\def\Dbar{\overline{\partial}}
\def\cross{{\rm{cr}}}
\def\dist{\rm{dist}}
\def\dist{{\rm{dist}}}
\def\diam{{\rm{diam}}}
\def\supp{{\rm{supp}}}
\def\Mod{{\rm{Mod}}}
\def\lc2{{\rm{cap}}}
\def\z{{\bold z}}
\def\w{{\bold w}}
\def\v{{\bold v}}
\def\boldalpha{{\bold \alpha}}
\def\Bbb{\mathbb}
\def\cal{\mathcal}
\def\disk{{\Bbb D}}
\def\ball{{\Bbb B}}
\def\circle{{\Bbb T}}
\def\reals{{\Bbb R}}
\def\real{{\Bbb R}}
\def\plane{{\Bbb R^2}}
\def\complexes{{\Bbb C}}
\def\sphere {{\Bbb S}}
\def\uhp{{\Bbb H}}
\def\uhs{{\Bbb H}^3}
\def\diag{\rm{diag}}
\def\FFT{{\rm{FFT}}}
\def\IFFT{{\rm{IFFT}}}
\def\OmegaFB{\Omega^{\rm fb}}
\def\OmegaRC{\Omega^{\rm rc}}
\newtheorem{thm}{Theorem}
\newtheorem{cor}[thm]{Corollary}
\newtheorem{lemma}[thm]{Lemma}
\newcounter{conj}
\theoremstyle{definition}
\newcounter{ques}
\begin{document}

\baselineskip=18pt
%%%%%%%%%%%%%%%%%%   title  %%%%%%%%%%%%%%%%%%%%%%%%%%%%%%

\title{ Conformal mapping in  linear time}
\subjclass{Primary: 30C35, Secondary: 30C85, 30C62 }
\keywords{numerical conformal mappings, Schwarz-Christoffel formula,
hyperbolic 3-manifolds, Sullivan's theorem, convex hulls, 
quasiconformal mappings, quasisymmetric mappings, medial axis, 
CRDT algorithm}

\author {Christopher J. Bishop}
\address{C.J. Bishop\\
         Mathematics Department\\
         SUNY at Stony Brook \\
         Stony Brook, NY 11794-3651}
\email {bishop@math.sunysb.edu}
\thanks{The  author is partially supported by NSF Grant DMS 04-05578.}

%\date{\today}
\date{August 13, 2009}

\maketitle

%%%%%%%%%%% abstract %%%%%%%%%%%%%%%%%%%%%%%%
\begin{abstract}
Given any $\epsilon >0$ and any  planar region $\Omega$
 bounded by a simple $n$-gon $P$ we construct  a $(1+\epsilon)$-quasiconformal 
map between $\Omega$ and the unit disk in time  $C(\epsilon) n$.
 One can take $C(\epsilon) = C + C \log \frac 1 \epsilon \log \log
\frac 1 \epsilon$.

%The approximation is obtained in two steps. The first 
%step uses a  quasiconformal map to the disk that  arises
%from the theory of hyperbolic $3$-manifolds and has been 
%previously studied by various authors including Thurston, 
%Sullivan and Epstein and Marden, who showed its quasiconstant
%is bounded independent of the domain. The fact we can construct this 
%map in $O(n)$ steps follows from a  result of Chin, Snoeyink and
%Wang   in computational geometry that the 
%medial axis of a simple polygon can be computed in $O(n)$ steps.
%The second step is to approximately solve the resulting 
%Beltrami equation with  $O(n)$ work; this depends on the 
%particular form the map in the first step and the fact that 
%we can factor it into a uniformly bounded number of maps 
%with small dilatation.

 \end{abstract}

\clearpage
%%%%%%%%%%%%% Introduction %%%%%%%%%%%%%%%%%%%%
\setcounter{page}{1}
\renewcommand{\thepage}{\arabic{page}}

\section{Introduction} \label{new-intro}

%What is the computational complexity of conformal 
%mapping?
If $\Omega$ is a proper, simply connected plane domain, then by the 
Riemann mapping theorem  there is  a conformal map
$f: \disk \to \Omega$, but for most domains there is no
simple, explicit formula. In this paper we will 
show that there is ``almost'' such a formula in the sense
that there is a linear time algorithm for computing the 
conformal map with estimates on time and accuracy that are 
independent of the geometry of the particular domain.
Thus the computational complexity of conformal mapping is
linear in the following sense.

\begin{thm} \label{thmQC}
Given a simply connected domain $\Omega$ bounded by 
an $n$-gon we can compute  the conformal 
map $f: \disk \to \Omega$ to within quasiconformal 
error $\epsilon$ in time $O(n \cdot
 p \log p)$ where $p =O(\log \frac 1 \epsilon)$.
\end{thm}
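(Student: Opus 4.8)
The plan is to produce the map in two stages, both organized around the medial axis: first, in time $O(n)$, an explicit $K_0$‑quasiconformal map $\psi_0:\disk\to\Omega$ with $K_0$ a \emph{universal} constant (independent of $P$); second, a refinement of $\psi_0$ to quasiconformal accuracy $\epsilon$ costing $O(n\,p\log p)$ with $p=O(\log\frac1\epsilon)$. Here ``compute $f$'' means produce a data structure of size $O(n+p)$ from which $f$ and $f^{-1}$ can be evaluated anywhere to $p$ digits in time $O(\log n+p)$. For Step~1 I would compute the medial axis $\MA(\Omega)$ — a finite tree with $O(n)$ edges, each a segment of a line or parabola — in time $O(n)$ (known for simple polygons), and use it to subdivide $\Omega$ into $O(n)$ cells, grouped into a \emph{thick} part $\Omegathick$ (a bounded number of geometric types, each conformally a disk with controlled distortion) and a \emph{thin} part $\Omegathin$ of long thin quadrilaterals and cusps attached to $\Omegathick$ along $O(n)$ arcs. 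The point of using $\MA(\Omega)$ is that each thin tube carries a canonical conformal coordinate (roughly, width as a function of arclength along the axis), so an arbitrarily long tube is still specified by $O(1)$ data and contributes negligibly to conformal distortion — this keeps everything linear in $n$ and immune to the aspect ratios and small angles of $P$.

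\textbf{Universal distortion via the dome.} Over $\Omega\subset\widehat{\complexes}=\partial\uhs$ form the dome $\hull\cap\uhs$, the component of the boundary of the hyperbolic convex hull of $\widehat{\complexes}\setminus\Omega$ facing $\Omega$. With its path metric the dome is isometric to the hyperbolic disk, giving a conformal uniformization $\Phi:\disk\to\text{dome}$, and the nearest‑point retraction $R:\Omega\to\text{dome}$ is, by a quantitative form of Sullivan's theorem (with an explicit universal $K_0$), $K_0$‑quasiconformal; hence $\psi_0:=R^{-1}\circ\Phi:\disk\to\Omega$ is $K_0$‑quasiconformal. The crucial point is that for a polygon the bending lamination of this dome is supported on the $O(n)$ hyperbolic geodesics dual to the edges of $\MA(\Omega)$, so $\Phi$ and $R$ are piecewise Möbius on an $O(n)$‑piece subdivision; they can be assembled in time $O(n)$ and evaluated in time $O(\log n)$ per point. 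As an alternative to invoking an explicit Sullivan bound, one can instead glue explicit conformal maps on the cells of $\Omegathick\cup\Omegathin$ and interpolate quasiconformally with bounded distortion across the $O(n)$ seams; the outcome is the same — a universal‑distortion map built in linear time.

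\textbf{Refinement to $(1+\epsilon)$‑quasiconformal.} From $\psi_0$ read off approximate prevertices on $\circle$ for the vertices of $P$; to avoid crowding I would keep them as cross‑ratios on a Delaunay triangulation compatible with $\MA(\Omega)$ (the CRDT parametrization). A Schwarz–Christoffel evaluation with these cross‑ratios produces a conformal map onto a polygon $P'$ close to $P$; the mismatch defines a quasiconformal correction whose distortion is comparable to the current error, and a Newton‑type update of the cross‑ratios roughly squares the error per step, so $O(\log\log\frac1\epsilon)$ steps reach error $\epsilon$. Running step $k$ at working precision $p_k\asymp 2^k$ and evaluating the Schwarz–Christoffel integrals at the $O(n)$ relevant points by $\FFT$/fast‑multipole summation of truncated series costs $O(n\,p_k\log p_k)$, geometrically dominated by the last step $O(n\,p\log p)$; the thin tubes again cost $O(1)$ each thanks to their explicit conformal coordinate. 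Composing the final correction with $\psi_0$ yields the desired map in total time $O(n\,p\log p)$, and since $p=O(\log\frac1\epsilon)$ this is $O\!\big(n(\,1+\log\frac1\epsilon\log\log\frac1\epsilon)\big)$.

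\textbf{Main obstacle.} The crux is the dome step together with the demands the refinement places on it: one needs not merely the \emph{existence} of a universal‑distortion map (Sullivan), but a version that is (i) quantitatively explicit, (ii) genuinely recoverable from $\MA(\Omega)$ in linear time, and (iii) so well adapted to $\Omegathin$ that the iteration never has to resolve a long tube geometrically. Proving that the iteration's per‑step cost and step count are uniform over \emph{all} $n$‑gons — in particular that they do not secretly hide a dependence on small angles or large aspect ratios — is precisely where the medial‑axis coordinates on $\Omegathin$ and the universal bound $K_0$ are both indispensable.
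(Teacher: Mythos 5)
Your Step 1 (medial axis, dome, $\iota$-map, universal $K_0$, thick/thin decomposition) matches the paper's first stage closely. The refinement stage, however, contains two genuine gaps.

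First, your Newton iteration acts on the \emph{Schwarz--Christoffel parameters}: you propose to adjust cross-ratios (CRDT-style), re-evaluate the SC map onto a nearby polygon $P'$, and claim the update ``roughly squares the error per step.'' This is precisely the route the paper explicitly declines to take, because no proof of convergence is known for any such parameter-adjustment scheme: the dependence of the image polygon's side lengths on the prevertex parameters is not controlled well enough to establish a quadratic (or even linear) contraction with constants uniform over all $n$-gons. The paper's iteration is structurally different: it never varies SC parameters toward a moving target domain. Instead it works with non-conformal maps onto the \emph{exact} domain $\Omega$, encoded as $\epsilon$-representations (truncated power/Laurent series on a Carleson--Whitney decomposition of $\uhp$, glued by a partition of unity into a $(1+O(\epsilon))$-quasiconformal map $F$). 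The Newton step computes the Beltrami dilatation $\mu_F$ explicitly from the series, solves $\Dbar H = \mu$ approximately via the Beurling transform (fast multipole, $O(np\log p)$), and replaces $F$ by $F\circ H^{-1}$, which is $(1+O(\epsilon^2))$-quasiconformal. The quadratic convergence and the uniform radius $\epsilon_0$ come from quasiconformal/singular-integral estimates, not from any property of the SC parameter problem. If you want to keep your SC-parameter iteration, you would have to supply the missing convergence proof, which is an open problem.

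Second, even granting a quadratically convergent iteration with uniform radius $\epsilon_0$, your starting point is only within a \emph{bounded} quasiconformal distance $K_0$ (e.g.\ $7.82$) of the answer, which is far outside any small $\epsilon_0$. You have no mechanism for descending from $K_0$ to $\epsilon_0$. The paper bridges this gap with the angle-scaling chain: the thick pieces are approximated by finitely bent domains, whose normal-crescent decomposition yields a one-parameter family $\disk=\Omega_0,\dots,\Omega_N=\OmegaFB$ with consecutive domains related by $(1+O(1/N))$-quasiconformal maps (Lemmas \ref{varphi-est}--\ref{varphi=QI}); a representation is passed along the chain, re-improved to accuracy $\epsilon_0/2$ at each of the $N=O(1/\epsilon_0)$ steps, and only then is the quadratic iteration run to the final accuracy $\epsilon$. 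Without this (or some substitute for it), the uniformity over all polygons that you correctly identify as the crux in your last paragraph is not established.
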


The phrases ``can compute'' and ``quasiconformal error'' 
require some explanation in order
to make this a precise mathematical statement.
A unit of work consists of  an infinite precision arithmetic operation or 
an evaluation of $\exp$ or $\log$.
We will 
cover the unit disk by   $O(n)$ regions (disks and annuli) and in each
region approximate the conformal map using a $p$-term power or 
Laurent series and some elementary functions.
Combining these using a partition of unity will give  a
 $(1+\epsilon)$-quasiconformal map from $\disk$ to $\Omega$.
Our series  converge  geometrically fast on 
the associated regions,  and so each series has 
$p \sim \log \frac 1 \epsilon$ terms in general.
 The fastest  known methods for 
multiplication, division, composition or inversion of 
power series use the fast Fourier transform, and the time to 
perform an FFT on a $p$-term power series is 
${\rm{FFT}}(p) = O(p \log p)$,  so Theorem 
\ref{thmQC} says we only need  $O(1)$ such operations per
vertex.
%This would seem difficult to improve, 
%although I do not know a proof that Theorem 
%\ref{thmQC} is optimal. By way of comparison, the 
% time needed to find the $O(np)$ coefficients of 
%our representation is  less than the time needed
%to sort $O(np)$ numbers, or the time needed to print them 
%out (if we truncate our infinite precision numbers
%to $p$ bits of accuracy and print each bit).

The Schwarz-Christoffel formula (see Appendix  \ref{background}) provides 
a formula for the conformal map onto a polygon, but involves unknown 
parameters (the conformal preimages of the vertices).  Thus, it is 
not really a solution of the mapping problem, but simply reduces it 
to finding the $n$ conformal prevertices.
Suppose $\Omega$ is bounded by a simple $n$-gon
with vertices $\v=\{v_1,\dots,v_n\}$, let $f: \disk \to \Omega$ 
be  conformal and 
let $\z = f^{-1}(\v)$ be the conformal prevertices. 
A more concrete version of Theorem \ref{thmQC} is:

\begin{thm}\label{main}
Given any $\epsilon>0$ there is a $C=C(\epsilon) < \infty$ so that 
if $\Omega$ is bounded by a
simply polygon $P$ with $n$ vertices we can find points 
$ \w = \{w_1, \dots, w_n\}  \subset \circle$ so that 
\begin{enumerate}
\item All $n$ points in $\w$ can be computed in at most $ C n$ 
     steps.  
\item $d_{QC}(\w,\z) < \epsilon $ where $\z$ are the true conformal 
   prevertices.
\end{enumerate}
Here 
$d_{QC}(\w,\z) = \inf\{ \log K:  \exists \text{  
$K$-\rm{quasiconformal} } h: \disk \to \disk 
   \text{ such that }  h(\z) = \w. \}$ The constant $C(\epsilon)$ 
may be taken to be 
$C + C\log  \frac 1\epsilon  \cdot \log \log \frac 1 \epsilon$ 
 where $C$ is independent 
of $\epsilon$ or $n$.
\end{thm}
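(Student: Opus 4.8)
The plan is to build the approximating map as a composition of a crude, geometry-independent quasiconformal map coming from hyperbolic geometry with an explicit local correction that drives the dilatation down to $1+\epsilon$. Regard $\Omega$ as a subset of $\hat\complexes = \partial_\infty\uhs$ and let $\partial\hull$ be the boundary in $\uhs$ of the hyperbolic convex hull $\hull$ of $\complexes\setminus\Omega$, i.e.\ the ``dome'' over $\Omega$. With its path metric this dome is isometric to the hyperbolic plane $\uhp$, and Sullivan's convex hull theorem, in the quantitative form with an explicit universal constant $K_0$, provides a $K_0$-quasiconformal map $\Phi_0 : \Omega \to \disk$ built from the nearest-point retraction $\Omega \to \partial\hull$ followed by the isometric uniformization $\partial\hull \cong \uhp \cong \disk$. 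The point of passing through $\uhs$ is that the combinatorial structure of the dome is essentially the medial axis $\MA(\Omega)$, and for a simple $n$-gon $\MA(\Omega)$ can be computed in $O(n)$ time; so $\Phi_0$, and more importantly the decomposition of $\Omega$ it induces, are available in linear time.

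Using $\MA(\Omega)$ I would split $\Omega$ into a ``thin'' part $\Omegathin$ and a ``thick'' part $\Omegathick$, each a union of $O(n)$ pieces, arranged so that the complex dilatation of $\Phi_0$ is essentially supported on $\Omegathick$. A thin piece is a long quadrilateral following an edge of $\MA(\Omega)$, of conformal modulus forced by the dome construction to exceed $C\log\frac1\epsilon$; there the retraction is already nearly conformal, so $\Phi_0$ (after at most an explicit elementary straightening of the tube) is $(1+\epsilon)$-conformal, by rigidity of conformal maps of high-modulus quadrilaterals. Each thick piece, by the defining property of the medial axis, is sandwiched between concentric disks of comparable radii, hence has bounded geometry; on such a piece, correcting $\Phi_0$ to a $(1+\epsilon)$-conformal map is a local problem solved by a geometrically convergent power or Laurent series, and truncating to $p = O(\log\frac1\epsilon)$ terms suffices. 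Gluing the $O(n)$ corrected local maps with a partition of unity subordinate to the decomposition --- adjacent maps agree to within $\epsilon$ on their overlaps --- yields a single $(1+\epsilon)$-quasiconformal map $\Phi : \Omega \to \disk$.

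Set $\w = \Phi(\v) \subset \circle$. Then $\Phi \circ f : \disk \to \disk$ is $(1+\epsilon)$-quasiconformal and carries $\z$ to $\w$, so $d_{QC}(\w,\z) \le \log(1+\epsilon) < \epsilon$, which is (2). For (1): computing $\MA(\Omega)$ and the dome data is $O(n)$; each of the $O(n)$ local corrections is $O(1)$ arithmetic operations on $p$-term series (multiplication, division, composition, inversion), each costing one FFT, $\FFT(p) = O(p\log p) = O(\log\frac1\epsilon\log\log\frac1\epsilon)$; and the partition-of-unity gluing is $O(p)$ per piece. Summing, the total is $C(\epsilon)\,n$ with $C(\epsilon) = C + C\log\frac1\epsilon\log\log\frac1\epsilon$.

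The main obstacle is the second step: proving that the local models approximate the true conformal map to within \emph{multiplicative} dilatation $1+\epsilon$ --- not merely bounded dilatation --- and that the partition-of-unity gluing across the medial-axis skeleton preserves this. This calls for quantitative rigidity: geometric convergence of the correcting series on uniformly fat thick pieces, near-uniqueness of conformal maps of quadrilaterals of modulus $\gtrsim \log\frac1\epsilon$, and a proof that the dome/medial-axis decomposition genuinely confines the dilatation of $\Phi_0$, and hence all the difficult geometry of $P$, to the $O(n)$ thick pieces while every complementary piece is a high-modulus tube. Tracking constants through these estimates is what yields the stated form of $C(\epsilon)$, and keeping the number of pieces $O(n)$ with $O(1)$ series operations each is what keeps the algorithm linear.
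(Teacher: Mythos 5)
Your overall architecture (iota map from the dome, medial axis in linear time, thick/thin decomposition, local series glued by a partition of unity, then $\w = \Phi(\v)$) matches the paper's, but there is a genuine gap at the single most important step: how you get from the Sullivan map $\Phi_0$ to a $(1+\epsilon)$-quasiconformal map. The convex hull theorem gives only a \emph{bounded} constant $K_0$ (the paper uses $K_0 \le 7.82$), so the complex dilatation of $\Phi_0$ has modulus comparable to $(K_0-1)/(K_0+1)$, a fixed constant of order one --- it is not small anywhere, neither on thick nor on thin pieces. ``Correcting'' such a map to within multiplicative dilatation $1+\epsilon$ is therefore not a perturbative local problem: a truncated power or Laurent series correction, and more generally any Newton-type step, only works once you are already inside a basin $\|\mu\|_\infty < \epsilon_0$ where the linearization of the Beltrami equation dominates the quadratic error. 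Your proposal never explains how to cross the gap from $\|\mu\|_\infty \approx 1/2$ down to $\epsilon_0$, and the claims you lean on to dismiss it are false as stated: the nearest-point retraction is not nearly conformal on thin tubes (the bending crescents there have nontrivial angles), and a thick piece in the extremal-length sense is not ``sandwiched between concentric disks of comparable radii'' --- it can still be an arbitrary $n$-gon with no two non-adjacent sides close together.

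The paper's resolution of exactly this difficulty is the part your proposal omits: approximate each (really) thick piece by a finitely bent domain, read off its bending lamination, and scale the bending angles by $t = k/N$ to produce a chain $\disk = \Omega_0, \dots, \Omega_N$ of intermediate domains with $N = O(1/\epsilon_0)$, each within quasi-isometry constant $O(1/N)$ of the next (Lemmas \ref{varphi-est}--\ref{varphi=QI}). A representation is then bootstrapped along the chain: the composition step degrades $\epsilon_0/2$ to $\epsilon_0$, and a quadratically convergent improvement step (Lemma \ref{Newton-radius}), which solves $\Dbar H = \mu$ approximately via a fast-multipole computation of the Beurling transform, restores $\epsilon_0/2$; only after reaching $\Omega$ itself is the iteration run $O(\log\log\frac1\epsilon)$ more times to reach accuracy $\epsilon$, and that final stage is what produces the $\log\frac1\epsilon\log\log\frac1\epsilon$ factor. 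The thin pieces are handled not because $\Phi_0$ is good there but because the true conformal map onto a thin polygon is explicitly approximable by elementary functions (Lemma \ref{rect-lemma}), and the pieces are reassembled by Lemma \ref{combine-lemma}. Without the chain (or some substitute mechanism for shrinking a fixed dilatation to below the Newton radius with uniform, geometry-independent bounds), your argument does not close.
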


Note that $d_{QC}(\w,\z)=0$ iff the $n$-tuples are M{\"o}bius 
images of each other.  It is not hard to see that this 
 happens iff the corresponding 
polygons are linear images of each other, and so this is 
a natural metric for the problem. 
%Moreover, if 
%$d_{QC}(\w,\z) < \epsilon $ then  given  any M{\"o}bius 
%transformation of the disk, $\tau$, we can 
%choose another one, $\sigma$, so that  $\|\sigma(\w)-\tau(\z)\|_\infty= 
%\sup_i |\sigma(w_i) - 
%\tau(z_i)|=O(\epsilon)$. Thus 
Quasiconformal approximation 
implies uniform approximation but is stronger; not only are
the points of $\w$ within $O(\epsilon)$ of the corresponding 
points of $\z$, but  the relative arrangement 
of $\w$ approximates the corresponding arrangement of $\z$ equally well
at every scale (see Lemma \ref{QC-near-Id}).

%When approximating a conformal map two natural possibilities arise.
%The first is to construct an approximately holomorphic map onto exactly 
%the correct domain; the second is to construct a true holomorphic map onto 
%an approximation of the domain. The theorems above take the first 
%point of view.  However, by taking the approximate prevertices 
%given by Theorem \ref{main} and using them as parameters
%in the Schwarz-Christoffel formula we obtain a holomorphic map 
%onto a region $\Omega'$ which has a polygonal boundary with 
%all the correct angles and which is a $(1+\epsilon)$-quasiconformal 
%image of $\Omega$ by a map which preserves the vertices. Thus we 
%also get a holomorphic map onto an approximation of the desired
%domain.

%There  are practical methods for computing conformal 
%maps in $O(n)$ time in an average sense (e.g., using the fast multipole method as in 
% \cite{BT},  %\cite{CGR},\cite{GR87},
%\cite{O'D-R89}), so 
%the novel aspects of this paper are to (1) offer a  simple, uniform
%estimate of the complexity of computing a conformal map, (2) describe 
%new methods (such as a continuation method) which may lead to 
%better algorithms in practice and (3) highlight connections between 
%hyperbolic geometry, computational geometry and complex analysis 
%which deserve further investigation. 

We will  define a quadratically convergent iteration on 
$n$-tuples in $\circle$  and 
provide a starting point from which it is guaranteed to converge
with an estimate independent of the domain.
Although there are various details to check, each of  basic ideas
involved 
is fairly easy to explain and involves a  geometric 
construction.  We will discuss  these briefly here, 
leaving the details and difficult cases for the rest of the paper.

The  first 
idea is to  consider the so called ``iota-map'', $\iota: P \to \circle$ 
 to obtain an $n$-tuple $\w = \iota(\v)  \subset \circle $ 
that is only a bounded $d_{QC}$-distance $K$ from the true prevertices
(it is known from \cite{Bishop-ExpSullivan} that we can take 
$K \leq 7.82$). The definition of this map and the proof
that it has the desired approximation properties are  motivated by results 
 from  hyperbolic 3-dimensional geometry, but we can  
give a simple, geometric description  in the plane.
We approximate our polygon by a finite collection 
of medial axis disks (these are subdisks of the domain whose
boundary hits the boundary of the domain in at least two points).
 The union of these disks, $\Omega$,  
 can be written as a union 
of a single $D$  disk and a collection of disjoint crescents.
See Figure \ref{cd2lines-intro}.
Each crescent is foliated by circular arcs orthogonal to 
its two boundary arcs.  Following leaves of this foliation 
gives the desired  map $\iota: \partial \Omega \to \partial D$.
  The initial approximation 
by a union of disks is unnecessary,
but convenient for various reasons (the $\iota$ map for a 
polygon can be computed directly, using the medial axis 
of the polygon e.g., \cite{Bishop-fast}).
The construction of $\iota$ in linear time
depends on the fact that 
 the medial axis of a $n$-gon can be computed in linear time, a 
result of Chin, Snoeyink and Wang \cite{CSW-99}.

\begin{figure}[htbp]
\centerline{
 \includegraphics[height=2in]{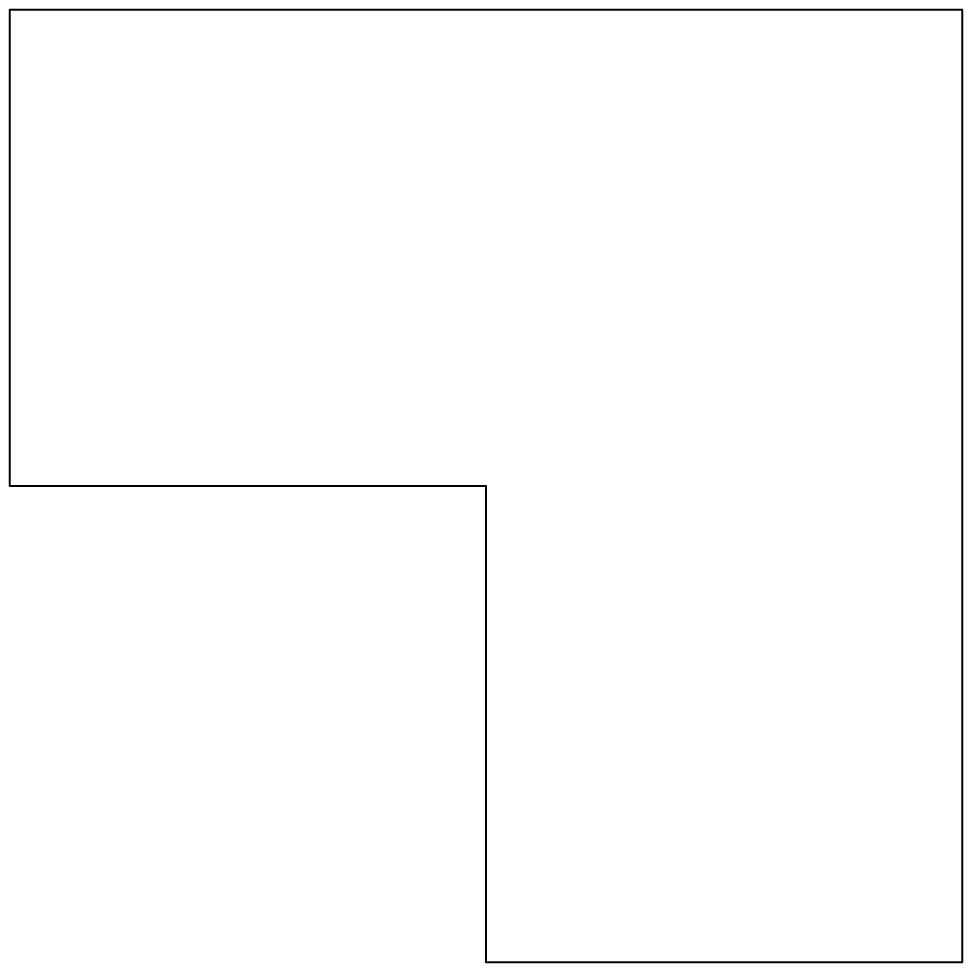}
$\hphantom{xxx}$
 \includegraphics[height=2in]{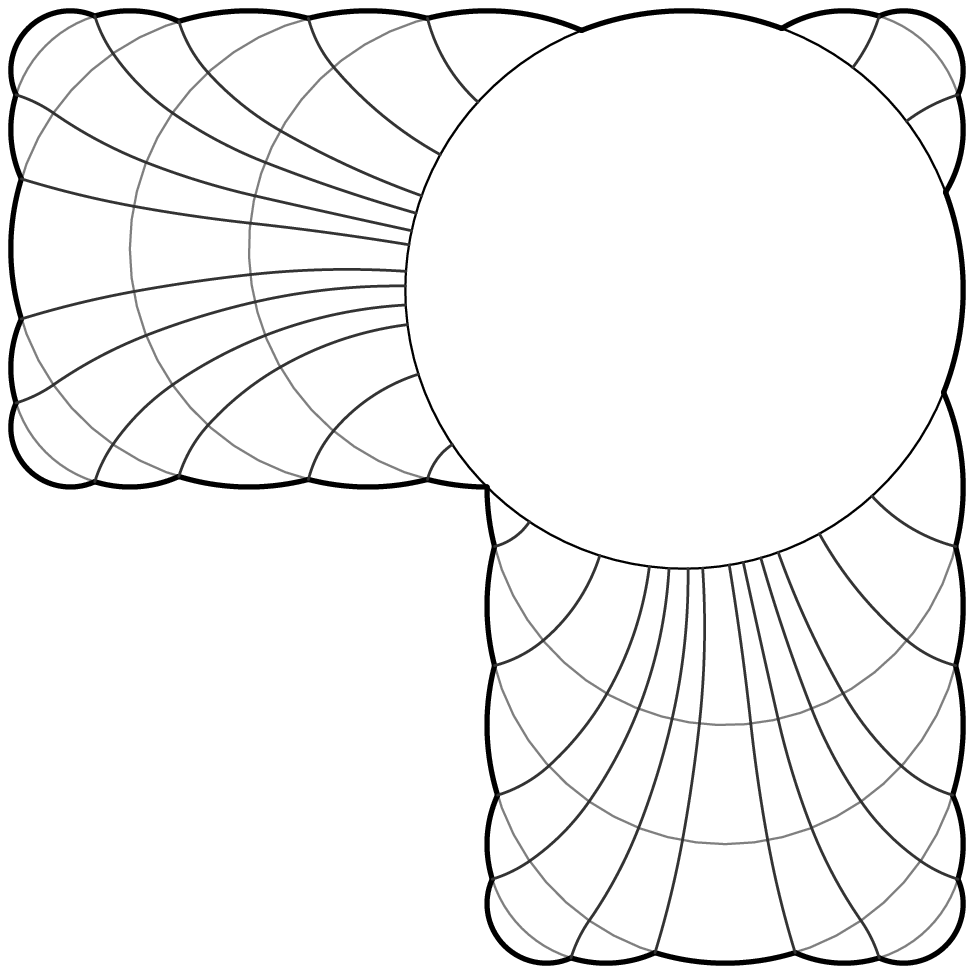}
}
\caption { \label{cd2lines-intro}
          An example  where we have approximated a domain  by a union 
         of disks; written the new domain $\Omega$ as a disjoint  union of 
          one disk  $D$ and several crescents; and used circular arcs 
         orthogonal to the crescents to define a flow from 
         $\partial \Omega$ to $\partial D$. The resulting map 
         is close to the Riemann map with estimates independent 
         of the domain.}
\end{figure}

%Besides being a good rough approximation to the Riemann map,
%the $\iota$ map has other desirable properties: it has a simple 
%geometric definition, it is natural with respect to 
%M{\"o}bius transformations,  it is local (parts of
%the boundary that are separated in a precise sense do not affect 
%each other in the computation of $\iota$), and the extension  to the 
%interior, which maps $\Omega$ to the unit disk, $\disk$,
%can be taken to have bounded derivative.
%See the end of Section \ref{Domes+MA2}.

The next idea is to decompose polygons into pieces, again 
following a motivation from hyperbolic geometry.
A standard technique in the theory of hyperbolic manifolds 
is to partition the manifold into its thick and thin parts
(based on the length of the shortest non-trivial loop through each  point). 
See Figure \ref{surfaces}.
 Thin parts
often cause technical difficulties, but this is  partially 
compensated for by the fact that  there are only a few possible 
types of thin parts and each has a well understood shape.
  Thus  we can 
think of the manifold as consisting of some 
``interesting'' thick parts   attached to some
annoying, but explicitly described, thin parts.
The manifold is considered especially nice if it is 
thick, i.e.,  no thin parts occur.

\begin{figure}[htbp]
\centerline{
 \includegraphics[height=1.5in]{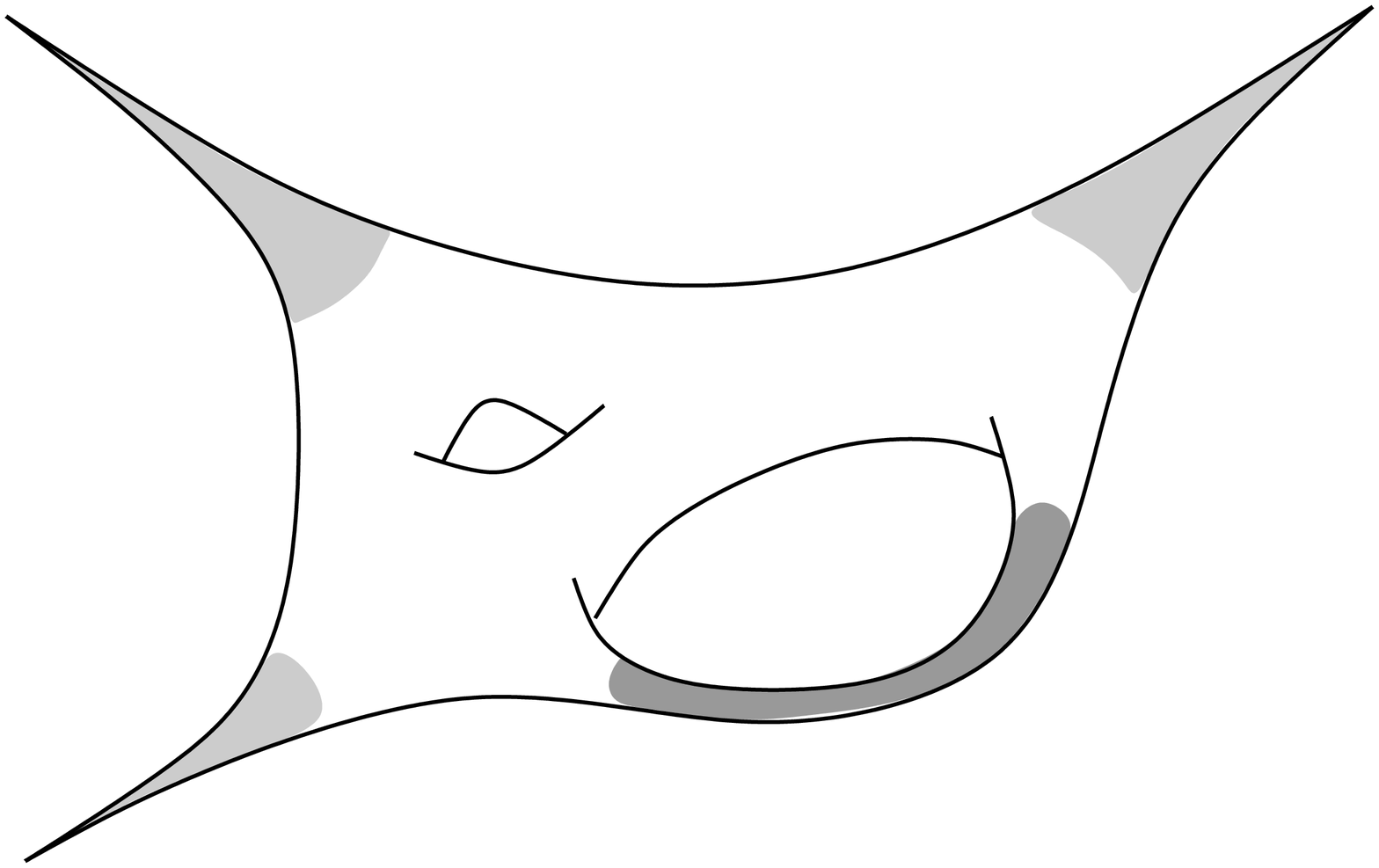}
$\hphantom{xxxx}$
 \includegraphics[height=1.5in]{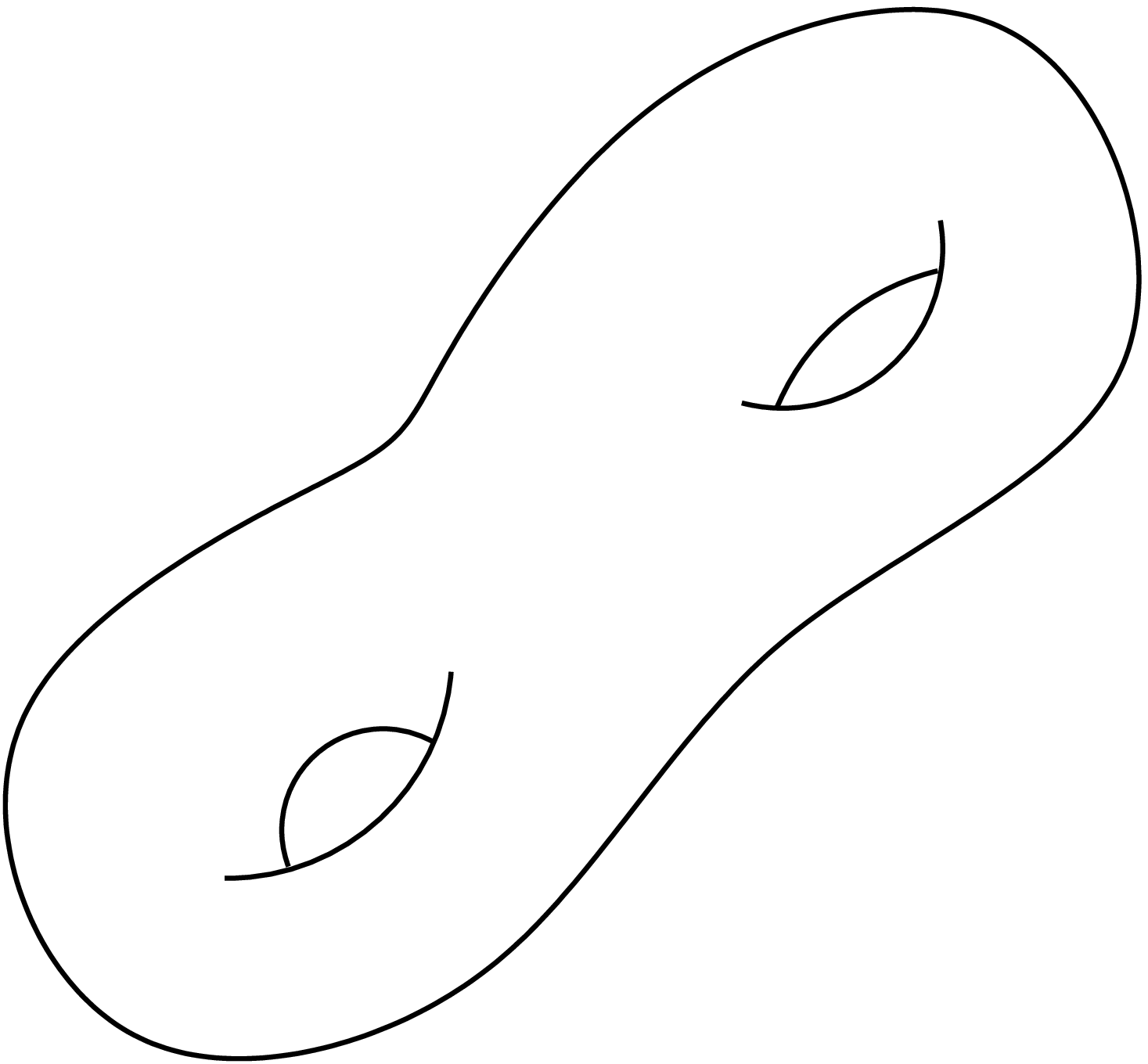}
}
\caption { \label{surfaces}
             On the left is a surface with one hyperbolic thin part 
  (darker) and three parabolic thin parts (lighter). On the right
  is a ``thick'' surface with no thin parts.
   }
\end{figure}

\begin{figure}[htbp]
\centerline{
 \includegraphics[height=1.75in]{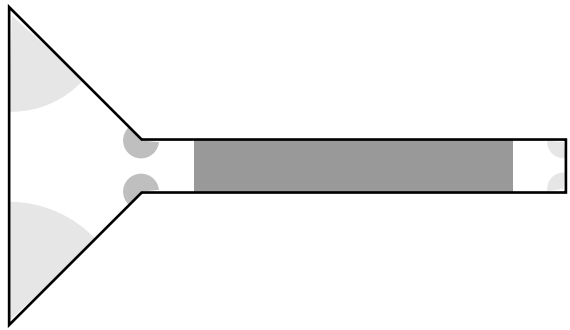}
}
\caption { \label{thin-parts}
            A  polygon with one hyperbolic thin part (darker) and six 
  parabolic thin parts, which we further divide into two groups 
  corresponding to interior vertex angles $< \pi$ and $> \pi$. 
}
\end{figure}

We will describe an analogous decomposition 
of a polygon into thick and thin parts. The thin 
parts occur when the extremal length between two edges 
is very small (roughly this means the Euclidean distance 
inside the domain
between the edges is small compared to their Euclidean 
diameters). This occurs whenever the edges are adjacent, but we 
shall be mostly interested in thin parts corresponding to 
non-adjacent edges and we denote the two cases as parabolic 
and hyperbolic respectively, in analogy to the thin parts 
of a Riemann surface (in that case, parabolic thin parts are
non-compact and have
one boundary component attached to the thick part of the surface;
hyperbolic thin parts are compact and have two boundary 
components, both attaching to the thick part of the surface).
See Figure \ref{thin-parts}.
The parabolic thin parts look like sectors,  and 
 %divide into two subcases, depending on 
%whether the vertex in question has interior angle $> \pi$ 
%or $\leq \pi$.  
%Only the first case will be of interest to us; 
%in some sense these vertices are at ``infinity'' with respect 
%to the medial axis because they are not on the boundary of 
%any medial axis disk. 
% The vertices with angle $\geq \pi$ are 
%on  such disks, i.e., are ``easier'' to reach.
%See Figure \ref{verts-on-disks}.
%See Figure \ref{thin-parts} for examples of both types of 
%thin parts for a polygon. 
the hyperbolic thin parts look like generalized 
quadrilaterals (with two sides 
on the boundary of the given polygon).
We say the polygon is thick if no hyperbolic 
thin parts occur.
See Figure \ref{thin-intro} for 
various ways hyperbolic thin parts can arise.

\begin{figure}[htbp]
\centerline{
 \includegraphics[height=1.25in]{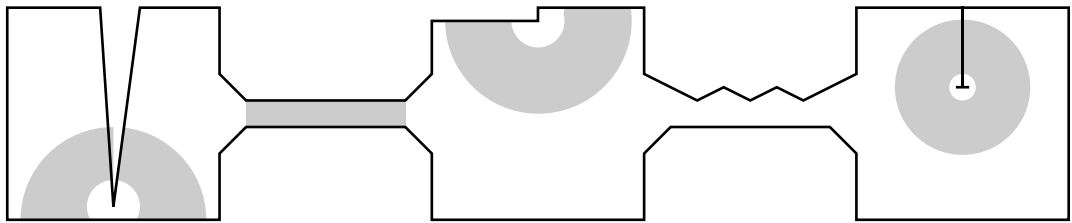}
}
\caption { \label{thin-intro}
            The five hyperbolic thin parts of this polygon are shaded gray.
            The channel on the right is not thin because there
             are many vertices lining one side of it.
            The complementary 
             white regions are ``thick''; one of our  strategies 
             is to compute mappings 
             onto thick domains and ``glue'' them together across the thin 
             connecting regions.  }
\end{figure}

  As with manifolds,  the thin 
parts of polygons  cause technical difficulties.
 However, our thin parts 
can only have a small number of simple shapes
and the  conformal 
maps from the disk into a thin part can be well approximated 
by explicit formulas. Thus they are
``well understood''.  Indeed, much of the algorithm 
described in this paper will only be applied to the remaining 
thick parts, making them the ``interesting'' part of the 
polygon. Thus, as with hyperbolic manifolds, polygons 
will be divided into interesting thick parts, attached 
to annoying, but well understood, thin parts.

%Thin parts can  be eliminated by adding 
%extra points to edges (as is done implicitly in \cite{DV98}),
%but then we would lose the theoretical  linear bound on the work (consider a 
%$1 \times N$ rectangle for $N$ large). This may not be 
%much additional burden in practice, since a similar 
%procedure would  be necessary to deal with the extra 
%accuracy needed in the same situations  due
%to crowding (which is hidden 
%in Theorem \ref{main} by the infinite precision assumption).
%The bit-complexity can probably be estimated in terms of the 
%$n$ (the number of sides of the polygon) and the sum of the 
%hyperbolic ``widths'' of all the arches used (i.e., this is the 
%number of pieces needed if we don't allow arches in our covers).

The next idea concerns how to represent a  
map onto $\Omega$.  A conformal
map onto  a polygon has a convergent power series on $\disk$,
but since $f'$ is discontinuous at the prevertices,  
it converges slowly and the number of terms needed
for a given accuracy  depends on the geometry of 
the image domain.  For convenience we will replace the 
disk by the upper half-plane $\uhp$, and we will  represent a map
$f : \uhp \to \Omega$ by breaking $\uhp$ into $O(n)$
simple pieces and using a $p$-term power series or 
Laurent expansion on each 
piece that represents $f$ with  error $\leq 2^{-p}$,
 independent of the geometry of $\Omega$.
The series are combined using 
a partition of unity to give a single quasiconformal map whose dilatation can 
be computed and corrected for to give an improved approximation.
 The decomposition ${\cal W}$  of $\uhp$ is accomplished by 
taking the hyperbolic convex hull of  the point set $S$ (our 
current prevertex approximation) and covering it by $O(n)$ Whitney boxes, 
Carleson squares  and 
regions we call arches  
 and then dividing the remaining regions, which all lie outside 
the convex hull, into  $O(n)$ Carleson 
boxes.  See Figure \ref{decom-intro}.

\begin{figure}[htbp]
\centerline{
 \includegraphics[height=2in]{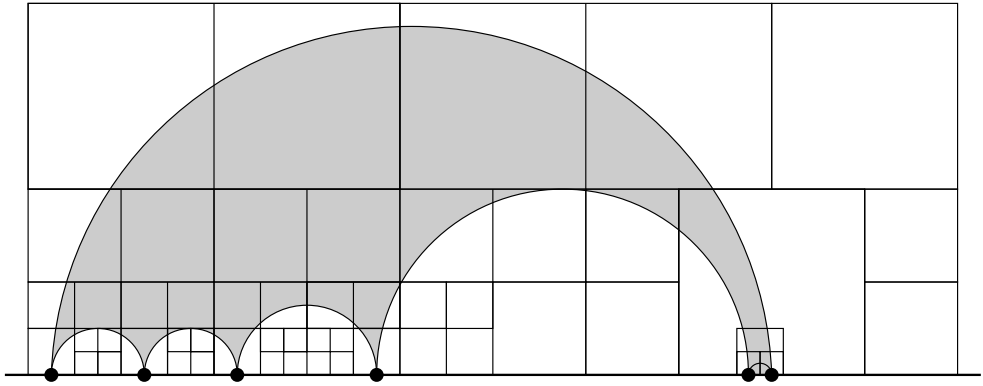}
}
\caption { \label{decom-intro}
Decompose the plane by first covering the convex hull of the 
prevertices.  This picture 
has one arch on the right hand side. Arches correspond to clusters 
of two or more prevertices which are isolated in a precise way.
}
\end{figure}

More precisely, 
an  $\epsilon$-representation of a polygon is a 
triple $(S,{\cal W}, {\cal F})$, where $S$ is a $n$-tuple 
in $\reals$ (the prevertices), ${\cal W}$ is  a 
decomposition of $\uhp$ into $O(n)$ simple pieces and ${\cal F}$ is a
collection of functions, one for each piece of our decomposition.
These functions consist of
 $p = O(|\log \epsilon|)$ terms of a  series expansion 
on each piece, and a choice of a certain elementary function for each 
piece, which is the identity or a power function in most cases.
Moreover, we require that functions for adjacent pieces agree 
to within $\epsilon$ (in a certain metric) along the common 
boundary. 

We will  prove a Newton type iteration
for improving  $\epsilon$-representations.
We will show that there is an absolute constant
 $\epsilon_0 >0 $ (independent of $n$ and $\Omega$) so that 
if $ \epsilon < \epsilon_0$, then  we can quickly
 improve a $\epsilon$-representation 
to a $\epsilon^2$-representation. 
  Thus starting with a $n$-tuple at distance
$\epsilon_0$ from the true answer, it only takes $O(\log \log  \frac 1\epsilon )$ 
iterations to reach 
accuracy $\epsilon$. 
The main problem is to estimate the time needed to perform each 
iteration.

Combining the functions  in ${\cal F}$ with a piecewise
polynomial partition of unity gives 
a $(1+\epsilon)$-quasiconformal map from $F:\uhp \to \Omega$. 
Let  $\mu = \overline {\partial }F/\partial F$ be the Beltrami 
dilatation of $F$.  Then $\| \mu\|_\infty = O(\epsilon)$
and   $\mu$  can be explicitly 
computed from the series expansions in ${\cal F}$ and the partition of 
unity. If we could solve a Beltrami equation to find a 
mapping $H$ of the upper half-plane to itself so
that $\mu_H = \mu_F$,  then $F \circ H^{-1}$ would be the 
desired  conformal map.
We can't solve this equation exactly in finite time, but we can solve
$\mu_H = \mu_F + O(\|\mu_F\|_\infty^2)$ in linear time using 
the fast multipole method of Greengard and Rokhlin.
Thus $F \circ H^{-1}$ will be $(1+O(\epsilon^2))$-quasiconformal, 
and this  is the improved representation.
Each iteration consists of approximately solving an
equation  $\Dbar H = \mu$ by evaluating
  $ p =O(\log \frac 1 \epsilon)$ terms of 
the power series of a  Beurling transform of $\mu$  on  $n$ disks.
 Using a fast multipole method and fast 
manipulation of power series, we can do each 
iteration in time $O(np \log p) = 
O(n\log  \frac 1\epsilon  \cdot \log \log \frac 1 \epsilon)$.
Moreover, since   $p = |\log \epsilon|$ 
increases geometrically with each iteration, the total work is 
dominated by the final iteration, which gives the desired estimate.

Another basic idea of the paper 
  deals with how to improve  our initial $n$-tuple (provided by 
the $\iota$ map) that is at 
most distance $K$ 
from the correct answer to an $n$-tuple that is within
the distance $\epsilon_0$ required by the  Newton type iteration.
  This 
is accomplished by connecting  our domain to the unit disk by a chain 
of $N+1 = O(1/\epsilon_0) $ 
regions $\disk = \Omega_0, \dots, \Omega_N = \Omega$.
  As before, it is 
convenient to work with a domain that is a finite 
union of disks (such domains are also called ``finitely 
bent'' for reasons that will be clear when we discuss the 
dome of a domain later).

In this case there is a ``normal crescent'' decomposition of $\Omega$.
%that comes from 3-dimensional hyperbolic geometry.
If $\Omega = \cup D_k$ is a finite union of disks and 
$\partial D_1 \cap \partial D_2 \cap  \partial \Omega \ne \emptyset$, 
then the corresponding crescent is the subregion of $\Omega$ 
bounded by circular arcs perpendicular to $\partial D_1$ and 
$\partial D_2$ at their  two intersection points. 
%See the top left of Figure \ref{gap-cres-dome}. 
Removing  every such 
crescent  from a finitely bent domain 
leaves a collection of ``gaps''. See 
 Figure \ref{gap-cres-dome}. This decomposition 
has a natural interpretation in terms of 
3-dimensional hyperbolic geometry.
Each planar domain $\Omega$ is associated to a surface $S$
(called the dome of $\Omega$) in 
the upper have space  that is the upper envelope of all hemispheres
with base disk in $\Omega$. If $\Omega$ is a finite union of disks then 
the dome is a finite union of geodesic faces that meet along hyperbolic 
geodesics called the bending lines. There is a map $R: \Omega \to S$ 
(the hyperbolic  nearest point projection onto $S$) and the gaps in $\Omega$ 
are simply the points that map to faces of $S$ and the crescents are 
points that map to bending lines. 
%Geometrically this corresponds to 
%taking a sphere  tangent to $\reals^2$ at a point $z$ and expanding 
%it until it first hits $S$ at $R(z)$; this is just the 
%continuous extension to $\reals^2$ of the nearest point retraction 
%in the hyperbolic upper half-space onto $S$ (the region above $S$ 
%a hyperbolically convex, so the nearest point map is well defined 
%and continuous).  See Figure \ref{gap-cres-dome}.

\begin{figure}[htbp]
\centerline{
\includegraphics[height=1.5in]{poly8.ps}
$\hphantom{xxxxxxxxx}$
 \includegraphics[height=1.5in]{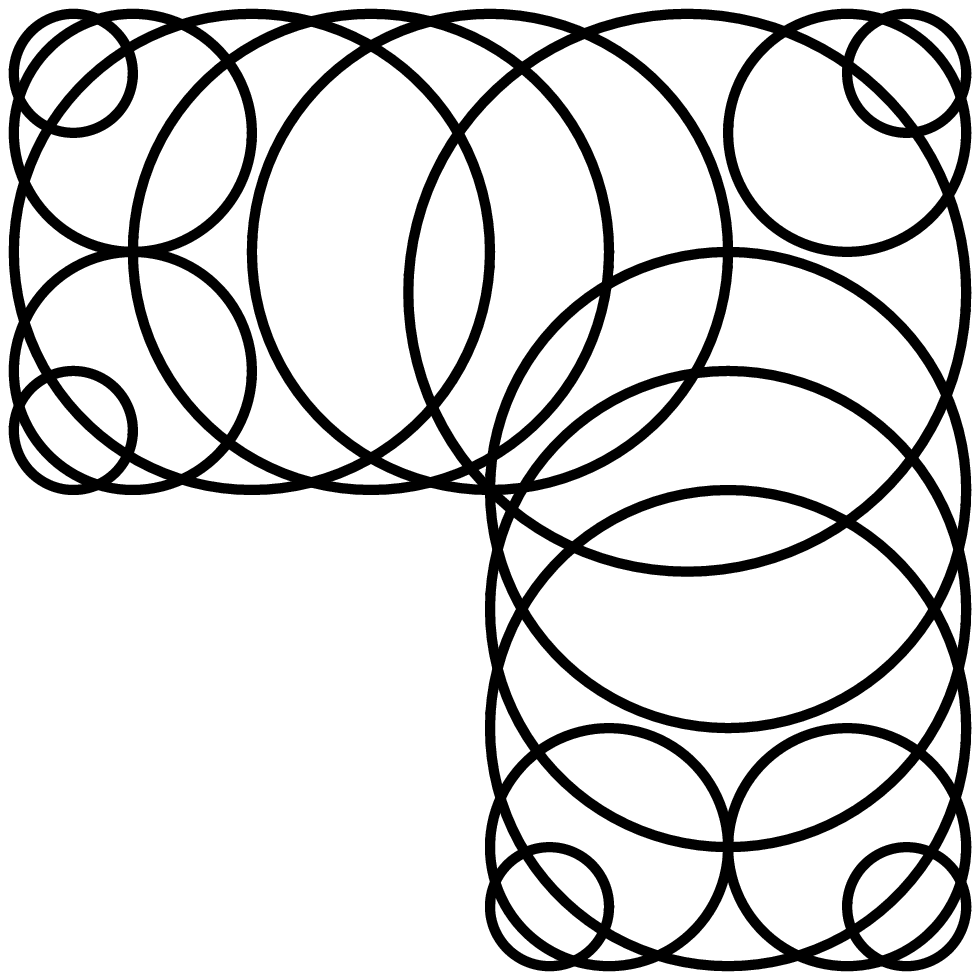}
 }
\vskip.2in
 \centerline{
 \includegraphics[height=1.5in]{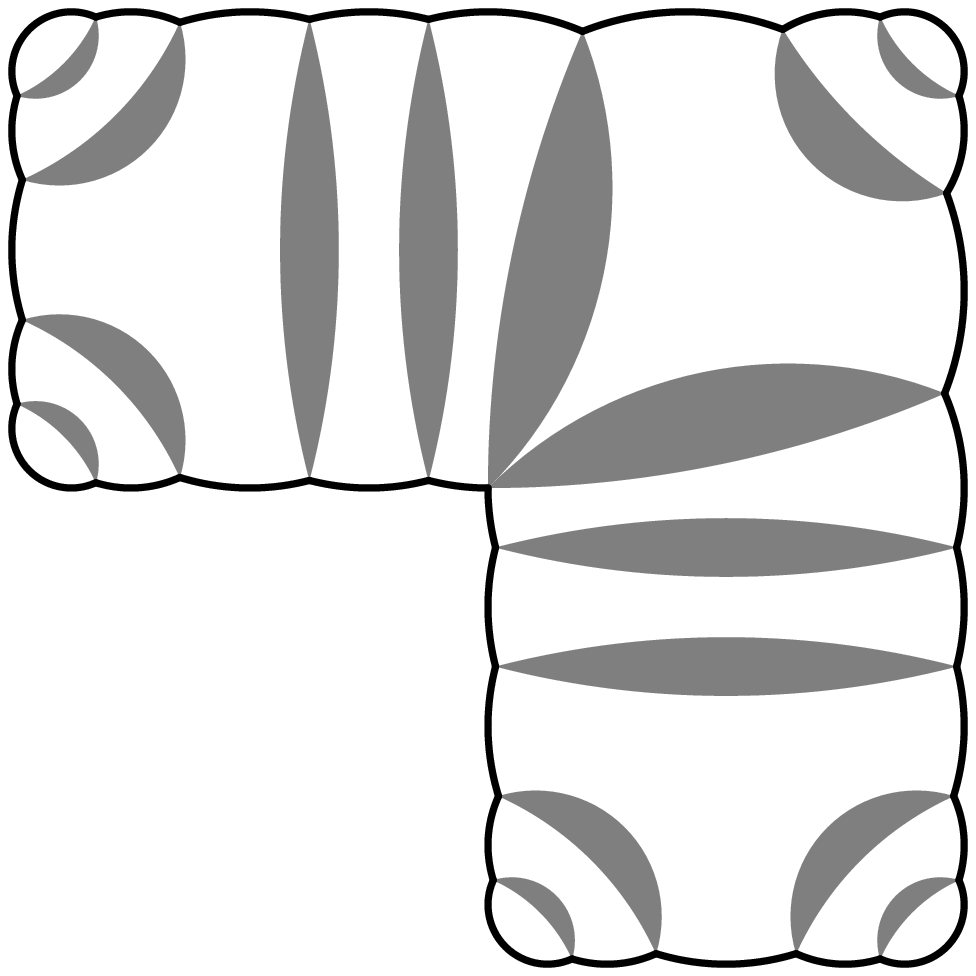}
$\hphantom{xxx}$
 \includegraphics[height=1.5in]{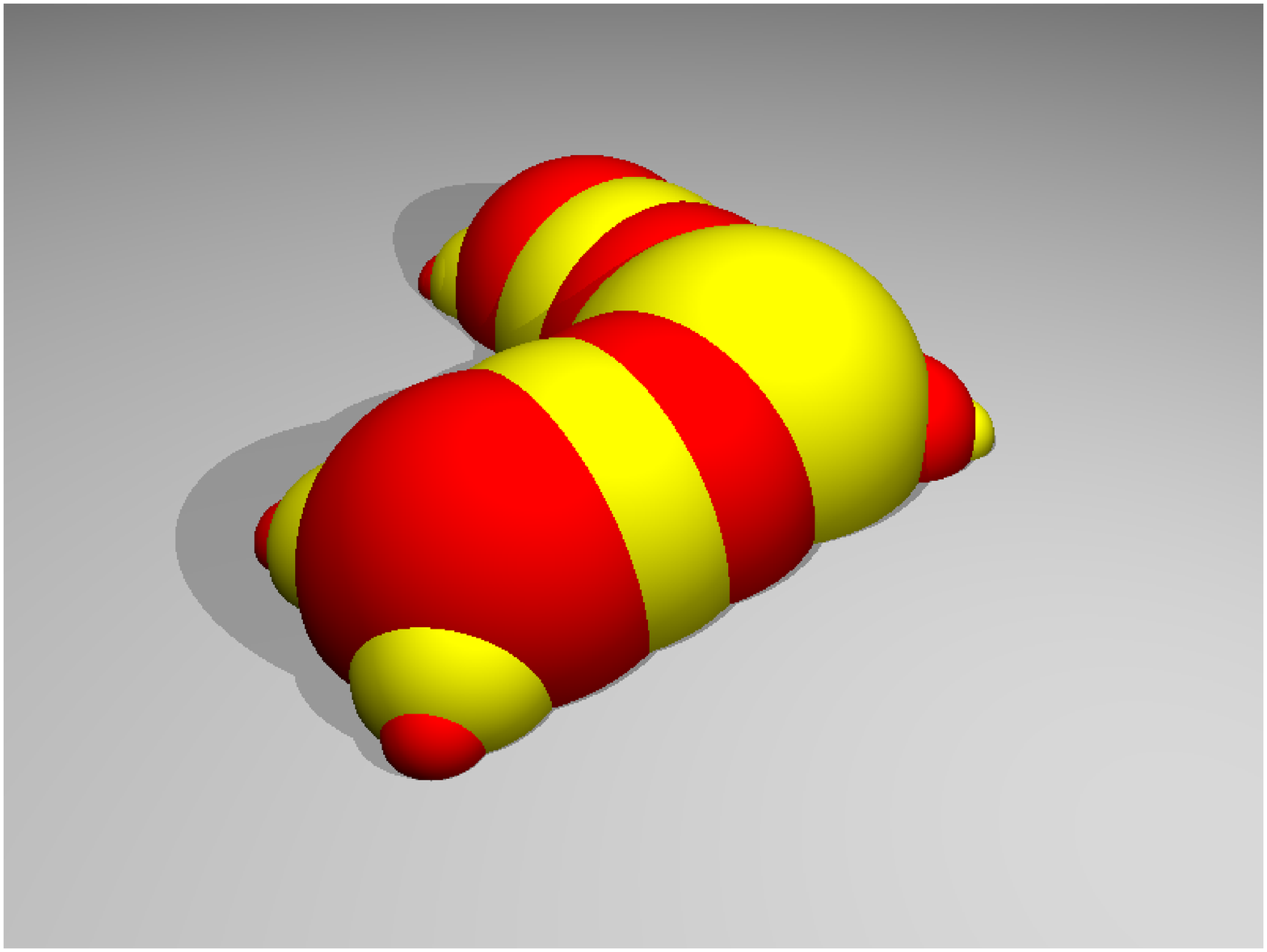}
}
\caption { \label{gap-cres-dome}
      A polygon, an approximation by a
       finite union of disks, its 
         normal crescent decomposition and 
	  the corresponding dome.}
\end{figure}

Given the  normal crescent decomposition of the 
domain,  we can build a one parameter family of regions by 
varying the angles of the crescents (this procedure is called ``angle 
scaling'').
When the angles have all been 
collapsed to zero, the resulting domain is the disk.
See Figure \ref{poly8-intro} for an  example of such a chain.
More examples are illustrated in Figures \ref{scale2} to \ref{disk-poly3}.

\begin{figure}[htbp] 
\centerline{ 
\includegraphics[height=1.5in]{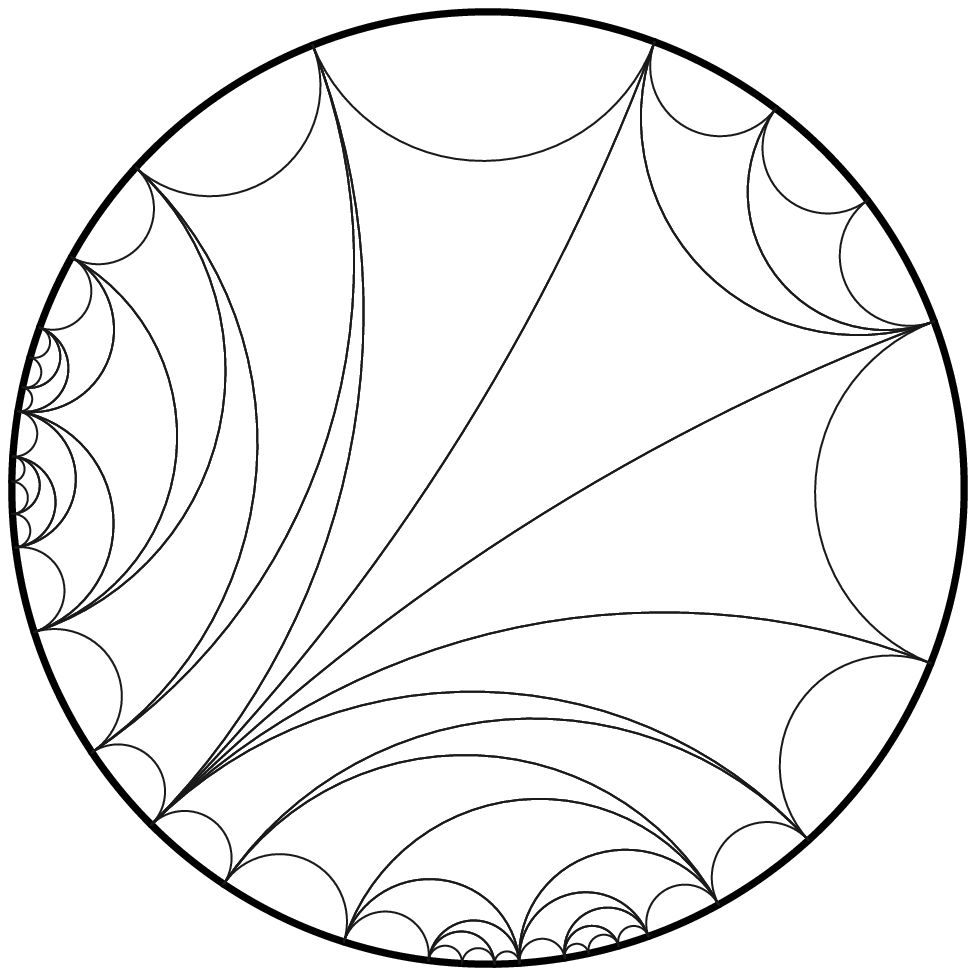}
$\hphantom{xxx}$
\includegraphics[height=1.5in]{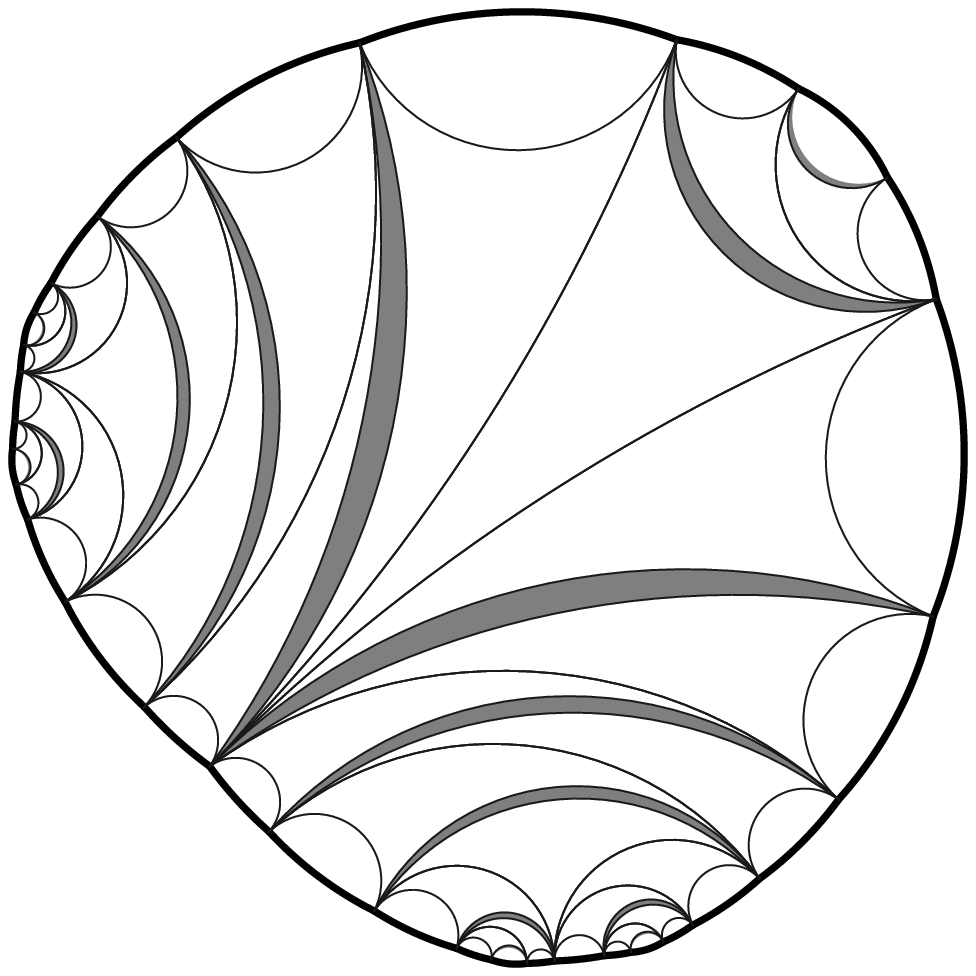}
$\hphantom{xxx}$
\includegraphics[height=1.5in]{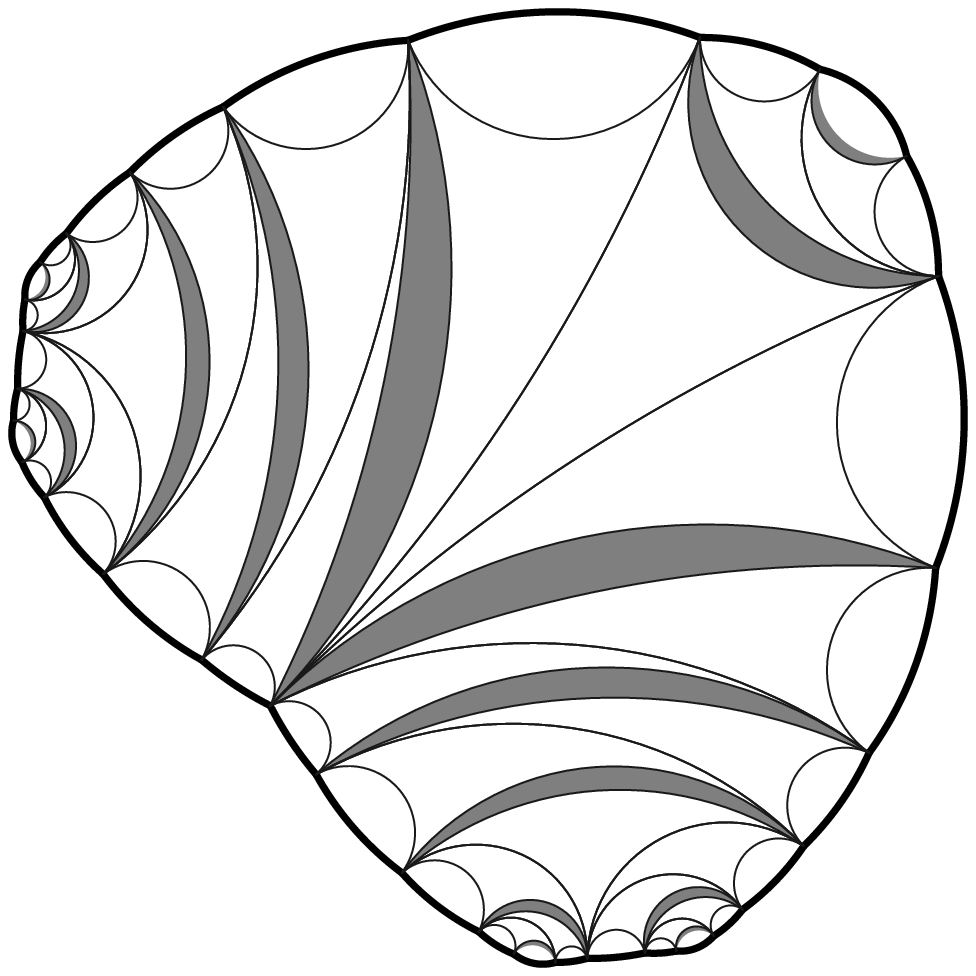}
 }
\centerline{ 
\includegraphics[height=1.5in]{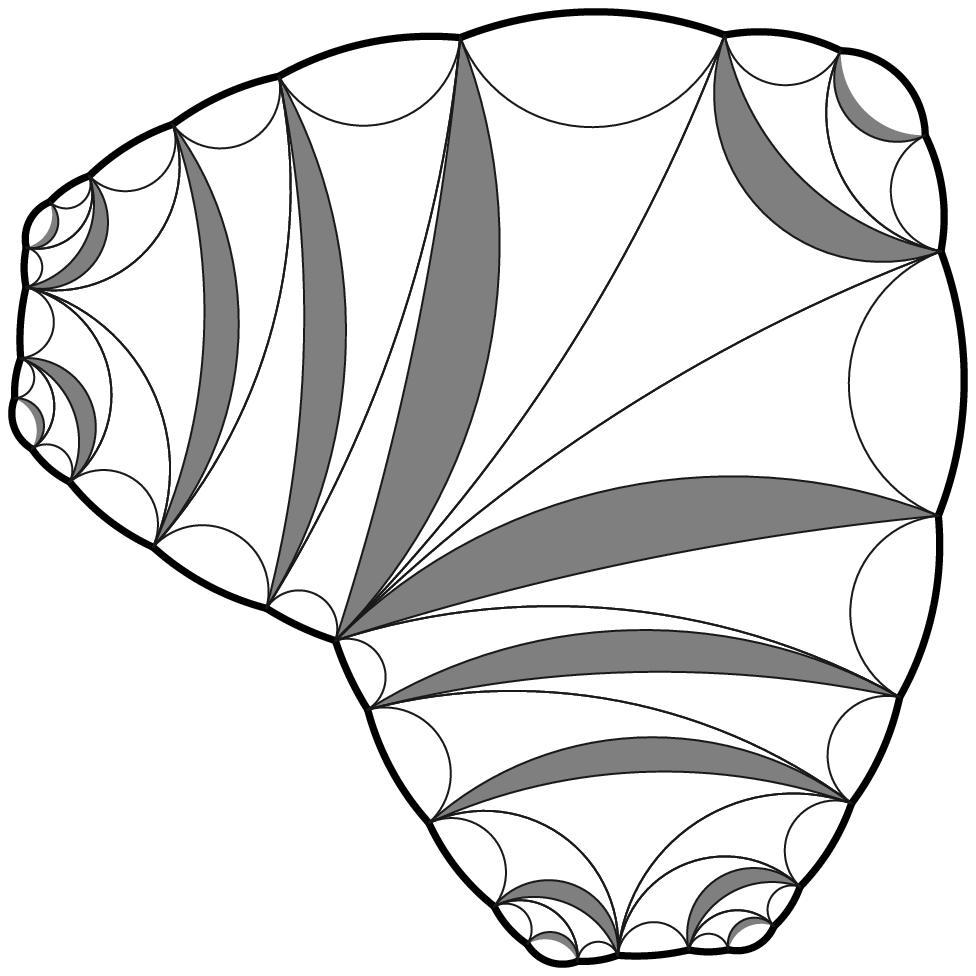}
$\hphantom{xxx}$
\includegraphics[height=1.5in]{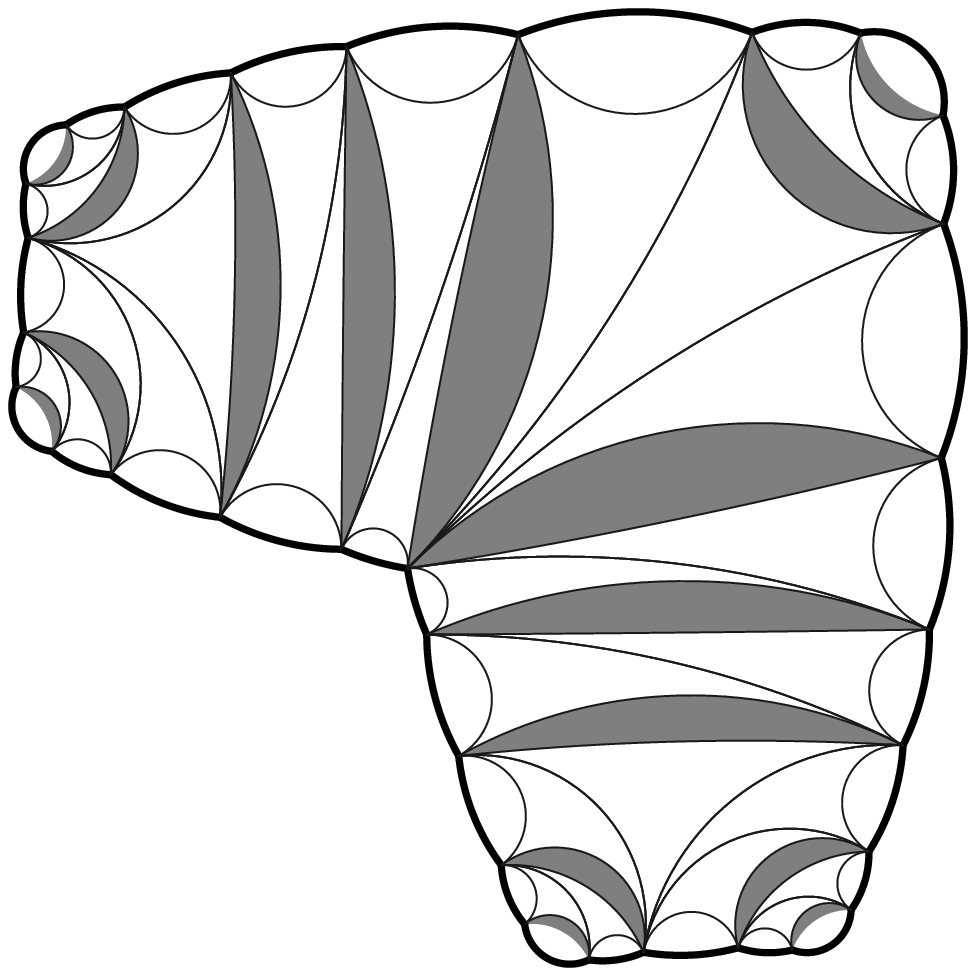}
$\hphantom{xxx}$
\includegraphics[height=1.5in]{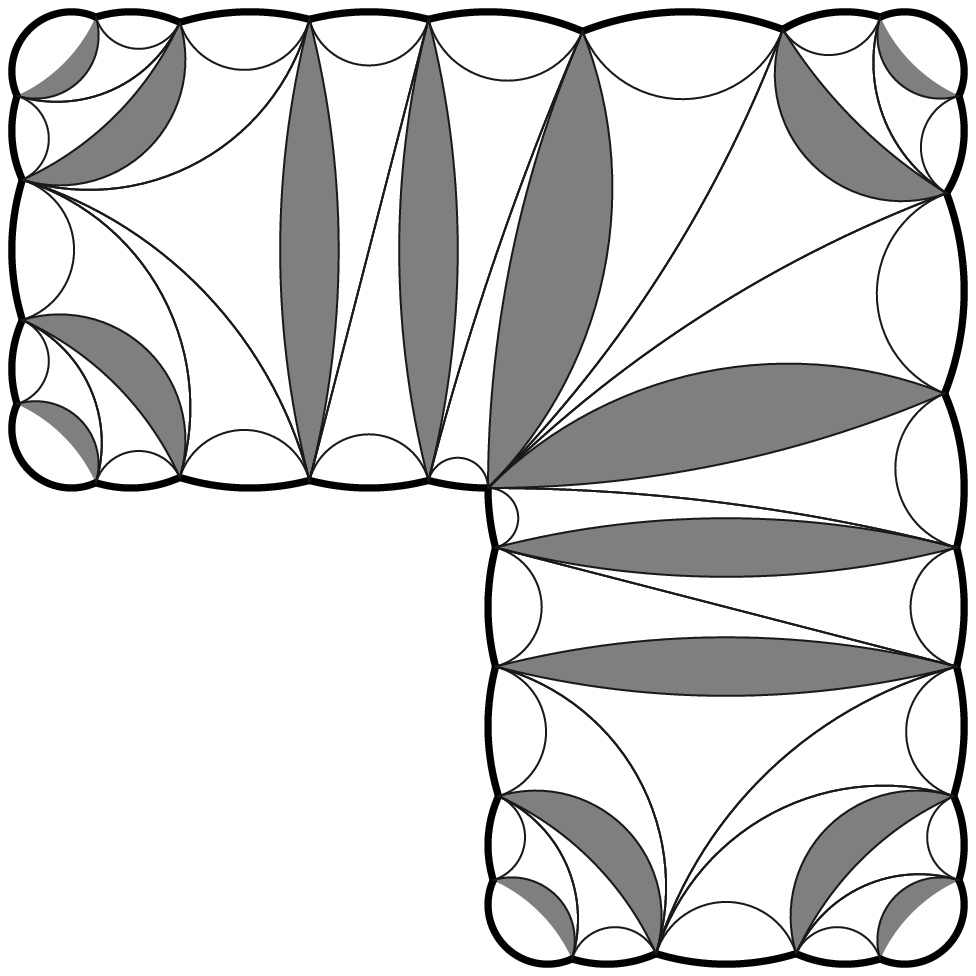}
 }
\caption{\label{poly8-intro}
Deforming the disk to an approximate polygon. The gaps have 
been subdivided to make our decomposition  into a  hyperbolic triangulation 
of the disk.
}
\end{figure}

We shall see that each domain in the chain  is mapped to the next by an 
explicit map $g_k: \Omega_k \to \Omega_{k+1}$ 
with small quasiconformal constant.
%If $\epsilon$ is small enough, 
This will allow us to convert
an $\epsilon$-representation for one domain in our chain into 
a $2 \epsilon$-representation for the next one.
 We can then use our improvement iteration to improve $2 \epsilon$
to $\epsilon$ and repeat the process. In this way we can start with 
a representation of the disk (which is easy to find) and finish with 
one for $\Omega$ (which is what we want), in a uniformly bounded
number of steps.

There are (at least) two 
alternatives for approximating  a conformal map $f : \disk 
\to \Omega$:  approximate it by conformal maps 
$f_n : \disk \to \Omega_n$ where $\Omega_n$ converges to 
$\Omega$, or approximate by maps $f_n: \disk \to \Omega$ which 
are not conformal, but converge to the conformal map.
The first approach is natural when 
dealing with Schwarz-Christoffel maps since a choice of 
parameters defines a conformal map onto  a region with 
the right angles, but perhaps the wrong side lengths. 
We then adjust the parameters to get a better approximation 
to the target domain. There are various heuristics for 
doing this that work in practice, but the relation between 
the parameters and the geometry of the image can be subtle and 
I have not seen how to prove convergence for any such method.
In this paper, I take the second choice above.  Information 
about the geometry of $\Omega$ is built directly into our 
approximating functions, and our iteration merely has to 
force the approximation to be ``more conformal''; this 
can be done  without reference to $\Omega$,
and hence with estimates independent of $\Omega$. This choice 
also leads naturally to the representation of these maps
using power series on Whitney-Carleson decompositions to enforce 
the desired  boundary conditions. 

The $\epsilon$-representations used to approximate conformal maps 
onto polygons can also approximate maps onto other domains,  as
long as each boundary point has a neighborhood which is an 
image of a half-disk by an explicit conformal map. The 
algorithm is just  a way of computing a global conformal map from knowing 
the local maps around each boundary point; we deal with polygons since 
the local maps are trivial. The work needed in general is $O(N)$, 
where $N$ is the   number of simple 
disks needed to cover the boundary (a simple disk is one so that 
$2D \cap \Omega$ can be explicitly mapped to a half-disk, with the 
boundary going to the line segment). In the case of polygons, 
we can reduce $O(N)$ to $O(n)$ by using arches,
 but this requires ``conformally straightening'' two boundary arcs 
simultaneously. For polygons, we do this with $3$-parameter Schwarz-Christoffel 
maps, but  it  may not be easy to do for curved boundary segments.
For example, local boundary maps are also easy to find 
for circular arc polygons,  but I don't know how to 
 ``straighten''  pairs of circular arcs (unless they
 happen to lie on intersecting circles).
 Thus the method of this paper will 
compute an $(1+\epsilon)$-quasiconformal map onto a circular arc
$n$-gon in time $O(N |\log \epsilon \log \log \epsilon|)$, but I 
don't yet see how to reduce $N$ to $n$.
The special case of finitely bent domains  (unions of disks)
will be discussed later in detail, and conformal maps onto such 
domains will be computed as part of the proof of Theorems 
\ref{thmQC} and \ref{main}.

This paper is part of a series of papers
 that have studied hyperbolic geometry 
and its relation to conformal mappings 
\cite{Bishop-Bowen},
\cite{Bishop-BrenConj}, \cite{Bishop-ExpSullivan}, 
\cite{Bishop-fast}, \cite{Bishop-crdt}.  Along the way, many people 
have  contributed helpful comments, advice and encouragement 
including Raphy Coifman, Tobin Driscoll, David Epstein, John Garnett,  
 Peter Jones, Al Marden, Vlad Markovic, Joe Mitchell,  Nick Trefethen,
Jack  Snoeyink and  Steve Vavasis. Many thanks to them and 
the others who helped me reach the results described here.

Also special thanks to the referees who made a tremendous 
effort reading and evaluating the manuscript. Their thoughtful and
extensive remarks touched on everything from typos to
the  overall strategy of the proof, and prompted  a 
rewriting which simplified parts of the proof  and improved 
the exposition.
The longer the paper, the 
more important (and more difficult) good writing becomes, and I 
very much appreciate their help in making this a better paper.

%Two papers in particular helped motivate  me to write this paper;
% \cite{DV98} and \cite{Howell90}. The first
%is by Tobin Driscoll and Steve Vavasis and describes their CRDT algorithm 
%for conformal mappings; this is very closely related to medial axes, 
%domes and $\iota $ although  they use the language of cross ratios 
%to express it (their ``initial guess'' for CRDT 
%involves the $\iota$ map for a certain 
%Riemann surface associated to a polygon, rather than for  a
%finitely bent subdomain, see \cite{Bishop-crdt}.)
% The second is the MIT PhD thesis of Louis Howell (under 
%the direction of Nick Trefethen), which  describes 
%what a general method of finding conformal prevertices might look like.
%Special thanks to the authors; readers interested in this paper should 
% consult \cite{DV98} and  \cite{Howell90} as well.

%--------------------------------------------------------------------------

\section{Summary of the proof  } \label{the-proof}

Now we will summarize our method for computing conformal maps. 
I hope that even without the precise definitions,
 this sketch  will help motivate what follows and 
give a ``map'' for reading the rest of the paper.

Suppose $\Omega$ is a simply connected domain with 
a polygonal boundary with $n$ sides.
Let $\epsilon_0$ be the radius of convergence of 
our Newton-type iteration for representations
 (see Lemma \ref{Newton-radius}).
Compute the medial axis of $\Omega$ and use it to  break $\Omega$
into  $O(n)$ thick and thin pieces 
(see Section \ref{thick-thin}).
Fix a thick piece  $\Omega^{\rm{thick}}$  
and approximate it by a finitely bent 
region $\OmegaFB$ using Lemma \ref{FB-app} with a 
``flattening map'' that is $(1+\delta)$-quasiconformal.
 Compute the
corresponding bending lamination (Section \ref{lamin=linear}),  
normal crescent decomposition (Section \ref{gaps-crescents}) and 
the chain of finitely bent angle scaling  domains $\Omega_0= \disk, 
\dots, \Omega_N = \OmegaFB$.
We will prove  that
if $\delta$ is small enough and $N$ is large enough (depending
only on $\epsilon_0$), then: 
\begin{enumerate}
\item (Starting point) 
     We can construct an $\epsilon_0/2$ representation of $\Omega_0 = \disk$
         (trivial).
\item (Composition step) Given an $\epsilon_0/2$ representation of 
     $\Omega_k$ we can construct an $ \epsilon_0$
     representation of $\Omega_{k+1}$ (Lemma \ref{compose rep FB}).
\item  (Improvement step) Given an $\epsilon_0 $ representation of 
      $\Omega_k$ we can compute an $\epsilon_0/2$ representation 
      of $\Omega_k$ (Lemma \ref{Newton-radius}).
\item (Final conversion) Given an $\epsilon_0/2$-representation of 
      $\Omega_N = \OmegaFB$ we 
      can construct a $\epsilon_0$ representation of $\Omega$
      (Lemma \ref{final rep}).
\item (Iterate to desired accuracy) Given any $\epsilon < \epsilon_0$
      and a $\epsilon_0$-representation of $\Omega$, we can compute an 
      $\epsilon$-representation of $\Omega$ (Lemma \ref{Newton-radius}).
\end{enumerate}

It is important to note that in steps (1)-(4) we only compute maps 
to a fixed accuracy; just enough to 
use it as a good starting point for the map onto the 
next element.   Thus the precise timing of these steps is 
unimportant, as long as it is linear in $n$ with a constant 
depending only  on $\epsilon_0, \delta, N$.
All  these constants will be  be chosen 
independent of $n$ and the geometry of $\Omega$, so the total 
work to get an $\epsilon_0$-representation of $\Omega$ is 
$O(n)$ with an constant independent of $n$ and $\Omega$.

At the final step we use Lemma \ref{Newton-radius} to  
iterate until we reach the desired $\epsilon$. 
By Lemma \ref{Newton-radius}, the $k$th iteration
 gives accuracy $ \epsilon_0^{2^k}$ 
and takes time $O( n  2^{k}  k )$ to perform (with constant
depending only on the fixed number $\epsilon_0$).
Thus  %$\frac {\log (\epsilon_0/\epsilon)}{\log 2} = 
$O(\log \frac 1 \epsilon)$ iterations 
are needed to reach accuracy $\epsilon$. Since the 
time per iteration grows exponentially at each step, the total time 
is dominated by the final step, which is 
$O(n \log \frac 1 \epsilon \log \log \frac 1 \epsilon)$. 

In an earlier version of this paper, the chain of domains 
consisted of polygons inscribed in the angle scaling family 
of finitely bent domains. This was awkward, but avoided 
some complications of extending the idea of $\epsilon$-representations
from polygons to finitely bent domains. This version
deals with these complications, in return for a cleaner 
presentation of the angle scaling chain and inductive steps.

The paper divides roughly into five parts: (1) an expository 
introduction to the  medial axis, $\iota$-map and angle 
scaling, (2) the construction of the bending lamination,
the associated decomposition of $\uhp$ and our 
representation of conformal maps, (3) the thick/thin 
decomposition of polygons and the special properties of 
thin polygons, (4) constructing the chain of domains 
connecting $\disk$ to $\Omega$ and implementing the composition 
step on representations,
 and  (5) our iteration for 
improving representations based on finding approximate
solutions of the Beltrami 
equation  by the multipole method.
More precisely, the remaining sections are:

\begin{description}
%3
%\item [Section \ref{background}] Background on conformal maps, hyperbolic 
%                        geometry and quasiconformal mappings.
%4
\item [Section \ref{Domes+MA1}] We    introduce the medial axis 
                   and the hyperbolic dome.
%\item [Section \ref{MA-linear}] We discuss the result of Chin, Snoeyink 
%             and Wang that the medial axis can be computed in linear time.
%5
\item [Section \ref{Domes+MA2}] We  discuss Thurston's observation 
                    that the dome of a simply connected domain  $\Omega$ is 
                    isometric to the hyperbolic disk. We show  how this gives 
                    a mapping $\iota$ from $\partial \Omega$ to $\circle$.

%6
\item [Section \ref{gaps-crescents}]
          We introduce the gap/crescent decomposition of a 
       finitely bent domain, the corresponding bending lamination on the
       disk and construct the angle scaling  chain of domains that connects the 
      disk to $\Omega$.

%16
\item [Section \ref{proof-of-SEM}]
      We show that elements of the angle scaling chain are close 
      in a uniform quasiconformal sense. This is one of  the key ideas that makes 
      the whole method work with uniform estimates.

\item [Section \ref{PM-maps}]
      We prove a technical result used in the Section \ref{proof-of-SEM}. 
     We introduce the idea of piecewise M{\"o}bius maps and $\epsilon$-Delaunay 
      triangulations to prove that a map which is close to M{\"o}bius
      transformations locally has a global approximation by a 
      hyperbolic bi-Lipschitz function.

%7 
\item [Section \ref{lamin=linear}]
 We   show the bending lamination of a finitely bent domain can be computed 
        in linear time.
%8
\item [Section \ref{cover}] We cover the bending
 lamination  by $O(n)$ ``simple''
                  regions.
%10
\item [Section \ref{extend-decom}] 
    We refine this covering  and extend it 
    to a decomposition   of $\uhp$.

%11
\item [Section \ref{epsilon-reps}]  We  define an $\epsilon$-representation 
          of a polygonal domain and show such a representation
         corresponds to a  $1+O(\epsilon)$-quasiconformal map onto the domain.
%12 
%\item [Section \ref{approx-lemma}]
%     We state and prove a technical lemma regarding 
%    $\epsilon$-representations.

%13
\item [Section \ref{thick-thin}] We define thick and thin polygons 
     and show that any polygon with $n$ sides can be decomposed 
     into thick and thin pieces with  a total of  $O(n)$ sides, 
     and with certain estimates on the overlaps of the pieces.
     We also record some approximation results for conformal maps 
     onto thin polygons.

\item [Section \ref{reps of FB}] We show how to approximate 
  thick polygons by finitely bent domains and define 
   $\epsilon$-representations of conformal maps onto such 
   domains. We use the approximation to define an angle 
   scaling family.

\item[Section \ref{really thick}] We show that if a polygon satisfies 
   a strong form of thickness, its finitely bent approximation satisfies 
   a weak form.  We use this to show how a representation of one 
   element of the angle scaling  family can be used to construct  
   a representation of the next element.

\item [Section \ref{iterate}] Assuming we can approximately solve a certain 
Beltrami equation we show how to update a $\epsilon$-representation 
to a $\epsilon^2$-representation.

\item[Section \ref{reduce-to-Dbar}] We reduce solving the  Beltrami 
   problem  to solving a $\overline{\partial} $ problem.

%20
\item [Section \ref{multipole}] We show how to quickly solve 
the $\Dbar$-problem by computing the Beurling transform
of a function  using the fast multipole method.

%21
%\item [Section \ref{remarks}] We make some concluding remarks; in 
%                    particular we discuss the connection of our method 
%                    to the CRDT algorithm of Driscoll and Vavasis and suggest
%                    some avenues for further investigation.

\item [Appendix  \ref{background}] Background on conformal maps, hyperbolic 
                        geometry and quasiconformal mappings. 
			Non-analysts may wish to review some of this 
			material before reading the rest of the paper.
\item [Appendix  \ref{fast-power-series}]  We  review  known results 
                about power series and  show that $O(p \log p)$ 
		suffices for all the manipulations needed by the 
		algorithm.

%18
%\item [Appendix \ref{app-lamination}]
%      We prove a technical result used in the Section \ref{composing}. 
%      It requires building an approximation in the Hausdorff metric 
%        of the bending 
%      lamination  which is fast to search.

\end{description}

%----------------------------------------------------------------e

\section{Domes and the medial axis: an introduction}\label{Domes+MA1}

Here we  introduce two closely related geometric 
objects associated to any planar domain $\Omega$: the dome
of $\Omega$ and the medial axis of $\Omega$.  We start with the dome.

Given a closed set $E$ in the plane, we let $C(E)$ denote the 
convex hull of $E$ in the hyperbolic upper half-space, $\uhs=\reals^3_+$.
This is the convex hull in $\uhs$ of all the infinite hyperbolic
geodesics that have both endpoints in $E$ (recall these are 
exactly the circular arcs in $\uhs$ that are orthogonal to 
$\reals^2= \partial \uhs$). One really needs to take the convex hull
of the geodesics ending in $E$ and not just the union of these 
geodesics; for example, if $E$ consists of three 
points, then there are three such geodesics and these form the 
``boundary'' of an ideal triangle whose interior is also 
in the convex hull of $E$.

 The complement of $C(E)$ is a 
union of hyperbolic half-spaces.
   There is one component of 
$\uhs \setminus C(E)$  for each complementary component $\Omega$ of 
$E$ and this component is the union of hemispheres whose bases are disks 
in $\Omega$ 
(also include  half-planes and disk 
complements if $\Omega$ is unbounded).
For example, when $E$ is the boundary of a square, 
the lower and upper boundaries of $C(E)$ are 
illustrated in Figure \ref{square-CH}.

\begin{figure}[htbp]
\centerline{ 
\includegraphics[height=2in]{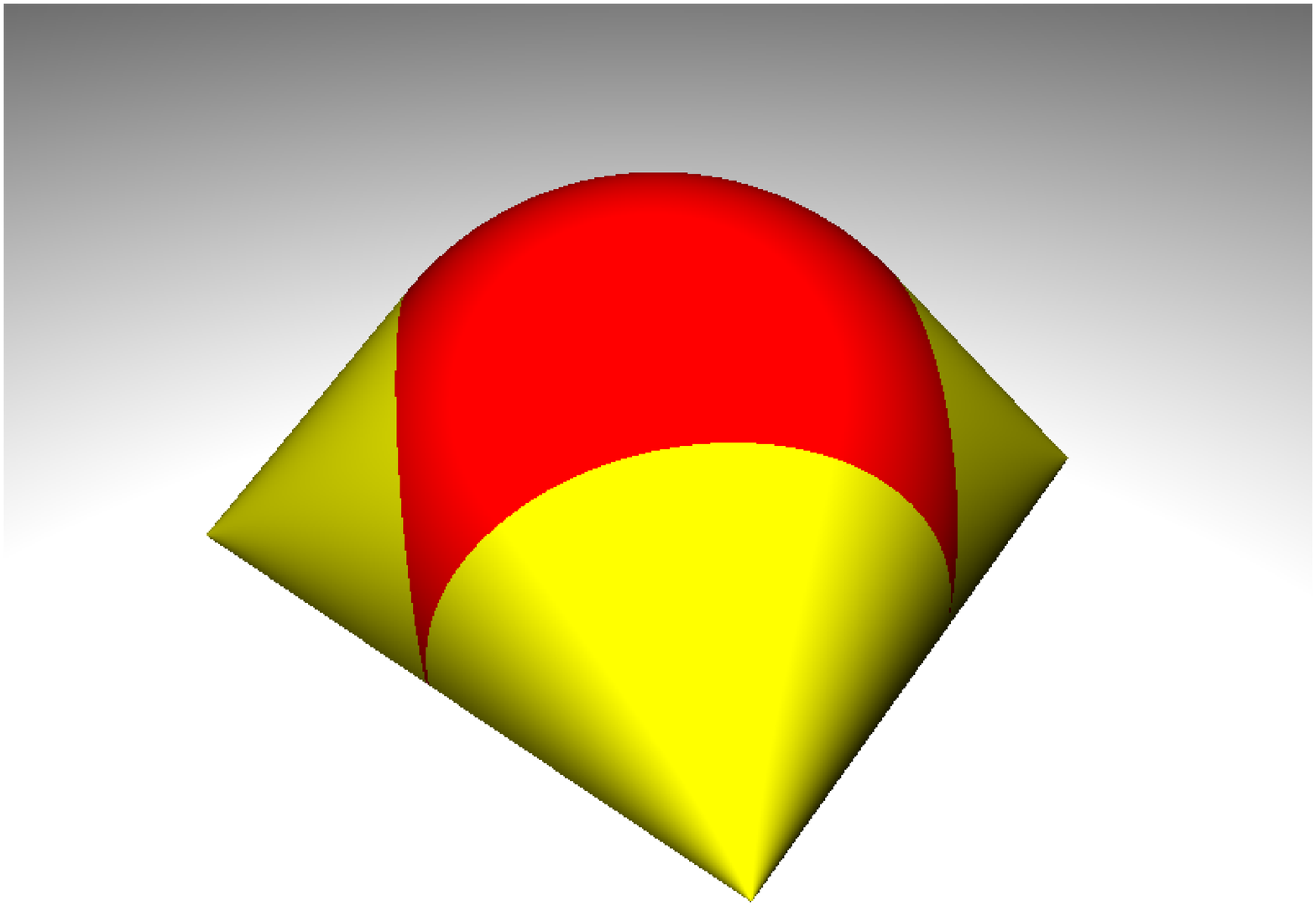}
$\hphantom{xxxxx}$
\includegraphics[height=2in]{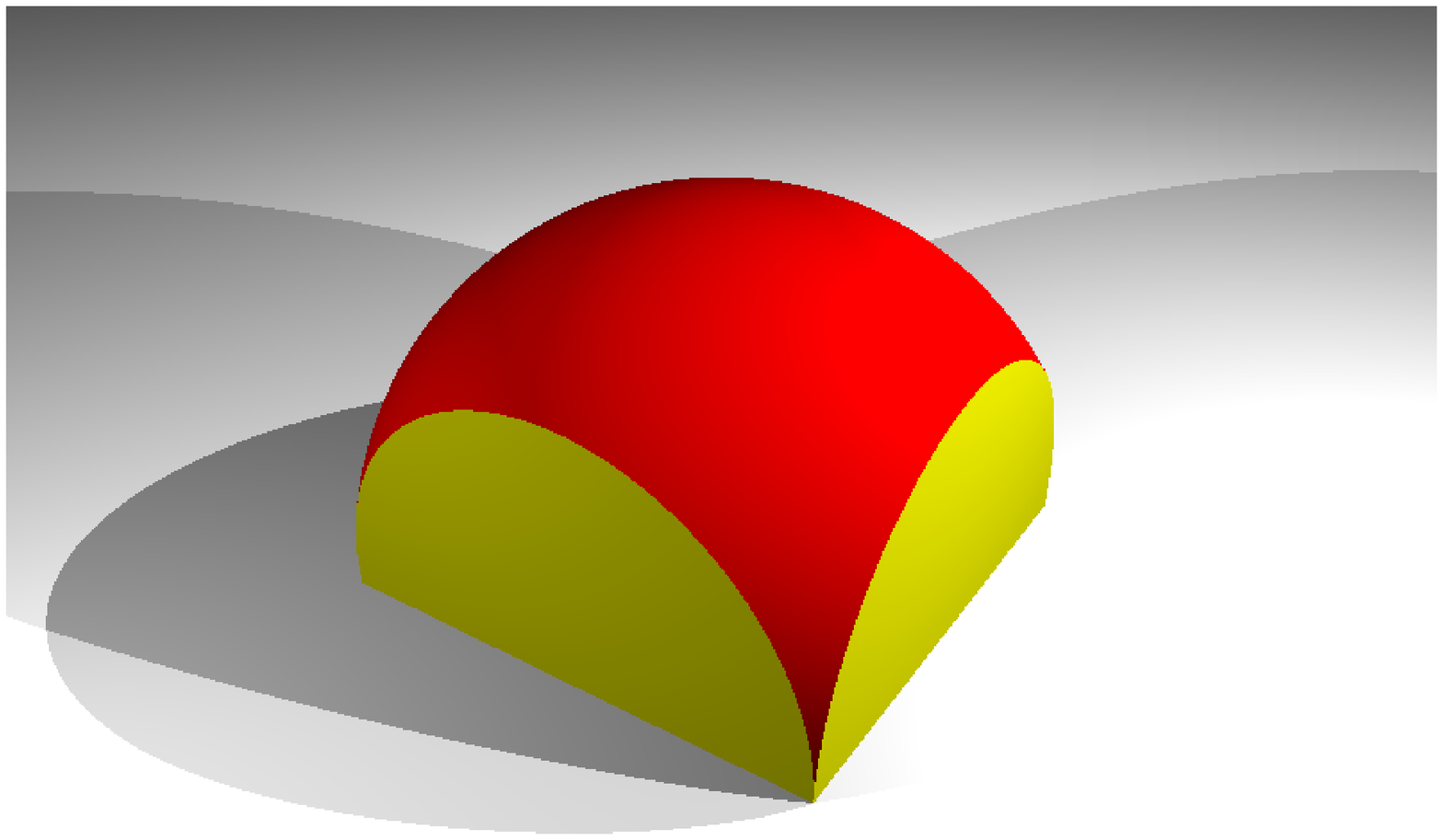}
 }
\caption{ \label{square-CH} The lower and upper boundaries of the 
    hyperbolic convex hull of  the boundary of  
    a square (left and right figures respectively).
 The lower boundary consists 
of one geodesic face (dark)  and four Euclidean cones (lighter). The upper 
boundary has five geodesic faces (one hemisphere and four 
vertical). The outside of the square is a finitely bent domain, 
but the inside is not.}
\end{figure}

In this paper we will focus exclusively on the case of
a single, bounded, simply connected domain 
$\Omega$. In this case, the dome of 
$\Omega$ is the unique boundary component of the convex set $C(\Omega^c)$.
The dome is fairly easy to draw because of the description of
$W$ as a union of Euclidean hemispheres  with bases in $\Omega$.
Moreover, 

\begin{lemma} \label{support-planes}
Suppose $S_\Omega$ is the dome of a simply connected, proper
plane domain $\Omega$. Then for every $x \in S_\Omega$ there is an
open hyperbolic  half-space $H$ disjoint from $S_\Omega$ so that $x \in
\partial H \cap S_\Omega$. For any such half-space,
 $\partial H \cap S_\Omega$ contains
an infinite geodesic, and its base disk (or half-plane)
has boundary that hits 
$\partial \Omega$ in at least two points.
%  Moreover,  
%$\Omega $ is the union of such disks and half-planes.
\end{lemma}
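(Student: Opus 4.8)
The plan is to exploit the defining description of the dome $S_\Omega$ as the upper envelope of the family $\mathcal{H}$ of all Euclidean hemispheres whose base disk lies in $\Omega$ (together with half-planes and disk complements if $\Omega$ is unbounded, though we have reduced to the bounded simply connected case). First I would fix $x\in S_\Omega$ and produce the supporting half-space. Since $C(\Omega^c)$ is hyperbolically convex and $S_\Omega$ is its unique boundary component, $x$ lies on the boundary of the convex set, so by the hyperbolic Hahn--Banach/separation property there is a hyperbolic half-space $H$ with $x\in\partial H$ and $H\cap C(\Omega^c)=\emptyset$; since $S_\Omega\subset C(\Omega^c)$ this gives $H$ disjoint from $S_\Omega$ with $x\in\partial H\cap S_\Omega$. (Alternatively, and more concretely, because $x$ is in the upper envelope of $\mathcal H$ it lies on the boundary hemisphere $\partial B$ of some $B\in\mathcal H$ — or a limit of such — and no hemisphere of $\mathcal H$ rises above $\partial B$ near $x$; the hyperbolic half-space bounded by $\partial B$ and lying above it is then disjoint from $S_\Omega$.)

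Next, given any such supporting half-space $H$, I want to show $\partial H\cap S_\Omega$ contains a complete geodesic. Write $\partial H$ as a hemisphere (or vertical half-plane) over a base disk $D$ (or line) in $\overline{\reals^2}$. The key point is that $D$ cannot be strictly contained in $\Omega$: if $\overline D\subset\Omega$ then $\partial H\in\mathcal H$ would be one of the hemispheres in the envelope, but then we could enlarge $D$ slightly inside $\Omega$ to a disk $D'$ whose hemisphere rises strictly above $\partial H$ in a neighborhood of $x$, contradicting that $H$ is disjoint from $S_\Omega$ (the dome lies on or above $\partial H$ locally, so a higher hemisphere in $\mathcal H$ would put points of $S_\Omega$ strictly inside $H$). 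Hence $\partial D$ meets $\Omega^c=\partial\Omega$; I then argue it must meet $\partial\Omega$ in at least two points. If $\partial D$ touched $\partial\Omega$ at exactly one point $p$, then $D$ lies in $\Omega$ except for the single boundary point $p$, and one can push $D$ off of $p$ into $\Omega$ and then enlarge it, again producing a hemisphere in $\mathcal H$ above $\partial H$ near $x$ — the same contradiction. (Here one uses that a disk internally tangent to $\partial\Omega$ at one point has room to grow, since $\Omega$ is open.) So $\partial D$ hits $\partial\Omega$ in at least two points $p,q$; the complete geodesic $\gamma$ in $\uhs$ with endpoints $p,q$ lies on the hemisphere $\partial H$ and has both endpoints in $\Omega^c$, hence $\gamma\subset C(\Omega^c)\cap\partial H$. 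Finally I must check $\gamma\subset S_\Omega$: since $\gamma\subset\partial H$ and $H$ is disjoint from the dome, $\gamma$ lies in the closed complement of $H$; combined with $\gamma\subset C(\Omega^c)$ and the fact that $S_\Omega=\partial C(\Omega^c)$, each point of $\gamma$ is a boundary point of $C(\Omega^c)$ that can be separated (by $H$ itself) from the interior, so $\gamma\subset S_\Omega$.

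I expect the main obstacle to be the "enlargement" arguments that derive the contradictions in the second paragraph: making precise, in hyperbolic terms, the claim that a base disk strictly inside $\Omega$ (or tangent at a single point) can be grown so that its hemisphere strictly dominates $\partial H$ in a neighborhood of the contact point $x$, and that this forces points of $S_\Omega$ into the open half-space $H$. This requires a careful local comparison of two hemispheres sharing (nearly) a boundary point, and attention to the degenerate configurations (base "disk" a half-plane, $x$ an ideal point, $\partial D$ tangent to $\partial\Omega$ from outside versus inside). Everything else — the separation property of hyperbolic convex sets, the identification of $\partial H$ with a Euclidean hemisphere, and the passage from two boundary-contact points to a complete geodesic on $\partial H$ — is routine hyperbolic geometry that I would handle quickly, citing the background in Appendix \ref{background}.
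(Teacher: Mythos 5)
Your proposal is correct and follows essentially the same route as the paper: a supporting half-space from convexity, then the observation that if the base circle missed $\partial\Omega$ (or met it in only one point) one could enlarge (or translate and enlarge) the base disk inside $\Omega$ to obtain a hemisphere rising strictly above $\partial H$ near the contact point, contradicting that the dome is the upper envelope. The "enlargement" step you flag as the main obstacle is exactly what the paper does, via the explicit disk $D(-2\epsilon,1+\epsilon)$ in the one-contact-point case.
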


\begin{proof}
Let $W = C(\Omega^c)$  be the hyperbolic convex hull of $\Omega^c$,
so $S_\Omega = \partial W$.
By definition, $W$ is the intersection of all closed
half-spaces that contain it, and from this it is easy to see that
any boundary point on $W$ is on the boundary of some closed
half-space that contains $W$. Thus $x$ is also on the boundary
of the complementary open half-space $H$ (which must be disjoint
from $W$). The base of $H$ on $\reals^2$ is a half-plane or
a disk and by conjugating by a M{\"o}bius transformation, if
necessary, we assume it is the  unit disk $D=\disk$ and that
$H$ contains the point $z=(0,0,1) \in  S_D$.
  Clearly $\partial D$
hits $\partial \Omega$ in at least one point, for otherwise
its closure would be contained in another open disk in $\Omega$, 
whose dome would be strictly higher than $S_D$, contradicting 
that $z \in S_D \cap S_\Omega $.
 In fact, $\partial D$ must
hit  $\partial \Omega$ in at least two points. For suppose
it only hit at one point, say $(1,0) \in \reals^2$. Then
for $\epsilon>0$ small enough the disk $D(-2 \epsilon, 1+\epsilon)$
would also be in $\Omega$ and its dome would strictly separate
$z$ from $S_\Omega$.
Thus $\partial D$ hits $\partial \Omega$ in at least two
points and the geodesic in $\reals^3_+$ between these points lies
on the $\partial H \cap  S_D$, as desired.

\end{proof}

Thus each point on the dome is also on the dome of a
disk in $\Omega$ whose boundary hits $\partial \Omega$
in at least two points.
  Such a 
disk is called a ``medial axis disk'' for $\Omega$ and the set of 
centers of such disks is called the medial axis of $\Omega$, 
denoted $\rm{MA}(\Omega)$. 
(The centers, together with the radii, is usually called the medial 
axis transform of $\Omega$, $\rm{MAT}(\Omega)$). 
It is easy to see that the $\Omega$ is the union of its
medial axis disks, and so it determined by $\rm{MAT}(\Omega)$.

The dome is easiest to visualize when 
$\Omega$ is a finite union of disks, e.g., see Figure  \ref{FB1}.
Such a domain will be called ``finitely bent'' because the dome 
consists of a finite union of geodesic faces (each contained on 
a geodesic plane in $\uhs$, i.e., a Euclidean hemisphere or vertical 
plane) which are joined along infinite geodesics called the bending 
geodesics.  
%Such a dome is the analog of a convex, unbounded,  polyhedral 
%surface in Euclidean space.
\begin{figure} [htbp]
\centerline{
 \includegraphics[height=1.75in]{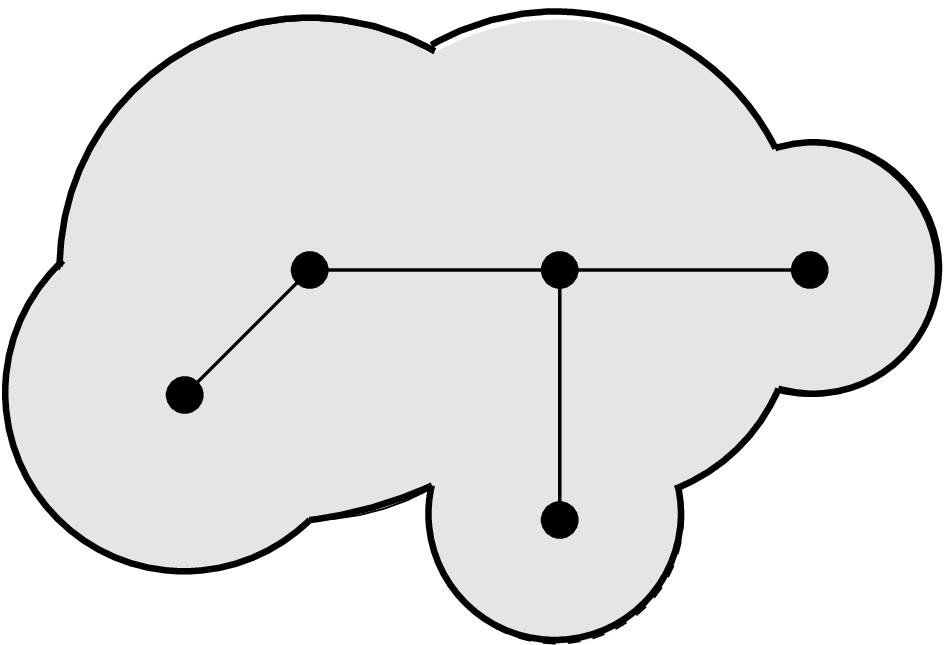}
$\hphantom{xxxxx}$
 \includegraphics[height=1.75in]{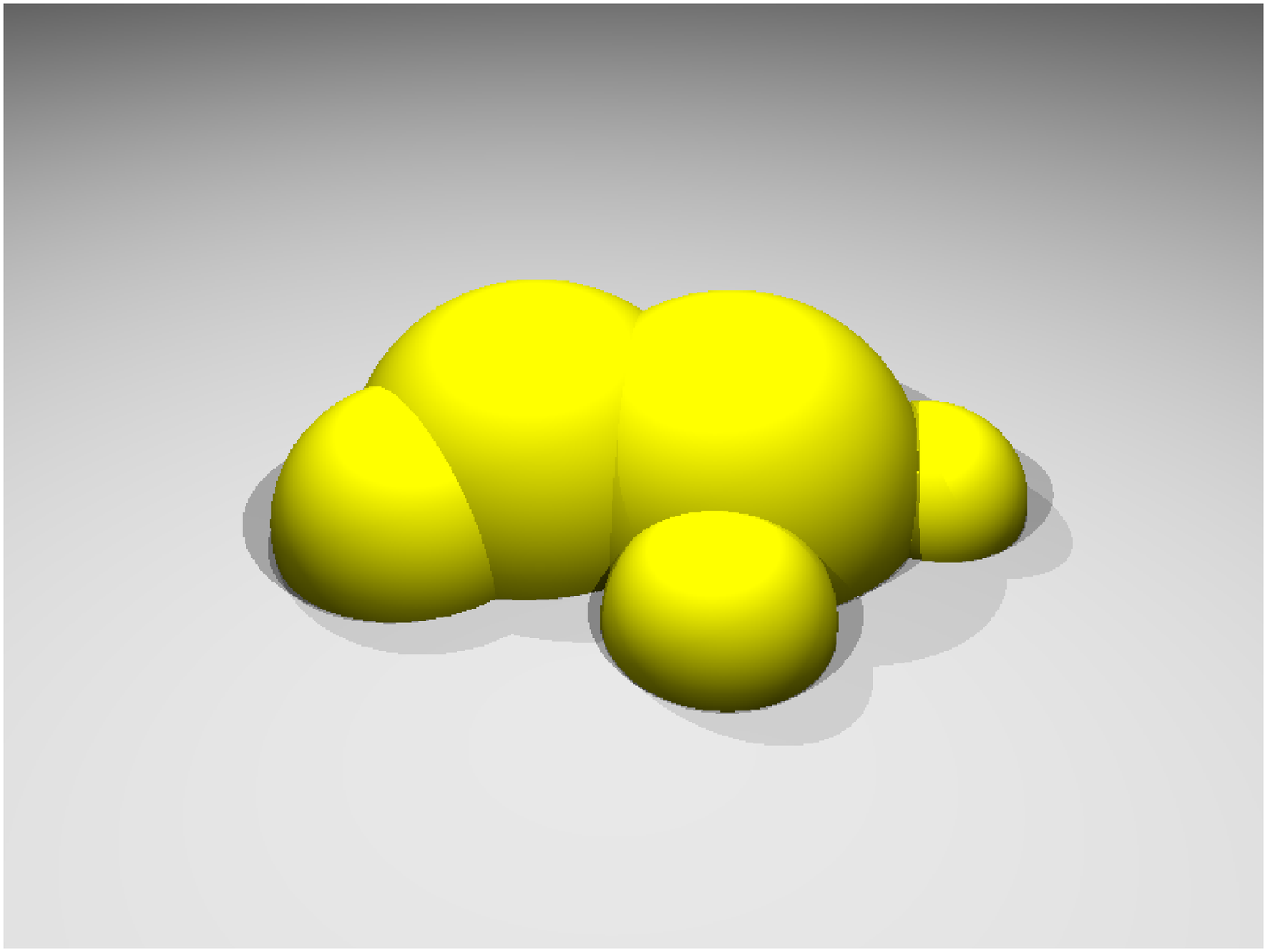}
}
\caption{\label{FB1} A finitely bent domain, its medial axis  and its dome}
\end{figure}

When we are given a finitely bent domain $ \Omega$ we shall always 
assume we are given a complete  list of disks in 
$\Omega$ whose boundaries hit $\partial \Omega$ in at least 
three points.  Then every face of the dome corresponds to a 
hemisphere that has one of these disks as its base.  This is 
slightly different than just giving a list of disks whose union 
is $\Omega$; in Figure \ref{FB2} we show a domain that is a union 
of four disks $\Omega= D(1,1) \cup D(i,1) \cup  D(-1,1) \cup
D(-i,1)$ but that contains a fifth disk, $D(0, \sqrt{2})$, which 
also corresponds to a face on the dome of $\Omega$.

\begin{figure}
\centerline{
 \includegraphics[height=1.75in]{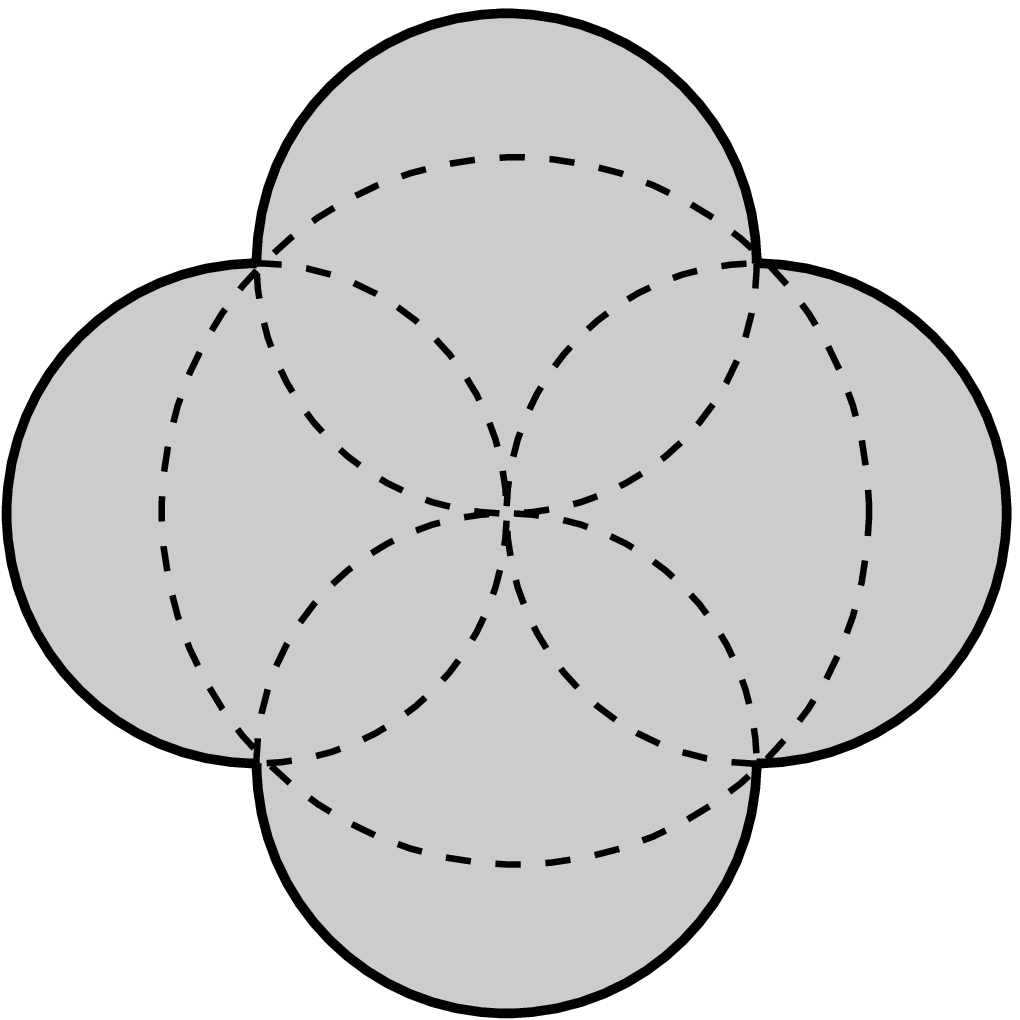}
$\hphantom{xxxxx}$
 \includegraphics[height=1.75in]{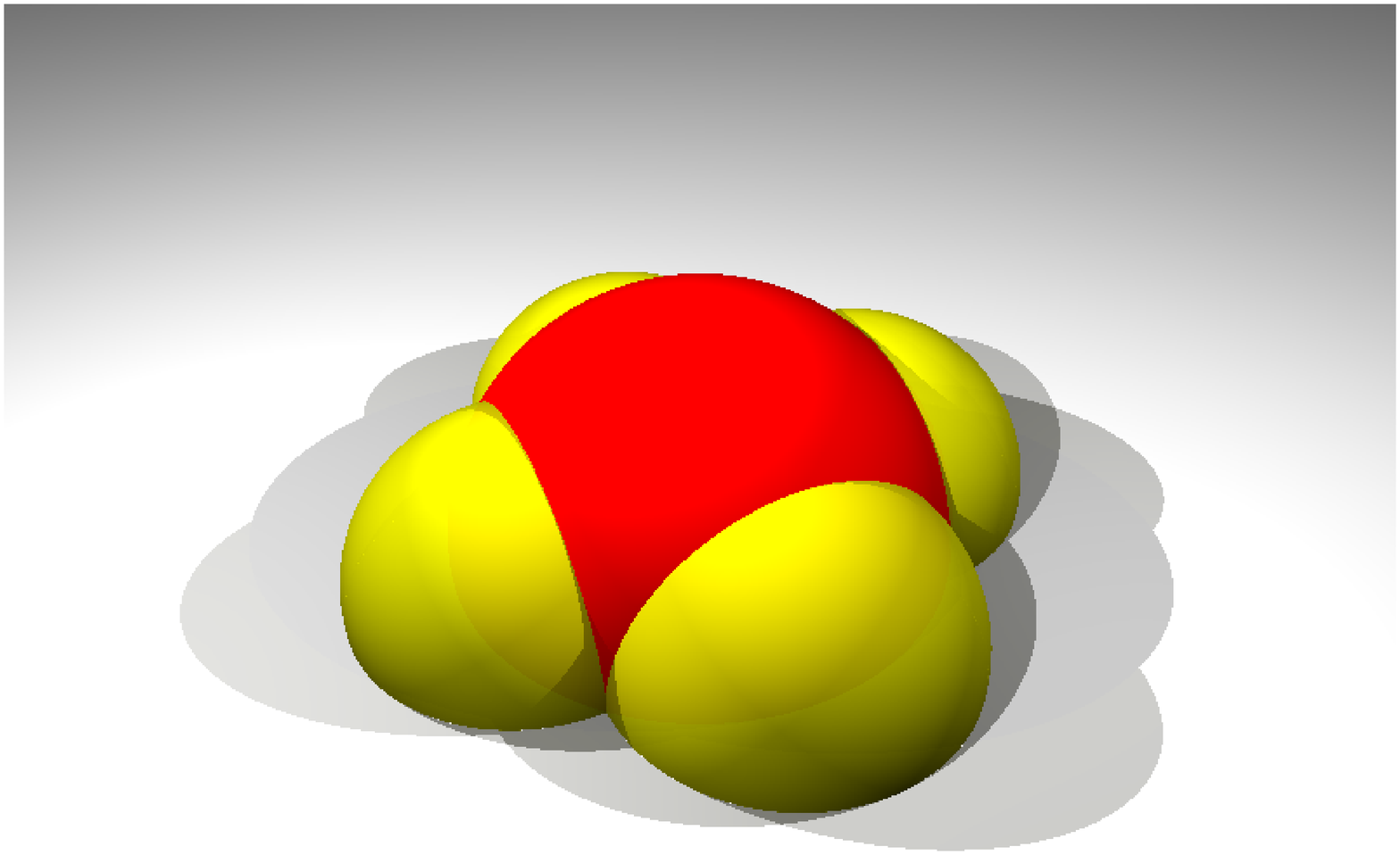}
}
\caption{\label{FB2} A domain 
        that is a union of four disks, but 
    which has five faces on the dome because of a ``hidden'' maximal disk.}
\end{figure}

The faces of the dome of a finitely bent domain 
form the vertices of a finite tree, with 
adjacency defined by having an infinite geodesic edge in common.
This induces a tree structure on the maximal disks in the 
base domain: disks that hit exactly two boundary points are 
interior points of edges of the tree and disks that hit three or 
more points are the vertices. 
% A ``hidden'' maximal disk (i.e., one that corresponds to some face of 
% the dome, but that does  not share a boundary arc
% with $\Omega$) must have degree at least three in this tree.

\begin{lemma} \label{tree-lemma}
For any tree the number of vertices of degree three or greater 
is less than the number of degree  one vertices.
\end{lemma}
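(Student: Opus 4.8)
The plan is to prove this by a straightforward edge count, using the two basic facts about a finite tree $T=(V,E)$: the handshake identity $\sum_{v\in V}\deg(v)=2|E|$, and the Euler relation $|E|=|V|-1$. Write $n_1$, $n_2$ and $n_{\ge 3}$ for the number of vertices of degree exactly one, exactly two, and at least three, respectively, so that $n_1+n_2+n_{\ge 3}=|V|$ (a finite tree with an edge has no isolated vertices). I would assume, as holds in every application in this paper, that $T$ has at least one edge; the statement is literally false only for the single–vertex tree, where it would read $0<0$.

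Combining the two facts gives $\sum_{v\in V}(\deg(v)-2)=2|E|-2|V|=-2$. Splitting this sum according to the degree of $v$: the degree–two vertices contribute nothing, the degree–one vertices contribute $-n_1$, and each vertex of degree $\ge 3$ contributes $\deg(v)-2\ge 1$. Hence
\[
-n_1+\sum_{v:\,\deg(v)\ge 3}\bigl(\deg(v)-2\bigr)=-2,
\]
so that
\[
n_1=2+\sum_{v:\,\deg(v)\ge 3}\bigl(\deg(v)-2\bigr)\ \ge\ 2+n_{\ge 3}\ >\ n_{\ge 3},
\]
which is the claimed inequality (in fact with the slightly stronger conclusion $n_1\ge n_{\ge 3}+2$, since each summand is at least $1$).

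Since the argument is nothing more than the handshake and Euler identities, there is no real obstacle beyond bookkeeping; the only point needing a word of care is the degenerate tree with no edges, which is excluded in all the uses here (the relevant trees come from domes of finitely bent domains built from several disks). If one prefers an argument avoiding edge counts, the same result follows by induction on $|V|$, deleting a leaf and tracking how $n_1-n_{\ge 3}$ changes in the three cases according to whether the neighbor had degree $2$, exactly $3$, or $\ge 4$; but the counting proof is cleaner, and I would present that one.
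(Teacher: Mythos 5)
Your proof is correct. The paper itself only gestures at an argument ("remove a degree one vertex and use induction"), which is the inductive route you mention at the end as an alternative; your main argument instead runs the standard degree count, combining $\sum_v \deg(v)=2|E|$ with $|E|=|V|-1$ to get $n_1 = 2+\sum_{\deg(v)\ge 3}(\deg(v)-2)\ge n_{\ge 3}+2$. This is a genuinely different (and arguably cleaner) derivation: it yields the quantitatively stronger bound $n_1\ge n_{\ge 3}+2$ in one line, with no case analysis, whereas the leaf-removal induction requires tracking how $n_1-n_{\ge 3}$ changes according to the degree of the deleted leaf's neighbor. You are also right to flag the single-vertex tree as the one degenerate case where the statement fails as literally written; the paper ignores this, and it is harmless in context since the trees that arise (faces of the dome of a finitely bent domain, vertices of a medial axis) always have at least one edge in the cases where the bound is used. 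The only cosmetic point: the paper's stated conclusion is the weak inequality $n_{\ge 3}<n_1$, and your argument delivers it with room to spare.
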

 The proof is easy and left to the reader (remove a degree one 
vertex and use induction). So if $\Omega$ can be written as a 
union of $n$ disks in any way,
there are at most $2n$ vertices of the medial axis.
% (it not hard to 
%see that there are no vertices of degree 2).

 For polygons the medial axis 
is also a finite tree, but now there are three 
types of edges:
 (1) edge-edge 
bisectors that are straight 
line segments equidistant from two edges, (2) point-point
bisectors, which are  straight line segments equidistant 
from two vertices,  or (3) point-edge bisectors, which are
 parabolic arcs equidistant from a vertex and 
and an edge. For an $n$-gon the medial axis has at most $O(n)$
vertices (it is not hard to show $2n+3$ works).

To illustrate these ideas we show a few polygons, along with 
their medial axes and their domes. 
The dome of a polygon is naturally divided into kinds of  pieces: (1)  a
hyperbolic geodesic face corresponding to a vertex of the medial axis 
of degree three or more (2) a cylinder or cone   corresponding 
to sweeping a hemisphere along  a bisector of two edges or 
(3) sweeping a hemisphere along the parabolic arc of a point-edge
bisector.
Disks corresponding to the interiors of point-point bisector edges 
do not contribute to the dome since
the union of the two disks at the endpoints  of this edge
contain all the disks corresponding to the interior points.

  In the dome 
of a convex polygon, only the first two types of  pieces
can occur. These are illustrated in  Figure \ref{convex}. 
 The third type of medial axis arc can occur 
in non-convex domains, as illustrated in the polygonal ``corner''
in Figure \ref{corner}.

\begin{figure}[htbp]
\centerline{ 
\includegraphics[height=1.75in]{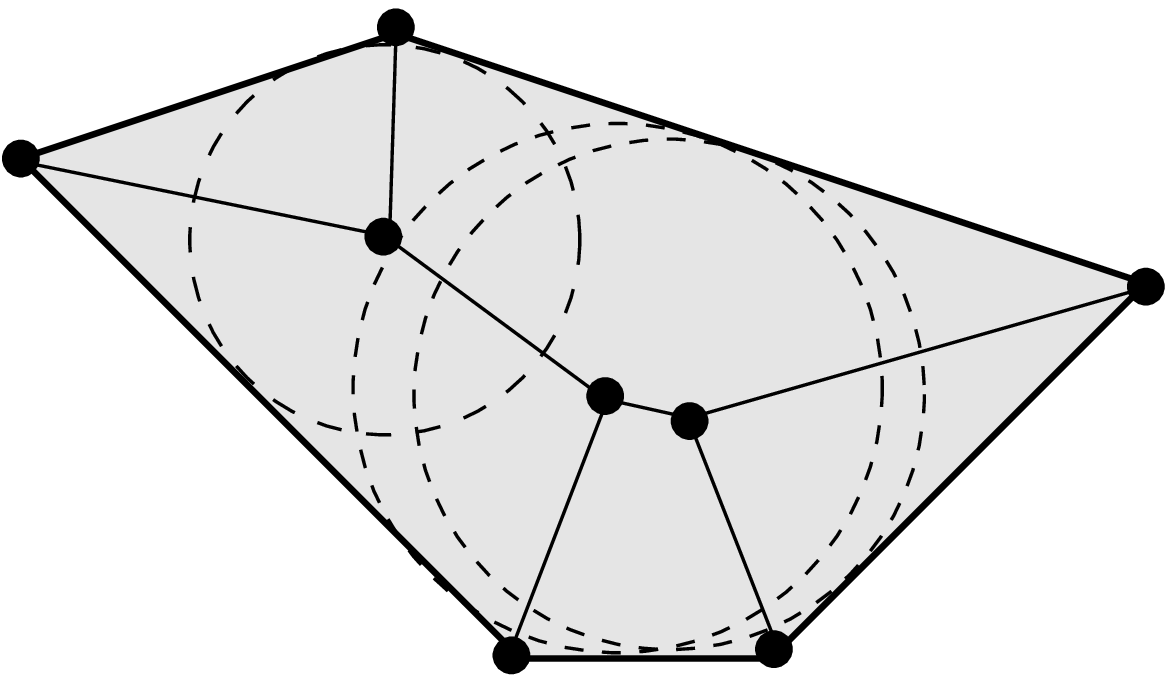}
}
\centerline{
\includegraphics[height=2in]{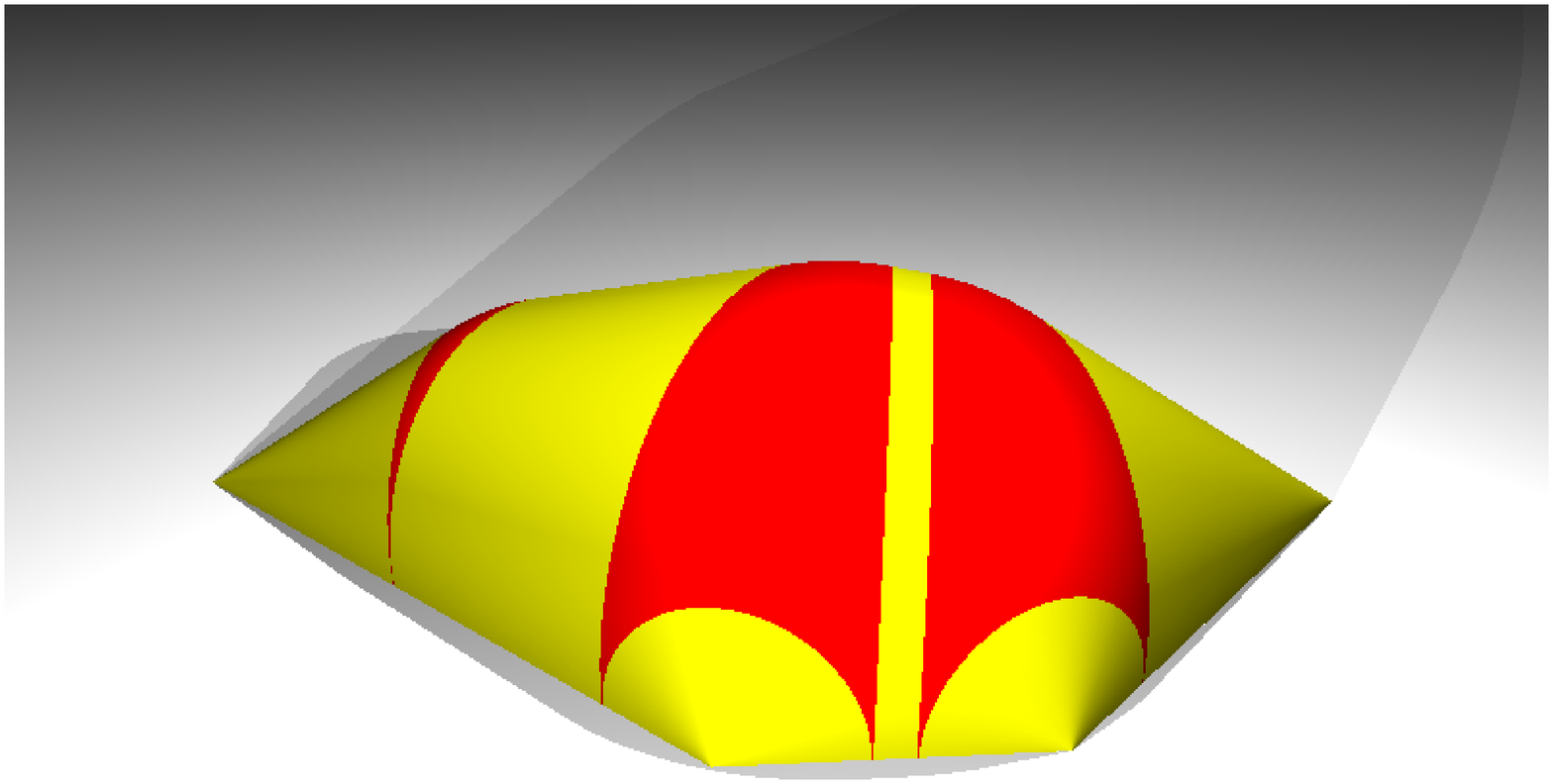}
$\hphantom{xxx}$
\includegraphics[height=2in]{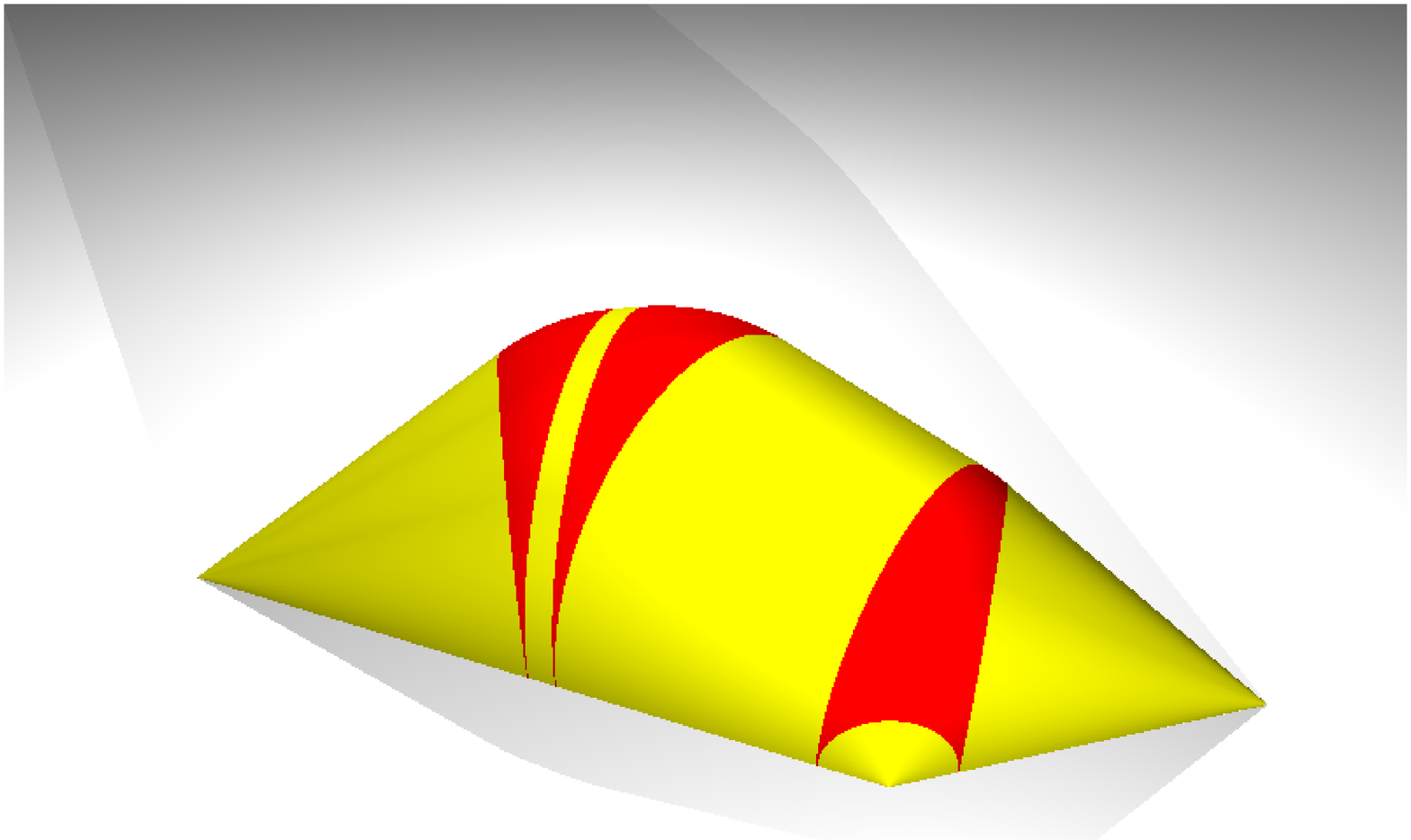}
 }
\caption{ \label{convex}
The medial axis and dome of a convex region. 
%The dome is shown from 
%two angles the ``front'' and ``back''. 
This dome has three
geodesic faces that are shaded darker (these correspond to 
vertices of the  medial axis);
 the lighter parts of the 
dome are Euclidean cones that correspond to edges of the medial axis.
The dome is shown from two different directions}
\end{figure}

\begin{figure}
\centerline{ 
	\includegraphics[height=2in]{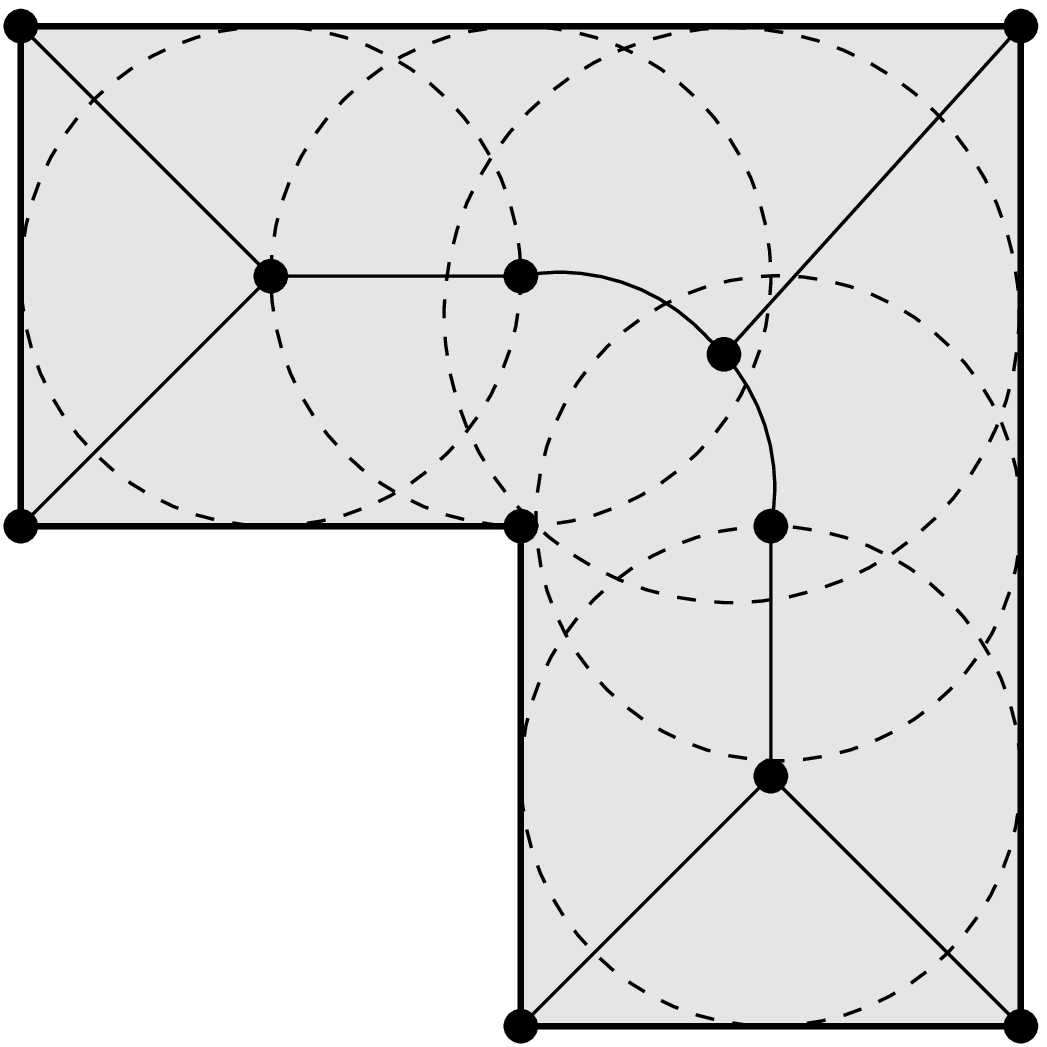}
$\hphantom{xx}$
\includegraphics[height=2in]{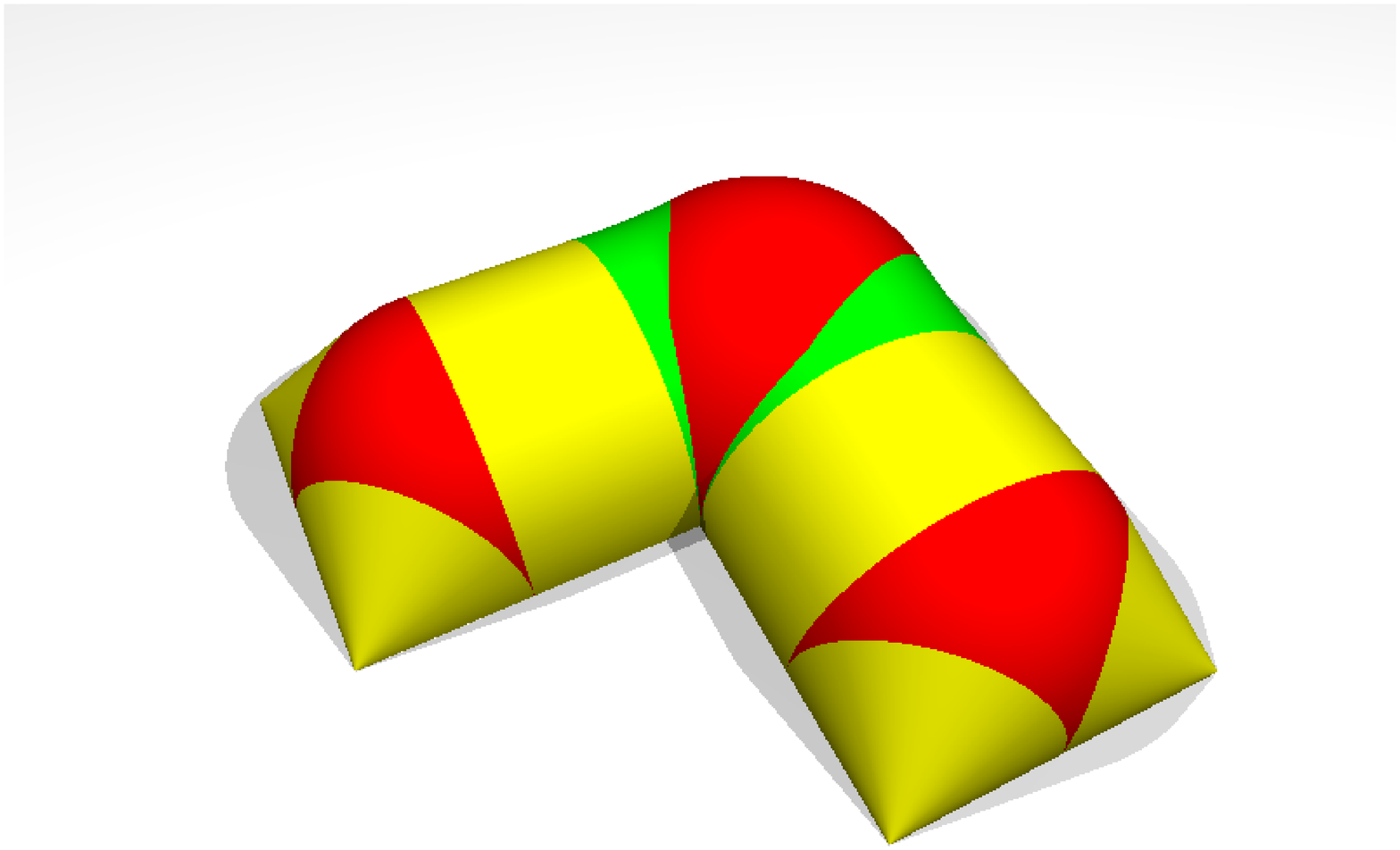}
 }
\caption{\label{corner}
The dome of a ``corner''. The darkest shading are 
geodesic faces (vertices of the medial axis);
 the lightest are Euclidean cones or cylinders (edge-edge bisectors
in the medial axis).
The medium shading illustrates 
the third type of medial axis edge that can occur: the 
parabolic bisector of a point and a line.}
\end{figure}

The medial axis also suggests a way of approximating any domain 
by a finite union of disks; simply take a finite subset of the 
medial axis so that the corresponding union of medial axis disks
is connected.
The medial axis of such a union consists of one vertex for each 
geodesic face in the dome and straight lines connecting the vertices 
corresponding to adjacent faces. 
%Thus the dome and the medial axis 
%of such domains is a bit easier to understand than those
%of polygons and in this paper will deal mainly with finitely 
%bent domains to take advantage of this simplicity.
 A polygon, its medial axis and a  finitely bent approximation 
are shown in Figure \ref{P_2}.  In Figure \ref{P_2dome} we show 
the domes of the polygon and its approximation.
The process of approximating a polygon by a finitely bent 
region will be discussed in greater detail in 
Section \ref{reps of FB}. Alternate approximations of 
polygons by disks coming from circumcircles of a triangulation
of the polygon are used in \cite{Bishop-crdt}, \cite{DV98}.
% See Section \ref{remarks}.

\begin{figure}[htbp]
\centerline{ 
\includegraphics[height=1.6in]{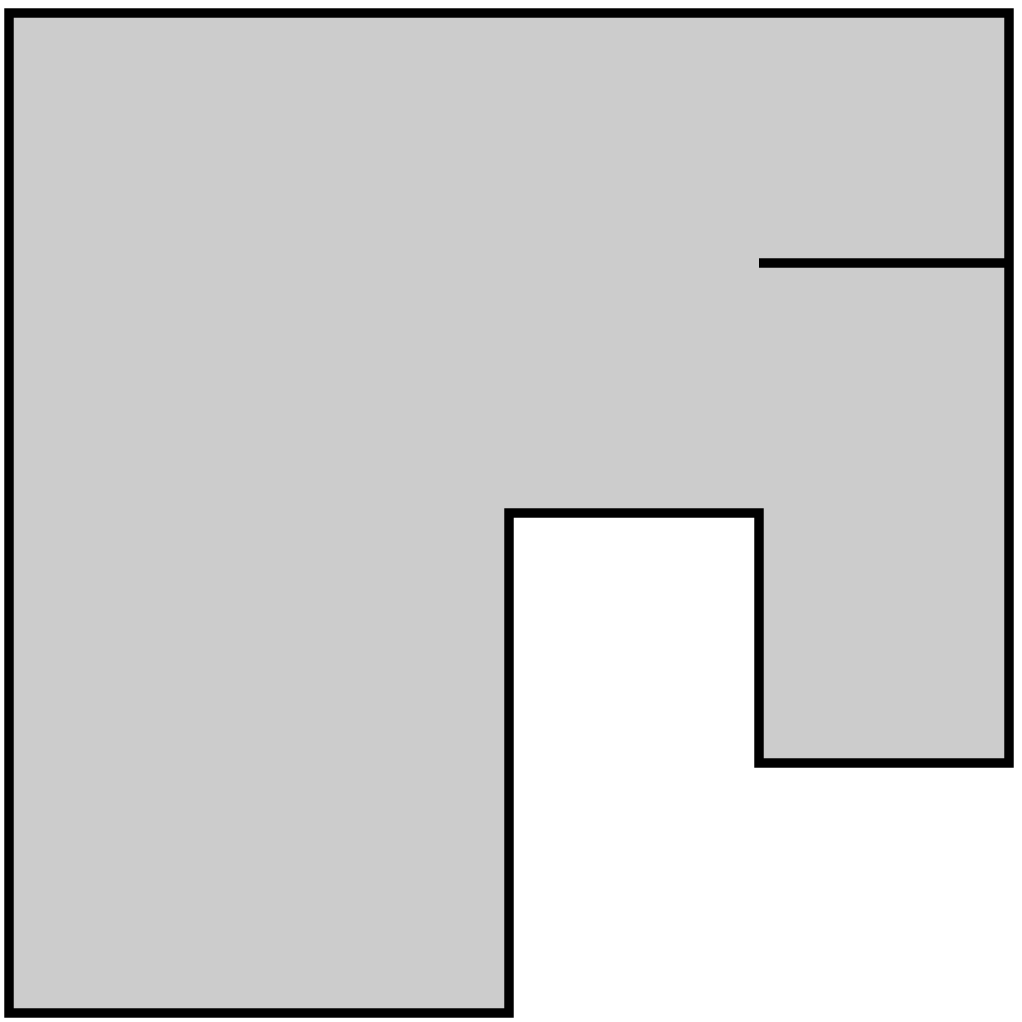}
$\hphantom{xxx}$
\includegraphics[height=1.6in]{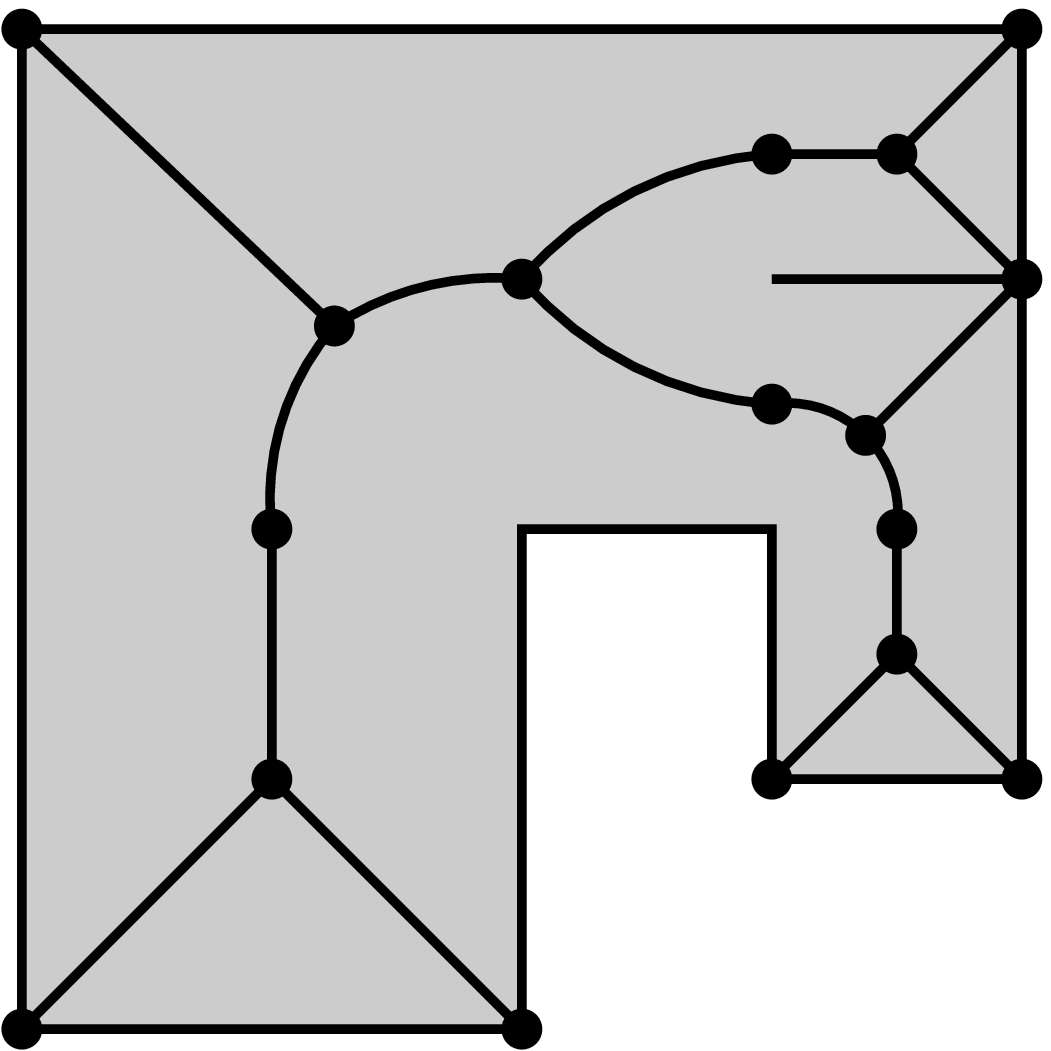}
$\hphantom{xxx}$
\includegraphics[height=1.6in]{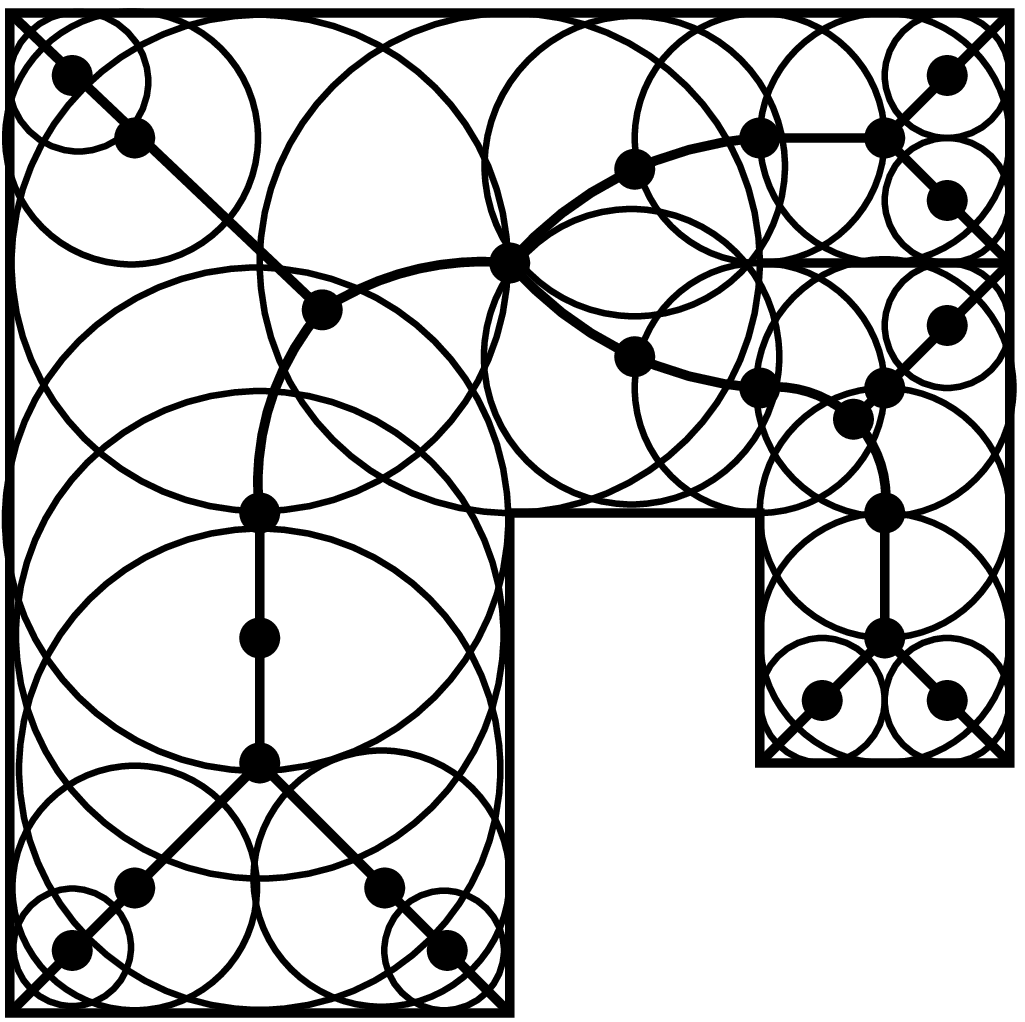}
}
\caption{ \label{P_2}
A (non-simple) polygon, its medial axis and a 
finitely bent approximation.}
\end{figure}

\begin{figure}[htbp]
\centerline{ 
\includegraphics[height=2in]{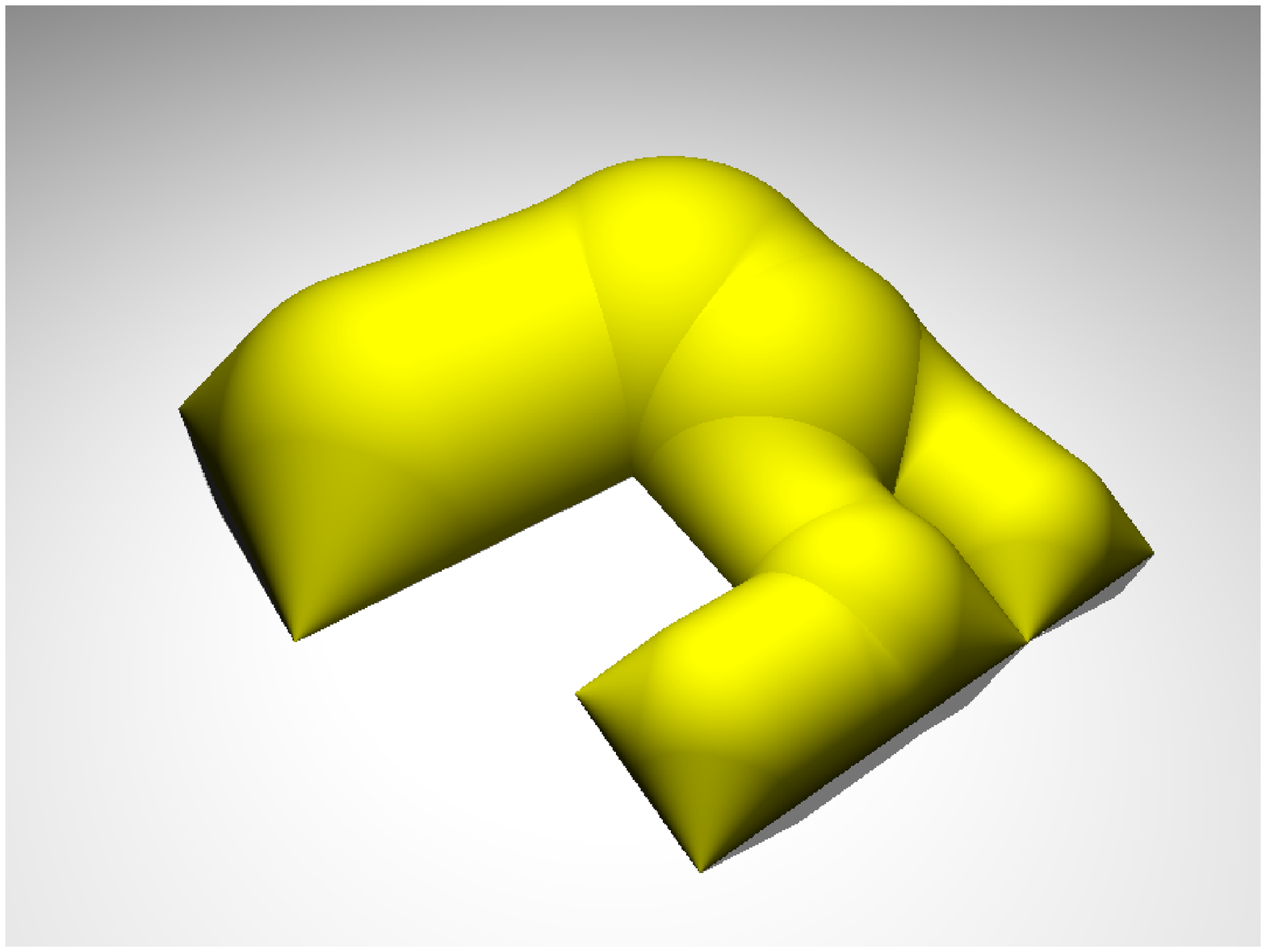}
$\hphantom{xx}$
\includegraphics[height=2in]{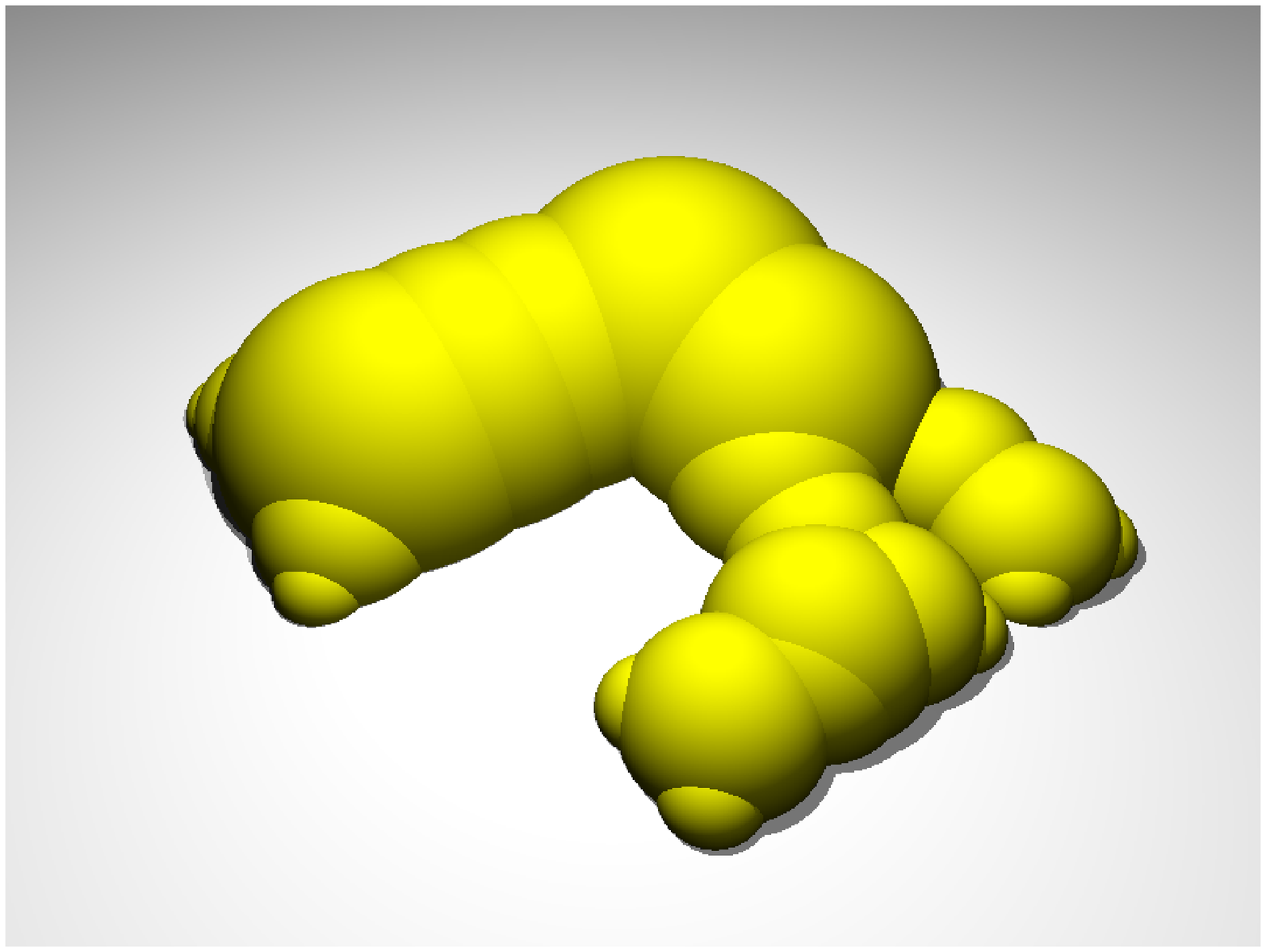}
 }
\vskip .25in
\centerline{ 
\includegraphics[height=2in]{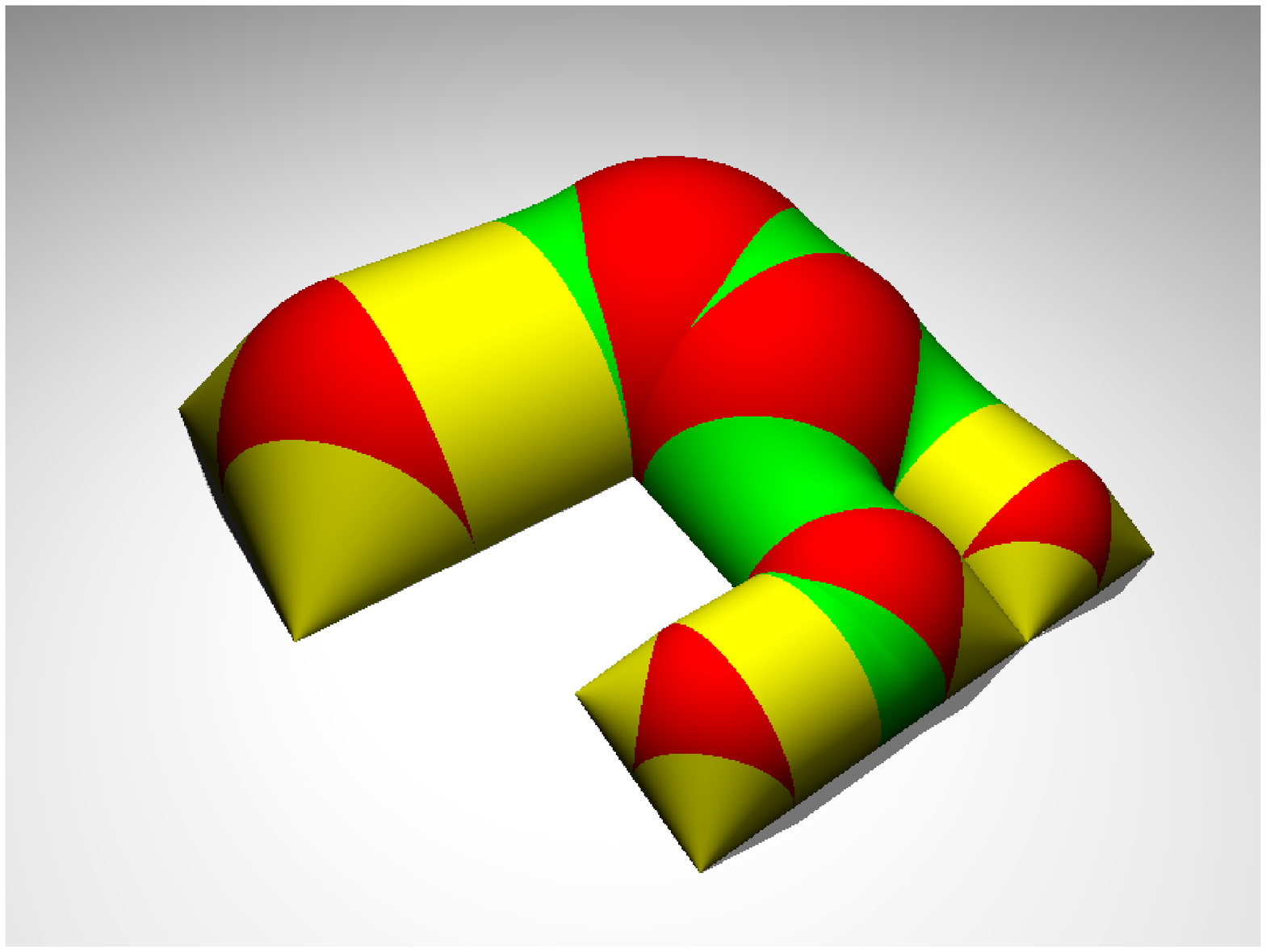}
$\hphantom{xx}$
\includegraphics[height=2in]{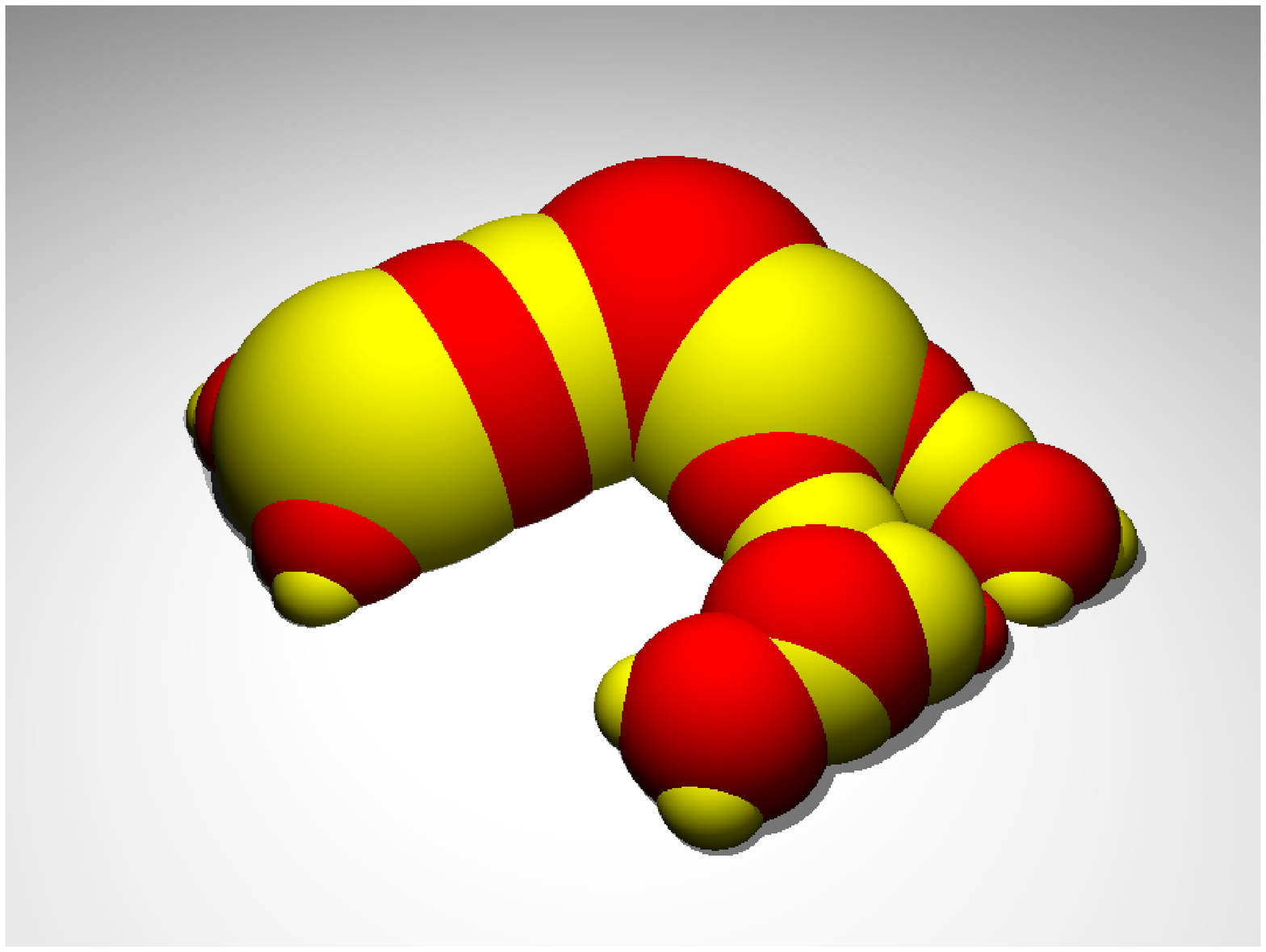}
 }
\caption{ \label{P_2dome}
On the top  left is the dome of the polygon 
$P_2 $ and on the top right is the dome of the finitely 
bent approximation $\Omega_2$.
Below each, we have redrawn the domes, but with different 
sections shaded differently. For $P_2$, 
 regions corresponding to different edges 
of the medial axis colored differently. 
For $\Omega_2$ the dome is a union of geodesic faces (which 
form the vertices of a tree) and
adjacent faces are shaded in alternating  colors.}
\end{figure}

The medial axis is a fundamental concept of geometry that 
seems to have been rediscovered many times and 
goes by several names: medial axis, skeleton, symmetry set, 
 cut locus (defined as the closure of the 
medial axis in \cite{Wolter}), equidistant set, 
 ridge set (think of an island where the elevation 
is proportional to the distance to the sea), wildfire set
(think of a fire started simultaneously along the boundary 
that burns inward at a constant rate).
The earliest reference I am aware of is a 1945 
paper of Erd{\"o}s \cite{Erdos}, where he proves the medial axis
(he calls it ``$M_2$'') of a planar domain has 
Hausdorff dimension $1$.

%The paper \cite{Fremlin} gives many fundamental 
%mathematical properties of these two sets for general domains
%in the plane. For example, it shows that the medial axis 
%of a connected planar open set is always connected, 
%locally rectifiable-path-connected  and 
%a countable union of paths  of finite length.
 In some parts of the 
literature the medial axis is  confused with the set of centers of maximal 
disks in $\Omega$, that,  following \cite{Fremlin},  we will call  the 
central set of $\Omega$. For polygons the two sets are the same, 
but in general they are not (e.g., the parabolic 
region $\Omega=\{ (x,y): y > x^2\}$ contains a maximal disk  
 that is only tangent at the origin).  
More dramatically, the medial axis of a planar domain always has $\sigma$-finite 
$1$-dimensional measure \cite{Fremlin}, but the central set can have 
Hausdorff dimension 2, \cite{Bishop-Hakobyan}.
Some papers in the mathematical literature that deal with 
the medial axis include 
\cite{Banchoff-Giblin87},
\cite{Bruce-Giblin-Gibson},
\cite{Duan-Rees},
  \cite{Evans-Harris},
 \cite{Giblin-00},
 \cite{Giblin-OShea},
  \cite{Hormander}, 
  \cite{Milman}, 
\cite{Milman-Waksman},
 \cite{Thom}.

In the computer science literature the medial axis is 
credited to  Blum who introduced it to describe
biological shapes \cite{Blum67}, \cite{Blum73}, \cite{Blum-Nagel}.
A few papers consider the theory of the medial axis
(e.g., \cite{Choi-Choi-Moon97},
 \cite{Choi-Seidel01}, \cite{Choi-Seidel02},
 \cite{Choi-Lee},
 \cite{SPW96},
 \cite{Wolter}), but most deal with algorithms for
computing it and with applications to areas like
pattern recognition, robotic motion, control of cutting tools, sphere
packing and  mesh generation.
A sample 
 of such papers includes:
\cite{Chazal-Lieutier04A},
\cite{Chazal-Soefflet},
\cite{Chiang-Hoffmann},
\cite{CKM99},
\cite{Evans-M-M98},
\cite{Gaudeau-Boiron79},
\cite{Patrikalkis},
\cite{Hoffmann92},
\cite{Hoffmann-Dutta},
\cite{propellers},
\cite{Lee82},
\cite{Lee-Drysdale81},
\cite{Lee-Horng},
\cite{MP93},
\cite{MP94},
\cite{PM-book},
\cite{Pottman-Wallner},
\cite{Preparata77},
\cite{SPB95},
\cite{SPB},
\cite{Wang00},
\cite{Wu99},
\cite{Yap87}.

Given a finite collection of disjoint  sets (called
sites), the corresponding Voronoi diagram  divides the plane
according to which site a point is closest to.  The medial
axis of a polygon $P$ is a Voronoi diagram for the interior of $P$
where the sites are
the complementary arcs in $P$ of the convex
vertices (i.e., interior angle $< \pi$) and distance is measured within $P$.
Equivalently, one can compute  the medial axis by 
taking the Voronoi diagram for the polygon with 
all edges and vertices as sites and then removing 
the cell boundaries that terminate at a  concave
vertex (one with angle $\leq \pi$). See Figure 
\ref{MAvsVD}.
 Thus the medial axis can be computed by using
algorithms for computing generalized Voronoi diagrams.
Voronoi diagrams  were defined by Voronoi in
\cite{Voronoi}, but go back at least to Dirichlet
\cite{Dirichlet} (indeed,
in the theory of Kleinian groups the
Voronoi cells of an orbit are called Dirichlet fundamental
domains).
 For more about Voronoi diagrams  see  e.g.,
\cite{Aurenhammer91},
\cite{Aurenhammer-Klein},
 \cite{BE92},
\cite{Fortune92}, \cite{Fortune97},
\cite{O'Rourke98},
\cite{Preparata-Shamos85}.
\begin{figure} [htbp]
\centerline{
\includegraphics[height=1.5in]{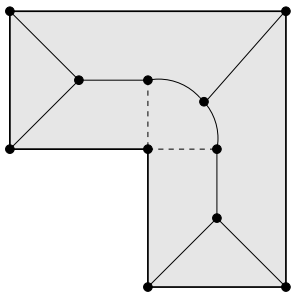}
}
\caption{\label{MAvsVD} This shows the Voronoi cells when all the 
edges and vertices are sites.  However, the dashed edges must 
 be removed to give the medial axis.  }
\end{figure}

It is a theorem of Chin, Snoeyink and Wang that the 
medial axis of a simple $n$-gon can be computed in 
$O(n)$ time.  I am not aware that their 
$O(n)$ algorithm has been implemented, since it depends
on the intricate algorithm of Chazelle that
triangulates polygons in linear time. However, other 
asymptotically slower methods (e.g., $O(n \log n)$) have 
been implemented, and in practice the computation of the 
medial axis in $\reals^2$ is not considered a ``bottleneck''.
See   \cite{Yao-Rokne91}, \cite{Yap87}.

The basic strategy of the linear time algorithm of
 Chin, Snoeyink and Wang is fairly
 simple (although the details are not): (1) decompose the 
polygon into simpler polygonal pieces called monotone
histograms using at most 
$O(n)$ new edges, (2) compute the Voronoi diagram for each 
piece with work $O(k)$ for a  piece with $k$ sides
 and finally (3)  merge the Voronoi diagrams of the pieces
using at most $O(n)$ work.

The first step is accomplished using a celebrated result of
 Chazelle \cite{Chazelle1991} that one can  
cut interior of $P$ into trapezoids with vertical sides in linear 
time (this is equivalent to triangulating the polygon in linear time).
 Klein and Lingas \cite{Klein-Lingas96} showed how to use 
Chazelle's result to cut a polygon into ``pseudo-normal 
histograms''; Chin, Snoeyink and Wang then show how to 
cut these into monotone histograms.

The next step is to show that the Voronoi diagram of a monotone
histogram can be computed in linear time.  The argument given 
in \cite{CSW-99} follows the elegant argument of Aggarwal, 
Guibas, Saxe and Shor for the case of convex domains.  In 
\cite{AGSS} the four authors  use duality to reduce the problem 
to finding the three dimensional convex hull of $n$ points 
whose vertical projections onto the plane are the vertices 
of a convex polygon.  

The final step is to merge the Voronoi diagrams of all 
the pieces. The merge lemma used in \cite{CSW-99} states:

\begin{lemma}
Let $Q$ be a polygon that is divided into two subpolygons
$Q_1$ and $Q_2$ by a diagonal $e$ (i.e., an line segment in 
$P$ whose endpoints are vertices of $P$). Let $S_1$ be a 
subset sites (vertices and edges) in $Q_1$ and $S_2$ a 
subset of sites in $Q_2$. Given the Voronoi diagrams for 
$S_1$ in $Q_1$ and for $S_2$ in $Q_2$, one can obtain the 
Voronoi diagram for $S = S_1 \cup S_2$ in $Q$ in time 
proportional to number of Voronoi edges for $S_1$ and $S_2$ 
that intersect $e$ and the number of new edges that are 
added.
\end{lemma}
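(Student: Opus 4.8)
The plan is the standard divide-and-conquer Voronoi merge. Let $M \subset Q$ be the \emph{merge curve}: the set of points $x \in Q$ with $\dist_Q(x,S_1) = \dist_Q(x,S_2)$, where $\dist_Q$ is geodesic distance inside $Q$. Then the desired diagram $\mathrm{Vor}(S)$ in $Q$ is obtained by keeping the portion of $\mathrm{Vor}(S_1)$ lying on the $S_1$-side of $M$, the portion of $\mathrm{Vor}(S_2)$ on the $S_2$-side, and inserting the arcs of $M$ as the new Voronoi edges; each arc of $M$ is a piece of the bisector of one site of $S_1$ and one site of $S_2$. A short triangle-inequality argument shows $M$ has no closed components and no component bounds a ``pocket'' of one side nested inside the other, so $M$ is a forest of simple arcs with endpoints on $\partial Q$. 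Thus the whole task reduces to (i) computing $M$, and (ii) clipping and relinking the two input diagrams along $M$, each in time proportional to the part that is touched.

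First I would find starting points for the arcs of $M$. Every arc of $M$ separates a region closer to $S_1$ from a region closer to $S_2$, and since $S_1 \subset Q_1$ and $S_2 \subset Q_2$ lie on opposite sides of the diagonal $e$, a geodesic joining a site of $S_1$ to a site of $S_2$ must cross both $e$ and $M$; so each arc of $M$ meets $e$, or failing that is detected by scanning $\partial Q$. Scan $e$ through the two given diagrams simultaneously: $e$ is subdivided by the edges of $\mathrm{Vor}(S_1)$ and of $\mathrm{Vor}(S_2)$ that cross it into $O(k)$ pieces, where $k$ is the number of such edges, and on each piece the nearest site in $S_1$ and in $S_2$ are fixed and known, so solving one bisector equation locates $M \cap e$ in total time $O(k)$. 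From a seed point, trace the current bisector arc $b(s_1,s_2)$ inside $Q$ until it meets an edge of $\mathrm{Vor}(S_1)$ (replace $s_1$ by the neighbouring $S_1$-site and start a new arc of $M$), or an edge of $\mathrm{Vor}(S_2)$ (symmetric), or $\partial Q$ (terminate the arc). Using the adjacency (doubly-connected-edge-list) structure of the input diagrams, each event costs $O(1)$ and yields one new edge of $M$ or ends an arc. Finally, walk along $M$ and splice: on its $S_1$-side delete every vertex and edge of $\mathrm{Vor}(S_1)$ in the clipped region and attach the surviving cells to $M$, symmetrically on the $S_2$-side; standard pointer surgery does this in time linear in the number of deleted plus new edges.

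The main obstacle is the running-time estimate: one must show that the number of trace events, equivalently the number of edges of $\mathrm{Vor}(S_1)$ and $\mathrm{Vor}(S_2)$ that $M$ crosses or that get deleted, is $O(k)$ plus the number of new edges added. This is a \emph{monotonicity} statement about the merge curve relative to the separating diagonal: $M$ crosses $e$ and never doubles back across the $e$-direction, so each arc of $M$ enters any given Voronoi cell of $\mathrm{Vor}(S_1)$ (or of $\mathrm{Vor}(S_2)$) in at most one connected sub-arc, and only enters cells that straddle $e$; hence every crossed or deleted input edge is one that meets $e$, charged $O(1)$ times. For point sites split by a straight line this is the classical monotonicity of the Voronoi ``contour''; for the vertex/edge sites and interior distances occurring here it is exactly the step that uses that $Q_1$ and $Q_2$ are honest subpolygons separated by a diagonal with all sites lying strictly in their own halves — and in the application of \cite{CSW-99} the pieces are further taken to be monotone histograms, which is what forces the merge curve to be a single monotone path. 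Granting monotonicity, the $O(k)$ seed scan, the $O(1)$-per-event trace, and the linear-in-touched-edges splicing combine to give the asserted time bound.
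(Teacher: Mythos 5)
The paper does not prove this lemma: it is quoted from \cite{CSW-99} and attributed to Shamos and Hoey, so there is no in-paper argument to compare against. Your sketch is the standard merge procedure that those references use (seed the merge curve on the splitting diagonal, trace bisector arcs through the two DCELs at $O(1)$ per event, then clip and splice), so in approach you are aligned with the cited sources.

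Two points in your write-up are asserted rather than proved, and they are exactly where the cited work does its real labor. First, the claim that every component of $M$ meets $e$ (equivalently, ``no pockets''): your geodesic-crossing observation shows that $M$ separates the two site sets and is crossed by site-to-site geodesics, but it does not by itself rule out a closed component of $M$ bounding an island of $S_2$-territory strictly inside $Q_1$; for point sites and Euclidean distance the triangle-inequality argument is short, but for mixed point/segment sites under geodesic distance in a nonconvex polygon this needs to be checked, and your fallback of ``scanning $\partial Q$'' for missed seeds would break the claimed time bound. Second, the charging argument for deleted edges: edges of $\mathrm{Vor}(S_1)$ that are swallowed whole (not merely crossed by $M$) must be charged to new edges or to edges crossing $e$, and this is where the monotonicity of the merge contour is genuinely used -- as you correctly note, in \cite{CSW-99} this is secured by restricting to monotone histograms rather than arbitrary subpolygons. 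So your proposal is a faithful outline of the intended proof, but the two steps you flag as ``the main obstacle'' are the lemma's actual content, and closing them requires the structural hypotheses of the application rather than the fully general statement as transcribed here.
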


This type of result was first 
 used by Shamos and Hoey \cite{Shamos-Hoey75} and has been
adapted by many authors since.

%----------------------------------------------------------------

\section{The dome is the disk } \label{Domes+MA2}

%Now that we have given an informal introduction to the  dome and 
%the medial axis  of a domain, it is time to record some important 
%facts about the dome which we will need. 
 The two main results about the 
dome of $\Omega$ say that (1) it  is isometric to the hyperbolic disk
and (2) it is ``almost isometric'' to the base domain $\Omega$.
More precisely, equip the dome with the hyperbolic path metric
$\rho_S$
(shortest hyperbolic length of a path connecting two points and 
staying on the surface).

\begin{thm}[Thurston, \cite{Thurston}] \label{Dome=disk}
Suppose $\Omega$ is a simply connected plane domain (other 
than the whole plane or  the complement of a circular arc) and let $S$ be 
its dome.
Then 
$(S,\rho_S)$ is isometric to the hyperbolic unit disk. We will 
denote the isometry by $\iota: S \to \disk$.
\end{thm}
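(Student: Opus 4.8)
The plan is to exhibit the dome $S$ explicitly as a bent surface and read off the intrinsic metric from the bending data. First I would reduce to the finitely bent case by exhaustion: write $\Omega = \bigcup_j \Omega_j$ as an increasing union of finitely bent domains (using the medial-axis approximation described after Lemma \ref{tree-lemma}), note that the corresponding domes $S_j$ converge to $S$ uniformly on compacta in $\uhs$, and that the intrinsic metrics $\rho_{S_j}$ converge to $\rho_S$; hence it suffices to treat $S = S_\Omega$ when $\Omega$ is a finite union of disks. (One must check the easy fact that the isometry type ``hyperbolic disk'' is preserved under such limits of complete simply connected surfaces of curvature $\le 0$ in the path metric, but this is standard once the finitely bent case is in hand.)

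In the finitely bent case, $S$ is a finite union of geodesic faces $F_1,\dots,F_m$, each a region in a totally geodesic plane of $\uhs$ (hence each isometric to a region in $\hypspace^2$), glued isometrically along the bending geodesics. The key point is that \emph{bending along a geodesic is an isometry of the path metric}: if we unfold two adjacent faces about their common bending geodesic $\gamma$ into a common totally geodesic plane, the resulting surface is isometric to the original one, because any path on $S$ crossing $\gamma$ has the same hyperbolic length as its unfolded image (lengths are computed inside each face, and the gluing is by the identity on $\gamma$). Iterating this unfolding across the (finitely many, tree-arranged by Lemma \ref{tree-lemma}) bending geodesics, the whole dome unfolds isometrically into a single totally geodesic plane, i.e.\ into $\hypspace^2$. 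The image is a region in $\hypspace^2$; I would then argue it is all of $\hypspace^2$: the dome is a complete, boundaryless surface (Lemma \ref{support-planes} shows every point of $S$ lies on an infinite geodesic in $S$, so $S$ has no boundary and geodesics extend indefinitely), so its isometric unfolding is a complete boundaryless convex-at-worst region in $\hypspace^2$, which forces it to be the whole plane. Since $\hypspace^2$ is isometric to the hyperbolic unit disk $\disk$, this gives the isometry $\iota: S \to \disk$.

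The main obstacle is making the ``unfolding is an isometry'' step rigorous without circularity: one has to verify that the developing map is globally well defined and injective, i.e.\ that the unfolding doesn't wrap or overlap. This is where the tree structure on the faces (Lemma \ref{tree-lemma}) is essential — because the adjacency graph of faces is a tree, there is no holonomy around loops, so the developing map into $\hypspace^2$ is single-valued; and convexity of $W = C(\Omega^c)$ (so that $S$ bends consistently to one side) prevents the developed faces from overlapping. I would spell out the developing-map construction: pick a base face $F_1$, fix an isometric embedding $F_1 \hookrightarrow \hypspace^2$, and extend across each bending geodesic by the unique geodesic-plane isometry agreeing on the shared geodesic; tree-ness makes this consistent, convexity makes it injective, completeness of $S$ makes the image all of $\hypspace^2$. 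The passage to the limit in the first paragraph is then the only remaining routine check.
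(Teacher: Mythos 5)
Your argument for the finitely bent case --- unfolding the faces across the bending geodesics one at a time, using the tree structure of the faces and the fact that rotation about a geodesic preserves path length --- is exactly the paper's proof, which ``unbends'' the dome geodesic by geodesic via elliptic M\"obius transformations until it becomes a hemisphere, each step being an isometry of the path metric. The paper proves only this finitely bent case and cites Thurston for the general statement, so your exhaustion/limiting argument for general $\Omega$ is additional material the paper does not attempt, and your explicit remarks on injectivity (convexity) and surjectivity (completeness) of the developing map are correct refinements of points the paper leaves implicit.
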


\begin{thm}[Sullivan \cite{Sullivan81}, Epstein-Marden \cite{EM87}]
\label{SEM}
Suppose $\Omega$ is a simply connected plane domain (other than than 
the whole plane or the complement of a circular arc).
There is a $K$-quasiconformal map $\sigma: \Omega \to S$ that 
extends continuously to the identity on the boundary  ($K$ is independent 
of $\Omega$).
\end{thm}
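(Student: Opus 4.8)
I want to prove Theorem \ref{SEM}: given a simply connected proper plane domain $\Omega$ (not a complement of a circular arc), there is a $K$-quasiconformal map $\sigma: \Omega \to S_\Omega$ to its dome, continuous up to the identity on the boundary, with $K$ absolute. The strategy is to approximate $\Omega$ by finitely bent domains, handle the finitely bent case by hand using the crescent/gap decomposition already introduced in Section \ref{gaps-crescents}, and then pass to the limit. So the first step is a reduction lemma: exhaust $\Omega$ from inside by an increasing sequence of finitely bent domains $\Omega_j$ (unions of medial-axis disks) with $\Omega_j \nearrow \Omega$, and observe that the domes $S_{\Omega_j}$ converge to $S_\Omega$ in a suitable (say, local Hausdorff) sense, since the dome depends continuously on the domain for Carathéodory convergence. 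This is more or less the standard limiting argument in Epstein–Marden \cite{EM87}, so the real content is the finitely bent case.

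For a finitely bent $\Omega = \cup D_k$, I would build $\sigma$ explicitly. The complement $\Omega^c$'s convex hull $W$ has boundary $S_\Omega$ consisting of geodesic faces joined along bending geodesics (Lemma \ref{support-planes} and the surrounding discussion). Dually, $\Omega$ decomposes into \emph{gaps} (preimages of open faces under the nearest point retraction $R:\Omega\to S_\Omega$) and \emph{crescents} (preimages of bending geodesics), as in Figure \ref{gap-cres-dome}. On each gap $G$, the face it maps to lies on a single hemisphere, and $R$ restricted to $G$ is the vertical-type projection onto that hemisphere — a conformal (hence $1$-quasiconformal) map, after the Möbius normalization of Lemma \ref{support-planes} sending the base disk to $\disk$. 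On each crescent $C$, which sits between arcs of two circles meeting at angle (bending angle) $\theta$, the retraction maps $C$ onto a bending geodesic arc, and one checks directly in the model (normalize the two circles to lines through $0$ meeting at angle $\theta$, and $C$ to a Euclidean sector of angle $\theta$) that the natural map onto the geodesic — foliating the sector by the circular arcs orthogonal to both sides and collapsing each leaf appropriately — has quasiconformal distortion bounded by a function of $\theta$, hence by an absolute constant since $\theta < \pi$. One then checks these piecewise definitions agree along the shared boundary arcs (the bending geodesics are the common boundaries of adjacent gaps and the "long" boundaries of crescents), so they glue to a global homeomorphism $\sigma:\Omega\to S_\Omega$; its dilatation is the max of the piecewise dilatations, hence $\le K$ absolute. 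Continuity to the identity on $\partial\Omega$ is built in because $R$, and these explicit leaf-collapsing maps, extend to the identity on the relevant boundary arcs.

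The main obstacle is the uniform bound on the crescent map, in particular near the cusps where the two bounding circles are tangent (bending angle $\to 0$ is fine — there the crescent is thin and the map is nearly an isometry — but the worrying regime is angle near $\pi$, and also the behavior at the \emph{tips} where a crescent meets several others or meets the boundary). I would handle this by the standard sector computation: in the normalized picture the retraction sends the sector $\{0 < \arg z < \theta\}$ to a vertical half-plane's boundary geodesic, and the map is $z \mapsto (\log|z|, \arg z \cdot(\text{something}))$ type in appropriate coordinates, giving dilatation bounded by an explicit increasing function of $\theta$ that stays finite for $\theta<\pi$; since at an honest bending geodesic of a finitely bent domain the angle is strictly less than $\pi$ and in the limit the worst case is controlled by Sullivan's original argument, the constant is absolute. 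The remaining bookkeeping — that the gaps and crescents genuinely tile $\Omega$, that the gluing is a homeomorphism, and that dilatations of finitely many pieces combine to a global bound — is routine. Finally, passing $\Omega_j \to \Omega$: the maps $\sigma_j$ are uniformly $K$-quasiconformal, hence (after normalization) a normal family, and any subsequential limit $\sigma$ is $K$-quasiconformal from $\Omega$ onto $S_\Omega$ and still the identity on the boundary; this completes the proof.
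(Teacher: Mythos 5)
Your overall architecture (reduce to finitely bent domains via the gap/crescent decomposition, then take limits) is the classical one and is close in spirit to the explicit construction in \cite{Bishop-ExpSullivan}; the paper itself does not reprove the theorem but instead observes that the nearest point retraction $R$ is a uniform quasi-isometry and invokes Theorem \ref{QC-bdy} to replace it by a quasiconformal (indeed biLipschitz) map with the same boundary values.

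There is, however, a genuine gap at the heart of your finitely bent case. The map you describe on a crescent $C$ --- ``foliating the sector by the circular arcs orthogonal to both sides and collapsing each leaf'' --- is exactly the restriction of $R$ to $C$, and it collapses a two-dimensional crescent onto the one-dimensional bending geodesic. Such a map is not a homeomorphism and has infinite dilatation; there is no ``explicit sector computation'' that makes it $K$-quasiconformal for any finite $K$. The paper states this explicitly in Section \ref{gaps-crescents}: each orthogonal leaf of the crescent is collapsed to a single point by $R$, so for a finitely bent domain $R$ is never a homeomorphism unless $\Omega$ is a disk. To build an actual quasiconformal $\sigma$ you must instead map each crescent homeomorphically onto a two-dimensional neighborhood of the bending geodesic in the dome, which means shrinking the images of the adjacent gaps to make room; the faces can then no longer be mapped by pure Möbius transformations, and the uniformity of the resulting dilatation is the whole difficulty. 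That uniformity requires a quantitative bound on the total bending angle meeting any bounded hyperbolic region --- precisely the content of Lemma \ref{B-est} (and of Epstein--Marden's bending estimates) --- which your argument never invokes. Without it, a point of $\Omega$ near which infinitely many (or, in the finitely bent case, very many) crescents accumulate with large total angle would force the dilatation of any such gluing to blow up, and your claimed absolute constant $K$ is not justified. The limiting argument at the end is fine once the finitely bent case is genuinely established with a uniform constant, but as written the core step fails.
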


In fact, there is a biLipschitz map between 
$\Omega$ and its 
dome (each with their hyperbolic metric; see Theorem \ref{QC-bdy}),
but we will only use the 
quasiconformal version of the result.
We place the additional restriction that $\Omega$ is not the
complement of a circular arc because in that case the convex 
hull of $\partial \Omega$ is a hyperbolic half-plane and the dome 
should be interpreted as two copies of this  half-plane joined along its edge 
with  bending angle  $\pi$. In order to simplify the discussion here,
we simply omit this case (with the correct interpretations the results
above still  hold in this case; this is discussed in complete detail
in Section 5 of \cite{EMM-earthquakes}). 

Explicit estimates of the constant in the
Sullivan-Epstein-Marden 
theorem are given elsewhere in the literature.
For example, it is proven in \cite{Bishop-ExpSullivan} that one 
can take $K=7.82$.  The estimates $K \approx 80$ and $ K 
\leq  13.88$ are given in \cite{EM87} and \cite{EMM-earthquakes} respectively.
%If the domain $\Omega$ is invariant under a group of M{\"o}bius transformations
%then so is the dome and one can require Sullivan's map to respect 
%the group action; this is only case originally considered by Sullivan 
%and the estimates given in \cite{EM87} and \cite{EMM-earthquakes} work in 
%the group invariant case, whereas the estimate in \cite{Bishop-ExpSullivan} 
%does not because of an initial approximation step which is not 
%group invariant. 
%These papers also give explicit upper bounds  for the 
% biLipschitz constant of Sullivan's map and when 
%$\Omega$ (and hence also its dome) is invariant under 
%a group of M{\"o}bius transformations 
%and we require the map to respect the group action.

Although we will not use it here, it is worth noting that both 
these theorems have their origin in the theory hyperbolic
of $3$-manifolds. Such a manifold $M$ is a quotient of the hyperbolic 
half-space, $\uhs$, by a discrete group $G$ of isometries. The orbit 
of any point under this group accumulates only on the boundary 
of the half-space and the accumulation set (which is independent 
of the orbit except in trivial cases) is called the limit set $\Lambda$. The complement 
$\Omega$ of $\Lambda$ in the boundary of hyperbolic space
 is called the ordinary set. The group 
$G$ acts discontinuously on $\Omega$ and $\partial_\infty M = \Omega/G$ 
is called the ``boundary at infinity'' of $M$. This is a
Riemann surface (possibly with branch points).  The manifold
 $M$ contains 
 closed geodesics and the closed convex hull of these is called
the convex core of $M$ and denoted $C(M)$. 
The lift of the convex core to $\uhs$ 
is the hyperbolic convex hull of the limit set and 
its boundary is the dome of the ordinary set. Thus $\partial C(M)$ 
is just the quotient of this dome by the group $G$.
Theorem \ref{Dome=disk}
implies that the boundary of $C(M)$ is a surface of 
constant negative curvature, i.e., is
 isomorphic to the hyperbolic disk modulo a 
group of isometries.  Theorem \ref{SEM} says that $\partial_\infty M$
and $\partial C(M)$ are homeomorphic, indeed, are quasiconformal
images of each other with respect to their hyperbolic metrics. 
This fact was needed in the proof  of Thurston's hyperbolization
theorem for 3-manifolds that fiber over the circle.

%The remainder of this section is devoted to observing why  Theorem
%\ref{Dome=disk} is true for finitely bent domains and
%discussing  some consequences
%of this observation.  We will give a proof of Theorem \ref{SEM} in 
%a later section, after we have reviewed  more about the structure 
%of finitely  bent domains and domes.

The proof of Theorem \ref{Dome=disk} for finitely bent domains simply 
consists of observing that if we deform the dome by bending it 
along a bending geodesic, we don't change the path  metric at all.
Moreover, a finite number of such deformations converts a finitely 
bent  
dome into a hemisphere, and this is obviously isomorphic to the 
hyperbolic disk.  More precisely, we are using the following 
simple lemma.

\begin{lemma}
Suppose two surfaces $S_1, S_2$ in $\uhs$
 are joined along a infinite hyperbolic 
geodesic and suppose $\sigma $ is an elliptic M{\"o}bius transformation 
of $\uhs$ that fixes this geodesic. Then  a map to another 
surface that equals
 the identity  on $S_1$ and equals  $\sigma$ on $S_2$ is an isometry between 
the path metric on $S_1 \cup S_2$ and the path metric on the image. 
\end{lemma}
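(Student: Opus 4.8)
Write $\gamma=S_1\cap S_2$ for the common geodesic. Because $\sigma$ is elliptic with axis $\gamma$, it fixes $\gamma$ pointwise; hence the map $f$ that is the identity on $S_1$ and $\sigma$ on $S_2$ is well defined and continuous on $M:=S_1\cup S_2$ and carries it onto $N:=S_1\cup\sigma(S_2)$, the image surface. In the situations where the lemma is used $S_1$ and $\sigma(S_2)$ meet only along $\gamma$, so $f$ is a bijection whose inverse $g$ --- the identity on $S_1$ and $\sigma^{-1}$ on $\sigma(S_2)$, with $\sigma^{-1}$ again elliptic with axis $\gamma$ --- has exactly the same form as $f$. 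In particular $\alpha\mapsto f\circ\alpha$ is a bijection between paths in $M$ and paths in $N$. Writing $\ell$ for hyperbolic length and $\rho_M,\rho_N$ for the two path metrics, it is enough to prove $\ell(f\circ\alpha)\le\ell(\alpha)$ for every rectifiable path $\alpha$ in $M$: applying this both to $f$ and (with $\beta=f\circ\alpha$) to $g$ gives $\ell(f\circ\alpha)=\ell(\alpha)$, whence
\[
\rho_N(f(p),f(q))=\inf_{\alpha}\ell(f\circ\alpha)=\inf_{\alpha}\ell(\alpha)=\rho_M(p,q),
\]
the infimum being over paths $\alpha$ in $M$ from $p$ to $q$.

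To prove the length inequality I would use only the description of the hyperbolic length of a path $\beta$ as the supremum, over partitions $0=t_0<\dots<t_m=1$, of $\sum_j d_{\uhs}(\beta(t_{j-1}),\beta(t_j))$ (this chord--sum supremum is the hyperbolic length of the curve in $\uhs$, hence also its intrinsic length inside any geodesic subsurface or inside $N$). Given $\alpha$ and a partition, refine it: whenever two consecutive partition parameters $a<b$ have $\alpha(a)\in S_1\setminus\gamma$ and $\alpha(b)\in S_2\setminus\gamma$ (or vice versa), insert a parameter $s\in(a,b)$ with $\alpha(s)\in\gamma$ --- such an $s$ exists because the connected set $\alpha([a,b])$ cannot be split by the two disjoint relatively open sets in which it meets $S_1\setminus\gamma$ and $S_2\setminus\gamma$. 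After this single round of insertions each pair of consecutive partition points has both endpoints lying in one of the closed pieces $\overline{S_1}$ or $\overline{S_2}$. On $\overline{S_1}$ the map $f$ is the identity and on $\overline{S_2}$ it is the $\uhs$-isometry $\sigma$; so each term $d_{\uhs}(f(\alpha(t_{j-1})),f(\alpha(t_j)))$ equals $d_{\uhs}(\alpha(t_{j-1}),\alpha(t_j))$, and therefore $\sum_j d_{\uhs}(f(\alpha(t_{j-1})),f(\alpha(t_j)))\le\ell(\alpha)$. Since refining a partition only increases the sum, taking the supremum gives $\ell(f\circ\alpha)\le\ell(\alpha)$.

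Geometrically the lemma just records that folding two totally geodesic surfaces along their common geodesic edge changes the embedding but not the induced intrinsic metric, and the argument above is essentially the triangle inequality together with the fact that $\sigma$ is an isometry fixing the edge. I do not expect a serious obstacle here; the only points needing a little care are the reduction to partitions whose consecutive endpoints lie in a single piece (the connectedness step) and the identification of hyperbolic length with the chord--sum supremum --- neither of which is hard --- so those are the steps I would write out most carefully. (One can instead argue via $\|(f\circ\alpha)'(t)\|=\|\alpha'(t)\|$ a.e., using that $d\sigma$ is the identity on $T\gamma$ since $\sigma$ fixes $\gamma$ pointwise, but then one must also check $f\circ\alpha$ is absolutely continuous, which makes the chord--sum argument the cleaner route.)
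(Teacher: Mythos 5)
Your proof is correct and rests on the same idea as the paper's: the folding map is piecewise an isometry of $\uhs$ (the identity on one piece, the elliptic $\sigma$ on the other) agreeing along the common geodesic, so path lengths are unchanged. The paper disposes of this in one sentence by normalizing $\gamma$ to a vertical line so that $\sigma$ is a Euclidean rotation about it; your chord-sum argument is simply a careful, coordinate-free write-up of the same observation, with the one extra (and reasonable) caveat that injectivity of the folded surface is needed for the image path metric to make sense.
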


\begin{proof}
This becomes  obvious is one normalizes so that the 
geodesic in question becomes a vertical line and $\sigma$ 
becomes a (Euclidean) rotation around it, since it is then
clear that the length of any path is left unchanged.
\end{proof}

\begin{figure}[htbp] \label{rotate}
\centerline{ 
\includegraphics[height=1.5in]{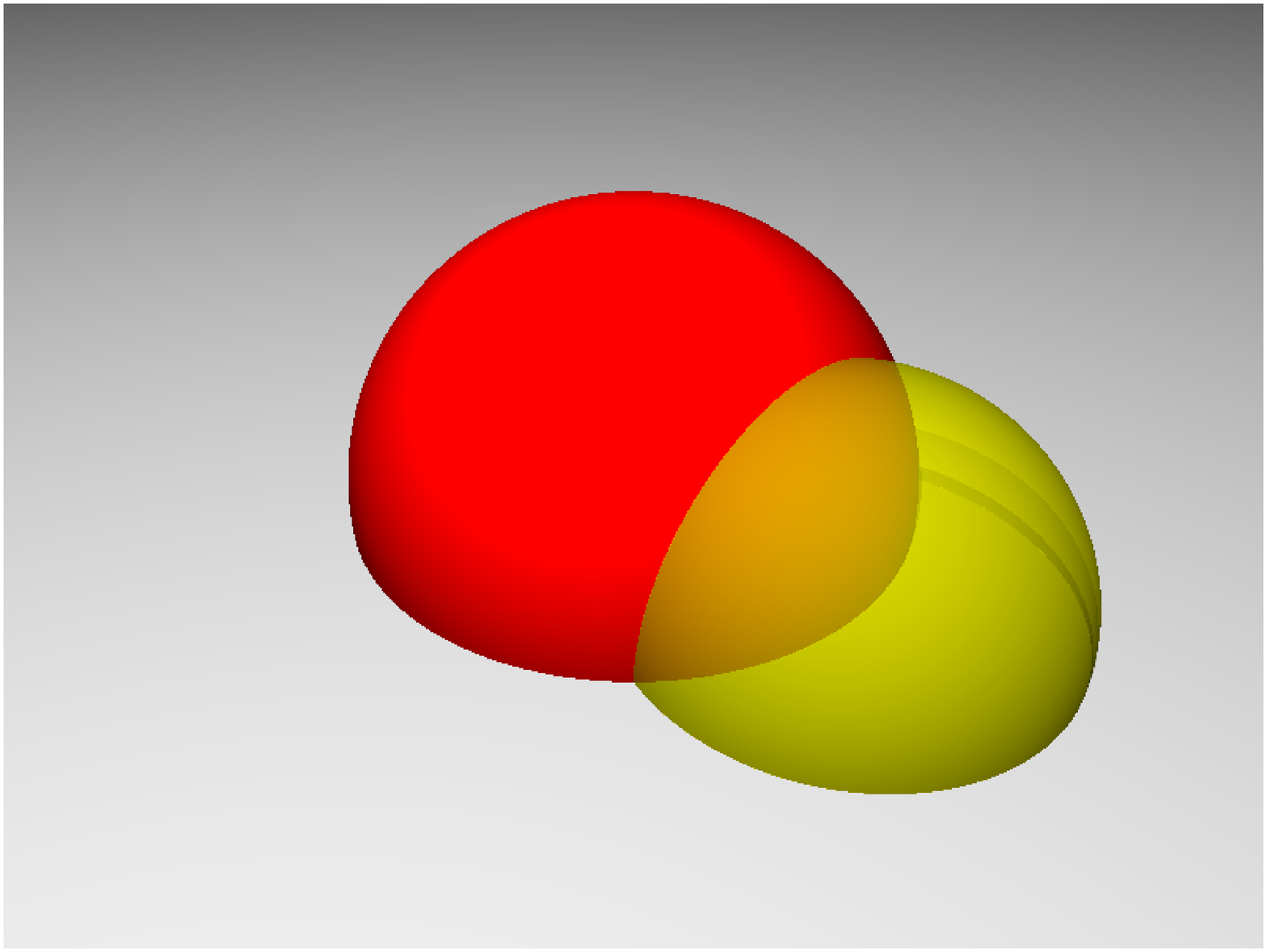}
$\hphantom{xxx}$
\includegraphics[height=1.5in]{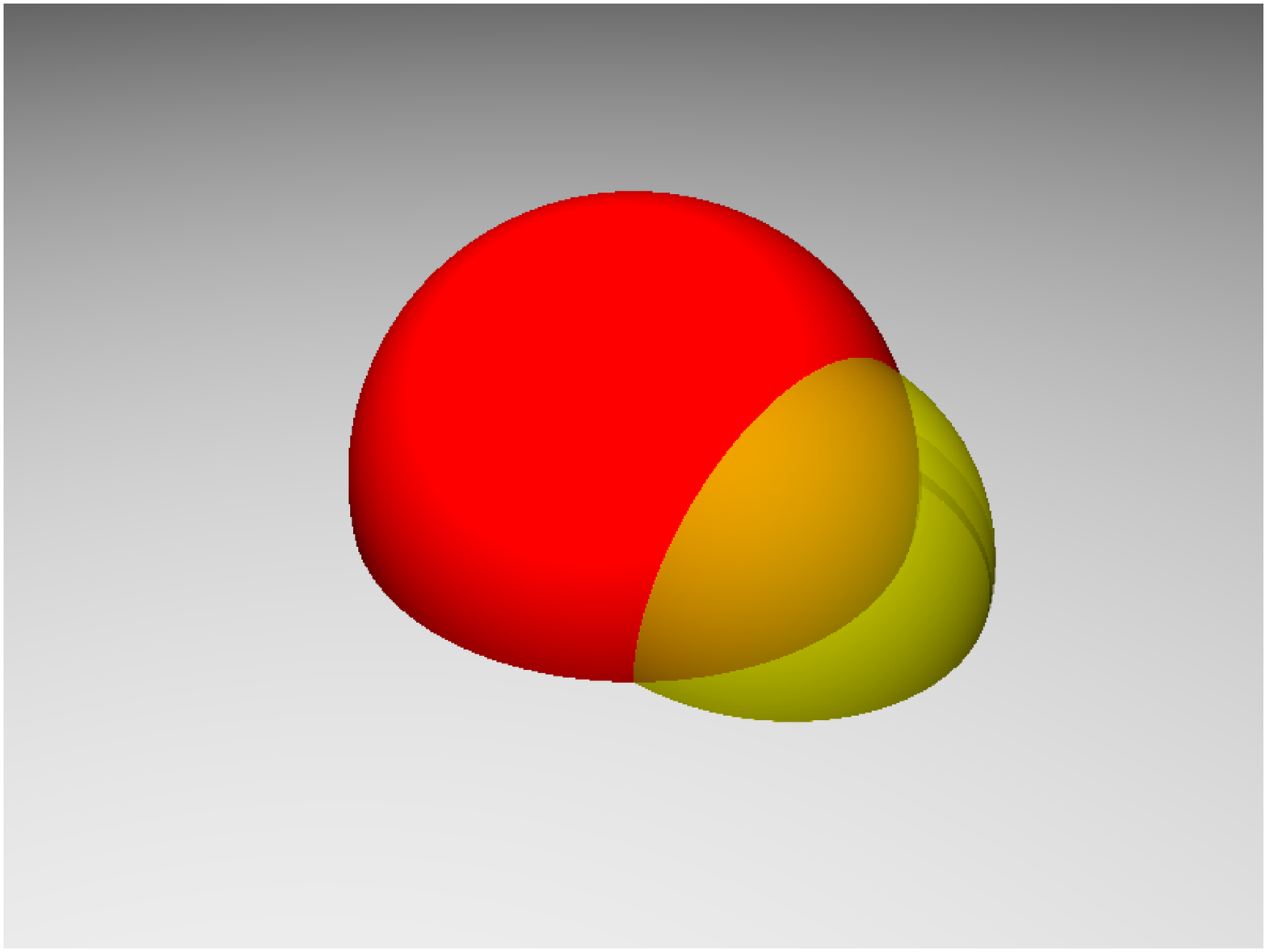}
}
$\hphantom{xxx}$
\centerline{
\includegraphics[height=1.5in]{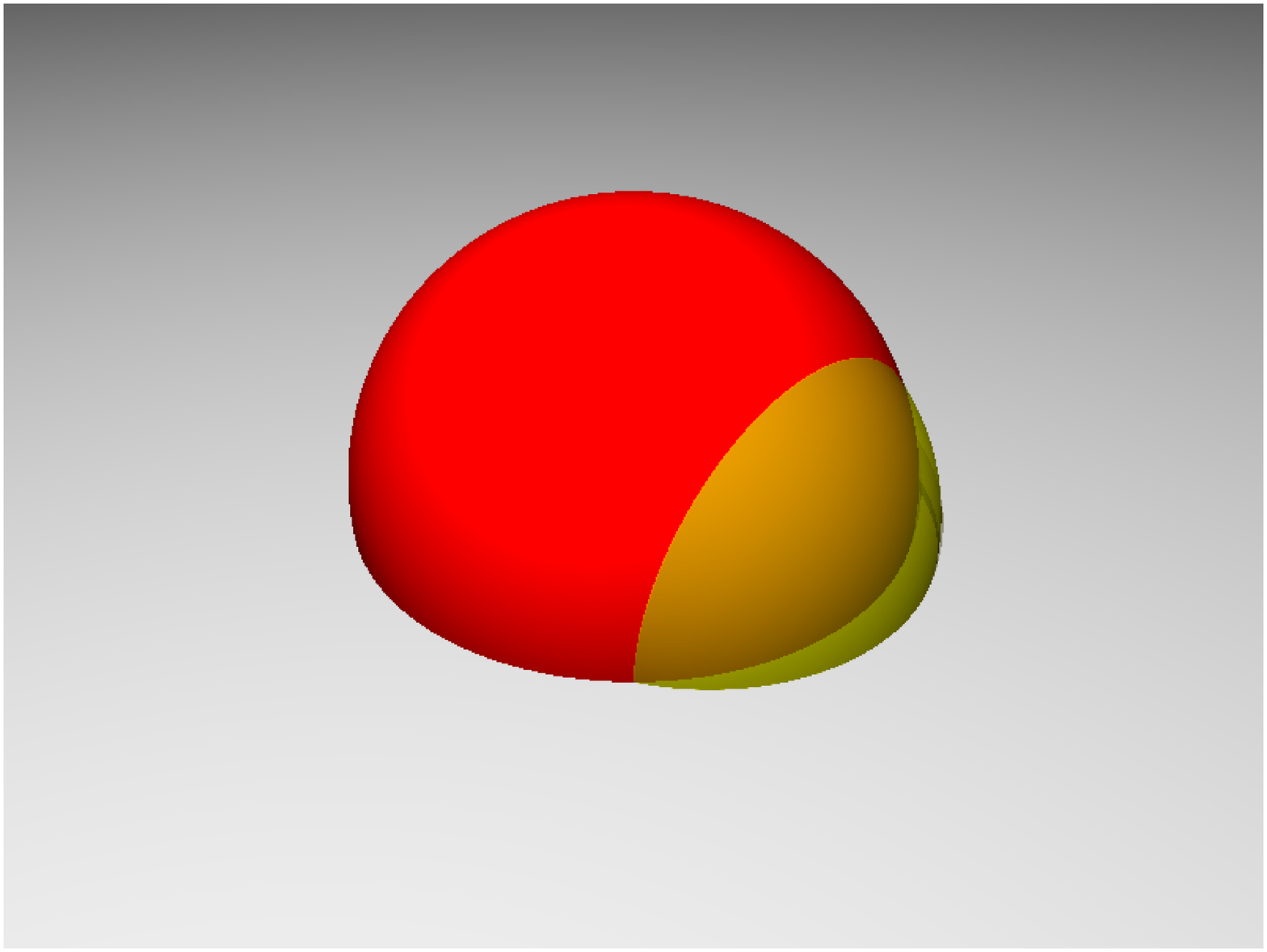}
$\hphantom{xxx}$
\includegraphics[height=1.5in]{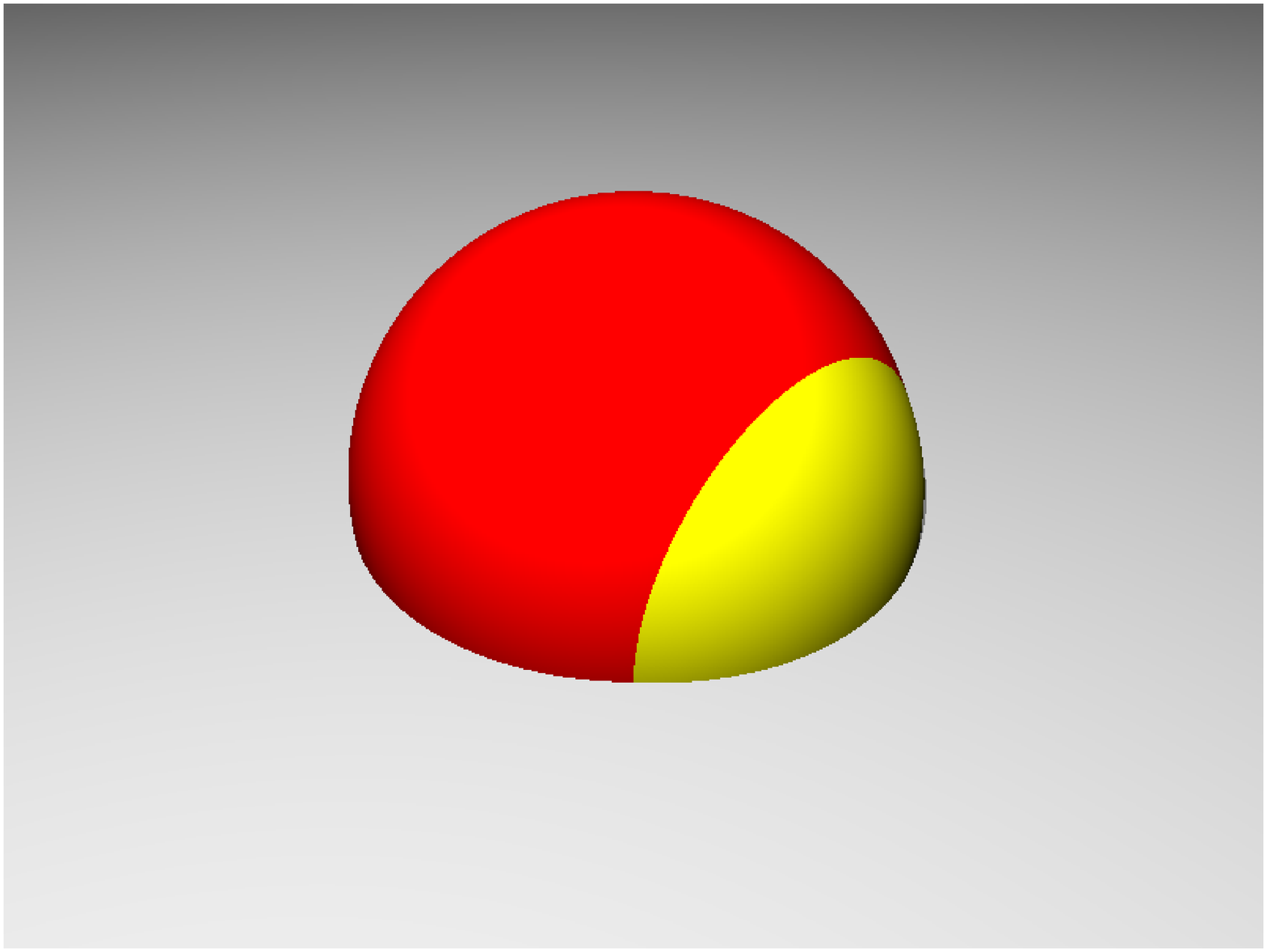}
}
\caption{  A dome consisting of two geodesic faces joined along 
an infinite geodesic.  By bending the dome along the geodesic we 
get a one-parameter,  isometric family of surfaces ending with 
 a hemisphere, which is obviously isometric to 
the hyperbolic disk.}
\end{figure}

Theorem \ref{Dome=disk} then follows by taking a finitely bent surface and 
``unbending'' it one geodesic at a time, i.e., we can map it to 
a hemisphere by a series of maps, each of which is an isometry 
by the lemma. Since a hemisphere is isometric to the disk, we are done.
In Figure \ref{rotate} we illustrate the bending along a geodesic 
for a dome with two faces.

This proof gives us a  geometric interpretation of the map 
$\iota : \partial \Omega \to \partial \disk$. The 
disks making up a finitely bent domain have a tree structure and
if $\Omega$ is finitely bent then we fix a root disk  $D_0$ and 
write $\Omega = D_0 \cup_j D_j \setminus D_j^*$, where $D_j^*$ 
denotes the parent disk of $D_j$.  This gives $\Omega \setminus D_0$ 
as a union of crescents.
See Figure \ref{cd2lines}.
We call these ``tangential'' crescents since  one edge of the 
crescent follows $\partial \Omega$ near each vertex
 (and to differentiate them 
from the ``normal'' crescents we will introduce later).

Each crescent in the tangential crescent decomposition
 has an ``inner edge''
(the one in the boundary of $D_j^*$) and an ``outer edge''
(the other one) and there is a unique elliptic M{\"o}bius
transformation that maps the outer edge to the inner one, 
fixing the two vertices of the crescent (this is just the 
restriction to the plane of the M{\"o}bius transformation of 
$\uhs$ that removes the bending along the corresponding 
bending geodesic). The map $\iota: 
\partial \Omega \to \partial \disk$ is the composition of these
maps along a path of crescents that connects an arc on 
$\partial \Omega$ to an arc on $\partial \disk$. 

An alternate way to think of this is to foliate each crescent 
$D_j \setminus D_j^*$ by circular arcs that are orthogonal to 
both boundary arcs. This gives a foliation of $\Omega \setminus 
D_0$ by piecewise circular curves that connect $x \in \partial 
\Omega$ to $\iota(x) \in \partial \disk$.
On the left of Figure \ref{cd2lines} 
we have sketched the foliation  in each of the 
crescents for a particular finitely bent domain  (but without 
attempting to line up the leaves in different crescents) and on the 
right we have plotted the trajectories of a couple of boundary points
that correspond to the vertices of the polygon we have 
approximated. This is the description given in the introduction.
Some further examples are illustrated in Figure \ref{more flows}.

\begin{figure}[htbp]
\centerline{
 \includegraphics[height=2.5in]{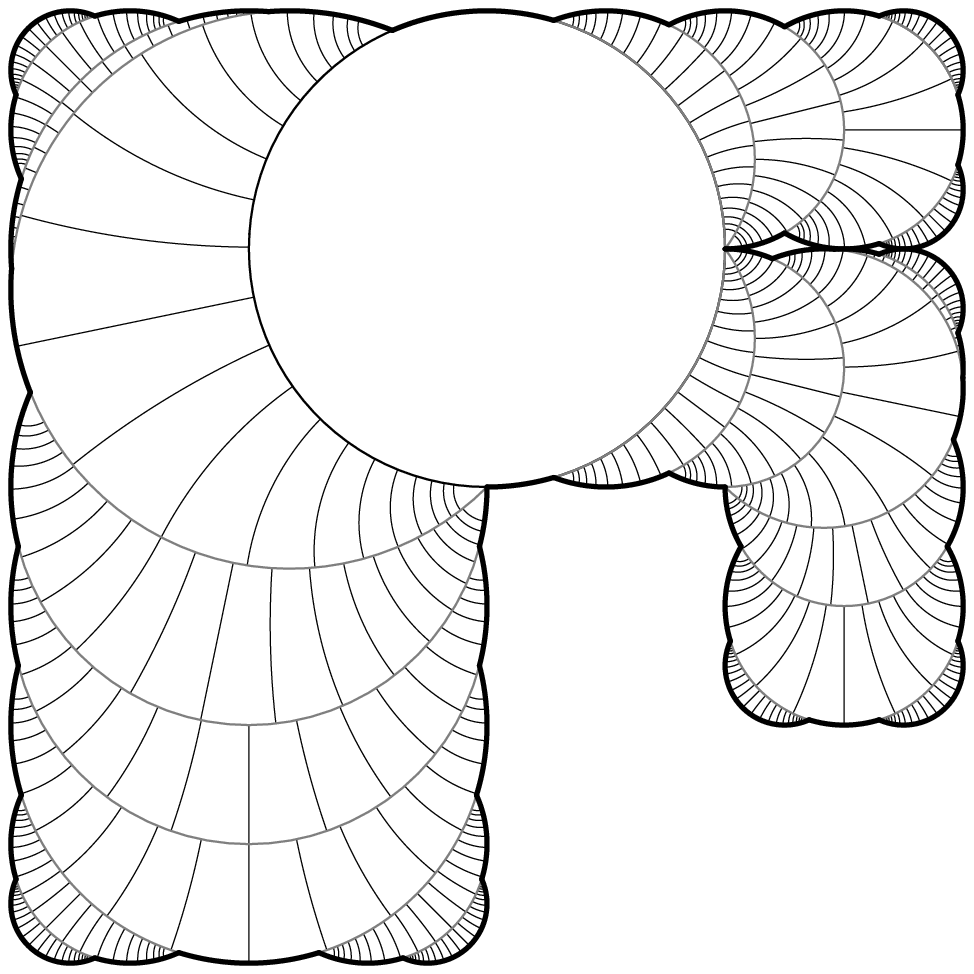}
$\hphantom{xxxxx}$
 \includegraphics[height=2.5in]{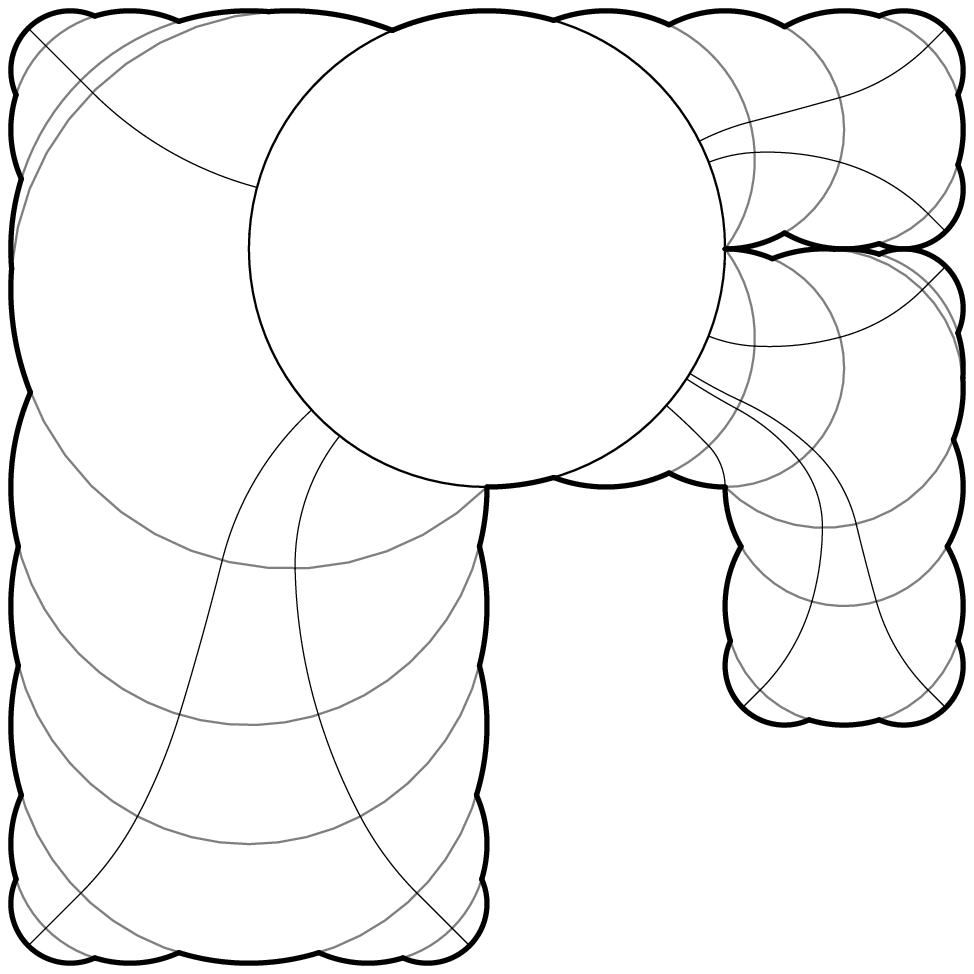}
}
\caption { \label{cd2lines}
             On the left is the foliation by orthogonal arcs in the
            tangential crescents.
	     %(but we have not drawn leaves in adjacent 
           % crescents so they join up).
	     On the right we start at the 
           vertices on the boundary of $\Omega_2$ follow 
          the corresponding trajectories of the vertices. Where
           these trajectories land on the circle are the $\iota$
           images  of the vertices.
	    %(a uniformly good approximation 
          %to the true conformal prevertices).
	   }
\end{figure}

\begin{figure}[htbp]
\centerline{
 \includegraphics[height=2.5in]{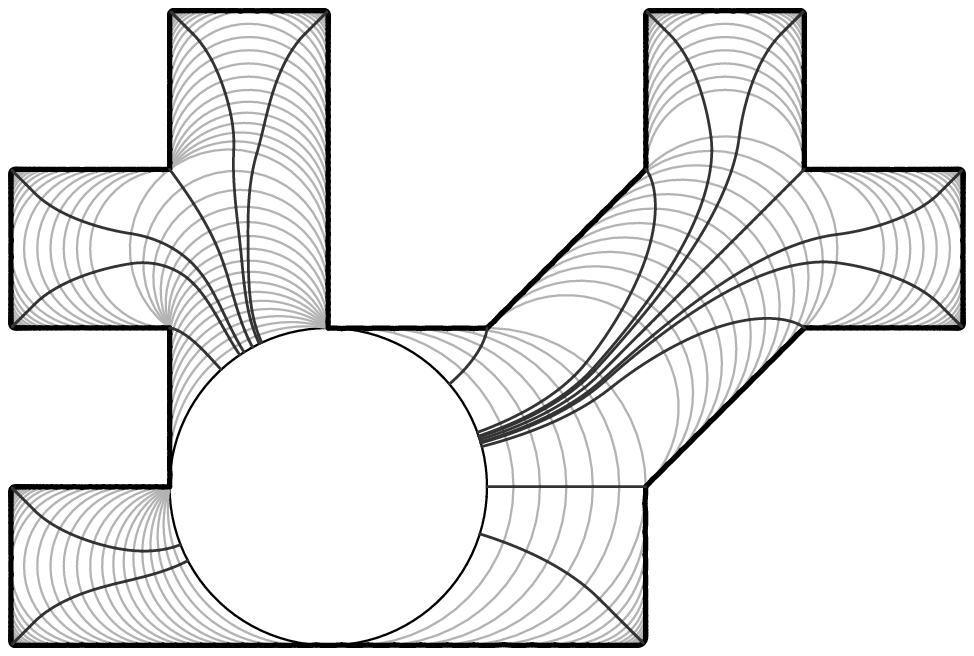}
  $\hphantom{xxxx}$ 
 \includegraphics[height=2.5in]{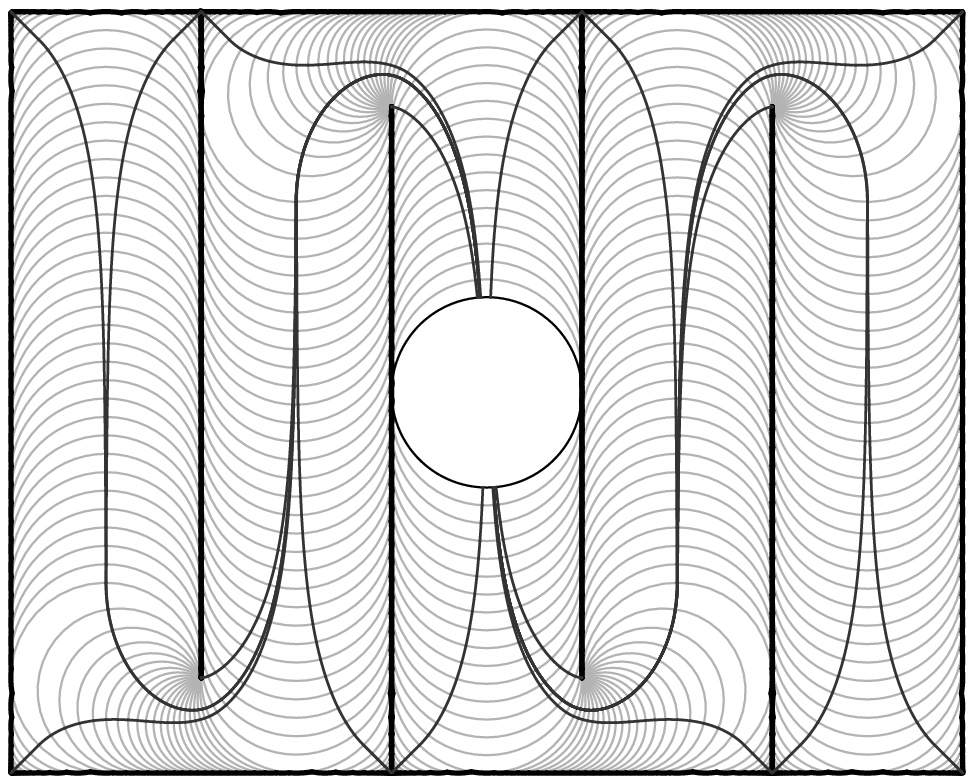}
}
  \caption{ \label{more flows} 
   The medial axis flow for two more polygons which have 
    been approximated by unions of medial axis disks. This 
     flow defines the iota map from $\partial \Omega$ to 
     the chosen root disk of the medial axis.
   } 
\end{figure}

Theorem \ref{SEM}  implies that 
the mapping $\iota: \partial \Omega \to \partial \disk$ has 
a quasiconformal extension to a map $\Omega \to \disk$
 that is $K$-quasiconformal 
with a bound $K$, independent of $\Omega$. Thus the 
geometric map we have described above is a rough approximation 
to the boundary values of the Riemann map.
  It is surprising (at least to the 
author) that there is such a simple, geometrically defined map 
that is  close to the Riemann map with estimates independent 
of the domain.

%-----------------------------------------------------------------

\section{The nearest point retraction and normal crescents}
 \label{gaps-crescents}

In the previous section we defined the map $\iota$ and interpreted 
it geometrically by  collapsing tangential crescents. In this section 
we will interpret $\iota$ as collapsing crescents from 
 a different decomposition of $\Omega$ that more closely 
approximates the geometry of the dome.
 
% Theorem \ref{SEM} says  there 
%is a quasiconformal map $\sigma$  from $\Omega$ (with its hyperbolic 
%metric) to the dome $S$ (with its hyperbolic path 
%metric) which extends continuously to 
%the identity on the common boundary.
%One can give an explicit description of such a map
%(e.g., \cite{Bishop-ExpSullivan}, \cite{EM87}),
% but there is a simpler  map 
%from $\Omega$ to $S$ which is  close to what we 
%need. 
% This is the ``nearest point retraction'' $R: \Omega\to S$. 

Recall that $S$ is the boundary 
of a convex set in $\uhs$,  so that the nearest point retraction 
defines a Lipschitz map of the complement of this set onto 
its boundary. This map can 
be extended to $\Omega \subset \partial \uhs = \reals^2$  as 
follows:  given a point $z \in \Omega$, 
define nearest point retraction $R : \Omega \to S$ by
expanding horoball tangent at $z \in \Omega$ until
it first hits $S$ at $R(z)$ (a horoball in $\uhs$ is a
Euclidean  ball tangent to the boundary). See Figure \ref{retract}.

\begin{figure}[htbp]
\centerline{
	\includegraphics[height=1.5in]{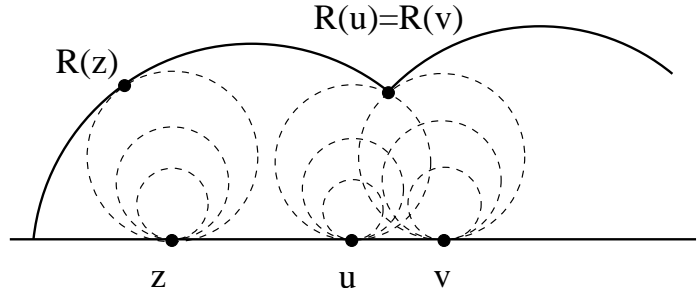}
	}
\caption{ \label{retract} Defining the retraction map $R : \Omega \to S$:
expand a sphere tangent at $z$ until it touches $S$ at $R(z)$.
This map need not be 1-1.}
\end{figure}

Note that the map need not be 1-to-1, i.e., two points in $\Omega$
 can map to the same point on the dome.
Thus it can't always be quasiconformal or even
be a homeomorphism. However, it is always a quasi-isometry
with bounds independent of $\Omega$ and this implies that 
there is a quasiconformal map from $\Omega$ to its
dome with the same boundary values by Theorem \ref{QC-bdy}.
This implies Theorem \ref{SEM}, 
e.g., see \cite{Bishop-Bowen}. 
Moreover, $R$ is quasiconformal in 
some special cases; e.g., Epstein, Marden and Markovic prove in 
\cite{EMM-convexregions} that for Euclidean convex domains the retraction
map is $2$-quasiconformal.

This map is called the nearest point retraction because it is the 
continuous extension to the boundary of the map in $\uhs$ that sends
a point to the nearest  point of $S$ in the hyperbolic metric, 
$\rho_{\uhs}$. 
See Appendix \ref{background} for the definition of the hyperbolic metric 
on $\disk$ and $\uhs$.   The surface 
$S$ has an important related metric, $\rho_S$. This is the hyperbolic 
path metric on $S$ defined by taking the shortest hyperbolic 
length of all paths that connect two points and stay on $S$.
Clearly $\rho_{\uhs} |_S \leq \rho_S$. The base domain of $\Omega$ 
has its own hyperbolic metric, $\rho_\Omega$, obtained by transporting the 
hyperbolic metric on $\disk$ by any conformal map.

The nearest retraction map $R $ is $C$-Lipschitz from 
$\rho_\Omega$ to $\rho_S$ for 
some $C < \infty$ (e.g., see \cite{Bishop-ExpSullivan}).
  It is easy to prove this for some 
$C$; the  sharp estimate of $C=2$
 is given in \cite{EMM-QH+CHB}
and earlier results are given in
 \cite{Bridgeman-Canary}, \cite{Canary2001}, \cite{EM87}.

Now suppose $\Omega$ is a finitely bent domain.
Then the dome $S$ of $\Omega$ is a finite union of 
geodesic faces. On the interior of each face 
the retraction map has a well defined inverse and
the images of the faces under $R^{-1}$ are called the 
``gaps''. 
The inverse images of the bending geodesics 
are crescents that separate the gaps.
 These are called ``normal crescents'' since their two
boundary arcs are perpendicular to the two arcs of 
$\partial \Omega$ that meet at the common vertex.
Therefore, we will call this decomposition of $\Omega$ 
the ``normal crescent decomposition''.
Refer back to Figure \ref{gap-cres-dome}; that picture shows a
polygon, a finitely bent approximation, the normal  crescent 
decomposition and the dome.
See Figure \ref{GC} for  more examples of gap/crescent  decompositions.

\begin{figure}[htbp]
\centerline{
   \includegraphics[height=2.0in]{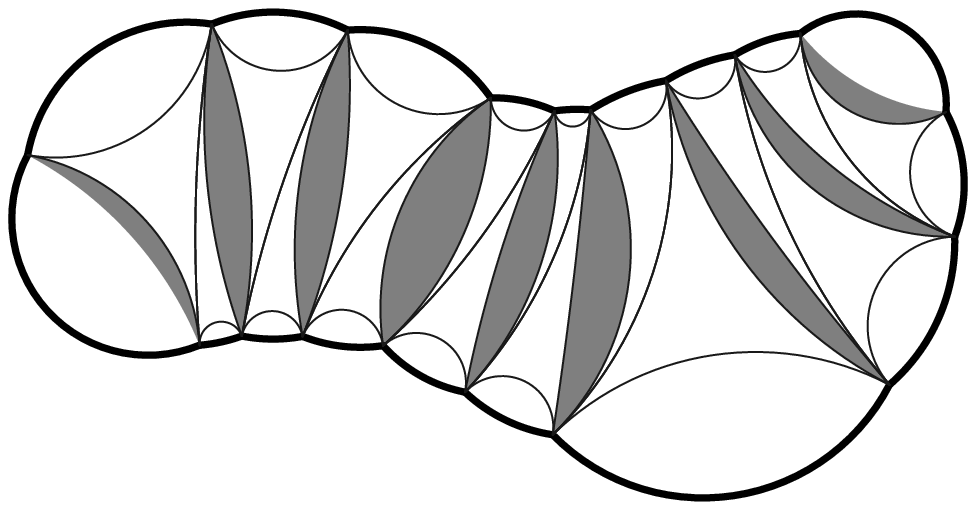}
      $\hphantom{xxx}$
   \includegraphics[height=2.0in]{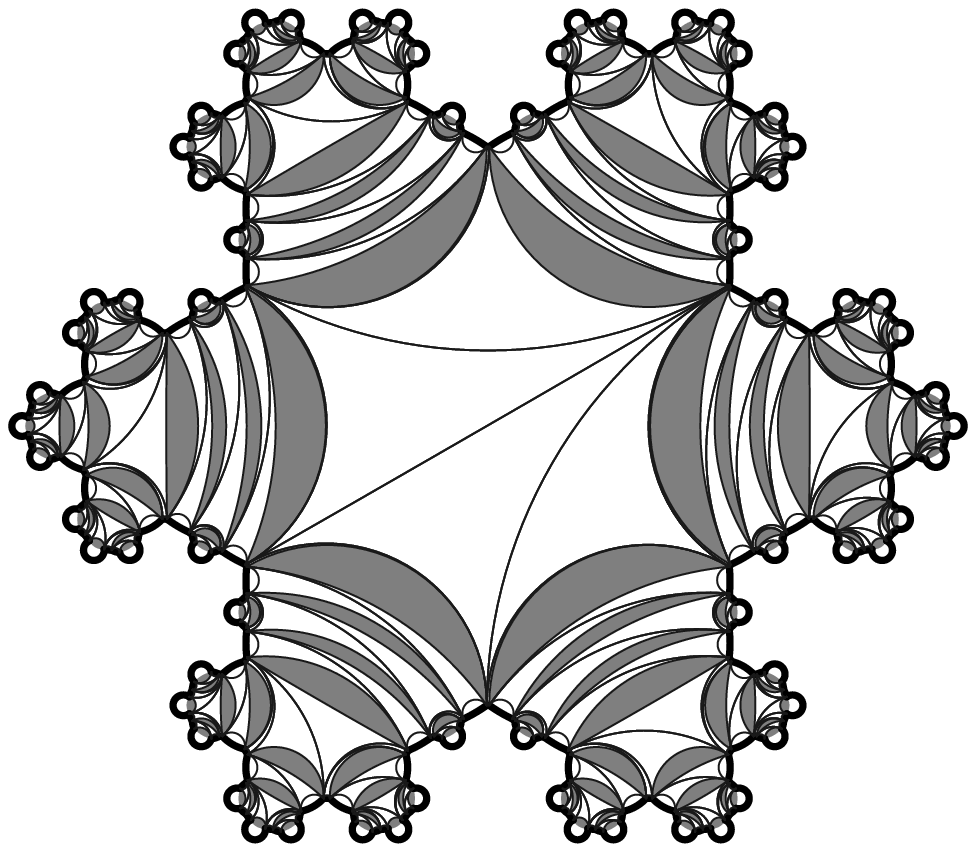}
}
\centerline{
 \includegraphics[height=2.0in]{cd8t=10.ps}
      $\hphantom{xxx}$
   \includegraphics[height=2.0in]{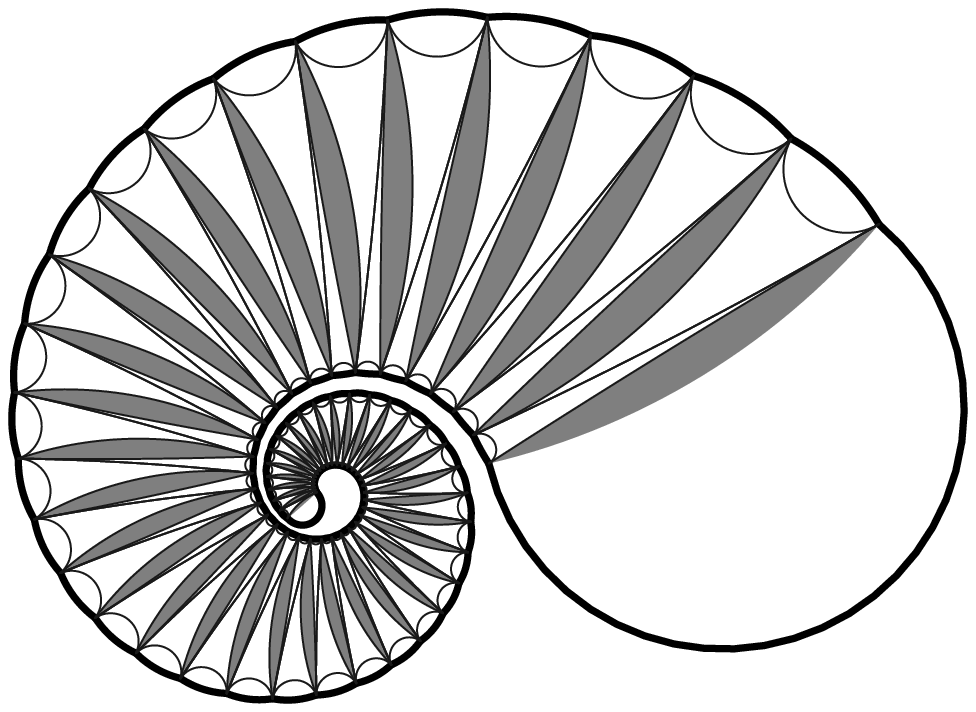}
}
\caption{ \label{GC} 
Normal crescent decompositions for some finitely bent domains.
Also drawn are  arcs triangulating the gaps. These are added 
to make the bending lamination complete (see Section \ref{lamin=linear}).
% Gap-crescent decompositions for the
%   four finitely bent domains $\Omega_0, \Omega_1, \Omega_2, \Omega_3$.
}
\end{figure}

 If a gap $G$ corresponds to a face $F \subset S$
then $G \subset D$, the disk in $\Omega$ that 
is the base of the hyperplane containing the face $F$.
We will call $D$ the ``base disk'' of $G$
Moreover, $G$ is the hyperbolic convex hull in $D$ of the 
set where $F$ meets $\partial \Omega$.
The angle of a normal crescent $C$ is the same 
as the angle made by the faces of the dome that meet at the 
corresponding bending geodesic.  
 $C$ is foliated by circular arcs that are orthogonal 
to both boundary arcs and each of these arcs is collapsed
to single point by $R$. Thus for a finitely bent domain $\Omega$, 
$R$ will never be a homeomorphism (unless $\Omega$ is a 
disk).

The two vertices of each normal crescent are also 
the vertices of a  crescent in the tangential 
crescent decomposition of $\Omega$. Moreover, corresponding 
crescents from the two decompositions have the same angle, 
and hence are simply images of each other by a $\pi/2$ 
elliptic rotation around the two common vertices. 
See Figure \ref{2-decoms}.
Collapsing the two types of crescents simply gives the two different 
continuous extensions to the interior  of the same map on the 
boundary (namely $\iota$).

\begin{figure}[htbp] 
\centerline{
 \includegraphics[height=2in]{cd2lines.ps}
$\hphantom{xx}$
 \includegraphics[height=2in]{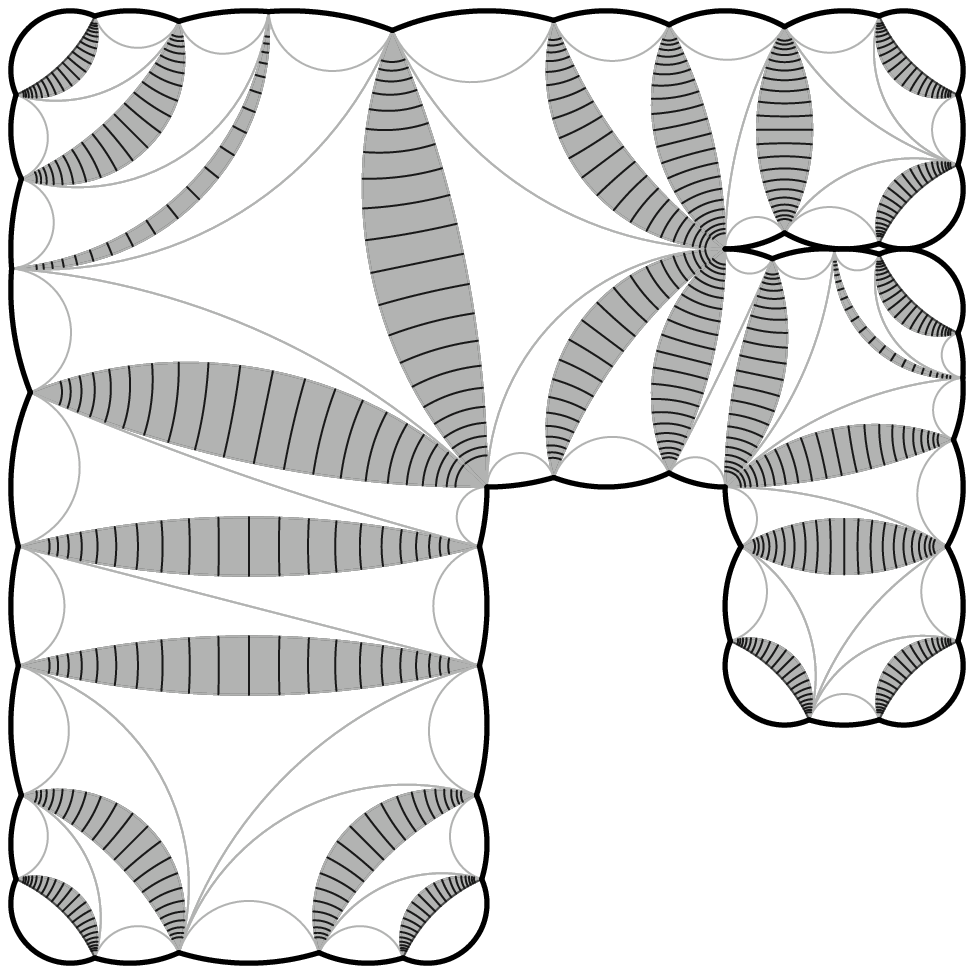}
 }
\caption{\label{2-decoms} 
The tangential and normal crescent decomposition for a domain.
There is a 1-to-1 correspondence between crescents in the two 
pictures; corresponding crescents have the same vertices and same
angle, but are rotated by $\pi/2$.}
\end{figure}

Both decompositions cut $\Omega$ into a ``disk'' and a union 
of crescents. In the tangential decomposition, it is 
a single connected disk, but in the normal decomposition
the disk itself is broken into pieces called the gaps.
%However, the disk can be reassembled as a union of 
%M{\"o}bius images of the gaps.  
The map $\varphi= \iota \circ R: \Omega \to S \to \disk$
 is M{\"o}bius on each gap and collapses every crescent to 
a hyperbolic geodesic in $\disk$, thus the disk is written 
as a union of M{\"o}bius  images of gaps. For example, see
Figures \ref{fol-square}.  % and \ref{fol-square2}. 
The picture on the left shows a normal crescent decomposition
of a square and on the right are the $\varphi$ images of the 
gaps in the disk.  The images of the crescents is a finite 
union of geodesics that is called the ``bending lamination''
of $\Omega$.  If we record the angle of each crescent and 
assign it to the corresponding geodesic in the bending 
lamination, then we get a ``measured lamination'', and this 
data is enough to recover $\Omega$, up to a M{\"o}bius image.
We will discuss laminations further in Section \ref{lamin=linear}.

\begin{figure}[htbp]
\centerline{ 
             \includegraphics[height=2in]{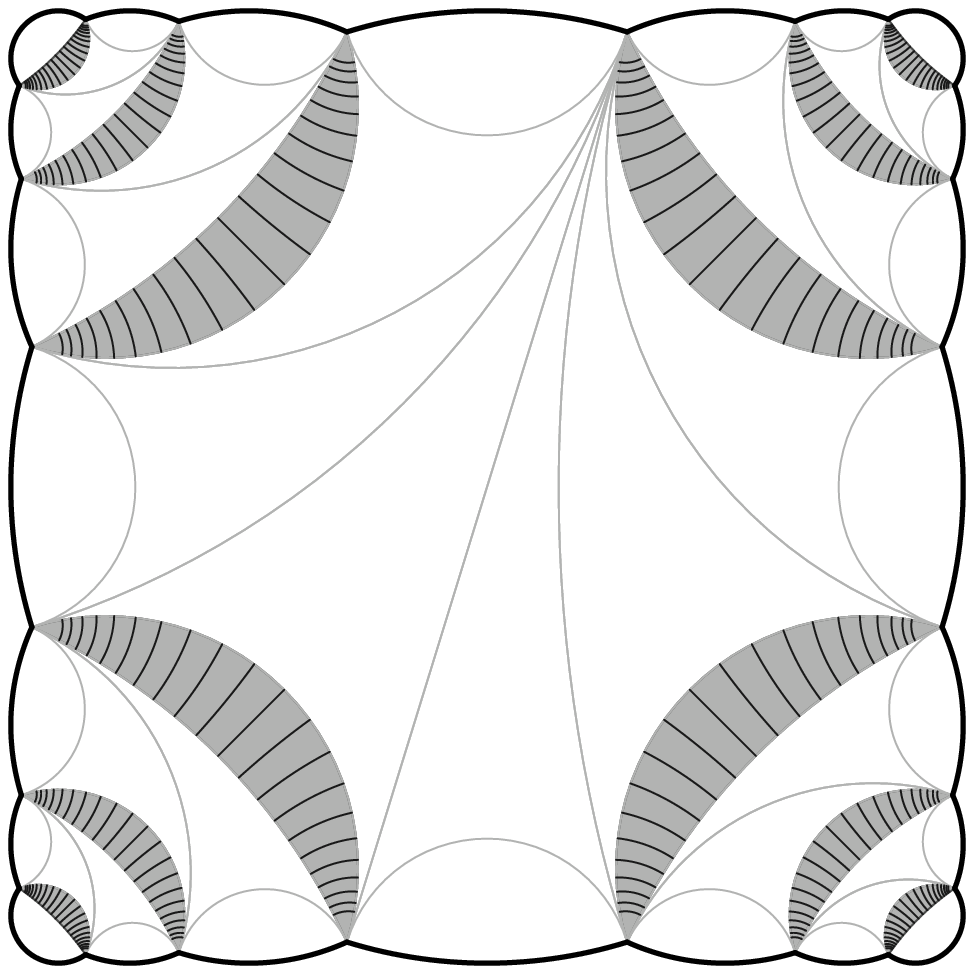}
             $\hphantom{xxx}$
             \includegraphics[height=2in]{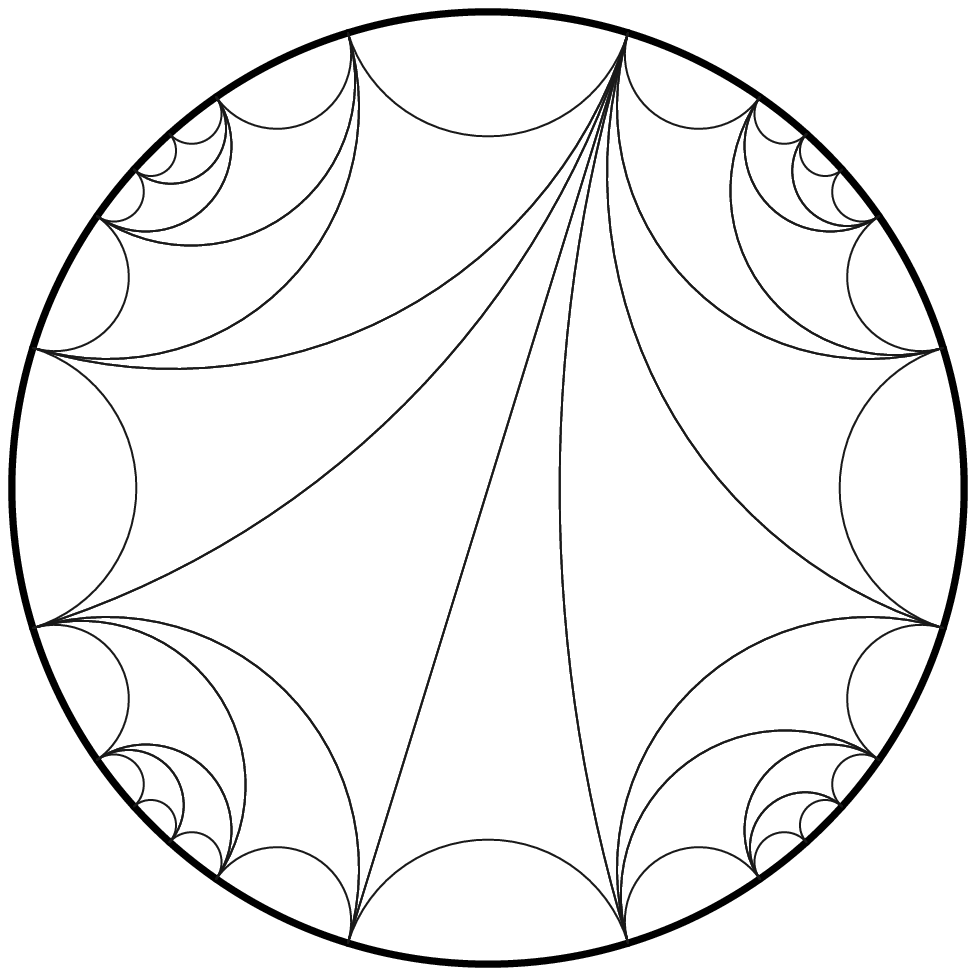}
 }
\caption{ \label{fol-square}
A normal crescent decomposition of a square and the corresponding 
bending lamination in the disk. We can recover the decomposition 
from the lamination by ``thickening'' each geodesics to a crescent 
of the correct angle.}
\end{figure}

We can recover the normal crescent decomposition from the bending 
lamination by ``thickening'' each bending geodesic to a crescent 
of the correct angle, and moving the gaps by the corresponding 
elliptic transformations.   If we do this continuously, we obtain 
a family of domains connecting the disk to $\Omega$.
For $0\leq t \leq 1$, 
let $\Omega_t$ be the domain obtained by replacing a crescent of  
angle $\alpha$ in the normal decomposition  by a crescent
or angle $t \alpha$. 
%(Note we would get the same domain if we 
%replaced crescents in the tangential decomposition).
See Figures \ref{scale2} to \ref{disk-poly3} for some examples 
of these 1-parameter families.
In general, the intermediate domains need not be planar, but 
we can think of them as Riemann surfaces that are constructed 
by gluing together crescents and gaps of given sizes along 
their edges. 
Figure \ref{disk-poly3} shows an example where the intermediate domains are 
not planar (one sees some small overlap for parameter value 
$t=.99$; bigger overlaps could be produced by other
examples).

\begin{figure}[htbp] \label{scale1}
\centerline{ 
\includegraphics[height=1.5in]{cd1t=0.ps}
$\hphantom{xxx}$
\includegraphics[height=1.5in]{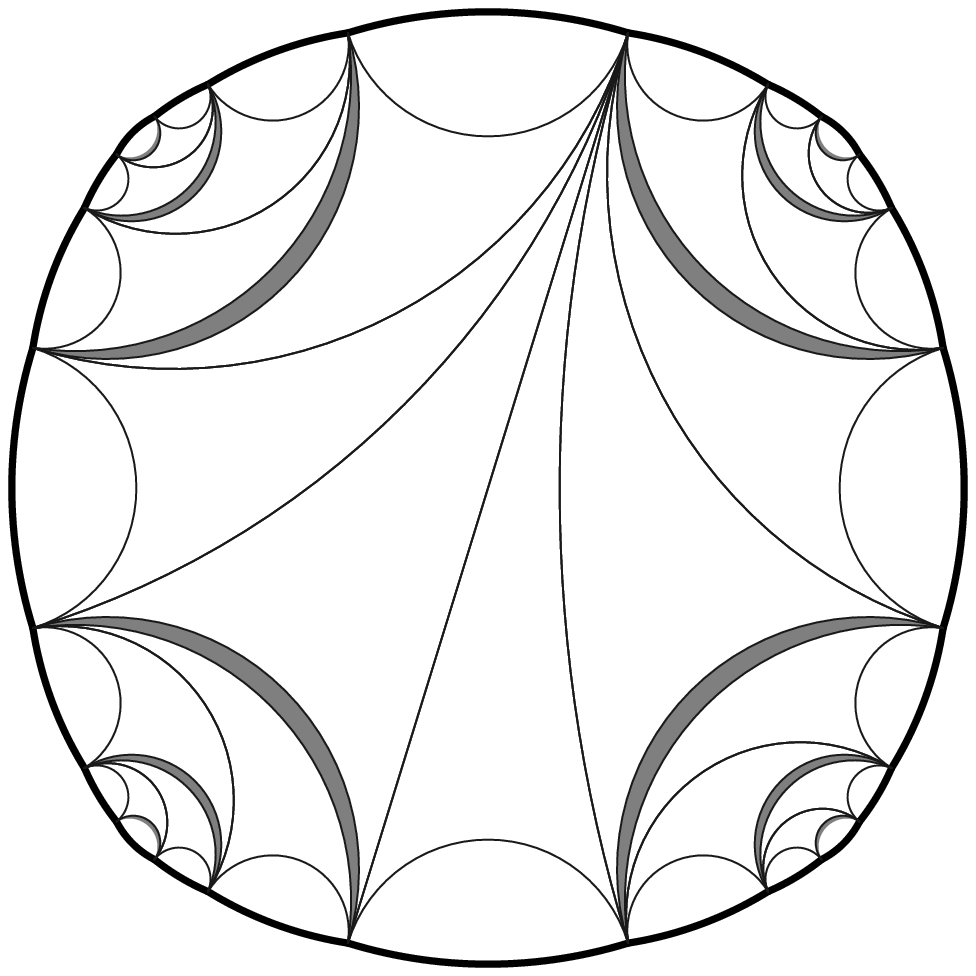}
$\hphantom{xxx}$
\includegraphics[height=1.5in]{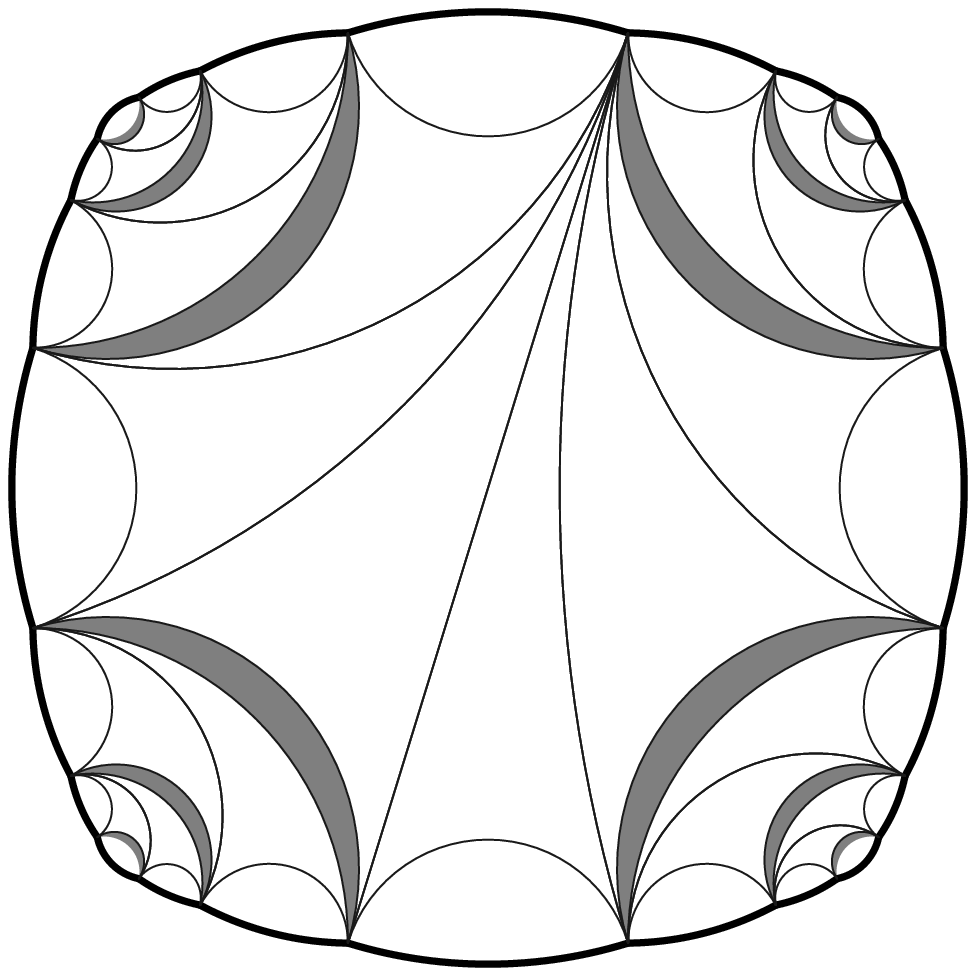}
 }
\centerline{ 
\includegraphics[height=1.5in]{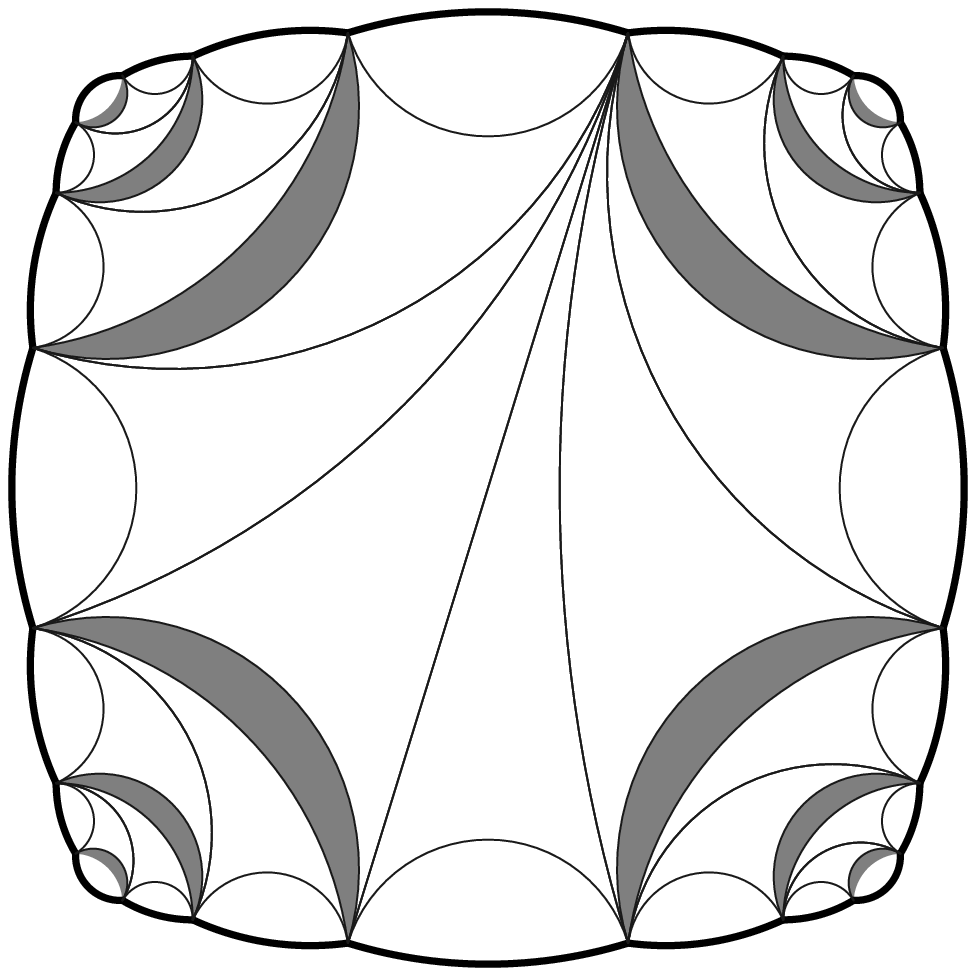}
$\hphantom{xxx}$
\includegraphics[height=1.5in]{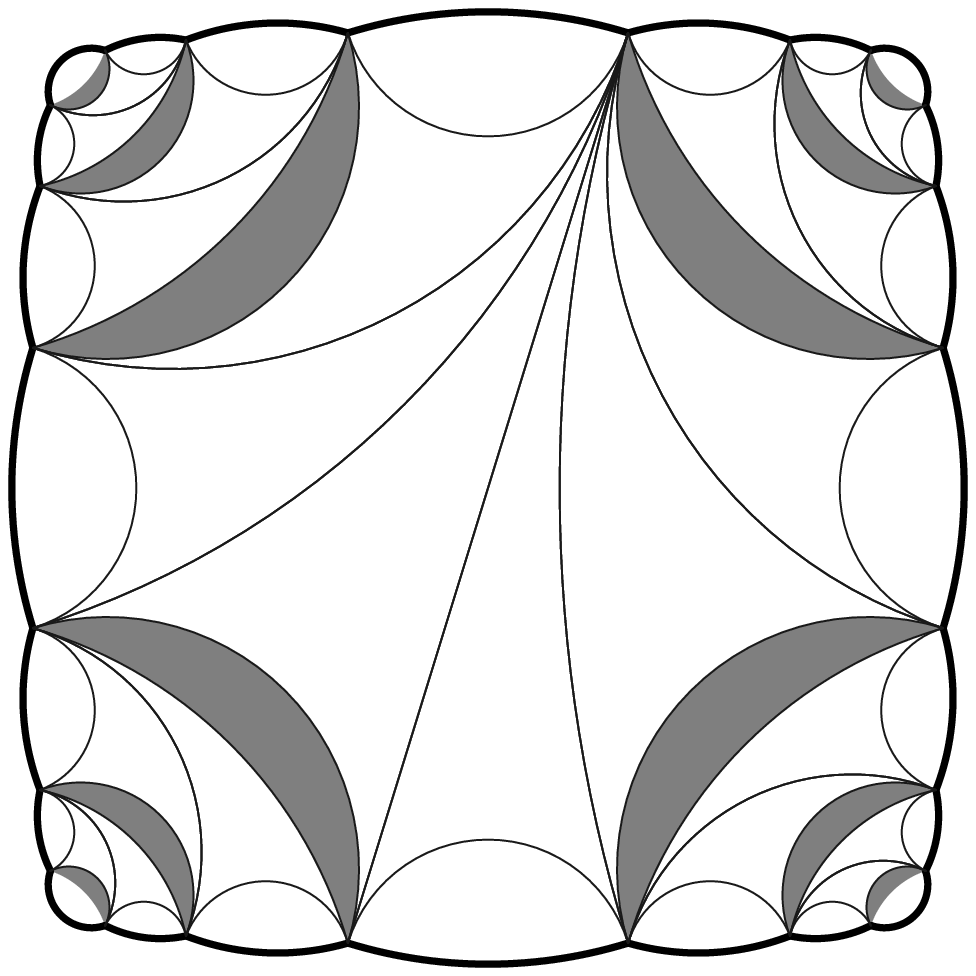}
$\hphantom{xxx}$
\includegraphics[height=1.5in]{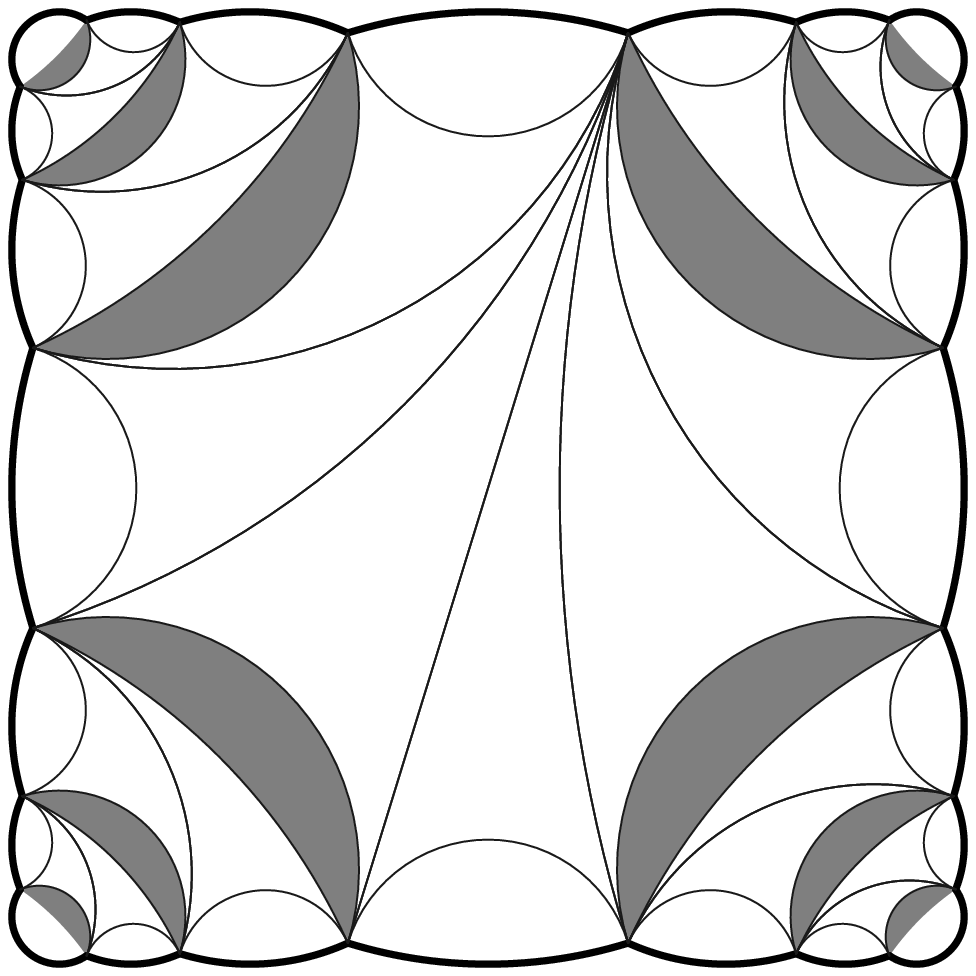}
 }
\caption{ \label{scale2}
The one parameter family connecting the disk 
to a finitely bent approximation of the square.
In each picture the angles have been 
multiplied by $t = 0,.2,.4,.6,.8,1$}
\end{figure}
\begin{figure}[htbp] 
\centerline{ 
\includegraphics[height=1.5in]{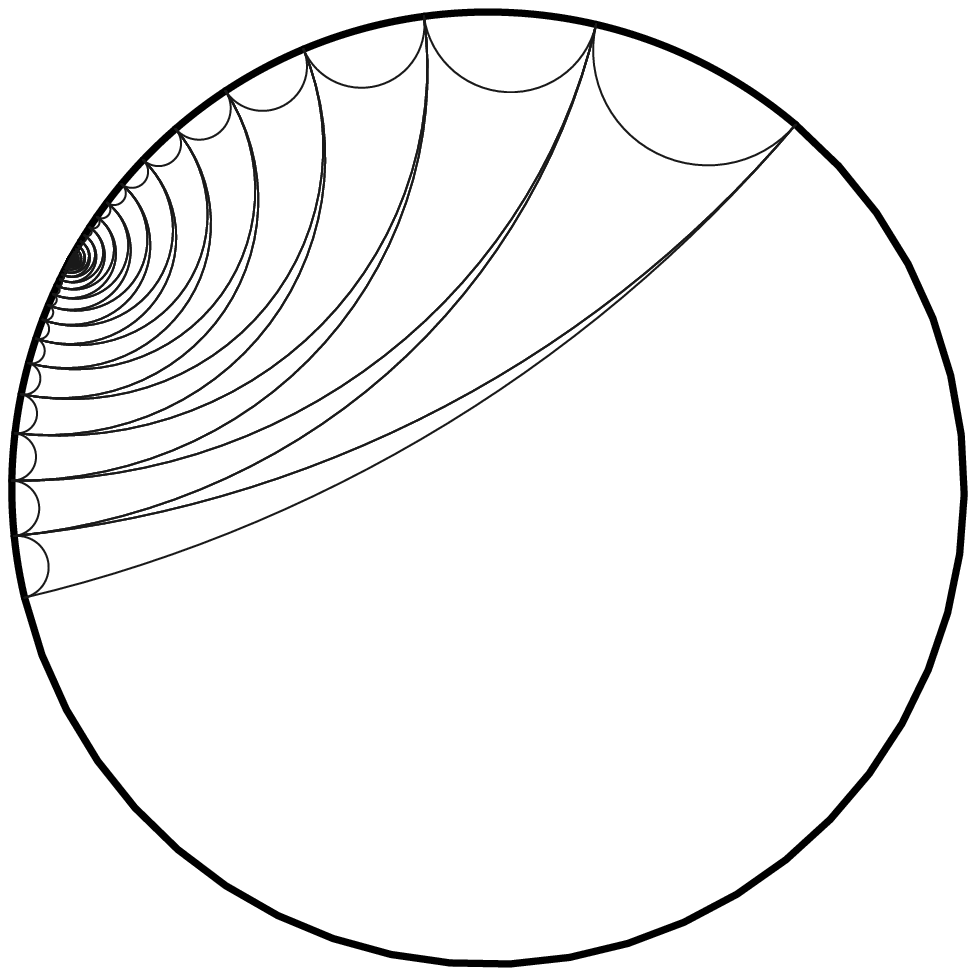}
$\hphantom{xxx}$
\includegraphics[height=1.5in]{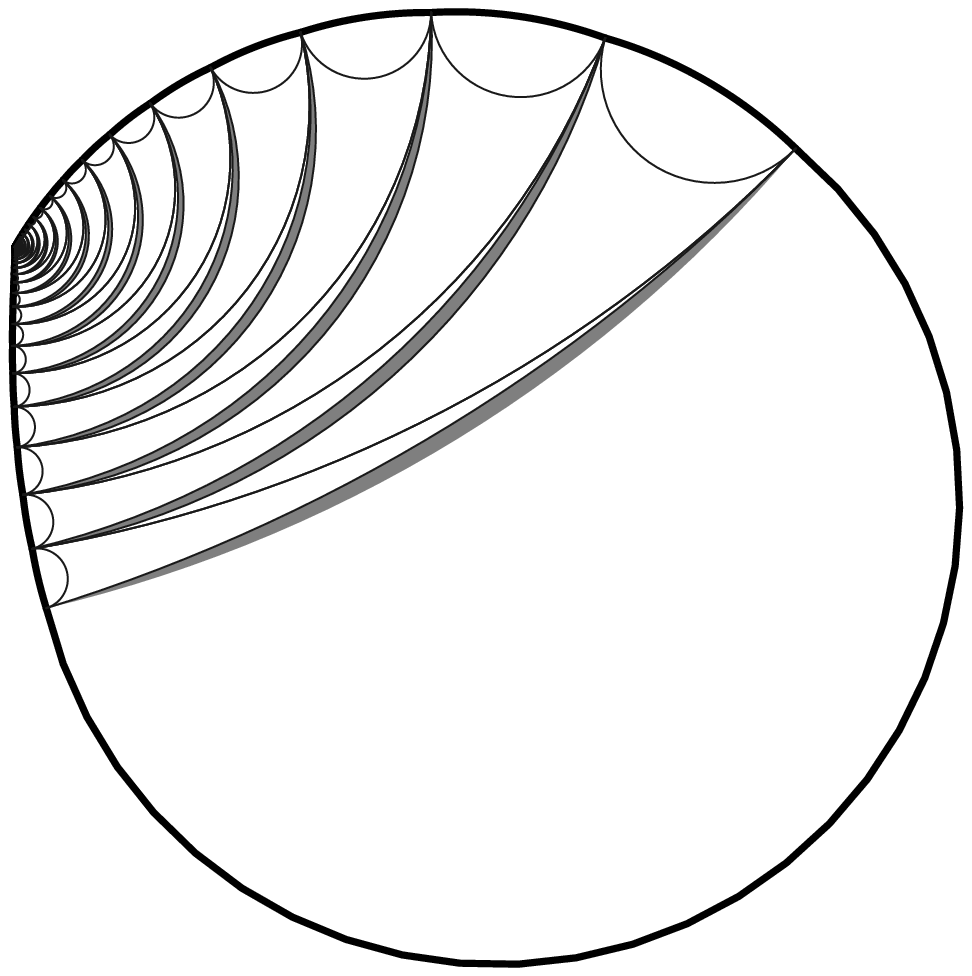}
$\hphantom{xxx}$
\includegraphics[height=1.5in]{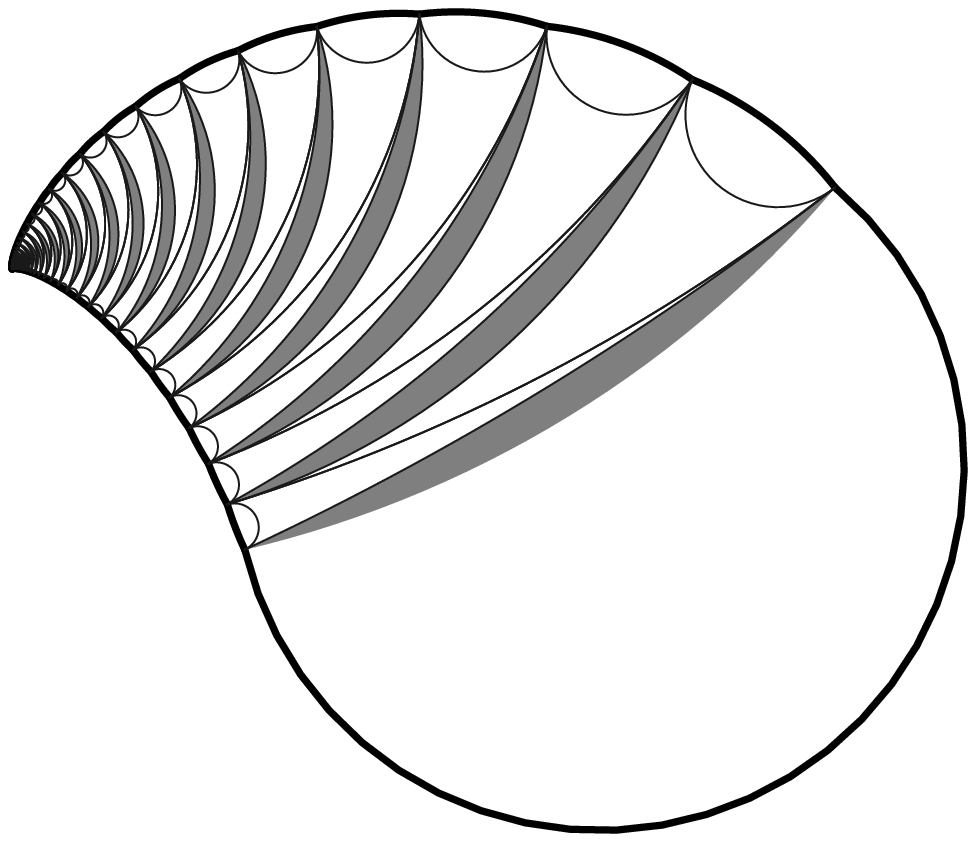}
 }
\centerline{ 
\includegraphics[height=1.5in]{cd7t=6.ps}
$\hphantom{xxx}$
\includegraphics[height=1.5in]{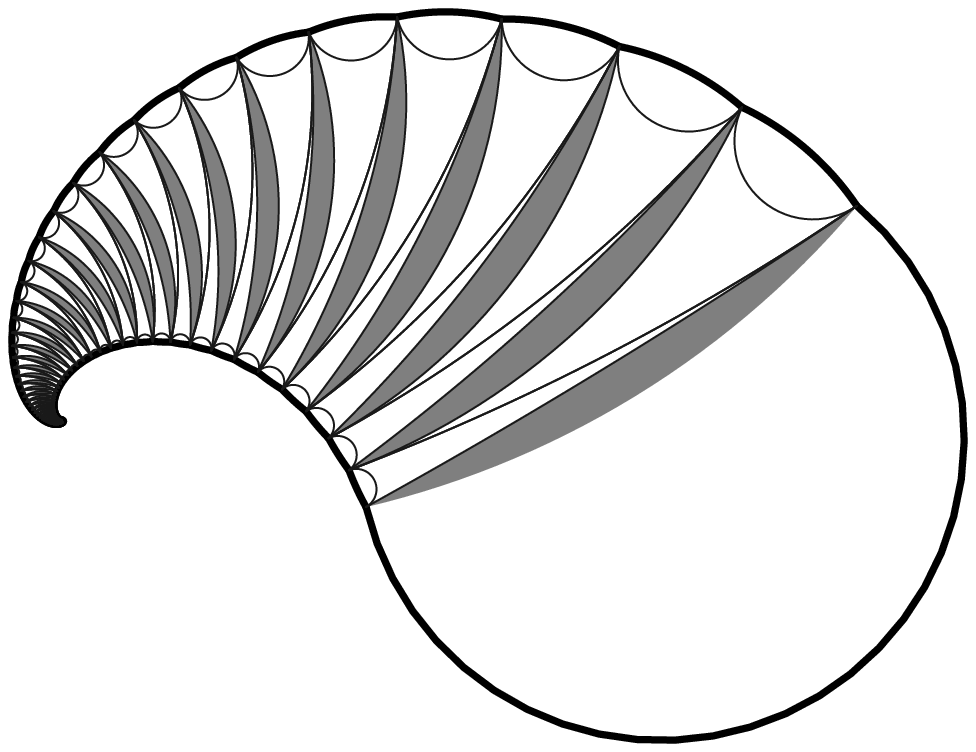}
$\hphantom{xxx}$
\includegraphics[height=1.5in]{cd7t=10.ps}
 }
\caption{\label{poly7}
An approximate logarithmic spiral with  $t = 0,.2,  .4, .6,  .8, 1$.
Logarithmic spirals were used by Epstein and Markovic in 
\cite{EM-log-spiral} to disprove Thurston's $K=2$ conjecture.
They showed that (in a precise sense) certain spirals have too much gray.
}
\end{figure}
\begin{figure}[htbp] \label{scale4}
\centerline{ 
\includegraphics[height=1.5in]{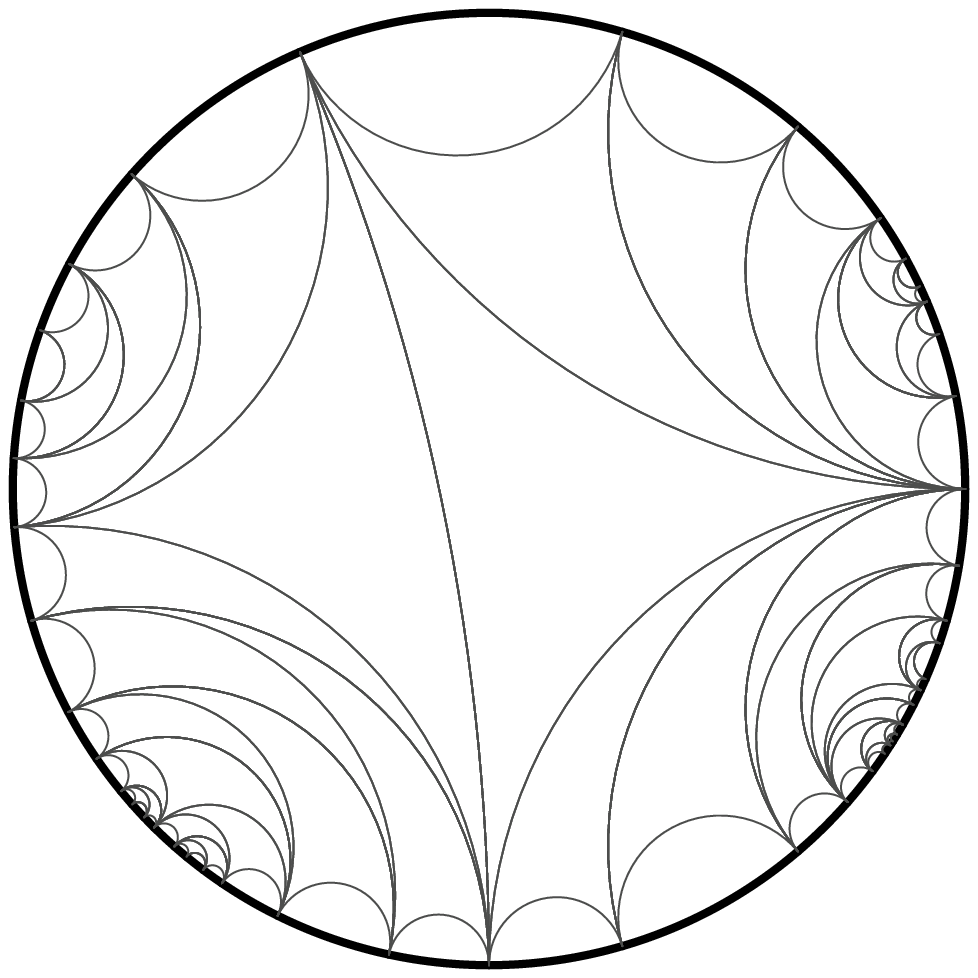}
$\hphantom{xxx}$
\includegraphics[height=1.5in]{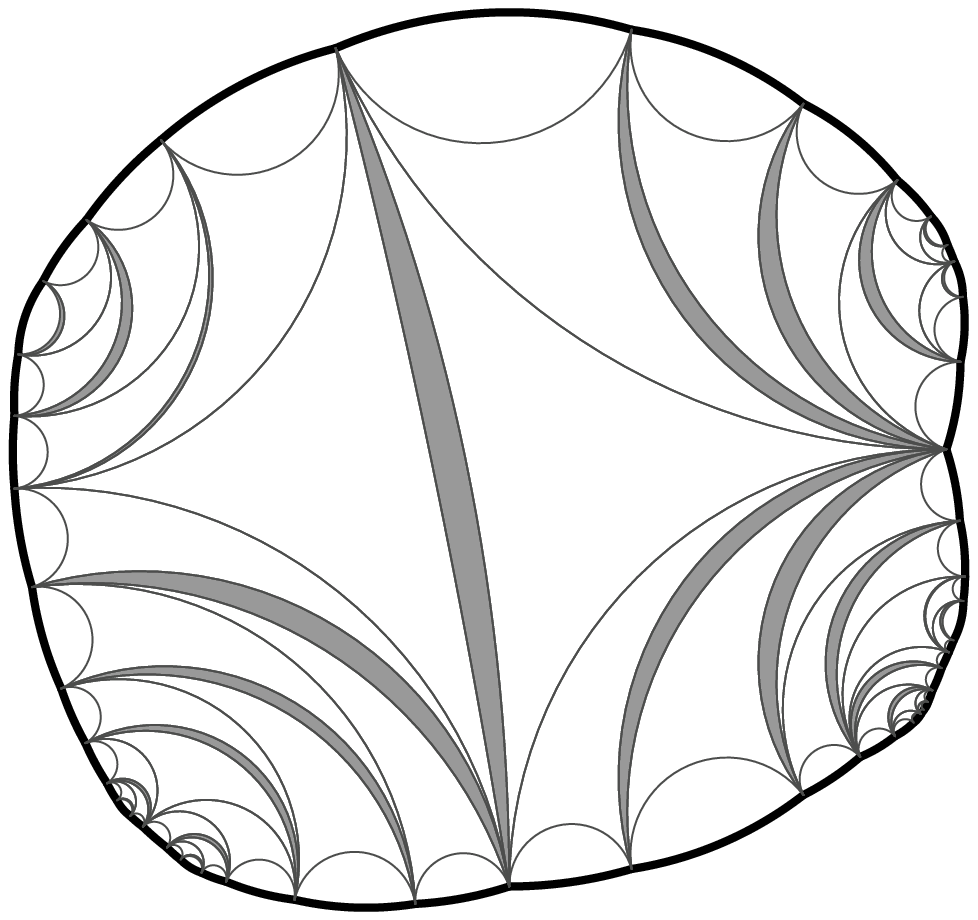}
$\hphantom{xxx}$
\includegraphics[height=1.5in]{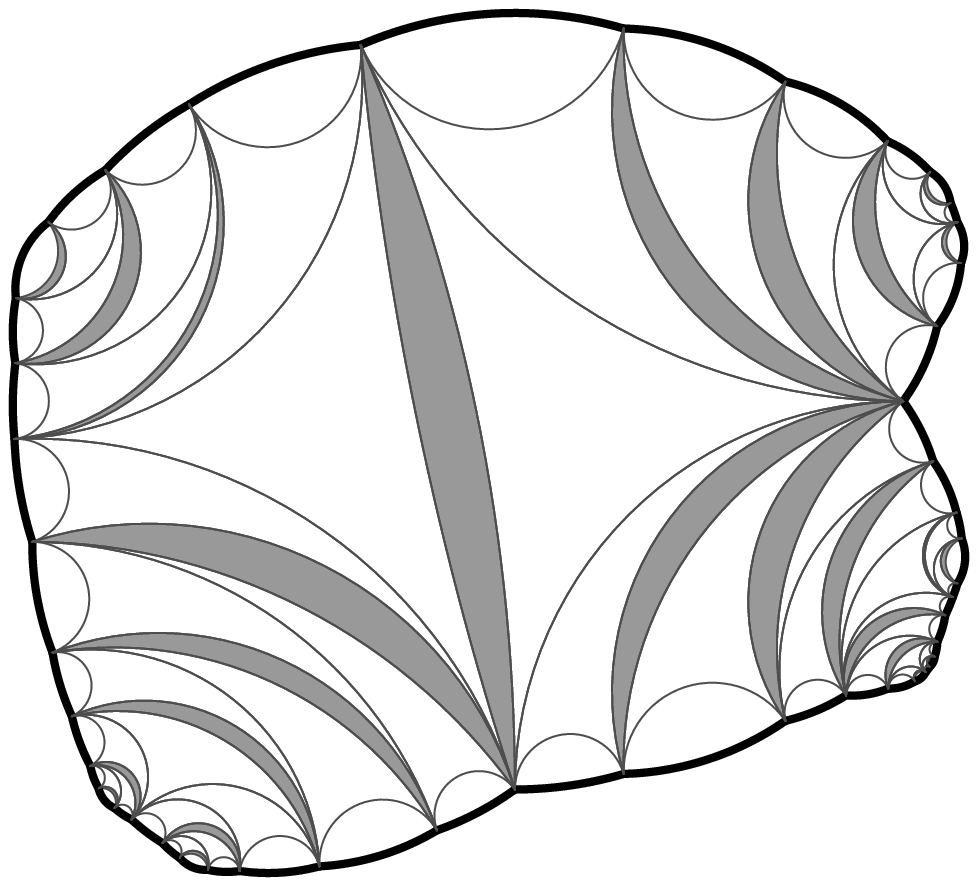}
 }
\centerline{ 
\includegraphics[height=1.5in]{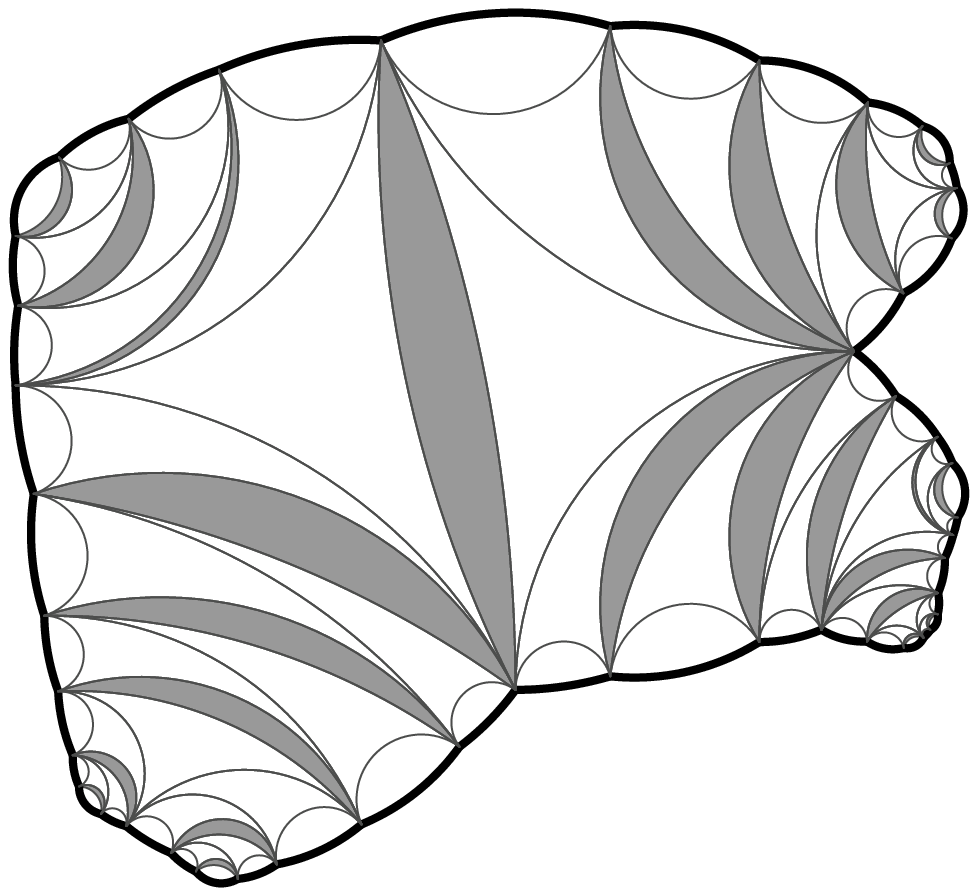}
$\hphantom{xxx}$
\includegraphics[height=1.5in]{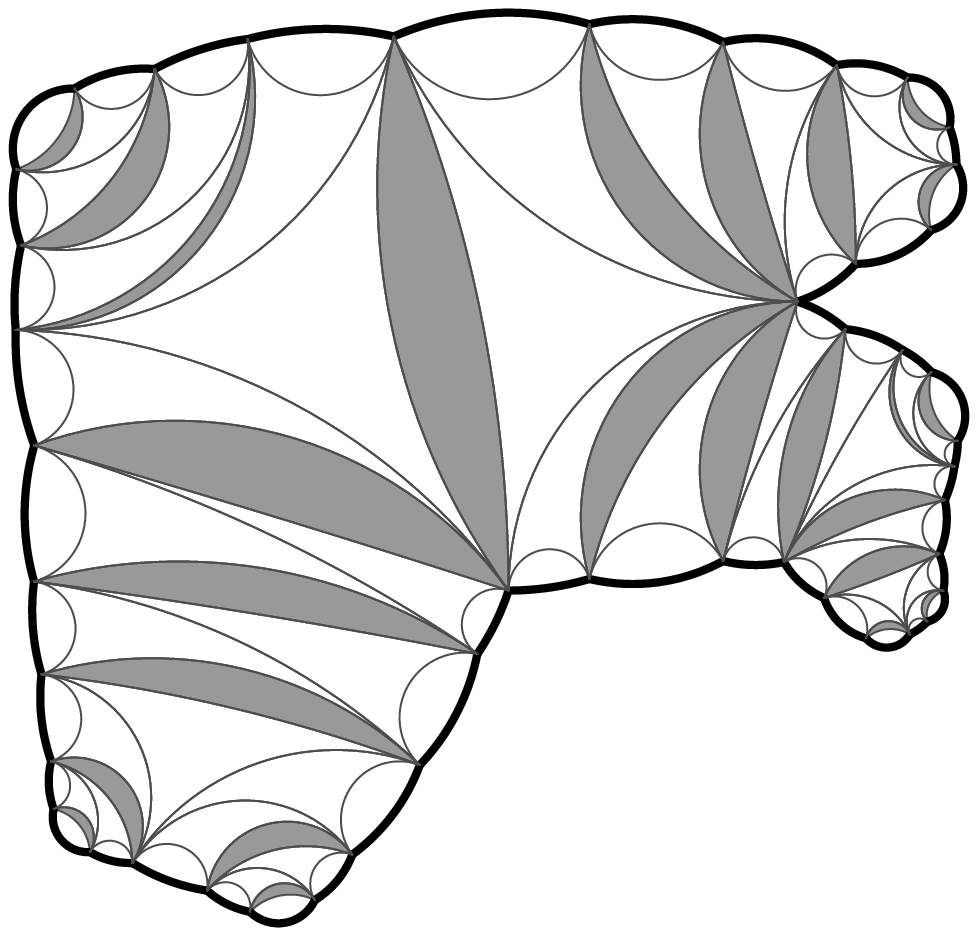}
$\hphantom{xxx}$
\includegraphics[height=1.5in]{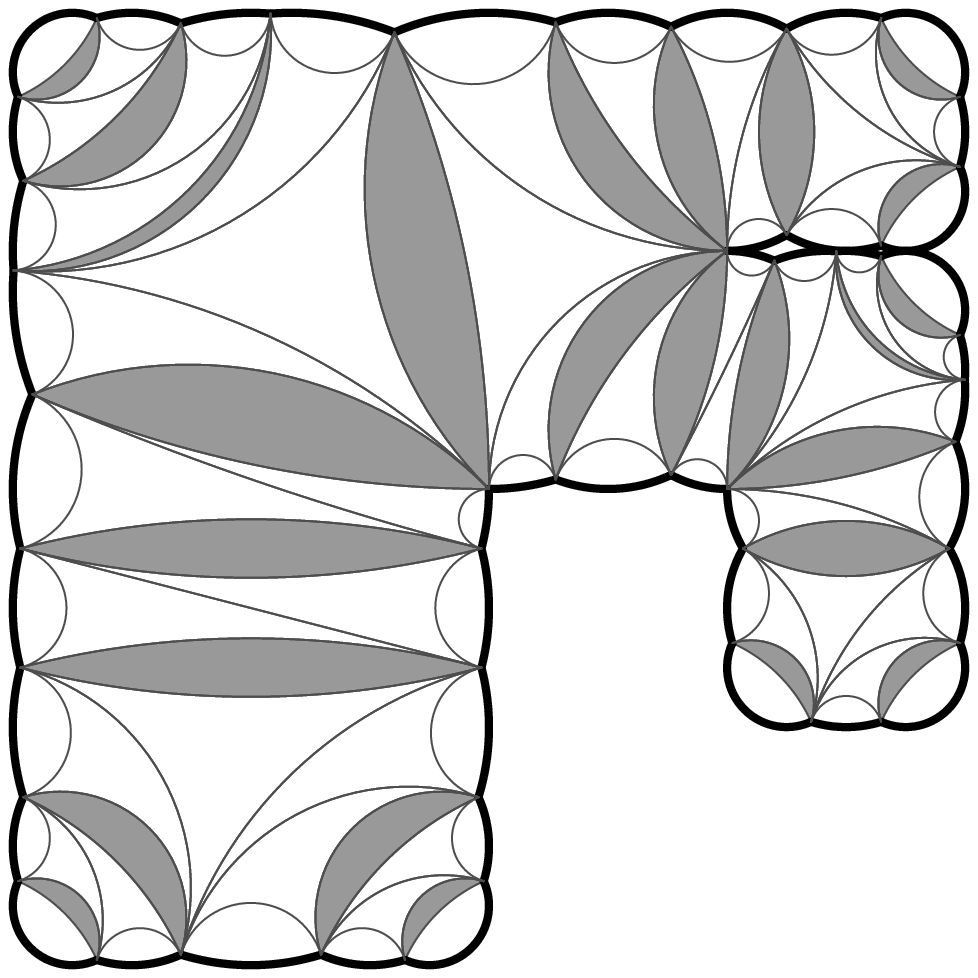}
 }
\caption{\label{poly2}
 The domain from Figure \ref{2-decoms}  with  $t = 0,.2,  .4, .6,  .8, 1$.
}
\end{figure}
\begin{figure}[htbp] 
\centerline{ 
\includegraphics[height=1.5in]{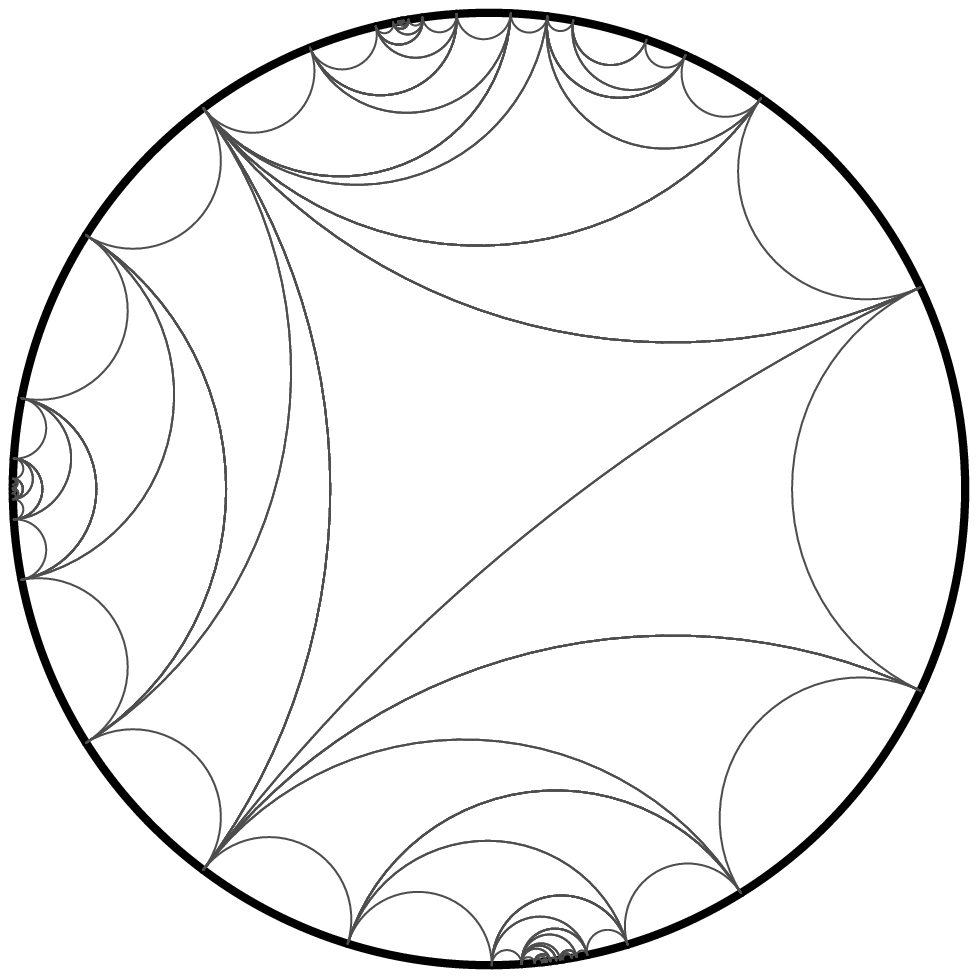}
$\hphantom{xxx}$
\includegraphics[height=1.5in]{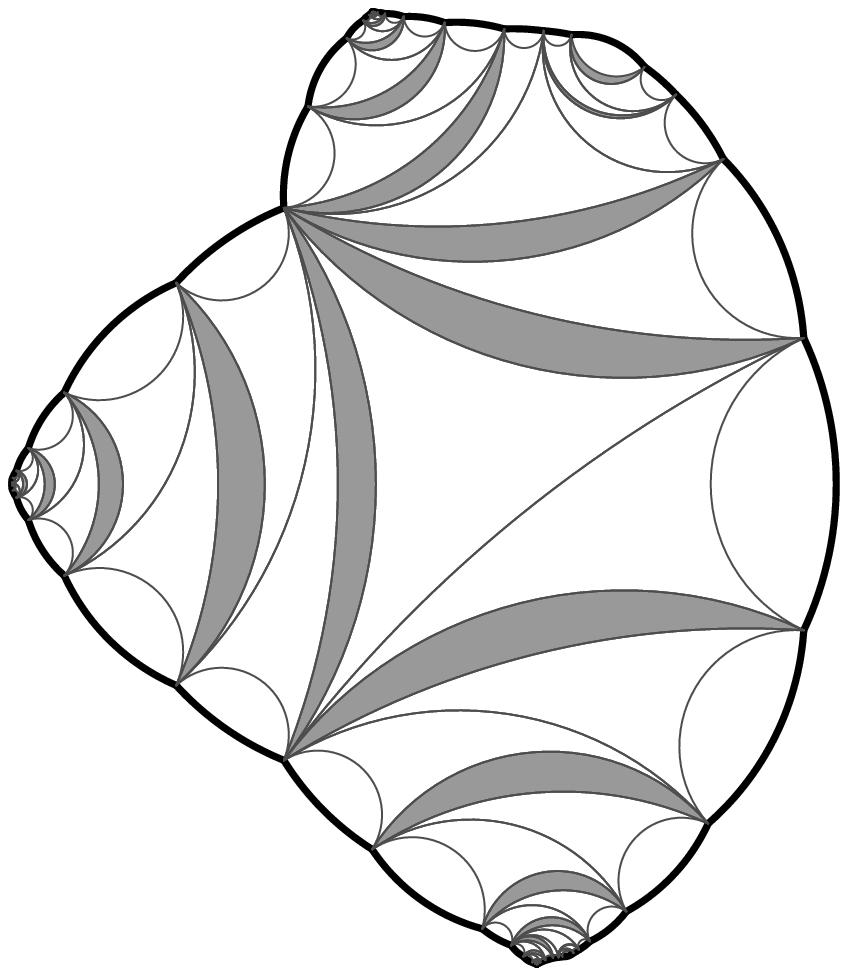}
$\hphantom{xxx}$
\includegraphics[height=1.5in]{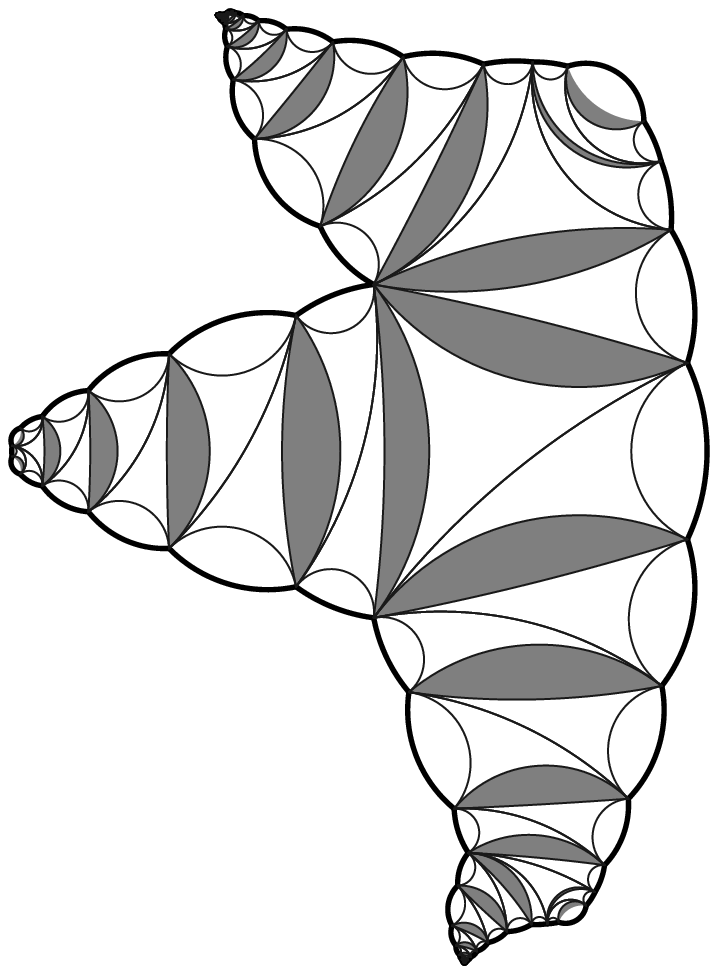}
 }
\centerline{ 
\includegraphics[height=1.5in]{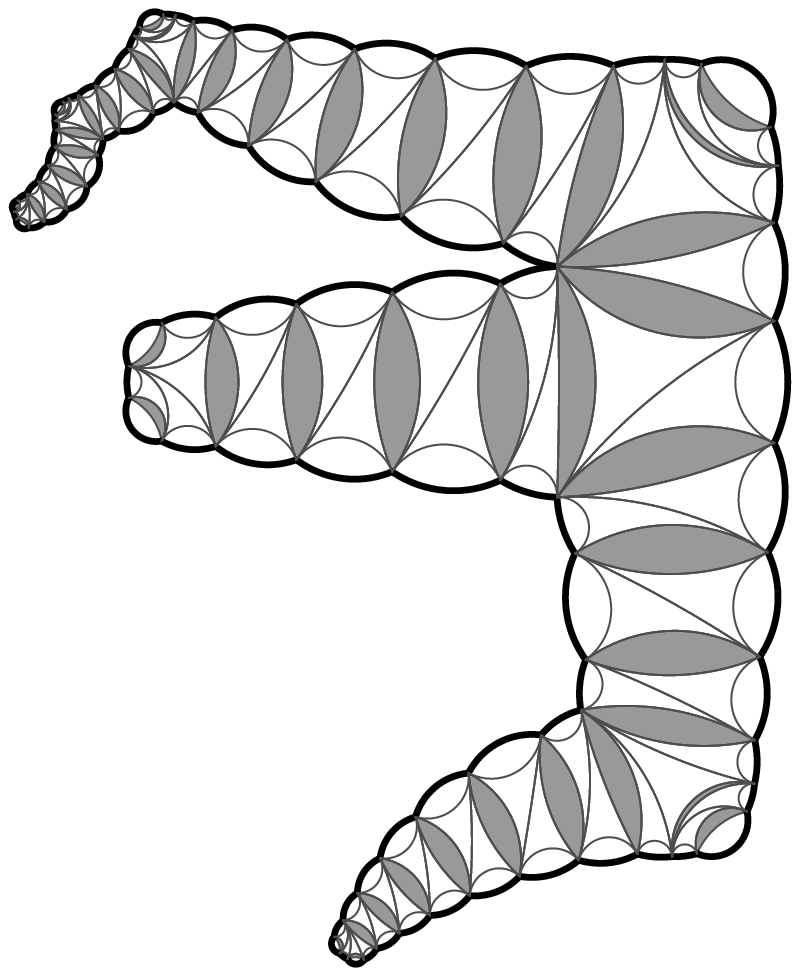}
$\hphantom{xxx}$
\includegraphics[height=1.5in]{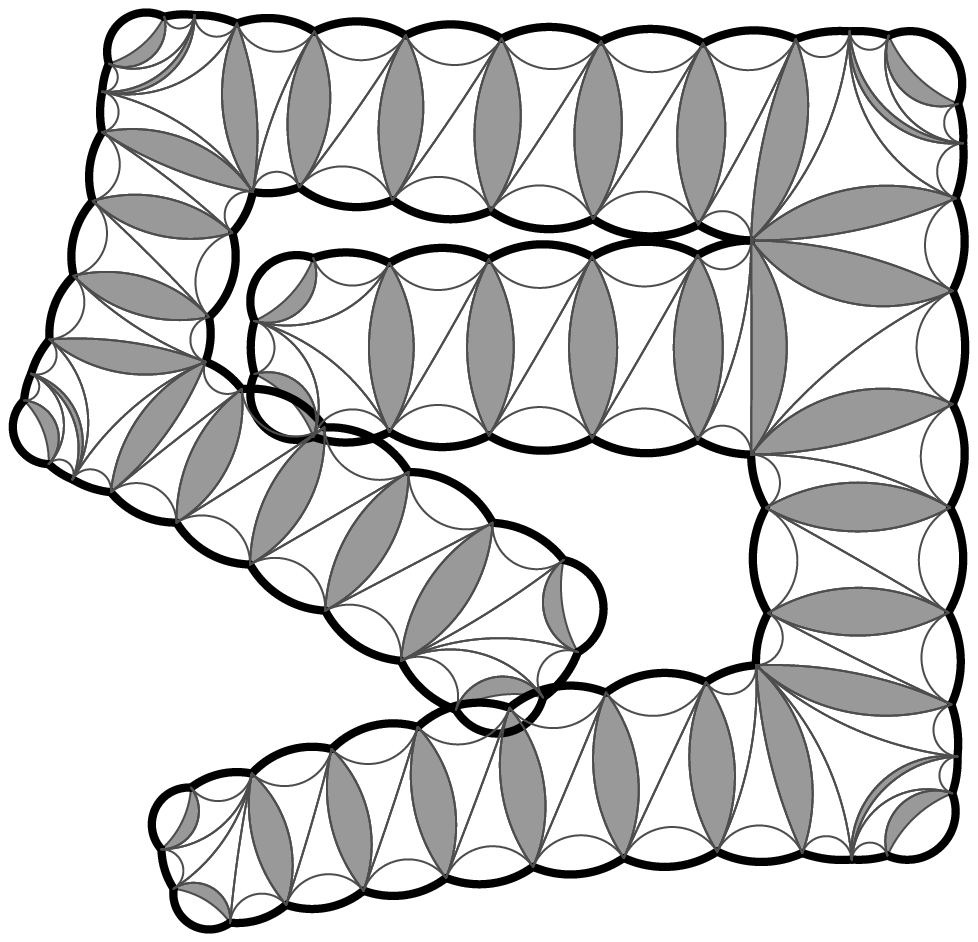}
$\hphantom{xxx}$
\includegraphics[height=1.5in]{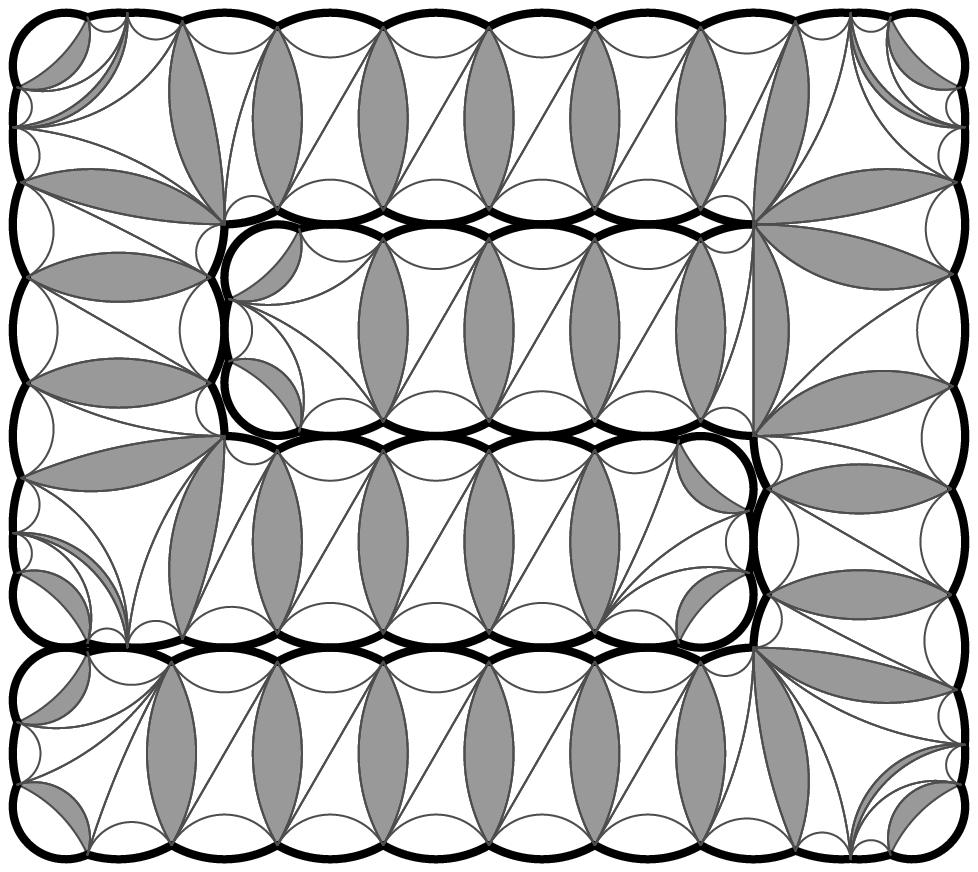}
 }
\caption{\label{disk-poly3}
An example where intermediate domains need not
be planar.  The pictures correspond to multiplying the angles 
by $t = 0, .4, .8, .95, .99, 1$.
Note that the parameter must be very close to 
$1$ before we see the longer corridors clearly.}
\end{figure}

Given a pair of domains $\Omega_s, \Omega_t$ with $0\leq 
s < t \leq 1$, let $\iota_{s,t}: \partial \Omega_t
\to \partial \Omega_s$ be the obvious boundary map obtained
multiplying  the angle of each crescent by $s/t$.
We will extend this boundary map to the interiors
by writing each crescent $C$ in $\Omega_t$ of angle 
 $\alpha$ as a union of crescents
$C_1$, of angle $\alpha s/t$ and $C_2$, of angle 
$\alpha(1-s/t)$. On $C_1$ we collapse each leaf of the 
$E$-foliation to a point (hence $C_1$ is maps to a circular 
arc) and we let our map be M{\"o}bius on $C_2$.
By continuity, 
this M{\"o}bius transformation would have to agree with the 
map on the gap that is adjacent to $C_2$. We will let 
$\varphi_{s,t}: \Omega_t \to \Omega_s$ denote this map.
Let $\rho_s= \rho_{\Omega_s}$ denote the hyperbolic
metric on $\Omega_s$.
%Although this map is not a homeomorphism, we shall see later that
%it is a quasi-isometry between the hyperbolic metrics.
%We will denote this map by $\varphi_{s,t}$.
%
Suppose $N$ is a large integer 
and choose points 
$t_0=0, t_1 = \frac 1N, \dots , t_N =1 $. Let 
$\Omega_k = \Omega_{t_k}$ for $k=0,\dots,N$.
Let $\varphi_k:\Omega_{k+1} \to \Omega_k$ be defined 
by $\varphi_k = \varphi_{\frac kn, \frac{k+1}n}$.
%We shall see later (Lemma \ref{varphi=QI})
%that $\varphi_k$ is a quasi-isometry  with constant bounded by 
%$O(\frac 1N)$.

%------------------------------------------------------------

\section{$\varphi_{s,t} $ is a quasi-isometry} \label{proof-of-SEM}

As noted before, the retraction map $R: \Omega \to S$ is a quasi-isometry. 
 Thus $\varphi = \iota \circ R
: \Omega \to \disk$  is also a quasi-isometry between 
the hyperbolic metrics.
The same is true for the maps
$\varphi_{s,t}$  for any $0 \leq s < t \leq 1$,
with constant bounded by $O(|s-t|)$. 
This result is the goal of this section and the next.

Consider the bending lamination $\Gamma$ associated to a 
finitely bent domain $\Omega$.
Suppose a hyperbolic $r$-ball hits 
 geodesics  in $\Gamma$ with angles $\alpha_1,\dots, 
\alpha_m$.  We want to show that there is an upper
bound $\sum_j \alpha_j \leq B(r)$ that only depends on $r$.
%Most proofs of the Sullivan-Epstein-Marden theorem 
%(Theorem \ref{SEM}) come 
%down to showing that $B(s) < \infty$ for some 
%$s > 0$, although different papers use variations, 
%e.g., the amount bending encountered by a special 
%path of length $s$.
See
 \cite{Bishop-ExpSullivan}, \cite{EM87} for some variations of this 
 idea.
Estimates of $B$ are also closely tied to results of 
Bridgeman \cite{Bridgeman-H3}, \cite{Bridgeman-H2} on 
bending of surfaces in hyperbolic spaces.
 Here we shall give a simple conceptual proof 
without an explicit estimate. The number of 
bending geodesics that hit the $r$-ball  has no uniform bound
(if it 
did the lemma would be trivial since every crescent 
has angle $\leq 2 \pi$). However, the total bending 
of these geodesics is bounded in terms of $r$.
This result (together with Lemma \ref{Newton-radius}) is 
one of the main pillars on which the whole paper rests;
the uniform estimate of bending  eventually becomes 
the uniform estimates of time and accuracy given in 
Theorems \ref{thmQC} and \ref{main}.

\begin{lemma} \label{B-est}
There is a $C < \infty$ so that $ B(r) \leq C e^{3 r}.$
\end{lemma}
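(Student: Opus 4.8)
I would prove this by a volume-packing argument in $\uhs$. The plan is to attach to each bending geodesic $g_j$ of the dome $S$, with bending angle $\alpha_j$, the wedge $W_j$ consisting of those points of $\uhs\setminus C(\Omega^c)$ whose hyperbolic nearest point on $S$ lies on $g_j$; this is a dihedral wedge of angular width $\alpha_j$ along $g_j$. Because nearest-point projection $R:\uhs\setminus C(\Omega^c)\to S$ onto the closed convex body $C(\Omega^c)$ is single-valued and the $g_j$ are disjoint (being a lamination), the sets $W_j=R^{-1}(g_j)$ are pairwise disjoint. What remains is to show (a) every $g_j$ meeting the $r$-ball pushes at least volume $c\,\alpha_j$ of $W_j$ into a slightly larger ball, and (b) a ball of radius $O(r)$ in $\uhs$ has volume $O(e^{2r})$; disjointness then forces $\sum_j\alpha_j\le Ce^{2r}\le Ce^{3r}$.

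\textbf{The volume estimate.} I would carry this out as follows. By Thurston's isometry $\iota:S\to\disk$ (Theorem \ref{Dome=disk}), a leaf of $\Gamma$ meeting the hyperbolic $r$-ball about $z_0$ is the image of a bending geodesic $g_j$ meeting the $\rho_S$-ball of radius $r$ about $x_0=\iota^{-1}(z_0)$; since $\rho_{\uhs}|_S\le\rho_S$, such a $g_j$ passes through $B_{\uhs}(x_0,r)$. Let $p^*\in g_j$ be the point nearest $x_0$ and $\sigma\subset g_j$ the subarc of hyperbolic length $1$ centered at $p^*$; the hyperbolic Pythagorean theorem gives $\sigma\subset B_{\uhs}(x_0,r+1)$. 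Sweeping $\sigma$ by the geodesics issuing perpendicularly into the width-$\alpha_j$ fan that defines $W_j$, followed for hyperbolic length $1$, yields a region $V_j\subset W_j\cap B_{\uhs}(x_0,r+2)$; the Jacobian of these normal coordinates is bounded below over this compact range, so ${\rm vol}(V_j)\ge c\,\alpha_j$ for a universal $c>0$ (here $\alpha_j\le\pi$, since the interior dihedral angle $\pi-\alpha_j$ of $C(\Omega^c)$ is nonnegative). As the $V_j$ are disjoint and lie in $B_{\uhs}(x_0,r+2)$, whose volume is $\pi(\sinh(2r+4)-(2r+4))\le C_0e^{2r}$, this gives $\sum_j\alpha_j\le(C_0/c)e^{2r}$, which is even a little stronger than claimed.

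\textbf{Expected obstacle.} The one place that genuinely needs care is the assertion that $W_j=R^{-1}(g_j)$ really is a wedge of angular width \emph{exactly} $\alpha_j$: for a point lying just off $g_j$ in a fan direction one must know that the \emph{global} nearest point of $C(\Omega^c)$ is the foot on $g_j$, not some point on a distant face. This should follow from convexity of $C(\Omega^c)$ together with the supporting-half-space description of the dome in Lemma \ref{support-planes}, but it is the step where local and global nearest points must be reconciled. Everything else — the Pythagorean estimate, the lower bound on the tube Jacobian, and the volume comparison — is routine.
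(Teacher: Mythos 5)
Your argument is correct, but it is not the route the paper takes. The paper stays in the plane: it normalizes $\infty\notin\Omega$, observes that the (disjoint) normal crescents attached to the bending geodesics each have Euclidean area $\geq c\,e^{-r}\alpha_j$ inside the disk $B(0,e^r+1)$ when the corresponding geodesic passes within distance $r$ of $(0,0,1)$, and divides by the area $\pi e^r(e^r+1)^2$ of that disk to get $\sum_j\alpha_j\leq Ce^{3r}$. You instead work upstairs in $\uhs$, packing the disjoint normal-cone wedges $R^{-1}(g_j)$ over length-one subarcs of the bending lines into a hyperbolic ball of radius $r+2$ and comparing hyperbolic volumes. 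Both are soft packing arguments, but yours buys a sharper exponent ($e^{2r}$ versus $e^{3r}$ — the paper explicitly remarks that its own estimate is not sharp because crescents near the origin can have small area, a loss your intrinsic volume count avoids), at the price of needing the normal-cone description of the nearest-point fibers over a ridge of a convex body, which you correctly isolate as the one step requiring care and which does follow from convexity plus the supporting half-space description in Lemma \ref{support-planes}. One cosmetic remark: the triangle inequality already places $\sigma$ in $B_{\uhs}(x_0,r+1)$; no Pythagorean theorem is needed. Since the lemma only asserts \emph{some} bound $B(r)$ depending on $r$, either constant is acceptable for the rest of the paper.
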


\begin{proof}
Suppose  $\Omega$ is normalized so $\infty \not \in \Omega$.
The normalization implies that if $\gamma$ is a 
 bending geodesic  in $\uhs$ that  hits the plane at $1$ and $-1$,
 then  the corresponding crescent is
in the unit disk. Moreover, an easy estimate shows that
a  crescent with vertices $\pm 1$ and  angle $\alpha$
has area  $  \geq c \alpha$ for some fixed $c >0$. 

If $\tilde \gamma$ is a bending geodesic 
with angle $\beta$
that passes within hyperbolic distance $r$ of $ (0,0,1)$ then 
the ``highest'' point of $\tilde \gamma$ has Euclidean height 
at least $e^{-r}$ above the plane $\reals^2$. Thus
its two endpoints on the plane are at least $2 \cdot e^{-r}$ apart. 
Moreover at least one endpoint  
must be  contained in the disk of diameter $e^r$ around the origin
(if not, then $\tilde \gamma$ lies outside the hemisphere with this 
disk as its base, which means the hyperbolic distance to $(1,0,0)$ 
is $\geq r$). 

Thus the part of the crescent corresponding to $\tilde \gamma$
inside the ball $B(0,e^r +1)$ 
has area at least $c e^{-r} \beta$. 
 Consider the set of all bending 
geodesics that come within hyperbolic distance $r$ of the point 
$(0,0,1) \in \uhs$ and let $\{\alpha_n\}$ be an enumeration of the 
bending angles. Since the crescents are 
disjoint we deduce 
$ \sum_n \alpha_n \leq \frac 1c \pi e^r (e^{r}+1)^2 \leq Ce^{3r}$, as desired.
(Note that this argument is not sharp since the crescents can 
have small area only when then are  close to the origin.)
%; also note that
%we must have $B(2s) \leq 2B(s)$ which implies at most linear growth for 
%large $s$.) 
\end{proof}

The following simple lemma quantifies the fact that an elliptic
M{\"o}bius transformation with small rotation angle is close to the 
identity.

\begin{lemma} \label{elliptic-est}
Suppose $\sigma$ is an elliptic M{\"o}bius transformation 
with fixed points $a,b$ and rotation angle $\theta$.
If $r= \max(|z-a|,|z-b|)\leq A |b-a|\leq |b-a|/(4 \theta)$
and $|\theta| \leq \frac 14$,  then we have 
$$   |z - \sigma(z)| \leq  2 A^2 |\theta| |z-a|,$$
where $C$ depends only on $A$.
\end{lemma}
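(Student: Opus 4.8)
The plan is to normalize by a M\"obius transformation so that the computation becomes transparent, then reduce to a direct estimate for a model elliptic map. First I would conjugate $\sigma$ by a M\"obius transformation $T$ sending the fixed points $a,b$ to $0$ and $\infty$, so that $T\sigma T^{-1}$ is the linear map $\zeta \mapsto \lambda \zeta$ with $\lambda = e^{i\theta}$ (rotation angle $\theta$). In these coordinates $T$ is essentially $\zeta = (z-a)/(z-b)$, so the quantity we want to control, $z - \sigma(z)$, pulls back to $\zeta - \lambda\zeta = (1-\lambda)\zeta$, and we must translate the resulting estimate back through $T^{-1}$, keeping careful track of how distances distort. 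The hypotheses $r = \max(|z-a|,|z-b|) \le A|b-a|$ and $A \le |b-a|/(4\theta)$ (equivalently $r$ small compared to $|b-a|/\theta$) are exactly what is needed to guarantee that $T$ and its inverse have bounded multiplicative distortion on the relevant region, so that the linear-model estimate transfers with only a constant loss.

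The key steps, in order, would be: (1) write $\zeta = (z-a)/(z-b)$ and note $|\zeta| = |z-a|/|z-b|$; use $|z-a| \le r \le A|b-a|$ and the triangle inequality $|z-b| \ge |b-a| - |z-a| \ge |b-a|(1 - A\cdot(\text{small}))$ — here one wants $A|\theta|\le 1/4$ type control rather than $A$ itself small, which is what the chained hypothesis $A \le |b-a|/(4\theta)$ provides — to get $|\zeta| \lesssim |z-a|/|b-a|$. (2) Compute $\sigma(z) - z$ explicitly: since $\sigma$ is the conjugate of $\zeta \mapsto e^{i\theta}\zeta$, solving back gives a rational expression, and a short calculation yields something like $\sigma(z) - z = \dfrac{(e^{i\theta}-1)(z-a)(z-b)}{(z-b) - e^{i\theta}(z-a)} \cdot \dfrac{1}{b-a}$ (up to checking the exact algebra). (3) Bound $|e^{i\theta}-1| \le |\theta|$ for $|\theta|\le 1/4$, bound the numerator factor $|z-b|$ by something comparable to $|b-a|$ (using $|z-b|\le |b-a| + |z-a| \le (1+A)|b-a|$, and $A$ is controlled via the hypothesis), and bound the denominator from below: $|(z-b) - e^{i\theta}(z-a)| \ge |z-b| - |z-a| - |\theta||z-a| \ge |b-a| - (2+|\theta|)|z-a|$, which is $\ge |b-a|/2$ say, once $|z-a|$ is small enough relative to $|b-a|$ — again this is precisely the role of $A|\theta| \le 1/4$. (4) Assemble: the $|b-a|$ and $|z-b|$ factors cancel against each other up to a constant, leaving $|z-\sigma(z)| \le C A|\theta| |z-a|$, and one then checks the constant can be taken to be $2A^2$ (the extra factor of $A$ presumably comes from being a bit lossy somewhere, e.g. bounding $|z-b|$ by $|b-a|$ times a factor involving $A$, or from symmetrizing in $a$ and $b$).

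I expect the main obstacle to be purely bookkeeping: getting the denominator lower bound clean enough that the final constant is genuinely $\le 2A^2$ rather than some larger $C(A)$, and making sure the various "$|z-a|$ small relative to $|b-a|$" reductions all follow from the single stated hypothesis chain $r \le A|b-a| \le |b-a|/(4\theta)$ with $|\theta|\le 1/4$ — in particular one should verify $A|\theta| \le 1/4$, which gives $|z-a|\,|\theta| \le |b-a|/4$ and controls all the error terms. There is also a minor subtlety in that the lemma statement says "$C$ depends only on $A$" even though the displayed bound already has an explicit constant $2A^2$; I would simply prove the explicit bound, which is stronger, and note the displayed inequality is what will be used. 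No deep idea is needed here — it is a robustness statement about conjugating a near-identity elliptic map, and the whole point is that the hypotheses have been arranged so the naive estimates close.
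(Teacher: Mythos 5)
Your proposal is essentially the paper's proof: conjugate to the model rotation $\zeta\mapsto e^{i\theta}\zeta$ via the M\"obius map sending $a,b$ to $0,\infty$ (the paper normalizes $a=1$, $b=-1$ and uses $\tau(z)=(z-1)/(z+1)$), write $\sigma(z)-z$ as an explicit rational function, bound $|1-e^{i\theta}|\le|\theta|$, and use $|z-b|\le(1+A)|b-a|$ together with the chained hypothesis $A|\theta|\le\frac14$ to control the numerator and keep the denominator bounded below. The only blemish is the spurious extra factor $1/(b-a)$ in your step (2) — the correct identity is $\sigma(z)-z=(\lambda-1)(z-a)(z-b)/\bigl((z-b)-\lambda(z-a)\bigr)$ with no such factor — but your subsequent estimates are consistent with the correct formula, so this is a self-flagged algebra slip rather than a gap.
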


\begin{proof}
This is an explicit computation. The conclusion is invariant 
under scaling, so we may assume $a=1$, $b=-1$,
in which case $\sigma$ has the form 
$ \sigma(z) = \tau^{-1}(\lambda \tau(z))$ where 
$\lambda = e^{i \theta}$ and $\tau(z) = (z-1)/(z+1)$. Doing 
some arithmetic, and using $|1-\lambda| \leq |\theta|$, we get 
$$ |\sigma(z) - z| = |\frac{ (1-\lambda)-(1-\lambda)z^2}
                           { (1+\lambda)+ (1-\lambda)z}|
                   \leq |\theta|\frac{|1-z^2|}{1-|\theta|-|\theta||z|}
                    \leq 2A^2 |\theta| |z-1| ,$$
if $|\theta| \leq \frac 14$ and $ |\theta z|\leq \frac 1{4 }$.
\end{proof}

The following is the main result of this section. Recall that $R: \Omega
\to S$ denotes the nearest point retraction discussed in the previous 
section.

\begin{lemma} \label{varphi-est}
Suppose $r>0$ is given. There is an $\epsilon>0$, 
depending only on $r$, so that if $0\leq s < t \leq 1$ 
and $|s-t| \leq \epsilon$ then the following holds.
Suppose $G_1$ and $G_2$ are gaps in the normal crescent 
decomposition  of the finitely bent domain $\Omega_s$
such that  $\rho_S(R(G_1), R(G_2)) \leq r$.
 Suppose 
$\tau_j$ are M{\"o}bius transformations so that 
$\varphi_{s,t}^{-1}|_{G_j} = \tau_j$ for $j =1,2$. Then 
$$     \rho_{t}( \tau_1(z) , \tau_2(z) ) \leq 
                   C_r |t-s|,$$
for every $z  \in \Omega_s$ with $\rho_{s}(z, G_1) 
\leq r$.
\end{lemma}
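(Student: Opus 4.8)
The plan is to write $\tau_2$ explicitly as a short product of elliptic M\"obius maps composed with $\tau_1$, bound the number and total rotation angle of the factors using Lemma~\ref{B-est}, and estimate each factor's contribution with Lemma~\ref{elliptic-est}.

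First I would transfer everything to the disk. Let $\Phi_s=\iota_s\circ R_s=\varphi_{0,s}\colon\Omega_s\to\disk$ and $\Phi_t\colon\Omega_t\to\disk$ be the maps that collapse \emph{all} crescents; they are M\"obius on each gap, send crescents to the leaves of the bending lamination, and (by Theorems~\ref{Dome=disk} and \ref{SEM}, i.e.\ because the nearest point retraction is a uniform quasi-isometry and $\iota$ an isometry) they are $K_0$-quasi-isometries onto $(\disk,\rho_\disk)$ with universal $K_0$. Since $\Phi_s$ carries the faces $R(G_1),R(G_2)$ isometrically onto the complementary regions $H_1,H_2$ of the lamination — and the lamination support, hence $H_1,H_2$, is the same for every $\Omega_s$ in the angle-scaling family — the hypothesis $\rho_S(R(G_1),R(G_2))\le r$ becomes $\rho_\disk(H_1,H_2)\le r$, and $\rho_s(z,G_1)\le r$ gives $\rho_\disk(\Phi_s(z),H_1)\le K_0r+K_0$. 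Thus $H_1,H_2$ and $\Phi_s(z)$ all lie in one hyperbolic ball $B$ of radius $r'=O(K_0r)$, every leaf $\gamma_1,\dots,\gamma_m$ of the lamination separating $H_1$ from $H_2$ meets $B$, and Lemma~\ref{B-est} bounds their total $\Omega$-angle: $\sum_{i=1}^m\alpha_i\le B(2r')=:B_r<\infty$.

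Next I would read off the structure of $\varphi_{s,t}^{-1}$. Passing from $G_1$ to $G_2$ across the dome tree crosses precisely the crescents $C_1,\dots,C_m$ dual to $\gamma_1,\dots,\gamma_m$, and from the definition of $\varphi_{s,t}$ (M\"obius on each gap, with an extra wedge of angle $(t-s)\alpha_i$ inserted into the $i$-th crescent) one gets $\tau_2=\sigma_m\circ\dots\circ\sigma_1\circ\tau_1$, where $\sigma_i$ is the elliptic M\"obius transformation fixing the two vertices of the (thickened) $i$-th crescent with rotation angle $\theta_i$, $|\theta_i|=(t-s)\alpha_i$; hence $\sum_i|\theta_i|=(t-s)\sum_i\alpha_i\le(t-s)B_r$. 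Choose $\epsilon=\epsilon(r)>0$ so small that $\epsilon B_r$ is below the thresholds needed in Lemma~\ref{elliptic-est}. With $w=\tau_1(z)$, $w_0=w$ and $w_i=\sigma_i(w_{i-1})$, so that $\tau_2(z)=w_m$, one has
$$\rho_t(\tau_1(z),\tau_2(z))\le\sum_{i=1}^m\rho_t(w_{i-1},w_i).$$
All the $G_j$ and $C_i$ project into $B$ under $\Phi_s$, so (using once more that $\Phi_s,\Phi_t$ are uniform quasi-isometries which are M\"obius on gaps) the image chain of crescents in $\Omega_t$, the point $\tau_1(z)$, and — by a short induction, since each $\sigma_j$ will have been shown to move points only a small hyperbolic amount — all the $w_i$ lie in a region of $\Omega_t$ of hyperbolic diameter $O_r(1)$ and at hyperbolic distance $\ge c_r$ from $\partial\Omega_t$. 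After a normalizing M\"obius change of coordinate near $C_i$, Lemma~\ref{elliptic-est} then gives $|w_{i-1}-\sigma_i(w_{i-1})|\le C_r|\theta_i|$ with $\dist(w_{i-1},\partial\Omega_t)\ge c_r$, whence $\rho_t(w_{i-1},w_i)\le C_r|\theta_i|$, and summing yields $\rho_t(\tau_1(z),\tau_2(z))\le C_r\sum_i|\theta_i|\le C_r(t-s)B_r$.

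The step I expect to be the main obstacle is the bounded-geometry claim: that each intermediate point $w_{i-1}$ and the $i$-th crescent occupy an $O_r(1)$ hyperbolic region of $\Omega_t$ at $O_r(1)$ distance from $\partial\Omega_t$, so that the constants coming out of Lemma~\ref{elliptic-est} are uniform in $i$, $s$, $t$ and $\Omega$. Two points need care: the quasi-isometry bounds for $\Phi_s,\Phi_t$ carry an additive constant, which is fine for proving ``bounded'' but must not enter the final estimate (which tends to $0$ with $t-s$ and can only come from Lemma~\ref{elliptic-est}); and since $w_{i-1}$ is itself produced by the earlier $\sigma_j$, one bootstraps — a crude bound (a point at hyperbolic distance $d$ from $C_j$ is moved $\lesssim|\theta_j|e^{O(d)}$ by $\sigma_j$) keeps all $w_j$ within bounded distance once $\epsilon B_r$ is small, after which the clean estimate applies.
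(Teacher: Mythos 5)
Your proposal follows essentially the same route as the paper's proof: normalize by a M\"obius transformation, write $\tau_2\circ\tau_1^{-1}$ as the composition of the elliptic transformations attached to the crescents separating $G_1$ from $G_2$, bound the total rotation angle by Lemma~\ref{B-est}, bound each factor's Euclidean displacement by Lemma~\ref{elliptic-est}, and convert to a hyperbolic estimate using that the relevant points stay a definite distance from $\partial\Omega_t$ (Lemma~\ref{dist-to-bdy}). Your explicit bootstrap for the intermediate points $w_i$ is a minor refinement of a step the paper leaves implicit, not a different argument.
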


\begin{proof}
The statement is invariant under renormalizing by
 M{\"o}bius transformations
so we may assume that $G_1$ has base disk $\disk$, that 
$z_1= 0 \in G_1$ is within $2r$ of  $G_2$, 
and that $\tau_1$ is the identity.

 Then 
$\tau_2$ is a composition of the elliptic transformations 
$\{ \sigma_j\}$
that correspond to the normal crescents $\{C_j\}$  that 
separate $G_1$ and $G_2$. By Lemma \ref{B-est}, the
measure of the bending geodesics separating $G_1$ and 
$G_2$ is at most $B(r)$.

Since $\rho_S(C_j , 0) \leq r$ for all $j$,   
$C_j$ has diameter $\geq e^{-r}$ and one vertex
is contained 
within $D(0, e^r)$ by  the proof of 
Lemma \ref{B-est}.  By Lemma \ref{elliptic-est}
this means that $\sigma_j$ moves points in $D(0,C)$ at most 
$C|\theta_j| $ with  $C$ depending only on $r$, assuming 
$\theta_j$ is small enough (depending only on $r$).
Thus 
\begin{eqnarray} \label{tau-1}
|\tau_2(z)- z| \leq  {C_r}|s-t|  \sum_j |\theta_j| 
  = O(|s-t|),
\end{eqnarray}
for $|z| \leq C$, assuming $|s-t|$ is small enough, depending only 
on $r$. 

If $\rho_t(0,z) \leq r$, then $|z|\leq A_r$ and $\dist(z, \partial 
\Omega_s) \geq B_r > 0$ with estimates that 
only depend on $r$ (see Lemma \ref{dist-to-bdy}, Appendix 
\ref{background}). Thus for 
$|s-t|$ small enough,
$\rho_t(z,0) \leq r$ and  $|z-w| \leq \epsilon$ 
imply  $\rho_t(z,w) \leq C_r \epsilon$. Hence for a given $r$ 
we can choose  $|s-t|$ so small that (\ref{tau-1})  implies 
$\rho_t(\tau_2(z),z) \leq 
O(|s-t|)$, (with constant depending on $r$).
\end{proof}

\begin{lemma} \label{varphi=QI}
$\varphi_{s,t}$ is a quasi-isometry with constant $O(|s-t|)$.
\end{lemma}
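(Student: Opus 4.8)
The plan is to globalize the local estimate of Lemma~\ref{varphi-est} by a chaining argument along geodesics. First I would dispose of the case where $|s-t|$ is not small. Let $\epsilon_0=\epsilon(4)>0$ be the threshold produced by Lemma~\ref{varphi-est} for $r=4$. If $|s-t|>\epsilon_0$ there is nothing to prove: $\varphi_{s,t}$ is a composition of quasi-isometries with an absolute constant (for instance $\varphi_{0,s}^{-1}\circ\varphi_{0,t}$, where each $\varphi_{0,\cdot}=\iota\circ R$ is a quasi-isometry with a universal bound by Theorem~\ref{SEM}), hence is a quasi-isometry with some absolute constant, and since $|s-t|$ is bounded below by $\epsilon_0$ that constant is $O(|s-t|)$. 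So from now on assume $|s-t|\le\epsilon_0$.

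The heart of the argument is the local estimate
\begin{equation}\label{prop-local-QI}
\bigl|\,\rho_t(\varphi_{s,t}^{-1}(x),\varphi_{s,t}^{-1}(y))-\rho_s(x,y)\,\bigr|\le C|s-t|
\end{equation}
for all $x,y\in\Omega_s$ with $\rho_s(x,y)\le 1$, where $C$ is absolute, together with the same statement with the roles of $s$ and $t$ (hence of $\varphi_{s,t}$ and $\varphi_{s,t}^{-1}$) interchanged. To prove \eqref{prop-local-QI} I would fix such $x,y$, let $B$ be the $\rho_s$-ball of radius $1$ about $x$, choose a gap $G_0$ of the normal crescent decomposition of $\Omega_s$ meeting $B$, renormalize by a M\"obius transformation so that $G_0$ has base disk $\disk$, and then compose $\varphi_{s,t}^{-1}$ with a fixed M\"obius transformation so that $\varphi_{s,t}^{-1}|_{G_0}=\mathrm{id}$. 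If $P$ is any gap or crescent of $\Omega_s$ meeting $B$, then $B$ has $\rho_s$-diameter at most $2$ and the nearest point retraction $R$ is $2$-Lipschitz from $\rho_s$ to $\rho_S$ (Section~\ref{gaps-crescents}), so $\rho_S(R(P),R(G_0))\le 4$; hence Lemma~\ref{varphi-est} with $r=4$ — and, when $P$ is a crescent, its proof, which realizes $\varphi_{s,t}^{-1}$ on each piece as a composition of elliptic M\"obius maps whose rotation angles sum to the added bending $O(|s-t|)$ (Lemmas~\ref{B-est} and~\ref{elliptic-est}) — gives $\rho_t(z,\varphi_{s,t}^{-1}(z))\le C|s-t|$ for every $z\in B$. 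In other words, on the ball $B$ the map $\varphi_{s,t}^{-1}$ stays within $\rho_t$-distance $C|s-t|$ of the identity. Since moreover $\rho_s$ and $\rho_t$ agree up to a factor $1+O(|s-t|)$ on the bounded region occupied by $B$ — which stays a definite distance from $\partial\Omega_s$ and $\partial\Omega_t$ by Lemma~\ref{dist-to-bdy}, so that $\Omega_s$ and $\Omega_t$ coincide there up to an $O(|s-t|)$ perturbation — the triangle inequality $\rho_t(\varphi_{s,t}^{-1}x,\varphi_{s,t}^{-1}y)\le\rho_t(\varphi_{s,t}^{-1}x,x)+\rho_t(x,y)+\rho_t(y,\varphi_{s,t}^{-1}y)$ converts the uniform $C^0$-bound into \eqref{prop-local-QI}.

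Finally I would globalize: given arbitrary $x,y\in\Omega_s$, subdivide a $\rho_s$-geodesic from $x$ to $y$ into $N\le\rho_s(x,y)+1$ subarcs each of $\rho_s$-length at most $1$ and sum \eqref{prop-local-QI} over the subarcs to obtain $\rho_t(\varphi_{s,t}^{-1}x,\varphi_{s,t}^{-1}y)\le(1+C|s-t|)\rho_s(x,y)+C|s-t|$; the reverse inequality follows by applying the $s\leftrightarrow t$ version of \eqref{prop-local-QI} along a $\rho_t$-geodesic between $\varphi_{s,t}^{-1}x$ and $\varphi_{s,t}^{-1}y$, and coarse surjectivity is automatic since $\varphi_{s,t}$ maps $\Omega_t$ onto $\Omega_s$. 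This shows $\varphi_{s,t}$ is a quasi-isometry with constant $O(|s-t|)$. The step I expect to be the main obstacle is the derivation of \eqref{prop-local-QI}: one must verify that every gap and crescent met by a unit $\rho_s$-ball satisfies the hypotheses of Lemma~\ref{varphi-est} with a uniform $r$ (handled by the $2$-Lipschitz bound on $R$), and, more delicately, that converting the Euclidean displacement bound into a bound in $\rho_t$ remains valid near pieces where the ball comes close to $\partial\Omega_s$ — the point being that Lemma~\ref{dist-to-bdy} keeps $B$ a definite distance from the boundary and that the comparison constants between $\rho_s$ and $\rho_t$ degrade only by $O(|s-t|)$, which is precisely the Euclidean-to-hyperbolic conversion already carried out at the end of the proof of Lemma~\ref{varphi-est}.
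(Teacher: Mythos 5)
Your overall strategy — localize via Lemma \ref{varphi-est} and then chain along geodesics — is a genuinely different route from the paper's, which deduces Lemma \ref{varphi=QI} immediately from Theorem \ref{near-Mobius}: the hypotheses of that theorem are exactly what Lemma \ref{varphi-est} verifies, and the theorem then produces a global hyperbolically $(1+O(|s-t|))$-biLipschitz map at bounded distance from $\varphi_{s,t}^{-1}$ by interpolating the local M\"obius approximations over an $\epsilon$-Delaunay triangulation (M\"obius on ideal triangles, affine-crescent maps in between). A correct chaining argument would buy a shorter proof of the quasi-isometry statement alone, at the cost of losing the biLipschitz map $\psi_{s,t}$, which the paper needs later (Corollary \ref{psi=sigma}).

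However, there is a genuine gap at the key step of your local estimate. To pass from the displacement bound $\rho_t(z,\varphi_{s,t}^{-1}(z))\le C|s-t|$ to \eqref{prop-local-QI} you need, after the triangle inequality, the comparison $\rho_t(x,y)\le\rho_s(x,y)+C|s-t|$ for $x,y$ in the unit $\rho_s$-ball, i.e.\ a \emph{multiplicative} comparison of the hyperbolic densities of the two distinct domains $\Omega_s$ and $\Omega_t$ to accuracy $1+O(|s-t|)$. You assert this on the grounds that the two boundaries are $O(|s-t|)$-close near $G_0$ and that $B$ stays away from both boundaries, but neither Lemma \ref{dist-to-bdy} nor the Koebe/quasihyperbolic comparison gives more than agreement of the densities up to a \emph{bounded} factor (the density at an interior point depends on the whole domain, not just the nearby boundary). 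A bounded-factor comparison would only yield a quasi-isometry with constant $O(1)$, not $O(|s-t|)$. The sharp $1+O(|s-t|)$ comparison of $\rho_s$ and $\rho_t$ is essentially equivalent to the existence of a $(1+O(|s-t|))$-biLipschitz map between the domains — which is precisely the content of Theorem \ref{near-Mobius} that your argument is trying to bypass. (The Euclidean-to-hyperbolic conversion at the end of the proof of Lemma \ref{varphi-est} is a different, easier estimate: it bounds a single hyperbolic distance $\rho_t(z,w)$ by a small Euclidean displacement; it does not compare $\rho_s$ with $\rho_t$ at unit scale.) To repair the proof you either need to supply this density comparison independently, or fall back on Theorem \ref{near-Mobius} as the paper does. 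The chaining itself, the handling of the non-small $|s-t|$ case, and the reduction of the lower bound to the $s\leftrightarrow t$ version are all fine.
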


This follows immediately from the following technical result 
that will be proven in Section \ref{PM-maps}. It also follows 
from a careful reading of 
 \cite{Bishop-ExpSullivan}, which gives an explicit construction 
of a quasiconformal map from $\disk$ to a finitely bent domain 
$\Omega$ with boundary values $\varphi^{-1}$.  The method can 
be adapted to give an explicit  map $\Omega_s \to \Omega_t$  
that is quasiconformal with constant $O(|s-t|)$. 

%\begin{thm}
%Suppose $\Omega_0, \Omega_1$ are simply connected and
% $\varphi: \Omega \to \Omega'$ has the property that if 
%$a,b,c,d \in \Omega$ have pairwise hyperbolic distances bounded 
%above by $C$ and below by $1/C$, then 
%$$ |\cr(a,b,c,d) - \cr(\varphi(a), \varphi(b), \varphi(c), \varphi(d)| 
%\leq \epsilon.$$
% Then there is a $1+O(\epsilon)$
%hyperbolic biLipschitz  map $\psi$ that agrees with $\varphi$ on a $C$-dense 
%set and $\rho_{\Omega_1}(\varphi(z), \psi(z)) \leq O(\epsilon)$ for all 
%$z \in \Omega_0$.
%\end{thm}

\begin{thm} \label{near-Mobius}
Suppose $\Omega_0, \Omega_1$ are simply connected and
 $\varphi: \Omega_0 \to \Omega_1$ has the following property:
there is a $0< C < \infty$ so that given any hyperbolic
$C$-ball $B$ in $\Omega_0$, there is a M{\"o}bius transformation 
$\sigma$ so that $\rho_{\Omega_0}(z,\sigma(\varphi(z))) \leq \epsilon$
for every $z \in B$. Then there is a 
hyperbolic $(1+O(\epsilon))$-biLipschitz map $\psi: \Omega_0 \to \Omega_1$ 
so that $\sup_{z \in \Omega_0}
\rho_{\Omega_1}(\varphi (z), \psi(z)) \leq O(\epsilon)$.
%and every $C$-ball contains a point where $\varphi= \psi$.
In particular,   $\varphi$ is a quasi-isometry between the 
hyperbolic metrics with constant $O(\epsilon)$.
\end{thm}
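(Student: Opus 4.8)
The plan is to build $\psi$ by patching together the local M\"obius transformations $\sigma_B$ using a partition of unity subordinate to a cover of $\Omega_0$ by hyperbolic $C$-balls, but to do the patching at the level of the \emph{infinitesimal} data rather than the maps themselves, so that the biLipschitz estimate comes out cleanly. First I would fix a maximal $C/4$-separated set $\{z_i\}$ in $\Omega_0$ with respect to $\rho_{\Omega_0}$; the balls $B_i=B(z_i,C/2)$ cover $\Omega_0$, the doubled balls have bounded overlap (a purely hyperbolic-geometry fact, the constant depending only on $C$), and by hypothesis there is a M\"obius $\sigma_i$ with $\rho_{\Omega_0}(z,\sigma_i(\varphi(z)))\le\epsilon$ on $B_i$. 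The key consequence is that on an overlap $B_i\cap B_j\ne\emptyset$ the composition $\sigma_i\circ\sigma_j^{-1}$ moves every point of a fixed-size hyperbolic ball by at most $O(\epsilon)$ in $\rho_{\Omega_0}$; by an elliptic/parabolic/loxodromic normal-form computation exactly in the spirit of Lemma \ref{elliptic-est}, this forces $\sigma_i\circ\sigma_j^{-1}$ to be $(1+O(\epsilon))$-biLipschitz on that ball and to have derivative within $O(\epsilon)$ of the identity there.

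Next I would choose a smooth partition of unity $\{\eta_i\}$ subordinate to $\{B_i\}$ with hyperbolic gradient bounds $|\nabla_{\rho}\eta_i|\le C'$ (again possible by bounded overlap and uniform geometry of hyperbolic balls), and \emph{define} $\psi$ not as $\sum\eta_i\,\sigma_i\circ\varphi$ — composition of M\"obius maps is not linear — but rather fix a basepoint index and set $\psi = \sigma_{i_0}^{-1}\circ\Theta$ where $\Theta$ is obtained from $\varphi$ by averaging the \emph{correction} maps $\sigma_i\circ\sigma_{i_0}^{-1}$; concretely, work in a coordinate where one writes $\psi(z)=\Phi\bigl(\sum_i \eta_i(z)\,F_i(z)\bigr)$ with $F_i$ the lift of $\sigma_i\circ\varphi$ and $\Phi$ the exponential-type map putting the answer back in $\Omega_1$, so that on each ball $\psi$ differs from the single M\"obius map $\sigma_{i_0}^{-1}\circ(\sigma_i\circ\varphi)$ by the averaging error. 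The point is that on $B_i$ all the maps $\sigma_j\circ\varphi$ entering the sum agree to within $O(\epsilon)$ in $C^1$ with $\sigma_i\circ\varphi$, so the convex combination stays within $O(\epsilon)$ of $\sigma_i\circ\varphi$ in $C^1$; since $\varphi$ composed with a M\"obius map is a holomorphic hence $1$-biLipschitz map of hyperbolic metrics, adding an $O(\epsilon)$ $C^1$-perturbation yields a $(1+O(\epsilon))$-biLipschitz map on that ball, and $\rho_{\Omega_1}(\varphi(z),\psi(z))=\rho_{\Omega_1}(\sigma_i\circ\varphi(z),\ \text{average})\le O(\epsilon)$ after transporting by the isometry $\sigma_i$. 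Global biLipschitz-ness then follows from the local statement plus connectedness: any path can be broken into pieces inside single balls, and the length distortion is $(1+O(\epsilon))$ on each piece.

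The main obstacle — and the reason this is split off as Theorem \ref{near-Mobius} and deferred to Section \ref{PM-maps} — is making the patching genuinely $C^1$-controlled rather than merely $C^0$. Averaging M\"obius transformations that are only $C^0$-close produces a map whose derivative one must still estimate, and naive interpolation can destroy local injectivity; this is why the paper introduces $\epsilon$-Delaunay triangulations and piecewise-M\"obius maps, which give a combinatorially rigid scaffold on which the interpolation is manifestly a small perturbation of a single M\"obius map on each cell, with controlled derivatives across cell boundaries. So in the actual write-up I expect to replace the smooth partition of unity by the piecewise-M\"obius construction over an $\epsilon$-Delaunay triangulation of $\{z_i\}$: on each simplex $\psi$ is a fixed M\"obius map perturbed by $O(\epsilon)$, adjacent simplices' maps differ by $\sigma_i\circ\sigma_j^{-1}=\mathrm{Id}+O(\epsilon)$, and one checks $C^1$-matching using the edge estimates. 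The final sentence of the theorem (quasi-isometry with constant $O(\epsilon)$) is then immediate, since a $(1+O(\epsilon))$-biLipschitz map for the hyperbolic metrics together with the uniform bound $\rho_{\Omega_1}(\varphi,\psi)\le O(\epsilon)$ makes $\varphi$ itself an $(1+O(\epsilon),\,O(\epsilon))$-quasi-isometry.
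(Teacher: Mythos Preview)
Your partition-of-unity approach has a genuine error: you write that ``$\varphi$ composed with a M\"obius map is a holomorphic hence $1$-biLipschitz map of hyperbolic metrics,'' but nothing in the hypothesis makes $\varphi$ holomorphic or even differentiable. In the paper's main application $\varphi=\varphi_{s,t}$ collapses entire crescents to arcs, so $\sigma_i\circ\varphi$ is not injective, let alone conformal. This kills the $C^1$-perturbation argument in your second paragraph, and it is not just a patching issue --- the individual local models $\sigma_i\circ\varphi$ are already bad.

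You are right that the fix is the piecewise-M\"obius/Delaunay construction, but the paper's version differs from what you sketch in one essential way: $\psi$ is \emph{not} obtained by perturbing or averaging $\varphi$. The paper discards $\varphi$ entirely except for its values on a discrete set. Concretely: build a fixed $(\epsilon_0,s)$-Delaunay triangulation of $\Omega_0$ from nested equilateral grids (so triangle sizes scale with distance to $\partial\Omega_0$, and $s<C/2$); on each ideal hyperbolic triangle $\tilde T$ define $\psi$ to be the unique M\"obius transformation sending the three vertices to their $\varphi$-images; on the crescent between two adjacent $\tilde T$'s use the affine-crescent extension, which is the extremal QC map with the given M\"obius boundary values. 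The hypothesis then enters only once: any two adjacent triangles fit inside a single hyperbolic $C$-ball, so both of their M\"obius maps are $O(\epsilon)$-close to the single $\sigma$ provided for that ball, hence $O(\epsilon)$-close to each other, hence the crescent map has dilatation $1+O(\epsilon)$. There is no $C^1$-matching to check across edges because $\psi$ is not $C^1$ there; the biLipschitz constant comes directly from the crescent dilatation. The bound $\rho_{\Omega_1}(\varphi,\psi)\le O(\epsilon)$ is immediate since on each triangle both maps are $O(\epsilon)$-close to $\sigma^{-1}$.
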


%\begin{cor} \label{psi=biLip}
%There is a (hyperbolically) $(1+O(|s-t|)$-biLipschitz
% map $\psi_{s,t} : \Omega_s \to \Omega_t$ so that
%$\psi_{s,t} = \varphi_{s,t}^{-1}$ on the boundary.
%\end{cor}

\begin{cor} \label{psi=sigma}
There is a (hyperbolically) $(1+O(|s-t|)$-biLipschitz
 map $\psi_{s,t} : \Omega_s \to \Omega_t$ so that
$\psi_{s,t} = \varphi_{s,t}^{-1}$ on the boundary.
If $G$ is a gap or crescent and $\varphi_{s,t}^{-1}$ 
is the M{\"o}bius transform $\sigma$
on $G$, then $\rho_t(\psi_{s,t}(z) ,\sigma(z)) \leq O(|s-t|)$ for $z \in G$
\end{cor}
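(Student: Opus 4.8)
The plan is to derive Corollary~\ref{psi=sigma} by applying Theorem~\ref{near-Mobius} to the map $\varphi_{s,t}:\Omega_s\to\Omega_t$, using Lemma~\ref{varphi-est} to verify the hypothesis. First I would fix the constant $C$ that appears in the statement of Theorem~\ref{near-Mobius}: given a hyperbolic $C$-ball $B\subset\Omega_s$, I want to produce a single M\"obius transformation $\sigma$ with $\rho_{\Omega_s}(z,\sigma(\varphi_{s,t}(z)))\leq O(|s-t|)$ for all $z\in B$. Here one must be slightly careful about the direction of the maps: the theorem as stated compares $z$ with $\sigma(\varphi(z))$ in the source metric $\rho_{\Omega_0}$, so I would apply it with $\Omega_0=\Omega_s$, $\Omega_1=\Omega_t$, and $\varphi=\varphi_{s,t}$, and I would phrase Lemma~\ref{varphi-est} in the contrapositive form it already gives: $\varphi_{s,t}^{-1}$ restricted to each gap is a M\"obius transformation $\tau_j$, and any two gaps within $\rho_S$-distance $r$ have $\tau_1,\tau_2$ agreeing to within $C_r|t-s|$ on a $\rho_s$-neighborhood of radius $r$. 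Since the retraction $R$ is a quasi-isometry with uniform constants, a hyperbolic $C$-ball in $\Omega_s$ meets the gaps whose $R$-images lie in a ball of radius $r=r(C)$ in $S$, so all the relevant $\tau_j$ are within $O(|s-t|)$ of one fixed one, say $\tau_{j_0}$; taking $\sigma=\tau_{j_0}$ (so $\sigma\circ\varphi_{s,t}$ is the identity on the gap $G_{j_0}$ and within $O(|s-t|)$ of the identity on all the others, hence on $B$) gives exactly the hypothesis of Theorem~\ref{near-Mobius} with $\epsilon=O(|s-t|)$.

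Next, Theorem~\ref{near-Mobius} immediately supplies a hyperbolic $(1+O(|s-t|))$-biLipschitz map $\psi:\Omega_s\to\Omega_t$ with $\sup_z\rho_{\Omega_t}(\varphi_{s,t}(z),\psi(z))\leq O(|s-t|)$. Two things remain: that we may arrange $\psi_{s,t}=\varphi_{s,t}^{-1}$ on the boundary, and that on each gap or crescent $G$ on which $\varphi_{s,t}^{-1}=\sigma$ we have $\rho_t(\psi_{s,t}(z),\sigma(z))\leq O(|s-t|)$. For the boundary statement I would note that $\varphi_{s,t}$ extends continuously to $\partial\Omega_s$ (it is $\iota_{s,t}$ there by construction in Section~\ref{gaps-crescents}), and since $\psi$ stays within bounded hyperbolic distance of $\varphi_{s,t}$ it has the same radial limits; strictly, one either takes $\psi_{s,t}$ to be $\psi$ redefined on the boundary to equal $\varphi_{s,t}^{-1}=\iota_{t,s}$, or observes the biLipschitz map constructed in the proof of Theorem~\ref{near-Mobius} already has these boundary values because it interpolates the piecewise-M\"obius structure. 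For the gap/crescent statement: on a gap $G$, $\varphi_{s,t}=\sigma^{-1}$ is M\"obius, so $\rho_t(\psi_{s,t}(z),\sigma(z))=\rho_t(\psi_{s,t}(z),\varphi_{s,t}(z))\leq O(|s-t|)$ directly from the sup bound after relabeling; on a crescent $C$, $\varphi_{s,t}$ collapses leaves and is M\"obius on a subcrescent, but $\sigma$ is the M\"obius map agreeing with $\varphi_{s,t}$ on the adjacent gap, so again $\varphi_{s,t}$ and $\sigma$ differ by a bounded-angle elliptic composition on $C$, and Lemma~\ref{elliptic-est} bounds that difference by $O(|s-t|)$ in the Euclidean, hence (via Lemma~\ref{dist-to-bdy}) in the hyperbolic $\rho_t$ metric on the relevant region.

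I expect the main obstacle to be the bookkeeping in the first paragraph: verifying that a hyperbolic $C$-ball in $\Omega_s$ only ``sees'' gaps whose dome-images are within a controlled $\rho_S$-distance, so that Lemma~\ref{varphi-est} applies uniformly. This requires using that $R:\Omega_s\to S$ is a quasi-isometry with constants independent of $\Omega_s$ (stated in Section~\ref{gaps-crescents} and at the top of Section~\ref{proof-of-SEM}), so $\rho_{\Omega_s}$-distance $\leq C$ forces $\rho_S$-distance between the $R$-images $\leq r(C)$, and then Lemma~\ref{varphi-est}'s hypothesis $\rho_S(R(G_1),R(G_2))\leq r$ is met; one also needs that the gap $G_{j_0}$ chosen as the ``base'' actually intersects $B$ (or is within distance $r$ of all of $B$), which follows since $B$ is covered by gaps and crescents and any crescent is sandwiched between two gaps. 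The remaining estimates — converting the Euclidean bound of Lemma~\ref{elliptic-est} to a hyperbolic bound via the distance-to-boundary control of Lemma~\ref{dist-to-bdy}, and checking the constants depend only on $r$ hence only on $C$ — are routine and parallel to what is already done in the proof of Lemma~\ref{varphi-est}.
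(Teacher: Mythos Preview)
Your approach is essentially the paper's: the corollary is stated without proof as an immediate consequence of Theorem~\ref{near-Mobius}, using Lemma~\ref{varphi-est} to supply the local-M\"obius hypothesis, and your handling of the boundary values and the gap/crescent estimate is along the right lines. One bookkeeping point to fix: in the paper's conventions $\varphi_{s,t}$ maps $\Omega_t\to\Omega_s$, not $\Omega_s\to\Omega_t$, so writing ``$\Omega_0=\Omega_s$, $\Omega_1=\Omega_t$, $\varphi=\varphi_{s,t}$'' and then ``$\rho_{\Omega_s}(z,\sigma(\varphi_{s,t}(z)))$ for $z\in B\subset\Omega_s$'' does not type-check. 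You should either take $\Omega_0=\Omega_t$, $\Omega_1=\Omega_s$, $\varphi=\varphi_{s,t}$ in Theorem~\ref{near-Mobius} (obtaining a biLipschitz $\psi:\Omega_t\to\Omega_s$ and setting $\psi_{s,t}=\psi^{-1}$), or apply the theorem to the piecewise-M\"obius map $\Omega_s\to\Omega_t$ given by the $\tau_j$'s on the gaps; Lemma~\ref{varphi-est} is already stated in the $\rho_t$ metric, which matches the latter choice directly. Once the directions are straightened out, the rest of your argument goes through as written.
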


%-------------------------------------------------------------

\section{Piecewise M{\"o}bius maps and $\epsilon$-Delaunay triangulations}
 \label{PM-maps}

Here we prove Theorem \ref{near-Mobius} from Section \ref{proof-of-SEM}.

If we want to approximate a map $f$ between polygons, 
a convenient thing to do is to decompose the interior 
into triangles, and approximate by a map that is linear
on each triangle. If $f$ is already linear in 
some subregion, we can arrange for the approximation to 
agree with it on the triangles that lie inside this subregion.

   We would like to do the same thing for finitely bent domains.
One problem is that the maps we wish to approximate 
are M{\"o}bius in some regions rather than linear, and a piecewise 
linear approximation will not preserve this.   We could  try to 
approximate circular arcs by line segments and M{\"o}bius 
transformations by linear maps,  but instead  we will slightly alter  the
idea of  piecewise linear approximation.

 Given 
a triangle $T$, let $D$ be the disk containing the three 
vertices on its boundary and let $\tilde T$ be the ideal hyperbolic 
triangle in $D$ with these three vertices.
We will say that a triangulation is $\epsilon$-Delaunay if whenever
two Euclidean triangles  $T_1, T_2$ meet along an edge $e$, the sum of 
two  the angles not incident on $e$ is at most $\pi-\epsilon$.
This means that between $\tilde T_1$ and $\tilde T_2$ there
is a crescent of angle at least $\epsilon$.
See Figure \ref{Del-quad}.
 A $0$-Delaunay triangulation is the same as 
the usual notion of  a Delaunay triangulation. Delaunay triangulations play 
an important role in computational geometry (see e.g., 
\cite{BE92}, \cite{Fortune92}, \cite{Fortune97}, \cite{Rajan94}).

\begin{figure}[htbp] 
\centerline{ 
	\includegraphics[height=2.0in]{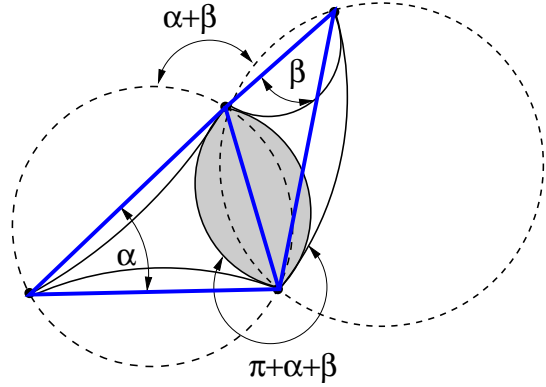}
	}
\caption{\label{Del-quad} Two triangles share an edge and 
the angles opposite the edge sum to less than $\pi$. There is 
then a crescent of angle $\pi-(\alpha+\beta)$ that separates
the ideal hyperbolic triangles associated the two Euclidean 
triangles.  }
\end{figure}

\begin{lemma}
$\epsilon$-Delaunay triangulations are invariant under 
M{\"o}bius transformations.
\end{lemma}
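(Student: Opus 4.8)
The plan is to show that the defining condition of an $\epsilon$-Delaunay triangulation can be rephrased using only data that transforms naturally under Möbius maps, so that the invariance becomes automatic. The data attached to the triangulation that actually enters the definition is: for each Euclidean triangle $T$ with vertex set $\{v_1,v_2,v_3\}$, the circle $D=D(T)$ through those three points together with the ideal hyperbolic triangle $\tilde T\subset D$ having ideal vertices $v_1,v_2,v_3$; and, for each interior edge $e=[v_i,v_j]$ shared by triangles $T_1,T_2$, the crescent $C_e$ between $\tilde T_1$ and $\tilde T_2$, bounded by the two hyperbolic geodesics joining $v_i$ to $v_j$ in $D(T_1)$ and in $D(T_2)$. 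By definition the triangulation is $\epsilon$-Delaunay exactly when every such $C_e$ has angle at least $\epsilon$ (equivalently, the two angles of $T_1,T_2$ not incident on $e$ sum to at most $\pi-\epsilon$; see Figure \ref{Del-quad}).

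I would then record three elementary facts about a Möbius transformation $\phi$, working on the sphere $\sphere^2=\reals^2\cup\{\infty\}$. (i) $\phi$ carries round circles to round circles (or lines / disk complements, on the sphere); hence it sends the circumcircle $D(T)$ of the vertices of $T$ to the circle through the image vertices. (ii) If $D$ is a round disk, then $\phi$ restricts to a conformal bijection of $D$ onto the round disk $\phi(D)$, hence to an isometry of $(D,\rho_D)$ onto $(\phi(D),\rho_{\phi(D)})$ for the Poincaré metrics; consequently $\phi$ maps hyperbolic geodesics of $D$ to hyperbolic geodesics of $\phi(D)$, and in particular the ideal triangle $\tilde T\subset D(T)$ to the ideal triangle in $\phi(D(T))$ with ideal vertices $\phi(v_1),\phi(v_2),\phi(v_3)$, i.e. to $\widetilde{\phi(T)}$ in the obvious sense. (iii) $\phi$ is conformal on $\sphere^2$, hence preserves angles between smooth arcs (including at $\infty$, measured in the spherical metric).

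Combining these, I would argue as follows. By (i) and (ii) the collection $\{\tilde T\}$ of ideal triangles associated to the given triangulation is carried by $\phi$ onto the collection of ideal triangles associated to the image configuration; adjacency is preserved, since $\phi(\tilde T_1)$ and $\phi(\tilde T_2)$ share the ideal edge $\phi(e)$; and the crescent $C_e$, cut out by the two geodesics $[v_i,v_j]_{D(T_1)}$ and $[v_i,v_j]_{D(T_2)}$, is carried by $\phi$ onto the corresponding crescent $C_{\phi(e)}$ of the image, because geodesics go to geodesics by (ii). Finally, by (iii) the angle of $C_{\phi(e)}$ at $\phi(v_i)$ equals the angle of $C_e$ at $v_i$. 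Hence every crescent of the image configuration has the same angle as the corresponding crescent of the original one, so one is $\epsilon$-Delaunay if and only if the other is.

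The only genuinely delicate point is bookkeeping rather than geometry: a Möbius image of a Euclidean triangle is bounded by circular arcs, not line segments, so ``invariant under Möbius transformations'' must be read as a statement about the associated ideal hyperbolic triangulations (equivalently, about triangulations with circular-arc edges), which is the natural category in which all of the operations above live; and one should keep $\infty$ in mind, either working on $\sphere^2$ throughout or precomposing with a harmless Möbius map to move $\infty$ off the finitely many vertices. With that understood, the invariance is immediate from the Möbius invariance of circles, of the hyperbolic structure of round disks, and of angles.
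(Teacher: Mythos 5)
Your proof is correct and follows essentially the same route as the paper's: both rephrase the $\epsilon$-Delaunay condition as an angle condition that is manifestly Möbius-invariant (the paper uses the exterior angle at which the two circumcircles meet, you use the equivalent angle of the crescent between the two ideal triangles) and then invoke the facts that Möbius maps preserve circles and angles. Your additional remarks on hyperbolic geodesics and on interpreting the statement at the level of ideal (circular-arc) triangulations are a correct, slightly more detailed elaboration of the same argument.
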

\begin{proof}
The $\epsilon$-Delaunay condition is equivalent to saying 
that if $T_1$ and $T_2$ are adjacent triangles then the 
boundaries of the corresponding disks $D_1$ and $D_2$ 
meet at exterior angle less than $\pi -\epsilon$. This is clearly 
invariant under M{\"o}bius transformations.
\end{proof}

Suppose we are given a mapping between the boundaries of two 
crescents with interior angles $\alpha_1, \alpha_2$
 that agrees with a M{\"o}bius transformation on 
each boundary arc (but possible different transformations on 
each arc). Normalizing so the vertices are $0$ and $\infty$, 
the boundary maps must be of the form $z \to \lambda_i z$ 
for $i =1,2$. Mapping the crescents to  strips $S_i = \{z= 
x+ i y: 0\leq y  \leq \alpha_i \}$ by a logarithm, these maps 
become $z \to z+t_i$. The boundary map can be extended 
to the interior by a unique affine map
 $T:(x,y) \to  (x+t_1 +(t_2-t_1)y/\alpha_1, y 
 \frac {\alpha_2}{\alpha_1} )$.
When this map is conjugated back to a map between the crescents, it 
defines a quasi-conformal map with minimal possible dilatation 
extending the given boundary values (e.g. Theorem 3.1 of \cite{EM-log-spiral}
for a simple proof; strict equality actual holds 
\cite{Beurling-Ahlfors}, \cite{Strebel}).
 We shall call such a map an affine-crescent map.

Suppose we are given an $\epsilon$-Delaunay triangulation in 
a region $\Omega$ and a map $f: \Omega \to \Omega'$ that sends
the vertices to the vertices of another $\epsilon$-Delaunay
triangulation. On each $\tilde T$, define $g$ to be the 
M{\"o}bius transformation defined by the images of the three 
vertices. On the crescents separating two ideal triangles, 
define $g$ to be the affine crescent map extending the definition 
on the boundary of the crescent.
Thus $g$ is an approximation to $f$ that is M{\"o}bius on the
ideal triangles and quasiconformal on the crescents. If $f$ is 
M{\"o}bius on the quadrilateral formed by two adjacent triangles, 
then it is $g=f$ on the two corresponding ideal triangles and the 
crescent separating them. Otherwise the quasiconformal
constant of $g$ is bounded in terms of the quasiconformal 
constant of $f$ and the  hyperbolic size of the triangles.

We will call an (infinite) $\epsilon$-Delaunay triangulation an 
$(\epsilon,s)$-triangulation for $\Omega$ if every edge
has hyperbolic diameter $\sim s$  in $\Omega$ and the 
circumcircle of every triangle has hyperbolic diameter
$\sim s$. Next we 
want to observe that such a triangulation always exists.

The plane can be tiled by a collection 
 equilateral triangles ${\cal T_n}$ of side 
length $2^{-n}$ in such a way that the  each triangle 
of size $2^{-n}$ is a union of four triangles  in 
${\cal T}_{n+1}$. 
Given a point $ x \subset \Omega$  and $0<\lambda  < \frac 1{4}$ there is a 
triangle $T \in {\cal T_n}$ that contains $x$ and so that 
$$ (\lambda/2) \dist(T,\partial \Omega) \leq
\ell(T) \leq \lambda\dist(T,\partial \Omega)$$
and it is unique except when $x$ is on the common boundary 
of a finite number ($\leq 6$) of such triangles.  Any two triangles 
that satisfy this condition have adjacent sizes (since they
are both comparable to the same number within a factor of two).

So we can cover $\Omega$ by a union 
of triangles whose interiors are disjoint and each is
approximately size $\lambda$ in the hyperbolic metric. We 
claim that by adding 
some extra edges we can preserve this property and also get 
a $\epsilon$-Delaunay triangulation.
To see how, form a triangular mesh by taking the lattice triangles whose
size is comparable to the distance to the boundary.

We can also arrange that if two triangles meet a common triangle, then they 
must be of adjacent sizes.
To see this, suppose $T_1$ and $T_2$ are adjacent and 
$T_2$ and $T_3$ are adjacent and that $T_1$ is the largest of
the three triangles.  Let $\ell(T)$ denote the side length of 
a equilateral triangle. Then
\begin{eqnarray*} \label{est10}
 \ell(T_3) &\geq&  (\lambda /2\sqrt{2}) \dist(T_3, \partial \Omega) \\
     &\geq& (\lambda/2\sqrt{2})[\dist(T_1, \partial \Omega) - \ell(T_2) - \ell(T_3)] \\
         &\geq& (\lambda /2\sqrt{2})( \frac 1 \lambda - 1 -1) \ell(T_1) \\
         &\geq &(\frac 1{2 \sqrt{2}}  - \frac {\lambda}{\sqrt{2}}) \ell(T_1)\\
          &> &\frac 14 \ell(T_1),
\end{eqnarray*}
if $\lambda$ is small enough.
Since $\ell(T_1)/\ell(T_3)$ is a power of $2$ we must have 
$\ell(T_3) \geq \ell(T_1)/2$, as desired.

 If two adjacent triangles are different sizes then some 
interior edges must be added to the larger one to make it 
a triangulation (but the smaller one does not hit an even smaller
one by our previous calculation, so it does not need to be divided).
 There are three cases.
\begin{enumerate}
\item   If the larger one is bordered 
on all three sides by smaller ones, then we divide it into
four equilateral  triangles in the usual way.  
\item If is bounded
on exactly one side by smaller triangles, we add the bisector
of the opposite angle. 
\item  If it is bounded on exactly two 
sides by smaller triangles, we add the segment parallel to
the third side $e$ and half its length and the three segments connecting 
the midpoint of the new segment to corners of the triangle.
\end{enumerate}
See Figure \ref{border-tri}. This is clearly $\epsilon$-Delaunay.
Indeed the worse case is  in the third case above.  The bottom 
triangle has two angles of size $\alpha = \arctan(\sqrt{3}/2)
\approx .713714 \approx .22718 \pi$ and 
one of angle $\beta = (\pi-2\alpha) \approx .544 \pi$.
Since $\alpha$ is opposite an angle of size $\frac 23 \pi$ and 
$\beta$ is opposite angle of $\frac 13 \pi$, we see that 
every possible quadrilateral is at least $ .106 \pi$-Delaunay.

\begin{figure}[htbp] 
\centerline{ 
	\includegraphics[height=1.5in]{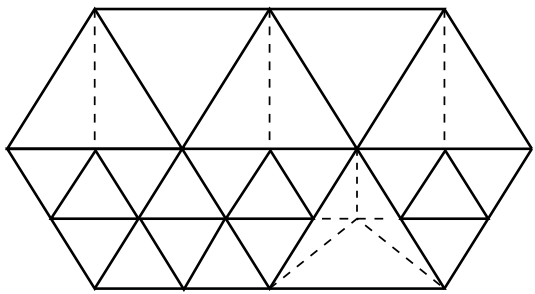}
	}
 \caption{\label{border-tri} When triangles of different size
 meet we subdivide the larger one to make a triangulation. This 
produces $\epsilon$-Delaunay triangulations for a uniform 
$\epsilon >0$}
 \end{figure}

The proof of Theorem \ref{near-Mobius} is now quite simple.
Take a  $(\epsilon_0, s)$-triangulation of $\Omega_1$, restrict 
the map $\varphi$ to the vertices and take $\psi$ to the 
piecewise M{\"o}bius extension of these values to $\Omega_1$. 
If $s$ is smaller than $C/2$ then on the union of any two 
adjacent triangles, the map $\varphi$ is $\epsilon$-close to a M{\"o}bius
transformation, and this  implies $\psi$ is hyperbolic 
biLipschitz with constant $1+O( \epsilon)$ where the constant 
depends only on $\epsilon_0$ and $s$.  This proves Theorem \ref{near-Mobius}.

%----------------------------------------------------------------

\section{Computing the bending lamination in linear time} 
\label{lamin=linear}

We have now finished introducing the $\iota$ map and describing 
the relevant estimates. We now start our discussion of the 
algorithm for computing conformal maps, starting with the 
construction of the bending lamination of a finitely bent 
domain in linear time. This will lead to our
decomposition of the plane,  the representation of 
conformal maps  and the method for improving such 
representations.

Recall that $R$ denotes the nearest point retraction from 
a planar domain to its dome.
Given a finitely bent domain $\Omega$, we noted above that 
 $\varphi= \iota \circ R$ is 
a continuous map of $\Omega$ to $\disk$,
 equals $\iota$ on the boundary, is 
  a M{\"o}bius transformation on each gap,  and collapses
the crescents to a union of geodesics $\Gamma$ in $\disk$ called the 
bending lamination. 
 To each 
geodesic $\gamma \in \Gamma$ we associate the angle of the 
corresponding crescent. This is called the bending measure 
of the geodesic and is an example of a transverse 
measure on a lamination.

  A finite lamination $\Gamma$ in the disk lies in the hyperbolic 
   convex hull of its endpoints. If it triangulates the convex 
   hull we say it is complete. We shall assume that our bending 
   laminations are complete, which is always possible by adding
    at most $O(n)$ extra geodesics with bending angle $0$ (since 
     we only need $2n-3$ 
    edges to triangulate $n$ points).

\begin{figure}[htbp]
\centerline{ 
	\includegraphics[height=1.5in]{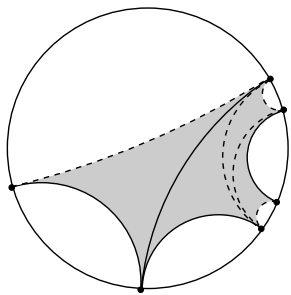}
 }
\caption{ \label{complete-lamin} The convex hull of six points, 
a lamination with these endpoints (solid lines) and a completion
of it (dashed lines).
}
\end{figure}

Next we will check that $\iota$ and the bending 
lamination of a finitely bent domain $\Omega$
can be constructed in linear time, given the 
medial axis of $\Omega$. This is fairly straightforward, 
but we record it formally with some definitions and 
a lemma.
Suppose we have a finite collection  of disks, ${\cal D}$, in the plane and
an adjacency relation between them that makes the collection
into the vertices of a tree. Suppose the disk $D_0$ has been
designated the root of the tree. Then any other disk $D$ has a unique
``parent'' $D^*$ that is adjacent to $D$ but closer to the root.
Assume that  for every (non-root) disk we are given a map
$\tau_D: D \to D^*$. Then we can define a map $\sigma_D: D \to D_0$
as follows. If $D = D_0$, the map is the identity. Otherwise,
there is a unique shortest path of disks  $D_0, \dots, D_k =D$
between $D_0$ and $D$. Note that each disk is preceded by
its parent. Thus $\sigma = \tau_{D_1} \circ \dots \circ \tau_{D_k}$
is a mapping from $D$ to $D_0$ as desired. We will refer
to this as a ``tree-of-disks'' map.

\begin{lemma} \label{tree-of-disks}
With notation as above, assume that every map $\tau_D$ is M\"obius.
Then given $n$ points $\v=\{v_1, \dots, v_n\}$,
with $v_k \in \partial D_k$ for $k =1,\dots,n $, we can compute
the $n$ image points $\sigma(\v) \subset \partial D_0$ in at most $O(n)$ steps.
\end{lemma}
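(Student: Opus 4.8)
The plan is to exploit the tree structure directly: a naive approach that, for each of the $n$ points $v_k$, walks the path from $D_k$ up to the root and composes the M\"obius maps along the way could cost $O(n)$ per point and hence $O(n^2)$ total, so the real content is organizing the computation so that shared prefixes of these paths are handled only once. First I would root the tree at $D_0$ and compute, for every disk $D$ in the collection, the composite M\"obius map $\sigma_D : D \to D_0$; this is done by a single traversal (depth-first or breadth-first) of the tree in which, upon moving from a parent $D^*$ to a child $D$, we set $\sigma_D = \sigma_{D^*} \circ \tau_D$. Since a M\"obius transformation is represented by a constant amount of data (say a $2\times 2$ matrix up to scaling) and composing two of them is $O(1)$ arithmetic, and since the tree has exactly as many edges as non-root vertices, this precomputation takes $O(|{\cal D}|)$ steps.

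The next step is bookkeeping about the size of the tree. The points $v_1,\dots,v_n$ live on $n$ distinct disks $D_1,\dots,D_n$, but the tree ${\cal D}$ might a priori contain many more disks than $n$; however, in the application the disks are the faces of the dome of a finitely bent domain with $n$ bending data, and Lemma \ref{tree-lemma} together with the remark following it already bounds the number of such disks by $O(n)$. More directly: we only care about the disks that appear on some path from a $D_k$ to the root, and after pruning all other branches (leaves that contain no marked point can be removed repeatedly) the tree has $O(n)$ vertices, because every leaf of the pruned tree carries a marked point and the number of degree-$\geq 3$ vertices is bounded by the number of leaves by Lemma \ref{tree-lemma}. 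So without loss of generality $|{\cal D}| = O(n)$, and the traversal above is $O(n)$.

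Finally, once all the $\sigma_D$ are in hand, computing the image $\sigma(v_k) = \sigma_{D_k}(v_k)$ is a single evaluation of a M\"obius transformation at a point, which is $O(1)$, so all $n$ images cost $O(n)$ more. Adding up, the total is $O(n)$ as claimed. The one point that needs a word of care — and the closest thing here to an obstacle — is the bound on $|{\cal D}|$: one must make sure that the tree we are handed (or that we extract from the medial axis / dome structure) really does have only $O(n)$ vertices rather than being artificially subdivided, and that ``$O(n)$ steps'' is understood in the paper's cost model where an arithmetic operation or an $\exp/\log$ evaluation is one unit, so that each M\"obius composition and evaluation counts as $O(1)$. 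Given the earlier discussion (the tree-of-disks construction, Lemma \ref{tree-lemma}, and the linear-time medial axis of Chin--Snoeyink--Wang), both points are already in place, so the proof is essentially the traversal argument above.
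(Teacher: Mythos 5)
Your proof is correct and follows essentially the same route as the paper: a single traversal of the tree that accumulates the composite M\"obius map toward the root with $O(1)$ work per vertex, so that shared path prefixes are handled only once, followed by an $O(1)$ evaluation per point. The only difference is representational --- you carry the composite as an explicit $2\times 2$ matrix, whereas the paper tracks three reference points on each circle and records cross ratios, which encodes the same M\"obius data.
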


\begin{proof}
 If $D \in {\cal D}$ has positive radius,
choose  three distinct reference points $z_1^D,z_2^D,z_3^D$ on $\partial D$;
otherwise let this collection be empty.
Every other point $z$ on $\partial D$ is uniquely determined
by the cross ratio $\cross(z_1^D,z_2^D,z_3^D,z)$.
Label each point $v$ in $\v$ with the minimal
$k$ so that $v$ is on the boundary of a $k$th generation
disk $D$. For $k=0$ we do nothing to the vertex.
For $k >0$,  compute $\tau_D(v) \in \partial D^*$ and record
the cross ratio $\cross(z_1^{D^*},z_2^{D^*},z_3^{D^*},\tau_D(v))$.
Also compute and record the images of the three reference
points for $D$, i.e.,
$\cross(z_1^{D^*},z_2^{D^*},z_3^{D^*},\tau_D(z_k^D))$ for
$k =1,2,3$.

If a vertex is on $\partial D_0$ then it maps to itself.
If $D$ is a first generation disk, then we just compute
$\tau_D(v) \in \partial D_0$ and compute the $\tau_D$ images of
the three reference points for $D$. For each child of
$D'$ of $D$, we can now compute $\sigma_{D'}$ for any
associated vertices using the previously recorded cross
ratios with respect to the reference points for $D$ and
we can also compute the images on $\partial D_0$
 for the references points for $D'$.  In general, if
$D$ is a disk and we have already computed where the
reference points for its parent are mapped on $\partial D_0$,
we can use the recorded cross ratio information to compute
where the associated vertices and reference points  for
$D$ map to.  This allows us to map every point of $\v$
to $\partial D_0$ in $O(n)$ steps.
\end{proof}

To construct the bending lamination of a finitely bent domain we 
apply this lemma to the collection of base disks corresponding to 
the gaps of the normal crescent decomposition and with two gaps being 
adjacent iff they are separated by a single crescent. In this case
adjacent disks either (1) intersect at exactly two points and we 
take the elliptic transformation that fixes these points and moves 
the child to the parent, or  (2) the disks coincide (if the crescent had
bending angle $0$) and we take the identity map.
We can also compute the gaps and the M{\"o}bius transformations
mapping these gaps to the ones in $\Omega$ in time $O(n)$.

In general, the disks in a ``tree-of-disks'' need not intersect.
In \cite{Bishop-fast}, this lemma is used to construct the exact $\iota$
map for a polygon.  The vertices are the medial axis disks corresponding 
to the vertices of the medial axis with adjacency inherited from the
medial axis. Adjacent disks need not intersect (e.g., consider two ends 
of a long edge-edge bisector), but we can still define an
explicit M{\"o}bius transformation between them (but not an 
elliptic transformation in this case).

%---------------------------------------------------------------------
\section{Covering  the bending lamination} \label{cover}

Our goal in this section is to cover   the bending lamination
 of a finitely bent domain by standard regions.  Our standard
regions will be ``Whitney  boxes'', which are approximately 
unit hyperbolic  neighborhoods of points, and ``Carleson towers'',
 which look like
unit neighborhoods of long hyperbolic geodesic segments.

The construction can carried out either in the unit disk 
or the upper half-plane. It is slightly easier to draw accurate 
figures in the upper half-plane, so we will describe it there, 
and only trivial changes are  needed to  move it to the 
unit disk.

Given an interval $I \subset \reals $, the corresponding 
 Carleson square is the region in the upper half-plane   of the form 
$ \{ z=x+iy:  x \in I, 0< y < |I| \}$. The ``top-half''
of $Q$ is 
$T(Q) = \{ z \in Q: y >|I|/2 \}.$
This will be called a Whitney box, and its Euclidean  diameter 
is comparable to its Euclidean distance from $\reals$
(abusing notation we may also call them Whitney ``squares'', 
even though they are Euclidean rectangles; the main point is
that they are approximately unit size in the hyperbolic metric).
% in general domains see, e.g., \cite{Stein71}).
When $I$ ranges over all  dyadic intervals (i.e., 
all intervals of the form $[j 2^{-n}, (j+1)2^{-n}]$), the corresponding 
 Whitney boxes partition the upper half-plane into 
pieces with approximately unit 
hyperbolic size. See Figure \ref{CarlesonSq}.
Carleson squares are  named after Lennart
Carleson who used them in his  solution of the 
corona problem and they are  now ubiquitous in  
function theory \cite{Carleson}, \cite{Garnett-BAF}. 

\begin{figure}[htbp]
\centerline{ 
	\includegraphics[height=2in]{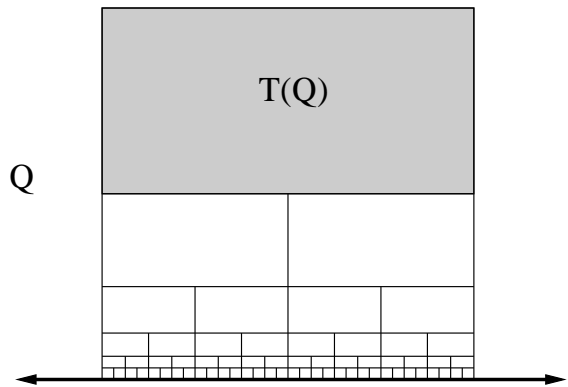}
	}
\caption{ \label{CarlesonSq} A decomposition of a Carleson square
into dyadic Whitney boxes. 
\cite{Garnett-BAF} }
\end{figure}

Dyadic  Carleson squares form a tree under intersection of the 
interiors. Each square has  a unique parent and two children.
The parent of a dyadic Carleson square $Q$ will be denoted
$Q^*$. 
This obviously also induces a tree structure on Whitney boxes.
We will say two dyadic Whitney boxes are neighbors if they are the 
same size and adjacent; each box therefore has a ``left'' and 
a ``right'' neighbor.
One of these is a ``sibling'' in the sense that it shares 
a parent, while the other does not.
%%%%%%

Suppose  $\Gamma$ is a complete, finite geodesic lamination 
in $\uhp$. We assume that $\Gamma$ has been normalized so that
its  set of endpoints $S$  satisfies  
$\{ 0,1\} \subset S \subset [0,1]$.

 For $x \in \reals$, let $W(x) \subset \uhp$
 be the Euclidean cone of angle $\pi/2$ and vertex  $ x$ whose 
axis is vertical.
This is called the Stolz cone with vertex $x$.
Then $W = \cup_{x \in S} W(x) $ is  an infinite  polygon 
with $2n$ sides.  See Figure \ref{sawtooth}.  This type of 
region is called a sawtooth domain and is also approximately 
a unit neighborhood of $\Gamma$ (at least if we truncate it 
at height 2). Clearly we can compute $W$ from $S$ in linear time 
if we are given $S$ as an ordered set.

\begin{figure}[htbp]
\centerline{
	\includegraphics[height=1in]{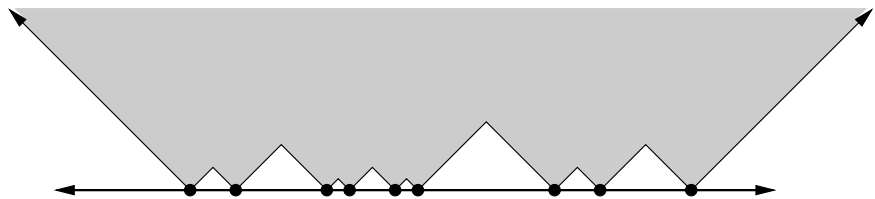}
	}
\caption{ \label{sawtooth} 
The sawtooth domain associated to the set $S$. This region is 
approximately a unit neighborhood of the bending lamination and 
has only $O(n)$ boundary arcs.
}
\end{figure}

We can compute the medial axis of $W$ in time 
$O(n)$ (we don't even need the full strength of the theorem of 
Choi-Snoeyink-Wang: $W$ is a monotone histogram so a 
modification of the simpler 
algorithm  from \cite{AGSS} for convex polygons will work 
in linear time, see \cite{CSW-99}).   In the medial axis of 
$W$,  an edge-edge bisector must be vertical. 
Suppose $A_0$ is a large number (to be fixed later, but 
$A_0 =20$ will work) and consider an edge-edge bisector  $e$
of hyperbolic length $\geq A_0$.

First, suppose $e$ has finite  hyperbolic length.
 This simply means that $e$ is bounded 
away from $\infty$ and the real line (i.e., it is 
a compact subset of $\uhp$).
Let $Q_1$, $ Q_2$ denote the  Whitney boxes 
that contain the top and bottom endpoints of $e$ respectively.
These must be distinct because we are assuming $e$ has large 
length, so its endpoints are in boxes far apart.
Let $e^*$ and $Q_1^*$ denote the vertical projections of $e$ 
and $Q_1$ onto the real axis. If $e^*$ is contained in 
the middle third of $Q_1^*$  then let $Q_3 =Q_1$. If $e^*$ is in 
the left third of $Q_1^*$ then let $Q_3$ denote the union of 
$Q_1$ and its neighbor to the left.  If $e^*$ is in 
the right third, we take the union with the neighbor to the right.
  In every case, $Q_2^*$ now hits the middle 
third of $Q_3^*$. 

The corresponding  Carleson tower is the 
trapezoid that has the bottom edge of $Q_3$ and the 
top edge of $Q_2$ as its bases. See Figure \ref{CT-defn}.
Each tower is associated to a unique edge of the medial axis 
so obviously there are at most $O(n)$ towers associated to 
a set $S$ with $n$ points.  The corresponding Carleson 
arch is $Q_4 \setminus Q_5$ where $Q_4$ is the union 
of  Carleson squares with top edge contained in the bottom 
edge of $Q_3$ and $Q_5$ is the Carleson square whose top edge 
is the top edge of $Q_2$.

\begin{figure}[htbp]
\centerline{
	\includegraphics[height=2.5in]{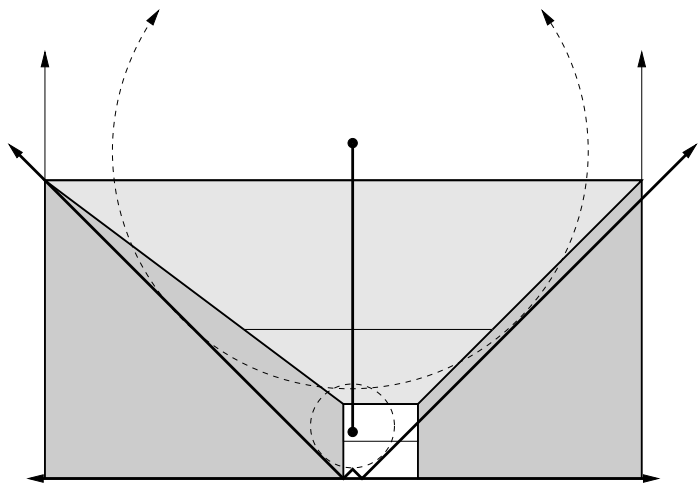}
}
\caption{ \label{CT-defn} 
The union of shaded areas is a Carleson arch.  The lighter 
shaded trapezoid is the corresponding Carleson tower.
}
\end{figure}

The decomposition piece associated to $e$ is $Q = Q_3 
\setminus Q_2$ and is called an arch. It meets the 
real line in two intervals since the base of $Q_2$ is 
much shorter that the base of $Q_3$ and hits the 
center third of $Q_3$'s base.
 If  $e$ has hyperbolic length $L$ then 
the resulting arch is said to have  hyperbolic``width'' $L$ and
the extremal distance in $\uhp$ between the two components
of $Q \cap \reals$ is  $\frac \pi L+O(1)$.
We will call this an $\epsilon$-arch if 
$\epsilon  \geq  \pi / L$. 
Note that if $\epsilon$ is small, then
the two  boundary components of a $\epsilon$-arch in $\uhp$ in $\uhp$ are very 
far apart in the hyperbolic metric ($\sim 1/\epsilon$).

If $e$ has infinite length then the corresponding Carleson tower 
is a triangle with one vertex on $\reals$ where $e$ hits the 
line, and opposite side  that is the bottom of $Q_3$ (defined as above).
The Carleson arch is ``degenerate'' in this case: it is the 
same as the union of Carleson squares $Q_4$ above, except that
it comes with a special marked point in the middle third of its 
base.

Also note that any  two 
hyperbolic geodesics connecting the top and  bottom edges of
the corresponding Carleson tower must be within $O(e^{-d})$
of each other when they are more than hyperbolic distance $d$ from 
either arch boundary.
 Thus the convex hull is 
thin in the arches, in the sense that removing a small ball 
there will disconnect it.   This is not true outside the arches.
%Conversely, outside the arches, any point 
%of the convex hull is in a disk inside the hull with radius bounded 
%uniformly from below (just depending on $A_0$).

%The three types of regions used in our cover (Whitney boxes, 
%Carleson arches, and degenerate Carleson arches) are 
%illustrated in Figure \ref{CW-pieces1}, along with a fourth type of region (Carleson 
%squares) that will be needed later.

If we are given $A > A_0$ then we 
let  $e'$ denote the ``central part'' 
of $e$, i.e., the points of $e$ that are at least hyperbolic
distance $A$ from the endpoints of $e$. Associated to 
$e'$ is a Carleson arch called the ``central arch'' or 
``$A$-arch'' associated to $e$.
Let $\{e_j\}$ be an enumeration of the edge-edge bisectors 
of length $\geq A_0$
in the medial axis of $W$, let $A_j$ be the Carleson arch associated 
to $e_j$ and $A_j'$ the arch associated to its central part $e_j'$
(possibly empty if $e$ is shorter than $2A$).
Let ${\cal K}$ be an enumeration of the connected components of $
\uhp \setminus \cup_j A_j'$ and let ${\cal N}$ be the $A_0$-arches.
(${\cal N}$ is for ``thi{\bf n}'' and ${\cal K}$ is for ``thic{\bf k}''.)
  Then the sets  in ${\cal N}$ and ${\cal K}$ 
cover the whole upper half-plane and  each set only overlaps 
sets of the other type. These overlaps are themselves 
Carleson arches corresponding to segments of hyperbolic length 
$A-A_0$. This covering of $\uhp$ will be referred to as  a
thick/thin decomposition of $\uhp$ with $\eta$-thin overlaps, 
where $\eta^{-1}  \approx A -A_0$.  We will use it later to define a
thick/thin decomposition of a polygon in Section \ref{thick-thin}.

%Consider the hyperbolic convex hull $C(S)$  of the set $S$. We 
%say $z \in C(S)$ is an $\epsilon$-thin point if $S\setminus
%\{ w: \rho_{\uhp}(z,w) < \exp(-1/\epsilon) \}$ is disconnected.
%Otherwise $z$ is called $\epsilon$-thick. The $\epsilon$-Carleson arches
%essentially cover  the $\epsilon$-thin part of $C(S)$. More precisely, 
%
%\begin{lemma} \label{cover-thin}
%There  is a $C< \infty$ so that 
%%every $\epsilon$-thin point in $C(S)$ is with hyperbolic distance $C$
%of a Carleson arch and every point of $C(S)$ covered by 
%a Carleson arch is $C\epsilon$-thin.
%\end{lemma}
%
%\begin{proof}
%This is a straightforward calculation which we leave to the reader.
%\end{proof}

\begin{lemma} \label{Whitney-near-MA}
 The number of Whitney boxes that 
hit $C(S)$ but do not hit any central Carleson arch is 
 at most $O(A n)$.
\end{lemma}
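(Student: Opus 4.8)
The plan is to bound the number of Whitney boxes that meet the convex hull $C(S)$ but avoid all central Carleson arches by charging each such box to a bounded "budget" associated to the medial axis of the sawtooth domain $W$. Recall that $W$ is itself a union of $O(n)$ sides, so its medial axis has $O(n)$ edges and vertices. The convex hull $C(S)$ is contained in (a bounded hyperbolic neighborhood of) $W$, so a Whitney box that meets $C(S)$ meets $W$, and it is within bounded hyperbolic distance of the medial axis of $W$. Thus it suffices to count Whitney boxes near the medial axis skeleton of $W$, subject to the constraint that they do not lie in any central arch.

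First I would classify the medial axis edges of $W$ into short edges (hyperbolic length $< A_0$), long edges whose central part is short (hyperbolic length between $A_0$ and $2A$), and long edges with a nontrivial central part (length $\geq 2A$). For the short edges and the vertices, the portion of the medial axis near them has hyperbolic diameter $O(A_0)=O(1)$, so it is covered by $O(1)$ Whitney boxes each, contributing $O(n)$ boxes total. For a long edge $e$ with central part $e'$, the edge $e$ is split into its central part $e'$ (which is inside the central arch $A'_{e}$, hence excluded by hypothesis) and two end segments, each of hyperbolic length exactly $A$. A segment of hyperbolic length $A$ together with a bounded neighborhood is covered by $O(A)$ Whitney boxes (one needs here that along an edge-edge bisector — which is vertical — consecutive Whitney boxes along it have comparable size and the bisector advances a bounded hyperbolic amount per box). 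Since there are at most $O(n)$ medial axis edges, this contributes $O(An)$ Whitney boxes. The two cases for medium-length long edges ($A_0 \le L < 2A$) are handled the same way and give $O(An)$ as well.

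The step I expect to require the most care is the geometric input that a Whitney box meeting $C(S)$ is within bounded hyperbolic distance of the medial axis of $W$, so that the counting near the skeleton captures everything; this uses that the sawtooth domain $W$ is approximately a unit hyperbolic neighborhood of $\Gamma$ (as noted in Section~\ref{cover}), that $C(S)$ is contained in $W$ truncated at bounded height, and that every point of $W$ is within bounded distance of $\MA(W)$ because $W$ is a union of Stolz cones of fixed opening angle, hence has uniformly bounded "inradius-to-skeleton" geometry. Granting this, the remaining obstacle is purely bookkeeping: making precise the claim that a hyperbolic $A$-segment of an edge-edge bisector, thickened by $O(1)$, meets only $O(A)$ dyadic Whitney boxes — this follows since such a bisector is vertical, the Whitney boxes it passes through form a chain of geometrically-comparable dyadic boxes, and moving hyperbolic distance $O(1)$ changes the box by a bounded amount, so the total number of boxes is $O(A)$. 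Summing the three contributions gives the bound $O(An)$, which is the assertion of the lemma.
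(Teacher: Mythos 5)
Your charging scheme has a genuine gap: it tacitly assumes that the only hyperbolically long edges of the medial axis of the sawtooth domain $W$ are edge--edge bisectors, so that every long edge comes with a central arch that absorbs all but $O(A)$ of its length. That is false. Point--point bisectors of $W$ (bisectors of two reflex ``tooth-tip'' vertices) can be hyperbolically long, and the paper's construction attaches no tower or arch to them. Concretely, take $S=\{k/n:0\le k\le n\}$. The tooth tips sit at $((k+\tfrac12)/n,\, 1/(2n))$, and for each $k$ the vertical ray above $k/n$ is, for heights between roughly $1/n$ and $\min(k,n-k)/n$, the bisector of the two adjacent tips: the maximal disk centered at $(k/n,t)$ there touches exactly those two reflex vertices and no edge in its interior. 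This point--point bisector has hyperbolic length about $\log \min(k,n-k)$, and summing over $k$ gives total medial-axis length of order $n\log n$ outside all arches. Your bound of $O(\ell+1)$ Whitney boxes per edge, summed over the $O(n)$ edges, therefore yields only $O(n\log n)$ in this example, not the claimed $O(An)$. (The true count here \emph{is} $O(n)$, but only because many of these parallel bisectors crowd into the same Whitney boxes --- a multiplicity your per-edge budget cannot detect. Note also that this region is conformally thick, so it is correct that no arches are created there; one cannot repair the argument by declaring these bisectors to generate arches.) A secondary soft spot, which you flagged yourself, is the claim that every Whitney box meeting $C(S)$ lies within bounded hyperbolic distance of ${\rm MA}(W)$; a point of $W$ can lie in maximal disks whose centers are at unbounded hyperbolic distance, so this needs a real argument. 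But the counting failure above is the decisive problem.

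The paper's proof avoids the medial axis of $W$ entirely and uses an area/packing argument: since the lamination is complete, $C(S)$ is triangulated into $n-2$ ideal triangles and hence has hyperbolic area at most $\pi(n-2)$; each Whitney box that meets $C(S)$ but no arch has the property that $2Q\cap C(S)$ contains a hyperbolic ball of definite area, and since a point of $\uhp$ lies within bounded hyperbolic distance of only boundedly many Whitney boxes, the number of such boxes is $O(n)$. The extra factor of $A$ comes only from filling the gap between each $A_0$-arch and its central $A$-arch with $O(A)$ boxes per arch. If you want to keep a skeleton-based argument, you would need to charge boxes to something whose total hyperbolic length outside the central arches is provably $O(An)$ --- the medial axis of $W$ is not such an object.
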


\begin{proof}
First suppose $A = A_0$.
Since $\Gamma$ is complete and contains $n$ geodesics,  its hyperbolic
 convex hull  can be split into $n-2$ ideal hyperbolic 
triangles and hence the hyperbolic convex hull has hyperbolic 
area bounded by $\pi(n-2)$. See Appendix \ref{background} for a 
discussion of hyperbolic area.  Thus it is enough to show that there 
is a $\delta >0$ and a $C < \infty$ so that each Whitney 
box $Q$ can be associated to a hyperbolic
ball of area $\delta$ contained in the convex hull of $\Gamma$ 
and within hyperbolic distance  $C$ of $Q$.
For, if we place $M$ balls of area $\delta$ into a set of 
area $\leq \pi n$, then  at least
$M\delta/ \pi n$ of them intersect in a common point.
 Since any point can only have a 
bounded number of distinct Whitney  boxes within hyperbolic
distance $C$ of it, we see that $M$ must be bounded by a 
multiple of $n$ (the constant depending on $\delta$ and 
$C$).
 If $Q$ is  a Whitney box that  hits 
$C(S)$, but does not hit any 
$\epsilon$-Carleson arch, then either it is about unit 
size (there are only $O(1)$ such boxes hitting $C(S)$)
or $2Q \cap C(S)$ contains a ball with radius bounded
below uniformly, which completes the proof.

If $ A > A_0$ then the gap between and $A_0$-arch and 
its central $A$-arch can be filled by $O(A)$ Whitney 
boxes.  Since there are $O(n)$ arches, this is at most 
$O(An)$ extra boxes.
\end{proof}

Thus we know that given $n$ ordered points in $\reals$ the convex hull of
these points can be covered by  $O(n)$ Carleson towers and $O(n)$ Whitney 
boxes. Next we wish to show these towers and boxes can actually 
be found in time $O(n)$.

% Fig 41

\begin{lemma}
Suppose $\Gamma$ is a finite, complete geodesic 
lamination containing $n$ geodesics. Then in time 
$O(n)$ we can find a covering of $\Gamma$ by at
most $O(n)$ dyadic Whitney boxes and Carleson towers.
\end{lemma}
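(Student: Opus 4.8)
The plan is to combine the linear-time medial axis computation for the sawtooth region $W$ with a careful bookkeeping argument showing that all the data structures needed to name the Whitney boxes and Carleson towers can be produced with $O(1)$ work per output object. First I would recall that $W$ is a monotone histogram with $O(n)$ sides, so by the result of Chin--Snoeyink--Wang (or the simpler argument of \cite{AGSS} adapted to histograms) its medial axis can be computed in time $O(n)$; this gives us the list of edge-edge bisectors, and in particular the $O(n)$ bisectors $e_j$ of hyperbolic length $\geq A_0$. For each such bisector we must produce the associated arch, which by the definition in this section is determined by the two Whitney boxes $Q_1$, $Q_2$ containing the endpoints of $e_j$ (together with the $\leq 3$-way choice of $Q_3$ depending on which third of $Q_1^*$ the projection $e^*$ lands in). Since the endpoints of $e_j$ are explicit points in $\uhp$ produced by the medial axis algorithm, the containing dyadic Whitney box of a point $x+iy$ is read off directly from the binary expansions of $x$ and the exponent of $y$; this is $O(1)$ arithmetic/$\log$ operations per endpoint in our computational model. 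Hence each arch, Carleson tower, and Carleson arch $Q_4\setminus Q_5$ is named in $O(1)$ time, for a total of $O(n)$.

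Next I would handle the Whitney boxes that cover the part of $C(S)$ not inside any central arch. By Lemma~\ref{Whitney-near-MA} there are only $O(An) = O(n)$ of these (with $A$ a fixed constant), so it suffices to enumerate them in linear time. The idea is to walk the medial axis tree of $W$: each edge of the medial axis has bounded hyperbolic length or is one of the long bisectors $e_j$, and in the latter case the ``gap'' between the $A_0$-arch and its central $A$-arch, together with the two end regions of the arch, is filled by $O(A)=O(1)$ explicitly located Whitney boxes stacked along the vertical bisector. So traversing the $O(n)$ medial axis edges and emitting the $O(1)$ boxes attached to each edge produces the full list in time $O(n)$. One must also emit the $O(1)$ unit-size boxes near the top of the sawtooth, but these are found in constant time once $S$ is known. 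Throughout, the hyperbolic-to-Euclidean size comparisons (``side length $\sim \lambda\,\dist(\cdot,\reals)$'', ``box $\sim$ unit hyperbolic size'') are the standard ones recorded earlier in the section, so they contribute only $O(1)$ bookkeeping per box.

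The main obstacle I anticipate is not any single estimate but the verification that every geometric quantity appearing in the construction --- the endpoints of $e_j$, the projections $e_j^*$ and $Q_1^*$, the ``which third'' decision, the trapezoids defining the towers, the index ranges of the Carleson squares making up $Q_4$ --- can genuinely be extracted from the medial axis output in $O(1)$ steps under the paper's cost model (infinite-precision arithmetic plus $\exp$/$\log$ counted as unit operations). Converting between the hyperbolic length of a bisector segment and the number of dyadic scales it spans requires a logarithm, which is allowed; converting between a real coordinate and its dyadic address requires comparing against powers of two, which reduces to $\log_2$ and floor, again $O(1)$ here. Once one is convinced that no step hides a non-constant loop, the lemma follows by summing the $O(1)$ per-object costs over the $O(n)$ medial axis edges, arches, towers, and boxes. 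I would therefore organize the proof as: (i) compute the medial axis of $W$ in $O(n)$; (ii) for each long bisector emit its arch/tower/Carleson-arch in $O(1)$; (iii) traverse the tree emitting the $O(A)$ filler Whitney boxes per edge; (iv) invoke Lemma~\ref{Whitney-near-MA} to conclude the total count and hence the total time is $O(n)$.
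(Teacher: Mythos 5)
Your steps (i), (ii) and (iv) track the paper: compute the medial axis of the sawtooth $W$ in $O(n)$, record a tower for each long edge--edge bisector, and invoke Lemma \ref{Whitney-near-MA} for the global count; the observation that dyadic addresses and hyperbolic lengths are $O(1)$ operations under the stated cost model is also fine. The gap is in step (iii). The claim that every medial axis edge of $W$ either has bounded hyperbolic length or is one of the long edge--edge bisectors $e_j$ is false: point--point bisectors (between two reflex ``peak'' vertices of the sawtooth) can be arbitrarily long in the hyperbolic metric, and they do not receive towers. For example, take $S$ to be $2^m+1$ equally spaced points in $[0,1]$: high above the teeth the nearest boundary site is a peak vertex (the perpendicular feet onto the tiny tooth edges fall off the segments), so the Voronoi structure there consists of $\sim n$ point--point bisectors, each of hyperbolic length $\sim m=\log_2 n$, while the edge--edge bisectors are all short. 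Emitting ``the $O(1)$ boxes attached to each edge'' then either misses the wide upper part of $C(S)$ entirely, or, if you instead emit every box a long edge meets, produces $\sim n\log n$ boxes with massive duplication. There is no per-edge $O(1)$ bound; the $O(n)$ count in Lemma \ref{Whitney-near-MA} is a global area estimate, not a local one, so a running-time analysis charged edge-by-edge cannot close.

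The paper's argument differs precisely at this step: after recording the towers it extends their top and bottom edges to cut $W$ into components (this is where $A_0$ is fixed, so that these cuts have length $<2$ and are covered by at most three Whitney boxes each), and then for each remaining component $W_0$ it enumerates \emph{all} dyadic Whitney boxes meeting $W_0$ --- possibly many per component --- in time proportional to the number found, via the tree-of-intervals device of Lemma \ref{make tree}: project the bottom segments of $W_0$ vertically to $\reals$, choose a comparable dyadic interval inside each projection, and list every dyadic ancestor up to the scale of the top edge of $W_0$. Only then does Lemma \ref{Whitney-near-MA} convert ``time proportional to the number of boxes'' into ``time $O(n)$.'' To repair your proof you need some substitute for this component-wise enumeration that both reaches the Whitney boxes far from any single short medial-axis edge and avoids duplication; the per-edge traversal does neither.
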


\begin{proof}
Compute the medial axis of the sawtooth domain $W$ corresponding to 
the set of endpoints $S$.  Locate all the edge-edge bisectors
of hyperbolic length $\geq A_0$ and record the associated  Carleson towers.  

Each tower has a top and bottom horizontal edge. Extend these
edges until they hit the boundary of $W$ (recall that we 
know which edges of $W$ get hit). Near a tower, $W$ looks
very close to a Stolz cone, so the length of these edges is close
to 1 if $A_0$ is large enough.  In particular,  for large $A_0$ it has 
length $<2$ and so at most three 
Whitney boxes suffice to cover each of these edges.  (This determines
our choice of $A_0$).

These edges cut $W$ into connected components.  After we remove the 
components corresponding to towers, we have at most $O(n)$ 
components remaining.  We claim that if one of these 
components hits $m$ dyadic Whitney boxes then we can 
find all these boxes in time $O(m)$.
\begin{figure}[htbp]
\centerline{
 \includegraphics[height=2in]{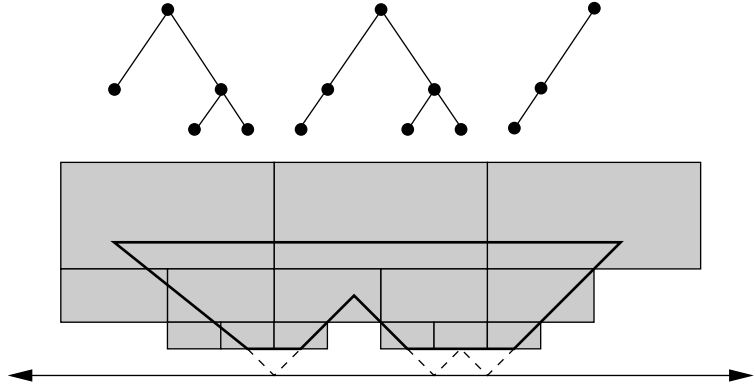}
 }
\caption{\label{Sq-Saw} A truncated  component of $W$ and the 
dyadic Whitney boxes that hit it. These boxes form vertices 
of at most three trees rooted at the topmost boxes. }
\end{figure}

To do so consider one such component $W_0$.  It 
has a single horizontal ``top'' edge of length 
$L$ and a
bottom edge that is  polygonal curve.
Project each 
segment of this curve  vertically to the real line and choose a dyadic interval 
contained in each projection and of comparable size. 
Then enumerate all the dyadic intervals which contain 
at least one of our choice and have length $\leq L$.
If there are $m$ such they can be listed in time 
$O(m)$ (see Lemma \ref{make tree} of Appendix \ref{background}).
The corresponding Whitney boxes, plus a constant number of their 
neighbors,   cover $W_0$, so we are done.
\end{proof}

As a consequence of the proof, we see that we can also record
the adjacency relations between the chosen boxes in time 
$O(n)$.

%$$$$$$$$$$$$$$$$$$$$$$$$$$$$$$$$$$$$$$$$$$$$$$$$$$$$$$$$$$$$$$$$$$$

\section{Extending the convex hull cover to all of $\uhp$}
 \label{extend-decom}

Given a finite set of points $S \subset [-1,1]$ we will 
construct the corresponding ``Carleson-Whitney'' decomposition 
of the upper half-plane by modifying the cover of  the convex 
hyperbolic hull of $S$ from Section \ref{cover}. 
This  decomposition of the upper half-plane will consist 
of the outside of some fixed Carleson  square $Q_0$ and 
a finite number of pieces whose 
union is all of $Q_0$.  These pieces come in four types
(see Figure \ref{CW-pieces1}:
\begin{enumerate}
\item  Carleson squares,
\item Whitney  squares, which are obtained by dividing a 
    Whitney box into a bounded number of equal sized Euclidean squares,
\item Carleson arches, which consist of a Carleson square with a smaller Carleson 
square removed from it; we assume the base of the smaller square 
   hits  the middle third of the base of the larger square and is much smaller.
   \item Degenerate arches, where the smaller square is replaced
      by a single point of $S$, contained in the center third of the base. Moreover, 
     every point of $S$ will be associated to a degenerate arch in this way.
      Note that these pieces are really Carleson squares, but it is 
      convenient to consider them separately from squares $Q$ whose closures 
     do not contain a point of $S$.
\end{enumerate}

\begin{figure}[htbp]
\centerline{
 \includegraphics[height=1.15in]{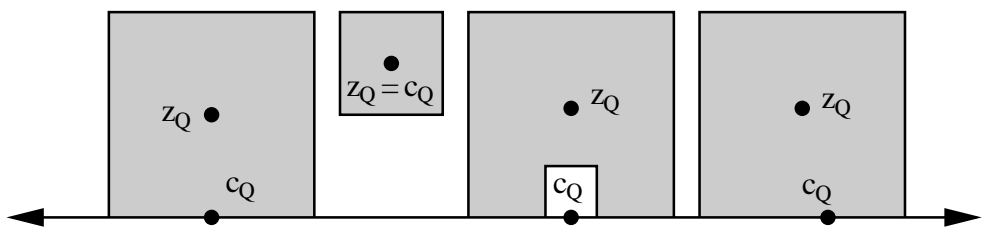}
 }
\caption{ \label{CW-pieces1}  
The shapes  of pieces used in a Carleson-Whitney decomposition:
Carleson squares,  Whitney
 squares,  Carleson arches and degenerate
Carleson arches.
}
\end{figure}

In practice, we could avoid arches by simply using more Whitney type
squares but we would lose the linear dependence on $n$.  For many 
domains, however, this might not be much of a loss, and would 
simplify much of what follows. Moreover, when using finite, 
rather than infinite, precision it may be  more practical to simply cover
any arches by a union of Whitney and Carleson boxes.

\begin{figure}[htbp]
\centerline{
 \includegraphics[height=1.5in]{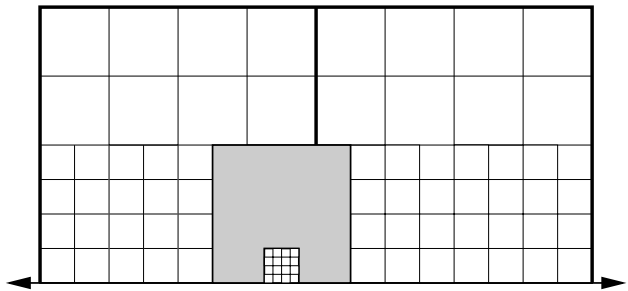}
 }
\caption{ \label{defn-arch}  
Definition of a Carleson arch. The bounding squares of an arch need not be 
dyadic, in which case we tile the region around them by Whitney boxes and 
Carleson squares to fill in a union of dyadic squares.
}
\end{figure}

To construct the covering we fix some $\epsilon >0$, 
take the hyperbolic convex hull
of $S$ and cover it by $O(n)$ $\epsilon$-Carleson arches and Whitney 
boxes as in Section \ref{cover}.
The remaining parts of 
$Q_0$ can be written as a   union of Carleson squares $Q$ whose
top edges meet the bottom edges of Whitney boxes in the cover 
above. The part of the boundary of $Q$ that hits the existing cover is 
a connected piece $E$  of the boundary that contains the top edge.
  We then  subdivide the Carleson box near $E$  into Whitney type and
Carleson squares so that any two adjacent squares have comparable 
size and that subsquares touching $E$ have size comparable to the 
distance from $\reals$. 
See Figure \ref{divide-square} to see how to do this.
 This insures that in our decomposition 
of $\uhp$ when two pieces meet, the intersecting boundary components
have comparable size (this is also true for the pieces themselves, 
except for arches, which may be much larger than adjacent pieces
underneath them; however, the lower boundary of the arch is comparable in 
size to the adjacent pieces). Moreover, the number of squares created is 
bounded by a multiple of the number of Whitney squares in the
cover, so is $O(n)$.

\begin{figure}[htbp]
\centerline{
 \includegraphics[height=2in]{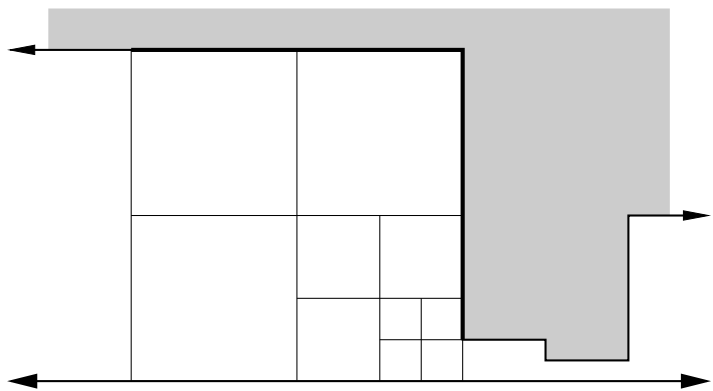}
}
\caption{ \label{divide-square}  
A Carleson square that lies below our cover of the convex hull
can be subdivided into subsquares so that any two adjacent 
squares have comparable size.  The number of squares needed
is bounded by a multiple of the number of squares in the 
covering region and hence is $O(n)$.
}
\end{figure}

The final step of the construction is to subdivide pieces as 
follows.
Each  Whitney  square  is subdivided into a bounded
number of Whitney type squares $Q$  with the property that 
$M Q$ does not hit a point of $S$ ($M$ is a fixed large number 
to be specified later and $MQ$ denotes the box concentric with 
$Q$ and $M$ times larger). For example, see 
Figure \ref{regions}. Similarly, we divide 
Carleson squares into a bounded number of Whitney type squares
and Carleson squares so that each can be expanded by a 
factor of $M$ without hitting $S$ (this can be done since each 
point of $S$ is contained in a degenerate Carleson arch, and 
hence any Carleson square in our decomposition is separated 
from $S$ by a uniform multiple of its diameter).
We will call these $M$-Whitney squares or $M$-Carleson
squares for $S$.
Similarly we replace arches by thinner arches $Q$ so that 
$M Q$ misses $S$ and tile the resulting gaps by Whitney type squares 
and Carleson boxes (for an arch $MQ$ means expanding the bigger square 
by a  factor of $M$ and shrinking the smaller one by a factor of $M$).

$M$ will be fixed later to insure various properties.
For example, it is easy to verify the following fact
we will need using hyperbolic geometry and the fact 
that M{\"o}bius transformations are hyperbolic isometries.

\begin{lemma} \label{defn-M}
 If $M$ is large enough the following holds. Suppose $Q_1$ is 
an $M$-Whitney square for $S$ and  
$Q_2$ is a $M$-Whitney square for $\tau(S)$ where $ \tau $ is a M{\"o}bius 
transformation of $\uhp$ to itself and  that $\tau(Q_1)
\cap Q_2 \ne \emptyset$. Then $2 Q_2 \subset \tau(10 Q_1)$.
\end{lemma}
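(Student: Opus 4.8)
The plan is to reduce the claim to a comparison of hyperbolic sizes, exploiting that $\tau$ is a hyperbolic isometry of $\uhp$ carrying convex hulls of boundary sets to convex hulls. Since $2Q_2\subset\tau(10Q_1)$ is equivalent to $\tau^{-1}(2Q_2)\subset 10Q_1$, I will prove the latter. For a Whitney-type square $Q$ write $z_Q$, $s_Q$, $y_Q=\mathrm{Im}\,z_Q$ for its center, side and height; by construction $s_Q\le y_Q$, the hyperbolic diameter satisfies $\sigma(Q):=\diam_\rho(Q)\asymp s_Q/y_Q$, and $\sigma(Q)$ is bounded above by an absolute constant $C_0$. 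The one place the hypothesis enters is the following estimate, which I would read off from the construction in Section \ref{extend-decom}: if $Q$ is an $M$-Whitney square for a set $T\subset\reals$ and $d=\dist_\rho(z_Q,C(T))$, then $\sigma(Q)\asymp\min(e^{d}/M,\,1)$ with absolute implied constants once $M\ge M_0$. The upper bound is short: $MQ\cap T=\emptyset$ forces the boundary geodesic of $C(T)$ over the gap of $T$ below $Q$ to rise to Euclidean height at least $\tfrac12 Ms_Q$ above the foot of $Q$, and this geodesic supports $C(T)$, so $z_Q$ lies below it at hyperbolic depth $\ge\log(M\sigma(Q)/2)$, i.e.\ $\sigma(Q)\le 2e^{d}/M$ (and $\sigma(Q)\le C_0$ is automatic). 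The lower bound is the assertion that the subdivision producing $Q$ was not pushed past the point where $MQ$ first misses $T$: if $Q$ is strictly smaller than its ambient Whitney box, the parent square of side $2s_Q$ still meets $T$ after dilation by $M$, which pins $\dist(\text{foot of }Q,\,T)$ to a bounded multiple of $Ms_Q$ and hence $e^{d}\asymp M\sigma(Q)$; otherwise $Q$ has unit hyperbolic size. (One uses here that boxes of the decomposition are separated from $T$ by a definite fraction of their diameter, which Section \ref{extend-decom} arranges.)

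Granting this, the comparison of $Q_1$ and $Q_2$ is immediate. Extending $\tau$ to a Möbius transformation of $\widehat{\reals}$ sending $S$ to $\tau(S)$ shows $\tau(C(S))=C(\tau(S))$, so with $d_1=\dist_\rho(z_{Q_1},C(S))$ we have $\dist_\rho(\tau(z_{Q_1}),C(\tau(S)))=d_1$. Choosing $w\in\tau(Q_1)\cap Q_2$ and using that $\tau$ is an isometry together with $\sigma(Q_i)\le C_0$ gives $\dist_\rho(\tau(z_{Q_1}),z_{Q_2})\le\sigma(Q_1)+\sigma(Q_2)\le 2C_0$, hence $|d_1-d_2|\le 2C_0$ where $d_2=\dist_\rho(z_{Q_2},C(\tau(S)))$. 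Applying the size estimate to $Q_1$ (with $T=S$) and to $Q_2$ (with $T=\tau(S)$) now yields $\sigma(Q_1)\asymp\sigma(Q_2)$ with absolute constants.

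Finally I would use that Möbius maps carry Euclidean disks lying in $\uhp$ to Euclidean disks. From the shape of a Whitney-type square, $2Q_2\subset \overline{D}(z_{Q_2},\sqrt2\,s_2)\subset\uhp$, so $\tau^{-1}\bigl(\overline{D}(z_{Q_2},\sqrt2\,s_2)\bigr)=\overline{D}(c',r')$ is again a Euclidean disk; it contains $\tau^{-1}(w)\in Q_1$, and because $\tau^{-1}$ scales lengths near $w$ by $\mathrm{Im}\,\tau^{-1}(w)/\mathrm{Im}\,w\asymp y_1/y_2$ (and by a bounded factor over all of $\overline{D}(z_{Q_2},\sqrt2\,s_2)$, whose hyperbolic diameter is bounded) its radius obeys
\[
  r' \ \asymp\ \frac{y_1}{y_2}\,s_2 \ \asymp\ \frac{y_1}{y_2}\,\sigma(Q_2)\,y_2 \ =\ \sigma(Q_2)\,y_1 \ \asymp\ \sigma(Q_1)\,y_1 \ \asymp\ s_1,
\]
using the previous step. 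So $\overline{D}(c',r')$ is a Euclidean disk of radius comparable to $s_1$ that meets $Q_1$, hence lies inside $C'Q_1$ for an absolute $C'$; the constants $2$ and $10$ in the statement are chosen generously enough that $C'\le 10$. Running the same argument with $\tau^{-1}$ in place of $\tau$ gives the companion inclusion $2Q_1\subset\tau^{-1}(10Q_2)$.

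I expect the lower bound in the size estimate to be the only real obstacle: it is precisely the near-minimality of the $M$-Whitney subdivision that prevents $Q_1$ and $Q_2$ from having wildly different hyperbolic sizes while $\tau(Q_1)$ still meets $Q_2$, and without it the asserted inclusion is false. Everything else is bookkeeping with the isometry property of $\tau$ and conformality of Möbius transformations.
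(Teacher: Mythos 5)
The paper offers no written proof of this lemma: it is dismissed as ``easy to verify using hyperbolic geometry and the fact that M{\"o}bius transformations are hyperbolic isometries,'' with the key input $\diam(Q) \sim \frac1M \dist(Q,S)$ recorded in the sentence immediately after the statement. So there is no argument of record to compare against, and your proof is a legitimate realization of exactly that hint: $\tau$ preserves hyperbolic distance and convex hulls, so the two squares have comparable hyperbolic size, and M{\"o}bius maps carry Euclidean disks in $\uhp$ to Euclidean disks with a controlled change of radius. That skeleton is the right one and it goes through.

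One caveat. Your pivotal estimate $\sigma(Q)\asymp\min(e^{d}/M,1)$ is not correct as a general statement about $M$-Whitney squares and sets $T$: it implicitly uses $\dist(z_Q,T)\asymp e^{d}\,\mathrm{Im}(z_Q)$, which fails when $z_Q$ lies hyperbolically far above $C(T)$ (take $T=\{0,1\}$ and $z_Q=\tfrac12+iR$ with $1\ll R\ll M$: then $\dist(z_Q,T)\asymp R\asymp \mathrm{Im}(z_Q)$, so $\sigma(Q)\asymp 1/M$, while $e^{d}\asymp R$ makes your right-hand side much larger). You are saved because every $M$-Whitney square of the canonical decomposition lies within bounded hyperbolic distance of the convex hull, so $d=O(1)$ for both $Q_1$ and $Q_2$, and your formula degenerates to the clean statement that actually drives the proof: every such square has hyperbolic diameter $\asymp 1/M$ with absolute constants (equivalently, the paper's $\diam(Q)\sim\frac1M\dist(Q,S)$ combined with $\dist(Q,S)\asymp\mathrm{Im}(z_Q)$, the latter because decomposition squares sit within $O(1)$ of $C(S)$ and points of $C(S)$ at height $h$ lie within $\sqrt2\,h$ of $S$). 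With that observation the entire $d_1$-versus-$d_2$ bookkeeping, which you single out as the main obstacle, becomes unnecessary. Finally, neither you nor the paper actually verifies the specific constants $2$ and $10$; they depend on the implied constants of the subdivision, and what the rest of the argument needs is only that some absolute constants work once $M$ is large.
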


A similar result holds for Carleson squares. For Whitney and
Carleson squares, 
$$ \diam(Q) \sim \frac 1M \dist(Q, S),$$
but this is not true for arches.  However, if $E$ is a 
boundary component of an arch then it is true that 
$$ \diam(E) \sim \dist(E, S).$$

\begin{figure}[htbp]
\centerline{
 \includegraphics[height=1.15in]{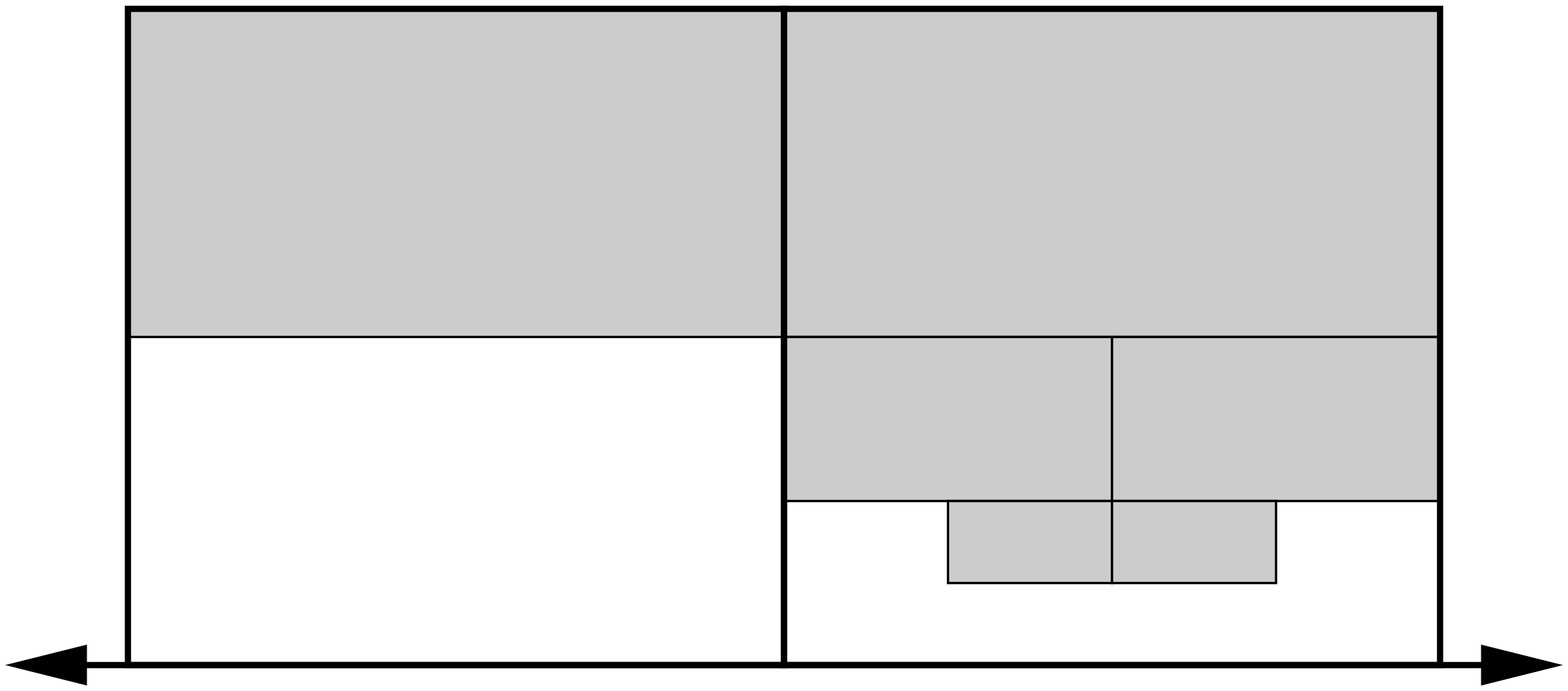}
$\hphantom{xx}$
 \includegraphics[height=1.15in]{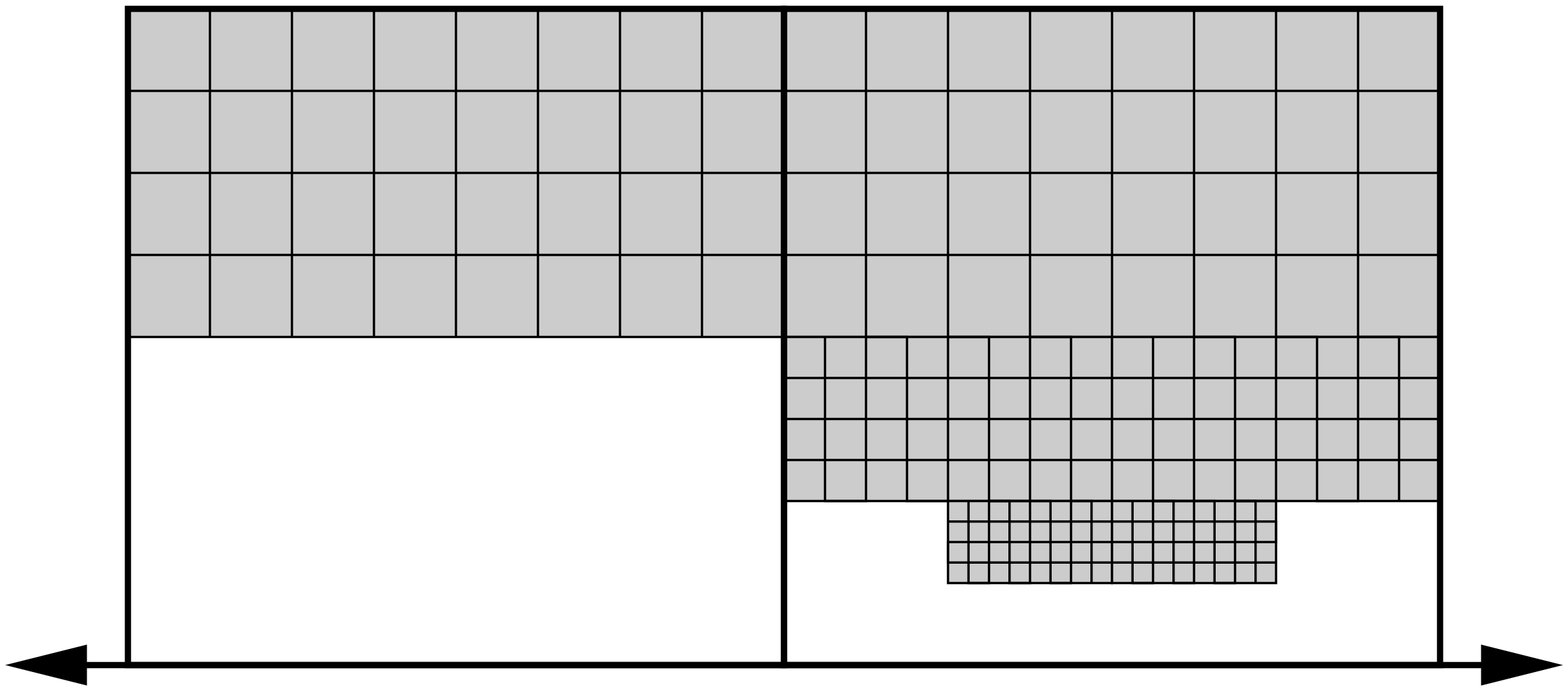}
}
\vskip.2in
\centerline{
%$\hphantom{xxxx}$
 \includegraphics[height=1.15in]{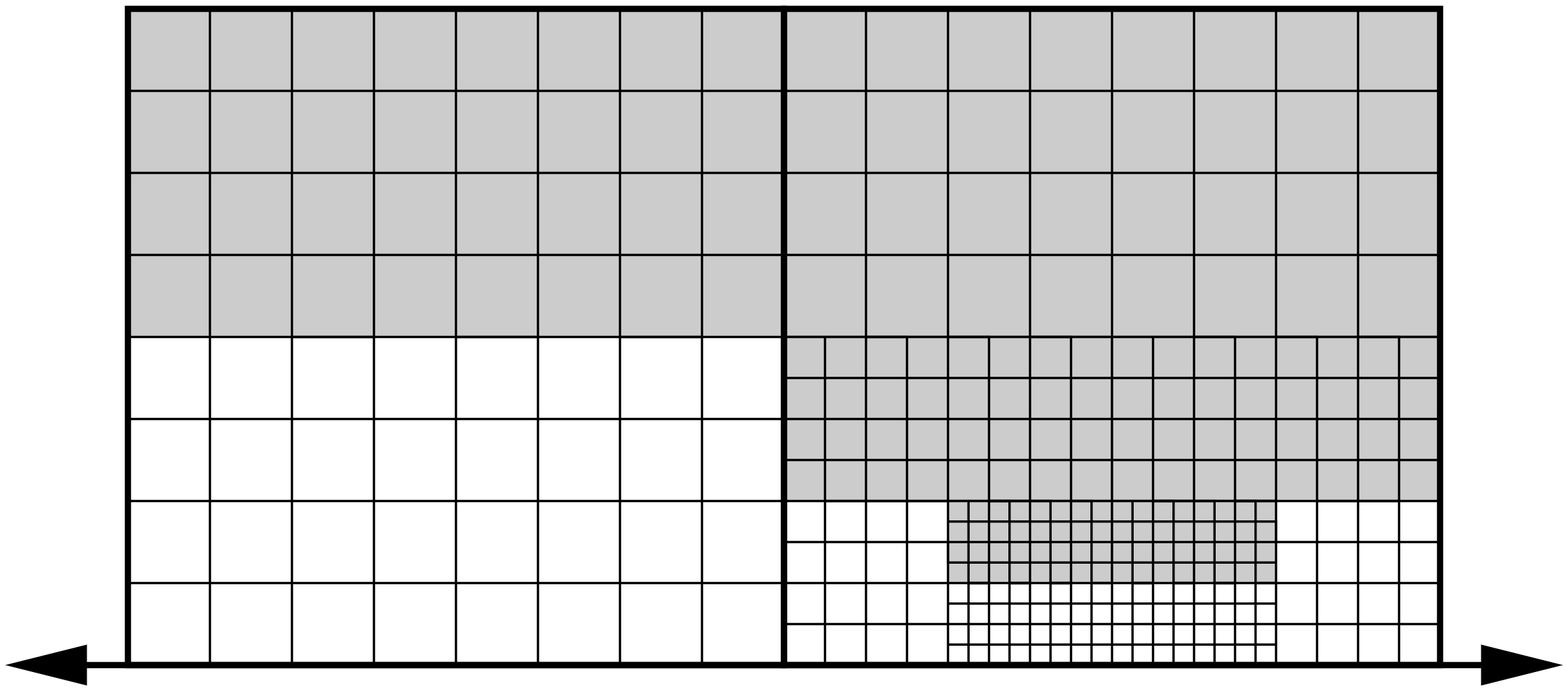}
 }
\caption{ \label{regions}  
On the upper left is a collection of Whitney squares, on the upper right 
 a division 
of these into Whitney type squares and on the bottom we subdivide the 
region below the Whitney squares into Whitney type squares and Carleson 
squares so that adjacent squares are of comparable sizes.
}
\end{figure}

The pieces of our decomposition form the vertices of a tree, where 
the parent of a piece is the piece that is adjacent and directly above it.
The leaves of the tree are either Carleson squares or degenerate Carleson arches. 
We will also consider the graph that results from adding edges corresponding 
to adjacency of pieces (left/right as well as up/down).

 Given an element of such a decomposition we 
consider the part of its boundary in the open upper half-plane.
For Whitney type squares this is a square, for 
Carleson squares it is an arc (the top and sides of the rectangle)
and for arches there are two components, each of which is the top
and sides of a square.
Given a boundary component $E$ of one of these types, 
we let $\diam(E)$ denote its Euclidean diameter and let $N_s(E)$ be 
the $s \cdot \diam(E)$  Euclidean neighborhood of $E$. 
Throughout the paper we only need one fixed value of $s < \frac 14$, 
say $s = 1/10$.
Let $N_s$ be the union of these 
sets over all boundary components of all pieces of our 
decomposition.
Similarly we define $N_s(\partial Q)$ to be the union of $N_s(E)$ 
over all boundary components of $Q$ (one such for boxes, two for arches)
and let $N_s(Q)$ be the union of this with $Q$.

\begin{figure}[htbp]
\centerline{
 \includegraphics[height=1.5in]{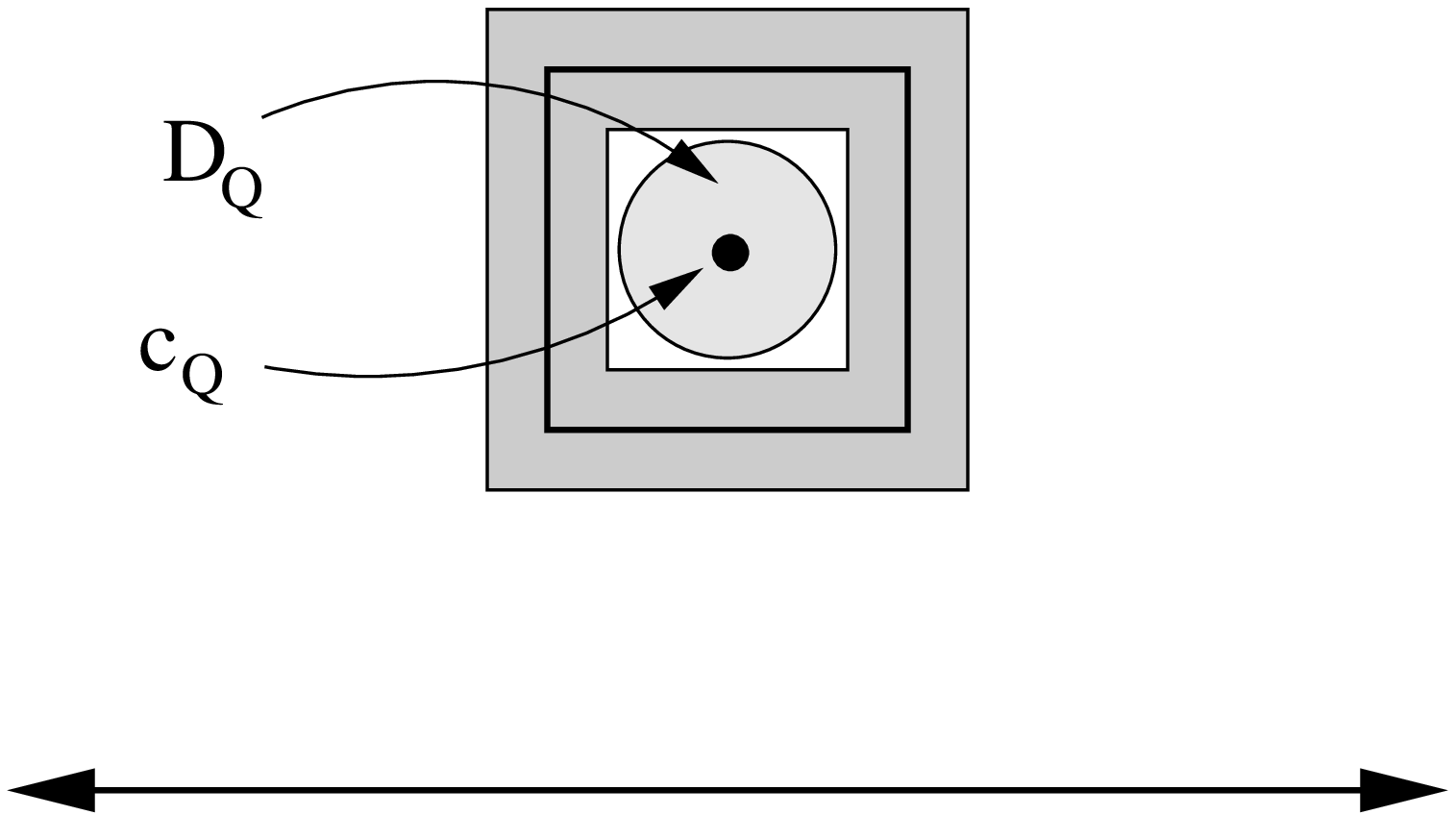}
$\hphantom{xxxx}$
 \includegraphics[height=1.6in]{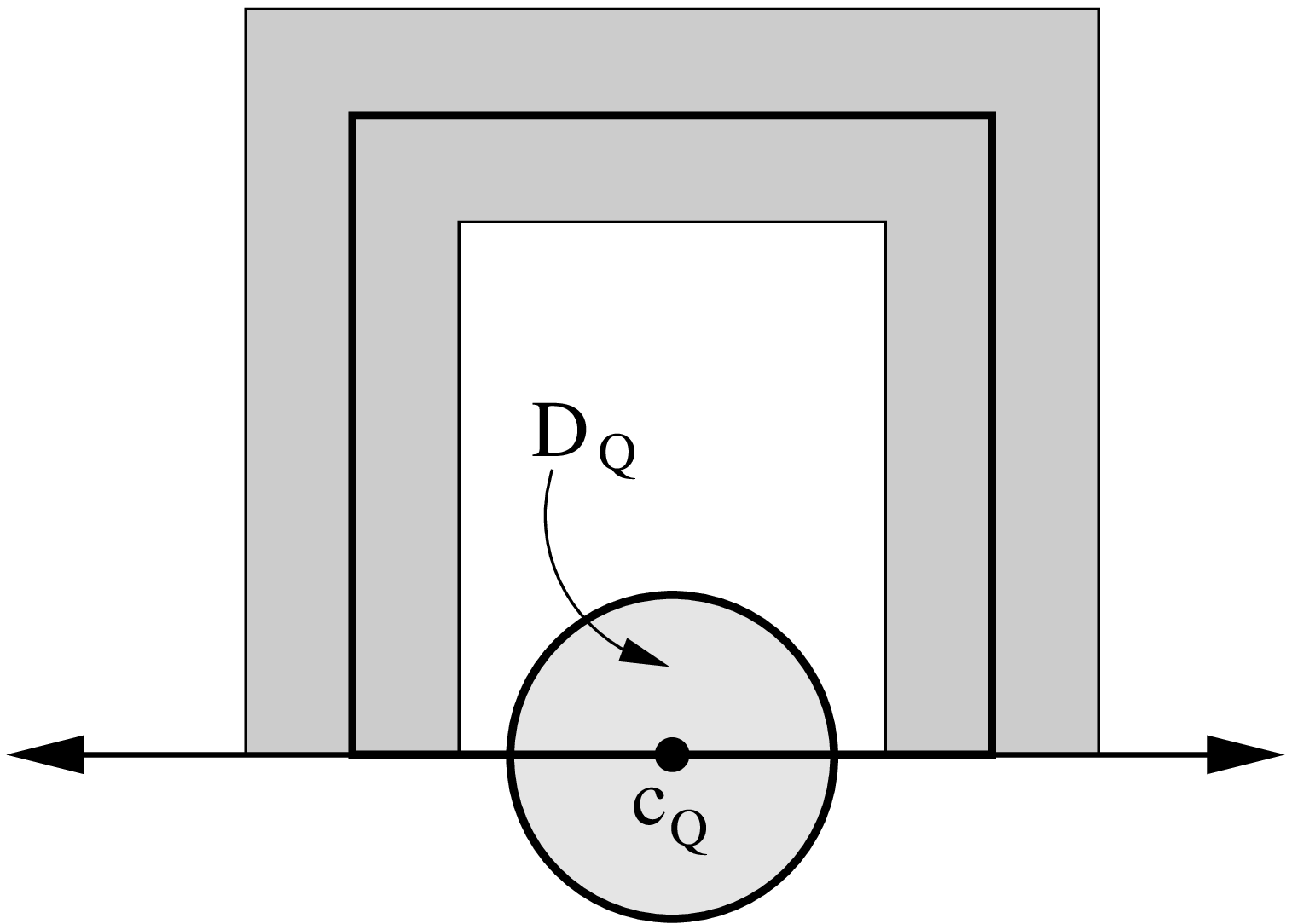}
}
\vskip.3in
\centerline{
 \includegraphics[height=2.5in]{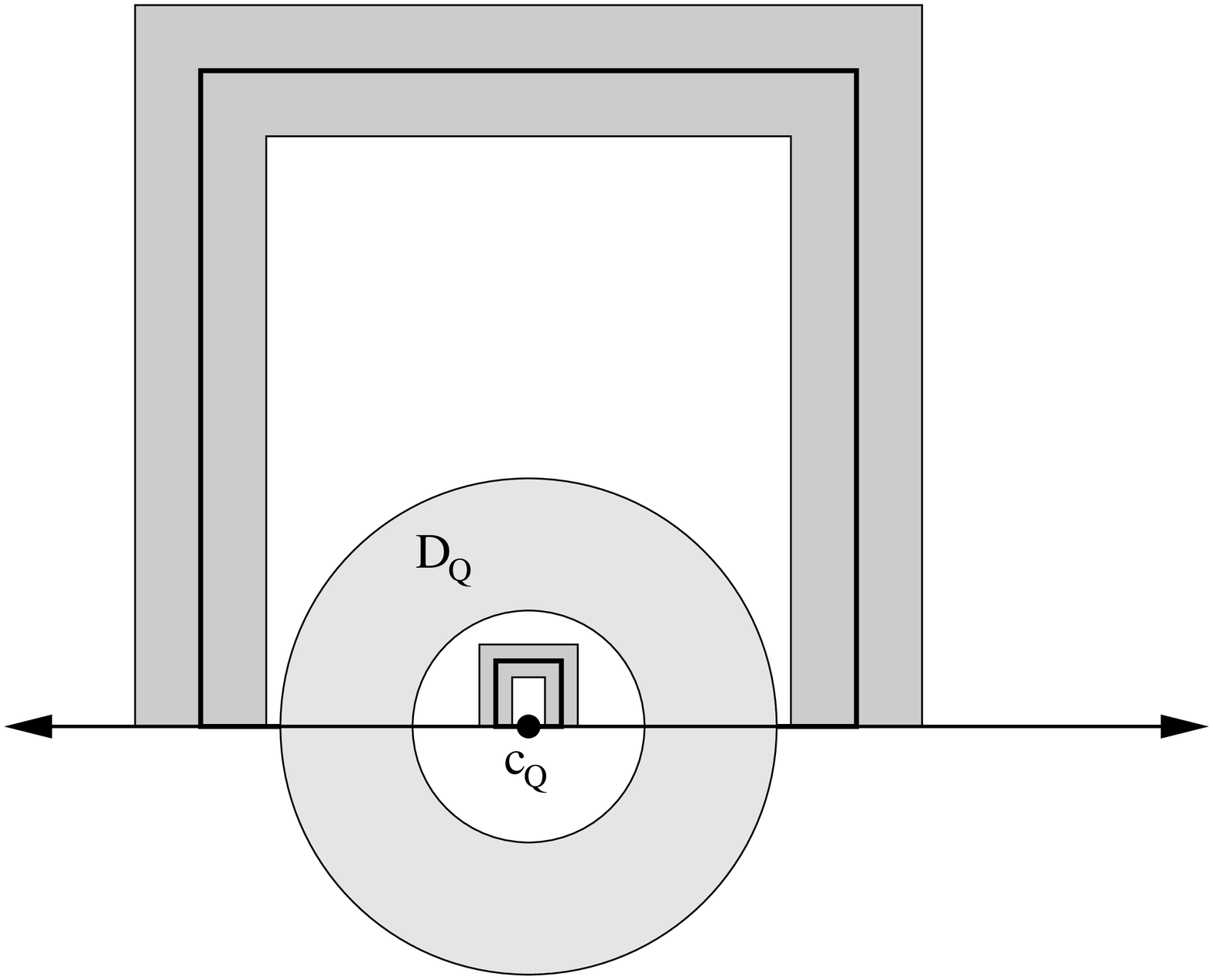}
 }
\caption{ \label{nbhds}
This shows the boundary neighborhoods (darker), centers and empty regions 
(lighter) for Whitney squares, Carleson squares and Carleson arches.
For a  degenerate arch these are the same as for the Carleson  square.
The picture of the arch is deceptive since the scales for the 
two boundary components should be much farther apart, and the empty 
region much ``thicker''.
}
\end{figure}

If $Q$ is a piece  of our decomposition, we let $z_Q$ denote its center
(for squares) or the center of the larger square (for arches).
Similarly,  we let $c_Q$ denote its 
center if $Q$  is a Whitney type square; the center of its base if 
$Q$ is a Carleson square; and the center of the base of the smaller
square if $Q$ is a Carleson arch and the associated point of $S$ 
for degenerate arches. See Figures \ref{CW-pieces1} and  \ref{nbhds}.

Given a disk $D = D(x,s)$  and a number $r >0$ we let 
$r D=D(x, r s)$.  Similarly, given 
an annulus $ A=\{ s< |z-x| < t\}$ we let 
$r A = \{ s/r< |z-c_Q| < r t\}$.  With this 
notation we see that each piece $Q$ of our decomposition contains 
a disk or half-annulus $D_Q$ such that 
$$D_Q \subset Q  \setminus N_s(\partial Q)
\subset N_s(Q) \subset  \lambda_s D_Q, $$
where a little arithmetic shows
\begin{eqnarray} \label{lambda-defn}
  \lambda_s = \frac {\sqrt{(1+s)^2+(\frac 12 +s)^2}}{\frac 12 -s},
\end{eqnarray}
and where $D_Q$ denotes a disk for square pieces and an annulus for arch
pieces; we use the same letter in both cases to simplify notation.  
We call these the ``empty regions'' associated to each piece of the 
decomposition; ``empty'' because  when we define quasiconformal mappings
from $\uhp$ to $\Omega$, the 
 Beltrami dilatation $\mu$  will be supported 
in $N_s$,  which is disjoint from $D_Q$. Thus our maps 
will be conformal in the empty regions and so will have power series 
or Laurent series expansions there. These series will be how
we record our quasiconformal maps.

Given the decomposition ${\cal W}$ we let ${\cal B}$ (for ``boundary'')
be collection 
of $O(n)$ closed squares with disjoint interiors that lie 
in $N_s$ and whose union cover $N_s$. See Figure \ref{bdy-cover}. 
Only a uniformly bounded number of squares are used for each boundary 
component and each such  square is associated to a disk in the arch 
(and disjoint from the covered neighborhood of the boundary) whose 
triple covers the square.  This disk is the ``empty region'' 
associated to the square. 

\begin{figure}[htbp]
\centerline{
 \includegraphics[height=2.25in]{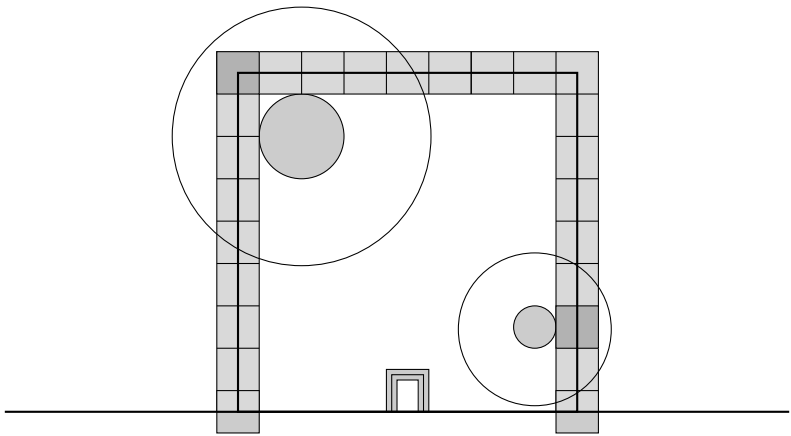}
 }
\caption{ \label{bdy-cover}
A neighborhood of an arch boundary can be covered by squares, and 
each square is associated to a disk inside the arch whose triple
covers the square.  These disks are the ``empty regions'' associated 
to the squares and will be used to define ``partial representations'' 
in Section \ref{epsilon-reps}.
}
\end{figure} 

%Note that if $E$ are the prevertices of a conformal map $f$
%of $\uhp$ onto a polygon, then $f$ has a convergent power or 
%Laurent series on $10D_Q$ for each empty piece $D_Q$.    
%See Figure \ref{nbhds}. 

%######################################################################    

%---------------------------------------------------------------------
\section{$\epsilon$-representations of polygonal domains}
\label{epsilon-reps}  

One problem in approximating a conformal map $f:\disk \to \Omega$ 
(or in the reverse direction) is to decide how to represent 
the function.  One obvious approach would be to use a truncation
 of 
the  power series of $f$, $f_n(z) = \sum_{k=0}^n a_k z^k$, 
but this converges slowly (since $f'$ is discontinuous at the 
prevertices) and the number of terms needed for a given 
accuracy depends on the
geometry of the domain.
% (consider a $1\times R$ rectangle).
We will use a representation using $O(n)$ different 
series that avoids these problems.

 Given 
a Carleson-Whitney decomposition ${\cal W} $, 
it is easy to see that there
is a corresponding piecewise polynomial partition of unity $\{ \varphi_k\}$ 
whose   gradients are supported in $N_s$. More precisely, there is 
%reader can verify that there is 
 a collection of piecewise polynomial  functions  (uniformly 
bounded degree) such that
\begin{enumerate}
\item $0 \leq \varphi_k \leq 1$,
\item  $\supp(\varphi_k) \subset N_s( Q_k)$,
\item $\sum_k \varphi_k(z) =1$ for all $z$, 
\item $\supp (\nabla \varphi_k ) \subset N_s(\partial Q_k)$,
 \item $ |\nabla \varphi_k (z) | \leq C/(s \cdot \diam(E))$ for $z \in N_s(E)$, 
when $E$ is a component of $\partial Q_k$.
\item  $|\nabla^2 \varphi_k (z) | \leq C/(s \cdot \diam(E))^2$ for $z \in N_s(E)$, 
when $E$ is a component of $\partial Q_k$.
\end{enumerate}

To prove this, consider Figure \ref{PartitionTypes}.  On the top 
it  shows part of the decomposition and a covering of the boundary arcs by 
small shaded squares and trapezoids (which allow the squares to shrink as we
approach the boundary). Outside these shaded  regions our functions 
are constant; either $0$ or $1$. Within the shaded squares they 
interpolate between $0$ and $1$. There are only five types of 
regions, and we need only show how to build partition functions for each 
type.
\begin{figure}[htbp]
\centerline{
 \includegraphics[height=2.75in]{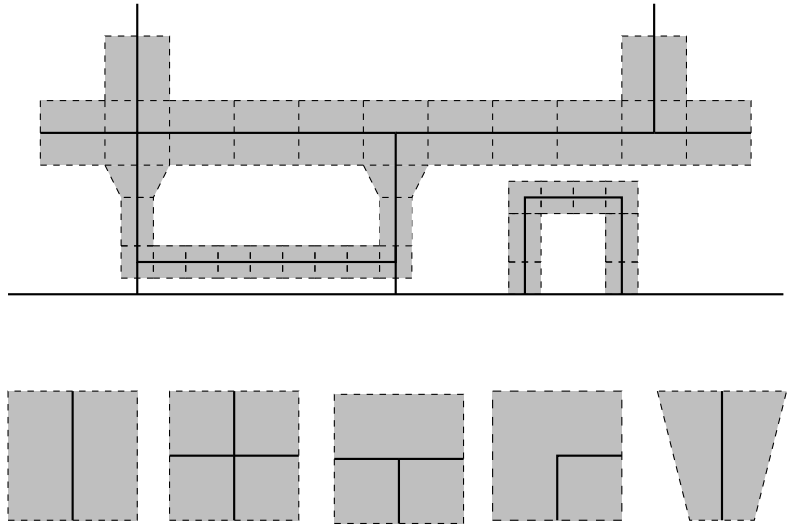}
 }
\caption{ \label{PartitionTypes}
The partition of unity is non-constant only of these types of regions. 
}
\end{figure} 

Let $f(x) =   (1-x^2)$  and 
$$g(x) = \int_{-1}^x f(t) dt/\int_{-1}^1 f(t) dt  
      = \frac 12 + \frac 34 ( x - \frac 13 x^3).$$
Then $g(-1) = 0, g(1) =1$ and $g' =0$ at these two points (by starting 
with $ f(x) =(1-x^2)^k$ we could get higher derivatives to also 
vanish, if needed).  Let $h(x)= g(x)$ if $-1 \leq x \leq 1$, $h(x) =0 $
for $x <-1$ and $h(x) =1$ for $x >1$. For the leftmost square 
on the bottom of Figure \ref{PartitionTypes}  we can take  the functions
$\varphi(x,y) = h(2x)$ and $1-\varphi$ (if $Q = [-1,1]^2$; otherwise we apply 
a Euclidean similarity to make $\varphi$ fit into $Q$).
These are illustrated in the top row of  Figure \ref{PartitionFunctions}.
The next three squares in  Figure \ref{PartitionTypes}  are handled by 
the functions 
$$ h(2x) h(2y), (1-h(2x))h(2y), (1-h(2x))(1-h(2y)), h(2x)(1-h(2y)).$$
$$  h(2y), h(2x)(1-h(2y)), (1-h(2x))(1-h(2y))$$
$$ h(2x) (1- h(2y)) ,  1- h(2x)(1-h(2y)),$$
respectively. The trapezoid uses 
$$ h(2x \cdot (2- h(2y)) ),  1- h(2x \cdot (2- h(2y)) ) .$$
It is easy to check that these function have the desired properties.

\begin{figure}[htbp]
\centerline{
 \includegraphics[height=1.3in]{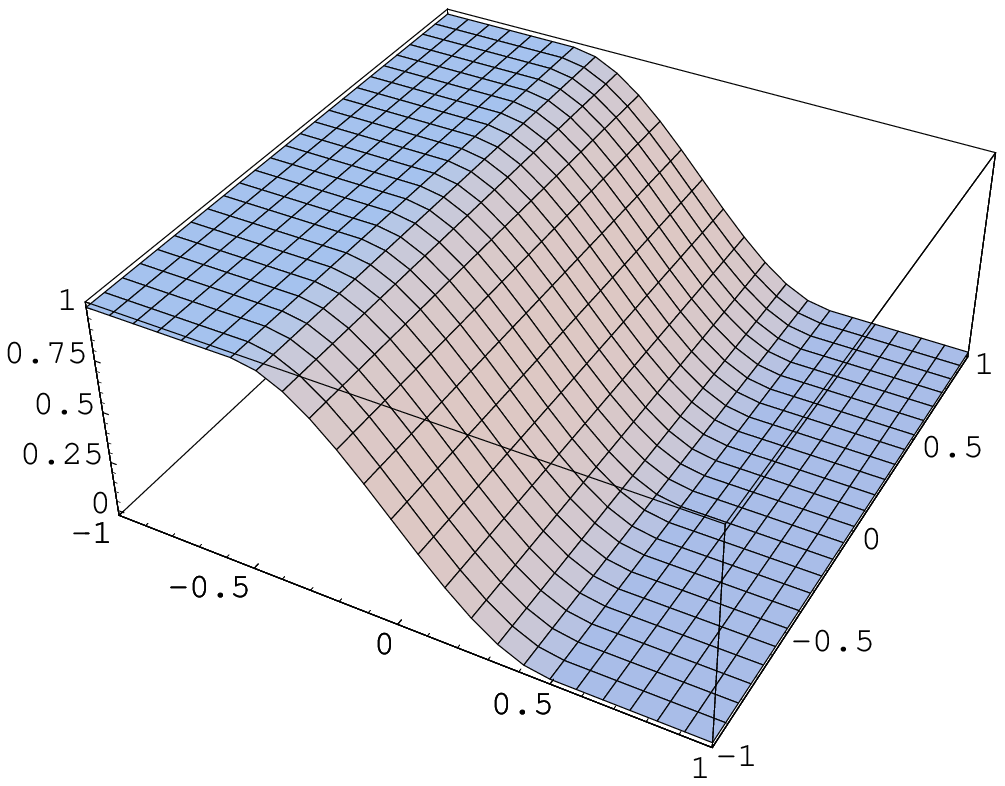}
$\hphantom{xxxx}$ 
 \includegraphics[height=1.3in]{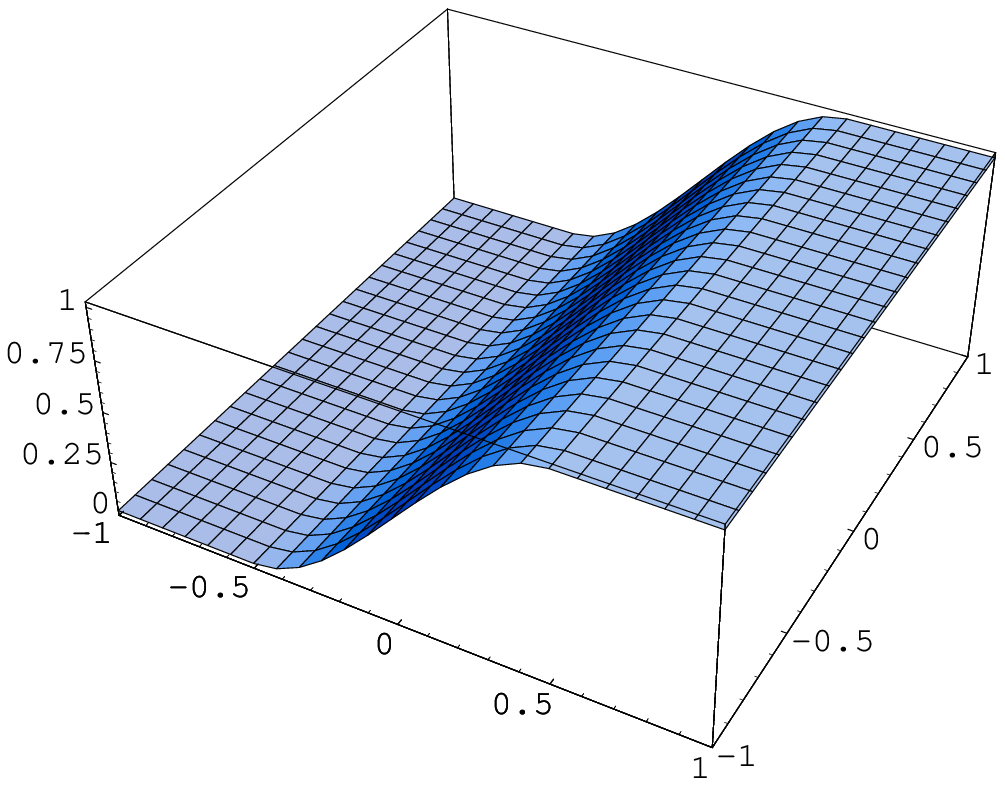}
 }
\centerline{
 \includegraphics[height=1.3in]{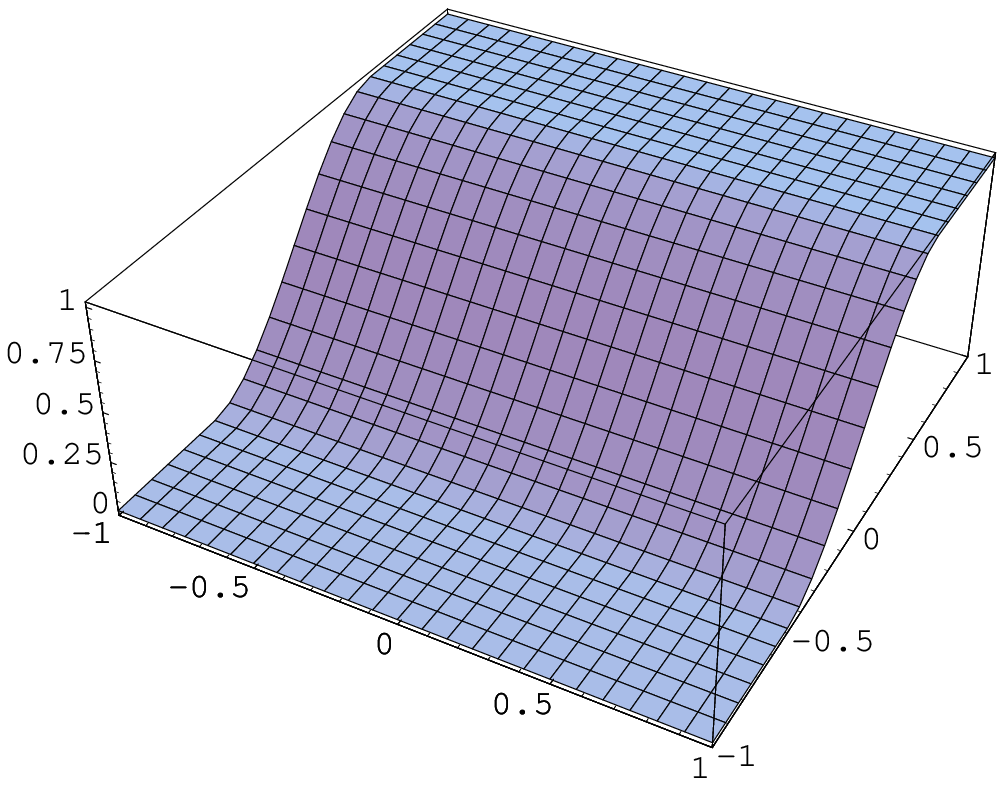}
$\hphantom{xxxx}$ 
 \includegraphics[height=1.3in]{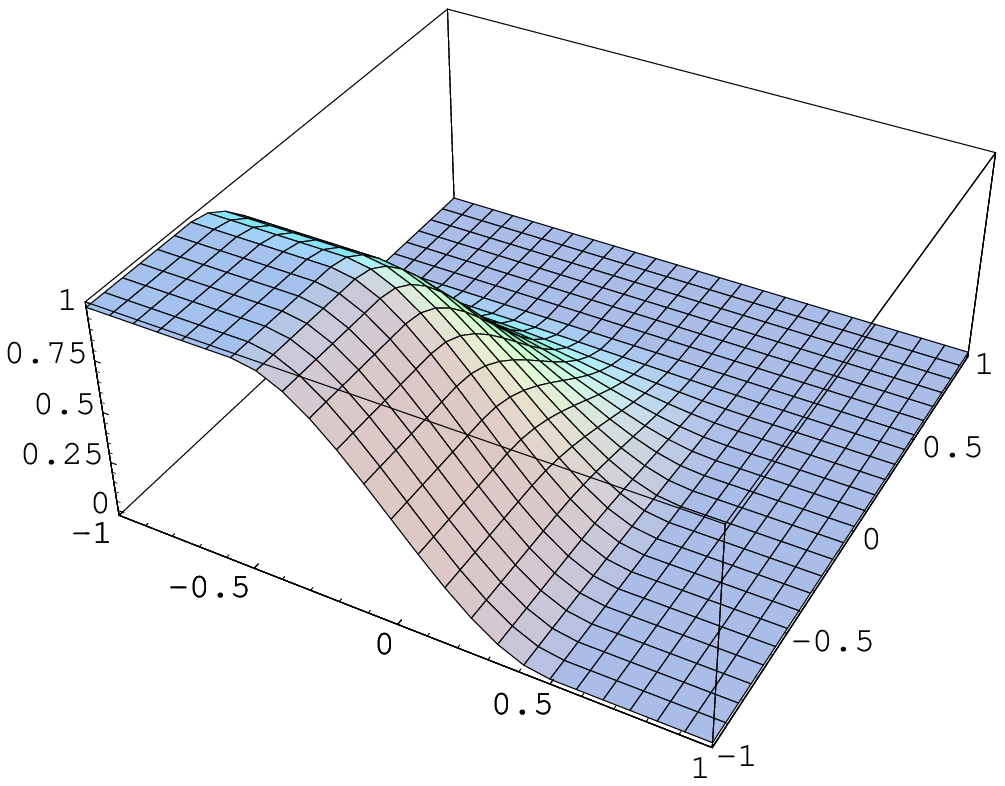}
$\hphantom{xxxx}$ 
 \includegraphics[height=1.3in]{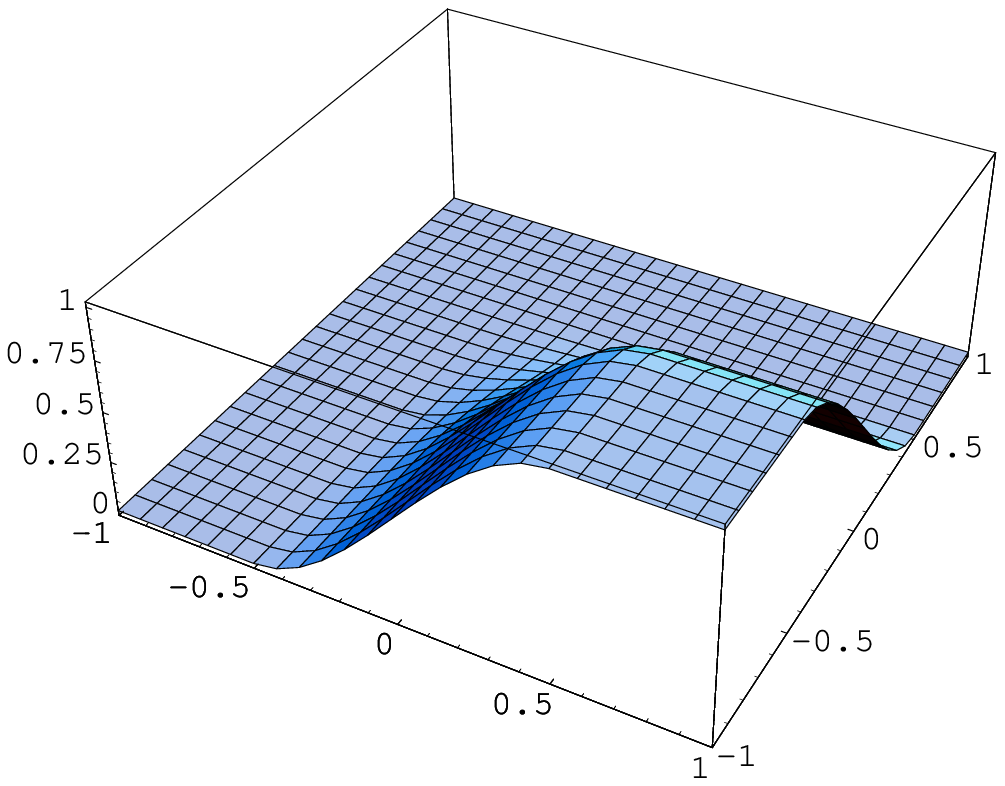}
 }
\centerline{
 \includegraphics[height=1.3in]{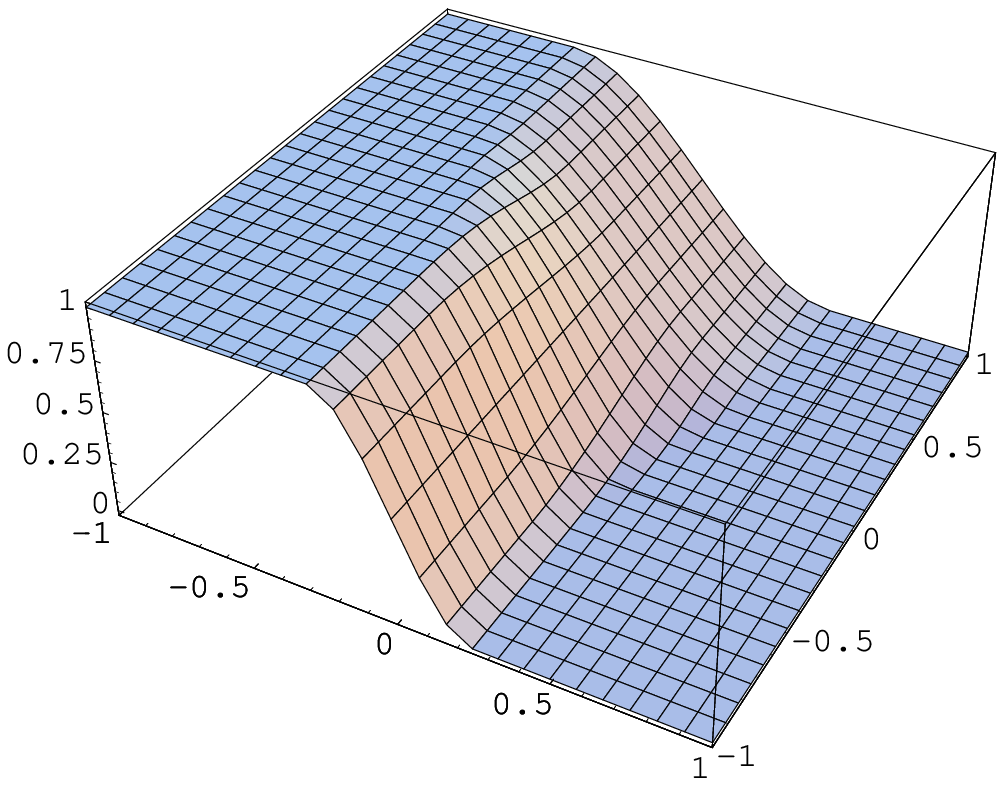}
$\hphantom{xxxx}$ 
 \includegraphics[height=1.3in]{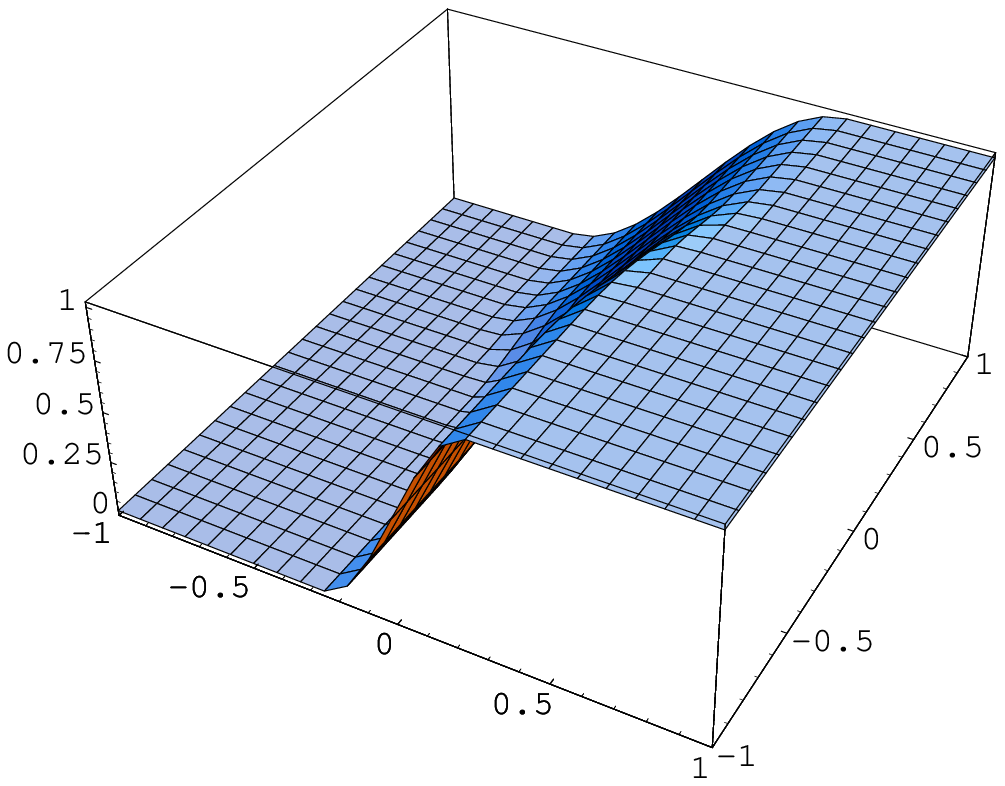}
 }
\caption{ \label{PartitionFunctions}
Some of the functions used  in our piecewise polynomial partition of unity.
The three rows correspond to the leftmost, center and rightmost regions at the 
bottom of Figure \ref{PartitionTypes}.
}
\end{figure}

Now suppose that for each piece $Q_k \in {\cal W}$ 
of our decomposition we have a
conformal map $f_k$ defined on $N_s(Q)$ (we think  of this as 
a local approximation to the globally defined conformal map $f$).
This collection of functions is denoted ${\cal F}$   and 
has the same index set, $I$,  as ${\cal W}$.
We say that ${\cal F}$  represents $\Omega$ if each
  point of $S$ is mapped to the corresponding vertex 
of $\partial \Omega$, and each component (if there is more than one)
of $\partial Q_k \cap \reals$ is mapped into the corresponding   edge of $\Omega$.
 Such a ${\cal F}$ is an $\epsilon$-representation if 
each $f_k$ is 1-biLipschitz with respect to the hyperbolic metrics 
and whenever
 $E_1$ and $E_2$ are 
boundary components of pieces  such that $N_s(E_1)$ and 
$N_s(E_2)$ overlap, then 
\begin{eqnarray} \label{close}
 |f_1(z)- f_2(z) | \leq \epsilon   \cdot \diam(f_1(E_1)),\qquad
 z \in N_s(E_1) \cap N_s(E_2).
\end{eqnarray}
We define the norm of our collection, $\| {\cal F} \|$ to be the 
smallest $\epsilon$ for which (\ref{close}) holds for every 
pair of adjacent elements of ${\cal W}$.

If $E$ is a boundary component of piece $Q_k$, define
$$ \partial F(E) = \frac{\diam (f_k(E))}{\diam(E)}.$$
If $Q_k $ is a Whitney square then $f_k$ is 
 conformal on a neighborhood of $Q_k$. For other pieces,
we can apply Schwarz reflection to see  $f_k$ can be 
extended to be conformal 
on a neighborhood $N_s(E)$ for a uniform $s$. So by Koebe's
distortion theorem, $|f'|$ is comparable at any two points of $E$ 
with uniform bounds. Thus given boundary components $E_1$, $E_2$ of
two adjacent pieces, the following are all comparable to each
other and any of them could be used in (\ref{close}): 
$$\partial F(E_1) \diam(E_1),  \quad
|\partial f_1(z)| \diam(E_1), \quad
|\partial f_2(z)|\diam(E_2), \quad
z \in E_1 \cup E_2.$$

It will  be convenient to express (\ref{close}) in a 
way that suppresses the $\diam(f_1(E_1))$ term. Roughly, 
(\ref{close}) says that adjacent functions are close to 
agreeing in the hyperbolic metric of the image domain.
This is precisely 
true for Whitney boxes where $d\rho_\Omega \sim ds/\dist(z, \partial 
\Omega) \sim ds /\diam(f_1(E_1))$, but not for boundary pieces.  We can 
make it true for all pieces by replacing the hyperbolic metric 
$\rho_\Omega$  by a related metric $\tilde \rho_\Omega$. Suppose 
we have a finite set $S \subset \circle$ that divides the circle 
into disjoint arcs $\{I_j\}$. For each arc, take the disk $D_j$ 
(or possibly a half-plane or disk complement) 
that intersects $\disk$ along this arc and is orthogonal to $\circle$.
Take the union of these disks and $\disk$. The result is a 
simply connected domain $\Omega_S$ that contains $\disk$ and
so that $S \subset \partial \Omega_S$.  Let $\tilde \rho_S$ be the 
hyperbolic metric on $\Omega_S$.  If $f: \disk \to \Omega$ is a 
conformal map to a polygonal domain and $S$ is the set of conformal 
prevertices, define $\tilde \rho_\Omega$ on $\Omega$ by pushing $\tilde 
\rho_S$ forward by $f$. Clearly for a boundary component of a 
decomposition piece in $\disk$, 
$\diam(E) \sim \dist(E, \partial \Omega_S)$, which means 
$d \tilde \rho_S \sim  ds /\diam(E)$ on $E$. Therefore by Koebe's 
distortion theorem, 
(\ref{close}) is equivalent to 
\begin{eqnarray} \label{equiv-close}
\tilde \rho_\Omega(f_1(z), f_2(z)) =O(\epsilon) \text{ for all } z \in N_s(E).
\end{eqnarray}
This is a little cleaner looking than (\ref{close}) and also is 
more clearly conformally invariant: if $g:\Omega_1 \to \Omega_2$ is a conformal map 
of a polygonal domain to a circular arc domain that maps 
edges into edges  then 
$$ \tilde \rho_{\Omega_1}(z,w) = \tilde \rho_{\Omega_2} (g(z), g(w)) .$$
In particular, this works for  the conformal map to the disk.
 This will allow us to estimate 
(\ref{equiv-close}) assuming we have maps into the unit disk; this will 
be convenient in certain proofs, while the more concrete version 
(\ref{close}) is more appropriate for certain explicit calculations.
We could also have used the hyperbolic metric on the plane punctured 
at the points of $E$; away from these points this is approximately the 
same size as $\tilde \rho$, but near each point the asymptotics are 
different (but we never use the metric there, so either choice would be fine).

% By reflection $f$ can be extended to a 
%locally 1-1 map of $\Omega_S$ (but not necessarily globally 1-1) 
%and we can think of $\tilde \rho$ as the hyperbolic metric on the 
%image surface. 

Occasionally we will want to measure the distance between a collection 
${\cal F}$ and a single function $f$ defined on $\uhp$.  We write
$$\tilde \rho_\Omega(f, {\cal F}) = \sup_j \sup_{z \in Q_j} \tilde
\rho_\Omega(f(z), f_j(z)).$$

Given a collection of functions satisfying (\ref{close}), define 
\begin{eqnarray} \label{defn-F}
 F = \sum_{k \in I} f_k \varphi_k,
\end{eqnarray}
Note that whenever we take a non-trivial combination of $f_k$'s 
along the boundary, the values must lie on the same edge 
of $\partial \Omega$. Thus any convex combination also lies 
on this edge, and hence $F$ maps $\reals$ to $\partial \Omega$.
If we try to define representations of domains with curved boundaries, we
lose this property (but we shall see how to deal with this 
in Section \ref{reps of FB}).

\begin{lemma} \label{F=QC}
 $F$ is quasiconformal with constant $1+O(\|{\cal F}\|)$, 
 if $\| {\cal F} \|$ is small enough.
\end{lemma}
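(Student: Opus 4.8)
The plan is to estimate the Beltrami dilatation $\mu_F = \Dbar F / \partial F$ of $F = \sum_k f_k \varphi_k$ directly, region by region, and show $\|\mu_F\|_\infty = O(\|\mathcal F\|)$; Lemma \ref{F=QC} then follows since a map with $\|\mu\|_\infty < 1$ is $K$-quasiconformal with $K = (1+\|\mu\|_\infty)/(1-\|\mu\|_\infty) = 1 + O(\|\mathcal F\|)$, provided we also check $F$ is a homeomorphism (a local degree/orientation argument, using that each $f_k$ is $1$-biLipschitz hyperbolically so locally orientation-preserving, together with the boundary correspondence already built into the definition of a representation). First I would fix a point $z$ and a piece $Q_k$ with $z \in Q_k$. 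If $z$ lies in the ``empty region'' $D_{Q_k}$, then by property (4) of the partition of unity all the $\nabla \varphi_j$ vanish near $z$, so in a neighborhood $\sum_j f_j \varphi_j = \sum_j f_k \varphi_j + \sum_j (f_j - f_k)\varphi_j$; since only the $\varphi_j$ supported near $\partial Q_k$ are nonconstant and those vanish on $D_{Q_k}$, in fact $F \equiv f_k$ there (the partition sums to $1$ and is locally constant), so $F$ is conformal and $\mu_F = 0$. Thus $\mu_F$ is supported in $N_s$, as the text anticipates.

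The work is therefore confined to $N_s(E)$ for boundary components $E$. There $F = \sum_j f_j \varphi_j$ with only a bounded number (at most $O(1)$, by the local geometry of the Carleson--Whitney decomposition) of nonzero terms, say $f_1,\dots,f_m$ with $\sum \varphi_j = 1$. Write $F = f_1 + \sum_{j\ge 2}(f_j - f_1)\varphi_j$. Then
\begin{eqnarray*}
\Dbar F &=& \sum_{j \ge 2} (f_j - f_1)\, \Dbar \varphi_j, \\
\partial F &=& \partial f_1 + \sum_{j \ge 2} \big[ (\partial f_j - \partial f_1)\varphi_j + (f_j - f_1)\partial \varphi_j \big],
\end{eqnarray*}
using that each $f_j$ is conformal (so $\Dbar f_j = 0$) and that $\sum_j \varphi_j = 1$ forces $\sum_j \Dbar \varphi_j = \sum_j \partial \varphi_j = 0$. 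Now I bound each ingredient on $N_s(E)$: by (\ref{close}), $|f_j - f_1| \le \epsilon\,\diam(f_1(E))$ with $\epsilon = \|\mathcal F\|$; by properties (5)--(6) of the partition, $|\nabla \varphi_j| \le C/(s\,\diam(E))$; and by Koebe distortion applied to the conformal extensions of the $f_j$ across $N_s(E)$ (this is exactly the comparison the text records just before the lemma), $|\partial f_j| \sim \partial F(E) = \diam(f_1(E))/\diam(E)$ uniformly on $N_s(E)$, and moreover $|\partial f_j - \partial f_1|$ is itself $O(\epsilon)\,\diam(f_1(E))/\diam(E)$ since the difference $f_j - f_1$ is a conformal (by Schwarz reflection along the common edge) function on a slightly smaller neighborhood which is $O(\epsilon\,\diam(f_1(E)))$ in sup norm, so Cauchy estimates give a derivative bound $O(\epsilon\,\diam(f_1(E))/\diam(E))$. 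Assembling: $|\Dbar F| \le C\epsilon\,\diam(f_1(E))/(s\,\diam(E)) = O(\epsilon)\,|\partial f_1|$, while $|\partial F| \ge |\partial f_1| - O(\epsilon)|\partial f_1| \ge \tfrac12|\partial f_1|$ once $\epsilon$ is small. Hence $|\mu_F(z)| = |\Dbar F|/|\partial F| = O(\epsilon) = O(\|\mathcal F\|)$ at every point of $N_s$, and $\mu_F = 0$ elsewhere.

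The main obstacle is the step comparing the various derivatives $|\partial f_j|$ (and their differences) on the overlap region: one must make sure that the closeness hypothesis (\ref{close}), which is stated only as a $C^0$ bound on $f_j - f_1$ on $N_s(E)$, upgrades to a $C^1$ bound on a slightly smaller set, and that the scale $\diam(f_1(E))/\diam(E) \sim |\partial f_j|$ is genuinely uniform across all the overlapping pieces. This is where the Schwarz-reflection extension to a fixed fraction larger neighborhood and the Koebe/Cauchy estimates are doing the real work; one has to use that in the decomposition adjacent boundary components have comparable diameter (noted in Section \ref{extend-decom}) so that a single Koebe constant suffices. The homeomorphism claim is the other point requiring a little care: I would argue that $F$ restricted to each ``empty region'' equals the injective map $f_j$, that $F$ is a small ($C^1$, since $\|\mu_F\|$ and the off-diagonal size are controlled) perturbation of $f_j$ on each $N_s(E)$, and that the boundary values of $F$ wind around $\partial\Omega$ exactly once (this is forced by the representation conditions: vertices go to vertices and edge-intervals go to the matching edges, in cyclic order); an argument principle / degree computation then shows $F : \uhp \to \Omega$ is a bijection. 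With injectivity in hand and $\|\mu_F\|_\infty = O(\|\mathcal F\|) < 1$, $F$ is $(1 + O(\|\mathcal F\|))$-quasiconformal, proving the lemma.
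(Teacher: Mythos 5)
Your proposal is correct and follows essentially the same route as the paper: both arguments exploit $\sum_k \Dbar\varphi_k = \sum_k \partial\varphi_k = 0$ to telescope against a reference function, use (\ref{close}) together with Cauchy/Koebe estimates to upgrade the $C^0$ closeness of adjacent $f_j$'s to closeness of their derivatives at the scale $\diam(f(E))/\diam(E)$, and conclude $|\Dbar F| = O(\epsilon)|\partial F|$ pointwise. Your added remarks on injectivity via a degree argument are a reasonable supplement to the paper's brief assertion that $F$ maps $\uhp$ into $\Omega$ and $\reals$ onto $\partial\Omega$, but they do not change the substance of the argument.
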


\begin{proof}
 First note that $F$ is a continuous mapping of $\uhp$ into 
$\Omega$ and that it maps $\reals$  onto $\partial \Omega$.
%Also note that $\tilde \rho_\Omega(f, {\cal F}) =O(\epsilon)$.
 If two functions $f_1, f_2$ are close 
in the sense of (\ref{close}),
then the Cauchy estimates imply their derivatives are 
also close in the sense 
\begin{eqnarray}  \label{close2}
|\partial f_1(z)-\partial f_2(z) | \leq C \epsilon |\partial F( E_j)| 
\end{eqnarray}
for $j=1$ or $2$. 
%This is like saying that the maps are close in the hyperbolic 
%metric of the image, except that for pieces of the decomposition 
%which hit the real line, we are only saying the maps are 
%close in a metric that depends on the distance to the nearest
%vertex, not the nearest edge.  One could make this more precise
%using a hyperbolic metric on a Riemann surface constructed by 
%extending $\Omega$ across its boundary segments, but this does 
%not seem advantageous for the current discussion.
Note that  (all sums are over $k \in I$, the index set of ${\cal W}$),
$$ \partial F  =\sum_k (\partial f_k \cdot\varphi_k + f_k\cdot \partial \varphi_k  ),$$
$$ \overline {\partial}F=  \sum_k f_k  \cdot \Dbar \varphi_k,$$
because $\Dbar f_k =0$.
Since only a bounded number of terms of our partition of unity are 
non-zero at any point we have 
\begin{eqnarray}\label{sharp2}
 \sum_k |\nabla \varphi_k(z)| \leq \frac C{\diam(E)}, \quad z \in N_s(E).
\end{eqnarray}
Also, since the $\sum_k \varphi_k \equiv 1$  we know $\sum_k \partial \varphi_k=
\sum_k \Dbar \varphi_k =0$.  Note that if $\sum_k a_k =0$ 
 and $|b_k - b| \leq \epsilon$, 
then 
\begin{eqnarray} \label{sharp1}
 |\sum_k a_k b_k| = |\sum_k a_k (b_k -b)| 
\leq \epsilon \sum_k | a_k|.
\end{eqnarray}
Hence by (\ref{close}), (\ref{sharp2}) and (\ref{sharp1}), 
$$ |\overline{\partial} F| \leq \frac {C \epsilon  \cdot 
              \diam(f(E))}{ \diam(E)} \leq 
                 C \epsilon |\partial f(E)|.$$
Because $\sum \varphi_k \equiv 1$, 
$$ \partial F = \partial f + \sum ((f_k)_z  - \partial f) \varphi_k 
+ f_k  \cdot (\varphi_k)_z = \partial f + I + II.$$
Thus $|\partial F - \partial f| \leq I + II$ and
we can estimate these terms as 
$$ I \leq \sum_k \frac {|f_k(z) - f(z)||\partial F(E)|}{\diam(E)}
                  \leq C \epsilon |\partial f(E)|, $$
$$ II \leq   \frac { C \epsilon |\diam  f(E)|}{\diam(E)} \leq 
                           C \epsilon |\partial F(E)|.$$

%For a given point $z$,  $f_j(z)$ and $\partial f_j(z)$ may be defined for 
%several choices of index, but all the possibilities agree up to a small 
%error given by (\ref{close}) and (\ref{close2}). Let $f(z)$ and 
%$\partial f(z)$ denote one of these possible values.
Thus, 
\begin{eqnarray*}
|\mu_F(z)|= |\frac {\Dbar F}{\partial F}(z) |
&\leq&
|\frac{\sum f_j(z)\Dbar \varphi_j(z)}{\sum \partial f_j(z) \cdot \varphi_j (z)
               + f_j(z) \cdot \partial \varphi_j(z) } | \\
&\leq&
         \frac {C \epsilon |\partial F (E)|}{ |\partial F(E)| - C 
                   \epsilon |\partial F(E)|}  \\
   &\leq& C \epsilon.
\end{eqnarray*}
In particular, if $\| {\cal F} \| \leq \epsilon$, then 
$F$ is a $(1+C\epsilon)$-quasiconformal map of $\uhp$
to $\Omega$.
\end{proof}

Next we define a collection  ${\cal E}$ of elementary mapping functions.  All 
of these can be considered as conformal maps of the upper half-plane 
to regions bounded by at most three straight edges 
(segments, rays or lines) with 
at most one bounded segment used.
 As special cases  we take the 
functions $f(z) = a z +b $ (one boundary line), 
 $f(z)= az^\alpha +b$  (two boundary rays) and
$f(z) = a \log z + b$ (two parallel boundary lines).
When there are three sides there must be two rays and a finite 
segment and  such a map is given by the Schwarz-Christoffel
formula; it is elementary in the sense that there is no parameter
problem to solve; when there are three sides we can take $\infty$ to map to itself 
and $\pm 1$ to map to the finite vertices.  See Appendix \ref{background}

Suppose $f: \uhp \to \Omega$ is conformal and $S$ consists 
of the preimages of the vertices of $P = \partial \Omega$.
The main observation we need to make is that on each piece of our 
Carleson-Whitney  decomposition  $Q_k$ we can write
$f = f_k \circ g_k$ where $f_k \in {\cal E}$ and $g_k$
is conformal on $M Q_k$ for Whitney squares  and 
on $M(Q_k \cup Q_k^*)$ ($Q^*$ is the reflection of $Q$ 
across $\reals$) for other pieces, and hence $g_k$ has a uniformly 
convergent power or Laurent  series on $D_k$.
In the case of Whitney type squares or 
Carleson squares with no vertex, this is clear and we can take 
$f_k $ to be the identity. For degenerate arches this is also clear with 
$f_k$ being a power function, for if  the vertex $v$ has interior angle 
$\alpha$ then $f(z)^{\pi/\alpha}$  can be extended conformally 
by Schwarz reflection.

The other elements of ${\cal E}$ are only needed for arches.
In this case the two components of $\partial M Q \cap \reals$  map to 
line segments, and  we can choose $h \in {\cal E}$ so that $h^{-1} \circ f$
 will map both  these to disjoint line segments 
into $  \reals$. Hence  $ h^{-1} \circ f$ will have
a conformal extension to $M Q \cup  M Q^*$.
 
\begin{figure}[htbp]
\centerline{ 
	\includegraphics[height=2in]{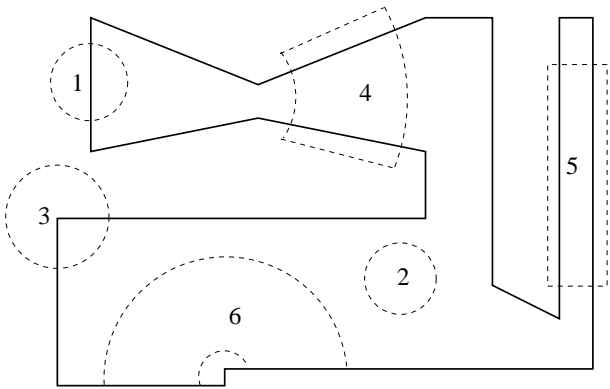}
	}
\caption{ \label{patches}  
This shows parts of a polygon that correspond to using 
different elements of ${\cal E}$. In regions 1 and 2 we would
use the identity, in regions 3 and 4 a power function, a logarithm in 
region 5 and a 3-sided Schwarz-Christoffel map in region 6.
}
\end{figure}

One of the main difficulties with extending our methods to non-polygons
is to define the analogous class of maps ${\cal E}$.
We shall see in Section \ref{reps of FB} how to do this 
for some circular arc polygons, but even in this case problems 
arise if we need to map two circular arcs into a line 
simultaneously, so arches can't be used in general. For
more general curved boundaries, it may be difficult even to 
define maps at the vertices.

Let $ g_{k,p} = \sum_{j=-p}^p  a_n (z-z_k)^j$ be the truncation
of the  power or Laurent series 
for $g_k$  around the point $z_k = z_{Q_k}$. Then since $g_k$ is 
conformal on $10Q_k$, we have 
$$ |g_k(z) - g_{k,p}(z) | \leq C \diam(g_k(E)) \lambda^p,$$
for a boundary component $E$ of $Q_k$ and for 
 $z \in N_s(E)$  and some fixed $\lambda < 1$. Thus 
$$ |f(z) - f_k(g_{k,p}(z)) | \leq C \diam(f(E)) \lambda^p,$$
for $z \in N_s(E)$  and some fixed $\lambda < 1$. 
We can formalize this 
a bit with some notation.

An $\epsilon$-representation of $ \Omega$ 
is a triple $(S, {\cal W}, {\cal F})$ consisting  of:
\begin{enumerate}
\item An ordered set $S \subset \reals$ of $n$ points,
\item A Carleson-Whitney decomposition ${\cal W}$  of $\uhp$ with $O(n)$ pieces
     that extends a covering of the hyperbolic convex hull of $S$,  
     together with the adjacency structure of the pieces and
    with a piecewise polynomial partition of unity $\{ \varphi_k\}$
           as described above,
\item  A collection of functions ${\cal F}$ of norm $\epsilon$, and each 
         function has the form  $f_k(z)= h_k(g_k(z))$ 
               where $g_k$  consists of 
       $p = O(|\log \epsilon|) $ terms of a  power series (for the square pieces) 
            or Laurent series 
          (for the arches) and $h_k \in {\cal E}$,
 \end{enumerate}

It may be a good idea to use Laurent series to implement the algorithm, 
but
we would like to avoid the use of Laurent series  in the proof  in order 
to make use of known results about fast manipulations of 
 power series, without having to extend them to  allow negative powers.
 It turns out that our iteration to improve representations only 
 needs the series expansions near the boundary of 
 our  decomposition regions and we can always convert a Laurent series to power
 series on a collection of disks that cover each boundary component. 
 Therefore, we define a
 partial $\epsilon$-representation as a triple $(S, {\cal B}, {\cal G})$
where ${\cal B}$ is the covering of the boundary of the decomposition 
introduced at the end of Section \ref{extend-decom} and ${\cal G}$ consists of 
power series on  the corresponding empty regions that 
satisfy (\ref{close}).
For Whitney or Carleson squares, this is the same as before, but for arches 
we have ``thrown away'' the representation in the middle of the arch, keeping 
power series representations that are valid only near the boundary
of the arch.  
%It will 
%also be convenient later to assume that the elements of ${\cal G} $ for arches 
%actually consist of two power series, one representing the positive powers
%of the Laurent series and the other the negative powers.

Clearly a representation can be turned into a partial representation by 
restricting an element of ${\cal F}$ to  a disk and computing its power
series there. Conversely, a partial representation can be converted 
back to a representation by using the power series near the boundary to 
compute integrals of the form $\int f(z) z^{k} dz$ around the boundary of 
an arch and  thus 
recover the coefficients of the Laurent series.  Moreover, we shall see 
in Appendix \ref{fast-power-series} that these conversions can 
be done quickly.

%------------------------------------------------------------------

%\section{An approximation lemma}  \label{approx-lemma}

Next,  we deal with a technical point concerning 
$\epsilon$-approximations. 
Later we will want to take power 
series expansions that are  good 
approximations to the desired conformal map on the 
empty disks of our decomposition and deduce that they are 
still good approximations on a larger disk that includes 
the entire piece.

More generally, suppose $f$ is an  conformal map
defined on $D(0,2)$.
Suppose we  know the power series of an  analytic function $h$, 
defined on the 
smaller disk $D(0,1)$ so that $|h-f| \leq \epsilon$ on 
the smaller disk. Can we use $h$ to get an approximation 
to $f$ on a larger disk, say $D(0,r)$ for some $1< r< 2$?
Note that $h$ itself may not work.  For example, if 
 $f(z)=z$ and $h(z) =
z+(\frac 89 z)^{1000}$ then $h$ is a good approximation 
for $|z|< 1$ but not for  $|z| > 9/8$.  However, 
if we truncate the power series for $h$ appropriately, then
we obtain a uniformly good 
approximation for $f$ on a uniformly larger disk.
More precisely:

\begin{lemma} \label{expand-est}
There is a $C < \infty$ so that given $0 < \beta < 1$ and 
$ 1 < r=R^\beta < R < \infty$, the following holds.
Suppose $D = \{ z:  |z| <t \}$ 
and suppose 
  $f(z) = \sum_{n=-\infty}^\infty a_n z^n $ 
 is a conformal on 
$2R\cdot D =\{ z:  |z| \leq 2Rt\}$.
Suppose also that $h(z) = \sum_{n=0}^\infty b_n z^n$ is holomorphic  on 
$D$ and that  
$|f-h| \leq \epsilon$  on $D$.
 Let $ g(z) = \sum_{n=0}^q b_n z^n $ be the 
truncated power series for $h$  where
 $q = \lfloor c \log \frac 1 \epsilon \rfloor$, 
 $c =1/\log R$.
Then 
$$ |f(z) - g(z)  | \leq \frac{C \epsilon^{1-\beta}}{1-R^{\beta-1}}  |f'(0)|,$$
for all $z \in  rD$. 
\end{lemma}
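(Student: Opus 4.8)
The plan is to estimate $f-g$ by working directly with Taylor coefficients, after reducing by a scaling to the normalized situation $t=1$ (so that $f$ is conformal on $\{|z|\le 2R\}$, $|f-h|\le\epsilon$ on $D=\{|z|<1\}$, and $g(z)=\sum_{n=0}^{q}b_nz^n$ with $q=\lfloor\log(1/\epsilon)/\log R\rfloor$); subtracting a constant we may also take $f(0)=0$, which changes $b_0$ by at most $\epsilon$. I regard $f$ as holomorphic on $\{|z|\le 2R\}$; the notation $\sum_{-\infty}^{\infty}$ only matters when the lemma is later applied on an annulus (as for the arches), in which case the principal part is handled by the identical argument with $2R$ replaced by the inner radius. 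Writing $f(z)=\sum_{n\ge 1}a_nz^n$, for $|z|\le r$ I split
\[
 f(z)-g(z)=\Big(\sum_{0\le n\le q}(a_n-b_n)z^n\Big)+\Big(\sum_{n>q}a_nz^n\Big)=:A(z)+B(z)
\]
and estimate the two pieces separately.

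For $B$, the truncation error of $f$ itself: since $f$ is conformal on a disk of radius $2R$, Koebe's distortion theorem gives $\sup_{|z|=R}|f|\le CR|f'(0)|$, hence by Cauchy's inequality $|a_n|\le C|f'(0)|R^{1-n}$, so that for $|z|\le r$
\[
 |B(z)|\le C|f'(0)|R\sum_{n>q}(r/R)^{n}=C|f'(0)|R\,\frac{(r/R)^{q+1}}{1-R^{\beta-1}}.
\]
For $A$: Cauchy's inequality applied to $f-h$ on $|z|=1$ gives $|a_n-b_n|\le\epsilon$, so for $|z|\le r$
\[
 |A(z)|\le\epsilon\sum_{0\le n\le q}r^{n}\le \epsilon\,\frac{r^{q+1}}{r-1}.
\]
Now the choice of $q$ is exactly what forces both geometric tails to collapse: $r^{q}=R^{\beta q}\le\epsilon^{-\beta}$ and $R(r/R)^{q+1}=R^{\beta}(r/R)^{q}\le R^{\beta}\epsilon^{1-\beta}$, so both $|A(z)|$ and $|B(z)|$ are bounded by $C\epsilon^{1-\beta}/(1-R^{\beta-1})$ times a factor depending only on $R,\beta$ (and, before the normalization, times $t$). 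Using the concavity of $\rho\mapsto 1-\rho^{-(1-\beta)/\beta}$ one also gets $1/(r-1)\le C/(1-R^{\beta-1})$, so the two bounds have the same shape, and since $\diam f(rD)\asymp rt|f'(0)|$ this is precisely the asserted estimate with $|f'(0)|$ read as the natural scale.

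I expect the only real content to be this exponent bookkeeping — checking that $q\approx\log(1/\epsilon)/\log R$ drives the tail of $f$ (geometric ratio $r/R$) and the coefficient-difference sum (ratio $r$) down to the common size $\epsilon^{1-\beta}$ — together with confirming that the constant can be kept absolute. The one non-computational ingredient is Koebe's theorem: it is what turns ``conformal on $\{|z|\le 2R\}$'' into the coefficient bounds $|a_n|\lesssim|f'(0)|R^{1-n}$, since without univalence the coefficients of $f$ could only be controlled by an a priori unbounded sup-norm rather than by $|f'(0)|$.
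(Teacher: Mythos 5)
Your proposal is correct and is essentially the paper's own proof: the same split of $f-g$ into the coefficient-difference sum over $n\le q$ (controlled by Cauchy's inequality applied to $f-h$ on $\partial D$) and the tail of $f$ over $n>q$ (controlled by the Koebe bound $|a_n|\lesssim |f'(0)|R^{-n}$), followed by the same exponent bookkeeping $r^q\le\epsilon^{-\beta}$, $(r/R)^q\le\epsilon^{1-\beta}$. The only soft spot — your claim that $1/(r-1)\le C/(1-R^{\beta-1})$, which fails as $\beta\to 0$ — is exactly mirrored in the paper's step $\sum_{k=0}^q r^k\le Cr^q$, so both arguments implicitly assume $r$ bounded away from $1$, which is how the lemma is used.
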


\begin{proof}
Without loss of generality we assume $t=1$.
Also, normalize $f$ so that 
$a_0=0$ and $|a_1| =1$. Because $f$ is conformal on $2RD$, 
this implies $f(RD)$ has uniformly bounded diameter
(Koebe distortion theorem, Appendix \ref{background}).
Thus  $ |a_n| \leq C R^{-n}$,  $n \geq 0$
%and   $|a_n| \leq (s/R)^{|n|}$ if $ n < 0$.
%These estimates follow from the usual formula for the 
%Laurent coefficients, 
since  the usual formula 
$$ a_n = \frac 1{2 \pi i} \int_{|z|=R} \frac { f(w)dw}
                {w^{n+1}},$$
%$$ a_{-n} = \frac 1{2 \pi i} \int_{|z|=s/R}  { f(w)}
%         {w^{n-1}} dw,$$
bounds  the coefficients in terms of the maximum of $|f|$.
Also, if 
$\|f -g\|_\infty <  \epsilon$ for $z \in D$,
then 
 $|a_n - b_n| \leq C \epsilon$ for $n \geq 0$.
% and $|a_n- b_n| \leq C \epsilon s^{|n|}$ for $n < 0$.
The rest of the proof is a simple exercise summing series, i.e., 
 for $1 < |z|\leq r$, 
\begin{eqnarray*}
|f(z) - \sum_{k=0}^q b_k z^k | 
& \leq &  |\sum_{|k| >q+1}a_k z^k |+ \sum_{|k|\leq  q}  |a_k-b_k| |z|^k \\
& \leq &  C\sum_{k=q+1}^\infty  (\frac rR )^k + C \epsilon \sum_{k=0}^q  r^k \\
& \leq &  \frac C{1-(r/R)} (\frac rR)^q  + C \epsilon r^q\\
& \leq & \frac C{1-R^{\beta-1}} (\frac rR)^{c |\log \epsilon|} 
             + C \epsilon r^{c|\log \epsilon|}\\
& \leq & C \epsilon^{c \log \frac Rr} + C \epsilon^{1-c \log r}.
 \end{eqnarray*}
Taking $c = 1/\log R$ and $\beta = \log r / \log R$, the 
last line becomes $O(\epsilon^{1-\beta})$, as desired.
%The estimate for $rs < |z| < s$ is similar.
\end{proof}

%For our applications we can take $r = 2$, $R=4$ and $c = 1$ then we get 
%$$|f(z) - \sum_{k=0}^N b_k z^k |\leq C \epsilon^{\log 2},$$
%for all $z \in D(0,2)$.
%If we fix $r$ we can choose $c$ and $R$ 
%so as to make the power of $\epsilon$ as close to $1$ as we want.

%Assume that we take $ s = 1/10$ in the definition of $N_s(Q)$
%and set $\alpha = .6$. By (\ref{lambda-defn}), and a little arithmetic, 
%$$ r = \lambda_ s = 3.1324910,$$
%$$ c =  \frac {1-\alpha}{\log r}=.350315$$
%$$\beta \log \frac Rr =  =  \frac {\alpha \log r}{1-\alpha} =\alpha/c = 1.7127$$
%$$ R = r^\beta = 7.068.$$
%Thus if $f$ is analytic on $RD$ and $|f-g| \leq \epsilon^2$ on $D$, 
%the $\lfloor c \log \frac 1 \epsilon \rfloor$-truncation of 
%$g$'s power series is accurate to $\epsilon^{2 \alpha}= \epsilon^{1.2}$ on $r D$.

%################################################################

\section{The thick/thin decomposition of a polygon}
\label{thick-thin}

Suppose $\Omega$ is simply connected domain with $n$ specified boundary 
points $V$, let $f$ be a conformal map of $\uhp$ onto $\Omega$
and let $S = f^{-1}(V)$. Construct the covering of the hyperbolic
convex hull of $S$ as in  Section  \ref{cover}. 
If no $\epsilon$-Carleson arches are needed to cover 
$C(S)$ then we say $(\Omega, V)$ is $\epsilon$-thick 
(or just ``thick'', if $\epsilon$ is understood from context).
If $\Omega$ has a polygonal boundary we let $V$ be the 
vertices and this defines a ``thick polygon''.
%Note that this makes sense for polygons or finitely bent 
%domains (each of which comes with a natural choice of 
%vertices) or more general simply connected domains.
%For the rest of this section, we assume we are dealing 
%with a simply connected polygonal domain.

A more geometric way to think of  thick polygons  uses 
extremal distance. Suppose $e_1$, $e_2$ are non-adjacent
edges of $\Omega$. We say the pair of edges  is $\epsilon$-thin
in $\Omega$  if 
the extremal distance between them is $\leq \epsilon$, i.e., 
if the modulus of the path family connecting these 
edges inside $\Omega$ is $\geq \frac \pi{\epsilon}$.
By our remarks in Section Lemma \ref{cover}, such a pair 
corresponds to an $\delta$-Carleson arch with $\delta
\sim \epsilon$.  Indeed, thin
pairs of non-adjacent sides are in 1-1 correspondence with 
Carleson arches (except for the possibility that one side
has unbounded preimage under $f$, and gives two arches 
in the decomposition of $\uhp$).  Thus if a polygonal domain is $\delta$-thick in 
the sense of the previous  paragraph, then 
every non-adjacent pair of edges is $\epsilon$-thick for some 
$\epsilon \simeq \delta$, and conversely.
Some examples of thick  and not thick polygons  are given 
in Figures \ref{thick-parts} and \ref{thin-parts-fig}.  
  
\begin{figure}[htbp]
\centerline{
 \includegraphics[height=1.25in]{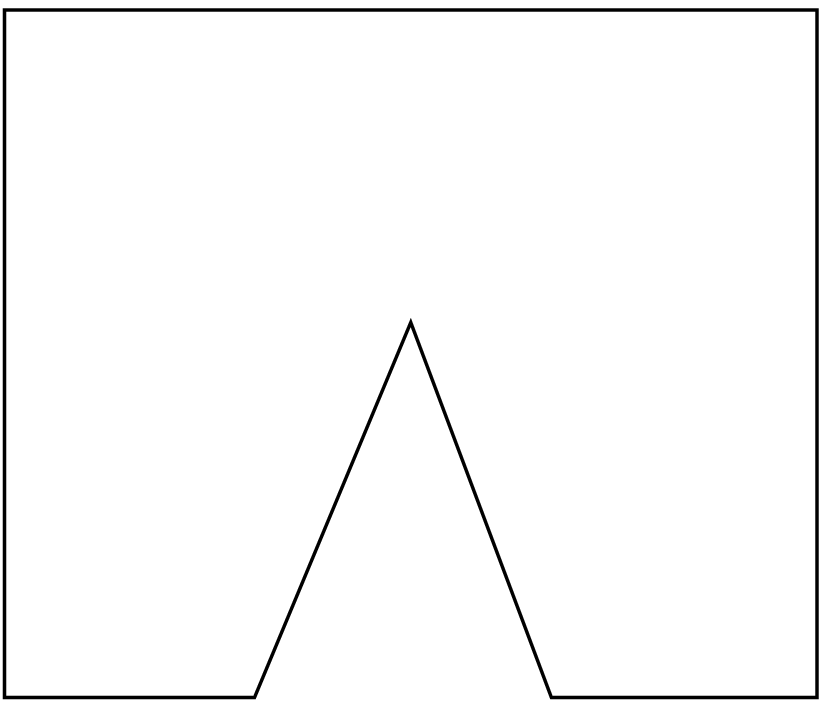}
$\hphantom{xxx}$
 \includegraphics[height=1.25in]{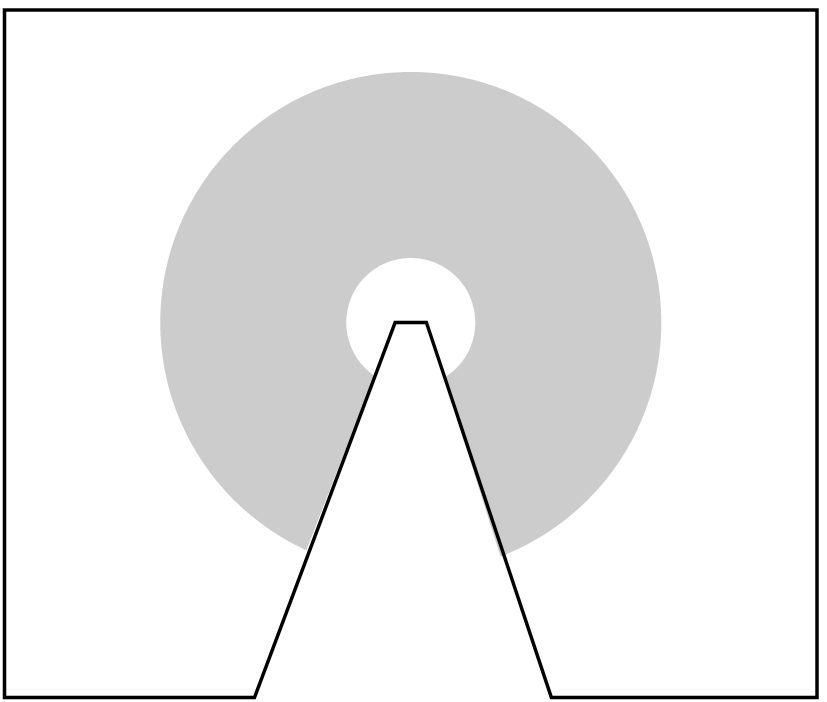}
$\hphantom{xxx}$
 \includegraphics[height=1.25in]{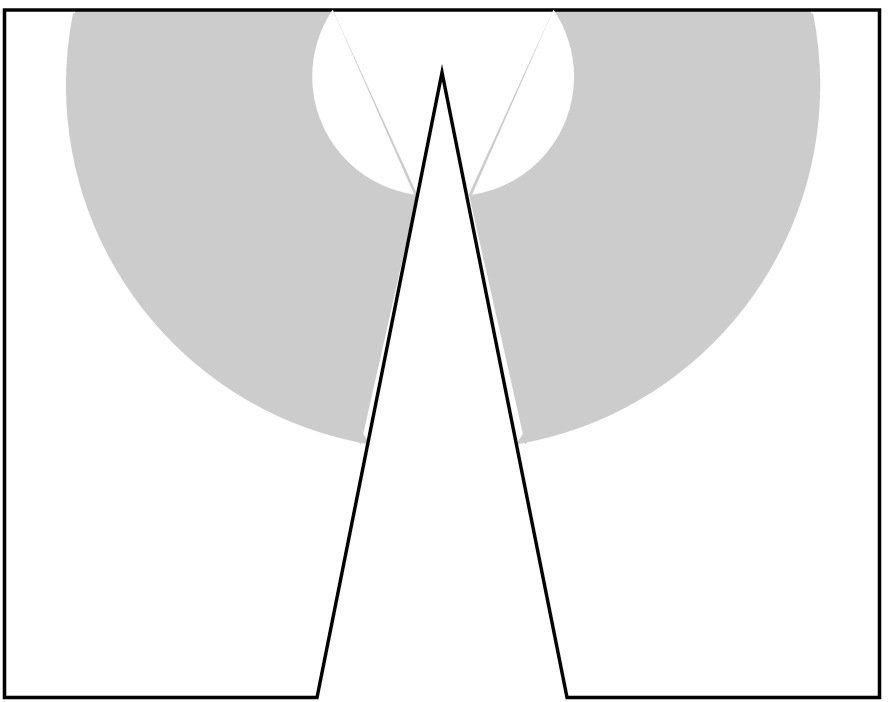}
 }
\caption{\label{thick-parts} A thick polygon and two 
non-thick variations. The shaded areas join non-adjacent edges
with small extremal distance and  correspond to 
Carleson arches.}
\end{figure}

\begin{figure}[htbp]
\centerline{
	\includegraphics[height=.75in]{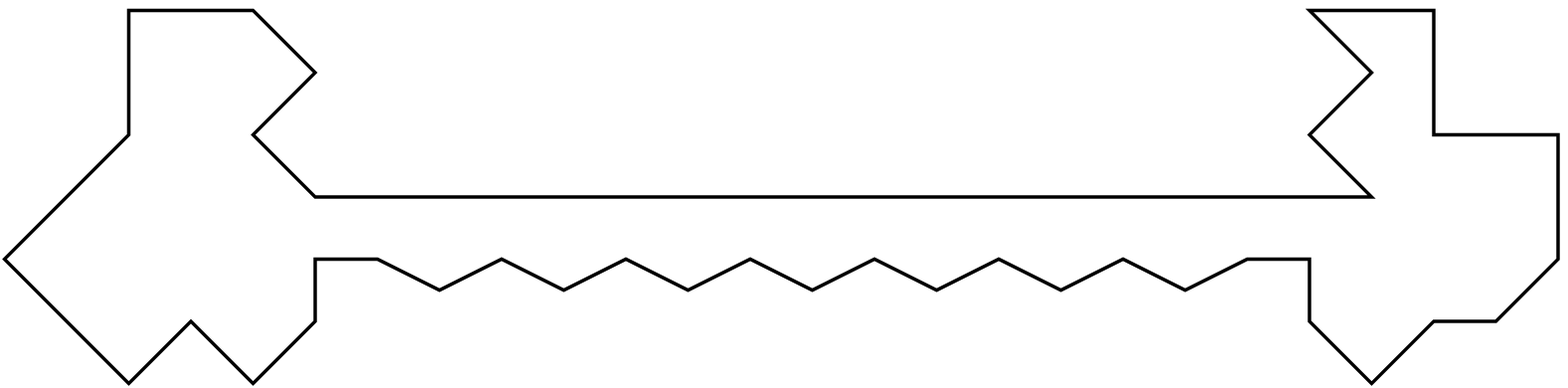}
	}
\vskip.2in
\centerline{
 \includegraphics[height=.75in]{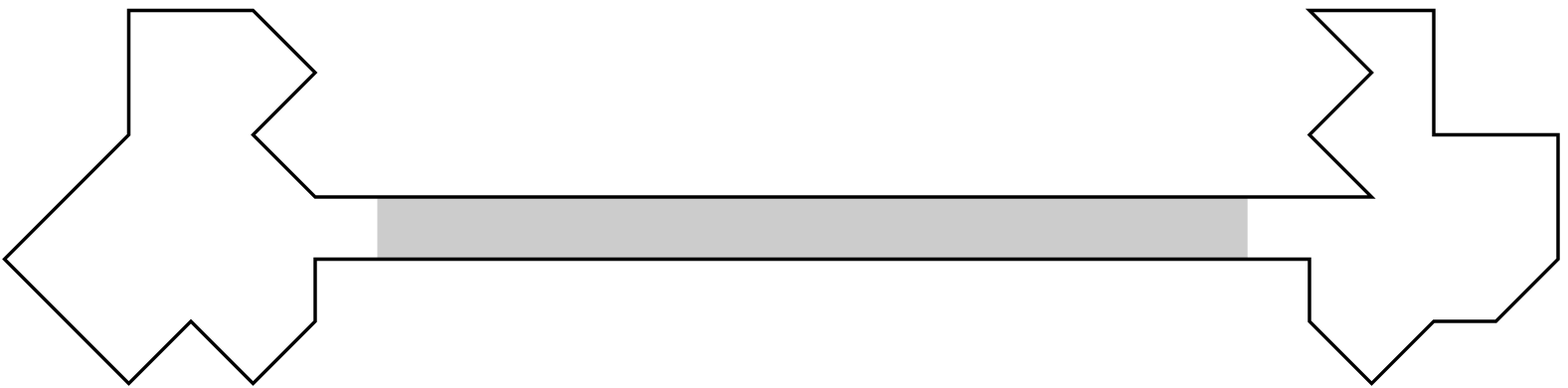}
 }
\caption{\label{thin-parts-fig} The polygon on top is 
thick, but not the one on the bottom. }
\end{figure}

We can always add more vertices to the boundary to make a polygon 
thick, although there is no bound on how many we might have to add.
Also note that adding vertices can convert a thick polygon to a 
non-thick one, i.e., add two vertices $\epsilon$ apart to the 
middle of an edge of length $1$. 

Another way to approach thick and thin parts is to consider 
the Riemann surface obtained by reflecting the dome of $\Omega$
across $\reals^2$ and removing the vertices of $\partial \Omega$.
This is a punctured Riemann surface (isometric to the unit 
sphere with the $\iota$-preimages removed) and the thin parts 
of $\Omega$ correspond to the thin parts of this surface, and 
the polygon is thick if this surface has no hyperbolic thin parts.
We will not pursue this connection between polygons and surfaces
further here, but it may prove interesting (e.g., what does 
Mumford's compactness theorem for surfaces say about polygons?).

\begin{lemma}
If $(\Omega, V)$ is $\epsilon$-thick then any $K$-quasiconformal 
image %$(\Omega', V')$ 
is $\epsilon/K$-thick.
\end{lemma}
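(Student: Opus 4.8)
The plan is to deduce this from the quasi-invariance of the modulus of a curve family, working with the geometric description of thickness recorded above (``no non-adjacent pair of edges is $\epsilon$-thin'') rather than with the combinatorial description in terms of Carleson arches. First I would set up the translation. Let $\phi$ be a $K$-quasiconformal homeomorphism of $\Omega$, put $\Omega^* = \phi(\Omega)$, and let $V^* = \phi(V)$. A $K$-quasiconformal map of a simply connected domain carries the prime-end boundary homeomorphically onto the prime-end boundary, so $V^*$ and the arcs (``edges'') of $\partial\Omega^*\setminus V^*$ are well defined; moreover the induced boundary map takes $V$ to $V^*$ preserving cyclic order, hence it induces a bijection of edges taking adjacent pairs to adjacent pairs and non-adjacent pairs to non-adjacent pairs, with every non-adjacent pair of edges of $\Omega^*$ equal to $(\phi(e_1),\phi(e_2))$ for a unique non-adjacent pair $(e_1,e_2)$ of $\Omega$. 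Since $\phi$ extends to a homeomorphism of closures, a curve in $\Omega$ connects $e_1$ to $e_2$ if and only if its $\phi$-image connects $\phi(e_1)$ to $\phi(e_2)$.

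The key step is the quasi-invariance of the modulus. Recall that a non-adjacent pair $e_1,e_2$ is $\epsilon$-thin in $\Omega$ exactly when the modulus $\Mod(\Gamma)$ of the family $\Gamma$ of curves in $\Omega$ connecting $e_1$ to $e_2$ is at least $\pi/\epsilon$; thus $(\Omega,V)$ being $\epsilon$-thick means $\Mod(\Gamma) < \pi/\epsilon$ for every such $\Gamma$. By the previous paragraph, $\phi(\Gamma)$ is precisely the family of curves in $\Omega^*$ connecting $\phi(e_1)$ to $\phi(e_2)$, and since $\phi$ is $K$-quasiconformal, $\Mod(\phi(\Gamma)) \le K\,\Mod(\Gamma)$. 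Hence, for any non-adjacent pair of edges of $\Omega^*$, written as $(\phi(e_1),\phi(e_2))$ for the corresponding non-adjacent pair of $\Omega$, the modulus of its connecting family is at most $K\,\Mod(\Gamma) < K\cdot\pi/\epsilon = \pi/(\epsilon/K)$, so that pair is not $(\epsilon/K)$-thin. Therefore $(\Omega^*,V^*)$ is $\epsilon/K$-thick, as claimed; note the constant $\epsilon/K$ is exactly what the modulus estimate gives, with no further loss.

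I expect the only genuine obstacle to be bookkeeping rather than mathematics: reconciling the two notions of ``$\epsilon$-thick''. The definition stated formally is the combinatorial one (no $\epsilon$-Carleson arch is needed to cover $C(S)$), whereas the argument above uses the equivalent geometric one; these agree only up to a universal multiplicative constant, since an $\epsilon$-Carleson arch of hyperbolic width $L$ has $\epsilon \ge \pi/L$ while the two edges it joins have extremal distance $\pi/L + O(1)$. I would therefore either state the lemma with that convention in force, or --- cleaner --- simply adopt ``no non-adjacent pair of edges is $\epsilon$-thin'' as the working definition of $\epsilon$-thick, in which case the bound $\epsilon/K$ is exactly as written and the lemma is the one-line consequence of quasi-invariance of the modulus sketched above. (The Carleson-arch picture is needed for the linear-time \emph{construction} of the Carleson--Whitney decomposition, not for this estimate, so nothing is lost by using the geometric formulation here.)
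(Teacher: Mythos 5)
Your argument is exactly the paper's: the proof given there is the one-line observation that a $K$-quasiconformal map changes conformal modulus by at most a factor of $K$, which is precisely your key step. Your additional bookkeeping about boundary correspondence and the reconciliation of the Carleson-arch and extremal-distance formulations of thickness is a reasonable elaboration but not a different route.
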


\begin{proof}
Obvious since conformal modulus can be changed by at most a 
factor of $K$.
\end{proof}

Because of this lemma, we can deduce that a polygon is 
$\epsilon$-thick by computing the $\iota$ preimages of the 
vertices and making sure there are no $\epsilon/K$-arches 
in its covering, where $K$ is the universal quasiconformal 
bound on the extension of the $\iota$ map to the interior.
Thus in time $O(n)$ we can  determine (up to a bounded  factor of $K$) the 
degree of thickness of a polygon.

It will also be convenient to define a class of polygons called 
``thin''.
 A thin polygon has two special edges that we 
call the ``long edges'', and  which may be either adjacent or 
non-adjacent.
In the first case, (which 
we call parabolic) assume both long edges start at $0$ and  end 
at points $\{a,b\} \in A_R= \{z: R \leq |z| \leq 2 R \}$. 
We assume the other edges  of the polygon
all lie in $A_R$, have lengths comparable to $|a-b|$ and interior angles 
bounded away from $0$ and $2 \pi$.

In the second case (which we call hyperbolic), the long edges  
start at points $a,b$ in $A_1=\{z: 1 \leq |z| \leq 2\}$  and 
end at points $c,d$ 
in $A_R=\{z: R \leq |z| \leq 2R\}$, $R  \geq 4$. 
The remaining sides of a thin polygon
all lie in either $A_1$ or $A_R$, have length comparable to $|a-b|$  and  
$|c-d|$ respectively, and all interior angles  are bounded away from 
$0$ and $2 \pi$ (the exact bound is unimportant, but we can take
$\pi/6$ to be specific).  We say a polygon is $\epsilon$-thin if there
is a Euclidean similarity to shape as described and if the extremal 
length between the two long sides in the polygon is less than 
$\epsilon$ and there is no other pair of non-adjacent sides 
that are $\frac{1}{100}$-thin (again, the exact constant is 
unimportant).
 Although the description is a bit wordy, a picture makes 
it much clearer, see Figure \ref{thin-polys}.

\begin{figure}[htbp]
\centerline{
 \includegraphics[height=1.25in]{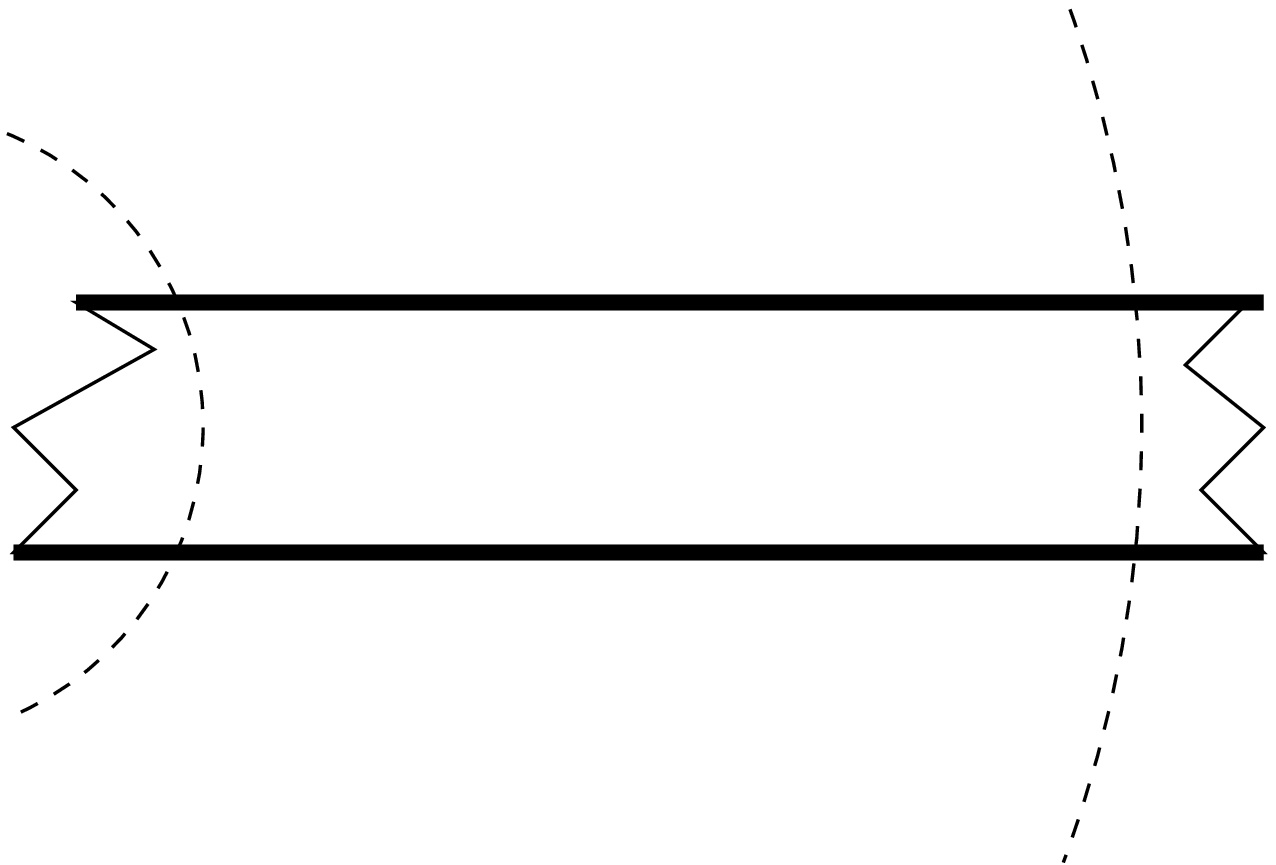}
$\hphantom{xxx}$
 \includegraphics[height=1.25in]{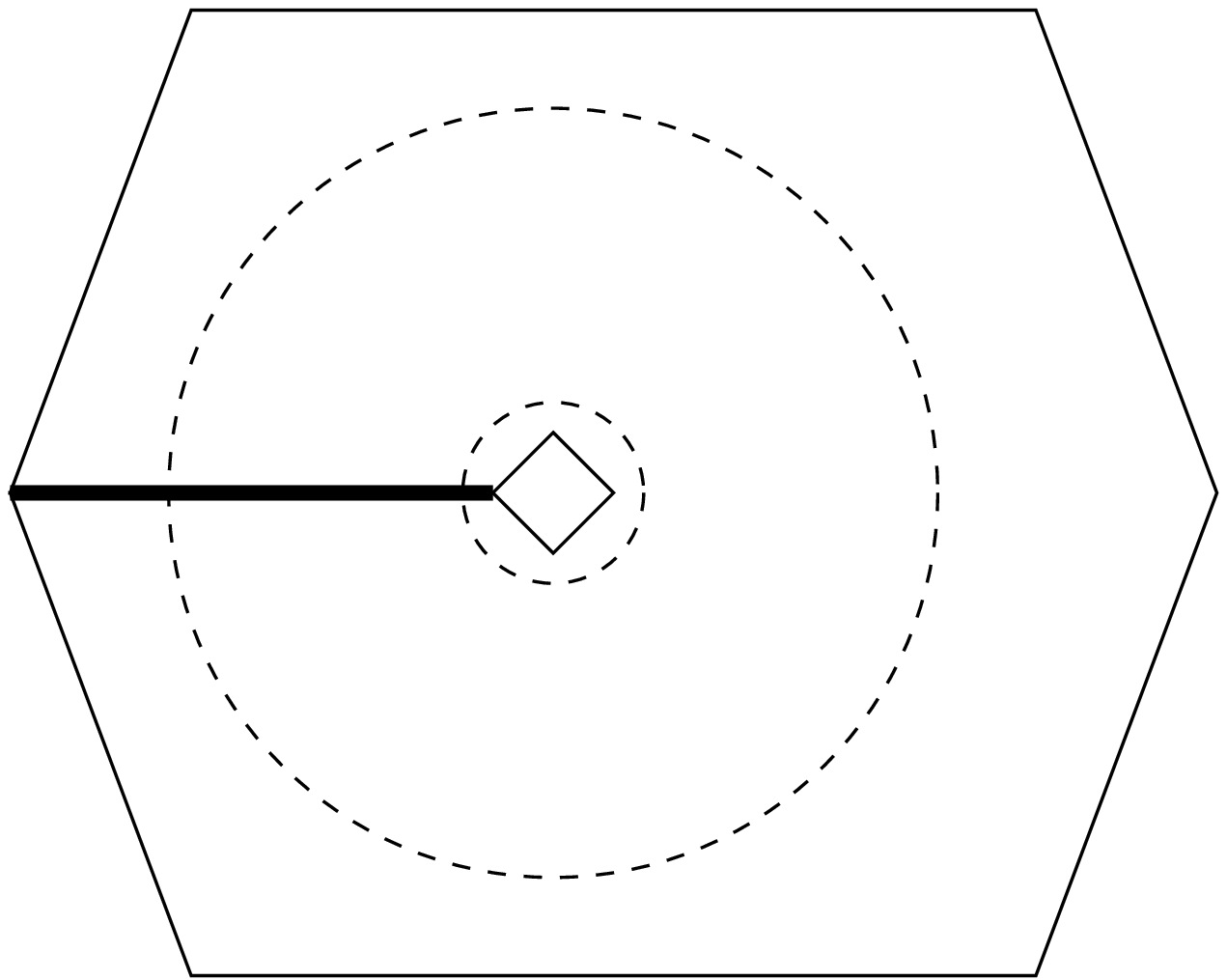}
$\hphantom{xxx}$
 \includegraphics[height=1.25in]{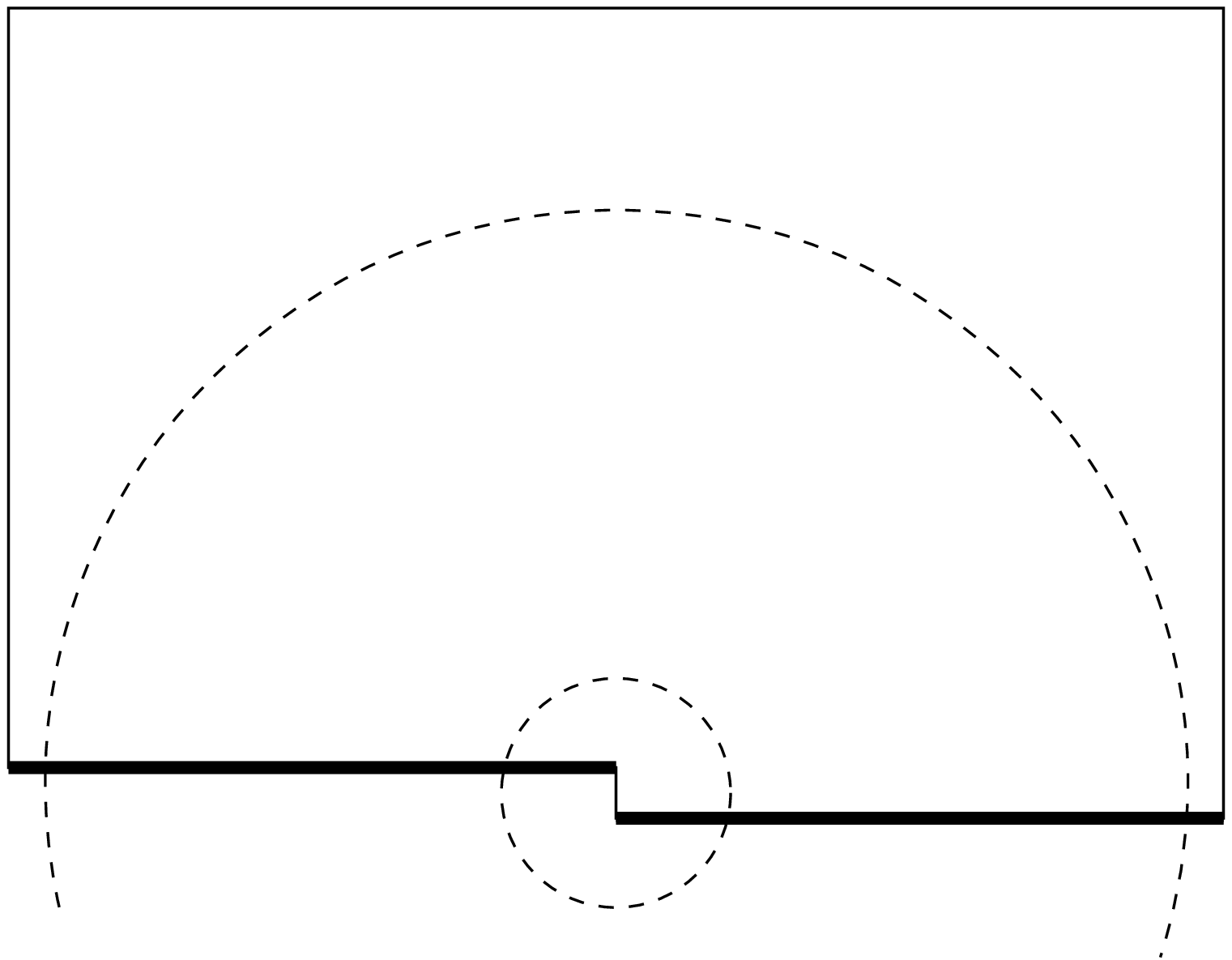}
 }
\caption{\label{thin-polys}  Examples of hyperbolic thin polygons. The long sides 
   are emphasized and the dashed circles show outer boundary of $A_1$ 
  and the inner boundary of $A_R$. }
\end{figure}

The names of the two cases correspond to the 
thin parts that occur in Riemann surfaces. An
$\epsilon$-thin parts  of a hyperbolic manifold 
are the connected components of the set where the injectivity 
radius of the surface is $< \epsilon$. In the surface case,  parabolic 
thin parts (also called ``cusps'') are non-compact and have 
a single (finite) boundary component, 
whereas hyperbolic thin parts are compact and have two boundary 
components.

\begin{lemma} \label{thick-thin-lemma}
There is an $\epsilon_0 >0$  and $0< C < \infty$ 
so that if $ \epsilon < \epsilon_0$
then the following holds.
Given a simply connected, polygonal domain $\Omega$ we can 
write $\Omega$ is a union of subdomains $\{ \Omega_j\}$ 
belonging to two families ${\cal N }$ and $ {\cal K} $.
The elements of ${\cal N}$  are $O(\epsilon)$-thin polygons
and the elements of 
${\cal K}$  are $\epsilon$-thick.
The number of edges in all the pieces put together is $O(n)$
and all the pieces can be computed in time $O(n)$ (constant
depends on $\epsilon$).
We can either choose the pieces to be disjoint except for 
common boundaries, or given $\epsilon_0 > \delta >4 \epsilon$, 
we can choose them so that pieces of 
each type only intersect pieces of the other type and that 
 the intersection  is a $\delta$-thin polygon.
\end{lemma}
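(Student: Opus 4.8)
The plan is to build the decomposition by combining the thick/thin decomposition of $\uhp$ from Section~\ref{cover} with the $\iota$-map machinery, then transport everything back to $\Omega$ via the (quasiconformal) extension of $\iota$. Concretely: compute the medial axis of $\Omega$ in linear time (Chin--Snoeyink--Wang), use it to build the $\iota$-map $\iota:\partial\Omega\to\circle$ and hence the prevertex approximation $\w=\iota(\v)$; by Theorem~\ref{SEM} this differs from the true prevertices $\z$ by a bounded quasiconformal amount $K$. Now form the sawtooth/Carleson--Whitney apparatus of Section~\ref{cover} for the set $S=\w$: the edge-edge bisectors of the sawtooth domain of length $\ge A$ give the $\epsilon$-Carleson arches, and Section~\ref{cover} already gives the thick/thin decomposition of $\uhp$ with $\eta$-thin overlaps, with $O(n)$ pieces found in $O(n)$ time. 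Pull this back to $\Omega$ by the conformal map $f:\uhp\to\Omega$ (or, for computation, by the explicit $\iota$-flow composed with corrections): the images of the thick components ${\cal K}$ become $\epsilon$-thick subpolygons (after adding the $O(1)$ new straight edges coming from the arch boundaries, which are segments in $\uhp$ and whose images we replace by straight segments — this is where ``can add more vertices to make a polygon thick'' and the quasiconformal-invariance lemma are used), and the images of the $A_0$-arches ${\cal N}$ become $O(\epsilon)$-thin polygons of the two shapes (parabolic for adjacent edges, hyperbolic for non-adjacent), because an arch of hyperbolic width $L$ has exactly the extremal-length geometry demanded in the definition of $\epsilon$-thin with $\epsilon\sim\pi/L$.

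First I would make precise the correspondence ``$\epsilon$-thin pair of non-adjacent edges $\leftrightarrow$ $\delta$-Carleson arch with $\delta\sim\epsilon$'', which was asserted in the text right after the definition of thick polygons; this is the engine that converts the combinatorial statement about the sawtooth medial axis into the geometric statement about $\Omega$. Then I would verify that the pieces of ${\cal K}$, once we cut along the arch boundaries, are genuinely $\epsilon$-thick: no $\epsilon$-arch survives inside a thick component of $\uhp\setminus\bigcup_j A_j'$ by construction, and since we only added $O(1)$ new sides per arch and $O(n)$ arches total, the total side count is $O(n)$. For the overlap version, I would use the central-arch construction from Section~\ref{cover}: replacing each $A_0$-arch $A_j$ by its central $A$-subarch $A_j'$ creates exactly the $\delta$-thin overlap region (a Carleson arch for a segment of hyperbolic length $A-A_0$), with $\delta^{-1}\approx A-A_0$; choosing $A=A_0+1/\delta$ realizes any prescribed $\delta$ with $4\epsilon<\delta<\epsilon_0$, and by Lemma~\ref{Whitney-near-MA} this costs only $O(An)=O(n)$ extra Whitney boxes, hence $O(n)$ extra sides in $\Omega$.

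The timing is handled by the two lemmas of Section~\ref{cover} (medial axis of the sawtooth and the tree-listing lemma) together with Lemma~\ref{tree-of-disks} for computing $\iota$: the medial axis of $\Omega$ is $O(n)$ by Chin--Snoeyink--Wang, the $\iota$-images of all vertices and of the arch-boundary endpoints are $O(n)$ by the tree-of-disks map, and assembling the sawtooth, locating its long bisectors, and reading off the arches and thick components is $O(n)$ as shown there; the constant absorbs $A_0$, $A$, hence $\epsilon$ and $\delta$. Finally I would note that all the ``interior angles bounded away from $0$ and $2\pi$'' and ``side lengths comparable'' requirements in the definition of $\epsilon$-thin are automatic: the arch is a Carleson square with a much smaller concentric-base Carleson square removed, so after the explicit conformal straightening its image has exactly the normalized shape in some annulus $A_R$, and the bounded number of extra edges we cut along have controlled angles because the arch-boundary segments meet $\partial\Omega$ transversally with bounded angle (Koebe distortion on the empty regions).

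The main obstacle, I expect, is the bookkeeping in the \emph{overlap} clause: one must choose $A$ (equivalently $\delta$) \emph{after} already having fixed $\epsilon$, check that the central arches $A_j'$ are still long enough to be $\epsilon$-arches (needs $A>2A_0$ comfortably, forcing $\epsilon_0$ small), verify that a thick component and a thin piece really only meet along a single $\delta$-thin arch and never along two, and confirm that cutting along the \emph{central} rather than the full arch boundary does not reintroduce a forbidden thin pair inside a thick piece — this last point uses that the region between $A_0$-arch and $A$-arch is filled by $O(A)$ Whitney boxes, so the newly-exposed pair of sides there has extremal length $\sim 1/(A-A_0)=\delta$, which is exactly the allowed overlap thinness, not a violation of $\epsilon$-thickness as long as $\delta>\epsilon$ (hence the hypothesis $\delta>4\epsilon$). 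Everything else is a routine assembly of results already in Sections~\ref{cover} and \ref{Domes+MA2}.
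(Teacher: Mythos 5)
Your proposal follows essentially the same route as the paper's proof: take the thick/thin covering of $\uhp$ by Carleson arches and thick components from Section~\ref{cover}, transport it to $\Omega$ (replacing the images of the arch boundaries by short polygonal crosscuts, justified by Schwarz reflection and Koebe), verify thickness of the resulting pieces by separating the cases of new versus original edges and using monotonicity of modulus, and realize the $\delta$-thin overlaps by passing from the $A_0$-arches to their central $A$-arches with $A-A_0\approx 1/\delta$. Your additional care about computing everything through the $\iota$-map (so that moduli are only distorted by the universal factor $K$) and about why $\delta>4\epsilon$ keeps the channel stubs inside the thick pieces from violating $\epsilon$-thickness is consistent with, and slightly more explicit than, the paper's own argument.
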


\begin{proof}
All we have to do is choose polygons that approximate the 
images of the regions in ${\cal N}$ and ${\cal K}$ associated
to the Carleson-Whitney decomposition in Section \ref{extend-decom}
(see the discussion preceding Lemma \ref{Whitney-near-MA}).
  Suppose $E$ is a boundary component
of a Carleson arch. Then the Euclidean distance of $E$ to the nearest
prevertex point is at least $\frac 12 \diam(E)$ and hence we can use 
Schwarz reflection to show that $f$ extends to be conformal in 
a ball of radius $\frac 12 \diam(E)$ around every point of $E$.
Then the Koebe distortion theorem implies there is a $P< \infty$
so that if we take $P$ equally spaced points along $E$ (including its
endpoints on $\reals$) and connect the $f$ images of these by 
line segments, the resulting polygonal curve is simple and 
hence divides $\Omega$ into two polygonal subdomains.

\begin{figure}[htbp]
\centerline{
 \includegraphics[height=1.5in]{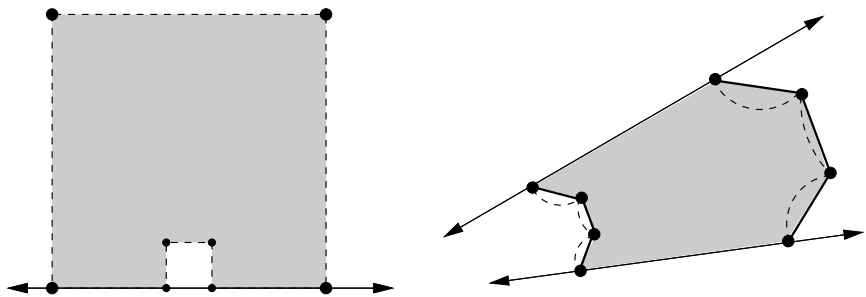}
 }
\caption{\label{intersections} The intersection of a 
thin and thick piece is approximately the image of 
a Carleson arch whose width may be chosen. }
\end{figure}

When we do this for every 
domain in ${\cal N}$ and ${\cal K}$ we get the desired 
decomposition with the desired overlaps (if the parameter 
$A$ is chosen correctly). The thin pieces have 
at most $2P$ edges. To check that the  thick pieces are really 
thick, we have to show that no 
two  non-adjacent edges have modulus in the piece less than $1/\epsilon$.
If both edges were new ones introduced (e.g., subarcs of a  crosscut
or a subarc of an original edge created by  an endpoint 
of a crosscut) then this is clear from construction.  A similar 
proof works  if one edge is 
new and the other was an original edge.  Finally, if both edges 
are original, then the modulus in the piece is smaller than it is 
in the whole polygon, and hence the pair is still thick, since it 
was thick before.
\end{proof}

The division of $\Omega$ into thick and thin pieces is not unique, 
because parabolic thin pieces at the vertices either may or may not
be included. However,  
${\cal N}$ must contain all the thin parts corresponding to all the Carleson arches
in the decomposition corresponding to $\Omega$.  
%Later we shall
%see that it is convenient to take just some of the parabolic thin 
%parts; the ones that correspond to convex angles (i.e., interior 
%angle $< \pi$). These vertices correspond to $\infty$ in the polygon from the 
%point of view of the medial axis since they don't lie on  the boundary of any medial axis 
%disk. Concave vertices (interior angle $\geq \pi$), on the other hand, 
%do lie on the boundary of some such disk.
It is easy to estimate 
the conformal map to thin polygons, at least if we stay near the long edges 
and away from the other edges.

\begin{lemma} \label{rect-lemma}
Suppose $R$ is  the rectangle $[0,L] \times [0,1]$ and 
$\Omega$ is a simply connected domain containing $R$ 
that is formed by replacing the two vertical sides 
of $R$ by curves. Let $f: R \to \Omega$ be the 
conformal map such that $f(c)=c$ and $f'(c) >0$, where
$c = \frac L2 + i \frac 12 $ denotes the center of $R$.
Then $|f(z) - z| \leq O(e^{-L/2\pi})$ for $|z-c|< 2$.
\end{lemma}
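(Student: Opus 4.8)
The plan is to reduce the estimate to a standard fact about conformal maps of long rectangles and then transfer it to $\Omega$ via the fact that the vertical sides of $R$ have been replaced by curves lying at uniformly bounded distance from those sides. First I would set up the model map: let $R_0 = [0,L]\times[0,1]$ and consider the conformal map $\phi:R\to\Omega$ normalized by $\phi(c)=c$, $\phi'(c)>0$. The key geometric input is that $\Omega\setminus R$ and $R\setminus\Omega$ are both contained in a bounded neighborhood (in fact a neighborhood of size $O(1)$, depending on nothing) of the two vertical segments $\{0\}\times[0,1]$ and $\{L\}\times[0,1]$; this is exactly what ``replacing the two vertical sides by curves'' means in the context where the piece came from a thin polygon with all the non-long edges lying in the fixed annuli $A_1,A_R$ of bounded shape. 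Consequently $\Omega$ and $R$ agree exactly on the ``middle'' sub-rectangle $[\,2,L-2\,]\times[0,1]$ (after enlarging the constant $2$ if necessary), and in particular on the disk $D(c,2)$, so the set where $z$ ranges in the statement lies in the common part.

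Next I would invoke extremal length / the theory of the modulus of a quadrilateral. The rectangle $R$, viewed as a quadrilateral with the two horizontal sides distinguished, has modulus $L$; the domain $\Omega$, viewed as a quadrilateral whose ``corners'' are the four endpoints of the two replaced curves, has modulus $L + O(1)$ since it differs from $R$ only in bounded collars at the two ends, and the modulus of a quadrilateral changes by at most a bounded amount under such a bounded perturbation (this is a serial/parallel-rule estimate on extremal distance, of the same flavor as the remarks following Lemma in Section~\ref{cover}). The conformal map $\phi$ therefore carries $R$ onto $\Omega$ as a map between two long quadrilaterals of nearly equal modulus. Now the standard exponential-rigidity estimate for such maps (a consequence of the Schwarz–Christoffel representation, or equivalently of reflecting across the horizontal sides and applying the Schwarz lemma to the induced self-map of a long strip) gives that $\phi$ is exponentially close to the identity away from the two short ends: for $\mathrm{Re}(z)$ at hyperbolic distance $\geq d$ from the vertical ends one has $|\phi(z)-z| = O(e^{-d})$. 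Taking $z\in D(c,2)$, so that $z$ is at Euclidean distance $\geq L/2 - 2$ from each end, and converting to the natural rate $e^{-\pi\cdot\mathrm{dist}/1}$ coming from the unit height of the strip, yields $|\phi(z)-z|\leq O(e^{-L/2\pi})$ as claimed (the precise constant in the exponent is exactly the one produced by the width-$1$ strip after reflection).

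The cleanest way to make the middle step rigorous, and the step I expect to be the main obstacle, is controlling the map near the ends uniformly in the shape of the replacing curves. I would handle this by a normal-families / compactness argument combined with the reflection trick: reflect $R$ and $\Omega$ repeatedly across their horizontal sides to obtain a conformal map $\Phi$ between an infinite horizontal strip $\{0<\mathrm{Im}\,w<1\}$ with bounded perturbations at the two ends (at $\mathrm{Re}\,w\approx 0$ and $\mathrm{Re}\,w\approx L$) and the corresponding reflected copy of $\Omega$; away from the perturbed collars, $\Phi$ maps the strip to itself, fixes the center, and the bounded perturbations at each end are confined to a set of fixed diameter. Precomposing with a translation to put one end at the origin and using that the family of such perturbed strips is precompact in the Carathéodory sense, one gets a uniform bound $|\Phi(w)-w|\leq C$ on the unperturbed part; then the Schwarz lemma (or the Lindelöf-type estimate for the strip, i.e. the fact that a bounded holomorphic function on a strip that is small on the boundary of a sub-strip decays exponentially into the interior) upgrades this to $|\Phi(w)-w|\leq Ce^{-\pi\,\mathrm{dist}(w,\text{ends})}$. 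Restricting back to $D(c,2)\subset R$ gives the lemma. The only genuinely delicate point is bookkeeping the additive error from the two ends and checking that the additive perturbation to the modulus is absorbed into the $O(e^{-L/2\pi})$ bound; since that error decays at the same exponential rate once one is a bounded distance inside, this causes no trouble, but it is where one must be careful not to lose a multiplicative constant that depends on $L$.
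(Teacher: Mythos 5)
Your overall strategy (reflect to a long strip and use a Schwarz--Lindel{\"o}f type estimate to get exponential rigidity in the middle) is a legitimate alternative to the paper's route, but as written it has a gap that defeats the main point of the lemma. You assume at the outset that $\Omega\setminus R$ is contained in a \emph{bounded} neighborhood of the two vertical segments, and both of your key steps --- the claim that the modulus of $\Omega$ is $L+O(1)$ and, more importantly, the Carath{\'e}odory precompactness of the family of perturbed strips --- depend on that boundedness. But the lemma makes no such assumption: the replacing curves are arbitrary (in the application they bound whatever the thin part attaches to), and the whole content of the statement is that the constant in $O(e^{-L/2\pi})$ is independent of them. Without a uniform bound on the perturbation, your normal-families argument produces no uniform constant. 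The paper gets uniformity from a single extremal-length estimate: any path in $\Omega$ from $B(c,2)$ to $\partial\Omega\setminus\partial R$ must traverse half the rectangle, so the extremal distance is $\geq \frac12 L+O(1)$; hence under the Riemann map $h:\disk\to\Omega$ the preimage of everything outside $R$ is confined to two boundary arcs of diameter $\delta=O(e^{-L/2\pi})$, no matter what the curves look like. The conclusion is then extracted from the harmonic function $u+iv=\log(F(z)/z)$ with $F=h^{-1}\circ g$, using $-\delta\leq u\leq 0$ and a reflection/gradient estimate for $v$, rather than from a strip estimate.

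A second, related problem: to Schwarz-reflect $f$ across the horizontal sides of $R$ you need to know that $f$ carries those sides (at least away from the ends) into the lines $y=0$ and $y=1$. The map $f$ is normalized only at the interior point $c$, so its boundary correspondence is not given in advance; a priori the preimages of the four ``corners'' of $\Omega$ could sit anywhere on $\partial R$, and part of a horizontal side of $R$ could map onto one of the replacing curves, in which case the reflected map does not glue. Establishing the correct boundary correspondence is essentially equivalent to the extremal-length or harmonic-measure step you are trying to bypass. (Two smaller points: there is no conformal map between marked quadrilaterals of different moduli, so the framing of $f$ as ``a map between two long quadrilaterals of nearly equal modulus'' to which a rigidity theorem applies should be dropped; and with the horizontal sides distinguished the modulus of $R$ is $1/L$, not $L$.)
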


\begin{proof}
Suppose $g: \disk \to R$ and $h: \disk \to \Omega$ are 
conformal, taking $0$ to $c$ with positive derivative 
at $c$. Then $F=h^{-1} \circ g$ is a conformal map 
from the disk to  $W = h^{-1} (R) $ that fixes the origin 
and has positive derivative there. Since  the extremal 
length of the path family connecting $B(c,2) \cap R$ to 
$\partial \Omega \setminus \partial R$ in $\Omega$ 
is $\geq \frac 12 L +O(1)$, we get that $\partial W 
\setminus \circle $ has two components, each of Euclidean 
diameter $\leq \delta$ where $\delta = O(e^{-L/2\pi})$
by Lemma \ref{EL-diam}.
See Figure \ref{dogbone}.
 Thus $|F(z)| \geq 1- O(e^{-L/2})$. So if 
$\log \frac {F(z) }{z} = u(z)+ iv(z)$, then 
$-\delta \leq u \leq 0$ on $\disk$ where $ \delta 
= O(\exp(-L/2))$. By reflection $u$ extends to be harmonic 
with $|u|\leq \delta$ on $\reals^2 \setminus E$
where $E =h^{-1}(\partial \Omega \setminus \partial R) \subset
\circle$ has two components. Thus
$$ |\nabla v | = |\nabla u| = O(\delta/\dist(z,E)),$$
which together with $v(0) =0$, implies $|u|+|v|=O(\delta)$
on $\{z: \dist(z, E) \geq \frac 1{10} \}$, which further implies 
$|F(z) - z   | =O(\delta)$ on the same set.
Back on $R$, this means that 
$|f(z)-z|=O(e^{-L/2\pi})$, as desired.
\end{proof}

\begin{figure}[htbp]
\centerline{ 
	\includegraphics[height=1.25in]{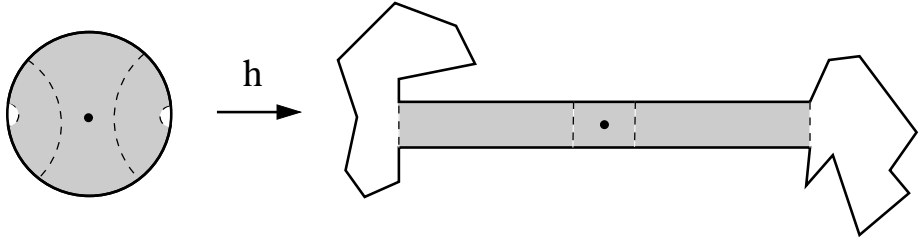}
	}
\caption{\label{dogbone}
In the center of a thin polygon the conformal map is 
essentially independent of the region outside the 
center. }
\end{figure}

Given an $\epsilon$-thin polygonal domain  $\Omega$ with 
long edges connecting $A_1$ and $A_R$ as above,  let 
$\Omega' = \Omega \cap \{ z:  |z| < R\}$ if $\Omega$ is 
parabolic and
$\Omega' = \Omega \cap \{ z: 1 < |z| < R\}$ if $\Omega$ 
is hyperbolic. 
We denote the ``center'' of  $\Omega$ as those points $z$ 
so that the extremal distance  of $E_z = \{w  \in \Omega : |w| =|z| \}$
to both $A_1 \cap \partial \Omega$ and $A_R \cap \partial 
\Omega$ is greater than $\epsilon/2 + O(1)$ (in the hyperbolic case)
or to just $A_1 \cap \partial \Omega$ (in the parabolic case).
Lemma \ref{rect-lemma}
says that in the center of $\Omega$, the conformal map from 
$\uhp$ to $\Omega$ is essentially independent of the part of 
the boundary outside $\Omega'$.  In particular, we can chose 
an elementary mapping from ${\cal E}$ (see Section \ref{epsilon-reps}) 
that agrees with the conformal mapping onto $\Omega$ up to 
order $O(e^{-2\pi/\epsilon})$ in the center of $\Omega$.
%Thus

%\begin{lemma}
%Suppose $\Omega_1 \subset \Omega$, 
%$\Omega_2 \subset \Omega$ and the intersection is 
%a $\delta$-thin polygon for some $ \delta > 2 \epsilon$.
%Suppose we have a  $\epsilon$-representation for $\Omega_1$ 
%with respect to $\Omega$. Then we can choose a element of 
%${\cal E}$  (depending only on $\Omega_2$ and not on $\Omega_1$)
%to represent $\Omega_2$ on its Carleson arch so that adding the arch
%and element of ${\cal E}$ to the partial representation for
%$\Omega_1$ gives a partial 
%$\max (\epsilon + \exp(-2\pi/\delta))$-representation 
%of $\Omega_1 \cup \Omega_2$.
%\end{lemma}

In Lemma \ref{thick-thin-lemma} we saw how to break a 
polygon into thick and thin pieces. 
Next we observe that we can reconstruct a representation for 
the whole domain from representations of the thick pieces (since 
we know maps onto the thin pieces automatically).

% The thin pieces 
%have an explicit form and we can compute $\epsilon$-representations
%for them easily.  In the next few sections we shall see how to 
%compute representations for the thick pieces. 
% In this 
%section we want to show that we can combine representations for 
%the pieces to get a representation for the original domain.

\begin{lemma} \label{combine-lemma}
%Given $\delta>0$ there are $\epsilon, \eta>0$ 
%so that the following holds.
Suppose we are given a decomposition of a $n$-gon 
into thick and thin pieces (of either 
parabolic or hyperbolic type) with $\eta$-thin 
overlaps and we are given a  normalized
$\epsilon$-representation of each thick piece. Then 
in time $O(n)$ (constant depending only on $\eta$ and $\epsilon$)
 we can compute a $\delta$-representation 
of $\Omega$ where $\delta =O(\epsilon +e^{- c\eta})$
for some $c >0$.
%and $\beta$ is the constant from Lemma \ref{inverting-reps}.
\end{lemma}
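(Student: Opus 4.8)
The plan is to glue together the representations of the thick pieces using the explicit mappings onto the thin pieces provided by Lemma \ref{rect-lemma}, and then package the result as a Carleson--Whitney decomposition of $\uhp$ together with a collection of functions satisfying the closeness condition (\ref{close}). First I would fix the conformal map $f : \uhp \to \Omega$ and let $S = f^{-1}(V)$ be the prevertices; the set $S$, ordered along $\reals$, is assembled from the prevertex sets of the thick pieces together with the boundary vertices introduced when cutting along the Carleson arches. The decomposition ${\cal W}$ of $\uhp$ is obtained by taking, inside the preimage of each thick piece, the given Carleson--Whitney pieces, and inside the preimage of each thin piece, the standard pieces (Whitney squares, Carleson squares, and, for the long corridor of a thin polygon, the Carleson arch whose two boundary components sit over the two ``short ends'' of the thin piece). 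Since each thin piece contributes $O(1)$ edges (at most $2P$ in the notation of Lemma \ref{thick-thin-lemma}), and there are $O(n)$ pieces total with $O(n)$ edges altogether, the decomposition ${\cal W}$ has $O(n)$ pieces and is computed in time $O(n)$ with constant depending only on $\epsilon$.

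Next I would build the function collection ${\cal F}$. On each piece lying over a thick part, take the function from the given $\epsilon$-representation of that thick piece; but first these must be conjugated by the conformal map that identifies the abstract thick piece with the actual subdomain of $\Omega$ cut out by the arch boundaries. On each piece lying over a thin part, I use an explicit element of ${\cal E}$ (a similarity, power, or logarithm, depending on whether the thin piece is parabolic or hyperbolic and whether the piece is near a vertex or in the corridor), chosen according to the prescription after Lemma \ref{rect-lemma}: by that lemma, such an explicit map agrees with the true conformal map $f$, \emph{in the center of the thin piece}, up to error $O(e^{-2\pi/\epsilon})$ --- here it is really $O(e^{-c\eta})$ since the relevant thin piece has extremal width $\sim \eta$ as measured by the $\eta$-thin overlap. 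The key structural point is that the overlap between a thick piece and an adjacent thin piece is itself a $\delta$-thin polygon with $\delta$ as large as we like (by choosing the parameter $A$ in the thick/thin construction), so the center of the overlap is long in the extremal-length sense; on this common center the thick-side function (which represents $f$ there to within $\epsilon$) and the thin-side function (which represents $f$ there to within $O(e^{-c\eta})$) both approximate $f$, hence approximate each other to within $O(\epsilon + e^{-c\eta})$ in the appropriate normalized metric.

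To finish, I would verify the three conditions of an $\epsilon$-representation for the assembled triple $(S,{\cal W},{\cal F})$ with $\delta = O(\epsilon + e^{-c\eta})$: (1) $S \subset \reals$ is an ordered $n$-tuple --- immediate from the construction; (2) ${\cal W}$ extends a covering of the hyperbolic convex hull of $S$ --- this follows because each constituent decomposition does and the arch pieces stitch the covers together across the thin corridors, and the partition of unity is inherited piecewise; (3) the norm of ${\cal F}$ is $O(\delta)$ --- for adjacent pieces both within one thick part this is given; for adjacent pieces both within one thin part it is an explicit computation with elementary functions; and for a thick piece adjacent to a thin piece it is the estimate of the previous paragraph. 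Composition of two maps each $1$-biLipschitz in the hyperbolic metric with normalizing conformal maps preserves the biLipschitz bound, so each $f_k$ still has the required regularity. The timing is $O(n)$ since each of the $O(n)$ pieces requires only $O(1)$ arithmetic and series operations.

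\textbf{Main obstacle.} The delicate point I expect to spend the most care on is matching scales and normalizations at the thick/thin interfaces: the thick-piece representation comes ``normalized'' in some internal coordinates, and to paste it into $\Omega$ one must track the conformal map identifying the abstract thick piece with its image, verify (via Koebe and Schwarz reflection, as in Lemma \ref{thick-thin-lemma}) that this identification has bounded distortion near the arch boundaries, and check that the elementary map chosen on the thin side is pinned to the same normalization at the shared boundary arcs --- only then does Lemma \ref{rect-lemma}'s exponential estimate translate into the closeness bound (\ref{close}) on $N_s(E_1)\cap N_s(E_2)$ in the correct metric $\tilde\rho_\Omega$. Getting the bookkeeping of which edge each piece's boundary maps into, and that convex combinations stay on that edge, is where the argument is fiddly rather than deep.
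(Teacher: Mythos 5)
Your overall strategy is the paper's: splice the thick pieces' representations together across the thin corridors and use the exponential localization of Lemma \ref{rect-lemma} to control the error at the seams. But there is a genuine gap in how you position the pieces in $\uhp$. You begin by fixing the true conformal map $f:\uhp\to\Omega$ and building ${\cal W}$ from ``the preimage of each thick piece'' under $f$, and you propose to conjugate each thick piece's functions by ``the conformal map that identifies the abstract thick piece with the actual subdomain of $\Omega$.'' Neither of these maps is available: $f$ and $S=f^{-1}(V)$ are precisely what the algorithm is trying to compute, and the lemma is an algorithmic statement (``in time $O(n)$ we can compute''), so the whole content of the proof is to make the gluing computable. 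The paper's device is the normalization convention in the hypothesis (choose a root thick piece; a normalized representation of a thick piece puts the root in the unbounded complementary component of each adjacent hyperbolic arch and the other thick parts in the bounded components) together with purely \emph{linear} renormalizations of $\uhp$: the thick/thin overlap is $\eta$-thin, hence appears as an $\eta$-arch both inside the thick piece's decomposition and inside the thin piece's explicit $\epsilon$-arch; one composes with the affine self-map of $\uhp$ making the two $\eta$-arches coincide, deletes everything below that arch in the thick piece's decomposition, inserts the thin piece's arch and its elementary function there, and recurses down the tree of thick pieces. No knowledge of $f$ enters, and no image-side conjugation is needed (the thick pieces already are subdomains of $\Omega$, so their representing functions already map into $\Omega$). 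Your ``main obstacle'' paragraph correctly identifies the interface matching as the crux, but the resolution you sketch would not yield an algorithm.

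On the error estimate, your triangle inequality through $f$ needs one more step than you state: the hypothesis only gives that the functions within a single thick piece are mutually $\epsilon$-close in the sense of (\ref{close}), not that they are $\epsilon$-close to the conformal map onto that piece. To get the latter you must first pass through Lemma \ref{F=QC} and the stability estimate Lemma \ref{QC-near-Id}; only then does Lemma \ref{rect-lemma} let you compare the thick piece's conformal map with $f$ in the center of the overlap and conclude the seam bound $O(\epsilon+e^{-c/\eta})$ (note the exponent: small $\eta$ means a long corridor, so the decay is in $1/\eta$, matching the $O(e^{-2\pi/\eta})$ in the paper's proof rather than the $e^{-c\eta}$ appearing in the statement). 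That said, this part of your sketch is right in outline and no less detailed than the paper's own, which is equally terse about the seam estimate.
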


\begin{proof}
Choose a thick part $\Omega_0$ and think of it as the ``root'' 
of a tree where the thick  pieces are the 
vertices and are adjacent iff they are connected by 
a thin part. In a representation, a  thin part corresponds to 
an arch. This arch has two complementary components; a bounded one 
and an unbounded one. A normalized representation for each 
thick part is one where the root lies in the unbounded complementary 
 components
of its adjacent  hyperbolic thin arches,
 and the other thick parts   lie in the bounded 
complementary components of the hyperbolic 
thin part  that connects each to its parent.
%We can 
%renormalize by a linear transformation so that 
%the representation of this root piece has vertices 
%in $[0,1]$ and edges of the piece which overlap 
%thin pieces all correspond to subintervals of 
%$[0,1]$.

By choice, the overlap of a thick  piece $\Omega_0$ and 
a thin piece $\Omega_1$  corresponds to 
an $\eta$-thin piece.  Thus in the decomposition of 
$\uhp$ for $\Omega_0$, we can find a collection of
Whitney boxes and Carleson squares whose union is 
an $\eta$-Carleson arch.  Similarly, if $\Omega_1$ is 
$\epsilon$-thin  we have a map of  an $\epsilon$-arch
to $\Omega_1$. Moreover, we can choose a $\eta$-subarch 
that  maps to the same region. We can renormalize 
the $\epsilon$-arch by linear transformations so that 
the two $\eta$-arches agree. Then remove the part of the 
decomposition for $\Omega_0$ that lies below the 
$\eta$-arch corresponding to $\Omega_1$ and replace 
it  by an $\epsilon$-arch and the representing function 
for $\Omega_1$. See Figure \ref{combined}.

\begin{figure}[htbp]
\centerline{
 \includegraphics[height=1.25in]{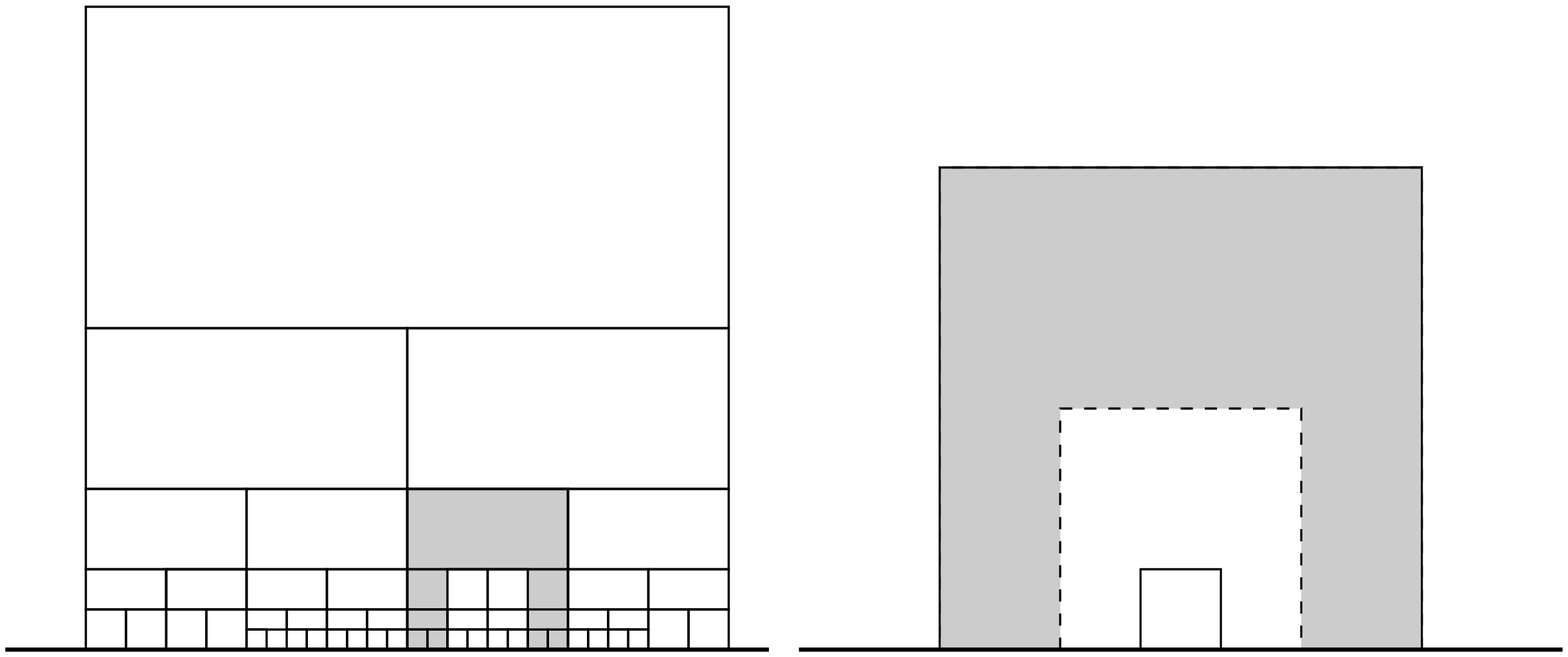}
$\hphantom{xx}$
 \includegraphics[height=1.25in]{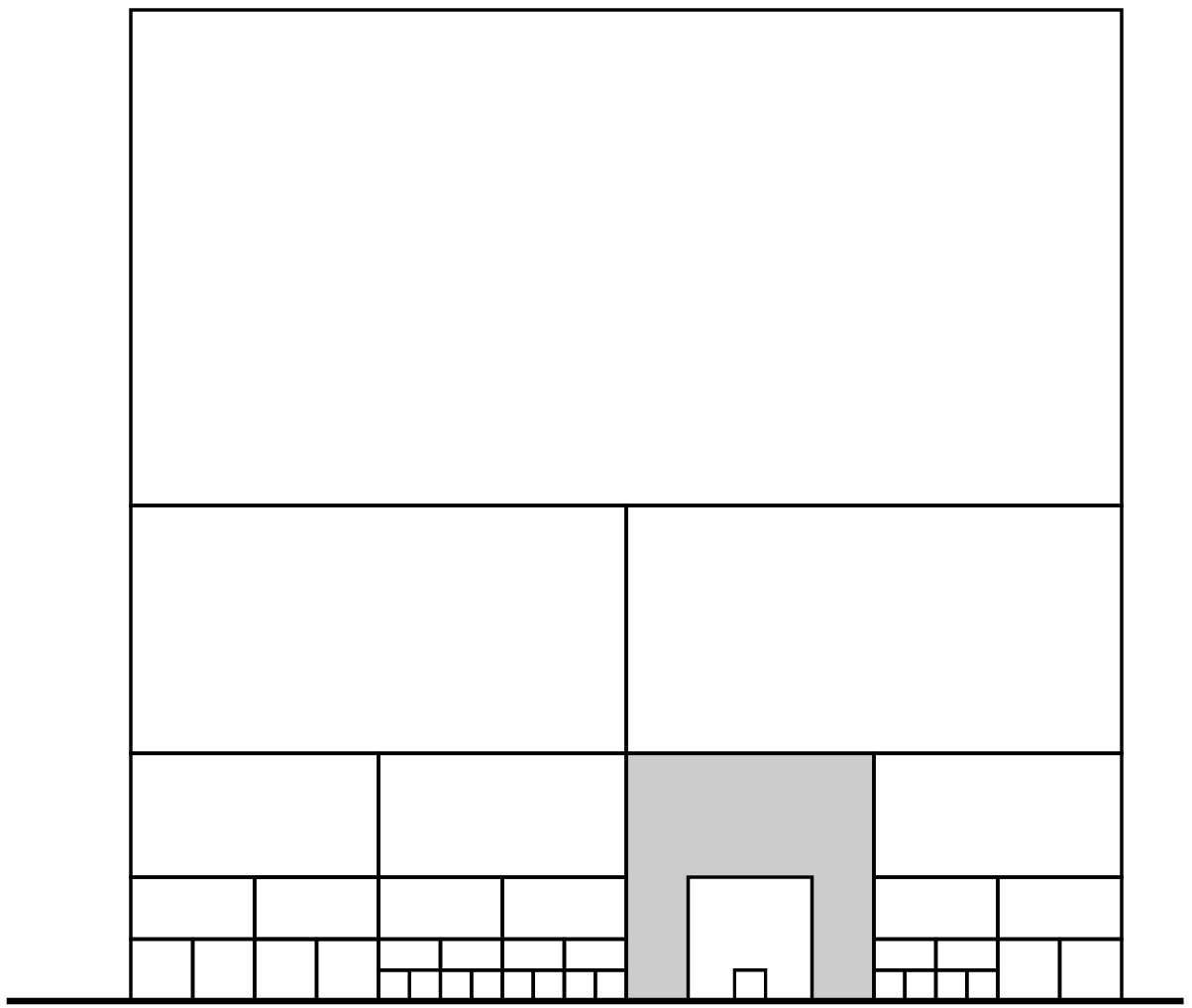}
 }
\caption{\label{combined} 
 Combining a representation of a thick piece with 
an arch from a thin piece.
   }
\end{figure}

Now repeat this for the other thin parts adjacent to 
$\Omega$.  If any of these thin parts is adjacent to 
a second thick part $\Omega_2$, then we renormalize 
the decomposition for $\Omega_2$
%so that the corresponding 
%$\eta$-arch in the topmost piece using Lemma \ref{inverting-reps}
%to get a $O(\epsilon^{1-\beta})$-representation of the thick piece.
and   insert a copy of
the renormalized decomposition for $\Omega_2$ below
the $\epsilon$-arch of $\Omega_1$.  Continuing in this way, 
we eventually insert all the pieces and obtain a 
$O(\epsilon +e^{-2 \pi/\eta})$-representation 
of the whole polygon.
\end{proof}

%-----------------------------------------------------------------------------

\section{Representations of finitely bent domains}
\label{reps of FB}

One of the main ideas of the algorithm is to connect the given 
polygonal domain $\Omega$ to the unit disk by a chain of 
intermediate domains $\Omega_0 = \disk, \Omega_1, \dots 
, \Omega_N = \Omega$ so that each is mapped to the next 
by an explicit quasiconformal map with small constant. 
For each domain $\Omega_k$ we have to know how to improve 
a $\epsilon$-representation to any desired accuracy and 
we have to know how to take a representation for 
$\Omega_k $ and create one for $\Omega_{k+1}$. The 
improvement step will be discussed in later sections; 
in this section we discuss the the creation of chain and passing 
representations from one step  to the next.

The conformal maps onto the finitely bent elements on our chain are
only computed to a fixed accuracy $\epsilon_0$; 
just enough to allow us to compute the map onto the next 
element. Thus the precise timing of these steps is not 
crucial (as long as it is linear in $n$ with a constant 
depending on $\epsilon_0$). This gives us an $\epsilon_0$-representation
of $\Omega$ in time $O(n)$.  The time   dependence on $\epsilon$
in Theorems \ref{thmQC} and \ref{main} comes from only from 
iterating Lemma \ref{Newton-radius} for the domain $\Omega$ in 
order to improve $\epsilon_0$ to $\epsilon$.

If $\Omega$ is finitely bent, we have seen how to create the chain  
with angle scaling. For a polygonal domain, we will 
approximate by a finitely bent domain and then use
angle scaling on this. At this point we have a choice.  One
possibility is to 
replace the chain of  finitely bent domains by a chain of 
inscribed polygons 
and describe how to pass representations of these polygons from 
one to the next. The other possibility is to deal directly with the finitely 
bent chain, but this requires extending the definition of 
$\epsilon$-representations to such domains (which causes some 
difficulties as was pointed out earlier).
Both approaches involve a similar number of technicalities, 
but we will take the second one since it is seems cleaner and 
shows how our method can be adapted to certain domains with 
curved boundaries. 

%In the remainder of this section we discuss
%\begin{enumerate}
%  \item $\epsilon$-representations of special finitely bent domains.
%    \item approximations of thick domains by finitely bent domains.
%    \item   some alternate notions of thickness.
%   \item passing $\epsilon$-representations along a chain of 
%       finitely bent domains.
%    \item  computing a representation of a polygon from its 
%         finitely bent approximation.
%\end{enumerate}

Suppose $\Omega$ is a finitely bent domain with $n$ vertices.
 As before, an
$\epsilon$ representation is a triple $(S, {\cal W}, {\cal F})$
where $S$ is a set of $n$ points, ${\cal W}$ is the Carleson-Whitney 
decomposition  associated to $S$ and ${\cal F}$ consists of a 
power or Laurent series  for each piece and a boundary map 
for each boundary piece. Each boundary map sends an interval 
on $\reals$ to an subarc of $\partial \Omega$ and each is
power function (possibly the identity) followed by a M{\"o}bius 
transformation. See Figure \ref{FBboundarymaps}. These maps 
clearly suffice for Carleson squares  and degenerate Carleson  arches.
However, for such maps to suffice for Carleson arches we have to 
make an assumption about $\Omega$:  there is a $\delta_0>0$, 
so that whenever two sides of $\partial 
\Omega$ have  extremal distance $< \delta_0$  in $\Omega$, the sides 
lie on intersecting circles. In this case, a power function, followed 
by a M{\"o}bius transformation can be used to map the base intervals 
of the arch into the the sides of $\Omega$. We will call a 
finitely bent domain with this property simple. It is 
straight forward to check that this property is preserved 
by any angle scaling which decreases angles.

\begin{figure}[htbp]
\centerline{
 \includegraphics[height=2.00in]{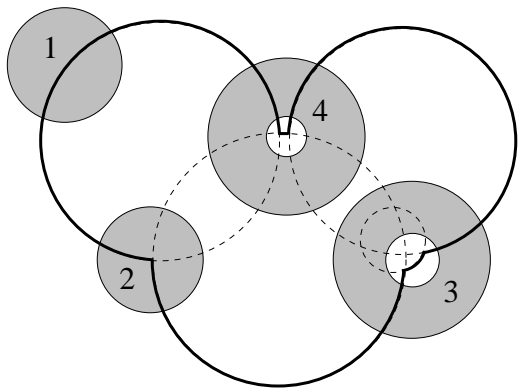}
 }
\caption{\label{FBboundarymaps} 
 In region 1 we use a M{\"o}bius transform and in region
  2  we can use a power function followed by a M{\"o}bius 
  transformation. The same type of function works in the arch 
  3, since the corresponding circles intersect.
  However, arches of type 4 are not allowed since it is 
   not clear how to explicitly map the boundary arcs into 
   a single line by a conformal map.
   }
\end{figure}

Given an $\epsilon$-representation of a simple finitely bent domain, 
we can define a quasiconformal map $F$ from $\uhp$ into $\Omega$ 
using a partition of unity as in formula (5). However, this map 
is not onto $\Omega$. The problem is that when we take a
convex combination of two points on a circular arc, the result 
is not on the circle (unless the points coincide). However, if the 
circle has radius $1$ and the points are only $\epsilon$ apart, 
then it is clear that any convex combination is within $O(\epsilon^2)$ 
of the boundary. We claim that the domain $F(\uhp) \subset \Omega$ 
can be  mapped to $\Omega$  by a $1+O(\epsilon^2)$ quasiconformal 
map whose boundary values agree with radial projection on each arc of
$\partial \Omega$. However, to prove this, it is not enough to 
know that the two boundaries are within $O(\epsilon^2)$ of each 
other; we also need to know that the tangent directions agree to 
order $O(\epsilon^2)$. 

Lemmas \ref{comp-K} and \ref{radial map} in Appendix \ref{background}
give precise estimates bounding the quasiconformal ``cost'' of 
pushing the boundary in this way. 
We will use the latter to show that $F(\uhp)$ can be mapped to 
$\Omega$ by a $(1+O(\epsilon^2))$-quasiconformal map.
 After rescaling by linear maps, we
may assume the base interval of the boundary piece is
$[0,1]$ and the image arc lies on the unit circle.
Suppose $x \to (u,v)$
and $x \to (a,b)$ are two maps  sending $[0,1]$ to an arc of the
unit circle  with $|u-a|, |v-b|, |u'-a'|, |v'-b'| = O(\epsilon)$ and 
suppose $\varphi$ is a partition of unity. Write  
$F= (\varphi)(u+iv) + (1-\varphi)(a+ib)$ and 
$g(z) = |F|^2= F \cdot \bar  F=|\varphi (u+iv) + (1-\varphi)(a+ib)|^2$.
So to apply Lemma  \ref{radial map} to $F$ it is enough 
to estimate the derivative of $g$. We begin by rewriting $g$ 
as follows:
 \begin{eqnarray*} 
 g(x) 
&=& (u \varphi + a (1-\varphi))^2 + 
   (v \varphi + b (1-\varphi))^2   \\ 
&=& ((u-a)\varphi  + a)^2 + 
   ((v-b)\varphi  + b)^2   \\ 
&=& a^2 + b^2   + 2(u-a)a  \varphi +(u-a)^2\varphi^2  
                + 2(v-b)b  \varphi +(v-b)^2\varphi^2   \\
&=& 1   + 2(u-a)a  \varphi +(u-a)^2\varphi^2  
                + 2(v-b)b  \varphi +(v-b)^2\varphi^2   \\
&=& 1   + 2(ua-a^2+ vb - b^2 )  \varphi +(u-a)^2\varphi^2  
                 +(v-b)^2\varphi^2   \\
&=& 1   + 2((u,v) \cdot(a,b)  - 1 )  \varphi +(u-a)^2\varphi^2  
                 +(v-b)^2\varphi^2   \\
&=& 1   + 2(\cos\theta  - 1 )  \varphi +(u-a)^2\varphi^2  
                 +(v-b)^2\varphi^2, 
\end{eqnarray*}
where $\theta$ is the angle between the vectors $(u,v)$ and 
$(a,b)$, which by assumption is $O(\epsilon)$, as is its derivative
$\theta'$ (by the Cauchy estimates). If we differentiate
$g$, the constant term drops out. The second term is bounded 
by 
$$ 2 \sin \theta \cdot \theta' \varphi + O(\theta^2) \varphi'
               = O(\epsilon^2),$$
and the third term is bounded  by 
$$ [(u-a)^2 \varphi^2]' = 2(u-a)(u'-a') \varphi^2 + (u-a) 2 \varphi \varphi'
    ,$$
and each term is $O(\epsilon^2)$. The same estimate holds  for 
the last term. 
Thus the map $F$ sends $\uhp$ onto a small quasiconformal perturbation 
of $\Omega$ (a subset, because of the convexity of disks). Later, 
in Lemma \ref{Newton-radius}, when we solve the Beltrami equation using the 
dilatation of $F$ as data, this data differs by $O(\epsilon^2)$ from 
the dilatation of a quasiconformal map $\tilde F$
  that sends $\uhp$ onto $\Omega$. However, 
the error given by that lemma is also $O(\epsilon^2)$, so using the 
dilatation of $F$ in place of the dilatation of $\tilde F$ gives an 
error which can be absorbed into this estimate.

Next we wish to approximate a thick polygonal domain $\Omega$ 
with $n$ sides by a finitely bent
domain $\OmegaFB \subset \Omega$ that has $O(n)$ sides.
We assume $\Omega$ is thick for otherwise 
$\OmegaFB $ may require $\gg n$ disks; 
consider a $1 \times r$ rectangle, $r \gg 1$. 
The problem is that the medial axis of this 
polygon has an edge-edge bisector that is very long 
in the hyperbolic metric. However, if we assume $\Omega$ 
is $\delta$-thick, then there is an upper bound $A$
on the hyperbolic length of any edge-edge bisector
(and $A \simeq  \delta^{-1}$). This is the only 
property of thickness that we will use at present.

Fix some $\eta>0$. We say $\OmegaFB$ is an $\eta$-approximation 
to $\Omega$ if it is the union of the medial axis disks 
corresponding to all the vertices of the medial axis 
and  also corresponding to certain points chosen along 
the edges of the medial axis as follows:

{\bf Case 1:} First suppose $e$ is a point-point bisector.
Then we simply choose the two endpoints of $e$.  The union of
the two corresponding maximal disks is the same as the union 
of all the maximal disks corresponding to  $e$.

{\bf Case 2a:} Next suppose $e$ is an edge-edge bisector  connecting 
    two vertices of the medial axis in the interior of $\Omega$).
 Then 
by the thickness assumption its hyperbolic length is $\leq A$.
Thus given  $\eta$,
we can choose a collection of points on that edge that include
the two endpoints and has the property that no point of the 
edge is more than hyperbolic distance
 $\eta$ from some point of the collection. We call 
  this being $\eta$-dense.
Clearly we need at most $1+A/\eta$ points per edge. 
See left side of Figure \ref{pt-edge-app}.

{\bf Case 2b:} Now suppose  $e$ is an edge-edge bisector with one endpoint 
     a convex vertex of $\Omega$. Choose the segment of hyperbolic length 
      $A$ starting at the interior vertex and place points that 
      are $\eta$-dense along it.
     Clearly we need at most $1+A/\eta$ points per edge.  Note that 
     the finite union of disks has one, $D$,  closest to the corresponding 
     convex vertex of $\Omega$. The arc $\partial D \cap \Omega$ closest 
     to the vertex will also be a boundary arc of $\OmegaFB$, and we will 
     call it a ``ending arc'' of $\OmegaFB$.  We place $\eta^{-1}$ 
     equally spaced vertices along the arc to split it into smaller 
     ending arcs. These have the property that the extremal distance to 
     any non-adjacent arc is bounded away from zero (independent of 
     $\eta$), a property we will use later.

\begin{figure}[htbp]
\centerline{
 \includegraphics[height=1.1in]{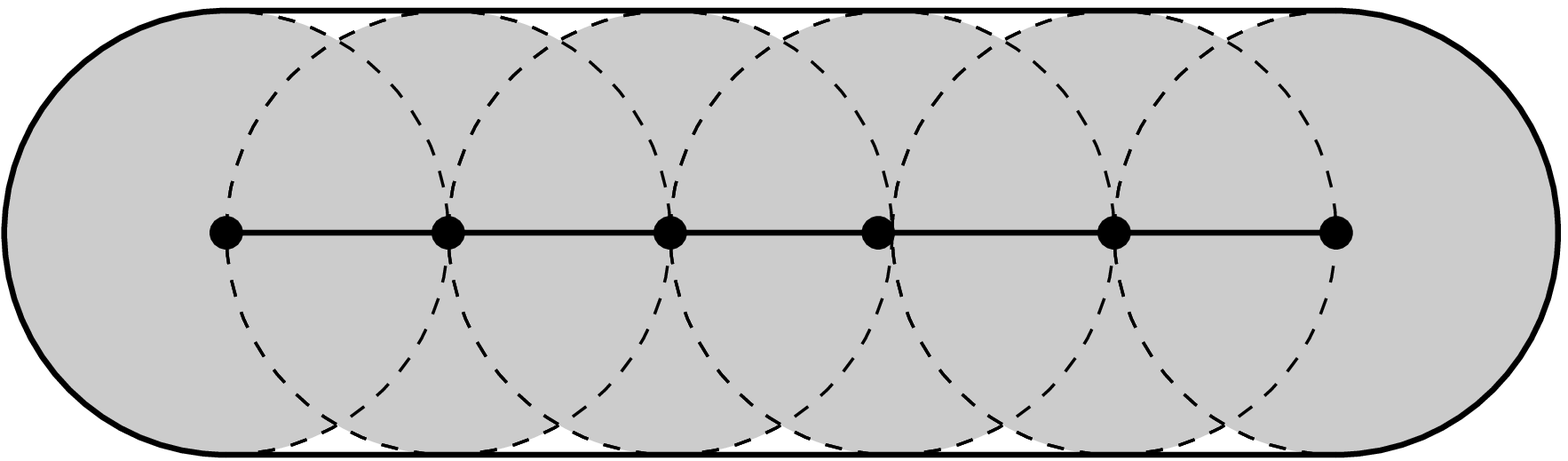}
$\hphantom{xxxxx}$
 \includegraphics[height=1.1in]{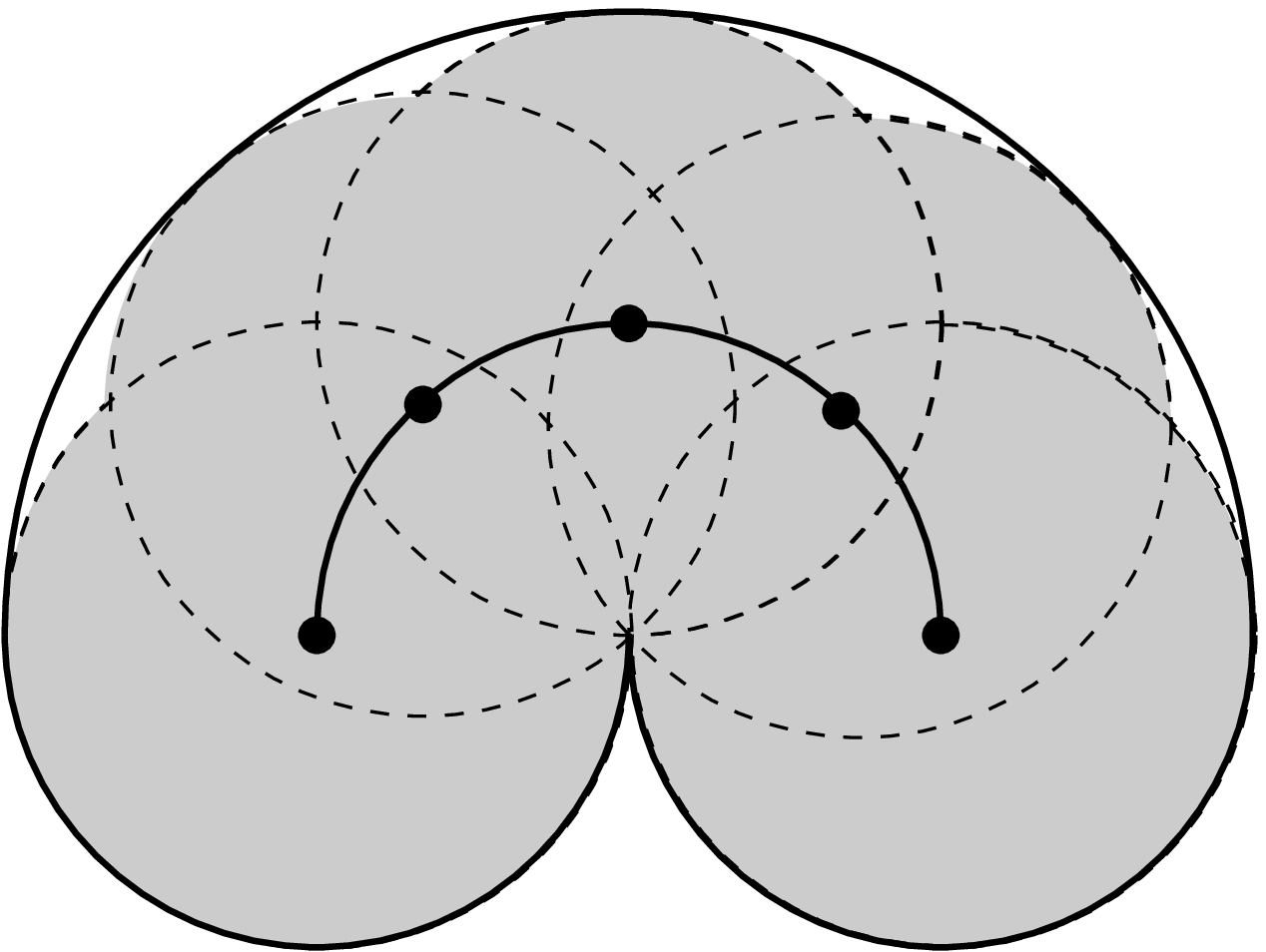}
 }
\caption{\label{pt-edge-app} The finitely bent approximation  for a 
     edge-edge bisector and a point-edge bisector  (the edge has been
     mapped to a circular arc).}
\end{figure}

{\bf Case 3:} Finally suppose $e$ is a point-edge bisector.
Renormalizing by a M{\"obius} transformation, assume the point
is $0$ and the edge is a subarc of the unit circle (such a 
transformation  preserves the collection of medial axis disks, 
but does not preserve the medial axis itself since Euclidean centers
of a circle need not be preserved by M{\"o}bius transformations).
In this new region the medial axis is an arc of the circle of 
radius $1/2$ and we take a collection of points that includes the 
endpoints, and so that the angular separation between points (when 
viewed from $0$) is $\leq \eta$.  Clearly at most $1+2\pi/\eta$
suffice. Now map this collection of maximal disks back to the 
original domain to get a collection of disks centered on $e$.  
See the right side of Figure \ref{pt-edge-app}.

We have shown:  

\begin{lemma} \label{FB-app}
Suppose $\Omega$ is a $\delta$-thick polygonal domain  with $n$
sides and 
$\OmegaFB$ is an $\eta$-approximation of it. Then 
$\OmegaFB$ has $O( n /(\delta \eta))$ boundary arcs.
\end{lemma}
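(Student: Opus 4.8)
The plan is to count the medial axis disks contributed by each type of edge in the medial axis of $\Omega$ and sum. Recall from Section \ref{Domes+MA1} that the medial axis of an $n$-gon is a finite tree with at most $O(n)$ vertices (the bound $2n+3$ was mentioned), at most $O(n)$ edges, and each edge is one of three types: point-point bisectors, edge-edge bisectors (possibly ending at a convex vertex of $\Omega$), and point-edge bisectors. The $\eta$-approximation $\OmegaFB$ is the union of the maximal disks at all medial axis vertices together with additional maximal disks chosen along the edges according to Cases 1, 2a, 2b, 3. So it suffices to bound the number of disks chosen per edge and multiply by $O(n)$.

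First, the vertex disks contribute $O(n)$ disks, which is absorbed into the claimed bound. Next I would go through the cases. In Case 1 (point-point bisector) we add only the two endpoints, so $O(1)$ disks per such edge. In Case 2a (an edge-edge bisector between two interior medial axis vertices) the $\delta$-thickness hypothesis is invoked: as discussed just before the lemma, $\delta$-thickness gives an upper bound $A \simeq \delta^{-1}$ on the hyperbolic length of any edge-edge bisector, so an $\eta$-dense set of points on an edge of hyperbolic length $\le A$ needs at most $1 + A/\eta = O(1/(\delta\eta))$ disks. In Case 2b (an edge-edge bisector with one endpoint a convex vertex of $\Omega$) we again place $\eta$-dense points along a sub-segment of hyperbolic length $A$, giving $O(1/(\delta\eta))$ disks, plus $\eta^{-1} = O(1/\eta)$ extra disks coming from subdividing the ending arc; both are $O(1/(\delta\eta))$. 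In Case 3 (point-edge bisector) we renormalize so the point is $0$ and the edge lies on the unit circle, the medial axis becomes an arc of the circle of radius $1/2$, and we choose points with angular separation $\le \eta$ as seen from $0$, needing at most $1 + 2\pi/\eta = O(1/\eta)$ disks; this is also $O(1/(\delta\eta))$.

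Thus every edge of the medial axis contributes $O(1/(\delta\eta))$ disks to $\OmegaFB$, and since the medial axis has $O(n)$ edges (and $O(n)$ vertices), the total number of disks, hence boundary arcs, is $O(n/(\delta\eta))$. I do not expect any real obstacle here: the only nontrivial input is the bound $A \simeq \delta^{-1}$ on edge-edge bisector lengths under $\delta$-thickness, which has already been recorded in the text preceding the lemma, and the rest is bookkeeping over the (finitely many) edge types. One minor point worth stating carefully is that each boundary arc of $\OmegaFB$ lies on the boundary of one of these finitely many disks (plus the subdivided ending arcs in Case 2b), so counting disks indeed controls the number of boundary arcs up to a constant factor.
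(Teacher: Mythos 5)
Your proposal is correct and follows essentially the same route as the paper, which presents the construction of the $\eta$-approximation case by case (recording the counts $1+A/\eta$ with $A\simeq \delta^{-1}$ for edge-edge bisectors, $1+2\pi/\eta$ for point-edge bisectors, etc.) and then states the lemma as the immediate consequence of summing over the $O(n)$ edges and vertices of the medial axis. The only thing you make explicit that the paper leaves implicit is the passage from counting disks to counting boundary arcs, which is harmless.
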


Also note that by construction, each non-ending arc of $\partial \OmegaFB$ 
is tangent to an edge of $\partial \Omega$, projects orthogonally onto
a subsegment of that edge, and makes at most angle $O(\eta)$ with lines
parallel to that edge. Using Lemma \ref{comp-K}
 this means that there is a $1+O(\eta)$ quasiconformal 
map from $\OmegaFB$ into a subdomain $\OmegaRC$ 
($rc$ is for ``rounded corners'')
of $\Omega$ bounded by the the ending edges of $\OmegaFB$ 
and the projections  onto $\partial \Omega$ of the non-ending edges.

\begin{figure}[htbp]
\centerline{
 \includegraphics[height=1.50in]{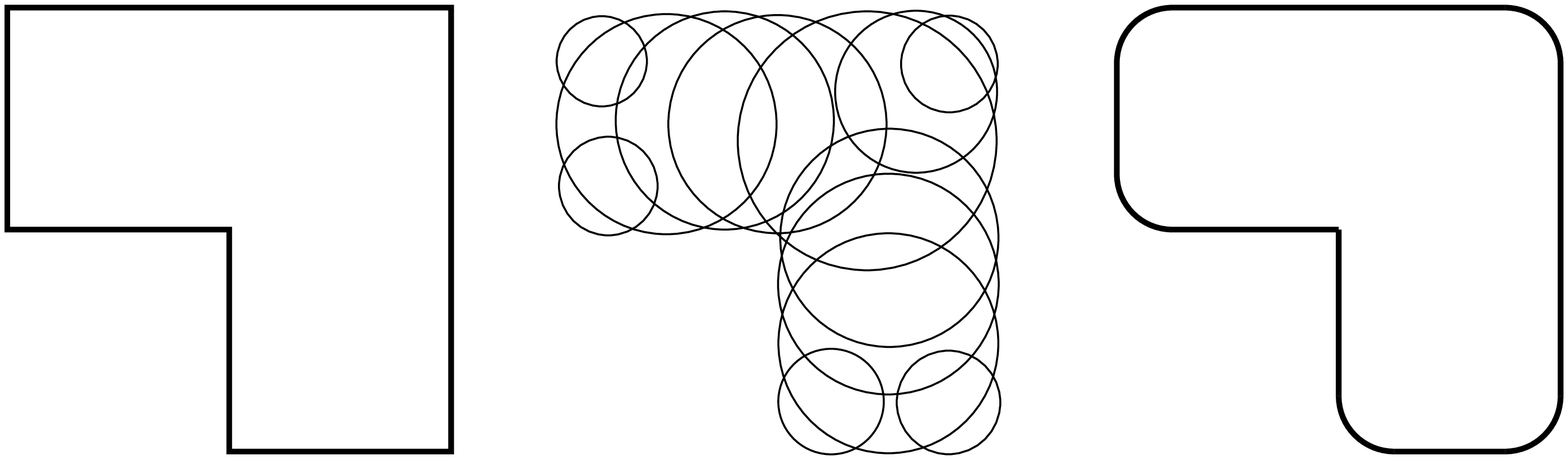}
 }
\caption{\label{RoundedOmega} 
   A polygon $\Omega$, a union of medial axis disks defining a 
   finitely bent approximation $\OmegaFB$ and the corresponding 
   domain with rounded corners, $\OmegaRC$.
   }
\end{figure}

\section{Really thick and almost thick pieces} \label{really thick}

 It would be very convenient if 
the finitely bent approximation of a thick polygon was also 
thick, for then the representation of $\OmegaFB$ would not 
require any arches. This is not the case (see  
Figure \ref{FB approx}), but there is 
a substitute that will be sufficient for 
our purposes. We shall show that if a polygon 
satisfies a stronger version of thickness, then 
its finitely bent approximation satisfies a weaker 
version of thickness. We make this precise with 
two new definitions. 

\begin{figure}[htbp]
\centerline{
 \includegraphics[height=1.50in]{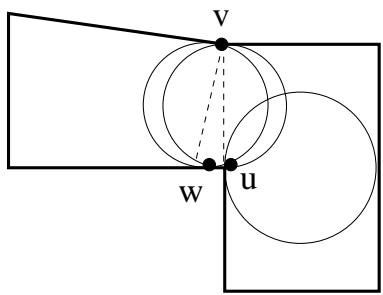}
 }
\caption{\label{FB approx} 
   The polygon is clearly thick, but has interior angle $\pi+\epsilon$
    at the vertex  $v$. This creates two medial axis 
    vertices whose circles intersect at $w$ which is only 
    $O(\epsilon)$ away from $u$, which must also be a vertex
    of the finitely bent approximation.
    Since $ \OmegaFB$ has $O(1)$ sides and at least one side shorter than 
    $O(\epsilon)$, it can't be $\delta$-thick if $\epsilon \ll \delta$.
   }
\end{figure}

Suppose $e$ is an edge of  a polygonal domain  $ \Omega$. 
 We call a point $x \in  e \subset \partial \Omega$ ``exposed'' if there is 
a medial axis disk $D$ with $x \in \partial D$ and so that 
$\partial D$ also contains a point of  an edge that is 
not adjacent to $e$.  See Figure \ref{Exposed}. 
We say the edge $e$ is exposed if it contains an exposed 
point in its interior.
We will say that $\Omega$ 
is ``really $\delta$-thick'' if it is $\delta$-thick and 
vertices  with interior angle $> \pi/2$ are never endpoints 
of exposed edges.

\begin{figure}[htbp]
\centerline{
 \includegraphics[height=1.25in]{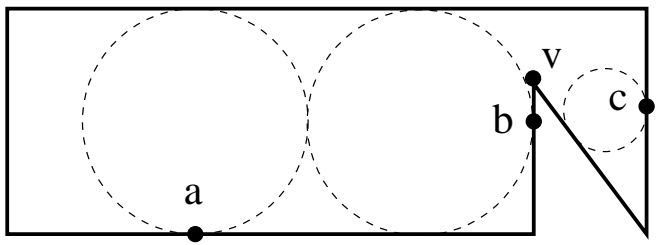}
 }
\caption{\label{Exposed} 
    The points $a,b$ are exposed but the point $c$ is not (nor is the 
     edge containing it. The polygon is thick, but not really thick, 
     because the vertex $v$ has large interior angle and bounds an 
     exposed edge (the one containing $b$).
   }
\end{figure}

%Recall that one way to define $\delta$-thickness is to require that 
%if two edges of $\partial \Omega$ have extremal distance
%$\leq \delta$, then  the edges are adjacent (so the extremal 
%distance is actually zero). 

%A $(r, \theta)$ partial  sector     is a set which is a Euclidean similarity  
%of $\{ z = s e^{i t}: r <  s  < 1,  0 < t < \theta\}$.  A sector  
%is a $(0,\theta)$ partial sector. 
%We say that a polygonal domain $\Omega$ is ``really 
%$\delta$-thick'' if whenever a  $(\delta^{-1}, \theta)$ 
%partial ring inside $\Omega$ has $\theta > \pi$ 
%and  its  radial edges lie on edges of 
%$\partial \Omega$,   then these radial edges hit only 
%unexposed points. 

Given a $\delta$ 
thick polygonal domain $\Omega$, we can always modify it to be really 
$\delta$-thick by adding a bounded number of edges per  
vertex (thus $O(n)$ overall). Compute the iota map, and the
corresponding Carleson-Whitney  decomposition. At each 
vertex with angle $\theta > \pi/2$ compute the diameter, $d$,  of the 
image of the corresponding degenerate arch, and let 
$r = \beta d $. The constant
$\beta$ is chosen small enough so that $\Omega$ contains 
a sector of radius $4r$ and angle $\theta$. On the other hand, 
$\Omega$ does not contain a sector of angle $\theta$ and 
radius $> C(\beta) r$, for this would violate the choice 
of the degenerate arch.

  Draw circles 
of radius $r, 2r$ around the vertex and replace the 
part of the edges adjacent to $v$ inside the smaller 
circle by the polygonal arc as shown in Figure 
\ref{ReallyThick}.  The new arcs are chosen so that any 
medial axis disk that hits an interior point has 
radius $< 2r$, and hence does not hit a third 
edge of $\partial \Omega$. Thus all these new edges 
are unexposed. 
When we finish making this construction
at every  vertex, the resulting domain is really 
$\delta$-thick, for all the new edges are unexposed and 
none of the old adjacent pairs of edges have small extremal 
distance (if they were separated by a modified vertex).

\begin{figure}[htbp]
\centerline{
 \includegraphics[height=2.00in]{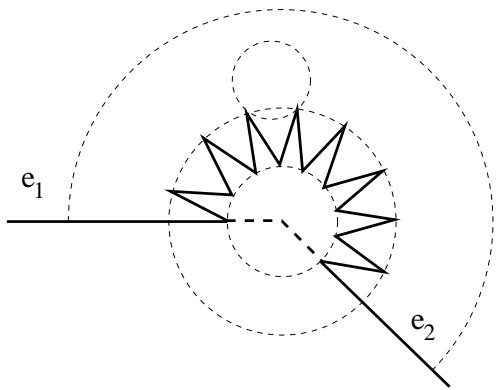}
 }
\caption{\label{ReallyThick} 
    We replace the boundary of $\partial \Omega$ by a 
    polygonal arc near each concave vertex and at a 
    scaled comparable to the largest sector in $\Omega$
    around this point.  If the original domain is thick, 
    then the new domain is really thick.
   }
\end{figure}

In particular, given a $\delta$-thick polygonal domain $\Omega$, 
we can perform a thick/thin decomposition to remove a 
thin neighborhood of each vertex. Using the construction 
above we can arrange for the thick component (there is only 
one since $\Omega$ is thick to being with) to be really thick.
If we can construct an $\epsilon$-representation for this 
really thick domain, then we can use Lemma \ref{combine-lemma} to obtain 
a $\delta$-representation of $\Omega$. Thus it suffices 
to assume that $\Omega$ is really thick.

We say that a finitely bent approximation  $\OmegaFB$  of
a polygonal domain $\Omega$ 
is ``almost $\delta$-thick'' if 
whenever two non-adjacent arcs $I_1, I_2$  have  extremal 
distance  $\leq \delta$ in $\OmegaFB$, then these arcs 
are not ending arcs and they both project onto the same edge of $\Omega$. 
Thus any arch that occurs in a Carleson-Whitney decomposition 
corresponding to $\OmegaFB$ 
has  base intervals that map to arcs projecting onto the same 
edge of $\Omega$.

We defined the normalized separation between $I_1$ and
$I_2$ as 
$$ \alpha =\dist(I_1, I_2)/\min(\diam(I_1),\diam(I_2)).$$
Since both arcs are tangent to the same line and project 
orthogonally onto disjoint segments, its easy to see 
that the circles containing these arcs intersect at 
angle $O(\alpha)$. See Figure \ref{almostthick}. 

\begin{figure}[htbp]
\centerline{
 \includegraphics[height=1.25in]{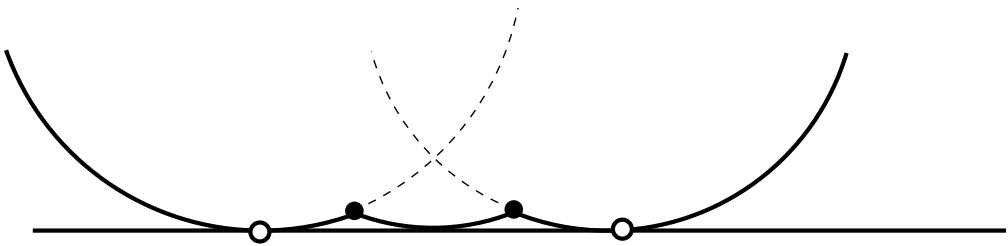}
 }
\caption{\label{almostthick} 
   We have two circular arcs of diameter $\geq 1$, which  
   are distance $\alpha$ apart and both tangent to the same 
   line. Moreover, the endpoints (shown black) are connected 
   by a path of circular arcs of similar radius and tangent 
   to the same line, which implies the distance from the 
   line is $O(\alpha^2)$. Thus the tangent points of the 
   arcs on the line (shown white) are only $O(\alpha)$ apart
  and the corresponding circles intersect with angle $O(\alpha)$.
     }
\end{figure}

 The main fact we will use concerns the 
conformal map $f$  from $D_1$ to $D_1 \cup D_2$.

\begin{lemma} \label{eta power} 
 Suppose 
$D_1, D_2$ are disks of comparable size  which intersect 
at angle $\alpha$ and $f: D_1 \to D_1 \cup D_2$ is  conformal 
and chosen to fix the two intersection points, $a,b$,  and 
 and  the point $c \in \partial D_1 \setminus D_2$ on the 
bisector of $a,b$.
Then $|f(z) -z| = O(\alpha) \diam(D_1)$
for every $z \in D_1$.
\end{lemma}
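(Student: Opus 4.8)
The plan is to normalize by a M\"obius transformation so that $f$ becomes a single explicit power function and then estimate directly. If $\alpha$ is bounded below by a fixed $\alpha_0>0$ (chosen below), the conclusion is immediate: $f(z)$ and $z$ both lie in $D_1\cup D_2$, a set of diameter $O(\diam D_1)$ by the comparable--size hypothesis, so $|f(z)-z|=O(\diam D_1)=O(\alpha\diam D_1)$. So assume $\alpha<\alpha_0$. The first point to record is a geometric fact: for small $\alpha$ the triple $\{a,b,c\}$ is \emph{non-degenerate} on $\partial D_1$. Two circles of comparable radii meeting at a small angle have centers close relative to the radii, so the intersection points $a,b$ are nearly antipodal on $\partial D_1$; the arc $\partial D_1\setminus D_2$ is short and lies at the far end of the thin crescent $D_1\setminus D_2$, and $c$ (the point of $\partial D_1$ equidistant from $a,b$ that is not in $D_2$) sits on it at distance $\asymp\diam D_1$ from both $a$ and $b$. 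Also $\diam D_2$ and $\diam(D_1\cup D_2)$ are $\asymp\diam D_1$. Hence for $\alpha<\alpha_0$ the three pairwise distances among $a,b,c$ are all $\asymp\diam D_1$, with constants depending only on the ratio of the radii.

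Now normalize: let $M$ be the M\"obius map with $M(a)=0$, $M(b)=\infty$, $M(c)=1$. Since $a,b\in\partial D_1\cap\partial D_2$, $M$ carries $\partial D_1$ to a line through the origin (which must be $\reals$, since $M(c)=1$) and $\partial D_2$ to a line through the origin making angle $\alpha$ with $\reals$; using that $c$ lies on $\partial D_1$ away from $D_2$, one checks that, after possibly reflecting, $M(D_1)$ is the upper half plane $H=\{0<\arg z<\pi\}$ and $M(D_1\cup D_2)$ is the sector $\Sigma=\{0<\arg z<\pi+\alpha\}$ (opening $\pi+\alpha$, not $2\pi-\alpha$, because $D_1\cup D_2$ is only a small enlargement of $D_1$). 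Every conformal map $H\to\Sigma$ fixing $0$ and $\infty$ has the form $\zeta\mapsto\lambda\,\zeta^{1+\alpha/\pi}$ with $\lambda>0$ (post--compose with a suitable branch of $z\mapsto z^{\pi/(\pi+\alpha)}$ to get a conformal self--map of $H$ fixing $0,\infty$, i.e.\ a positive dilation), and $M(c)=1$ forces $\lambda=1$. Hence $\widetilde f:=M\circ f\circ M^{-1}$ equals $\widetilde f(\zeta)=\zeta^{1+\alpha/\pi}$, and writing $N=M^{-1}$, $\zeta=M(w)$ we obtain
$$ f(w)-w \;=\; N\!\bigl(\zeta^{1+\alpha/\pi}\bigr)-N(\zeta),\qquad \zeta\in\overline H .$$

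The final step estimates this by integrating $N'$ along $\gamma(t)=\zeta^{1+t\alpha/\pi}$, $t\in[0,1]$, a path from $\zeta$ to $\zeta^{1+\alpha/\pi}$ that lies in $\overline\Sigma$. The non-degeneracy of $\{a,b,c\}$ makes $N$ a bounded M\"obius map of $\overline\Sigma$ onto $\overline{D_1\cup D_2}$ whose pole $N(\infty)$ lies in the open lower half plane at distance $\gtrsim1$ from $\overline\Sigma$; writing $N(\eta)-b$ in the usual M\"obius--remainder form one gets $|N'(\eta)|\le C\diam(D_1)\,(1+|\eta|)^{-2}$ on $\overline\Sigma$, with $C$ depending only on the radius ratio. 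Then, with $s=\log|\zeta|$, $\theta=\arg\zeta$, using $|\gamma(t)|=e^{(1+t\alpha/\pi)s}$, $|\gamma'(t)|=\tfrac{\alpha}{\pi}\sqrt{s^2+\theta^2}\,|\gamma(t)|$ and the substitution $u=|\gamma(t)|$,
$$ |f(w)-w|\;\le\;C\diam(D_1)\int_0^1\frac{|\gamma'(t)|}{(1+|\gamma(t)|)^2}\,dt \;=\; C\diam(D_1)\,\frac{\sqrt{s^2+\theta^2}}{|s|}\left|\frac{1}{1+|\zeta|}-\frac{1}{1+|\zeta|^{1+\alpha/\pi}}\right|. $$
Bounding $\sqrt{s^2+\theta^2}\le|s|+\pi$, $\dfrac{|\zeta|}{(1+|\zeta|)(1+|\zeta|^{1+\alpha/\pi})}\le e^{-|s|}$, and $\bigl|\,|\zeta|^{\alpha/\pi}-1\bigr|=|e^{s\alpha/\pi}-1|\le\tfrac{\alpha}{\pi}|s|\,e^{|s|\alpha/\pi}$, the right side is $\le C\diam(D_1)\cdot\tfrac{\alpha}{\pi}(|s|+\pi)e^{-|s|(1-\alpha/\pi)}\le C'\alpha\diam(D_1)$ uniformly in $\zeta\in\overline H$ (using that $(x+\pi)e^{-x/2}$ is bounded on $[0,\infty)$ and $\alpha<\pi/2$). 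This is the lemma.

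The step I expect to be the main obstacle is making the non-degeneracy of $\{a,b,c\}$ and the resulting bound $|N'(\eta)|\le C\diam(D_1)(1+|\eta|)^{-2}$ on $\overline\Sigma$ fully quantitative; this control is essential, since $f$ is genuinely \emph{not} uniformly close to the identity in the hyperbolic metric of $D_1\cup D_2$ (one checks $\rho_{D_1\cup D_2}(w,f(w))\to\infty$ as $w\to a$), so the Euclidean path integral above, rather than a Schwarz--Pick argument, is what is needed. If one prefers to avoid the $(1+|\eta|)^{-2}$ decay, the region $\{|\zeta|\le R\}$ can be handled with the crude bound $|N'|\le C\diam(D_1)$ (there $\gamma$ has length $O(\alpha)$), and the region $\{|\zeta|\ge R\}$ by the same argument after the involution $\zeta\mapsto-1/\zeta$ of $H$, which interchanges $a$ and $b$; non-degeneracy supplies a bounded $R$ for which these two regions cover $D_1$.
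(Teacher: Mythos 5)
Your proposal is correct and follows essentially the same route as the paper: normalize by the M\"obius transformation sending $a,b,c$ to $0,\infty$ and a fixed third point so that $f$ conjugates to a power map $\zeta\mapsto\zeta^{1+O(\alpha)}$, then transfer the $O(\alpha)$ displacement back using the bounded derivative of the inverse M\"obius map (the paper handles $|\zeta|>1$ by the same $\zeta\mapsto 1/\zeta$ symmetry you mention as an alternative). Your version is simply a more quantitative write-up of the paper's ``explicit computation,'' including the non-degeneracy of $\{a,b,c\}$ that the paper leaves implicit.
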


\begin{proof} 
  This can be proven by an 
explicit calculation.  Note that 
 $f= \tau^{-1}( \lambda \tau(z)^{1+ \alpha}  )$ 
where $\tau(z) = (a-z)/(z-b)$ and $\lambda $ is an 
appropriately chosen constant of modulus $1$ so that 
$\tau(c) = i$. 
See Figure \ref{EtaPower}.
Suppose $z \in D_1$,  $w = \tau(z)$ and $x = |w|$. If $ 0 \leq x \leq 1$, 
then $x- x^{1+\alpha}$ attains its maximum at its 
critical points, i.e., at $x = (1+\alpha)^{1/\alpha}$ which  
close to $1/e$ for small $\alpha$. Thus the maximum is 
$O(\alpha)$ which proves the estimate for such $z$ (because 
$\tau$ and $\tau^{-1}$ have bounded derivative at such points).
The case of $x> 1$ is similar using $1/\tau$. 
\end{proof}
\begin{figure}[htbp]
\centerline{
 \includegraphics[height=1.75in]{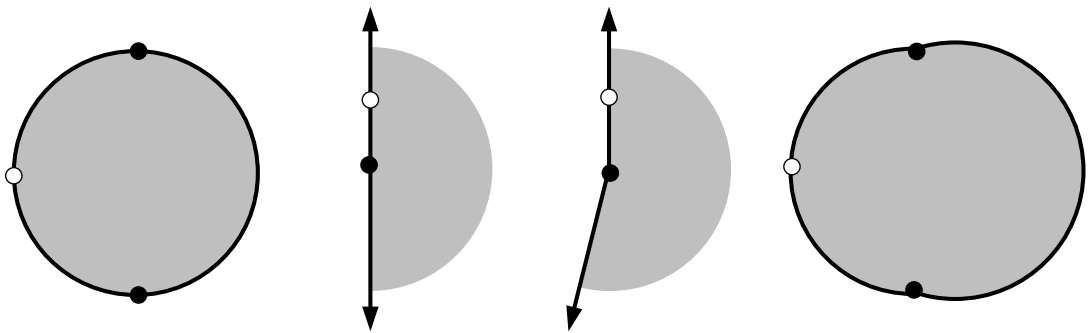}
 }
\caption{\label{EtaPower} 
    The disk $D_1$ mapped by $\tau$, $z^{1+\alpha}$ and $\tau^{-1}$.
    The M{\"o}bius transformation $\tau$ sends $a,b,c$ to $0,\infty, i$.
   }
\end{figure}

\begin{lemma}
If $\delta$ and $\eta$ are  small enough the following holds.
If $\Omega$ is a really $\delta$-thick polygon,
then the finitely bent $\eta$-approximation, $\OmegaFB$, 
is almost $\delta/2$-thick.
\end{lemma}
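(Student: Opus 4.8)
The plan is to unwind the definition of ``almost $\delta/2$-thick'': I must take two non-adjacent arcs $I_1,I_2$ of $\partial\OmegaFB$ whose extremal distance inside $\OmegaFB$ is at most $\delta/2$, and show that (a)~neither $I_j$ is an ending arc and (b)~$I_1$ and $I_2$ project orthogonally onto the same edge of $\Omega$. Part (a) is immediate from the construction of Case~2b in Section~\ref{reps of FB}: each ending arc there is subdivided into $\eta^{-1}$ pieces precisely so that the extremal distance in $\OmegaFB$ from any such piece to any non-adjacent arc exceeds an absolute constant $c_0>0$ independent of $\eta$; so it is enough to require $\delta<2c_0$.

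For (b), with $I_1,I_2$ now non-ending arcs, I would use that each $I_j$ is tangent to an edge $e_j$ of $\Omega$, projects orthogonally onto a subsegment $\hat e_j\subset e_j$, and meets lines parallel to $e_j$ at angle $O(\eta)$ (the remarks following Lemma~\ref{FB-app}). Taking $\Phi:\OmegaFB\to\OmegaRC\subset\Omega$ to be the $(1+O(\eta))$-quasiconformal map onto the ``rounded corners'' domain constructed there, which carries $I_j$ onto $\hat e_j$, and writing $\mathrm{ed}_D(A,B)$ for the extremal distance in $D$, I would combine the bound ``$\Phi$ distorts extremal distance by at most $1+O(\eta)$'' with the fact that passing to the smaller domain $\OmegaRC\subset\Omega$, or to subsegments, only enlarges extremal distance, to get
\[
\mathrm{ed}_\Omega(e_1,e_2)\ \le\ \mathrm{ed}_\Omega(\hat e_1,\hat e_2)\ \le\ \mathrm{ed}_{\OmegaRC}(\hat e_1,\hat e_2)\ \le\ (1+O(\eta))\,\frac{\delta}{2},
\]
which is $<\delta$ once $\eta$ is small. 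If $e_1\ne e_2$ were non-adjacent this would contradict the $\delta$-thickness of $\Omega$; hence either $e_1=e_2$, the desired conclusion, or $e_1$ and $e_2$ are distinct adjacent edges meeting at a vertex $v$ --- the case that remains to be excluded.

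For adjacent $e_1,e_2$ at a vertex $v$, I would split on the interior angle $\theta$ at $v$. If $\theta\le\pi/2$ the argument should be soft: near $v$, the boundary $\partial\OmegaFB$ is made up of the ending arc together with the arcs contributed by the chain of medial-axis disks along the bisector of $e_1$ and $e_2$, and these are pairwise adjacent and run continuously from over $e_1$, around $v$, to over $e_2$; ``away from $v$'', the two sides of a sector of opening $\le\pi/2$ stay a definite extremal distance apart at every scale. Hence $I_1$ (over $e_1$) and $I_2$ (over $e_2$) being within extremal distance $\delta/2$ would force them into this near-$v$ chain, making them adjacent in $\partial\OmegaFB$, contrary to hypothesis. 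If $\theta>\pi/2$, I would invoke that $\Omega$ is \emph{really} $\delta$-thick, so that, $v$ having interior angle $>\pi/2$, neither $e_1$ nor $e_2$ is an exposed edge. The plan there is a dichotomy: either every arc of $\partial\OmegaFB$ strictly between $I_1$ and $I_2$ also lies over $e_1\cup e_2$ near $v$, in which case those arcs again chain $I_1$ to $I_2$; or some intervening arc comes from a medial-axis disk tangent to $e_1$ (or $e_2$) at an interior point while also meeting a non-adjacent edge, exhibiting $e_1$ (or $e_2$) as exposed with endpoint $v$, contradicting really-$\delta$-thickness. This is exactly the configuration of Figure~\ref{FB approx}, which the really-thick hypothesis is designed to preclude.

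The hard part will be this last subcase ($\theta>\pi/2$): one has to justify the dichotomy by a genuine local analysis of $\partial\OmegaFB$ near $v$, tracking the medial-axis disks, their $\eta$-density, and the size and position of the boundary arcs they contribute --- Lemma~\ref{eta power} being the natural tool for the position estimates. Everything else (part (a), the extremal-distance chain in part (b), and the $\theta\le\pi/2$ subcase) is bookkeeping layered on facts already established.
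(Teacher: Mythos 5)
Your reduction to the adjacent-edges case follows the paper exactly: ending arcs are excluded by construction, and the chain of extremal-distance inequalities through the $(1+O(\eta))$-quasiconformal map $\OmegaFB\to\OmegaRC$, combined with the $\delta$-thickness of $\Omega$, forces the two edges carrying the projections to coincide or be adjacent. Up to that point the proposal is sound (your inequality chain is in fact cleaner than the garbled one printed in the paper).

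The genuine gap is that you never close the adjacent case, and both subcases you set up are problematic. For $\theta>\pi/2$ you concede the argument outright ("the hard part will be this last subcase"), offering only an unjustified dichotomy about intervening arcs; but this is exactly where the lemma lives, since it is the only place the \emph{really} thick hypothesis enters. The paper runs the implication in the opposite direction and needs no such analysis: it asserts that in this configuration the relevant edges are exposed at $v$, so really-thickness forces $\theta\le\pi/2$ at once, and there is no residual ``$\theta>\pi/2$ and unexposed'' case to study. For $\theta\le\pi/2$ your soft claim that the two sides of a sector of opening $\le\pi/2$ ``stay a definite extremal distance apart at every scale'' is false for the full sides (adjacent edges always have extremal distance tending to $0$ near the vertex --- that is precisely what a parabolic thin part is), and salvaging it for the specific arcs $I_1,I_2$ would require controlling the diameters of the medial-axis arcs relative to their distance from $v$, which you do not attempt. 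The one-line Euclidean fact the paper uses instead, and which is missing from your proposal, is this: because $I_1$ and $I_2$ are non-adjacent yet within extremal distance $\delta/2$, the tangent point of $I_1$ on $e_1$ lies within $\alpha\cdot\diam(I_1)$ of $v$ for a small $\alpha$ (this is the normalized-separation estimate from the almost-thick discussion around Figure \ref{almostthick}, not Lemma \ref{eta power}); a disk of that size tangent to $e_1$ that close to a corner of interior angle $\le\pi/2$ cannot be contained in $\Omega$, since it must cross $e_2$. That single contradiction disposes of the adjacent case with no split on $\theta$ and no local analysis of the chain of medial-axis disks near $v$.
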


\begin{proof}
Suppose we have two non-adjacent  arcs, 
$I_1, I_2$  in the boundary of  $\OmegaFB$.
 By construction, 
the extremal distance of an ending arc to any non-adjacent 
arc is bounded away from zero (independent of $\eta$). So we
may assume neither of these is an ending arc.
 Thus they are 
mapped to line segments $J_1, J_2 \subset \partial \Omega$,  
under the $(1+O(\eta))$- quasiconformal 
map $\OmegaFB \to \OmegaRC$ discussed above. Since $K$-quasiconformal maps 
change extremal length by a factor of at most $K$, the extremal 
distance between $J_1$ and $J_2$ is  $  \geq (  1+O(\eta))(\delta/2) \leq 
 \delta $ if $\eta$ is small enough.
Hence the extremal 
distance between the edges of $\Omega$ containing
 them is also  $\leq \delta$ . Since 
$\Omega$ is $\delta$-thick, the edges containing $J_1$ and $J_2$ must be 
the same or adjacent.
 If they are adjacent 
edges, then  the common endpoint $v$  must   be $\leq \pi/2$ 
by the really thick condition.  But the arc $I_1$ has 
a tangent point within distance $\alpha \cdot \diam(I_1)$  of the 
common vertex for a small 
$\alpha$ (depending on $\eta$), which means the circle containing $I_1$ can't 
bound a disk in $\Omega$ (it would hit the other edge  hitting
$v$). This contradiction means 
$I_1, I_2$ project onto the same edge of $\partial \Omega$, 
a.s desired. 
\end{proof}

\begin{lemma} \label{compose rep FB}
For any $\epsilon >0$ there is an $N < \infty$ so that the 
following holds. 
Suppose $\{\Omega_k\}_0^N$ is the angle scaling family constructed from 
an almost $\delta$-thick finitely bent domain $\OmegaFB=\Omega_N$ and suppose 
$0 \leq k < N$. Then given an 
$\epsilon$-representation of $\Omega_k$, we can construct an 
$2\epsilon$-representation of $\Omega_{k+1}$ in time $O(n)$ (constant
depends on $\epsilon, N$).
% in time $O(n |\log \epsilon \log \log \epsilon|)$. 
\end{lemma}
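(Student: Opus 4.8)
The plan is to pass the representation of $\Omega_k$ to $\Omega_{k+1}$ by composing with the explicit quasi-isometry $\psi_{t_k,t_{k+1}}$ from Corollary \ref{psi=sigma} and then re-expanding into power series on the new empty regions. First I would fix $N$ large enough (depending only on $\epsilon$) that $|t_k-t_{k+1}| = 1/N$ is smaller than the threshold forced by Lemma \ref{varphi-est}, Theorem \ref{near-Mobius} and Corollary \ref{psi=sigma}; this guarantees $\varphi_{t_k,t_{k+1}}^{-1}$ is within $O(1/N)$ (in the appropriate hyperbolic metric) of a single M\"obius transformation on each gap or crescent, and that $\psi_{t_k,t_{k+1}}$ is $(1+O(1/N))$-biLipschitz. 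Starting from the $\epsilon$-representation $(S_k,{\cal W}_k,{\cal F}_k)$ of $\Omega_k$, the points $S_{k+1}$ of $\Omega_{k+1}$ are obtained by ``thickening'' each crescent, i.e.\ by applying the tree-of-disks map of Lemma \ref{tree-of-disks} with the elliptic transformations whose angles are scaled by $t_{k+1}/t_k$; this is $O(n)$ by that lemma. I would then build the Carleson--Whitney decomposition ${\cal W}_{k+1}$ of $\uhp$ directly from $S_{k+1}$ as in Sections \ref{cover}--\ref{extend-decom}, again in time $O(n)$, and note that because the bending measures of $\Omega_k$ and $\Omega_{k+1}$ differ by the uniformly bounded factor $t_{k+1}/t_k \le 1$, Lemma \ref{B-est} still applies and the decomposition still has $O(n)$ pieces.

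Next I would transport the functions. For a piece $Q\in{\cal W}_{k+1}$, its empty region $D_Q$ lies in a single gap or crescent of the normal crescent decomposition of $\Omega_{k+1}$, up to enlarging $M$ (use Lemma \ref{defn-M}); on that gap/crescent $\varphi_{t_k,t_{k+1}}$ is M\"obius (or an affine-crescent map), call its inverse $\tau_Q$, so $\tau_Q$ carries $D_Q$ into a region where the old representing function $f_{k,j}$ of ${\cal F}_k$ is conformal (again after adjusting $M$ via Lemma \ref{defn-M}). Set the provisional new function to be $f_{k,j}\circ\tau_Q$, possibly after peeling off an elementary factor $h\in{\cal E}$ so that what remains is conformal on a disk of comparable hyperbolic size; then replace it by the $p$-term truncation of its power (or Laurent) series, using Lemma \ref{expand-est} to control the truncation error on the slightly larger region that contains all of $Q$. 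The boundary maps for $\Omega_{k+1}$ are the old boundary maps post-composed with the relevant elliptic transformations, which keeps them of the required form (power function followed by M\"obius) — here the ``simple finitely bent'' hypothesis, preserved under angle scaling, is exactly what makes the arch boundary maps admissible.

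The only substantive estimate is that the new collection has norm $\le 2\epsilon$, i.e.\ that adjacent transported-and-truncated functions still agree to within $2\epsilon$ in the metric $\tilde\rho$ of \eqref{equiv-close}. This is where the work is, and it is the step I expect to be the main obstacle. The argument I would give: on the overlap $N_s(E_1)\cap N_s(E_2)$ of two adjacent boundary components, the old functions $f_{k,1},f_{k,2}$ agree to within $\epsilon$ (in $\tilde\rho_{\Omega_k}$); the maps $\tau_{Q_1},\tau_{Q_2}$ used on the two sides are, by Corollary \ref{psi=sigma} (or directly Lemma \ref{varphi-est}), within $O(1/N)$ of one common M\"obius transformation on that overlap, so composing preserves closeness up to an additive $O(1/N)$ coming from the $(1+O(1/N))$-biLipschitz distortion of $\psi_{t_k,t_{k+1}}$; and the truncation step contributes only $O(\epsilon^{1-\beta})\ll\epsilon$ by Lemma \ref{expand-est} with $p=O(|\log\epsilon|)$ chosen appropriately, while on arches the conversion between Laurent and power series on the boundary disks (Appendix \ref{fast-power-series}) introduces no new error. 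Summing, the new norm is $\epsilon + O(1/N) + O(\epsilon^{1-\beta}) \le 2\epsilon$ once $N$ is taken large enough depending on $\epsilon$. The one place requiring care is the crescent/arch pieces, where $\varphi_{t_k,t_{k+1}}$ is not M\"obius but an affine-crescent map; there I would either subdivide the crescent so that on each empty region the map is genuinely M\"obius up to $O(1/N)$ (possible since the $E$-foliation collapse accounts for all the non-M\"obius behavior and the crescent angle $\le 2\pi$ is split into $N$ steps), or invoke directly the estimate of Corollary \ref{psi=sigma} which already records $\rho_t(\psi_{s,t}(z),\sigma(z)) = O(|s-t|)$ on gaps and crescents alike. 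Finally, assembling ${\cal W}_{k+1}$, ${\cal F}_{k+1}$ and the partition of unity is $O(n)$ with a constant depending on $\epsilon$ and $N$ only, completing the claim.
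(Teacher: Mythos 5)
Your overall strategy --- compose the old representing functions with the per-gap M{\"o}bius maps $\sigma_{k+1}\circ\sigma_k^{-1}$ and explicit crescent/arch maps, then control the norm via the quasi-isometry estimates of Lemma \ref{varphi-est} and Corollary \ref{psi=sigma} --- is the right one and matches the paper's. But your first step contains a genuine error. You propose to produce a new prevertex set $S_{k+1}$ by applying the tree-of-disks composition of the rescaled elliptic transformations to $S_k$. Those elliptic transformations act on the image plane (they carry the vertices of $\Omega_k$ to the vertices of $\Omega_{k+1}$); they do not act on $\reals\subset\partial\uhp$ where $S$ lives, and there is no explicit formula for the new conformal prevertices. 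More to the point, no update is needed: if $F$ is the map assembled from ${\cal F}_k$ and $\sigma:\Omega_k\to\Omega_{k+1}$ is the piecewise-M{\"o}bius angle-scaling map, then the preimages of the vertices of $\Omega_{k+1}$ under $\sigma\circ F$ are exactly the old points $S_k$. The paper keeps $S$ and ${\cal W}$ unchanged and updates only the functions --- this is precisely what the ``almost thick'' hypothesis buys (every arch straddles arcs projecting to a single edge, so the same decomposition and the same form of boundary map remain admissible), and the paper flags this as the reason almost-thickness was introduced. By rebuilding ${\cal W}_{k+1}$ from a new $S_{k+1}$ you both introduce an object you cannot compute and saddle yourself with re-expanding every series at a new center.

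A second, related confusion: you write that the empty region $D_Q$ of a piece $Q\in{\cal W}_{k+1}$ ``lies in a single gap or crescent of the normal crescent decomposition of $\Omega_{k+1}$'' and then form $f_{k,j}\circ\tau_Q$. The pieces and their empty regions live in $\uhp$; it is the image $f_{k,j}(D_Q)\subset\Omega_k$ that must be located inside (or near) a gap or crescent, and the update is the post-composition $\tau\circ f_{k,j}$ with $\tau=\tau_{k+1}\circ\tau_k^{-1}$ (M{\"o}bius on gaps and Carleson squares, the map of Lemma \ref{eta power} on arches, a power-type map on degenerate arches), not a pre-composition. Once stated correctly, one still must find, for each of the $O(n)$ pieces, a gap or crescent meeting its image in total time $O(n)$; this is not automatic, since a single image can meet arbitrarily many gaps, and the paper handles it by merging the two tree structures as in Lemma \ref{make cover} --- a step your proposal omits. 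Your concluding norm estimate is essentially the paper's triangle-inequality argument and goes through once these corrections are made.
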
 

\begin{proof}
The representation of $\Omega_{k+1}$ uses the same set $S \subset
\reals $ and the same
Carleson-Whitney decomposition ${\cal W}$  as is given for $\Omega_k$; 
only the functions will change by composing with a conformal map that 
sends the image from $\Omega_k$ to $\Omega_{k+1}$.
(This simplicity is why we have gone through the effort of defining almost 
thick domains; if we allowed more general arches, we would have 
to update $S$ and ${\cal W}$ as well.)

For each Whitney box $Q$ let $f$ be the map defined on $Q$ and 
  choose a  gap $G$  in the gap/crescent 
decomposition of $\Omega_k$ which hits $f(Q)$ (or so that an adjacent 
crescent hits $f(Q)$).
This gap  is 
the image under  M{\"o}bius transformation $\sigma_k$,
of an ideal triangle in the complement of the bending lamination.
The corresponding gap in $\Omega_{k+1}$ is the image of the same 
triangle under a transformation $\sigma_{k+1}$.
We modify the function  $f$
associated to $Q$ by post composing by $\sigma_{k+1} \circ \sigma_k^{-1}$.

In the previous paragraph we claimed that for each 
box $Q$ we could find a gap or crescent that intersects
$f(Q)$. We should verify that we can do this for all 
$O(n)$ boxes in time $O(n)$.  This is not immediately 
clear since arbitrarily many gaps and crescents may hit 
a particular $f(Q)$, so there is not a constant amount of
work to do per box. However, both the Whitney boxes and the 
gap/crescent decomposition come with tree structures. In 
time $O(n)$ we can search both trees to find some box and 
some gap/crescent that intersect. Treating these as roots 
of their respective trees, we can then search outwards 
finding the desired intersecting pieces and visiting each 
vertex of each tree only once (the process is analogous 
to merging two sorted lists in linear time). The details are given at 
the end of Appendix \ref{background} following Lemma \ref{make cover}.

For each of the three kinds of boundary pieces, $Q$,
we associate conformal  maps  $\tau_k, \tau_{k+1}$ onto  
corresponding part of $\Omega_k, \Omega_{k+1}$ and form 
the updated map by composing with $ \sigma_k = \tau_{k+1} 
\circ \tau_k^{-1}$. We just have to specify the maps 
 $\tau_k$ for each type of piece.

If $Q$ is a Carleson square, then it hits a boundary 
crescent and $\tau_k$ is the associated M{\"o}bius transformation.
 For a degenerate 
arch, the base is an interval containing a point $v$ of $S$ which
divides it into two subintervals whose images lie on intersecting 
circles. We take the map $\tau_{k}$ to be  the composition 
of a power function (determined by the angle of intersection 
of the circles)  and  
M{\"o}bius transformation chosen send  $v, \infty$ to 
the two intersection points of the circles. 
 For an arch $Q$, the base 
consists of two intervals which are mapped onto two arcs in 
boundary of the corresponding domain. 
By construction, $\OmegaFB$ is almost thick, so these two arcs
lie on circles which intersect at angle $O(\eta)$ where $\eta$ is the 
normalized separation between them. The map $\tau_k$ is 
a M{\"o}bius transformation of the upper half-plane to $D_1$, 
followed by the conformal map given in Lemma \ref{eta power}.

To estimate the norm of the new collection, we use 
Lemma  \ref{varphi-est} (for Whitney boxes and Carleson squares),  
Lemma \ref{elliptic-est} (for degenerate arches)  and 
Lemma \ref{eta power} (for arches). Each result says that the 
maps $\tau_{k+1} \circ \tau_k^{-1}$ are close to the 
angle scaling map 
$\psi_k : \Omega_k \to \Omega_{k+1}$  which is a
quasi-isometry with constants as close to $0$ as we wish
by taking $N$ large. Thus for any point in overlapping 
neighborhoods of adjacent pieces  $Q_i, Q_j$
\begin{eqnarray*} 
 |f_i^{k+1} (z) - f_j^{k+1}(z) | &\leq&   |f_i^{k+1}(z) - 
          \psi_k(f_i{k}(z)) |
+ |\psi_k(f_i^k(z)) - \psi_k(f_j^k(z))  | \\
 && \qquad \quad | \psi_k(f_j^k(z)) - f_j^{k+1}(z) |. 
\end{eqnarray*}
The middle term is small if ${\cal F} = \{ f^k\}$ has small norm and 
$\psi_k$ has quasi-isometry constant close to $0$. The 
first and last terms are small since $\sigma^k_j$ is close 
to $\psi_k$. 
\end{proof}

\begin{lemma} \label{final rep}
Given an $\epsilon$-representation of $\OmegaFB$ (the 
finitely bent $\eta$-approximation 
to a really thick polygonal domain  $\Omega$)  we can construct 
an $2\epsilon$-representation of $\Omega$ in time $O(n)$,
assuming $\eta$ 
is chosen small enough (depending 
on $\epsilon$, but not on $n$ or the geometry of $\Omega$).
\end{lemma}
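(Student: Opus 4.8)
Both the prevertex set and the Carleson--Whitney decomposition must change, since by Lemma~\ref{FB-app} the domain $\OmegaFB$ carries $O(n/\delta\eta)$ artificial boundary arcs while $\Omega$ has only $n$ vertices; away from the convex vertices of $\Omega$, however, this change is purely cosmetic, and the whole construction is linear. The plan is to first straighten the (curved) non-ending boundary arcs of $\OmegaFB$, converting the given representation of $\OmegaFB$ into one of the rounded-corner domain $\OmegaRC$, and then to excise and rebuild each rounded convex corner so as to obtain $\Omega$ itself.

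\emph{Step 1 (straightening the non-ending arcs).} Let $(S,{\cal W},{\cal F})$ be the given $\epsilon$-representation of $\OmegaFB$. Interior pieces are left untouched: since $\OmegaFB\subset\Omega$, the maps $f_k$ on Whitney boxes and vertex-free Carleson squares are already valid local maps into $\Omega$. If $Q_k\in{\cal W}$ is a boundary piece whose base is carried by $f_k$ onto a non-ending arc $I$, lying on a circle $C$ tangent to the edge $e$ of $\Omega$ onto which $I$ projects, let $M_k$ be the M\"obius transformation carrying $C$ to the line through $e$ and fixing the tangency point with unit derivative; since $I$ makes angle $O(\eta)$ with lines parallel to $e$, one has $\diam(I)=O(\eta)\cdot\mathrm{radius}(C)$, so $M_k$ displaces every point of $f_k(N_s(Q_k))$ by $O(\eta\,\diam(I))$ --- exactly the content of Lemma~\ref{comp-K}, with Lemma~\ref{radial map} supplying the sharper statement, needed to control $\overline\partial F$, that the tangent directions agree to the same order. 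For an arch of ${\cal W}$, whose two same-edge arcs lie on circles meeting at angle $O(\eta)$ by almost-thickness, straightening collapses the representing map to the identity at cost $O(\eta)$ by Lemma~\ref{eta power}; for a degenerate arch at a concave vertex of $\Omega$, where the two incident circles meet at the true interior angle $\gamma$, straightening them to their edge lines turns the representing map into the correct corner map $z^{\pi/\gamma}$, again at cost $O(\eta)$. In every case I replace $f_k$ by $M_k\circ f_k$, absorb $M_k$ into the series $g_k$ (the pole of $M_k$ lies on the far side of $C$, so the composite is still conformal on a $\diam(I)$-neighbourhood of $I$), and re-truncate to $p=O(\log\frac1\epsilon)$ terms by Lemma~\ref{expand-est}, losing only $O(\epsilon)$. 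I then delete the prevertices that now lie in the interior of an edge of $\Omega$ (the ones coming from the subdivisions in Lemma~\ref{FB-app}), merging the adjacent pieces whose maps now coincide. This is $O(1)$ length-$p$ series manipulations per piece, hence $O(np\log p)=O(n)$ for the fixed accuracy needed here, and yields an $(\epsilon+O(\eta))$-representation of $\OmegaRC$ whose prevertex set is the $n$ true vertices of $\Omega$ together with $O(1/\eta)$ sub-ending-arc prevertices near each convex vertex.

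\emph{Step 2 (rebuilding the convex corners).} The domains $\OmegaRC$ and $\Omega$ agree except inside the tangent disk $D_v$ at each convex vertex $v$ of $\Omega$, where $\partial\OmegaRC$ is an ending arc of $\OmegaFB$ rather than the corner of angle $\gamma_v$; the affected set is a bounded hyperbolic neighbourhood of $v$. For each $v$ I discard the $O(1/\eta)$ prevertices and decomposition pieces of the Step~1 data lying in a slightly larger neighbourhood (taken at hyperbolic depth $\asymp\log\frac1\epsilon$ from $v$), introduce a single new prevertex $s_v$ with its attached pieces, and use near $s_v$ the element of ${\cal E}$ obtained by precomposing $z\mapsto z^{\gamma_v/\pi}$ with the conformal map $\Phi_v$ of the model rounded-sector of angle $\gamma_v$ onto the half-plane. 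The model $\Phi_v$ depends only on $\gamma_v$ and the conformally normalised rounding scale; its boundary is a circular arc meeting two line segments tangentially, so $\Phi_v$ extends analytically across it and has the uniformly convergent expansions we need on the empty disks, while its Taylor coefficients obey an explicit recursion coming from the rational Schwarzian of a (degenerate) circular-arc triangle and so are produced with $O(1)$ work per vertex for fixed $\epsilon$. A Lemma~\ref{rect-lemma}-type estimate shows $\Phi_v$ agrees with the kept Step~1 data at the interface to within $\epsilon$ once $\Phi_v$ is truncated to $p$ terms, so the excision and reattachment are carried out exactly as in the thick/thin gluing of Lemma~\ref{combine-lemma}, with the sectors of $\Omega$ as the parabolic thin parts. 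Doing this at the $O(n)$ convex vertices gives a representation of $\Omega$ with prevertex set of exactly $n$ points, a decomposition of $O(n)$ pieces, and norm $\epsilon+O(\eta)$; choosing $\eta$ small enough in terms of $\epsilon$ alone makes this at most $2\epsilon$, and the total work is $O(n)$ with constant depending on $\epsilon$ (through $\eta$ and $p$) but not on $n$ or the geometry of $\Omega$.

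\emph{The main obstacle.} Step~1 is essentially routine --- a M\"obius change of variable with an $O(\eta)$ error already packaged in Lemmas~\ref{comp-K}, \ref{radial map} and \ref{eta power}. The delicate point is Step~2: unlike every other gluing in the paper, replacing a rounded corner by a sharp corner of angle $\gamma_v$ is \emph{not} a small quasiconformal perturbation --- the rounding sits at a fixed, not small, hyperbolic scale --- so it cannot be absorbed into the $O(\eta)$ budget. One must instead exhibit a genuinely accurate \emph{explicit} local model at each corner, a circular-arc-triangle uniformisation, prove its truncation error is as small as desired, check that it meshes with the Step~1 representation along the overlap, and verify that inserting it preserves both the $O(n)$ count of pieces and the tree structure of the decomposition. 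Fitting this model map into the class ${\cal E}$ and computing it with $O(1)$ work per vertex is the crux of the argument.
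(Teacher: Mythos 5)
Your Step 1 is, in substance, the paper's entire proof: the paper keeps the prevertex set $S$ and the decomposition ${\cal W}$ completely unchanged and only post-composes the maps piece by piece (identity on Whitney boxes \emph{and} on ending-arc pieces, the M\"obius projection onto the edge for non-ending Carleson squares, power-plus-M\"obius for degenerate arches, the map of $D_1\cup D_2$ onto a half-plane for arches), with each composition moving points by $O(\eta)\diam(W)$ so that the norm degrades from $\epsilon$ to $\epsilon+O(\eta)$. Note that the paper deliberately does \emph{not} delete the extra prevertices or merge pieces as you do --- preserving $S$ and ${\cal W}$ verbatim is exactly what the ``almost thick'' machinery was set up to allow, and your merging step buys nothing while complicating the bookkeeping.

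Your Step 2 is where the proposal departs from the paper and where it has a genuine gap. You correctly observe that the ending arcs are kept as circular arcs, so that what Step 1 literally produces is a representation of $\OmegaRC$ rather than of the polygon, and that replacing a rounded corner by a sharp one is not an $O(\eta)$ perturbation. But the paper does not resolve this inside Lemma \ref{final rep} at all: the sectors at the convex vertices are parabolic thin parts, and they are grafted on afterwards by the thick/thin gluing of Lemma \ref{combine-lemma}, where the explicit thin-part map is simply the power function $z\mapsto z^{\gamma_v/\pi}$ already in ${\cal E}$; by Lemma \ref{rect-lemma} this agrees with the true conformal map to within $O(e^{-cL})$ in the overlap, where $L$ is the modulus of the overlap region, so no knowledge of the rounding is needed. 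Your alternative --- an explicit uniformization $\Phi_v$ of a rounded sector, produced by ``an explicit recursion coming from the rational Schwarzian of a degenerate circular-arc triangle'' --- is asserted rather than proved: such a map is not an element of the class ${\cal E}$ as defined in Section \ref{epsilon-reps}, no algorithm for its coefficients is given, and the circular-arc Schwarz--Christoffel formula the paper mentions comes with its own unsolved parameter problem. Since Lemma \ref{rect-lemma} already gives you the interface matching you invoke, the honest fix is to drop $\Phi_v$ entirely, state that Lemma \ref{final rep} outputs a representation of $\OmegaRC$ (equivalently, of the really thick domain up to its ending arcs), and let Lemma \ref{combine-lemma} with the power map do the corner replacement, exactly as the last paragraph of Section \ref{really thick} prescribes.
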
 

\begin{proof}
The proof is similar to the previous proof. As above, we leave 
the set $S$ and the decomposition the same, and only change
the maps. For Whitney boxes, we compose with the identity (i.e., 
use the same map).  We do the same for boundary pieces
corresponding to ending arcs.  For Carleson squares  corresponding to 
non ending arcs, we compose by the M{\"o}bius transformations 
that maps the boundary arc onto its orthogonal projection on 
$\partial \Omega$ (with the tangent point going to itself). 
For degenerate arches we use the power function composed with 
M{\"o}bius transformation that sends the boundary arc into 
its orthogonal projection with the vertex going to its 
projection. For arches we use the conformal map of $D_1 
\cup D_2$ onto the half-plane bounded by the line containing 
the corresponding boundary edge. Since the diameter of the 
boundary pieces $W$  are  comparable to the lengths of the boundary 
arcs they hit, each type of maps moves points by less than
$O(\delta) \diam (W)$. Thus an $\epsilon$-representation is 
converted to a $(\epsilon+ O(\delta))$-representation by taking 
$\delta$ small enough. 
\end{proof}

So given any polygonal domain, we perform a thick/thin decomposition 
to remove the hyperbolic thin parts. For each resulting thick part
we decompose it into a really thick part and its parabolic 
thin parts. We then take a finitely bent approximation of each  
really thick components, and using angle scaling chains to 
inductively construct  representations of the really thick components.
We then combine these with the explicit representations of the 
thin components to get a representation of the original 
domain using Lemma \ref{combine-lemma}. 
This completes proof of Theorems
\ref{thmQC} and \ref{main}  except for the 
 proof of Lemma \ref{Newton-radius}.

%-----------------------------------------------------------------------------

\section{Iterating to the solution} \label{iterate}

In this section we will show how to improve a 
partial $\epsilon$-representation quickly  assuming a certain 
$\overline{\partial}$-problem can be solved quickly.
The following is our Newton type iteration for improving 
an $\epsilon$-representation. The fact that $\epsilon_0$ does 
not depend on $\Omega$ is one of the pillars upon which the 
whole proof rests.

\begin{lemma} \label{Newton-radius}
There is an $\epsilon_0 >0$ so that if $0<  \epsilon < \epsilon_0$
then the following holds.
Given a partial  $\epsilon$-representation of a polygonal region $
\Omega$ we can construct a partial  $\epsilon^2$-representation 
in time $O(n \log \frac 1 \epsilon \log \log \frac 1
\epsilon ) $.
\end{lemma}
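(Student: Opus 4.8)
The plan is to turn the improvement of a partial $\epsilon$-representation into one Newton step for the Beltrami equation, carried out piece-by-piece on the Carleson-Whitney decomposition. First I would assemble from the given partial $\epsilon$-representation $(S,{\cal B},{\cal G})$ a global map. Using the power series in ${\cal G}$ on the empty regions, reconstitute a Laurent/power expansion $g_k$ on each decomposition piece $Q_k$ (as in Section~\ref{epsilon-reps} and Appendix~\ref{fast-power-series}), post-compose with the appropriate elementary map $h_k\in{\cal E}$ to get $f_k$, and glue with the piecewise-polynomial partition of unity $\{\varphi_k\}$ to form $F=\sum_k f_k\varphi_k$. By Lemma~\ref{F=QC}, $F:\uhp\to\Omega$ is $(1+O(\epsilon))$-quasiconformal, and its Beltrami coefficient $\mu=\mu_F=\Dbar F/\partial F$ is supported in $N_s$, has $\|\mu\|_\infty=O(\epsilon)$, and is explicitly computable from the series and the (polynomial) partition of unity. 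The error coming from the fact that for curved-boundary (finitely bent) targets $F$ maps onto only a subdomain of $\Omega$ is $O(\epsilon^2)$ by the computation in Section~\ref{reps of FB}, so it is absorbed into the final estimate.

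Next I would solve, approximately, the Beltrami equation with data $\mu$. The exact solution $H:\uhp\to\uhp$ with $\mu_H=\mu$ would make $F\circ H^{-1}$ conformal, hence $=f$; we cannot produce $H$ exactly in finite time, but Sections~\ref{reduce-to-Dbar} and \ref{multipole} show how to reduce this to a $\Dbar$-problem and, via the Beurling transform computed by the fast multipole method of Greengard--Rokhlin together with fast manipulation of $p$-term power series ($p=O(\log\frac1\epsilon)$), produce in time $O(np\log p)=O(n\log\frac1\epsilon\log\log\frac1\epsilon)$ a map $H$ with $\mu_H=\mu+O(\|\mu\|_\infty^2)=\mu+O(\epsilon^2)$, together with its $p$-term series expansions on the (images of the) empty regions. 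Then $G:=F\circ H^{-1}$ is $(1+O(\epsilon^2))$-quasiconformal, still maps $\reals$ to $\partial\Omega$ and $S$ to the prevertices (one must check $H$ fixes $S$, or adjust by a bounded M\"obius normalization, and that $H$ extends to $\reals$ with the right behaviour), and $G$ is conformal on each empty region up to $O(\epsilon^2)$.

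Finally I would read off the new representation. On each empty region $D_k$, $G=f_k\circ(\text{conformal correction})$, so $G=h_k\circ\tilde g_k$ with $\tilde g_k$ conformal on a dilate $MD_k$; truncate its power series to $p'=O(|\log\epsilon^2|)$ terms, using Lemma~\ref{expand-est} to guarantee that truncation preserves the approximation on a uniformly larger disk so that the glued functions still satisfy the closeness condition~(\ref{close}) with constant $O(\epsilon^2)$ between adjacent pieces; the 1-biLipschitz (in the hyperbolic metrics) normalization of each $f_k$ is arranged by an explicit scalar normalization. This yields a partial $\epsilon^2$-representation $(S',{\cal B},{\cal G}')$ of $\Omega$, and every step above — reconstituting series, evaluating $\mu$, running the multipole solve, recomputing and truncating series — costs $O(np\log p)=O(n\log\frac1\epsilon\log\log\frac1\epsilon)$.

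The main obstacle, and the place where the bulk of the work sits, is controlling the Newton step quantitatively and uniformly: proving that the approximate $\Dbar$-solve genuinely delivers $\mu_{G}=O(\epsilon^2)$ with a constant independent of $n$ and of the geometry of $\Omega$, and simultaneously that all the truncations (of the reconstituted series, of the Beurling transform, of $\tilde g_k$) keep the per-piece errors and the adjacent-piece discrepancies at size $O(\epsilon^2)$ rather than $O(\epsilon^{2-})$ or worse — this is precisely where $\epsilon$ must be below an absolute $\epsilon_0$, and where Lemma~\ref{expand-est}, the Koebe distortion bounds on the empty regions, and the support properties of $\mu$ in $N_s$ must be combined carefully. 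The timing claim itself is comparatively routine once the algorithmic pieces (fast power-series arithmetic, the multipole method) are cited from the later sections and Appendix~\ref{fast-power-series}.
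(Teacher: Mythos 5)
Your proposal is correct and follows essentially the same route as the paper: build $F=\sum_k f_k\varphi_k$ and its dilatation $\mu$ supported in $N_s$ (Lemma \ref{F=QC}), approximately solve the Beltrami equation by linearizing to a $\Dbar$-problem whose solution is read off from the Beurling transform computed piecewise by the fast multipole method, set $G=F\circ H^{-1}$, and recover truncated series on the empty regions via Lemma \ref{expand-est}, iterating a bounded number of times to upgrade $O(\epsilon^{\alpha})$ to $\epsilon^2$. The only small divergence is that the paper recomputes the covering ${\cal B}$ for the moved points $S=H(S_0)$ rather than reusing the old one, which costs only $O(n)$ and does not affect the argument.
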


Recall the notation $\Dbar f = f_{\bar z} = \frac 12 ( f_x + i f_y)$, 
$\partial f = f_z = \frac 12 ( f_x - i f_y)$. The Beltrami 
dilatation of a map is given by $\mu_f = f_{\bar z} / f_z = 
\Dbar f /\partial f$.

Suppose $(S_0, {\cal B}_0, {\cal G}_0)$ is a partial $\epsilon$-representation 
of the domain (recall this means $S$ is a set of $n$ points giving 
our current guess of the prevertices, ${\cal B}$ is the covering of
the corresponding decomposition, and ${\cal G}$ is a function defined 
on each decomposition piece). 
 Let $F$ denote the quasiconformal map associated to this 
 $\epsilon$ approximation by (\ref{defn-F}). Then $\mu_F$ is a  piecewise
rational function  bounded by $\epsilon$ and we can estimate 
it to within $\epsilon^2$ by a polynomial on each piece with 
at most $O(p)$ terms (using the geometric sum formula).
More precisely, if $Q_k$ is a decomposition piece and  $z \in N_s(Q_k)$,  then
\begin{eqnarray*}
\mu_F(z) & =&
  \frac{\sum f_j (z)\Dbar \varphi_j(z)}{\sum \partial f_j(z) \cdot \varphi_j (z)
               + f_j(z) \cdot \partial \varphi_j(z) }  .
%   = \sum {f_j(z) \cdot \Dbar \varphi_j(z) }
%            {\partial f_k(z_{Q_k})}  +O(\epsilon^2). \\
\end{eqnarray*}
Let $\mu$ be the symmetrized 
version (with respect to the real axis) of this approximation,
i.e. $\mu$ is extended to the lower half-plane by $\mu(\bar z) =
\overline{\mu(z)}$. Any solution of the Beltrami equation with this 
data is also symmetric, so maps the real line to itself.  Note that $\| \mu \|_\infty =O(\epsilon)$ 
and  is supported on the $O(n)$ squares that cover the boundary of 
our Carleson-Whitney decomposition (and its reflection in the lower 
half-plane).
If $\|\mu\|_\infty \leq \epsilon$, we
 wish to  find a quasiconformal map $H$ of $\uhp$ to itself
that satisfies 
\begin{eqnarray} \label{BE+error}
\mu_H = \mu + O(\epsilon^2).
\end{eqnarray}
Then $G=F \circ H^{-1}$ is   a quasiconformal map of $\uhp$
to $\Omega$ with quasiconformal constant $1+ O(\epsilon^2)$.

Let $p = O(|\log \epsilon|)$. Suppose we can find 
a  $p$-term 
series expansion for $H$ in each empty piece of the decomposition.
Then we could compose $H^{-1}$ with our existing representation 
for $F$ to get the $p$ first terms of series approximating  $F \circ H^{-1}$
on the empty pieces of the Carleson-Whitney decomposition associated 
to $H(S_0)$, and this
series is accurate to  within $\tilde \rho_\Omega$-distance $O(\epsilon^2)$. 
%We will see these terms can  be computed  
%in time $O(p \log p)$ using fast power series manipulations 
%discussed in Section \ref{fast-power-series}.
We can then  apply 
Lemma \ref{expand-est} to obtain a $O(\epsilon^\alpha)$
representation for some  $\alpha  >1$. Iterating this a 
fixed number of times (depending on $\alpha$ and the constant 
in the ``$O$'') gives a $\epsilon^2$-representation.

In the remainder of this section we show how to define the 
new representation of $\Omega$, given the expansions for  $H$.
 In the next section we show how to 
define $H$, assuming we can solve a certain $\Dbar $ problem,
and in Section \ref{multipole} we show how to solve this
$\Dbar$  problem.

%\section{Updating the representation}

How do we define the new representation 
for $\Omega$? 
Suppose $(S_0, {\cal B}_0, {\cal G}_0)$ is the previous 
partial $\epsilon$-representation
with dilatation $\mu$ and that $H$ is a $(1+O(\epsilon))$-quasiconformal map 
of $\uhp$ to itself that solves $\mu_H = \mu + O(\epsilon^2)$.
Moreover, $H$ is conformal on the empty pieces of our 
decomposition and we have power  series on each empty 
piece $D$ that agree with $H$ to within $O(\epsilon^2 \diam(f_k(E)))$
on $N_s(E)$ where $E$ is a boundary component of $Q_k$.

The points $S=H(S_0)$ are known, and we take these to be the first 
part of our new triple.  In time $O(n)$ we can compute a covering 
of the hyperbolic convex hull of $S$ and extend this to 
a decomposition of $\uhp$. This will be our new ${\cal B}$.
For each Whitney square $Q$ in ${\cal B}$, choose a Whitney square 
$Q_0 \in {\cal B}_0$ so that $H(Q_0)$ hits $Q$ (we can do this in 
bounded time since $H$ is almost an isometry and we need only search 
a uniformly bounded number of possible squares).
Then there is a ball $B$ so that $Q \subset B \subset H(5 Q_0)$ and 
we can compute the inverse $h_Q$ of the power series of $H$ on $B$ in 
and then compute the composition of that series 
followed by the series for $Q_0$ (which converges on $10 Q$).
This gives the element of ${\cal F}$ corresponding to $Q$.
This computation takes time $O(p \log p)$, $p=\log \frac 1\epsilon$
(see Lemma \ref{inexact-comp} of Appendix \ref{fast-power-series}).
A truncation of this series, as given by Lemma \ref{expand-est}, 
will give the desired element of ${\cal F}$.  To see how, 
we use the following result.

\begin{lemma}
Suppose $f: \disk \to \disk$ is $(1+\epsilon^2)$-quasiconformal 
and is conformal on the disk $D(0,1/M)$ and maps $0$ to $0$. Then there is a 
truncation $g$ of the power series of $f$ at $0$, such that 
$|g(z)-f(z)| \leq O(\epsilon^{2(1-\beta)})$ on $D(0,M^{\beta-1})$.
\end{lemma}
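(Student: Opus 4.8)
The plan is to estimate $|g-f|$ on $D(0,M^{\beta-1})$ by controlling $|g-\mathrm{id}|$ and $|f-\mathrm{id}|$ separately. The reason one cannot argue more directly --- say, by bounding the Cauchy tail of the Taylor series of $f$, in the style of Lemma \ref{expand-est} --- is that the series of $f$ at $0$ need only converge on $D(0,1/M)$, while the target disk $D(0,M^{\beta-1})$ is strictly larger (note $1/M<M^{\beta-1}<1$ since $M>1$ and $0<\beta<1$). The point instead is that $f$ itself is globally close to a rotation, so it is enough that the \emph{finite} truncation $g$ stay small outside the disk of conformality.

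First I would normalize. Since $f$ is a $(1+\epsilon^2)$-quasiconformal self-map of $\disk$ fixing $0$, Lemma \ref{QC-near-Id} (together with the fact that a conformal automorphism of $\disk$ fixing $0$ is a rotation) shows that $f$ is within $O(\epsilon^2)$ of some rotation $R$, uniformly on the compact set $\overline{D(0,M^{\beta-1})}$. Postcomposing with $R^{-1}$ multiplies every Taylor coefficient of $f$ by a fixed unimodular constant, hence carries the order-$q$ truncation of $f$ to the order-$q$ truncation of $R^{-1}\circ f$ and leaves $|g-f|$ unchanged; so I may assume from now on that
$$ |f(z)-z|\le C_0\,\epsilon^2,\qquad |z|\le M^{\beta-1}. $$

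Next I would exploit the conformality on $D(0,1/M)$. There $E:=f-\mathrm{id}$ is holomorphic, and from $|E|\le C_0\epsilon^2$ on $\overline{D(0,1/M)}$ the Cauchy estimates give $|e_n|\le C_0\,\epsilon^2 M^n$ for the Taylor coefficients $e_n$ of $E$. Take $q=\lfloor 2|\log\epsilon|/\log M\rfloor$, which is $O(|\log\epsilon|)$ and hence of the order required by the surrounding proof of Lemma \ref{Newton-radius}, and let $g$ be the order-$q$ truncation of the Taylor series of $f$ at $0$; since $e_0=0$ this equals $g(z)=z+\sum_{n=0}^q e_n z^n$. For $|z|\le M^{\beta-1}$,
$$ \Big|\,\sum_{n=0}^q e_n z^n\Big|\le C_0\,\epsilon^2\sum_{n=0}^q M^n M^{n(\beta-1)}=C_0\,\epsilon^2\sum_{n=0}^q M^{n\beta}\le\frac{C_0 M^\beta}{M^\beta-1}\,\epsilon^2 M^{q\beta}\le\frac{C_0 M^\beta}{M^\beta-1}\,\epsilon^{2(1-\beta)}, $$
since $M^{q\beta}\le\epsilon^{-2\beta}$ by the choice of $q$. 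Thus $|g(z)-z|=O(\epsilon^{2(1-\beta)})$ on $D(0,M^{\beta-1})$, and adding the bound $|f(z)-z|\le C_0\epsilon^2\le C_0\epsilon^{2(1-\beta)}$ gives $|g(z)-f(z)|=O(\epsilon^{2(1-\beta)})$ there, as claimed.

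The only step I expect to require more than bookkeeping is the uniform near-identity estimate in the second paragraph --- but that is exactly the content of Lemma \ref{QC-near-Id}; everything else is the elementary fact that a degree-$q$ polynomial whose $n$th coefficient is $O(\epsilon^2 M^n)$ is $O(\epsilon^2 M^{q\beta})$ on $\{\,|z|\le M^{\beta-1}\,\}$, calibrated by taking $q\sim|\log\epsilon|/\log M$. (All implied constants are allowed to depend on the fixed quantities $M$ and $\beta$.)
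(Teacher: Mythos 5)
Your proof is correct and follows essentially the same route as the paper's: both reduce to the fact that $f$ agrees with a rotation to within $O(\epsilon^2)$ on a compact subdisk (the paper's one-line proof writes ``$O(\epsilon)$'' here, which appears to be a typo since the stated conclusion requires $O(\epsilon^2)$) and then bound the truncation on the larger disk $D(0,M^{\beta-1})$. The only difference is that the paper finishes by invoking Lemma \ref{expand-est}, whereas you inline the special case of that lemma in which the reference conformal map is the identity, so that only the Cauchy coefficient bound $|e_n|\le C\epsilon^2 M^n$ and a geometric sum calibrated by $q\sim 2|\log\epsilon|/\log M$ are needed.
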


\begin{proof}
Since $f$ is $(1+\epsilon^2)$-quasiconformal and fixes the
origin, it agrees with 
a rotation $\tau$  to within $O(\epsilon)$.  Apply Lemma \ref{expand-est}
to deduce the result.
\end{proof}

Our previous remarks prove:

\begin{lemma} \label{small-norm}
If $\| \mu\|_\infty = \epsilon < \epsilon_0$ then 
$\| {\cal G}\| \leq O( \epsilon^{2(1-\beta)})$.
\end{lemma}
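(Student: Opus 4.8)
The statement asserts that if the dilatation data $\mu$ built from our $\epsilon$-representation has $\|\mu\|_\infty=\epsilon<\epsilon_0$, then the new collection ${\cal G}$ constructed in this section has norm $O(\epsilon^{2(1-\beta)})$. The plan is to assemble the pieces already established in the section into a single estimate of $\|{\cal G}\|$, i.e.\ of the quantity controlling (\ref{close}) (equivalently (\ref{equiv-close})) for adjacent pieces of the new decomposition ${\cal B}$. The proof is essentially a bookkeeping argument tracking three sources of error: (i) the error in solving the Beltrami equation, (ii) the error in composing and inverting the relevant power series, and (iii) the error in truncating these series on a slightly larger disk. I would organize it as a short chain of inequalities, citing the relevant lemmas for each link.

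\emph{Step 1.} Start from the quasiconformal map $F$ associated to the input $\epsilon$-representation by (\ref{defn-F}); by Lemma \ref{F=QC}, $\mu_F=O(\epsilon)$, and by construction $\mu$ is the symmetrized polynomial approximation of $\mu_F$ with $\|\mu-\mu_F\|_\infty=O(\epsilon^2)$ and $\|\mu\|_\infty=O(\epsilon)$. Invoke the (still-to-be-proven, but assumable here) existence of a $(1+O(\epsilon))$-quasiconformal $H:\uhp\to\uhp$ solving $\mu_H=\mu+O(\epsilon^2)$, conformal on every empty piece of the decomposition, together with the power-series approximations to $H$ on those empty pieces valid to within $O(\epsilon^2\diam(f_k(E)))$ on $N_s(E)$. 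Then $G=F\circ H^{-1}$ is $(1+O(\epsilon^2))$-quasiconformal onto $\Omega$, and our new functions are obtained by composing the inverse power series $h_Q$ of $H$ with the old series for a suitably chosen $Q_0$ with $H(Q_0)\cap Q\neq\emptyset$; by Lemma \ref{defn-M}, such a $Q_0$ can be chosen with $Q\subset B\subset H(5Q_0)$ for a ball $B$.

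\emph{Step 2.} Estimate the composed series on each piece. On each empty region $D$ the map $H$ agrees with its truncated power series to order $O(\epsilon^2\diam)$, and by Lemma \ref{inexact-comp} the inverse and composition operations on $p=O(|\log\epsilon|)$-term series are carried out with error comparable to the input error, i.e.\ still $O(\epsilon^2\diam)$ (times a harmless constant). So on the empty region we have a series agreeing with the conformal map onto the correct part of $\Omega$ to within $O(\epsilon^2)$ in the normalized metric $\tilde\rho_\Omega$. Now apply Lemma \ref{expand-est} (with the parameter $\beta$ as in its statement; the preceding one-line lemma about $(1+\epsilon^2)$-quasiconformal maps of the disk shows the hypotheses hold, since $H$ differs from a rotation by $O(\epsilon)$ on the small conformal disk): truncating the series appropriately yields an approximation valid on a uniformly larger disk — large enough to contain the whole piece $Q$ and its boundary neighborhoods $N_s(\partial Q)$ — with error $O(\epsilon^{2(1-\beta)})$ relative to $\diam(f_k(E))$.

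\emph{Step 3.} Conclude the norm estimate. Since for every pair of adjacent new pieces $Q_i,Q_j$ with overlapping boundary neighborhoods both $f_i$ and $f_j$ now agree with the true conformal map onto $\Omega$ to within $O(\epsilon^{2(1-\beta)})$ on $N_s(E_i)\cap N_s(E_j)$ (in $\tilde\rho_\Omega$, hence in the sense of (\ref{close}) by Koebe distortion as in Section \ref{epsilon-reps}), the triangle inequality gives $|f_i(z)-f_j(z)|\leq O(\epsilon^{2(1-\beta)})\diam(f_i(E_i))$, which is exactly the statement $\|{\cal G}\|=O(\epsilon^{2(1-\beta)})$. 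The main obstacle I anticipate is not any single estimate but making sure the error from approximating $\mu_F$ by the polynomial $\mu$ (rather than solving with $\mu_F$ exactly), the error $O(\epsilon^2)$ in solving the $\Dbar$-problem, and the series-truncation error all genuinely stay at the $O(\epsilon^{2(1-\beta)})$ level after the $H^{-1}$ and composition steps — i.e.\ verifying that the inverse of a $(1+O(\epsilon))$-quasiconformal map does not amplify these errors by more than a bounded factor, and that the choice of $Q_0$ via Lemma \ref{defn-M} keeps the relevant disks nested with uniform constants so that Lemma \ref{expand-est} applies with a uniform $R$ (hence uniform $\beta$). Once these uniformities are in place, the conclusion follows by concatenating the cited lemmas.
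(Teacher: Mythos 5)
Your proposal is correct and follows essentially the same route as the paper: the paper's ``proof'' of this lemma is literally the preceding remarks of Section \ref{iterate} (construct $F$ and $\mu$, solve $\mu_H=\mu+O(\epsilon^2)$, form $G=F\circ H^{-1}$, compose and invert the series via Lemma \ref{inexact-comp}, and truncate via Lemma \ref{expand-est} and the small lemma on $(1+\epsilon^2)$-quasiconformal maps conformal on $D(0,1/M)$ to get the exponent $2(1-\beta)$). Your three steps are a faithful, slightly more explicit organization of exactly that argument.
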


Thus if $\beta < 1/2$ and $\epsilon_0$ is small enough we get
a definite improvement, and a bounded number of iterations will 
improve it below $\epsilon^2$.  This choice of $\beta$ determines
the choice of $M$ in the construction of the decomposition in 
Section \ref{extend-decom}.

%--------------------------------------------------------------
\section{Reducing the Beltrami problem to a $\Dbar$ problem}
\label{reduce-to-Dbar}

In Appendix \ref{background} we recall that the 
Beltrami equation $\Dbar f = \mu \partial f$ can be 
solved using an infinite series of the form 
$T \mu + T \mu T \mu + \dots $, where $T$ is the 
Beurling transform. The method we describe below 
might be adapted to use several terms of this series, 
but we shall use only the leading term.

We will replace the Beltrami equation 
$ \Dbar H  = \mu  \partial H$ with the  easier equation 
$\Dbar H =  \mu$. 
For $\epsilon$ small, we will show $\partial H$ is close to a 
constant on each piece of our decomposition, which means the 
solution of the Beltrami problem is close to a constant multiple 
of the solution of the $\Dbar$-problem.  
  Since $\mu$ has compact support, 
the $\Dbar$-problem can be  exactly solved by the convolution 
$$ G_1 (z) = \frac 1{2 \pi i} \iint \frac {\mu(w) dxdy}{z-w}.$$
It will actually be slightly more convenient to deal 
with $ \partial  G_1$, which  is given by the 
Beurling transform $T\mu$, 
$$ \partial G_1(z) = 
 T \mu (w) = \lim_{r \to 0} \frac 1 {2 \pi i} \iint_{|z-w| > r} \frac {\mu (z)}
{ (z-w)^2} dxdy.$$
This convolution gives a solution of $\Dbar G_1 = \mu$, but 
we cannot compute $\partial G_1$ exactly in finite time. However,  we
can  compute  
$p$ terms of a power or Laurent series that approximates $\partial G_1$ 
in each of the empty 
pieces of our decomposition (where $G_1$ is holomorphic).
 We will refer to the expansion on a piece
$Q$ as $G_Q$. We will see later that for each piece $Q$ 
of our decomposition we can compute an expansion $G_Q$ so that 
\begin{eqnarray} \label{GQ}
 |\partial^2 G_1(z)  - \partial^2 G_Q(z)| \leq \frac {C \epsilon^2}
{ \diam (D_Q) }, z \in D,
\end{eqnarray}
where $D$ denotes the empty region of $Q$.
(The $\epsilon^2$ could be 
replaced by a higher power of $\epsilon$, if necessary, by simply 
taking more terms in $G_Q$). 

 It will also be convenient to consider a function 
$G_2$ on $\uhp$ so that (1) $G_2=G_1$ on $N_s$ (recall this is 
the union of  $N_s(E)$ over all boundary  components of all 
pieces) , (2)  $G_2=G_Q$ on  $D_Q$
%$Q \setminusN_{2s}(\partial Q)$ 
and  (3) $\Dbar G_2 = \Dbar G_1 + O(\epsilon^2)$.
This can easily be obtained by  combining $G_1$ and the $G_Q$'s 
using a partition of unity 
whose  gradient is supported in $N_{s}(D)  \setminus D$,
 whose gradient in bounded by $O(\diam(E)^{-1})$ and whose second 
gradient is bounded by $O(\diam(E)^{-2})$ on $N_s(E)$, when $E$ 
a boundary component of $D$. Note that this implies 
\begin{eqnarray} \label{G1-G2}
  | \partial G_1 (z) - \partial G_2(z) | \leq C \epsilon^2.
\end{eqnarray}
The algorithm only 
requires the computation of $ \partial G_Q$, not of $ \partial G_2$;
 the latter function will 
only be used in the proof that the algorithm gives the desired 
accuracy.

\begin{lemma}\label{H''-est}
Suppose $E$ is a boundary component of a piece $Q$. Then
$| \partial^2 G_1(z)| \leq  C \epsilon / \diam(E)$, $z \in N_s(E)$.
The same estimate also holds for $G_2$.
\end{lemma}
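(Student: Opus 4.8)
The plan is to bound the second derivative $\partial^2 G_1$ on $N_s(E)$ using the Cauchy integral formula together with the fact that $\mu$ is supported away from the empty regions. Recall $\partial G_1 = T\mu$ is holomorphic on each empty region $D_Q$, and by construction $N_s(E) \subset N_s(Q) \subset \lambda_s D_Q$, so $G_1$ is holomorphic on a neighborhood of $N_s(E)$ whose size is comparable to $\diam(D_Q) \sim \diam(E)$ (for boundary components of arches we use $\diam(E) \sim \dist(E,S)$, and for squares $\diam(Q) \sim \diam(E)$, so in every case the relevant holomorphy disk around a point of $N_s(E)$ has radius $\gtrsim \diam(E)$). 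Thus it suffices to control $|\partial G_1|$ itself on a slightly larger region and then apply the Cauchy estimate
$$
|\partial^2 G_1(z)| = |\partial (T\mu)(z)| \leq \frac{C}{\diam(E)} \sup_{|w - z| \leq c\,\diam(E)} |T\mu(w)|.
$$

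The key step is therefore to show $\|T\mu\|_\infty = O(\epsilon)$. First I would recall that $\|\mu\|_\infty = O(\epsilon)$ by construction (it is the symmetrized approximation of $\mu_F$, and Lemma \ref{F=QC} shows $\|\mu_F\|_\infty \leq C\epsilon$). The Beurling transform $T$ is a Calder\'on--Zygmund operator bounded on every $L^p$, $1 < p < \infty$, but it is \emph{not} bounded on $L^\infty$; however, we do not need the full $L^\infty$ bound. What we need is a pointwise bound on $T\mu(w)$ for $w$ in an empty region, i.e., at definite distance from $\supp\mu$. Since $\mu$ is supported on the $O(n)$ boundary-covering squares (and their reflections), each of which lies within $N_s$, a point $w \in \frac{1}{2}\lambda_s D_Q$ has $\dist(w, \supp\mu) \geq c\,\diam(E)$ for the portion of $\supp\mu$ lying in the boundary components of $Q$ itself. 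For that ``near'' part of the support the kernel $(z-w)^{-2}$ is bounded and the integral over a region of area $O(\diam(E)^2)$ carrying mass $\leq \|\mu\|_\infty$ contributes $O(\epsilon)$. For the ``far'' part of $\supp\mu$ one uses that the singular-integral kernel has $L^1$-norm controlled on dyadic annuli and that the total area of $\supp\mu$ in a fixed Carleson tower is controlled — more precisely, I would split $\supp\mu$ into dyadic shells around $w$ and use that in each shell of radius $2^k \diam(E)$ the support meets only $O(1)$ decomposition pieces of comparable size (this is exactly the bounded-overlap / Whitney structure from Section \ref{extend-decom}), so the contribution of shell $k$ is $O(\epsilon\, 2^{-k})$ after accounting for the $|z-w|^{-2}$ decay against the $2^{2k}$ area growth, and the geometric series sums to $O(\epsilon)$.

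Combining, $|T\mu(w)| \leq C\epsilon$ uniformly for $w$ ranging over the holomorphy region surrounding $N_s(E)$, and then the Cauchy estimate above gives $|\partial^2 G_1(z)| \leq C\epsilon/\diam(E)$ for $z \in N_s(E)$, as claimed. The same argument applies verbatim to $G_2$ on $N_s(E)$, since $G_2 = G_1$ there by property (1) in the definition of $G_2$; away from $N_s$ we never need this estimate.

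\textbf{Main obstacle.} The delicate point is the dyadic-shell summation for the singular integral: one must verify that the Whitney/Carleson structure really does force only $O(1)$ pieces of $\supp\mu$ of each dyadic scale near any given empty region, so that the geometric series converges with the right constant. Put differently, the true content is a \emph{local} $L^\infty$-type bound for the Beurling transform of a measure supported on a sparse family of small squares, evaluated at points bounded away from that support — a standard Calder\'on--Zygmund off-diagonal estimate, but one that requires invoking the geometric regularity of the decomposition (comparable sizes of adjacent pieces, bounded overlap, $\diam(E) \sim \dist(E,S)$) established in Section \ref{extend-decom} rather than any deep harmonic analysis.
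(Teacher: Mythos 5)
Your proof rests on a misreading of where the lemma is being asserted: $N_s(E)$ is precisely where $\mu$ \emph{is} supported, not where it is absent. The empty regions $D_Q$ satisfy $D_Q \subset Q\setminus N_s(\partial Q)$ and are disjoint from $N_s$; the sets $N_s(E)$ are the boundary neighborhoods whose union \emph{is} $N_s = \supp\mu$. Consequently $\partial G_1 = T\mu$ is \emph{not} holomorphic on any neighborhood of $N_s(E)$, and the Cauchy estimate you propose is unavailable there. Your argument would prove the estimate on the empty regions (where it is comparatively easy), but the lemma --- and its use in Corollary \ref{H''-cor}, whose proof integrates $\partial^2 G_1$ along segments passing through the support of $\mu$ --- requires the bound at points lying inside $\supp\mu$. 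For the same reason the ``off-diagonal'' Calder\'on--Zygmund estimate you invoke, which needs $\dist(w,\supp\mu)\gtrsim \diam(E)$, does not apply to the points in question.

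The missing ingredient is the \emph{smoothness} of $\mu$, not just its size. The paper's proof is a direct on-diagonal singular-integral estimate: writing $\partial^2 G_1(z) \simeq \iint \mu(w)(z-w)^{-3}\,dxdy$, it uses that $\mu$ was built from a piecewise-polynomial partition of unity with $|\nabla^2\varphi_k|\lesssim \diam(E)^{-2}$, so that $\mu$ admits on the disk $D=D(z,r)$, $r\simeq\diam(E)$, a linear approximation $L$ with $|\mu-L|\lesssim \epsilon|z-w|^2/r^2$. One then observes that $\iint_D L(w)(z-w)^{-3}\,dxdy=0$ by the cancellation of the odd kernel against affine functions over a disk centered at $z$, bounds the remainder against the now locally integrable kernel $\epsilon\,r^{-2}|z-w|^{-1}$, and estimates the tail over $D^c$ by $\epsilon/r$. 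An $L^\infty$ bound $\|\mu\|_\infty=O(\epsilon)$ alone cannot yield a pointwise bound on $\partial T\mu$ inside the support: a rough $\mu$ of size $\epsilon$ oscillating at scales much smaller than $\diam(E)$ would make $\partial^2 G_1$ arbitrarily large there.
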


\begin{proof}
Assume $ z \in N_s(E)$.
Let $D$ be a Euclidean disk of radius $r \simeq \diam(E)$ around $z$
and let $\chi_D$ denote its characteristic function ($=1$ on $D$ 
and $0$ off $D$).
Let $L(w)$ be the linear function such that $|\mu(w) - L(w)| \leq C
\epsilon |z-w|^2/ r^2$  for $w \in D$ (there is such a function 
because we constructed
$\mu$ to have second derivative bounded by $C \epsilon /r^2$).
Then,
\begin{eqnarray*}
| \partial^2 G_1(z) | &\simeq& |\iint \frac {\mu(w) dxdy}{ (z-w)^3} |\\
        & \lesssim& \iint  \frac {|\mu(w)-L(w) \chi_D(w)| dxdy}{ |z-w|^3}
            + |\iint_D  \frac { L(w)  dxdy}{ (z-w)^3}|
             + \iint_{D^c}  \frac {\epsilon dxdy}{ |z-w|^3}\\
       &\lesssim& C \epsilon \iint_D \frac {dxdy}{r^2|z-w|}+ 0 + C \epsilon/r \\
        & \leq & C \epsilon/r.
\end{eqnarray*}
The final claim holds because $G_1=G_2$ on $N_s(E)$.
%$\partial^2 G_Q$ is within $O(\epsilon/\diam(E))$
% of $\partial^2 G_1$ by the Cauchy estimates and the bounds we assumed for the 
%partition of unity.
\end{proof}

\begin{cor} \label{H''-cor}
Suppose $Q$ is a piece of our decomposition. Then 
$ |\partial G_2(z) - \partial G_2(z_Q)| \leq C \epsilon$
for every $z \in Q$.
\end{cor}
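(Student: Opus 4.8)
The plan is to integrate the second-derivative bound of Lemma \ref{H''-est} along a path from $z$ to the center $z_Q$, using that $Q$ has bounded hyperbolic diameter so that the path can be chosen to have controlled Euclidean length relative to $\diam(E)$ for each boundary component $E$ it passes near. First I would recall that every piece $Q$ of the decomposition is, by construction in Sections \ref{extend-decom} and \ref{cover}, comparable to a unit hyperbolic ball (for Whitney and Carleson squares) or a bounded union of such balls (for arches, via the two boundary components), so any two points of $Q$ can be joined by a polygonal path $\gamma \subset N_s$ consisting of a bounded number of segments, each segment lying in some $N_s(E)$ with $\mathrm{length}(\gamma \cap N_s(E)) \leq C \diam(E)$ and with $z_Q$ reachable this way. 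Then
\[
 |\partial G_2(z) - \partial G_2(z_Q)| \leq \int_\gamma |\nabla \partial G_2(w)|\, |dw|
   \leq \sum_E \int_{\gamma \cap N_s(E)} |\partial^2 G_2(w)|\, |dw|,
\]
and by Lemma \ref{H''-est} (which, as noted there, applies to $G_2$ as well as $G_1$) each term is bounded by $C\epsilon/\diam(E) \cdot C\diam(E) = C\epsilon$. Since only $O(1)$ boundary components are involved (two for an arch, one for a square, plus a bounded number of neighboring pieces the path might cross), the total is $O(\epsilon)$, as desired.

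The main obstacle I anticipate is the bookkeeping for arches: an arch $Q$ has two boundary components $E_1, E_2$ of wildly different sizes (the smaller square is much smaller than the larger), and a path from a point near $E_1$ to a point near $E_2$ must traverse the ``thin'' middle of the arch where neither $N_s(E_1)$ nor $N_s(E_2)$ applies directly. Here I would use the fact, recorded in Section \ref{cover}, that $G_2 = G_Q$ on the empty annulus $D_Q$ of the arch and that $\partial^2 G_Q$ obeys the bound \eqref{GQ} of the form $C\epsilon^2/\diam(D_Q)$ there (so the contribution from the middle of the arch is lower order), together with the observation that the center $z_Q$ is taken to be the center of the \emph{larger} square, so the path genuinely only needs to reach a point whose distance-to-$S$ scale is $\diam(E_1)$ with $E_1$ the larger component — the small component $E_2$ is never the terminal scale. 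Alternatively, and perhaps more cleanly, I would cover $Q$ by $O(1)$ overlapping sub-disks on each of which either \eqref{GQ} or Lemma \ref{H''-est} gives a bound, chain the estimate across consecutive sub-disks, and sum. Either way the number of steps is absolutely bounded, so the final constant is universal.

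For the square pieces (Whitney, Carleson, degenerate arch) the argument is immediate: $Q \subset \lambda_s D_Q$ with $\lambda_s$ the fixed constant of \eqref{lambda-defn}, $N_s(\partial Q)$ consists of a bounded number of neighborhoods $N_s(E)$ each of diameter $\simeq \diam(Q)$, and $z_Q$ is the center of $Q$ (or its base), so a single radial segment from $z$ to $z_Q$ of length $\leq \lambda_s \diam(Q)$ suffices, and one application of Lemma \ref{H''-est} with $E$ the relevant boundary component gives $|\partial G_2(z)-\partial G_2(z_Q)| \leq C\epsilon \cdot \lambda_s \diam(Q)/\diam(E) \leq C\epsilon$. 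I would present the square case first as the model, then indicate the arch modification, and conclude. No new machinery beyond Lemma \ref{H''-est}, the estimate \eqref{GQ}, and the geometric properties of the decomposition pieces is needed.
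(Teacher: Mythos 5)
Your argument for the square pieces (Whitney, Carleson, degenerate arch) is correct and is essentially the paper's: integrate the bound of Lemma \ref{H''-est} along a segment of length $O(\diam(E))$ joining $z$ to $z_Q$. The arch case, however, has a genuine gap, and it is exactly the case the corollary's proof has to work for. First, you misread (\ref{GQ}): it bounds the \emph{difference} $|\partial^2 G_1 - \partial^2 G_Q|$ by $C\epsilon^2/\diam(D_Q)$, not the size of $\partial^2 G_Q$ itself. On the empty annulus $\partial^2 G_2 = \partial^2 G_Q$ is as large as $\partial^2 G_1$, which near the small boundary component $E_2$ is of order $\epsilon/\diam(E_2)$ — enormously larger than $\epsilon^2/\diam(D_Q)$. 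So the middle of the arch is not ``lower order,'' and your claim that $z$ never needs to be taken at the small scale is also wrong: the corollary asserts the bound for every $z\in Q$, including points adjacent to $E_2$, so the path from $z$ to $z_Q$ must traverse all the scales from $\diam(E_2)$ up to $\diam(Q)$, and the ratio of these is unbounded (that is what makes it an arch). Consequently your ``$O(1)$ overlapping sub-disks'' covering does not exist, and if you instead chain the Whitney-type bound $|\partial^2 G_2|\lesssim \epsilon/\dist(\cdot,\supp\mu)$ over the $\log(\diam(Q)/\diam(E_2))$ intermediate scales you get $\int_{\diam(E_2)}^{\diam(Q)}\epsilon\,dt/t = \epsilon\log(\diam(Q)/\diam(E_2))$, which is not $O(\epsilon)$.

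The missing idea is the one the paper uses. By the maximum principle (since $\partial G_2$ is holomorphic in the empty region) it suffices to control the variation near the two boundary components; for the vertical segment rising from just above $E_2$ to the scale of $Q$, one estimates $\iint |\mu(w)|\,|z-w|^{-3}\,dxdy$ at height $t$ by splitting $\supp\mu$ according to the two complementary components of the annulus: the part of $\supp\mu$ in the bounded component has area only $O(\diam(E_2)^2)$, contributing $O(\epsilon\,\diam(E_2)^2 t^{-3})$, while the unbounded part contributes $O(\epsilon/\diam(Q))$. Both of these integrate in $t$ from $\diam(E_2)$ to $\diam(Q)$ to $O(\epsilon)$ with no logarithm. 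It is precisely the smallness of the area of $\supp\mu$ near $E_2$, not a generic second-derivative bound, that kills the log; without that observation the arch case of the corollary does not close.
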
 

\begin{proof}
Recall the definition of $z_Q$ (see Figure \ref{CW-pieces1}).
For Whitney type squares and Carleson squares, there is only one
boundary component and the proof of Lemma 
\ref{H''-est} actually applies to any point in $Q$ and then we 
 integrate along a segment connecting 
$z$ and $z_Q$ and note that the length of the segment at most $O(\diam(E))$.

For arches, it is slightly more involved since there is a boundary component 
that is much smaller than the diameter of the
whole piece. Since $\partial G_2$ is holomorphic 
in the empty part of the piece, the maximum principle implies it is enough 
to check the inequality near the boundary components.  The proof for the 
``big'' boundary component is just like the case of Whitney squares and 
Carleson boxes.  For the `small' boundary component it suffices to check the 
estimate at one point near  that component, say the one directly underneath 
$z_Q$ and distance $2s \cdot  \diam(E)$ above $E$. Then 
$$ \partial^2 G_1 = O( \iint \frac {|\mu(w)|}{|z-w|^3} dxdy),$$
and we can break the integral into two parts corresponding to the 
two complementary components of the annulus.
%support of $\mu$ outside the empty annulus and inside the empty annulus.
The unbounded  part is bounded by $O(\epsilon \diam(Q)^{-1})$ and the other 
is bounded by $O(\epsilon t^{-3} \diam(E)^2)$ for a point at height $t$.
Integrating both estimates from $t=\diam(E)$  to   $t=\diam (Q)$, gives
$ O(\epsilon ) + O(\epsilon)$ and is an upper bound for the variation of 
$\partial G_1$ along the vertical line segment connecting the two boundary 
components.  This proves the desired estimate for $G_1$. It follows for 
$G_2$ by (\ref{G1-G2}).
\end{proof}

Given the function $G_2$, which  is an approximate solution of the 
$\Dbar$-problem, we can define a quasiconformal mapping on each 
piece of our decomposition.  Assume for the moment that  
$z_Q$ maps to  $w_Q$ and let $d_Q = 
\text{Im}(w_Q) /\text{Im}(z_Q)$. Define 
$$ H_Q(z) = L_0(z) + d_Q (G_2(z) -L_1(z)),$$
where
$L_0$ is the unique conformal linear map of $\uhp$ to itself 
that maps $z_Q$ to $w_Q$, and $L_1$ is the unique conformal linear map
that agrees with $G_2$ at $z_Q$. Note that $H_Q(z_Q) = w_Q$ and 
$\partial L_0 = d_Q$ and that we only need to know the 
difference $G_Q(z) - G_Q(z_Q)$ in order to define $H_Q(z)$.

By Corollary \ref{H''-cor} $|\partial G_2 - \partial L_1| \leq O(\epsilon)$
on $N_s(Q)$. Thus for $z \in N_s(Q)$, 
$$ \partial H_Q = d_Q + d_Q O(\epsilon) = d_Q (1+ O(\epsilon)),$$
and  using this we get
\begin{eqnarray*}
\Dbar H_Q = d_Q \Dbar G_2 =  d_Q \mu + O(d_Q \epsilon^2)   
= (\mu +O(\epsilon^2)) \partial H_Q 
\end{eqnarray*}
\begin{eqnarray} \label{H-mu}
\mu_{H_Q} &=& (\mu +  \epsilon^2)
\end{eqnarray}
 for $z \in Q$. Thus  $H_Q$ is a quasiconformal 
map on $N_s(Q)$ with dilatation  $\mu + O(\epsilon^2)$.

So far we have assumed that we know the images 
$w_Q$ of 
$z_Q$. We now describe how to find $w_Q$. We proceed
in a ``top-down'' manner,  by starting 
at the root piece $Q_0$ and assuming $H(z_{Q_0}) = z_{Q_0}$ is fixed. Then 
if $Q$ is a child of $Q_0$ let $w_Q= H_{Q_0}(z_Q)$, i.e., use the 
quasiconformal map for the parent piece to define where the center point 
of the child maps to.  In this way we can proceed inductively down
the tree of pieces and define $H_Q$ for every piece.

In each step of this procedure, we might introduce an error of 
size  $O(\epsilon^2)$ in the definition of $w_Q$. This is not a 
problem by itself since we are only trying to compute a map 
with this accuracy. However, it is possible to have to adjacent 
pieces $Q$, $Q'$ of our decomposition that have no common ancestor for 
a very large number of generations, and in such a situation the 
definitions of $w_Q$ and $w_{Q'}$ might begin to diverge to an 
unacceptable degree. 
In order to avoid  this problem,  we modify  the ``top-down'' 
induction described above for Whitney type pieces. If the current 
piece is a Whitney type piece $Q$ and it is adjacent to a 
Whitney type piece $Q'$ of the same height and they have  $3$rd 
generation descendants that are Whitney type and 
 adjacent, then we modify the definitions 
of $w$ for the $3$rd generation descendants as follows. 
For the descendants of $Q$ and $Q'$ that are adjacent we 
define $w$ using the average of the values we would get using 
$H_Q$ and $H_{Q'}$. For the descendant next to the center of $Q$  
we use the value of $Q$ alone and for the intermediate values
we take weighted averages that linearly interpolate between the 
endpoint cases.  This step insures that adjacent pieces 
have $d_Q$ values that are within $1+O(\epsilon)$ of each other, i.e.,

\begin{cor} \label{cor43}
If  $Q$ and $Q'$ are adjacent pieces of the decomposition then 
$   d_Q / d_{Q'} = 1 + O(\epsilon)$.
\end{cor}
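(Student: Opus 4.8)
The plan is to prove Corollary \ref{cor43} by tracing through the top-down inductive construction of the $w_Q$'s and showing that the ``stretch factors'' $d_Q = \text{Im}(w_Q)/\text{Im}(z_Q)$ vary slowly from piece to adjacent piece. There are two regimes to handle: adjacent pieces that share a recent common ancestor (within $O(1)$ generations), and adjacent pieces whose common ancestor is many generations back — the latter being precisely the situation the averaging modification for Whitney pieces was introduced to control.

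First I would treat the easy case. Recall that $d_Q$ was defined recursively: for a child $Q$ of $Q'$ we set $w_Q = H_{Q'}(z_Q)$, so $d_Q$ is essentially $\partial H_{Q'}$ evaluated near $z_Q$. By the formula $H_{Q'}(z) = L_0(z) + d_{Q'}(G_2(z) - L_1(z))$ and Corollary \ref{H''-cor}, we have $\partial H_{Q'} = d_{Q'}(1 + O(\epsilon))$ throughout $N_s(Q')$; since $z_Q \in N_s(Q')$ when $Q$ is a child of $Q'$, this gives $d_Q = d_{Q'}(1+O(\epsilon))$. Iterating over a bounded number of generations shows that if $Q$ and $Q'$ are adjacent and have a common ancestor within $k = O(1)$ steps, then $d_Q/d_{Q'} = (1+O(\epsilon))^{2k} = 1 + O(\epsilon)$. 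This covers Carleson squares, degenerate arches, arches, and any pair of pieces in the ``same neighborhood'' of the tree, since by the comparable-size property of the decomposition (Section \ref{extend-decom}) adjacent non-Whitney pieces always have a common ancestor after $O(1)$ generations.

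The main obstacle is the case of two adjacent Whitney-type squares $Q$, $Q'$ of the same height whose common ancestor lies many generations back. Here the naive estimate $(1+O(\epsilon))^{(\text{number of generations})}$ degrades uncontrollably. This is exactly what the averaging/interpolation modification in the ``top-down'' induction addresses: when $Q$ and $Q'$ are adjacent Whitney pieces of the same height with third-generation descendants that are themselves adjacent Whitney pieces, one redefines $w$ on those descendants by interpolating linearly between the values $H_Q$ and $H_{Q'}$ would assign. I would argue as follows. Let $Q_1$ be the third-generation descendant of $Q$ adjacent to the boundary shared with $Q'$, and $Q_1'$ its counterpart in $Q'$. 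By the modification, $w_{Q_1}$ and $w_{Q_1'}$ are both set to the \emph{average} $\tfrac12(H_Q(\cdot) + H_{Q'}(\cdot))$ at the relevant points, so $d_{Q_1}$ and $d_{Q_1'}$ are computed from the same quantity and differ only by the amount $G_2$ and the geometry vary over the $O(\diam Q_1)$-sized region separating the two evaluation points — which is $1 + O(\epsilon)$ by Corollary \ref{H''-cor} again. For the interior descendants of $Q$ (not at the shared boundary) one uses the interpolation weights, and since $d_Q$ and the averaged value at the boundary already differ by only $1+O(\epsilon)$ (the averaged value is $\tfrac12(H_Q(z) + H_{Q'}(z))$ and $H_Q, H_{Q'}$ agree to within $O(\epsilon \cdot \diam)$ on the overlap by the representation condition \eqref{close} applied to the input, plus $\partial H_Q = d_Q(1+O(\epsilon))$), the linear interpolation stays within $1 + O(\epsilon)$ of $d_Q$ throughout.

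Finally, I would assemble the two cases: any pair of adjacent pieces either falls into the ``recent common ancestor'' regime, in which case the $O(1)$-generation iteration of Corollary \ref{H''-cor} applies, or is a pair of adjacent same-height Whitney pieces, in which case the averaging step forces their descendants (and hence, stepping back through the $O(1)$ intervening generations, the pieces themselves) to have comparable $d$-values. In both cases $d_Q/d_{Q'} = 1 + O(\epsilon)$, which is the claim. The one point requiring care in writing this up cleanly is bookkeeping the ``third-generation'' offset in the averaging rule so that one does not accidentally compare pieces across the shared boundary \emph{before} the averaging has taken effect; I would handle this by noting that the modification guarantees that for \emph{every} pair of adjacent Whitney pieces at a given height, either they or their recent ancestors were subjected to the averaging, so at most $O(1)$ unaveraged generations ever separate an adjacent pair, keeping the accumulated factor bounded by $(1+O(\epsilon))^{O(1)} = 1 + O(\epsilon)$.
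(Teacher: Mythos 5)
Your argument is essentially the paper's own: Corollary \ref{cor43} is stated there without a separate proof, as a direct consequence of the top-down induction (which gives $d_Q = d_{Q^*}(1+O(\epsilon))$ for a child via Corollary \ref{H''-cor}) combined with the third-generation averaging modification that prevents drift between adjacent branches with distant common ancestors --- exactly the two regimes you identify and handle. Your write-up supplies more detail than the paper does at this point, and the details are consistent with its intended argument.
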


Suppose $Q$ and $Q'$ are adjacent pieces. How close are 
the functions $H_Q$ and $H_{Q'}$ along the common boundary?
Note that both maps are  quasiconformal on   a Carleson 
square containing both $Q$ and $Q'$ and that $H_{Q'}$ was 
chosen to agree with $H_{Q}$ at the point $z_{Q'}$. Thus the 
maps also agree at the reflection of this point in the 
lower half-plane and both maps take $\infty$ to $\infty$.
Moreover, by Corollary  \ref{cor43} and (\ref{H-mu})
the dilatations of these maps agree to within $\epsilon^2$
and hence it follows from Lemma \ref{QC-near-Id} of
Appendix \ref{background}
that the maps agree to within $d_Q \epsilon^2$, i.e., 
\begin{eqnarray} \label{H-close}
   | H_Q(z) - H_{Q'}(z)| = O(d_Q \epsilon^2),
\end{eqnarray}
for $z$ in $N_s(Q) \cap N_s(Q')$.

Given the approximate solutions $H_Q$ on each piece we can combine
them using a partition of unity
to get a single approximate solution on the whole 
upper half-plane (again, we only do this to estimate the error; 
it is not necessary to do so as part of the algorithm).
Define a mapping of $\uhp$ to itself by 
$$ H = \sum_Q \varphi_Q H_Q$$
where $\{ \varphi \}$ is a partition of unity as in 
Section \ref{epsilon-reps}.
If $Q$ and $Q'$ are adjacent squares and $z$ is in the $s$ neighborhood
of the common boundary, then 
$$ |d_H(z_Q) - d_H(z_{Q'}) | \leq C d_H(z_Q)  \epsilon.$$
$$|H_Q(z)-H_{Q'}(z) | \leq \epsilon d_H(z_Q) \text{Im}(z_Q)$$
 Also,
$$ \Dbar H = \sum_Q \Dbar \varphi_Q 
\cdot  H_Q + \sum_Q \varphi_Q \cdot  \Dbar H_Q =I +II .$$

Since $\sum_Q \Dbar \varphi_Q = 0$,
$\sum |\Dbar \varphi_Q| \leq C/\text{Im}(z)$, 
the estimate (\ref{H-close}) implies that I is 
bounded by $O(d_Q \epsilon^2)$. 
Since $\sum_Q \varphi_Q =1$, the estimate 
(\ref{H-mu}) 
 shows that second sum is $d_Q \mu+  O( d_Q\epsilon^2) $. Similarly, 
$$ \partial  H = \sum_Q \partial \varphi_Q H_Q + 
          \sum_Q \varphi_Q \partial H_Q.$$
As above, the first sum is $O(\epsilon^2  d_H(z) )$ and the 
second sum is 
$$d_H(z_Q) (1+ O( \max|d_H(z_Q)-d_H(z_{Q'})|+ \epsilon))$$
 where the 
maximum is over all pieces $Q'$ that are adjacent to $Q$.  This 
maximum is $O(\epsilon)$, so we have 
$\partial H = d_H(z_Q)(1 + O(\epsilon))$ if $z \in Q$. Thus 
$$ \mu_{H}(z)= \frac {\Dbar H(z)}{\partial H(z)}
= \frac {d_H(z_Q)( \mu+O(\epsilon^2))}{ d_H(z_Q) (1+ O(\epsilon))} = 
    \mu+ O(\epsilon^2).
$$
Thus in the empty regions (where only one partition of unity function 
is non-zero) our piecewise solutions $H_Q$ agree with a global
solution $H$. On the empty regions of the decomposition $H_Q$ agrees 
with a function defined using only $G_Q$, which we can compute.  Thus the 
truncated expansions that we can actually compute,  
agree (on the empty regions)
with a globally defined quasiconformal 
map whose dilatation is $\mu+ O(\epsilon^2)$.

%####################################################################

%---------------------------------------------------------------

%###################################################################

%--------------------------------------------------------------

\section{Fast computation of the Beurling transform} \label{multipole}

 Now it only remains to 
compute $  \partial G_Q(z)-G_Q(z_Q)  \approx T\mu$ as quickly and as 
accurately  as we claimed. Since we only need to compute this 
difference in regions where $T \mu$ is holomorphic, we will compute 
a series expansion for 
$$ \partial T\mu(w) = \lim_{r \to 0} \frac {-1} { \pi i} \iint_{|z-w| > r} \frac {\mu (z)}
{ (z-w)^3} dxdy$$
 and then simply integrate the series term-by-term.
We need to compute this with error at most 
$$ \frac {C \epsilon^2}{ \diam (Q_j)},$$
on a Whitney square $Q_j$.

We shall use  the 
fast multipole algorithm of Rokhlin and
Greengard \cite{GR87} (named one of the top ten 
algorithms of the 20th century in   \cite{SIAM-top10}).
The basic idea is that we have  $n$ empty regions 
where want to compute a series expansion, each of which is influenced
by the $n$ regions where the data is supported. This is $n^2$ 
interactions to compute in only  time $O(n)$. The 
multipole method takes advantage of the fact that pieces of data
that are close together  affect distant outputs in similar ways.
Thus the data can be grouped together and the combined effects 
computed simultaneously. 
By way of review, we first describe how the method works in an 
easier setting: binary trees. 

Suppose $T$ is a binary tree with vertex set $V$ of
size $n$, 
 $f: V \to \reals$ is given, and we want to evaluate
 the sum
$$ F(v) = \sum_{w \in V\setminus \{v \}} K(v,w) f(w),$$
at every $v \in V$ where $K(v,w) = a^{\rho_T(v,w)}$ and $\rho_T$ is the 
path distance in the tree. There are $n$ inputs and $n$ outputs 
and each input affects the evaluation of every output, so 
naively it seems that $n^2$ operations are 
required.
 However, all $n$ values of $F$ can be computed in $O(n)$ 
steps as follows.   Choose a root $v_0 \in V$ and for any $v \in V$ 
let $D(v) \subset V$ be the  vertices that are separated from the root
by $v$, not including $v$. (i.e., its descendants). 
Let $\tilde D(v) = V \setminus (\{v \} \cup D(v)$.
Let
$$ F_1(v) = \sum_{w \in D(v)} K(v,w) f(w), \quad 
 F_2(v) = \sum_{w \in \tilde D(v)} K(v,w) f(w).$$
We can compute each of these functions in one pass through the tree. 
For $F_1$ start by setting $F_1(v) =0$ for each leaf of $T$ and 
proceed from the leaves to the root by setting
$$ F_1(v) = a \sum_{w \in C(v)} F_1(w),$$
where the sum is over the children of $v$. This is called 
the ``up-pass'' since we start at the leaves and work 
towards the root.
Next, we transfer values from $F_1$ to $F_2$ using 
an ``across-pass''. For each vertex where $F_1$ has 
already been evaluated by the up-pass, add 
 $a^2 \cdot (f(v) +F_1(v))$ to $F_2(w)$, for each sibling $w \in S(v)$ of
$v$ ($w \ne v$ is a sibling of $v$ if it has the same parent as $v$).
Lastly,   we compute $F_2$ using a ``down-pass'', by 
 setting $F_2(v) =0$  when $v$ is the root (which has no 
sibling, so was not affected by the across-pass), 
and in general if  $F_2(v)$ has already been computed,
 then for each of its children $w$ we set 
$$ F_2(w) = F_2(w) +a (F_2(v) +  f(v)). $$
We get the desired output by noting
$$ F(v) = (F_1(v) + F_2(v)),$$
for every $v \in V$ (we can evaluate vertices in any order).
Thus $F$ has been evaluated at all $n$ points in $O(n)$ steps.

The same method works more generally. If we are   given 
a rooted tree $T$ with vertex set $V$ we turn it into a 
directed graph $G$ by taking two copies  $V_1, V_2$ of $V$, point all 
edges towards the root in $V_1$ and  away from the root 
in $V_2$ and connect each vertex in $V_1$  to 
 the copies of its siblings in $V_2$. 
Assume we have linear space $X_v^1, X_v^2$
for each vertex $v  \in V$  and a linear map  from  each 
$X_v^1$ to its parent and from each $X_v^2$ to each of its 
children.
Assume we also have an ``across map'' $A_v: X_v^1 \to X^2_v$ . We define a 
linear map $L(w,v): X_w \to X_v$ by composing maps along the
path from $w$ to $v$. Given the $n$ values $x_v \in X_v^1$, 
$v \in V$ the method above   evaluates all $n$ values of 
$$ F(x_v) = \sum_{w \in V} L(w,v) x_w $$
in only $O(n)$ steps. 

 In the previous example, the linear spaces
were one dimensional and the edge maps were multiplication by 
$a$.  For our application to computing a Beurling transform, the
tree will be the tree of dyadic Whitney boxes that intersect the 
support of the dilatation $\mu$. To each Whitney box,  $Q$, we will associate 
a finite set of regions $\{ W_j\}$; 
each will be either a disk or a disk complement (including $\infty$).
The linear spaces will be spaces of analytic 
functions on these regions.  We will actually consider  two situations: 
an infinite dimensional  ideal model
 and a finite dimensional approximation
that we actually compute.

In the idealized version we consider the 
space $X_Q$  of all analytic functions  on  a region $W$.
If $W$ is a disk then every such function has a 
power series $\sum_{k=0}^\infty a_n (z-a)^k$ converging in the 
disk and for the disk complements there is a Laurent 
series $\sum_{k=0}^\infty a_n (z-a)^{-k}$.  
The finite dimensional version of these spaces are  
the spaces $X^p_Q$. These consist 
 of $p$ term power series  $\sum_{k=0}^p a_n (z-a)^k$ (for disks) 
or Laurent series  $\sum_{k=0}^p a_n (z-a)^{-k}$ (for disk complements).
There is an obvious  truncation map $T: X_Q \to X_Q^p$ and an inclusion 
map $I: X_Q^p \to X_Q$.

Given  two regions, one of which is contained 
 the other, we can restrict a function from the larger region to the 
smaller.
This defines restriction maps  $R$ between the infinite dimensional spaces 
$X_Q$.  For the finite dimensional analog, we
define maps $R^p$ between the  spaces 
$X_Q^p$ by restricting and then truncating the series expansion.
We will see how to compute the Beurling 
transform exactly using the restriction maps, and then 
check how much error is introduced when we replace these
by the finite dimensional restriction/truncation maps.

When we allow infinite expansions, then restricting an 
analytic function to a subdomain introduces no errors
and  the method described 
above allows us to compute series expansion for the Beurling 
transform in time $O(n)$ with no errors (except for filling
in the initial values of the arrays). Similarly, if
 we restrict a power series to a smaller disk, there
is no error introduced,
since  the restriction of a degree $p$ polynomial is still a degree $p$ polynomial.
However, if we change the center of a Laurent expansion, 
then a finite expansion may become infinite and truncating 
to $p$ terms causes an error (depending on $p$ and the geometry 
of the regions).  In this case, performing the restriction-truncation 
along a series of nested regions might not give the same 
result as restricting to the smallest domain is single step.
Because of this, we have to estimate the errors at each step 
and show the total accumulated error along the whole path is 
still small. 

We will now introduce the elements needed to apply these general 
ideas to the specific problem of computing $\partial T \mu$.

We start with a partial  $\epsilon$-representation
of a polygonal domain. As in Section \ref{iterate}, we use this 
to construct a dilatation $\mu $ that is a sum  
$\sum \mu_k$ of terms, each of  which are 
supported in small squares  whose union covers a neighborhood, $N_s$, of the 
boundary of our decomposition.  We assume that $\mu $ is defined by 
reflection on the lower half-plane, so that solutions of the Beltrami 
equation will be real on the real line. Each $\mu_n$ is a polynomial 
in $x$ and $y$  of degree at most $O(n)$ restricted to a small square.
The terms of this polynomial are of the form $z^k x^a y^b = (x+iy)^k x^a y^b$
with $0\leq k \leq p$ and $0 \leq a,b \leq C$ where $p$ grows
depending of the desired accuracy, but $C$ is fixed, depending 
only on the degrees of the piecewise polynomials used in 
our partition of unity associated to the decomposition ${\cal W}$
of our representation. Thus there are only $O(n)$ terms to 
consider, not $O(n^2)$ as would be the case if all powers of 
$x$ and $y$ less than $n$ had to be considered.

We take as our tree the collection of all Whitney squares in the 
upper half-plane that hit the support of $\mu$, i.e., which hit
$N_s$.  There are $O(n)$ such, since there are $O(n)$  boundary 
components in our decomposition and each only hits a bounded number 
of Whitney boxes (for arches we only need to cover the edges of the 
arch, not the interior).  The adjacency relation is the usual one; 
$Q$ is a child of $Q^*$ if the base of $Q^*$ contains the base of $Q$ 
and $Q$ is maximal with this property.  A given Whitney square can 
have zero, one or two children.  Those with no children are called 
``leaves'' of the tree. Often a child is half the size of its parent 
and the top edge of the child is half the bottom edge of the parent, 
but because of arches, there are some cases where a child is much 
smaller than its parent.
A neighbor of a dyadic Whitney square is a distinct  dyadic Whitney 
square of the same size that touches along the boundary. 
The terms ``descendant'' and ``ancestor'' have the usual meanings 
for a rooted tree.

For any Whitney box $Q$ in $\uhp$ with base interval $I$ (its vertical 
projection on $\reals$) let $c_Q$ denote the center of this base
and let $c_Q^j$, $j=1,\dots, 8$ be $8$ equally spaced points in 
$I$ (including the right, but not the left endpoint of I).
 Let $A_Q = \{z: |z-c_Q| \geq \lambda |I| \}$
where we choose $ \frac 12 \sqrt{5} < \lambda < \frac 54$,
and let $D_Q^j = \{ z: |z-c_Q^j| \leq \frac 14 |I| \}$. These 
will be called the type I and type II regions associated to $Q$
respectively.  See Figure \ref{types}.  The 
number $\lambda $ is chosen in this range so that the type I 
region does not intersect $Q$, but it does contain the 
type II regions of any $Q'$ that is the same size as 
$Q$, but not adjacent to it. See Figure \ref{type-convert}.
A series expansion in terms of $(z-c_Q)^{-1}$ or
$(z-c_Q^j)$ will be called type I and type II expansions 
respectively.

\begin{figure}[htbp]
\centerline{ 
	\includegraphics[height=2.0in]{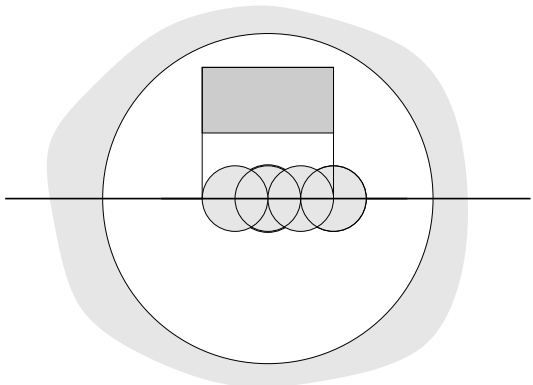}
	}
\caption{ \label{types}  
A Whitney box and its type I and type II regions.
}
\end{figure}

\begin{figure}[htbp]
\centerline{ 
	\includegraphics[height=1.5in]{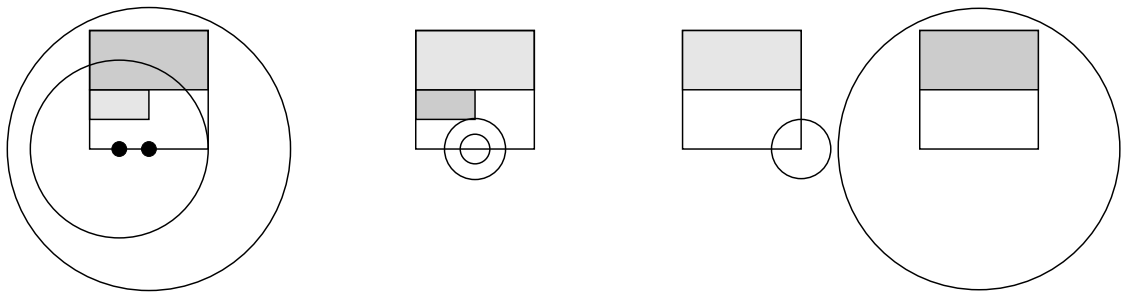}
	}
\caption{ \label{type-convert}  
The three kinds of conversions: 
multipole-to-multipole, local-to-local and   multipole-to-local.
  In each case the first 
Whitney square is shaded lighter than the second.
}
\end{figure}

Given a Whitney box $Q$ we can restrict $\mu$ to $Q$ and 
compute type I and type II expansions for  $\partial T \mu|_Q$.
Since $\mu$ is a piecewise polynomial of degree $O(p)$, and there
is an explicit formula for the expansion of each monomial, 
this can be done in time $O(p \log p)$ by the  remarks 
in Appendix \ref{fast-power-series}.

Given a Whitney box $Q$ and its parent $Q^*$, the type I region 
for $Q$ contains that for $Q^*$ and so we can take the analytic 
function $f$ defined by the type I expansion for $Q$ and compute its
Laurent expansion in the type I region for $Q^*$. Then truncate this
(infinite) series to get a type I expansion for $Q^*$. This is 
called a I-to-I conversion or a multipole-to-multipole conversion.
See Figure \ref{type-convert}.
This introduces an error of $\epsilon M_f$, where
$\epsilon = \lambda^p$, where $\lambda < 1$ 
 and $M_f$ is the maximum of $f$ on the type 
I region of $Q$.

Similarly, we can take a type II expansion for $Q$ and restrict it to 
one of the two type II regions for a child of $Q$ whose center 
agrees with the first center or is immediately to the left 
of it. Changing the center of the expansion just gives another 
degree $p$ polynomial and there is no error introduced, 
i.e., $R = R^p$. This is a local-to-local conversion.

Finally, the type I region of  a Whitney box $Q$  contains the 
type II regions of a box $Q'$ of the same size if $Q'$ is 
in $3Q^*$ but not in $3Q$ (here $Q^*$ denotes the parent of $Q$).
Therefore, we can restrict the type I expansion of $Q$ to 
a type II region of $Q'$ and do a I-to-II conversion (or multipole-to-local
conversion), with an 
error of $\epsilon M_f$, as above.

For a given box $Q$,  the associated 
 regions cover the whole upper half plane, except for 
a region of bounded hyperbolic diameter around $Q$.

  We think of the type I and type II 
expansions associated to each $Q$ as defining  two arrays 
indexed by the Whitney boxes.  We next describe 
how to initialize  and update these arrays:

\begin{description}
\item [Initialize Type I  array] For each $Q$ compute the initial type I expansion.
\item [Initialize Type II  array] For each $Q$ compute the initial type II expansion 
                              for each type II disk. 
\item [Modify Type II array]  Compute type II expansions for two neighbors and 
                           add to their initial expansions.  Every type II 
                            expansion now has contributions from at most  three 
                            boxes (its parent and the parent's neighbors)     
\item [Perform  the up-pass]  Starting with leaves of the tree, do I-to-I conversion
                          of the current type I expansion and add it to the type I expansion 
                          of the parent. Continue until we reach the root.
\item [Perform the across-pass] For each square $Q$, do  I-to-II conversions 
                          taking the current type I expansion and obtaining 
                          type II conversions for regions corresponding to 
                          centers in $3 I^* \setminus 3 I$  where $I$ 
                           is the base of $Q$ and $I^*$ is the base of $Q$'s parent. 
\item [Perform the down-pass] Starting with root square, do  II-to-II conversions, 
                          taking each type II expansion and restricting it to 
                          the two type II expansions of the children. Continue 
                          downward until we reach the leaves of the tree.
\end{description}

This is clearly $O(n)$ steps and when we are finished, the type I expansion of 
a square $Q$ contains the contribution of $Q$ and every descendant of $Q$ and 
the type II expansions of $Q$ contain the contributions of 
 every  square that is  not a (strict) descendant of $Q$  or its 
two neighbors.

The third step (Modify the type II array) is necessary because the tree structure 
on Whitney squares does not completely reflect their actual placement
in $\uhp$; two squares 
that are far apart in the tree could be adjacent in $\uhp$. The type I region 
of a square does not contain the type II disks of its neighbors (they are too close),
so we need this special step to pass the information to these regions (for other 
regions it is passed in the  across-step).

Now suppose $D$ is an empty  piece of our decomposition and $Q$ is the Whitney
box containing $D$. If $D$ is a disk, it is contained in 
a type II region of a grandparent of $Q$ and is contained in the 
type I region of all grandchildren of $Q$ and its two neighbors.
Therefore we can do series conversions and compute the expansion 
in $D$ due to these expansions.  There are only a finite 
number of Whitney boxes whose contributions have not been 
accounted for and all these lie within a uniformly bounded 
distance of $Q$.  For each piece of $\mu$ supported in one 
of these boxes, we compute the contribution to $D$ directly.

%If $D$ is an annulus, then it separates the Whitney boxes into 
%two sets, the ones ``above'' and ``below'' the arch. The 
%negative powers of the Laurent expansion for $D$ are 
%obtained from the type I expansion of the maximal 
%Whitney squares below $D$ (do a conversion to move the 
%center of the expansion, if necessary). The positive 
%powers are obtained from the type II expansion of the 
%Whitey box at the top of the corresponding arch, together 
%with the type I expansions of the grandchildren of its 
%two neighbors and the contributions of a finite number 
%of other boxes.

Given any Whitney boxes $Q'$ and $Q$ there is a  path in our 
directed graph that starts from an initial expansion for $Q$ 
and goes to a terminal expansion for $Q'$ or one of its neighbors.
 To see this we consider
several cases. 
\begin{enumerate}
 \item If $Q = Q'$ there is nothing to do.
 \item If $Q=Q_2$ is an ancestor of $Q_1$ then the all ``down'' path works.
 \item If $Q_3$ is a neighbor of square $Q_2$ in (1), then start with 
          the special ``Modify type II'' step and follow by all downs.
       \item If $Q_4$ is a descendant of a case (2) square, then follow
``up'' paths until we hit  a child of $Q_3$ and then use a ``across'' step 
  to bring us to an ancestor of $Q'$ (this works because by the definition of 
   the across step).
 \item If  $Q$ is  a descendant of $Q'$, then use all ``up'''s.
  \item The only remaining case is that $Q$ is a neighbor of $Q'$ or 
   a descendant of  a neighbor. Using an all ``up'' path works.
\end{enumerate}

%Given a Whitney box $Q$ with base $I$ we divide the upper 
%half-plane into three regions $\uhp = W_1 \cup W_2 \cup W_3$
%with base $I$ so that  or a union of two adjacent Carleson 
%squares.

If $D$ is an empty piece of our decomposition, we want to 
show that the desired expansion for $\partial T \mu$ can 
be computed using a bounded number or  type I and
type II expansions, plus a bounded number of direct expansions 
of nearby squares. Fix such a $D$ and suppose $Q$ is any 
Whitney box, then $D $ is a subset of one of the type I or II regions associated 
to $Q$ or one of its neighbors, unless $Q$ is within a uniformly bounded hyperbolic distance 
$M$ of $D$. If $D$ is a subset of one of these regions, then we can 
convert the series expansion on the region to one on $D$, the 
conversion being one of three types. First, we might have 
to convert  a power series in $(z-a)$ to 
one in $(z-b)$; this happens when $D$ is a disk or arch  contained in a 
disk and  involves no loss of accuracy. Second, converting an 
expansion in $(z-a)^{-1}$ to one in $(z-b)^{-1}$; this happens when 
$D$ is an arch that contains $Q$ in its bounded complementary 
component and there is a loss of accuracy (described in Lemma \ref{bound-error}).
Finally, in all other cases we must convert 
an expansion in $(z-a)^{-1}$ to one in $(z-b)$;
this also involves a small loss of accuracy.

\begin{lemma} \label{bound-error}
Suppose $|a| \leq \frac 14$, $|b| \leq \frac 12$ and 
 $f$ is analytic on $\{z:|z-a|>1 \}$ and $|f| \leq 1$ there.
Also assume $f(z) z^3$ is bounded as $ z \to \infty$.
Suppose $A= \{z:|z-b| > 2\}$ and let 
$f(z) = \sum_{j=0}^\infty a_j (z-b)^{-j} $ be the Laurent expansion 
for $f$ in $A$ and let $g(z)= \sum_{j=0}^p a_j (z-b)^{-j} $. 
Then there is $0 < \lambda < 1$ so that for $\epsilon = \lambda^p$, 
 \begin{enumerate}
\item $a_0=a_1=a_2 =0$,
\item  $|g(z)| \leq (1+\epsilon) |z|^{-3} \leq |z|^{5/2} ,$ if $p$ is
         large enough.
\item $|f(z) - g(z)| \leq  \epsilon |z|^{-3} $.
\end{enumerate}
\end{lemma}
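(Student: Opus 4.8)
The plan is to read everything off the Laurent expansion of $f$ about $b$, valid on the annulus $\{|z-b|>\rho\}$ with $\rho:=1+|a-b|$. First one checks $\rho<2$: since $|a-b|\le|a|+|b|\le\frac14+\frac12=\frac34$ we get $\rho\le\frac74$, and for any $r>\rho$ the circle $\{|z-b|=r\}$ lies in $\{|z-a|>1\}$ (as $|z-a|\ge|z-b|-|a-b|>1$ there), so $f$ is holomorphic and bounded by $1$ on $\{|z-b|\ge\rho\}\cup\{\infty\}$, in particular on $A=\{|z-b|>2\}$, where $g$ and all three conclusions live. Write $f(z)=\sum_{j\ge0}a_j(z-b)^{-j}$ there. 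Claim (1) is the conceptual input and comes straight from the growth hypothesis: $f(z)z^3$ bounded as $z\to\infty$ gives $f(z)=O(|z|^{-3})$, hence $a_0=\lim_{z\to\infty}f(z)=0$, $a_1=\lim_{z\to\infty}(z-b)f(z)=0$, and $a_2=\lim_{z\to\infty}(z-b)^2f(z)=0$; so the expansion of $f$, and hence of $g=\sum_{j=3}^p a_j(z-b)^{-j}$, genuinely starts at $j=3$.

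For the size estimates I would bound the coefficients by Cauchy's inequality on $\{|z-b|=r\}$ and let $r\downarrow\rho$, which gives $|a_j|\le\rho^j$ for every $j$. Since $\rho/|z-b|<\rho/2\le\frac78<1$ throughout $A$, summing geometric series yields
\[
|g(z)|\le\sum_{j\ge3}\rho^j|z-b|^{-j}=\frac{\rho^3}{1-\rho/|z-b|}\,|z-b|^{-3},\qquad
|f(z)-g(z)|\le\sum_{j>p}\rho^j|z-b|^{-j}\le\frac{\rho^3}{1-\rho/2}\Bigl(\frac{\rho}{2}\Bigr)^{p-2}|z-b|^{-3}.
\]
From $|b|\le\frac12$ and $|z-b|>2$ one has $|z-b|\ge\frac23|z|$, so both right-hand sides turn into bounds against $|z|^{-3}$ at the cost of a fixed numerical factor. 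The second display decays geometrically in $p$ with base $\rho/2<1$, so choosing $\lambda\in(\rho/2,1)$ and absorbing the fixed factor into $\lambda$ (legitimate once $p$ is large, the only regime that matters since $\epsilon=\lambda^p$) gives $|f(z)-g(z)|\le\lambda^p|z|^{-3}=\epsilon|z|^{-3}$, which is (3). Claim (2) then follows from the first display directly, or from $|g|\le|f|+|f-g|$ together with the decay $f(z)=O(|z|^{-3})$ from (1).

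There is no real obstacle here --- it is a standard tail estimate for a Laurent series --- and the only things needing care are bookkeeping: (a) one must check $\rho<2$ so that the series converges on $A$, which is exactly what the normalizing hypotheses $|a|\le\frac14$, $|b|\le\frac12$ buy; (b) the Cauchy radius cannot be pushed below $1+|a-b|$, so the coefficient bound is $\rho^j$ and nothing sharper; and (c) the precise constant ``$1+\epsilon$'' in (2) (and the consequent $|z|^{-5/2}$ bound) is really a statement about the regime --- it is comfortable when the empty region $D$ of an $\epsilon$-arch sits far from $b$ --- whereas for the downstream use only a bound $|g(z)|\le C|z|^{-3}$ with a fixed $C$ is actually needed, and that is immediate from the first display above.
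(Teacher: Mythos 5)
Your argument is correct and is precisely the standard Laurent-tail estimate that the paper explicitly declines to write out ("left to the reader"): read off $a_0=a_1=a_2=0$ from the decay at infinity, bound $|a_j|\le\rho^j$ by Cauchy on circles of radius $r\downarrow\rho=1+|a-b|<2$, and sum geometric series on $\{|z-b|>2\}$. Your observation in (c) is also the right reading of the statement: item (2) with the constant $1+\epsilon$ follows from item (3) once the hypothesis "$f(z)z^3$ bounded" is normalized to $|f(z)z^3|\le 1$, and the displayed $|z|^{5/2}$ is a typo for $|z|^{-5/2}$, which is all the subsequent error-accumulation argument uses.
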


The proof is just the standard estimates for Taylor series and left
to the reader. If we start with an expansion $f_0$
on the type I region of a box $Q$ of size $1$ 
and then restrict the expansion to get 
an expansion $f_1$ for the 
type I region of its parent, then repeat this over and over, we accumulate
an error each time. Suppose $f_k$ is $k$th expansion on the 
$k$th region $A_k$. Then $\sup_{A_k} |f_k| \leq |z|^{-5/2}$,
which means the maximum error between $f_k(z)$ and $f_{k+1}(z)$  on $A_{k+1}$
is bounded by $\epsilon |z|^{-5/2}$.  So the total error that is ever 
possible inside $A_N$ is 
\begin{eqnarray*}
&\leq&  \sum_{k=1}^N |f_k(z) - f_{k+1}(z)|\\
& \leq &
\sum_{k=1}^N  \epsilon \diam(\partial A_k)^{-5/2} (\diam(\partial A_n)/
           \diam(\partial A_k))^{-3}\\
&\leq &
O(\epsilon) \diam(\partial A_N)^{-3}  \sum_{k=1}^N  \diam(\partial A_k)^{1/2}. 
\end{eqnarray*}
Since the regions grow by at least a factor of two at each stage, the final 
sum is dominated by its final term, and so the total error is 
less than $O(\epsilon \diam(\partial A_N)^{-2.5})$.

Summarizing this argument gives:

\begin{lemma} \label{error-on-path}
Suppose $f_0(z)= \sum_{k=3}^p a_k (z-c_Q)^{-k}$ is a type I expansion 
associated to a dyadic Whitney square $Q_0$ and $|f_0|$ is bounded 
by $M$ on the type I region of $Q_0$. Suppose $ Q_1, \dots
, Q_N$ are ancestors of $Q$ and  $f_k$ is the result of
applying a   I-to-I conversion to $f_{k-1}  $ for $k=1, \dots N$. 
Then on the type I region of $Q_N$, $|f_0-f_N| \leq O(\epsilon M 
(\diam(Q_0)/\diam(Q_N))^{2.5} )$ with  $\epsilon = \lambda^p$ and 
a constant that is independent of $N$.
\end{lemma}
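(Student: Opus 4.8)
The statement merely packages the iteration of Lemma~\ref{bound-error} sketched in the paragraph just before it, so the ``proof'' is a matter of organizing that bookkeeping; here is the plan. First I would reduce to the normalized situation $M=1$, $\diam(Q_0)=1$, $c_{Q_0}=0$: the asserted inequality is unchanged if one replaces $f_0$ by $f_0/M$ and precomposes every map with the Euclidean similarity carrying $Q_0$ to a unit box with base centered at the origin. I would record that $Q_k$ is the dyadic parent of $Q_{k-1}$, so that $r_k:=\diam(Q_k)=2^k$ and $|c_{Q_k}-c_{Q_{k-1}}|\le r_{k-1}/2$, and that (because $\lambda\in(\tfrac12\sqrt5,\tfrac54)$) the type~I regions nest, $A_{Q_N}\subset\cdots\subset A_{Q_1}\subset A_{Q_0}$ — this is exactly the geometric fact that makes each $\mathrm{I}$-to-$\mathrm{I}$ conversion legitimate. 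I would also note, via the substitution $w=1/z$ and the maximum principle, that $|f_0|\le 1$ on $A_{Q_0}$ together with $f_0(z)z^3$ bounded at $\infty$ upgrades to $|f_0(z)|\le C_0|z-c_{Q_0}|^{-5/2}$ on $A_{Q_0}$ for an absolute $C_0$; the exponent $5/2$ rather than $3$ is the slack I will need next.

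The heart of the argument is the claim, proved by induction on $k$, that $|f_k(z)|\le C_0|z-c_{Q_k}|^{-5/2}$ for all $z\in A_{Q_k}$, \emph{with the same constant $C_0$ for every $k$}. For the inductive step I would translate and rescale so that the type~I region of $Q_{k-1}$ becomes $\{|z|>1\}$ and that of $Q_k$ becomes $\{|z|>2\}$; in these coordinates the new center $b$ satisfies $|b|\le\tfrac12$, $f_{k-1}$ is bounded by $C_0 r_{k-1}^{-5/2}$ on $\{|z|\ge1\}$, and it decays like $z^{-3}$ at $\infty$ by the inductive hypothesis about its leading term. Lemma~\ref{bound-error} then applies (with its ``$1$'' replaced by $C_0 r_{k-1}^{-5/2}$): part~(1) gives the vanishing of the constant, linear, and quadratic coefficients of the output $f_k$, and part~(2), combined with the elementary bound $(1+\epsilon)|z|^{-3}\le|z|^{-5/2}$ on $\{|z|\ge\tfrac32\}$ (valid once $\epsilon=\lambda^p$ is small), yields $|f_k|\le C_0 r_{k-1}^{-5/2}|z|^{-5/2}$ in the new coordinates. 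Undoing the scaling reproduces $|f_k(z)|\le C_0|z-c_{Q_k}|^{-5/2}$ with the \emph{same} $C_0$. The point is that the $(1+\epsilon)$ cost of each conversion is absorbed once and for all by the $|z|^{-3}\to|z|^{-5/2}$ slack, so it never compounds into a factor $(1+\epsilon)^N$.

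With the uniform bound in hand, I would feed part~(3) of Lemma~\ref{bound-error} back through the same rescaling to obtain, for the single-step error $e_k:=f_{k-1}-f_k$,
\[
  |e_k(z)|\ \le\ C\,\epsilon\,r_{k-1}^{-5/2}\Bigl(\tfrac{r_{k-1}}{|z-c_{Q_k}|}\Bigr)^{3}
  \ =\ C\,\epsilon\,r_{k-1}^{1/2}\,|z-c_{Q_k}|^{-3}\qquad\text{on }A_{Q_k},
\]
with $C$ absolute. Then for $z$ in the type~I region of $Q_N$, all the centers $c_{Q_k}$ ($k\le N$) lie within $r_N$ of the origin while $|z|\ge r_N$, so $|z-c_{Q_k}|\ge|z|/2$, and
\[
  |f_0(z)-f_N(z)|\ \le\ \sum_{k=1}^N|e_k(z)|\ \le\ C\,\epsilon\,|z|^{-3}\sum_{k=1}^N r_{k-1}^{1/2}\ \le\ C'\,\epsilon\,r_N^{-3}\,r_N^{1/2}\ =\ C'\,\epsilon\,r_N^{-5/2},
\]
since $\sum_k r_{k-1}^{1/2}=\sum_k 2^{(k-1)/2}$ is a geometric series dominated by a constant times its last term $r_N^{1/2}$. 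Recalling the normalization $r_0=1$ and restoring the factor $M$ turns this into $|f_0-f_N|\le O\!\bigl(\epsilon M(\diam(Q_0)/\diam(Q_N))^{2.5}\bigr)$ on the type~I region of $Q_N$, with an implied constant that depends on none of $N$, $p$, or $\Omega$.

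Everything after the normalization is elementary series estimation, and the one step that needs care is the uniform-amplitude induction: checking that the $(1+\epsilon)$ of Lemma~\ref{bound-error}(2) is \emph{exactly} compensated by evaluating one dyadic scale further out (so that no $(1+\epsilon)^N$ appears), together with verifying the nesting of the type~I regions for the stated range of $\lambda$ so that Lemma~\ref{bound-error} can be invoked verbatim at every ascent. I expect that to be the main — indeed the only substantive — obstacle.
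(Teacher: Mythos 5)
Your proposal is correct and follows essentially the same route as the paper: absorb the $(1+\epsilon)$ cost of each I-to-I conversion into the $|z|^{-3}\to|z|^{-5/2}$ slack of Lemma \ref{bound-error} so the amplitude bound stays uniform in $k$, then sum the per-step errors $O(\epsilon\,r_{k-1}^{1/2}|z|^{-3})$ as a geometric series dominated by its last term. The only caveat is that successive Whitney ancestors need not double in size exactly (arches can make a child much smaller than its parent), but since the scales grow by at least a factor of two your geometric-sum step is unaffected.
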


\begin{cor}
Suppose $Q_0$ is a Whitney square and for each $Q$ that is a descendant 
of $Q_0$, let $f_Q$ be the type I expansion of $\partial T \mu$ for $\mu$ 
restricted to $Q$. Let $f_0$ be the type I expansion for $Q_0$ obtained
by running the up-pass over all descendants of $Q$ with initial data 
$\{ f_Q\}$. Let $F_0= \sum_{Q \in D(Q_0)} f_Q$ be the exact sum of these
initial expansion restricted to the type I region of $Q_0$. Then 
$$ |f_0(z) - F_0(z) |    
                =O( \epsilon \frac{\diam(Q_0)^2}{|z-c_{Q_0}|^3}
		\| \mu \|_\infty  ).$$
\end{cor}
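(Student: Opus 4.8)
The plan is to bound the error accumulated when the up-pass combines the initial type I expansions $\{f_Q\}$ of all descendants of $Q_0$ into a single type I expansion $f_0$ for $Q_0$. The exact quantity we are approximating is $F_0 = \sum_{Q \in D(Q_0)} f_Q$, the honest sum of these expansions restricted to the type I region $A_{Q_0}$. The up-pass replaces this by a sequence of I-to-I conversions, each of which truncates a Laurent series back to $p$ terms and so introduces an error; the total error is the sum, over all descendants $Q$, of the error incurred in carrying $f_Q$ along its path of ancestors up to $Q_0$. The key input is Lemma \ref{error-on-path}, which says that transporting a single type I expansion $f_Q$ of sup-norm $M_Q$ (on the type I region of $Q$) along a chain of ancestors up to $Q_0$ produces an error bounded by $O(\epsilon M_Q (\diam(Q)/\diam(Q_0))^{2.5})$ on the type I region of $Q_0$, with $\epsilon = \lambda^p$ and a constant independent of the length of the chain.

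First I would estimate $M_Q$, the maximum of $f_Q = \partial T\mu|_Q$ on the type I region $A_Q = \{z: |z-c_Q| \ge \lambda \diam(Q)\}$. Since $\mu$ restricted to $Q$ is supported in $Q$ with $\|\mu\|_\infty = O(\epsilon)$ (here writing $\epsilon$ for the representation norm; in the statement of the corollary the role of this quantity is played by $\|\mu\|_\infty$), and since $\partial T\mu(w) = \frac{-1}{\pi i}\iint \mu(z)(z-w)^{-3}\,dx\,dy$, on $A_Q$ we have $|z-w| \gtrsim \diam(Q)$ for $w$ in the support, so $|f_Q(z)| \lesssim \|\mu\|_\infty \diam(Q)^2/|z-c_Q|^3$ for $z \in A_Q$, and in particular $M_Q \lesssim \|\mu\|_\infty / \diam(Q)$. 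More useful is the scale-invariant version: on the type I region of $Q_0$ (which contains $A_Q$ since $Q$ is a descendant), the honest term $f_Q$ itself is bounded by $O(\|\mu\|_\infty \diam(Q)^2 / |z-c_{Q_0}|^3)$, because the support of $\mu|_Q$ is within $O(\diam(Q_0))$ of $c_{Q_0}$ but the relevant decay is governed by $|z - c_{Q_0}|$ once $z$ is far from $Q_0$. Combining this with Lemma \ref{error-on-path} applied to each $f_Q$, the contribution of $Q$ to $|f_0 - F_0|(z)$ is $O(\epsilon \|\mu\|_\infty \diam(Q)^{0.5}\diam(Q_0)^{-0.5} \cdot \diam(Q)^{2} |z-c_{Q_0}|^{-3} \cdot \diam(Q)^{-2}) = O(\epsilon\|\mu\|_\infty \diam(Q)^{2.5}\diam(Q_0)^{-2.5}) \cdot \diam(Q_0)^{2}|z-c_{Q_0}|^{-3}$ — here I have to be a little careful to bookkeep whether the $M_Q$ in Lemma \ref{error-on-path} is the raw sup-norm $\sim \|\mu\|_\infty/\diam(Q)$ on $A_Q$ or the value on the larger region, and convert consistently.

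Next I would sum over all descendants $Q$ of $Q_0$. Group the descendants by generation (or, more robustly, by dyadic size $\diam(Q) = 2^{-j}\diam(Q_0)$). Because the Whitney boxes at a fixed scale that lie in $3Q_0$ have bounded overlap and total measure $O(\diam(Q_0)^2)$, the number of descendants of size $2^{-j}\diam(Q_0)$ is $O(2^{2j})$ (for genuine Whitney boxes; arches only shorten some chains, which only helps). The per-box error estimate above scales like $(\diam(Q)/\diam(Q_0))^{2.5} = 2^{-2.5j}$, so the generation-$j$ contribution to the total error is $O(2^{2j} \cdot 2^{-2.5j}) = O(2^{-0.5j})$ times $\epsilon \|\mu\|_\infty \diam(Q_0)^2 |z-c_{Q_0}|^{-3}$. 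This geometric series converges, giving the claimed bound $|f_0(z) - F_0(z)| = O(\epsilon \diam(Q_0)^2 |z-c_{Q_0}|^{-3}\|\mu\|_\infty)$.

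The main obstacle I anticipate is the bookkeeping in the second paragraph: reconciling the normalization of $M_f$ in Lemma \ref{error-on-path} (a sup-norm on $Q$'s own type I region) with the estimate I want on $Q_0$'s type I region, and checking that the exponent $2.5$ from that lemma genuinely beats the $2^{2j}$ growth in the number of boxes per scale with room to spare (it does, by $0.5$ in the exponent, which is exactly the margin the lemma was designed to provide). A secondary point to verify is that the presence of arches does not break the counting of descendants per scale — an arch can make a child much smaller than its parent, but it cannot increase the number of boxes at a given scale inside $3Q_0$, so the bound $O(2^{2j})$ still holds; if anything arches make some ancestor-chains shorter, which only decreases the accumulated error. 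Everything else is the "standard estimates for Taylor series" already invoked for Lemma \ref{bound-error} and Lemma \ref{error-on-path}.
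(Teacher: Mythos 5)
Your proof follows the paper's argument essentially verbatim: bound $M_Q = O(\|\mu\|_\infty/\diam(Q))$ on the type I region of $Q$, apply Lemma \ref{error-on-path} to each descendant, sum over scales, and restore the $|z-c_{Q_0}|^{-3}$ decay at the end. Two small corrections: the number of Whitney-box descendants of $Q_0$ at scale $2^{-j}\diam(Q_0)$ is at most $2^{j}$, not $2^{2j}$ --- they are indexed by the dyadic subintervals of the base interval of $Q_0$, a one-dimensional family, not by a two-dimensional packing of the Carleson square --- though your overcount still leaves a convergent series $\sum_j 2^{-j/2}$, so nothing breaks; and the bookkeeping you worry about in your second paragraph is avoided by the paper's ordering, which first sums the sup-norm errors to $O(\epsilon\|\mu\|_\infty/\diam(Q_0))$ and only then observes that the error, being a difference of functions all of whose expansions begin at order $(z-c_{Q_0})^{-3}$, is analytic on the type I region and decays like $|z-c_{Q_0}|^{-3}$, which yields the stated bound.
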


\begin{proof}
The maximum of $f_Q$ on the type I region of $Q$ is clearly $O(\|\mu \|_\infty /
\diam(Q))$. Therefore, by the Lemma \ref{error-on-path} the error 
of applying I-to-I conversions until we reach $Q_0$ is 
$$O(\epsilon \|\mu\|_\infty 
\diam(Q)^{1.5} /\diam(Q_0)^{2.5}) .$$
  There are at most $2^k$ descendants
of $Q_0$ with $\diam(Q) = 2^{-k} \diam(Q_0)$, so the total error of all 
of these is 
$$O( \epsilon \|\mu\|_\infty 
\diam(Q)^{.5} /\diam(Q_0)^{1.5}) = O(\epsilon \|\mu\|_\infty 
 2^{-k/2} /\diam(Q_0)).$$
 We now sum $k=1,2, \dots$ and see the 
total error over all descendants of $Q_0$ is at most 
$O(\epsilon \|\mu\|_\infty /\diam(Q_0))$.   The error is an analytic function 
on the type I region of $Q_0$ that  decays
like $|z-c_{Q_0}|^{-3}$ near infinity (since it is a difference of functions 
which do), and this gives the estimate in the corollary.
\end{proof}

\begin{cor}
Let $f$ be a type II expansion for a Whitney square $Q$ that is obtained
by starting with expansions of $\partial T \mu$ and applying
the up-pass, across-pass and down-pass. Then the total error
between $f$ and simply adding all the initial expansions that 
contribute to $f$ is $O(\epsilon \|\mu\|_\infty /\diam(Q))$.
\end{cor}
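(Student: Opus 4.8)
The plan is to follow each of the initial box-by-box expansions of $\partial T\mu$ through the directed graph of conversions until it reaches the type~II array of $Q$, estimate the error accumulated along the way, and sum over the contributing boxes. Recall that three kinds of conversions occur: I-to-I (on the up-pass), I-to-II (on the across-pass, and in the ``modify type~II'' step), and II-to-II (on the down-pass). The II-to-II conversions are \emph{exact}, since restricting a polynomial of degree $\le p$ to a subdisk and re-centering produces again a polynomial of degree $\le p$ ($R=R^p$); hence the down-pass never introduces error, and the restriction of any error term along the down-pass only shrinks in sup-norm. So all error in $f$ comes from the I-to-I steps feeding an across-pass and from the I-to-II step of the across-pass itself, and the cumulative effect of the I-to-I steps has already been packaged in the preceding corollary.

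First I would fix $Q$ and normalize the tree so its root has diameter $1$, writing $\diam(Q)=2^{-m}$. Because the down-pass pushes each type~II array to the arrays of the children, the only arrays that can feed $f$ are those of $Q$ and of its ancestors $B_0,\dots,B_{m-1}$, one at each scale $2^{-k}$. Unwinding the order of the passes, $f$ equals the sum, over $B\in\{Q,B_0,\dots,B_{m-1}\}$, of the quantities deposited into $B$'s type~II array \emph{before} the down-pass reaches $B$: its initialized term (the expansion of $\partial T(\mu|_B)$), the $O(1)$ terms added in the ``modify'' step (from the $\le 2$ neighbours of $B$), and the $O(1)$ I-to-II images deposited by the across-pass from the boxes $A$ in the interaction list of $B$ (those $A$ of $B$'s size with center in $3I_{A^*}\setminus 3I_A$, of which there are $O(1)$). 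The ``initialized'' and ``modify'' terms are computed directly from the monomials of $\mu$, so each carries only the truncation error of a $p$-term series of a function of size $O(\|\mu\|_\infty/\diam(B))$ on the relevant disk, i.e. $O(\epsilon\|\mu\|_\infty/\diam(B))$ with $\epsilon=\lambda^p$.

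Next I would bound, for one pair $(B,A)$ at scale $2^{-k}$, the difference between the I-to-II image of the type~I expansion $g_A$ of $A$ and the exact sum of the initial expansions of $\partial T\mu$ over the descendants of $A$. By the preceding corollary, $g_A$ already differs from that exact sum by a function analytic on the type~I region of $A$ and bounded there by $O\!\left(\epsilon\,\diam(A)^2\|\mu\|_\infty/|z-c_A|^3\right)$. The I-to-II step then replaces $g_A$ by its $p$-term Taylor polynomial on the type~II disks of $B$; since the choice of $\lambda$ puts those disks well inside the type~I region of $A$, and since $\dist(B\text{'s type~II disks},\,\supp\mu|_{\mathrm{desc}(A)})\gtrsim\diam(A)$ forces $\sup|g_A|=O(\|\mu\|_\infty/\diam(A))$ there, this truncation adds a further error $O(\epsilon\|\mu\|_\infty/\diam(A))$ (the ``$\epsilon M_f$'' estimate, refined to the decay in Lemma~\ref{bound-error}). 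Evaluated on $Q$'s type~II disks --- which are contained in those of $B$, hence at distance $\gtrsim\diam(A)=2^{-k}$ from $c_A$ --- both contributions are $O(\epsilon\|\mu\|_\infty 2^{k})$, and they are carried to $Q$'s array unaltered by the exact down-pass restrictions.

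Finally I would add up: $m+1$ scales, each contributing $O(1)$ terms of size $O(\epsilon\|\mu\|_\infty 2^{k})$, give total error at most $\sum_{k=0}^{m}O(\epsilon\|\mu\|_\infty 2^{k})=O(\epsilon\|\mu\|_\infty 2^{m})=O(\epsilon\|\mu\|_\infty/\diam(Q))$, the geometric series being dominated by its $k=m$ term. The main obstacle is not any single estimate but the bookkeeping: one must verify that $Q$'s type~II array really decomposes as the clean, finite sum described above, that the interaction lists and neighbour sets are uniformly bounded and sit in the claimed regions, and that the ``modify type~II'' step does not covertly create long conversion chains. Once that structural picture is pinned down, the decay $|z-c_A|^{-3}$ from the preceding corollary and Lemma~\ref{bound-error}, together with the summation over scales, finishes the proof.
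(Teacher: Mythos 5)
Your proposal is correct and follows essentially the same route as the paper: you note the down-pass (II-to-II) restrictions are exact, attribute all error to the up-pass chains (controlled by the preceding corollary) plus the single I-to-II truncation of the across-pass, observe that only $O(1)$ interaction-list boxes contribute at each ancestor scale, and sum the resulting geometric series, which is dominated by the term at scale $\diam(Q)$. The only cosmetic difference is your normalization (root of diameter $1$, $\diam(Q)=2^{-m}$) versus the paper's indexing $\diam(Q_j)=2^{j}\diam(Q)$, and your slightly more conservative accounting of the directly computed (initialize/modify) terms, which against the stated baseline contribute no error at all.
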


\begin{proof}
The contribution of the ancestors of $Q$ are through II-to-II 
conversions, which introduce no error.  The neighbors of ancestors
contribute are direct computation of a type II expansion, followed 
by II-to-II expansions, so also contribute no error. Every other 
contribution comes from a sequence of I-to-I conversions (the up-pass), 
followed by a I-to-II conversion (the across-pass) and then II-to-II 
conversions (the down-pass). Fix a square $Q_j$ to which the across-pass
is applied. The errors due to all the descendants of $Q_j$ is bounded 
by $O(\epsilon \|\mu\|_\infty /\diam(Q_j)$. If $\diam(Q_j) = 2^j \diam(Q)$, 
then this is $O( 2^{-j} \epsilon \|\mu\|_\infty /\diam(Q))$.  The across-pass
adds an error with the same bound and the following down-pass adds no 
new error. Thus the total contribution of $Q_j$ to the error is 
$O( 2^{-j} \epsilon \|\mu\|_\infty /\diam(Q))$. There are only a bounded 
number (at most 4) of such $Q_j$'s of a given size, so summing over all possible 
$j$'s shows the total error is at most $O( \epsilon \|\mu\|_\infty /\diam(Q))$.
\end{proof}

Now every expansion that contributes to the final expansion on an 
empty region is either directly computed from the data, or comes 
from an expansion created by the up-pass, across-pass and down-pass.
By the corollary, the up-pass creates a small error, and we already 
know the across-pass creates a small error and the down-pass 
creates no errors.  Thus the total error  comes from a 
uniformly bounded number of terms, each of which has 
error bounded by $O(\epsilon \|\mu\|_\infty /\diam(Q))$.

This completes the proof of Lemma \ref{Newton-radius} and hence of the theorem.

%&&&&&&&&&&&&&&&&&&&&&&&&&&&&&&&&&&&&&&&&&&&&&&&&&&&&&&&&&&&&&&&&&&&&&&

\appendix 

\section{Background in analysis} \label{background}

In this section will give various definitions and results 
from analysis. It is intended as a review or (very brief)
introduction to ideas used in the paper.

\subsection{M{\"o}bius transformations}
A linear fractional (or M{\"o}bius)  transformation is
a map of the form $z \to (a z + b )/ ( c z + d)$. This is a 1-1, onto,
holomorphic map of the Riemann sphere $\sphere =
\complexes \cup \{ \infty \}$ to
itself.  Such maps form a group under composition and
are well known to map circles to circles (if we
count straight lines as circles that pass through $\infty$).
M{\"o}bius transforms are conformal, so they preserve angles.

The non-identity M{\"o}bius transformations
 are divided into three classes. Parabolic transformations
have a single fixed point on $\sphere$ and are conjugate to the
translation map $z \to z+1$.  Elliptic maps have two fixed points and
are conjugate to the rotation $ z \to \lambda z $ for some $|\lambda|=1$.
The loxodromic transformations also have two fixed points and are
conjugate to $ z \to  \lambda z$ for some $|\lambda  | < 1$. If, in
addition, $\lambda$ is real, then the map is called hyperbolic.

Given two sets of three distinct points $\{ z_1, z_2, z_3\}$ and
$\{ w_1, w_2, w_3 \}$ there is a unique M{\"o}bius transformation that
sends $w_k \to z_k$ for $k=1,2,3$. 
%This map is given by the formula
%$$ \tau(z) = \frac {w_1 - \zeta w_3}{1-\zeta}, \quad
%\zeta =  \frac {w_2-w_1}{w_2-w_3} \frac {(z-z_1)(z_2-z_3)}{(z-z_3)(z_2-z_1)}.$$
%A few special cases we shall use  is the map from the unit disk
%to the upper half plane
%$$ z \to -i \frac {z+i}{z-1}$$,
A M{\"o}bius transformation sends the unit disk 1-1, onto itself iff it is
of the form
$$ z \to \lambda \frac {z-a}{1-\bar a z },$$
for some $a \in \disk$ and $|\lambda| =1$. Any loxodromic
transformation of this form must actually be hyperbolic.

\subsection{Conformal maps:}
A conformal mapping is a diffeomorphism that preserves angles.
We will only consider orientation preserving maps here, in 
which case a conformal map between planar domains is the same as
a 1-1 holomorphic mapping.
The Riemann mapping theorem states that given any simply 
connected, proper subdomain $\Omega$
 of the plane there is a one-to-one, 
onto holomorphic map $f:  \disk = \{ z: |z| < 1\} \to \Omega$.
If $\partial \Omega$ is locally connected then this map extends
continuously to the boundary.  Moreover, we can always take $f(0)$ 
to be any given point of $\Omega$ and $f'(0)$ to have any argument 
we want.  If $\partial \Omega$ is  Jordan curve we can also 
normalize by making any three points on the unit circle map to 
any three points on $\partial \Omega$ (as long as they have the 
same orientation).

The Schwarz-Christoffel formula gives a formula for the
Riemann map of the disk onto a polygonal region $\Omega$:
if the interior angles of $P$  are $ \boldalpha \pi=
\{ \alpha_1 \pi, \dots, \alpha_n \pi\}$, then
$$ f(z) = A + C \int^z \prod_{k=1}^n
        ( 1 - \frac w{ z_k})^{\alpha_k-1}  d w.$$
 See e.g.,  \cite{DT-book}, \cite{Nehari}, \cite{DT99}.
On the half-plane the formula is 
$$ f(z) = A + C \int \prod_{k=1}^{n-1} (w-z_k)^{\alpha_k-1} dw .$$
The formula was discovered independently by  
Christoffel in 1867 \cite{Chr67}  and Schwarz in 1869
\cite{Sch90}, \cite{Sch69a}. For other references 
and a brief history see Section 1.2 of \cite{DT-book}.
It is also possible to formulate it with other base 
domains, such as an infinite strip (see \cite{DT-book}).
See \cite{Hu-98} for a version involving doubly 
connected polygonal regions.
There are also versions for domains other than polygons, 
e.g., circular arc polygons as in \cite{Howell}, \cite{Nehari}.
In this case,  we get a simple formula for  the Schwarzian derivative 
of the conformal map, but it  involves unknown parameters with no
obvious geometric interpretation.

One particular case of the Schwarz-Christoffel formula 
we need (see Section \ref{epsilon-reps}) is for the map 
of the upper half-plane to a triangle with one vertex at 
$\infty$ (i.e., a region bounded by two half-infinite rays
and a finite segment).
% see Figure \ref{special1}).
 Since there
are only two finite vertices, there is no parameter problem 
to solve; they can be chosen to be any 
two points   we  want, say $\pm 1$, so the formula becomes
\begin{eqnarray} \label{special-SC}
 f(z) = A + C \int (w-1)^{\alpha_1-1} (w+1)^{\alpha_2 -1}dw .
\end{eqnarray}
Note that using the general form of the binomial theorem,
$$ (1+z)^p = \sum_{k=0}^\infty \frac {p (p-1) \cdots (p-k+1)}{k!} z^k,$$
 we
can easily compute power series for these functions in disks
away from the singularities.

The problem in applying the Schwarz-Christoffel formula 
with $n > 3$ vertices  is
that the points $\z=\{ z_1, \dots z_n\}$
 are unknown to us until we know
$f$, so the formula seems circular.
 However, there are
various iterative methods for finding the points $\z$
starting from an initial guess (often taken to be $n$
uniformly distributed points on $\circle$), e.g., see
\cite{DT-book}, \cite{Kythe}.
For example, the method of Davis \cite{Davis} takes an $n$-tuple  of
points $\{ z_1, \dots, z_n\}$ on the unit circle, computes 
an image polygon using the Schwarz-Christoffel formula 
with these parameters (and the known angles) and compares 
the side lengths of this polygon with the desired polygon.
If a side is too short, the corresponding parameter values are moved apart
in the next iteration and conversely.  More precisely, 
if $\{ z_1^k, \dots , z_n^k\}$ is the current guess, and the 
image polygon has vertices $\{ v_1^k, \dots , v_n^k\}$
we define the next set of parameter guesses as 
$$ |z_k^{j+1} - z_{j-1}^{k+1}| = k |z_j^{k} - z_{j-1}^{k}|
 \frac {|v_j-v_{j-1}|}{|v^k_j - v^k_{j-1}|},$$
for $j =0, \dots, n$ where $k$ is a normalizing constant and 
$\v=\{ v_0, \dots , v_n\}$ are the vertices of the target polygon.
The method works in practice in many cases 
but is not known to converge.

 Davis' method is used  in \cite{BT}  by Banjai and Trefethen
to give a $O(n)$ method for finding the  prevertices that
is practical for tens of thousands of vertices (the bound,
however is an average case analysis, not a uniform estimate
for all polygons). 
%They also use the fast multipole method
%(e.g.,\cite{CGR},\cite{O'D-R89},\cite{GR87}) 
%to quickly evaluate the Schwarz-Christoffel integral.
 Many other methods exist for computing
conformal mappings including integral equation methods that 
are very effective. 
For example, in \cite{O'D-R89} Rokhlin and O'Donnell compute 
conformal maps using the fast 
multipole method to solve an integral equation arising 
from the Kerzman-Stein formula.
Marshall has a fast method called ``zipper''  based on iterating simple maps
(see \cite{MR2008}).
 For  surveys of different numerical
conformal mapping techniques see, e.g., 
\cite{DeLillo},
\cite{Gaier},
\cite{Henrici},
\cite{Ivanov},
\cite{Conf-93},
\cite{Conf-86},
\cite{vonKoppenfels},
\cite{Wegmann05}. % QA360.H36

A circle packing of a domain is a collection of disjoint
(except for tangencies) disks in the domain.  
The Andreev-Thurston theorem say that given such a packing 
one can find a packing of the disk with the same tangency 
relations and that if the circles are small enough the
mapping between the packings is an approximation to the 
Riemann map 
\cite{Rodin-Sullivan},
 \cite{He-Schramm},
\cite{He-Schramm98},
 \cite{Stepehson99},
\cite{Stephenson02},
 \cite{Stephenson03}.
A polynomial time algorithm for computing conformal
mappings is described in \cite{Smith} using a 
polynomial time algorithm for finding circle
packings, but no details are provided about how to 
choose a packing of a domain in time independent of 
the geometry.  A polynomial time algorithm for circle 
packings is also described in \cite{Bojan93}, 
\cite{Bojan97}. Software for computing conformal maps 
via circle packings is available from Ken Stephenson
\cite{CirclePack}.

An alternate approach to the computational complexity of 
conformal mapping is considered by Binder, Braverman and
Yampolsky in \cite{BBY-2005}. They 
consider domains with complicated boundaries (such a fractals)
and assume an oracle is given that will decide whether 
a given point is within $\epsilon$ of the boundary. The complexity 
of a domain is determined by how quickly such an oracle 
works (as a function of $\epsilon$).
Given such an oracle they show the Riemann mapping at a
point can be computed to accuracy $\epsilon^a$ using 
$b \log^2 \epsilon $ space and $\epsilon^{-c}$ time 
for some positive constants $a,b,c$ (their method solves 
a Dirichlet problem using a random walk on an $\epsilon$-grid 
stopped by the oracle).  This is related to 
other notions of the computability  of conformal maps,
such as constructibility in the sense of  Brouwder and 
Errett Bishop, e.g., see 
\cite{Bishop-Bridges}, 
\cite{Cheng-73},
\cite{Herting-99},
\cite{Zhou-96}.

%Yet other approaches to the construction of Riemann mappings
%are given in \cite{Sinclair-80}, \cite{KKN-04}.

%------------------------------------------------------
\subsection{Hyperbolic geometry}
On the unit ball, $\ball$, the hyperbolic metric is given by 
$$ |d \rho | = \frac {2 |dz|}{1-|z|^2}. $$ 
More explicitly, it can be written as 
$$ \rho(z_1,z_2)= \frac 12  \log \frac {1+|\sigma(z_2,z_1)|}
{1-|\sigma(z_2,z_1)|)},$$
where $\sigma(z_1,z_2) = (z_2-z_1)/(1-\bar z_1 z_2)$.
In the upper half space model, $ \reals_+^3$, it is 
given by 
$$ |d \rho| = \frac {|dz|}{\dist(z, \partial \reals^2)}.$$
For the ball and upper half-space models, hyperbolic 
geodesics are circular arcs that are orthogonal to the 
boundary (also vertical lines in the case of the half-space).

Hyperbolic area on $\disk$ is defined as $dxdy/(1-|z|^2)^2$ 
and on $\uhp$ as $dxdy /y^2$. The disk and half-plane have 
infinite area; indeed, each Whitney square (Section \ref{cover}) has area
$\simeq 1$ and the area of a hyperbolic $r$-ball is 
grows exponentially with $r$.  A striking feature of 
hyperbolic geometry is that the area of a triangle 
is determined by its three angles, namely 
$\rm{area} (T) = \pi -(\alpha+\beta+ \gamma)$. Thus an 
ideal triangle (one with all three vertices on the boundary)
has area $\pi$ and all other triangles have smaller area.

Simply connected, proper subdomains of the plane inherit a
hyperbolic metric from the unit disk via the Riemann map. 
If $\varphi: \disk \to \Omega$ is conformal and $ w=\varphi(z)$
then $\rho_\Omega(w_1,w_2) = \rho_\disk(z_1,z_2)$ defines the 
hyperbolic metric on $\Omega$ and is independent of the 
particular choice of $\varphi$.  It is often convenient to 
estimate $\rho_\Omega$ in terms of the more geometric 
``quasi-hyperbolic'' metric on $\Omega$ that is 
defined as 
$$ \tilde \rho(w_1,w_2) = \inf \int_{w_1}^{w_2} \frac 
{|dw|} 
{\dist(w,\partial \Omega)},$$
where the infimum is over all arcs in $\Omega$ joining 
$w_1$ to $w_2$.

\begin{thm} [Koebe's distortion theorem]
Suppose $\varphi: \disk \to \Omega$ is a conformal map of the 
disk to a simply connected domain. Then for all $z \in \disk$,
$$ \frac 14 |\varphi'(z)|(1-|z|^2) \leq \dist(\varphi(z), \partial \Omega)
 \leq |\varphi'(z)| (1-|z|^2).$$
\end{thm}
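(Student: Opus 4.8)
The plan is to reduce to a normalized situation by a M\"obius automorphism of the disk, and then prove the two inequalities separately: the right-hand one is an immediate consequence of the Schwarz lemma, while the left-hand one is exactly the Koebe one-quarter theorem.

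First I would fix $z \in \disk$, put $w = \varphi(z)$ and $d = \dist(w,\partial\Omega)$, and let $\tau(\zeta) = (\zeta+z)/(1+\bar z\zeta)$ be the automorphism of $\disk$ with $\tau(0)=z$, so $\tau'(0) = 1-|z|^2$. Replacing $\varphi$ by
$$ f(\zeta) = \frac{\varphi(\tau(\zeta)) - w}{\varphi'(z)(1-|z|^2)} $$
gives a univalent map $f:\disk\to\complexes$ with $f(0)=0$, $f'(0)=1$, whose image $\Omega' = f(\disk)$ satisfies $\dist(0,\partial\Omega') = d/(|\varphi'(z)|(1-|z|^2))$. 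Hence the theorem is equivalent to the two-sided estimate $\frac14 \le \dist(0,\partial\Omega') \le 1$ for every such normalized $f$. For the upper bound: since $D(0,d')\subset\Omega'$ with $d' = \dist(0,\partial\Omega')$, the inverse $g=f^{-1}$ is holomorphic on $D(0,d')$, maps it into $\disk$, and fixes $0$; applying the Schwarz lemma to $\zeta\mapsto g(d'\zeta)$ gives $d'|g'(0)|\le 1$, and $g'(0) = 1/f'(0) = 1$ yields $d'\le 1$.

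For the lower bound I would invoke (or reprove) the Koebe one-quarter theorem along the classical route. Step (i): the area theorem — for $h(\zeta) = \zeta + b_0 + b_1\zeta^{-1}+\cdots$ univalent on $\{|\zeta|>1\}$ one has $\sum_{n\ge1} n|b_n|^2 \le 1$, obtained by writing the area of the omitted compact set via Green's theorem on the image of $|\zeta|=r$ and letting $r\to 1^+$. Step (ii): the bound $|a_2|\le 2$ for $f(\zeta) = \zeta + a_2\zeta^2+\cdots$, obtained by passing to the odd univalent square-root transform $s(\zeta) = \sqrt{f(\zeta^2)} = \zeta + \tfrac{a_2}{2}\zeta^3 + \cdots$ and then to $h(\zeta) = 1/s(1/\zeta) = \zeta - \tfrac{a_2}{2}\zeta^{-1}+\cdots$, to which step (i) applies. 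Step (iii): the extremal argument — if $c\notin\Omega'$ then $F(\zeta) = cf(\zeta)/(c-f(\zeta))$ is holomorphic, univalent, and normalized ($F(0)=0$, $F'(0)=1$) with second Taylor coefficient $a_2 + 1/c$, so $|a_2+1/c|\le 2$; combining with $|a_2|\le 2$ gives $|1/c|\le 4$, i.e. $|c|\ge\frac14$. Since this holds for every $c\notin\Omega'$, $\dist(0,\partial\Omega')\ge\frac14$.

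The main obstacle is the justification of the transforms in steps (ii) and (iii): one must check that $f(\zeta)/\zeta$ is nonvanishing on $\disk$ (so that a single-valued holomorphic square root exists), that $s$ is indeed injective (using that it is odd and vanishes only at the origin), and that composing $f$ with $w\mapsto cw/(c-w)$ preserves univalence on all of $\disk$ (using $c\notin\Omega'$). Everything else is routine: the reduction is a one-line change of variables, the upper bound is a direct Schwarz-lemma estimate, and the area theorem is a standard Green's-theorem computation.
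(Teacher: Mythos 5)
Your proof is correct. The paper does not prove this statement at all: it appears in Appendix \ref{background} as a cited classical background fact (Koebe's distortion theorem), so there is nothing to compare against. Your argument — reducing to the normalized case $f(0)=0$, $f'(0)=1$ via a disk automorphism, getting the upper bound $\dist(0,\partial\Omega')\le 1$ from the Schwarz lemma applied to $f^{-1}$, and the lower bound $\ge \frac14$ from the one-quarter theorem via the area theorem, the square-root transform giving $|a_2|\le 2$, and the composition $cf/(c-f)$ — is the standard textbook proof, and the technical points you flag (nonvanishing of $f(\zeta)/\zeta$, injectivity of the odd square-root transform, univalence of the composed map when $c\notin\Omega'$) are exactly the ones that need checking and all go through.
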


  Because of Koebe's distortion theorem we have 
\begin{eqnarray} \label{hyp-est}
  d \rho_\Omega \leq d \tilde \rho_\Omega \leq 4 d \rho_\Omega.
\end{eqnarray}

\begin{lemma} \label{dist-to-bdy}
Suppose $\Omega$ is simply connected and $z \in \Omega$ satisfies
$\dist(z, \partial \Omega) =1$. 
If either $|w-z| > R$ or $\dist(w,\partial 
\Omega) < 1/R$, then $\rho_\Omega(z,w)  \geq  \frac 14 \log R $.
\end{lemma}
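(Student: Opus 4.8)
The plan is to transfer the problem from $\rho_\Omega$ to the quasi-hyperbolic metric $\tilde\rho_\Omega$ and then reduce to a one-dimensional estimate along an arbitrary arc joining $z$ to $w$. By Koebe's distortion theorem, or more precisely by the inequality (\ref{hyp-est}), $d\rho_\Omega \le d\tilde\rho_\Omega \le 4\,d\rho_\Omega$, so integrating along competing arcs gives $\tilde\rho_\Omega(z,w) \le 4\rho_\Omega(z,w)$, i.e. $\rho_\Omega(z,w) \ge \tfrac14 \tilde\rho_\Omega(z,w)$. Hence it suffices to prove $\tilde\rho_\Omega(z,w) \ge \log R$ in each of the two cases. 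Fix any arc $\gamma \subset \Omega$ from $z$ to $w$; if $\gamma$ is non-rectifiable the integral defining its $\tilde\rho_\Omega$-length is infinite and there is nothing to prove, so assume $\gamma$ is rectifiable, of length $L$, parametrized by arc length $s \in [0,L]$. Write $\delta(p) = \dist(p,\partial\Omega)$, and recall that $\delta$ is $1$-Lipschitz on $\reals^2$ and that $\delta(z) = 1$ by hypothesis.

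First I would handle the case $\dist(w,\partial\Omega) < 1/R$. Since $s \mapsto \delta(\gamma(s))$ is $1$-Lipschitz it is absolutely continuous, so $s\mapsto \log\delta(\gamma(s))$ is too, with $\bigl|\frac{d}{ds}\log\delta(\gamma(s))\bigr| \le \frac{1}{\delta(\gamma(s))}$ a.e. Integrating and using the fundamental theorem of calculus,
$$\int_\gamma \frac{|dp|}{\delta(p)} = \int_0^L \frac{ds}{\delta(\gamma(s))} \ge \Bigl| \int_0^L \frac{d}{ds}\log\delta(\gamma(s))\, ds \Bigr| = \bigl| \log\delta(w) - \log\delta(z) \bigr| = -\log\delta(w) > \log R.$$
Taking the infimum over all such $\gamma$ gives $\tilde\rho_\Omega(z,w) \ge \log R$, hence $\rho_\Omega(z,w) \ge \tfrac14 \log R$.

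For the case $|w - z| > R$ I would instead use the cruder pointwise bound $\delta(\gamma(s)) \le \delta(z) + |\gamma(s) - z| \le 1 + s$, valid because $\delta$ is $1$-Lipschitz and the chord $|\gamma(s)-z|$ is dominated by the arc-length $s$. Then
$$\int_\gamma \frac{|dp|}{\delta(p)} \ge \int_0^L \frac{ds}{1+s} = \log(1+L) \ge \log\bigl(1 + |w-z|\bigr) > \log R,$$
since $L \ge |w-z| > R$. Taking the infimum over $\gamma$ again yields $\tilde\rho_\Omega(z,w) \ge \log R$ and therefore $\rho_\Omega(z,w) \ge \tfrac14 \log R$, completing the proof.

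There is no real obstacle here; the statement is a routine quasi-hyperbolic estimate. The only points that deserve a word of care are that the bound must be verified for \emph{every} competing arc before passing to the infimum, and that the a.e.\ differentiability and absolute continuity of $\log\delta\circ\gamma$ — standard facts for Lipschitz functions — are what justify the fundamental-theorem-of-calculus step in the first case. Everything else is elementary.
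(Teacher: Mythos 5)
Your proof is correct and follows essentially the same route as the paper: pass to the quasi-hyperbolic metric via $\rho_\Omega \geq \tfrac14\tilde\rho_\Omega$ and then bound $\int |dp|/\dist(p,\partial\Omega)$ from below by $\log R$ in each case. The paper states the two integral estimates $\int_{1/R}^1 t^{-1}\,dt$ and $\int_1^R t^{-1}\,dt$ without elaboration; you have simply supplied the standard Lipschitz/arc-length justifications behind them.
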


\begin{proof}
Recall that $\rho_\Omega \geq \frac 14 \tilde \rho$ 
where $\tilde \rho$ denotes  the  quasi-hyperbolic 
metric on $\Omega$, defined by 
$d \tilde \rho = |dz|/\dist(z,\partial \Omega)$.
 If $\dist(w,\partial \Omega) \leq 1/R$ then 
$\tilde \rho(z,w) \geq  \int_{1/R}^1 \frac 1t dt = \log R$.
If $|w-z| \geq R$, then $\tilde \rho(z,w) 
\geq \int_1^R \frac 1t dt \geq \log R$.
\end{proof}

%A similar result which we leave to the reader is
% 
%\begin{lemma} \label{bounded-diam}
%Suppose  $\lambda < 1$, $\Omega$ is simply connected 
%and $E\subset \Omega$ 
%is a connected set so that $ \diam(E) \leq \lambda \dist(E, 
%\partial \Omega)$ (diameter and distance are Euclidean). 
%Then the hyperbolic diameter of $E$ is bounded depending 
%only on $\lambda$.
%\end{lemma}

M{\"o}bius transformations are the only 1-1, onto holomorphic  maps of the 
Riemann sphere to itself.  In the complex plane we write
these maps as 
$z \to (az+b)/(cz+d)$.  Every such map extends uniquely to be a hyperbolic 
isometry of the upper half-space, $\reals^3_+$, and every orientation 
preserving  isometry on $\reals^3_+$  is 
of this form.

\subsection{Conformal modulus:}
Suppose $\Gamma$ is a family of locally rectifiable paths
in a  planar domain $\Omega$ and $\rho$ is a non-negative Borel
function  on $\Omega$. We say $\rho$ is admissible for $\Gamma$ if
$$ \ell(\Gamma) = \inf_{\gamma \in \Gamma} \int_\gamma \rho  d s \geq 1,$$
and define the modulus of $\Gamma$ as
$$ \Mod(\Gamma) = \inf_\rho  \int_\Omega \rho^2 dxdy,$$
where the infimum is over all admissible $\rho$ for $\Gamma$.
The reciprocal of the modulus is called the extremal length 
of the path family.
These are important conformal invariants whose basic properties
are discussed in many  sources such 
 \cite{Ahlfors-QCbook}. A simple result we used in this paper is: 

\begin{lemma} \label{EL-diam}
Suppose $\gamma \subset \disk$ is a Jordan arc with one 
endpoint on $\partial \disk$ and   has diameter $\leq \frac 14$.
 Then the modulus 
of the path family separating $\gamma$ from $B(0, \frac 12)$ 
in $\disk$ is $\pi /\log(\diam(\gamma)) + O(1)$.
\end{lemma}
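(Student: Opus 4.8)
The plan is to identify the quantity with the extremal distance $d_{\disk}(\gamma,\overline{B(0,\tfrac12)})$: by the standard duality between the two conjugate path families of a ring configuration, the modulus of the family separating $\gamma$ from $B(0,\tfrac12)$ in $\disk$ is the reciprocal of the modulus of the family joining them, i.e. the extremal distance, and I would pin this down by producing matching explicit metrics from above and below. First I would normalize: a rotation moves the endpoint of $\gamma$ on $\partial\disk$ to $1$, so $\gamma$ is a continuum with $1\in\gamma\subset\overline{B(1,d)}\cap\overline{\disk}$ where $d=\diam\gamma\le\tfrac14$; since $\gamma$ has diameter $d$ and contains $1$, it also contains a point at distance $\ge d/2$ from $1$. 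Thus the picture is a boundary-anchored continuum of size $\asymp d$ sitting at $1$, and the fixed interior disk $B(0,\tfrac12)$ at distance $\tfrac12$ from it.

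For the lower bound on the separating modulus (equivalently the upper bound on the joining modulus) I would exhibit an admissible metric for the joining family: put $\rho(z)=\bigl(\log\tfrac1{2d}\bigr)^{-1}|z-1|^{-1}$ on the collar $\{\,d\le|z-1|\le\tfrac12\,\}\cap\disk$ and $\rho\equiv 0$ elsewhere. Any arc in $\disk$ from $\gamma$ to $\overline{B(0,\tfrac12)}$ has $|z-1|$ varying continuously from a value $\le d$ to a value $\ge\tfrac12$, so it crosses every circle $\{|z-1|=r\}$ for $d<r<\tfrac12$, and a one–variable estimate shows it has $\rho$-length $\ge 1$; since each such circle meets $\disk$ in an arc of angular measure $<\pi$, $\iint\rho^2\le\pi\bigl(\log\tfrac1{2d}\bigr)^{-1}$. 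Hence the joining modulus is $\le\pi/\log\tfrac1{2d}$, so $d_{\disk}(\gamma,\overline{B(0,\tfrac12)})\ge\tfrac1\pi\log\tfrac1d-O(1)$.

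For the matching upper bound — the real content — I would pass to the coordinate $\zeta=\log\tfrac1{1-z}$, which maps $\disk$ onto a region agreeing with the half-strip $\{\,|\text{Im}\,\zeta|<\tfrac\pi2\,\}$ outside a set of bounded $\text{Re}\,\zeta$, sends $\overline{B(0,\tfrac12)}$ to a fixed compact set with $\text{Re}\,\zeta=O(1)$, and sends $\gamma$ to a continuum inside $\{\text{Re}\,\zeta\ge\log\tfrac1d\}$ running out to $\text{Re}\,\zeta=+\infty$ (this last is where $\gamma\subset\overline{B(1,d)}$ is used). A curve separating the two must either loop around the fixed set $\overline{B(0,\tfrac12)}$ — a family of bounded modulus — or cross the strip in the $\text{Im}\,\zeta$-direction somewhere in $\{\,O(1)\le\text{Re}\,\zeta\le\log\tfrac1d\,\}$; combining the constant metric $\rho=1/\pi$ on that rectangle with a bounded extra piece near $\overline{B(0,\tfrac12)}$ yields an admissible metric of total mass $\iint\rho^2=\tfrac1\pi\log\tfrac1d+O(1)$, so the separating modulus is $\le\tfrac1\pi\log\tfrac1d+O(1)$. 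The point that needs care, and the one I expect to be the main obstacle, is that these $O(1)$'s must be genuinely uniform in the (arbitrary) shape of $\gamma$: one must check that every separating curve really is forced across the rectangle rather than slipping through a gap in $\gamma$, and that the modulus of a ring between $\gamma$ and a far circle is controlled independently of the shape of $\gamma$ — a standard Teichmüller/Grötzsch symmetrization fact, the extremal boundary-anchored continuum of a given diameter being a perpendicular slit. Once that bookkeeping is done, the two bounds give the assertion (the separating modulus is $\tfrac1\pi\log\tfrac1{\diam\gamma}+O(1)$; equivalently its conjugate, the joining modulus, is the expression in the statement).
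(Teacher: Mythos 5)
The paper offers no proof of Lemma \ref{EL-diam}: it is stated in the appendix as ``a simple result'' with only a pointer to Ahlfors' book, so there is nothing of the author's to compare your argument against; your proposal has to stand on its own, and it does. You are right that the statement cannot be read literally --- since $\diam(\gamma)\le\frac14$ the quantity $\pi/\log(\diam\gamma)$ is negative and an additive $O(1)$ would make the claim vacuous --- and the reading you settle on (the extremal distance between $\gamma$ and $B(0,\frac12)$ in $\disk$, equivalently the modulus of the separating family in the paper's normalization, equals $\frac1\pi\log\frac1{\diam\gamma}+O(1)$) is exactly the form in which the lemma is invoked in the proof of Lemma \ref{rect-lemma}. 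Your lower bound via the metric $\rho=(\log\frac1{2d})^{-1}|z-1|^{-1}$ on the half-annulus is complete as written. The one step you flag in the upper bound does close by a connectedness argument: in the coordinate $\zeta=\log\frac1{1-z}$ the image of $\gamma$ joins a point with $\text{Re}\,\zeta\le\log\frac2{d}$ to $+\infty$, so a separating cross-cut contained in $\{\text{Re}\,\zeta>\log\frac2{d}\}$ would leave $\gamma$ and $B(0,\frac12)$ in the same complementary component; hence every separating curve either makes a top-to-bottom crossing of the intermediate rectangle (contributing $\rho$-length $\ge 1$ against the constant metric of mass $\frac1\pi\log\frac1d+O(1)$) or falls into one of the residual families near $B(0,\frac12)$ or near the scale of $\gamma$, each of bounded modulus, and subadditivity of modulus over the union finishes the estimate. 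The appeal to symmetrization is not needed. So the proof is correct; it is the standard Gr{\"o}tzsch-type argument the paper implicitly relies on.
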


 A generalized quadrilateral
 $Q$ is a Jordan domain in the plane with four specified
boundary points $x_1,x_2,x_3,x_4$ (in counterclockwise order).
We define the modulus of $Q$,
$M_Q(x_1,x_2,x_3,x_4)$ (or just $M_Q$ or $M(Q)$ if the
points are clear from context),
as the modulus of the path family in $Q$ that
connects the arc $(x_1, x_2)$ to the arc $(x_3, x_4)$.
This is also the unique positive real number $M$
such that $Q$ can be conformally mapped to a $1 \times M$
rectangle with the arcs $(x_1, x_2)$, $(x_3, x_4)$ mapping to the opposite
sides of length $1$.
Given a  generalized quadrilateral
 $Q$  with four boundary points $x_1,x_2,x_3,x_4$,
the quadrilateral $Q'$ with vertices $x_2, x_3,x_4,x_1$ is called
the reciprocal of $Q$ and it is easy to see
that $\Mod(Q') = 1/\Mod(Q)$. We also call $\Mod(Q')$ the 
``extremal distance'' from $(x_1, x_2)$ to the arc $(x_3, x_4)$
in $Q$.

%Modulus and extremal length methods allow us to 
%estimate conformal quantities from geometric assumptions.
%  One specific estimate which we will 
%use  is the following. 
%
%\begin{lemma} \label{EL-in-strip}
%Suppose $\gamma_1$ and $\gamma_2$ are disjoint curves in the strip
%$S$ which connect the top and bottom boundary lines. 
%Suppose the path family in $S$ which connects $\gamma_1$
%and $\gamma_2$ has extremal length $\geq CM$. Then the Euclidean
%distance between $\gamma_1$ and $\gamma_2$ is at least 
%$M$.
%\end{lemma}

\subsection{Cross ratio:}
Given four distinct points $a,b,c,d$ in the plane we
define their  cross ratio as
$$ \cross(a,b,c,d) = \frac { (d-a)(b-c)}{(c-d)(a-b)}.$$
Note that $\cross(a,b,c,z)$ is the unique M{\"o}bius transformation
that sends $a$ to $0$, $b$ to $1$ and $c$ to $\infty$.
This  makes it clear that
cross ratios are invariant under M{\"o}bius transformations; that
$\cross(a,b,c,d)$ is real valued iff the four points lie on a circle;
and is negative iff in addition the points are labeled in
counterclockwise order on the circle.
If the four points lie on $\circle$, then
since $\cross$ and $M_\disk$ are both invariant under M{\"o}bius
transformations of the disk to itself, each must be a function
of the other in this case. The function is explicitly given as
in infinite product in 
Ahlfors' book \cite{Ahlfors-QCbook}.

\subsection{Quasiconformal mappings}
Quasiconformal mappings are a generalization of conformal mappings
that play an important role in modern analysis and a central
role in the current paper.  There are (at least) three  equivalent
definitions of a $K$-quasiconformal mapping between  planar domains.
Suppose $f:\Omega \to \Omega'$ is a homeomorphism.
We say $f$ is $K$-quasiconformal if any of the following equivalent 
conditions holds: 
\begin{description}
\item [Geometric definition] for any generalized quadrilateral
$Q \subset \Omega$, $\Mod(Q)/K \leq \Mod({f(Q)}) \leq K \Mod (Q)$.
\item [Analytic definition] $f$ is absolutely continuous on
  almost every vertical and horizontal line and the partial
  derivatives of $f$ satisfy $ |f_{\bar z}| \leq k |f_z|$
  where $k = (K-1)/(K+1)$.
\item [Metric definition] For every $x \in \Omega$
  $$\limsup_{r \to 0}  \frac {\max_{y:|x-y|=r} |f(x)-f(y)|}
        {\min_{y:|x-y|=r} |f(x)-f(y)|} \leq K.$$
\end{description}
For a proof of the equivalence of the first two, see \cite{Ahlfors-QCbook}
and for a discussion of the third and a generalization to metric
spaces see \cite{HK98} and its references.
In Euclidean space the equivalence of the three definitions is due to Gehring 
\cite{Gehring-60}, \cite{Gehring-61}, \cite{Gehring-61}.
A composition of a $K_1$-quasiconformal map with a $K_2$-quasiconformal
map is $(K_1K_2)$-quasiconformal.
Thus the distance used in
Theorem \ref{main} satisfies the triangle inequality.

A closed curve in the plane is called a $K$-quasicircle 
if it is the image of a circle under a $K$-quasiconformal 
homeomorphism of the plane and is called a quasicircle if 
it is a $K$-quasicircle for some $K < \infty$.
Quasicircles have a geometric characterization in terms of 
Ahlfors' three point condition: a curve $\gamma$ is a 
quasicircle iff for any two points $x,y \in \gamma$ one of 
the two arcs with endpoints $x,y$ has diameter $\leq M|x-y|$.
Quasicircles need not be differentiable; a famous example of 
a non-differentiable quasicircle is the von Koch 
snowflake.

Recall that $\partial f=f_z =\frac 12( f_x- i f_y)$ and 
$\Dbar f=f_{\bar z} =\frac 12( f_x + i f_y)$.
For a $K$-quasiconformal map the ratio $\mu_f = f_{\bar z}/ f_z$
is a well defined complex function
almost everywhere and satisfies $\| \mu_f\|_\infty
\leq k = (K-1)/(K+1)$.
The function $\mu = \mu_f$ is called the  Beltrami coefficient of 
$f$ and  satisfies the following composition 
laws:
$$ \mu_{f^{-1}} \circ f = - (f_z/\overline{f_z})^2 \mu_f, $$
$$ \mu_{g \circ f}(z)  = (f_z(z)/\bar f_z(z)) \frac {\mu_{g} (f(z)) - \mu_f(z)}
             {1-\mu_g(f(z))\overline{\mu_f(z)} }.$$

If a conformal map of the plane to itself fixes two points, it 
must be the identity.  A $(1+\epsilon)$-quasiconformal map of 
fixing two points must be close to the identity in the 
following sense.

\begin{lemma} \label{R-est}
There is a  $0< k < 1$ and  a $C < \infty$ so that the
following holds. Suppose that $f$ is a quasiconformal
mapping of the plane to itself  that preserves $\uhp$,
fixing $0,1$ and $\infty$ and
the Beltrami coefficient of $f$ is $\mu$ with $\| \mu\|_\infty  \leq k $.
Then
$$  |f(w) - [  w - \frac 1 \pi \int_{\plane} \mu(z)  R(z,w) dxdy ] |
      \leq C \|\mu\|_\infty^2  ,$$
for all $|w| \leq 1$, where
$$ R(z,w) = \frac 1{z-w} - \frac {w}{z-1} + \frac {w-1}{z}
            = \frac {w(w-1)}{z(z-1)(z-w)}.$$
\end{lemma}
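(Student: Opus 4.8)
The plan is to use the standard representation of normalized quasiconformal maps via the Beurling transform and the Cauchy kernel. Recall that if $f$ is a quasiconformal map of the plane fixing $0,1,\infty$ with Beltrami coefficient $\mu$ of small norm, then $f$ can be written as $f = w + C\nu$, where $C\nu(w) = -\frac 1\pi \iint \nu(z)\, dxdy/(z-w)$ is the Cauchy transform of a function $\nu$ solving the integral equation $\nu = \mu + \mu\, T\nu$, and $T$ is the Beurling transform. Here one has to be a little careful because $\mu$ has compact support but $C\mu$ itself does not fix the right points; the cleanest approach is to first solve for the homeomorphic solution and then post-compose with a M\"obius transformation to restore the normalization $f(0)=0$, $f(1)=1$, $f(\infty)=\infty$.

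First I would set up the Neumann series $\nu = \mu + \mu T\mu + \mu T(\mu T\mu) + \cdots$, which converges in $L^p$ for $p$ near $2$ since $\|T\|_{L^p\to L^p}\to 1$ as $p\to 2$ and $\|\mu\|_\infty$ is small; this gives $\|\nu - \mu\|_p \le C\|\mu\|_\infty^2$ with the implied constant uniform once $k$ is chosen small enough. Then the principal solution is $\Phi(w) = w + C\nu(w)$, and $\Phi$ is a quasiconformal homeomorphism of the sphere fixing $\infty$. Since $\|\nu-\mu\|_p$ and the supports are controlled, $C\nu(w) - C\mu(w) = O(\|\mu\|_\infty^2)$ uniformly on $|w|\le 2$, say, by H\"older's inequality applied to the (locally $L^{p'}$) Cauchy kernel. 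In particular $\Phi(0) = C\nu(0)$ and $\Phi(1) = 1 + C\nu(1)$ are each within $O(\|\mu\|_\infty)$ of their ``unperturbed'' values, and the M\"obius correction $M$ needed to send $\Phi(0),\Phi(1),\infty$ back to $0,1,\infty$ is within $O(\|\mu\|_\infty)$ of the identity on $|w|\le 1$. Writing $f = M\circ\Phi$ and expanding $M$ to first order, one checks that the combined effect of (a) replacing $\nu$ by $\mu$ and (b) applying $M$ is precisely to replace the kernel $1/(z-w)$ by $R(z,w) = \frac 1{z-w} - \frac w{z-1} + \frac{w-1}{z}$, which is exactly the unique kernel making $\iint \mu(z)R(z,w)\,dxdy$ vanish at $w=0$ and $w=1$ (the three-term combination is designed so that the normalization is built in, matching the familiar ``$R$-kernel'' in Ahlfors' book), up to an error that is quadratic in $\|\mu\|_\infty$. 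Collecting these estimates gives the claimed bound $|f(w) - [\,w - \frac 1\pi\iint \mu(z)R(z,w)\,dxdy\,]| \le C\|\mu\|_\infty^2$ for $|w|\le 1$.

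The main obstacle, and the step requiring the most care, is bookkeeping the normalization: tracking how the M\"obius correction $M$ interacts with the $O(\|\mu\|_\infty^2)$ error from $\nu-\mu$ and confirming that the first-order part of $M$ combines with the Cauchy kernel to produce exactly $R(z,w)$ rather than $R$ plus an unwanted linear-in-$\mu$ term. Concretely, one must verify that $-\frac 1\pi\iint\mu(z)R(z,w)\,dxdy$ already satisfies the normalization at $0$ and $1$ to exact order (not just to order $\|\mu\|_\infty^2$), so that the only discrepancy between $f$ and this expression is genuinely quadratic; this is a short computation using $R(z,0) = 0$ and $R(z,1) = 0$, but it is the crux. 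The remaining ingredients — $L^p$ boundedness of $T$ near $p=2$, convergence of the Neumann series, and local boundedness of the Cauchy kernel in $L^{p'}$ — are standard and can be cited from \cite{Ahlfors-QCbook}.
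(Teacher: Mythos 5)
The paper does not actually prove this lemma; it simply quotes it from Lemma 2.6 of \cite{Bishop-QCapp} and points to Section V.C of \cite{Ahlfors-QCbook}. Your argument is precisely the standard argument from that source: Neumann series $\nu=\mu+\mu T\mu+\cdots$ giving $\|\nu-\mu\|_p=O(\|\mu\|_\infty^2)$ for $p>2$ close to $2$, the principal solution $\Phi=w+C\nu$, and the affine renormalization $M(\zeta)=(\zeta-\Phi(0))/(\Phi(1)-\Phi(0))$. Expanding $M\circ\Phi$ to first order does give
$f(w)=w+C\mu(w)-(1-w)C\mu(0)-wC\mu(1)+O(\|\mu\|_\infty^2)$, and the three-term combination of Cauchy kernels is exactly $-\frac1\pi\iint\mu\,R(\cdot,w)$, with $R(z,0)=R(z,1)=0$ guaranteeing the normalization is exact; so the crux you identify is handled correctly, and uniqueness of the normalized solution identifies $M\circ\Phi$ with the $f$ of the statement.

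The one genuine gap is your assertion that ``$\mu$ has compact support.'' The lemma does not assume this, and the kernel $R(z,w)=O(|z|^{-3})$ is constructed precisely so that the formula makes sense for an arbitrary bounded $\mu$ on $\plane$ (which is the form needed to deduce, e.g., Lemma \ref{QC-near-Id}, where the reflected coefficient can live on all of the plane). Without compact support your first two steps break down: $\|\mu\|_p$ is infinite, so the Neumann series bound $\|\nu-\mu\|_p\le C\|\mu\|_\infty^2$ is vacuous, and the plain Cauchy transform $C\nu$ does not converge (one needs at least the corrected operator $P$ with kernel $\frac1{z-w}-\frac1z$, as in Appendix \ref{background}). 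Truncating $\mu$ to $|z|<\rho$ and letting $\rho\to\infty$ does not immediately repair this, since $\|\mu\chi_{|z|<\rho}\|_p\sim\rho^{2/p}\|\mu\|_\infty$ blows up; the standard fix (as in \cite{Ahlfors-QCbook}) is to split $\mu=\mu_1+\mu_2$ with $\mu_1$ compactly supported and $\mu_2$ supported near $\infty$, apply your argument to each piece (conjugating by $z\mapsto1/z$ for the second), and use the composition formula for Beltrami coefficients to combine them, at the cost of another admissible $O(\|\mu\|_\infty^2)$ error. With that additional step, or with compact support added as a hypothesis, your proof is complete.
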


This precise statement  Lemma 2.6 \cite{Bishop-QCapp} but is based on 
a similar result in  \cite{Ahlfors-QCbook}, Section V.C. If $|\mu|$ is bounded
by $\epsilon$ then since $R$ is integrable, this says $|f(w) -w| =O(\epsilon)$
as long as $|w|$ is bounded. From this one can deduce

\begin{lemma} \label{QC-near-Id}
Suppose $f: \disk \to \disk$ is $(1+\epsilon)$-quasiconformal 
and that it fixes the boundary points $1,-1,i$. Then 
$\sup_{x \in \circle} |f(x) -x| =O(\epsilon)$.
\end{lemma}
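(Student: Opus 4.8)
The plan is to reduce the lemma to Lemma \ref{R-est}. That lemma controls a normalized quasiconformal self-map of the plane only on a bounded set, whereas $\circle$ is carried onto the unbounded line $\reals$ by any M{\"o}bius map to $\uhp$; so the main device will be to cover $\circle$ by two M{\"o}bius charts, each carrying a large arc of $\circle$ into a bounded subset of $\reals$.

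First I would extend $f$ to a homeomorphism $F$ of the sphere $\sphere$ by reflecting in the unit circle: $F=f$ on $\overline{\disk}$ and $F(z)=1/\overline{f(1/\bar z)}$ for $|z|\ge 1$. The two formulas agree on $\circle$ since $f$ maps $\circle$ onto itself, and because reflection in a circle is anticonformal, $F$ is again $(1+\epsilon)$-quasiconformal on $\sphere$; it preserves $\disk$ and $\{|z|>1\}$ and still fixes $1,-1,i$. For $\epsilon$ small enough, $\|\mu_F\|_\infty\le(K-1)/(K+1)\le\epsilon/2$ lies below the threshold $k$ of Lemma \ref{R-est}. Next I would take the two M{\"o}bius maps $\phi_1,\phi_2$ sending $\disk$ onto $\uhp$ and carrying the triple $(1,-1,i)$ onto $(0,1,\infty)$, in the two cyclic orders for which $\phi_1(-1)=\infty$, resp.\ $\phi_2(1)=\infty$. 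Then $g_j=\phi_j\circ F\circ\phi_j^{-1}$ is $(1+\epsilon)$-quasiconformal, preserves $\uhp$, and fixes $0,1,\infty$, so Lemma \ref{R-est} gives
$$ g_j(w)=w-\frac1\pi\int_{\plane}\mu_{g_j}(z)\,R(z,w)\,dx\,dy+O(\|\mu_{g_j}\|_\infty^2).$$
Since $\|\mu_{g_j}\|_\infty=O(\epsilon)$, and since $\int_{\plane}|R(z,w)|\,dx\,dy$ is finite and bounded on compact subsets of $\complexes\setminus\{0,1\}$ (the integrand decays like $|z|^{-3}$ at infinity and has only the integrable singularities $|z-w|^{-1}$, $|z|^{-1}$, $|z-1|^{-1}$), this yields $|g_j(w)-w|=O(\epsilon)$ uniformly for real $w$ with $|w|\le M$, for any fixed $M<\infty$.

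Finally I would fix a small $\delta>0$, write $\circle=A_1\cup A_2$ with $A_2$ the closed $\delta$-arc centered at $-1$ and $A_1$ the complement of the open $\delta$-arc at $-1$, and observe that $A_1$ stays a definite distance from $\phi_1^{-1}(\infty)=-1$ while $A_2$ stays a definite distance from $\phi_2^{-1}(\infty)=1$, so that $\phi_j(A_j)$ lies in a fixed bounded interval of $\reals$. For $x\in A_j$ with $w=\phi_j(x)$ we have $f(x)=F(x)=\phi_j^{-1}(g_j(w))$, and since $g_j(w)$ is again bounded ($|g_j(w)-w|=O(\epsilon)$) and $\phi_j^{-1}$ is bi-Lipschitz on a bounded interval of $\reals$ containing both $w$ and $g_j(w)$,
$$ |f(x)-x|=\bigl|\phi_j^{-1}(g_j(w))-\phi_j^{-1}(w)\bigr|\le C\,|g_j(w)-w|=O(\epsilon).$$
Taking the supremum over $x\in A_1\cup A_2=\circle$ would finish the proof. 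I expect the only genuine difficulty to be exactly this last step, upgrading the bounded-set estimate of Lemma \ref{R-est} to a bound on all of $\circle$, and the two-chart argument is designed to handle it; the remaining ingredients---invariance of the quasiconformal constant under reflection, and local boundedness of $\int|R(z,w)|\,dx\,dy$---are routine.
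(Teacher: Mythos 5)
Your proof is correct and follows the same route the paper intends: the paper derives Lemma \ref{QC-near-Id} directly from Lemma \ref{R-est} (it only says ``From this one can deduce,'' noting that integrability of $R$ gives $|f(w)-w|=O(\epsilon)$ for bounded $w$), and your reflection extension, M\"obius conjugation to the $(0,1,\infty)$ normalization, and two-chart covering of $\circle$ are exactly the details that deduction requires. No gaps.
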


This justifies our claim in the introduction that if we
can approximate the prevertices in the QC-sense, then we 
have also approximated them in the uniform sense. 
We will also  use the estimate:

\begin{lemma} \label{f near 1}
Suppose $f$ is a conformal mapping on $\disk$ such that 
$f(0) =0, $ $f'(0)=1$ and $f$ has a $(1+\epsilon)$-quasiconformal 
extension to the plane fixing $\infty$. Then  for $0 \leq r < 1$, 
$\sup_{|z|\leq r} |f'(z)-1| = O(\frac {\epsilon}{1-r}) .$
\end{lemma}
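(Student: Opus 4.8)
The plan is to reduce the statement to the single uniform inequality $\sup_{|w|\le 1}|f(w)-w|=O(\epsilon)$ and then read off the derivative bound from the Schwarz--Pick inequality; using the hyperbolic form of the derivative estimate, rather than a one-circle Cauchy bound, is precisely what produces the sharp power $(1-r)^{-1}$ instead of $(1-r)^{-2}$. To set up, let $F$ denote the $(1+\epsilon)$-quasiconformal extension of $f$ to $\hat\complexes$ with $F(\infty)=\infty$, let $\mu=\mu_F$, and observe that $\mu$ is supported in $\complexes\setminus\disk$ (since $F$ is conformal on $\disk$) with $\|\mu\|_\infty=O(\epsilon)$. Because $F$ is a homeomorphism of $\hat\complexes$, $f$ extends continuously to $\overline\disk$, so $E(w):=f(w)-w$ is holomorphic on $\disk$ and continuous on $\overline\disk$.

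For the uniform estimate I would argue as follows. Let $w^\mu$ be the normalized quasiconformal solution of the Beltrami equation with coefficient $\mu$, fixing $0$, $1$ and $\infty$. By uniqueness in the measurable Riemann mapping theorem, $F$ and $w^\mu$ differ by a M\"obius map; since both fix $\infty$ this map is affine, and since $F(0)=w^\mu(0)=0$ it has no constant term, so $F=\alpha\, w^\mu$ for some $\alpha\in\complexes^*$. As $\mu$ vanishes on $\disk$, $w^\mu$ is conformal there, and the classical estimate for normalized quasiconformal maps (the symmetry-free form of Lemma~\ref{R-est}; see \cite{Ahlfors-QCbook}, \S V.C, or \cite{Bishop-QCapp}) gives $|w^\mu(w)-w|=O(\epsilon)$ for $|w|\le 1$; a Cauchy estimate on $\{|w|=1/2\}$ then yields $(w^\mu)'(0)=1+O(\epsilon)$. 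Now $F'(0)=\alpha(w^\mu)'(0)=1$ forces $\alpha=1+O(\epsilon)$, whence for $|w|\le 1$
$$ f(w)-w=\alpha\bigl(w^\mu(w)-w\bigr)+(\alpha-1)w=O(\epsilon). $$

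Finally, with $\delta=C\epsilon$ an upper bound for $|E|$ on $\overline\disk$, the maximum principle gives $|E|<\delta$ on $\disk$ (unless $E\equiv 0$, in which case the lemma is trivial), so $E/\delta$ maps $\disk$ into $\disk$ and Schwarz--Pick gives $|E'(w)|\le\delta/(1-|w|^2)$ on $\disk$. Since $E'=f'-1$, for $|z|\le r$ this is at most $C\epsilon/(1-r^2)\le C\epsilon/(1-r)$, which is the asserted bound. The main obstacle is the uniform estimate of the previous paragraph: Lemma~\ref{R-est} as stated applies to an upper-half-plane--preserving map fixing $0,1,\infty$, whereas our $F$ need not preserve $\uhp$ and carries a derivative normalization $F'(0)=1$ in place of a third fixed point, so the factorization $F=\alpha\,w^\mu$ together with the classical symmetry-free version of the normalized-solution estimate is what bridges the gap. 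The remaining Schwarz--Pick step is routine, but it is worth emphasizing that taking the hyperbolic form of the derivative estimate is exactly what yields the correct exponent $1$ in $(1-r)^{-1}$.
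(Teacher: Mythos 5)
Your proof is correct, and it follows the same overall strategy as the paper: first establish the uniform bound $\sup_{|w|\le 1}|f(w)-w|=O(\epsilon)$, then convert it into a pointwise derivative estimate. The two halves are executed differently, though. For the uniform bound the paper quotes equation (10) of Section V.B of \cite{Ahlfors-QCbook}, which gives $|f(z)|\leq |z|+O(\epsilon)$ for $|z|\le 2$, applies the same to $f^{-1}$, and concludes $|f(z)-z|=O(\epsilon)$ on $\overline{\disk}$; your route via the factorization $F=\alpha\, w^{\mu}$ and the representation-formula estimate for the normalized solution (the symmetry-free form of Lemma \ref{R-est}) is a legitimate alternative, arguably more self-contained since it reduces to a lemma already stated in the paper, at the cost of having to track the normalizing constant $\alpha$. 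One correction to your framing of the second half: the paper's ``Cauchy estimates'' step already yields the exponent $1$, not $2$ --- applying the one-circle Cauchy bound to $E(w)=f(w)-w$ on the disk $D(z,1-|z|)$, where $|E|=O(\epsilon)$, gives $|E'(z)|\le O(\epsilon)/(1-|z|)$ directly. So Schwarz--Pick is not what rescues the sharp power; it is simply an equivalent (and equally clean) way to finish. Your claim that a Cauchy bound would produce $(1-r)^{-2}$ is therefore mistaken, though it does not affect the validity of your argument.
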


To prove this, we use equation (10) of Section V.B, \cite{Ahlfors-QCbook},
which implies that  for a $(1+\epsilon)$-QC map $f$ fixing $0$ and 
$\infty$  we have 
$$  |f(z) | \leq O(\epsilon)  + |z|,$$
if $|z|\leq 2$.
The same estimate applied to the inverse of $f$  gives 
$\geq |z| - O(\epsilon)$, so we deduce $|f(z)-z|\leq 
O(\epsilon)$  on the closed unit disk. The Cauchy estimates 
then imply $|f'(z) -1| = O(\epsilon/(1-|z|))$, which implies the lemma.

The following lemma from \cite{Bishop-ExpSullivan} is used in this 
paper to show that certain boundaries can be ``flattened'' with 
small quasiconformal distortion. 

\begin{lemma} \label{comp-K}
Suppose $G: (x,y) \to (x,g(x,y))$ is differentiable. Let 
$$ L_1(G,z) = \liminf_{w\to z}  \frac {|G(z)-G(w)|}{|z-w|},$$
$$ L_2(G,z) = \limsup_{w\to z}  \frac {|G(z)-G(w)|}{|z-w|}.$$
Then
\begin{eqnarray} \label{bilip-comp}
 \frac 12  (\sqrt{Y} - \sqrt{X}) =
  L_1(G,z) \leq L_2(G,z) =
     \frac 12 (\sqrt{Y} + \sqrt{X}),
\end{eqnarray}
where
$$ X =
   % A+C - \sqrt{4AC-B^2}
    1+(g_x)^2+(g_y)^2 - 2|g_y|,
   \qquad
   Y = 1+(g_x)^2 + (g_y)^2 + 2|g_y|.$$
Thus the quasiconformal dilatation of $G$ is
$$ K(z)= \limsup_{r\to 0} \frac{\max_{|z-w|=r} |G(z)-G(w)|}
             {\min_{|z-w|=r} |G(z)-G(w)|}
           =  \frac {(\sqrt{Y} + \sqrt{X})}{(\sqrt{Y} - \sqrt{X})},  $$
and $ \mu = \frac {K-1}{K+1} = \sqrt{X}/\sqrt{Y}$. For a map of the 
form $(x,y) \to ( x + y +g(x,y))$ this becomes $ \mu =  
\sqrt{g_x^2+g_y^2} / \sqrt{g_x^2+(2+g_y)^2}$. 
\end{lemma}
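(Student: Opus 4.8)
The plan is to reduce the statement to an elementary computation of the singular values of the Jacobian of $G$ at the point $z$. First I would observe that, since $G$ is differentiable at $z$, if $A = DG(z)$ denotes the Jacobian matrix (the linear map with rows $(1,0)$ and $(g_x,g_y)$, the partials evaluated at $z$) and $u_w = (w-z)/|w-z|$, then $|G(w)-G(z)|/|w-z| = |A u_w| + o(1)$ as $w\to z$. Letting $u_w$ range over the unit circle then shows that $L_1(G,z) = \min_{|u|=1}|Au|$ and $L_2(G,z) = \max_{|u|=1}|Au|$, i.e.\ $L_1$ and $L_2$ are precisely the smallest and largest singular values of $A$; the same observation identifies the $K(z)$ appearing in the metric definition of quasiconformality (Appendix \ref{background}) with the ratio $L_2(G,z)/L_1(G,z)$.

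The second and main step is then pure $2\times 2$ linear algebra. Forming $A^{T}A$ (with diagonal entries $1+g_x^2$ and $g_y^2$ and off-diagonal entry $g_xg_y$), its trace is $T = 1+g_x^2+g_y^2$ and its determinant is $(\det A)^2 = g_y^2$, so the eigenvalues of $A^{T}A$ are $\lambda_\pm = \frac{1}{2}\bigl(T \pm \sqrt{T^2-4g_y^2}\,\bigr)$. I would then use the factorization $T^2-4g_y^2 = (T-2|g_y|)(T+2|g_y|) = XY$, with $X = T-2|g_y|$ and $Y = T+2|g_y|$ exactly as in the statement, note that $X = g_x^2 + (|g_y|-1)^2 \ge 0$ so both eigenvalues are real and nonnegative, and combine this with $X+Y = 2T$ to rewrite $\lambda_\pm = \frac{1}{4}(\sqrt Y \pm \sqrt X)^2$. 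Taking square roots yields the singular values $\frac{1}{2}(\sqrt Y \pm \sqrt X)$, which is exactly (\ref{bilip-comp}). The formula for $K$ follows immediately, and $\mu = (K-1)/(K+1) = \sqrt X/\sqrt Y$ because the $\sqrt Y$ contributions cancel between numerator and denominator (here one needs $X<Y$, i.e.\ $g_y\neq 0$, for $K$ to be finite; otherwise the map is infinitesimally degenerate at $z$).

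For the final sentence I would simply apply the computation to the map $(x,y)\mapsto (x,\,y+g(x,y))$, that is, replace the function $g$ by $y+g$ so that the relevant partials become $g_x$ and $1+g_y$; substituting these into the formulas for $X$ and $Y$ (using $|1+g_y| = 1+g_y$, which holds whenever $|g_y|$ is small, as in all the applications in this paper) gives $X = g_x^2+g_y^2$ and $Y = g_x^2+(2+g_y)^2$, hence the stated value of $\mu$.

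The only point where a little care is required is the first step: verifying that the $\liminf$ and $\limsup$ in the definitions of $L_1$, $L_2$ (and in $K$) are genuinely realized in the limit by the extremal singular directions of $A$, which is where differentiability of $G$ at $z$ enters. After that there is no real obstacle --- the whole content is the identity $T^2-4g_y^2 = (T-2|g_y|)(T+2|g_y|)$ together with $X+Y=2T$, which is precisely what makes the singular values of $A$ come out in the closed form $\frac{1}{2}(\sqrt Y \pm \sqrt X)$.
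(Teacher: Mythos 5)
Your argument is correct and complete: identifying $L_1,L_2$ with the extremal singular values of the Jacobian $A=\begin{pmatrix}1&0\\ g_x&g_y\end{pmatrix}$, computing $\operatorname{tr}(A^{T}A)=T$ and $\det(A^{T}A)=g_y^2$, and using $T^2-4g_y^2=XY$ together with $X+Y=2T$ to get the singular values $\tfrac12(\sqrt Y\pm\sqrt X)$ is exactly the right computation, and your handling of the final substitution $g\mapsto y+g$ (with $|1+g_y|=1+g_y$) is also correct. Note that the paper itself supplies no proof of this lemma --- it is quoted from \cite{Bishop-ExpSullivan} --- so there is nothing to compare against, but your singular-value argument is the standard one and fills the gap cleanly; the only caveats worth keeping are the two you already flag, namely that differentiability at $z$ is what lets the $\liminf$/$\limsup$ be realized by the extremal directions of $A$, and that the formulas for $K$ and $\mu$ degenerate when $g_y=0$ (where $L_1=0$).
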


The following, which we also use, follows immediately from the above. 

\begin{lemma} \label{radial map} 
Suppose $W \subset \disk$ is a domain defined by $\{ z= r e^{i \theta}:
 r < g(\theta) \}$,  where $ 1 - \delta \leq g \leq 1$ 
and $|g'| \leq \delta$ for all $\theta$.
Then the map $ z \to z/g(\arg(z))$ is a $1+O(\delta)$ quasiconformal 
map of $W$ to the disk.
\end{lemma}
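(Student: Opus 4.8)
The map in question is the radial stretch $\Phi(z)=z/g(\arg z)$, which in polar coordinates is $\Phi(re^{i\theta})=(r/g(\theta))e^{i\theta}$. First I would check that $\Phi$ is a homeomorphism of $W$ onto $\disk$: it preserves arguments, and for each fixed $\theta$ the factor $r\mapsto r/g(\theta)$ carries $[0,g(\theta))$ bijectively onto $[0,1)$, so $\Phi$ maps the radial segment of $W$ in direction $\theta$ onto the full open radius of $\disk$ in direction $\theta$; continuity at $0$ is clear since $|\Phi(z)|\le|z|/(1-\delta)$, and the inverse $w\mapsto g(\arg w)\,w$ is handled the same way. With the homeomorphism in place it suffices to bound the dilatation pointwise on $W\setminus\{0\}$, the single point $0$ being removable for quasiconformal homeomorphisms.

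The heart of the argument is a change to logarithmic coordinates, which reduces everything to Lemma \ref{comp-K}. On a neighbourhood of any point of $W\setminus\{0\}$ pick a branch of $\log$; writing $\log z=u+iv$ one has $\log\Phi(z)=(u-\log g(v))+iv$, so $\Phi$ is conjugated by the (conformal) logarithm to the shear $G(u,v)=(u+p(v),v)$ with $p(v)=-\log g(v)$. The hypotheses $1-\delta\le g\le 1$ and $|g'|\le\delta$ give $|p(v)|\le -\log(1-\delta)=O(\delta)$ and $|p'(v)|=|g'(v)|/g(v)=O(\delta)$. The coordinate swap $(x,y)\mapsto(y,x)$ is anticonformal, hence preserves the quasiconformal dilatation, and it turns $G$ into a map of the form $(x,y)\mapsto(x,\,y+p(x))$ to which Lemma \ref{comp-K} applies directly: taking $g_x=p'$ and $g_y=1$ in that lemma gives $X=(p')^2$ and $Y=4+(p')^2$, hence $\mu=\sqrt{X}/\sqrt{Y}=|p'|/\sqrt{4+(p')^2}=O(\delta)$ and $K=(1+\mu)/(1-\mu)=1+O(\delta)$. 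Since conformal (and anticonformal) conjugation does not change the pointwise dilatation, $\Phi$ has dilatation $1+O(\delta)$ at every point of $W\setminus\{0\}$, and by removability of the isolated point $0$ the map $\Phi:W\to\disk$ is $(1+O(\delta))$-quasiconformal.

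I do not anticipate a serious obstacle here; the statement says it "follows immediately" from Lemma \ref{comp-K}, and indeed the only points requiring a word of care are (i) lining up the shear $G$ with the normal form $(x,y)\mapsto(x,g(x,y))$ of Lemma \ref{comp-K}, which is what the harmless reflection accomplishes, and (ii) justifying that the origin may be ignored, for which one can either invoke removability of point singularities or observe directly that the metric-definition ratio of $\Phi$ at $0$ is $\max_\theta g/\min_\theta g\le 1/(1-\delta)=1+O(\delta)$. (One could instead bypass logarithms and differentiate $\Phi$ in polar coordinates to exhibit its Jacobian as a small shear, but the logarithmic substitution makes the reduction to Lemma \ref{comp-K} cleanest.) The differentiability needed to apply Lemma \ref{comp-K} is exactly the assumption $|g'|\le\delta$.
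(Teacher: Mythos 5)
Your proof is correct and follows exactly the route the paper intends: the paper gives no written argument beyond asserting that the lemma "follows immediately" from Lemma \ref{comp-K}, and your logarithmic change of coordinates reducing $\Phi$ to the shear $(u,v)\mapsto(u+p(v),v)$ with $p=-\log g$, followed by the dilatation computation $\mu=|p'|/\sqrt{4+(p')^2}=O(\delta)$, is precisely the intended reduction. The extra care about the homeomorphism property and the removable singularity at the origin is a welcome (if routine) addition.
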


As noted earlier, any quasiconformal map has a dilatation $\mu$ 
so that $\|\mu\|_\infty < 1$.  Conversely, the ``measurable Riemann
Mapping theorem'' says that given any such $\mu$, there is a
$K$-quasiconformal map $f$ with $\mu =  \Dbar f  / \partial f$.
The Beltrami equation $\Dbar f = \mu \partial f$ can 
be solved using a power series in $\mu$ by setting 
$$ f = P[\mu(h+1)] + z,$$
and  
$$ h = T\mu + T \mu T \mu + T \mu  T \mu  T \mu + \dots ,$$
where $T$ is the Beurling transform 
$$ Th (w) = \lim_{r \to 0} \frac 1 \pi \iint_{|z-w| > r} \frac {h(z)}
{ (z-w)^2} dxdy,$$
and $P$ is the  Cauchy integral 
$$ Ph(w) = - \frac 1 \pi \iint h(z) (\frac 1{z-w} - \frac 1{z}) dxdy.$$ 
Formally, $\Dbar P$ is the identity and $\partial P = T$.
So if we choose $f$ as above, then 
$$\Dbar f = \mu(h+1) ,$$
$$
 \partial f = T \mu (h+1) + 1= h+1.
      $$
Hence, $\Dbar f / \partial f = \mu (1+h) /(1+h) = \mu,$
as desired. To make the argument rigorous requires $L^p$
estimates on these operators as described in \cite{Ahlfors-QCbook}, Chapter V.

Even though they don't have to be differentiable everywhere,
 Mori's theorem states that every $K$-quasiconformal map is H{\"older
continuous of order $1/K$, i.e.,
$$ |f(x) - f(y) | \leq C |x-y|^{1/K}.$$
  Moreover, any quasiconformal
map of $\disk$ to itself extends continuously to the boundary.
We shall discuss these boundary values in more detail below.

% Given a mapping $G$ defined on a domain $U$ and
%a point $z \in U$ we define
%$$ L_1(G,z) = \liminf_{w\to z}  \frac {|G(z)-G(w)|}{|z-w|},$$
%$$ L_2(G,z) = \limsup_{w\to z}  \frac {|G(z)-G(w)|}{|z-w|},$$
%
%\begin{lemma} \label{comp-K}
%Suppose $G: (x,y) \to (x,g(x,y))$ is differentiable. Then
%\begin{eqnarray} \label{bilip-comp}
% \frac 12  (\sqrt{Y} - \sqrt{X}) =
%  L_1(G,z) \leq L_2(G,z) =
%     \frac 12 (\sqrt{Y} + \sqrt{X}),
%\end{eqnarray}
%where
%$$ X =
%   % A+C - \sqrt{4AC-B^2}
%    1+(g_x)^2+(g_y)^2 - 2|g_y|,
%   \qquad
%   Y = 1+(g_x)^2 + (g_y)^2 + 2|g_y|.$$
%Thus the quasiconformal dilatation of $G$ is
%$$ \limsup_{r\to 0} \frac{\max_{|z-w|=r} |G(z)-G(w)|}
%             {\min_{|z-w|=r} |G(z)-G(w)|}
%           =  \frac {(\sqrt{Y} + \sqrt{X})}{(\sqrt{Y} - \sqrt{X})},$$
%and the Beltrami coefficient is 
%$ \mu = \frac {K-1}{K+1} = \sqrt{X} / \sqrt{Y}$.
%\end{lemma}%

Numerical computation of quasiconformal maps with given 
dilatation is considered in 
 \cite{Daripa92}, \cite{Daripa93}, \cite{Daripa-Mashat}.

%---------------------------------------------------------

\subsection{Quasi-isometries:}
Quasiconformal maps are a generalization of biLipschitz maps, i.e.,
maps that satisfy
$$ \frac 1K \leq \frac {|f(x)-f(y)|}{|x-y|} \leq K.$$
From the metric definition it is clear that any $K$-biLipschitz 
map is $K^2$-quasiconformal.

For $K$-quasiconformal  self-maps of the disk, there is
almost a converse. Although a   quasiconformal map
$f: \disk \to \disk$ need not be biLipschitz, it is
a quasi-isometry of the disk with its hyperbolic
metric $\rho$, i.e., there
are constants $A,B$ such that
$$ \frac 1A \rho(x,y) -B
      \leq \rho(f(x), f(y))
     \leq A \rho(x,y) + B.$$
This says $f$ is biLipschitz for the hyperbolic
metric  at large scales.  A quasi-isometry is
also called a rough isometry in some sources, e.g., 
 \cite{Holopainen}, \cite{Woess}.
We will say $f$ is a quasi-isometry with constant $\epsilon$ 
if we can take $A = 1+ \epsilon$ and $B = \epsilon$.

 In \cite{EMM-QH+CHB} Epstein, Marden and Markovic show that
any  $K$-quasiconformal selfmap of the disk is a quasi-isometry
respect to the hyperbolic metric with $A=K$ and
$B = K\log 2$ if $1 \leq K \leq 2$ and $B= 2.37(K-1)$
if $K > 2$.
 Note that small circles are 
asymptotically the same for the two metrics, so there is no 
difference between ``hyperbolic-quasiconformal'' and 
``Euclidean-quasiconformal'' maps. There is a difference, 
however, between ``hyperbolic biLipschitz'' and 
``Euclidean biLipschitz''.

\begin{thm} \label{QC-bdy}
 For a map $f: \disk \to \disk$   
we have 
(1) $\Rightarrow$
(2) $\Rightarrow$
(3) $\Rightarrow$
(4)  where 
\begin{enumerate}
\item $f$ is biLipschitz with respect to the hyperbolic metric. 
\item $f$ is  quasiconformal.
\item $f$ is a quasi-isometry with respect to the hyperbolic metric.
\item There is a hyperbolic  biLipschitz map $g: \disk 
\to \disk$ so that $g|_\circle = f|_\circle$.
\end{enumerate}
In other words, the three classes of maps 
(hyperbolic biLipschitz, quasiconformal, hyperbolic quasi-isometry) 
all have the same set of boundary values.
\end{thm}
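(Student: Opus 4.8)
The plan is to prove the chain of implications $(1)\Rightarrow(2)\Rightarrow(3)\Rightarrow(4)$ in order, drawing on the facts about quasiconformal maps and quasi-isometries collected earlier in this appendix. The first implication, $(1)\Rightarrow(2)$, is immediate from the metric definition of quasiconformality: a hyperbolic $K$-biLipschitz map has, at every point, infinitesimal circles distorted by a bounded ratio (since the hyperbolic and Euclidean metrics are asymptotically conformal at small scales, the hyperbolic biLipschitz bound passes to a Euclidean metric-definition bound of the same order), so $f$ is $K^2$-quasiconformal. The second implication, $(2)\Rightarrow(3)$, is precisely the theorem of Epstein, Marden and Markovic quoted just above the statement: every $K$-quasiconformal self-map of $\disk$ is a quasi-isometry of $(\disk,\rho)$ with explicit constants $A=K$, $B=K\log 2$ (or $B=2.37(K-1)$ for $K>2$). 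So no work is needed there beyond citing it.

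The substantive content is $(3)\Rightarrow(4)$: given that $f:\disk\to\disk$ is a hyperbolic quasi-isometry, we must produce a hyperbolic biLipschitz $g$ with the same boundary values on $\circle$. First I would record that a quasi-isometry of $\disk$ extends continuously to a homeomorphism of $\circle$ — this is the standard Morse/Mostow-type stability of boundary values under quasi-isometry of $\hypspace^2$, and it guarantees $f|_\circle$ is well-defined and a circle homeomorphism. The construction of $g$ is then exactly the piecewise-M\"obius interpolation machinery developed in Section~\ref{PM-maps}. Concretely: fix a small $s$ and take an $(\epsilon_0,s)$-triangulation $\cal T$ of $\disk$ (such triangulations exist by the lattice-triangle construction of Section~\ref{PM-maps}), restrict $f$ to the vertices of $\cal T$, and let $g$ be the piecewise-M\"obius/affine-crescent extension of these vertex values. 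Since $f$ is a quasi-isometry, on any union of two adjacent triangles $f$ is within a bounded hyperbolic distance of a single M\"obius transformation, so Theorem~\ref{near-Mobius} applies and yields that $g$ is hyperbolic $(1+O(\epsilon))$-biLipschitz — in particular biLipschitz — with $\sup_{z\in\disk}\rho(f(z),g(z))\le O(\epsilon)<\infty$. Because $f$ and $g$ stay a bounded hyperbolic distance apart on all of $\disk$, they induce the same homeomorphism of $\circle$, which is the conclusion $g|_\circle=f|_\circle$.

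The main obstacle I anticipate is the transition from ``quasi-isometry'' to ``locally near a single M\"obius transformation'', which is the hypothesis Theorem~\ref{near-Mobius} actually requires: a quasi-isometry a priori only says distances are distorted boundedly, not that $f$ agrees with an isometry up to bounded error on each ball. Closing this gap is a compactness/normal-families argument — on a hyperbolic $C$-ball, $f$ has bounded distortion and bounded displacement of the center, so after composing with a suitable M\"obius normalization the restrictions form a precompact family, and the quasi-isometry constants force every limit to be a genuine isometry; hence for $C$ fixed and the quasi-isometry constant small, $f$ is uniformly close to a M\"obius map on every $C$-ball. (If one only has a quasi-isometry with large constants rather than one close to the identity, one first subdivides $\disk$ into hyperbolically bounded pieces and works scale-by-scale, or simply records that Theorem~\ref{near-Mobius} as stated tolerates any fixed $\epsilon$.) The remaining points — existence of the $(\epsilon_0,s)$-triangulation, $\epsilon$-Delaunayness, and the biLipschitz estimate for the piecewise extension — are all already established in Section~\ref{PM-maps}, so this step is purely a matter of verifying hypotheses rather than new estimation.
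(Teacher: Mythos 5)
The paper states Theorem \ref{QC-bdy} as background and gives no proof, so the comparison here is with the standard argument rather than with anything in the text. Your first two implications are fine and are exactly what the surrounding remarks in the appendix supply: $(1)\Rightarrow(2)$ because the hyperbolic metric is a conformal rescaling of the Euclidean one, so a hyperbolic $K$-biLipschitz homeomorphism satisfies the metric definition of quasiconformality with constant $K^2$; and $(2)\Rightarrow(3)$ is the quoted Epstein--Marden--Markovic estimate.

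The gap is in $(3)\Rightarrow(4)$. Theorem \ref{near-Mobius} requires that on every hyperbolic $C$-ball the map agree with some M\"obius transformation up to a \emph{small} error $\epsilon$, and its proof genuinely needs $\epsilon$ small: the piecewise-M\"obius extension is built from the images of triangle vertices, and one needs those images to determine nondegenerate, correctly oriented M\"obius maps that nearly agree across shared edges; only then is the glued map even a local homeomorphism, let alone $(1+O(\epsilon))$-biLipschitz. A general quasi-isometry with constants $(A,B)$ provides no such control. It need not be continuous, and at scales below the additive constant $B$ it can permute or collapse the vertices of a triangle arbitrarily, so the restriction to a $C$-ball is only within $\epsilon\approx (A+1)C+B$ of a M\"obius map, which is useless here. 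Your proposed repair by compactness is also not correct: the claim that ``the quasi-isometry constants force every limit to be a genuine isometry'' fails --- a fixed $2$-biLipschitz self-map of the disk is a limit of itself and is not an isometry, and without equicontinuity (which quasi-isometries lack) the family of normalized restrictions is not even precompact in a topology where such a limit argument could run.

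The standard route, which the paper itself signals in the sentences immediately following the theorem, is: a quasi-isometry of the hyperbolic disk sends geodesics to quasi-geodesics, which by the Morse lemma fellow-travel genuine geodesics, so $f$ induces a well-defined boundary homeomorphism; cross-ratio estimates show this boundary map is quasisymmetric; and every quasisymmetric homeomorphism of $\circle$ admits an extension to the disk that is biLipschitz for the hyperbolic metric (Douady--Earle, or Beurling--Ahlfors). That extension is the required $g$, and since it realizes the same boundary homeomorphism as $f$, the conclusion $g|_\circle=f|_\circle$ follows. Your piecewise-M\"obius machinery would only become applicable after one already knows $f$ is uniformly close to M\"obius on bounded balls, i.e.\ essentially in the setting of Theorem \ref{near-Mobius} itself, not for an arbitrary quasi-isometry.
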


The boundary extension is a quasisymmetric  homeomorphism, i.e.,
there is an $k < \infty$ (depending only on $K$) so that
$1/k\leq |f(I)|/|f(J)| \leq k$, whenever $I, J \subset \circle$ are
adjacent intervals of equal length. Conversely, any
quasisymmetric homeomorphism of $\circle$ can be extended
to a $K$-quasiconformal selfmap of the disk, where $K$ depends
only on $k$.

% For small constants we have the following from 
%\cite{Ahlfors-QCbook}, \cite{Bishop-BiLipApprox}:
%\begin{lemma} \label{extend-lemma}
%Suppose $f: \reals \to \reals$ is a $k$-quasisymmetric map. Then 
%$f$ has an extension to a $K$-quasiconformal mapping of 
%$\uhp$ to itself which is  $K$-biLipschitz with respect 
%to the hyperbolic metric, where $K$ depends only on $k$. 
%There is a $k_1 > 1$ and $C_1 < \infty$ so that if $k=1+\epsilon \leq k_1$, 
%then we may take $K\leq 1+C_1 \epsilon$.
%\end{lemma}

%------------------------------------------------------------
\subsection{Trees-of-intervals}

The dyadic intervals of generation $n$   in $\real$ 
are of the form $[2^{-n} j, 2^{-n} (j+1))$  and form 
the vertices of a infinite binary tree (we say $I,J$ are adjacent 
if their lengths differ by a factor of $2$ and one contains 
the other). This property is very useful and in this paper 
we make use of it through the following results.

\begin{lemma} \label{make tree}
Suppose ${\cal D} $ is a disjoint, finite collection of dyadic intervals, 
ordered from left to right, and
all with length $\leq L$.  Let ${\cal C}$ be the collection of 
all  dyadic intervals of length $\leq L$ which contain some element 
of ${\cal D}$. If ${\cal C}$ has $m$ elements, it can be enumerated in 
time $O(m)$. 
\end{lemma}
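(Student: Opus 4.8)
The plan is to produce $\cal C$ by a single left-to-right sweep through ${\cal D} = \{I_1, \dots, I_N\}$, maintaining at each stage the ``active chain'' of dyadic ancestors (of length $\le L$) of the interval currently being processed. I would store this chain on a stack whose entries, read from bottom to top, form a nested sequence $J_0 \supsetneq J_1 \supsetneq \dots \supsetneq J_r = I_k$ in which $J_0$ is the largest dyadic interval of length $\le L$ containing the current $I_k$ (equivalently, the root of the tree component of $\cal C$ that contains $I_k$). Throughout I would use only two $O(1)$ primitives on dyadic intervals: computing the dyadic parent $P(I)$ (a shift of the two endpoints of $I$), and testing whether one dyadic interval contains another (two endpoint comparisons); both are immediate from the definition.

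First I would initialize the stack for $I_1$ by climbing $I_1, P(I_1), P^2(I_1), \dots$, stopping just before the length would exceed $L$, and outputting each interval met. Then, passing from $I_k$ to $I_{k+1}$, I would (i) pop entries off the top of the stack as long as the top fails to contain $I_{k+1}$ --- this leaves the empty stack, or else the lowest common dyadic ancestor $A$ of $I_k$ and $I_{k+1}$ on top --- and (ii) climb from $I_{k+1}$ upward until reaching $A$ (or, if the stack was emptied, until the length would exceed $L$), pushing and outputting each interval on the way. Because $\cal D$ is a \emph{disjoint} family, no $I_i$ contains another, so $A$ is always a \emph{proper} ancestor of $I_{k+1}$ and step (ii) pushes at least $I_{k+1}$ itself; thus the stack invariant is restored for $I_{k+1}$.

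To see the output is exactly $\cal C$: every interval output is dyadic, of length $\le L$, and contains some element of $\cal D$, hence lies in $\cal C$. Conversely, given $J \in \cal C$ let $I_j$ be the leftmost element of $\cal D$ inside $J$. When $I_j$ is processed, the interval $A$ left on top of the stack after the popping step is an already-output ancestor of $I_j$, so it contains some $I_i$ with $i < j$; by the choice of $j$ we cannot have $J \supseteq A$, so --- the ancestors of $I_j$ being linearly ordered by inclusion --- $J \subsetneq A$, which means $J$ lies on the chain from $I_j$ up to $A$ that gets pushed and output at this step. Finally, each $J$ is output only once: it is pushed only during the processing of this particular $I_j$, and once it is later popped it lies strictly to the left of every remaining element of $\cal D$ (it is disjoint from the interval it just failed to contain, and $\cal D$ is sorted), so it is never touched again.

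The hard part --- really the only subtle point --- is the running-time bookkeeping, which is a standard amortization. Each of the $m$ intervals of $\cal C$ is pushed exactly once and popped at most once, for $O(m)$ total stack work; every parent computation and containment test is $O(1)$ and can be charged to an accompanying push (while climbing) or pop (while descending to $A$), apart from one extra test per element of $\cal D$ to decide when to stop popping, costing $O(|{\cal D}|)$; and since each $I_k$ is itself a dyadic interval of length $\le L$ containing itself, $|{\cal D}| \le |{\cal C}| = m$. Hence the whole sweep runs in $O(m)$ steps.
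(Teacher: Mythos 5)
Your proposal is correct and is essentially the paper's argument: the paper's chains $\gamma_k$, with the "cut $\gamma_{k-1}$ where the new climb first hits it and replace the bottom portion" step, are exactly your stack with its pop-then-push update, and both rely on the same amortization (each element of ${\cal C}$ pushed once, popped at most once). Your write-up just makes the correctness and bookkeeping more explicit than the paper does.
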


\begin{proof}
Start with the leftmost element $I_1$  of ${\cal D}$ and form the list   $\gamma_1$, of
nested increasing intervals until reaching size $L$. Then move the 
second element $I_2$  of ${\cal D}$, and form the nested, increasing list
until we hit an element of $\gamma_1$. Form $\gamma_2$ by cutting
$\gamma_1$ at this point and replacing the bottom potion by the 
list  starting at $I_2$. Continue moving to the right. When we have 
finished, all $m$ elements of ${\cal C}$ has been found and only $O(m)$
work has been done. 
\end{proof} 

More generally, we define a tree-of-intervals as a collection of 
half-open intervals ${\cal I}$ which contains a maximal element (an interval 
containing all the others) and say $I$ is a child of $J$ if $I\subset J$
and $I$ is maximal in ${\cal I}$ with this property. The tree has degree 
$d$ if every interval has at most $d$ children.
We will say such a tree is complete if each interval is either a 
leaf of the tree or is the union of its children.

\begin{lemma} \label{make cover}
Suppose ${\cal I} = \{ I_j\}_1^n$ is a  tree-of-intervals 
 of degree $d$  and 
$ {\cal J} = \{ J_j\}_1^m$   is a complete binary  tree-of-intervals.
Assume the root of ${\cal J}$ contains the root of ${\cal I}$.
Then for every  element of ${\cal I}$ we can find the minimum element of 
${\cal J}$ containing it  in total time $O(d\cdot n+m))$.
\end{lemma}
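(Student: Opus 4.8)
The plan is to compute all the answers by a single coordinated descent through the two trees, using the fact that both are trees of intervals and so carry a natural left--to--right order. First I would root ${\cal I}$ at its maximal element $I_0$ and ${\cal J}$ at its maximal element $J_0$; by hypothesis $J_0 \supseteq I_0$, so $J_0$ contains every element of ${\cal I}$. For $I \in {\cal I}$ write $\mu(I)$ for the minimal element of ${\cal J}$ containing $I$. This is well defined, and since the elements of ${\cal J}$ lying on any one root--to--leaf chain are totally ordered by inclusion, $\mu(I)$ is exactly a node on the root--to--leaf path in ${\cal J}$ determined by $I$. Two monotonicity facts drive everything: (a) if $I' \subseteq I$ in ${\cal I}$ then $\mu(I')$ is a descendant of (or equal to) $\mu(I)$ in ${\cal J}$, because $\mu(I)$ is itself one element of ${\cal J}$ containing $I'$; and (b) if $I_1 < I_2 < \dots < I_k$ are the children of a common node of ${\cal I}$, listed in left--to--right order, then the nodes $\mu(I_1),\mu(I_2),\dots,\mu(I_k)$ are met in that order during an in--order walk of ${\cal J}$, since the $I_j$ are pairwise disjoint and increasing.

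Second, I would run a preorder depth--first traversal of ${\cal I}$, carrying a ``cursor'' into ${\cal J}$ and the stack of ancestors' cursor values. Initialization: find $\mu(I_0)$ by descending from $J_0$, at each step moving to whichever child of the current ${\cal J}$-node still contains $I_0$ (there is at most one such child, as the children of a ${\cal J}$-node are disjoint), stopping when no child contains $I_0$. When the ${\cal I}$-traversal reaches a node $I$ with $\mu(I)$ already known and descends into its children $I_1 < \dots < I_k$, I process them left to right: from the cursor position $\mu(I)$ I descend (as above) to $\mu(I_1)$ and output it; I recurse into $I_1$; on returning, I move the cursor from $\mu(I_1)$ to $\mu(I_2)$ by walking up in ${\cal J}$ until the current ${\cal J}$-node contains $I_2$ and then descending into $I_2$'s chain; and so on through $I_k$. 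Each containment test ``does this child of the current ${\cal J}$-node contain $I_j$?'' is $O(1)$, and ${\cal J}$-nodes have at most two children.

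Third, and this is the heart of the matter, I would bound the running time. The non--cursor work is $O(d)$ per node of ${\cal I}$ (to enumerate its $\le d$ children and do the constant--size bookkeeping), hence $O(dn)$ overall. For the cursor work, I would argue that while the children list of a fixed ${\cal I}$-node $I$ is being processed, fact (b) forces the cursor to perform essentially an in--order sweep of the part of ${\cal J}$ lying under $\mu(I)$: after deleting backtracking, each node of ${\cal J}$ is entered only a bounded number of times. Summing over all ${\cal I}$-nodes and invoking the nesting of the relevant $\mu(I)$'s along ${\cal I}$-chains, every node (equivalently every edge) of ${\cal J}$ is charged $O(1)$ times in total, so all cursor motion costs $O(m)$. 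This gives the claimed $O(dn+m)$.

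The main obstacle is precisely this amortization of the cursor. The danger is that, without the ordering, the cursor could re--descend the same long chain in ${\cal J}$ once for each child of $I$ (or once for each ${\cal I}$-node whose minimal container sits below it), degrading the bound to $O(nm)$. What must be made precise is that disjointness plus left--to--right ordering of siblings in a tree of intervals makes the sequence of ${\cal J}$-nodes touched while handling one sibling list monotone in the in--order sense, so each is touched $O(1)$ times; this is the interval analogue of ``merging two sorted lists in linear time,'' the same device already invoked in the proof of Lemma \ref{compose rep FB}. I expect a short induction on the structure of ${\cal J}$, or a potential--function argument that assigns to the cursor the in--order rank of its current node, to close this gap cleanly.
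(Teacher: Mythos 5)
Your facts (a) and (b) are correct, and the overall shape (a coordinated traversal of the two trees) is reasonable, but the amortization you identify as ``the heart of the matter'' is not merely unproved --- it is false for the algorithm as you describe it, so the time bound fails. Concretely, let ${\cal J}$ be the left--spine tree on $[0,1)$: its nodes are $[0,2^{-k})$ for $0\le k\le m$ together with the leaves $[2^{-k-1},2^{-k})$ for $0\le k<m$, each $[0,2^{-k})$ having children $[0,2^{-k-1})$ and $[2^{-k-1},2^{-k})$; this is complete, binary, and has about $2m$ nodes. For $1\le i\le p$ set $I^{(i)}=[\,(i-1)2^{-m}/p,\ \frac{1}{2}+2^{-i-1})$ and $E^{(i)}=[\,(i-1)2^{-m}/p,\ i\,2^{-m}/p)$. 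Then ${\cal I}$ is a tree-of-intervals of degree $2$ with $n=2p$ nodes in which $I^{(i)}$ has the two children $E^{(i)}<I^{(i+1)}$. Every $I^{(i)}$ contains $\frac12$, so $\mu(I^{(i)})$ is the root $[0,1)$, while every $E^{(i)}$ lies inside the deepest leaf $[0,2^{-m})$. In your preorder traversal the cursor sits at the root when it enters $I^{(i)}$, must descend $m$ edges of the left spine to serve the left child $E^{(i)}$, and must climb all $m$ edges back to serve the right child $I^{(i+1)}$, which again straddles $\frac12$. This repeats for every $i$, so each left--spine node is visited $\Theta(p)$ times and the total cursor motion is $\Theta(nm)$, not $O(dn+m)$. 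Your in-order argument does control the cost of sweeping \emph{one} sibling list, but the sweeps attached to the successive nodes of an ${\cal I}$-chain are not disjoint pieces of ${\cal J}$ --- here the relevant $\mu(I^{(i)})$ are all equal to the root and the sweeps all retrace the same spine --- so ``summing over all ${\cal I}$-nodes'' does not yield $O(m)$.

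The essential requirement is that all ${\cal I}$-nodes whose answers lie below a given ${\cal J}$-node share a single descent, and a depth-first cursor cannot arrange this because deep and shallow targets alternate along the recursion. This is exactly what the paper's argument is organized around: at the current ${\cal J}$-node $J$ with dividing point $c$ it first extracts, in one pass, the entire path $\gamma$ of ${\cal I}$-nodes containing $c$ (all of which receive the answer $J$), deletes $\gamma$, and only then recurses into the two children of $J$ with the surviving components, each of which lies wholly inside one child; the descent through ${\cal J}$ is thereby consumed rather than retraced, and in the example above the whole chain $I^{(1)},\dots,I^{(p)}$ is disposed of at the root before the left spine is ever entered. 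If you prefer a traversal-based formulation you must batch the point-location work --- for instance, locate all $2n$ endpoints of the ${\cal I}$-intervals among the leaves of ${\cal J}$ by a single merge of the two left-to-right orders and then recover each $\mu(I)$ as a lowest common ancestor --- but as written the proposal does not establish the lemma.
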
 

\begin{proof}
This is clearly true if either $n=1$ or $m=1$.  Consider the case 
$n>1$ and $m >1$ and suppose the lemma holds for all pairs where
both coordinates are strictly smaller. 
Let $I_0$, $J_0$ be the largest elements of 
${\cal I}$ and ${\cal J}$ respectively. Clearly $I_0 
\subset J_0$. Check the children of $J_0$ to see if either 
contains $I_0$. Continue this way, creating a path in the ${\cal J}$
tree until we either reach (1) a leaf $J$  of ${\cal J}$ containing 
$I_0$ or (2) an interval $J$ neither of whose children contain 
$I_0$. This takes work $C_1 g$ if $J$ is $g$ levels below $J_0$, since
we only have to do a bounded amount of work per level.

 In  case (1), we assign $J $ to every element of 
${\cal I}$ and we are done. In case (2), we assign 
$J$ to $I_0$  and to every descendent of $I_0$ which contains 
the dividing point of $J$ (the common endpoint it's children
$J_1, J_2$). These descendent form a path  $\gamma$ in the ${\cal I} $
tree and if there are $p$ such descendants we can find them all in 
time $C_2 d p$ since we only have to check  $d$ children at each stage.

Now remove $\gamma$ from the tree ${\cal I}$. Each connected component 
of what remains has its intervals either all in $J_1$ or all in $J_2$, 
and thus satisfies the hypothesis of the lemma with respect to the
one of the subtrees ${\cal J}_1, {\cal J}_2$ 
 of ${\cal J}$ rooted at these points. By induction 
the assignments can be done for each  component ${\cal I}_k $ in time 
$ C_3 d \cdot  i_k \cdot  j_k$ 
where $i_k$ is the number of vertices in the ${\cal I}_k$ 
and $j_k$ is the number of vertices in the choice of ${\cal J}_1$ or ${\cal J} _2$ 
covering this component. Summing over all the components gives 
$ C_3 (d (n-p)+ (m-g))$. Added to the $C_1 g + C_2  dp $ already done, this 
proves the lemma.
\end{proof} 

This seems closely related to merging heaps in computer science
(a heap is a tree whose vertices are labeled by numbers so that the children's 
labels are less than the parents), although here the labels are intervals and
the ordering is set inclusion.

In Section \ref{really thick}, following Lemma \ref{compose rep FB}
 we  claimed that in linear time we 
could find gaps or crescents that were a bounded hyperbolic 
distance from  the image of each Whitney box. 
Here we give a few more details. 
Associate to each Whitney square,  $Q_j$,  its base
$I_j \subset \reals $. Then map $I_j$ to $\partial \Omega_k$ by 
the map $f$ and back to $\reals$ by $\varphi_k$ (the iota map for
$\Omega_k$). Let $W_j$ be the image of $Q_j$ under these two maps. 
Since $\varphi_k$ is a quasi-isometry, $W_j$ is a bounded hyperbolic 
distance from the Whitney box corresponding to the image $K_j$ of $I_j$.
 The $\{ K_j\}$  still form a tree-of-intervals which we denote 
${\cal K}$. Let ${\cal J}$ be the complete binary tree-of-intervals
consisting of the bases of all the bending lamination geodesics.
If $J \in {\cal J}$ is the minimal element containing $K \in {\cal K}$, 
then the ideal triangle (or crescent if $J$ is a leaf) bounded above 
by the geodesic corresponding to $J$ must hit the Whitney box  corresponding 
to $K$ and thus is a bounded hyperbolic distance from $W_j$. This 
is what we wanted. Thus finding the claimed gaps/crescents, reduces 
to an application of Lemma \ref{make cover}.

%-----------------------------------------------------
%#################################################################

\section{Fast  power series manipulations (or, $O(n \log n)$ suffices)}
\label{fast-power-series}

In order to prove  
Theorems \ref{thmQC} or  \ref{main} with $C(\epsilon) = O(\log \frac 1 \epsilon 
\log \log \frac 1 \epsilon)$ we can only use operations on 
power series of length $p \sim \log \frac 1 \epsilon$ that take
time at most $ O(p \log p)$. In this section we review 
some basic  results about  power series manipulations 
and check that all the operations we need  can be carried out this 
quickly. If one uses naive manipulations of power series, then one 
simply gets Theorem \ref{main} with a larger constant, e.g., 
$C(\epsilon) = O(\log^c \frac 1\epsilon)$ for some $c$.
To conform with the references, we replace $p$ by $n$; 
from this point 
on $n$ will refer to the number of terms in a power series
(rather than the number of vertices in a polygon, as it did 
in earlier sections).
This  summary is  taken mostly from \cite{Tang} and  \cite{vanderHoeven02}.

The Fourier matrix  is given by 
\begin{eqnarray*}
F_n =\begin{pmatrix}
    1  &  1   &   1  &   \dots  &   1 \\
    1  &  \omega   & \omega^2    &   \dots  &  \omega^{n-1}  \\
    1  &  \omega^2   &  \omega^4   &   \dots  &  \omega^{2(n-1)}  \\
  \vdots    &  \vdots    &  \vdots   &   \ddots  &  \vdots    \\
    1  &  \omega^{n-1}   & \omega^{2(n-1)}     &   \dots  &  \omega^{(n-1)(n-2)}  \\
\end{pmatrix}
\end{eqnarray*} 
where $\omega$ is an $n$th root of unity.  The fast Fourier 
transform (FFT)  applies  $F_n$ to an $n$-vector 
in time $O(n \log n)$ \cite{Cooley-Tukey}.
$F_n$ is unitary (after rescaling) and its conjugate transpose, $F^*$, 
can also be applied in $O(n \log n)$ time.
The discrete Fourier transform (DFT) takes a $n$-long 
sequence of complex numbers $\{ a_k\}_0^{n-1}$ and a
$n$-root of unity $\omega$ and returns the values of 
the  polynomial 
$p(z) = a_0 + a_1 x + \dots a_{n-1} z^{n-1}$
at the points $z =\{ 1, \omega, \omega^2, \dots, 
\omega^{n-1} \}$. Composing DFT with its adjoint returns 
the original sequence times $n$.
%When $n$ is a power 
%of $2$, them DFT can be computed in time $O(n \log n)$; 
%the method for doing this is called the fast Fourier 
%transform (FFT).

Suppose $ f(z) = \sum_{k=0}^n a_k z^k $ and $g(z) = \sum_{k=0}^n 
b_k z^k$.  How fast can we multiply, divide or compose these 
series? Let $M(n)$ denote the number of field operations 
it takes to multiply two power series of length $n$.  The 
usual process of convolving the coefficients shows 
$M(n) =O(n^2)$. A divide and conquer method of
Karatsuba and Ofman \cite{Karatsuba-Ofman} improves this to 
$O(n^\alpha)$ with $\alpha = \log 3 / \log 2$, but the 
fastest known method uses the Fast Fourier Transform 
\cite{Cooley-Tukey}, which 
shows $M(n) = O(n \log n )$
(two power series 
of length $n$ can be multiplied by taking the DFT of each, 
multiplying the results term-by-term,  taking the DFT 
of the result and finally dividing by $n$).

Other operations on power series are generally estimated 
in terms of $M(n)$. For example,  inversion (finding the  
 reciprocal  power series, $1/f$, given the series for $f$) 
is $O(M(n))$. Like several other operations on power series, 
this is most easily proven using Newton's method (applied 
to series rather than numbers). For example, $1/f$ is the 
solution of the equation $\frac 1g - f =0$. If $g_k$ is 
an approximate solution with $n>0$ terms correct, 
then 
$$ g_{k+1} = g_k - \frac { \frac 1g_k -f}{-1/g_k^2} = g_k - \frac {fg_k-1}{z^n} g_k z^n,$$
has $2 n$ correct terms. The right side requires two 
multiplications and so the work to compute inversions is 
$O(M(n)) + O(M(n/2)) + \dots +O(1) = O(M(n))$.

Given inversion, one can divide power series (multiply $f$ by $1/g$) 
compute $\log f$ (integrate $f'/f$ term-by-term) or $\exp(f) $
(solve $\log g =f$ by Newton's method) all in time $O(M(n))$.

Composition  of power series is a little harder.  Brent and Kung showed 
that given power series $f,g$ of order $n$ and $g_0 =0$, the 
composition $f \circ g$ can be computed in time 
${\rm{Comp}}(n)  = O(\sqrt{n \log n} M(n))$.  Using FFT multiplication, this 
gives $O(n^{3/2} \log^{3/2} n \log \log n)$. They also showed
that reversion (i.e., given $f$ find $g$ so $f \circ g(z) = z$) 
can be solved using Newton's method with the 
iteration 
$$ g \to g-\frac{f\circ g}{f' \circ g},$$
which doubles the number of correct terms in $g$ with every step.
Thus ${\rm{Rev}}(n) =O({\rm{Comp}}(n))=O(\sqrt{n \log n} M(n))$.

Fortunately, there are some special cases when composition is faster.
For example, if we want to post-compose with a linear fractional 
transformation $\sigma(z)=(az+b)/(cz+d)$, this is the same as adding and 
dividing series, so is only $O(M(n))$. 

Pre-composing 
by $\sigma$ is more complicated.  A function $f$ is
called algebraic if it satisfies
$$ P_d(z) f(z)^d + \dots + P_0(z) =0,$$
for some polynomials $P_0, \dotsm P_d$. Clearly every rational 
function is algebraic with $d=1$. 
% Algebraic functions are
%a special case of holonomic functions  whose derivatives  satisfy 
%$$ L_r(z) f^{(r)}(z) + \dots + L_0(z) f(z) =0.$$
The power series of algebraic functions satisfy linear recursions and
$n$ terms of the series can be computed in $O(n)$.  Moreover, 
pre-composition by algebraic functions is fast; if $f$ has $p$ terms, 
$g$ has $q$ terms and is algebraic 
of degree $d$ then the first $n$ terms of $f \circ g$ can be computed
in time $O(q d^2 \frac{p(q-v)}{n} M(n+pv) \log n)$
where $v$ is the valuation of $P_d$ (the largest power of $z$ that 
divides $P_d(z)$) and $q$ is the maximum of the 
degrees of $P_i$, plus $1$. For linear fractional transformations 
$v=0$ and $q=2$ so the time to pre-compose by such a map 
is $O(M(n) \log n)=O(n\log^2 n)$. (There is an extensive
generalization of the algebraic case to fast manipulations of 
holonomic functions, as developed by van der Hoeven
\cite{vanderHoeven99}, although 
we do not need to use it here.)

This is too slow for our purposes.  Fortunately, the only
times we will have to pre-compose with a M{\"o}bius transformation 
correspond to various manipulations of power and Laurent series
in the fast multipole method 
and all of these can be accomplished in $O(n \log n)$  by 
fast application of Toeplitz, Hankel and Pascal matrices as 
shown by Tang in  \cite{Tang} (the following discussion is based 
on \cite{Tang}).

 A matrix is called circulant if each 
column is a down-shift of the previous one, is called Toeplitz if it 
is constant on diagonals (slope $-1$)  and called Hankel if it is constant on 
antidiagonals (slope $1$). The general forms of these three types are: 
\begin{eqnarray*}
C(x) =\begin{pmatrix}
   x_1   &  x_n   & x_{n-1}    &   \dots  & x_2   \\
   x_2   &  x_1   & x_{n}    &   \dots  & x_3  \\
   x_3   &  x_2   &  x_{1}   &   \dots  &  x_4  \\
  \vdots    &  \vdots    &  \vdots   &   \ddots  &  \vdots    \\
    x_n  &  x_{n-1}   & x_{n-2}    &   \dots  &  x_1  \\
\end{pmatrix}
,\\
T_(x) = \begin{pmatrix}
   x_0   &  x_1   &  x_2   &   \dots  & x_{n-1}   \\
   x_{-1}   &  x_0   &  x_1   &   \dots  & x_{n-2}   \\
    x_{-2}  &  x_{-1}   &  x_0   &   \dots  & x_{n-3}   \\
  \vdots    &  \vdots    &  \vdots   &   \ddots  &  \vdots    \\
   x_{-n+1}   &  x_{-n+2}   & x_{-n+3}    &   \dots  & x_0   \\
\end{pmatrix}
, \\
H(x) = \begin{pmatrix}
  x_{-n+1}    &  x_{-n+2}   &  x_{-n+3}     &   \dots  & x_0   \\
  x_{-n+2}    &   x_{-n+3}    &  x_{-n+4}   &   \dots  & x_1   \\
   x_{-n+3}   &  x_{-n+4}   &  x_{-n+5}   &   \dots  & x_2   \\
  \vdots    &  \vdots    &  \vdots   &   \ddots  &  \vdots    \\
    x_0  &  x_1   & x_2    &   \dots  &  x_{n-1}  \\
\end{pmatrix}
\end{eqnarray*} 
A circulant matrix can be applied to a vector 
using three applications of FFT, i.e., 
because $ C_n(x)$ applied to a vector $y$ is the 
same as  $\IFFT(\FFT(x) \cdot \FFT(y))$.
%(see \cite{Bai2000}). 
A Toeplitz matrix can be embedded in a circulant matrix 
of the form 
\begin{eqnarray*}
C_{2n} =\begin{pmatrix}
    T_n & S_n \\
    S_n & T_n \\
\end{pmatrix}
\end{eqnarray*} 
where 
\begin{eqnarray*}
S_n= \begin{pmatrix}
    0  &  x_{-n+1}   &  x_{-n+2}  &   \dots  &  x_{-1}  \\
    x_{n-1}  &  0   & x_{-n+1}    &   \dots  & x_{-2}   \\
   x_{n-2}   &     & 0    &   \dots  &  x_{-3}  \\
  \vdots    &  \vdots    &  \vdots   &   \ddots  &  \vdots    \\
  x_1   &  x_2   &  x_3   &   \dots  & 0   \\
\end{pmatrix}
\end{eqnarray*} 
To apply $T$ to an $n$-vector $y$, append $n$ zeros to $y$ 
to get a $2n$-vector, apply $C_n$ and take the first $n$ 
coordinates of the result.
This takes  $O(n \log n)$ time.
If $H$ is a Hankel  matrix then 
$  R\cdot H$ is a Toeplitz matrix where $R$ is the permutation 
matrix that is $1$'s on the main anti-diagonal and $0$ elsewhere, 
i.e., it reverses the order of the coordinates of a vector. 
Thus $ H = R \cdot( R \cdot H)$, is a Toeplitz matrix followed 
by a permutation and can clearly be applied in time $O(n \log n)$
as well. 

The Pascal matrix is lower triangular with its $(j,k)$th entry 
being the binomial coefficient $ C^j_i=\binom{i}{j}$.
\begin{eqnarray*}
\begin{pmatrix}
   1   &  0   &   0  &   \dots  &  0  \\
   1   &  1  &    0 &   \dots  &  0  \\
    1  &   2  &   1  &   \dots  &  0  \\
  \vdots    &  \vdots    &  \vdots   &   \ddots  &  \vdots    \\
    C^{0}_{n-1}  &   C^{1}_{n-1}   &   C^{2}_{n-1}   &   \dots  & C^{n-1}_{n-1}   \\
\end{pmatrix}
\end{eqnarray*} 
This matrix can be written as $P= {\rm{diag}}(v_1) \cdot T \cdot {\rm{diag}}(v_2)$
where 
$$ v_1= ( 1,1,2!, 3!, \dots, (n-1)!), $$
$v_2= \frac 1{v_1}$ (term-wise) and $T$ is the Toeplitz matrix
\begin{eqnarray*}
T=\begin{pmatrix}
   1   &  0   &   0  &   \dots  &  0  \\
   1   &  1  &    0 &   \dots  &  0  \\
   \frac  1{2!}  &   1  &   1 &   \dots  &  0  \\
  \vdots    &  \vdots    &  \vdots   &   \ddots  &  \vdots    \\
   \frac1{(n-1)! } &   \frac 1{(n-2)!}   &   \frac 1{(n-3)!}   &   \dots  &  1   \\
\end{pmatrix}
\end{eqnarray*} 
The diagonal matrices can be applied in $O(n)$ and the Toeplitz in 
$O(n \log n)$ and hence so can $P$. Similarly for the
transpose of $P$.

Now for the applications to fast multipole translation 
operators.
There are three types of conversions to consider.
First, local to local translation
$$\sum_{k=0}^{n-1} a_k (z-a)^k \quad \to \quad \sum_{k=0}^{n-1} b_k(z-b)^k, $$
then multipole to local 
$$\sum_{k=0}^n a_k (z-a)^{-k} \quad \to \quad \sum_{k=0}^n b_k(z-b)^{k}, $$
and finally, multipole to multipole, 
$$ \sum_{k=0}^n a_k (z-a)^{-k} \quad \to \quad 
\sum_{k=0}^n b_k(z-b)^{-k}.$$

Let $c=b-a$ and consider the local-to-local translation. We have 
$$ \sum_{k=0}^{n-1} a_k (w-c)^k
     =\sum_{k=0}^{n-1} a_k \sum_{j=0}^k w^{j} (-c)^{k-j} \binom{k}{j},$$
so the matrix corresponding to local translation has $k$th column
$$ ( (-c)^{k} , (-c)^{k-1} \binom{k}{1}, \dots, (-c)^{0} \binom{k}{k}, 0, \dots, 0  )^t$$
or 
\begin{eqnarray*}
LL=\begin{pmatrix}
 1 &  -c   &  c^2  &   \dots  &  (-c)^{n-1} \\
 0   &  1   &  -2c   &   \dots  & (-c)^{n-2} C_{n-1}^{1}   \\
 0   &  0  &  1  &   \dots  &  (-c)^{n-3} C_{n-1}^{2}  \\
  \vdots    &  \vdots    &  \vdots   &   \ddots  &  \vdots    \\
  0  &  0  &  0  &   \dots &  1  \\
\end{pmatrix}
\end{eqnarray*} 
This  matrix is equal to 
$$\diag(1,-c,\dots, (-c)^{n-1}) \cdot P' \cdot 
\diag(1, -c^{-1}, \dots (-c)^{-n+1}),$$
where $P'$ is the transpose of $P$.
The diagonal matrices can be applied in $O(n)$ time and $P'$ can 
be applied in $O(n \log n)$. Thus local-to-local translations 
can be done this fast.

Similarly, the multipole-to-multipole and multipole-to-local 
transformations correspond to applying the matrices 
\begin{eqnarray*}
 MM &=&\begin{pmatrix}
 1 &   0  &  0  &   \dots  &  0 \\
 \binom{1}{1} c   &   1  &  0   &   \dots  &   0 \\
   \binom{2}{2} c^2  &   \binom{2}{1} c  &  1  &   \dots  &    \\
  \vdots    &  \vdots    &  \vdots   &   \ddots  &  \vdots    \\
     \binom{n-1}{n-1} c^{n-1}   &    \binom{n-1}{n-2} c^{n-2} 
   &    \binom{n-1}{n-3} c^{n-3}  
              &   \dots  &    1  \\
\end{pmatrix}
 \end{eqnarray*}
\begin{eqnarray*}
ML &=& \begin{pmatrix}
 -c^{-1}  &  c^{-2}   &  c^{-3}  &   \dots  & c^{-n+1}  \\
  -c^{-2}  &  2 c^{-3}   &  -3 c^{-4}   &   \dots  &    \\
   -c^{-3}   &  3 c^{-4}  &  -6 c^{-5}  &   \dots  &    \\
  \vdots    &  \vdots    &  \vdots   &   \ddots  &  \vdots    \\
    -c^{-n+1}  &  (n-1) c^{-n}  & -\binom{n}{2} c^{-n-1}   &   \dots & (-1^{n-1}
                          \binom{2p-2, p-1} c^{-2n-1}   \\
\end{pmatrix}.
\end{eqnarray*} 
We can rewrite these matrices as 
$$ MM =\diag(1,c,\dots, c^{n-1}) \cdot P \cdot 
\diag(1, c^{-1}, \dots c^{-n+1}), $$
$$ ML= \diag(1,c^{-1},\dots, c^{1-n})
 \cdot P \cdot P' \cdot 
\diag( -c^{-1},  c^{-2} \dots (-c)^{-n}), $$
where  $P'$ is the transpose of $P$.
%\begin{eqnarray*}
%PP =P \cdot P'=\begin{pmatrix}
%   1   &  1   &   1  &   \dots  &  \binom{0}{n-1}  \\
%   1   &  2  &    3 &   \dots  &  \binom{1}{n} \\
%    1  &   3  &   6  &   \dots  &  \binom{2}{n+1}  \\
%  \vdots    &  \vdots    &  \vdots   &   \ddots  &  \vdots    \\
% \binom{0}{n-1}& \binom{1}{n}  & \binom{2}{n+1} & \dots & \binom{n-1}{2n-1}   \\
%\end{pmatrix}
%\end{eqnarray*} 
As with local translations, these are compositions of 
diagonal matrices (which  can be applied in $O(n)$) and 
matrices that can be applied in $O(n \log n)$
time.

We will also use structured matrices to compute  
expansions around $\infty$ of functions of the 
form $\int \frac {d \mu(z)}{(z-w)^k}$, $k=1,2,3$.  We will only 
consider the  Cauchy transform ($k=1$) since the 
others can be obtained by term-by-term differentiation 
of that one.
Suppose 
$f(z) = \sum_{k=0}^n  a_k z^k$ is a power
series for an analytic function, bounded 
by one and defined on $\disk$ and 
$\varphi (x,y)$ is a polynomial in $x$ and 
$y$ of uniformly bounded degree. Then 
the Cauchy transform 
$$ F(w)  = \int_S \frac {f(z) \varphi(x,y) dxdy}{z-w},$$
is analytic in $w$ outside $S = [-\frac 12, \frac 12]^2$, 
so has an expansion $ F(w) = \sum_{k=1}^\infty b_n w^{-n}$.
Given $\{a_k\}_0^n$, thinking of $\varphi$ as fixed, we 
want to compute $\{b_k\}_1^n$.  For each monomial 
of the form $z^k x^a y^b$ we can precompute the 
expansion using explicit formulas  ($O(n)$ for each 
of $O(n)$ monomials) and then we simply 
apply the resulting matrix to the vector $\{a_k\}$.
Naively, we can do this in  time $O(n^2)$.

Actually we can compute the expansion in
only $O(n \log n)$. Let $d \mu = x^a y^b dxdy $ 
restricted to $Q = [0,1]^2$. We want to 
compute the expansion at $\infty$ of
\begin{eqnarray*}
F(w) = \iint \frac {z^n}{w-z} d \mu(z)
&=&\iint z^n \frac 1w (1+\frac zw +(\frac zw )^2 + \dots ) d \mu (z) \\
&=& \sum_{k=0}^\infty w^{-k-1} \iint z^{n+k} d \mu (z) \\
& =& \sum_{k=1}^\infty a_{k,n} w^{-k} ,
\end{eqnarray*}
where 
$$a_{k,n} = c(n+k+1,a,b)=\iint_Q (x+iy)^{n+k-1} x^a y^b dx dy.$$
 Since $a_{k,n}$ only 
depends on $k+n$, $A$ is a Hankel matrix.  As noted above,  a $n \times n$
Hankel matrix can be applied to a $n$-vector using FFT in time 
$O(n \log n)$.

The individual coefficients  have explicit formulas involving Euler's 
Beta function. Evaluations for a few small values of $a,b$  (as given 
by {\it Mathematica} are)
$$ c(n,0,0) = \frac {i -i^{n+1} + 2(1+i)^n}{2+3n + n^2},$$
$$ c(n,1,0) =  \frac {2i - i^n + in + 2(1+i)^n((2-i)+n)}
                     { (1+n)(2+n)(3+n)},$$
$$ c(n,2,0) =  \frac {i(6+2i^n + 5n + n^2) + 2(1+i)^n ((4-4i)+n((5-2i)+n))}
                     {(1+n)(2+n)(3+n)(4+n)}$$
$$ c(n,1,1) = -\frac {1+i^n-2(1+i)^n(2+n)}
                     {(1+n)(2+n)(4+n)},$$
$$ c(n,2,1) =  \frac{-3(2i)i^n - n + 2(1+i)^n(1+n)((4-i)+n)}
                    {(1+n)(2+n)(3+n)(5+n)},$$
$$ c(n,0,1) =   \frac{-1-i^{n+1}(2+n) + 2(1+i)^n((2+i)+n)}
                     {(1+n)(2+n)(3+n)}.$$

Thus $n$-term Laurent expansions for 
Beurling transforms of the appropriate 
degree $n$ polynomials  can be 
computed in time $O(n \log n)$.

In Section \ref{epsilon-reps}   we claimed that $\epsilon$-representations
 and partial representations could be computed from each 
 other quickly.
% The following lemma makes this more precise.
%\begin{lemma} \label{convert-reps}
%An $\epsilon$-representation can be converted to an $O(\epsilon)$-partial 
%representation in time 
%$O(n \cdot p \log p)$ where $p = O(\log \frac 1 \epsilon)$.
%An $\epsilon$-partial representation can be converted to a 
%$O(\epsilon^{1-\beta})$-representation in time $O(n \cdot p \log p)$ (the 
%constant depends on $\beta$, 
%and $\beta$ can be chosen as close to $0$ as we wish.
%\end{lemma}
We can now see why this is true. Given a 
$O(n)$-term 
Laurent expansion on an annulus we can clearly create $O(n)$-term
power series expansions that approximate it on disks contained 
in the annulus  in time $O(n \log n)$, just as with the 
multipole to local conversions discussed before. If the double of the
disk is contained in the annulus where the function is bounded
by $1$, then the convergence of the power series is geometric and 
$O(n)$ terms give accuracy of order $\lambda^n$ for some 
$\lambda < 1$. Thus we can do the conversion from representations 
to partial representations.

To go the other direction, we need to compute the coefficients 
of a Laurent expansion for $f$  from knowing power series 
approximations for $f$ on disks that cover a 
contour $\gamma$ around the origin.
The coefficients can be computed exactly as integrals of the 
form $\int_\gamma f(z) z^{k} dz $. This integral can be 
broken into pieces $\int_{\gamma_n}$ where $\gamma = \cup_n 
\gamma_n$ is a decomposition of $\gamma$ into pieces, where 
each piece stays inside one of the disks where we have a 
power series approximation for $f$. If this approximation is
of the form $\sum_j a_j (z-a)^j$, then we wish to evaluate integrals
of the form  
$$ \int_{\gamma_n} (\sum_j a_j (z-a)^j) z^k dz .$$
We can convert $z^k$ to a series with center $a$, e.g., 
$z^k = \sum_q b_{q,k} (z-a)^q$ just as before with Pascal matrices
and then we have to evaluate 
\begin{eqnarray*}
\int_{\gamma_n} (\sum_j a_j (z-a)^j) (\sum_q b_{q,k} (z-a)^q) dz 
  &=& 
  \sum_q \sum_j  a_k   b_{q,k}  \int_{\gamma_n} (z-a)^{j+q} dz  \\
&=&   \sum_q I_{j,k} \sum_j     b_{q,k} a_k    \\
  \end{eqnarray*}
  where $b_{q,k}$ is a Pascal matrix and
$I_{j,q} = \int_{\gamma_n} (z-a)^{j+q} dz$ is an explicit
  Hankel matrix. We have seen above that both types of matrix can 
  be applied in $O(n \log n)$. So that the contour integral 
  using the power series on each piece of the contour can be 
  evaluated this fast.  If the power series approximations  
  agree with the Laurent series to within $\epsilon$ then 
  resulting coefficients will be accurate up to an error 
  of $\int_\gamma \epsilon|d z| = O(|\gamma| \epsilon)$.
  Normalizing so that $\gamma$ and its image both have length 
  about $1$, we see that the coefficients are accurate to 
  $O(\epsilon)$ and so the reconstructed Laurent series agrees
  with the original to within $\sum_k \epsilon = O(n \epsilon)
  = O(\epsilon \log \epsilon) =O(\epsilon^{1-\beta})$ with 
  $\beta$ as close to $0$ as we wish (taking a larger constant 
   in front).

The compositions of power series  considered earlier 
were exact computations in the sense that 
given two $n$-term power series, $f$ and  $g$, we 
are computing the exact first $n$ coefficients of  $f \circ g$.
However, they are inexact in the sense that we are
 truncating up to $n^2 -n$ terms of the full composition.
We will be most interested in the case when $f$ and 
$g$ are conformal maps whose power series coefficients 
decay exponentially. 
Then the  $n$-term truncation of $f \circ g$ equals $f \circ g$ up to 
an error of $e^{- c n}$. Since the truncation introduces an 
error anyway, we may as well tolerate an error of the same 
size coming from an 
inexact computation of the first $n$ terms that we keep. 
Can we approximate the first $n$ terms of $f  \circ g$ 
faster than we can compute them exactly (i.e., faster
than $O(n^{3/2} \log^{3/2}  n \log \log n)$)? The answer is 
yes, at least in the special case that we care about.

\begin{lemma} \label{inexact-comp}
Suppose $f$ and $g$ are conformal maps of $D(0, R)$, $R \geq 2$
such that $f$
has a $(1+\epsilon)$-quasiconformal extension to $\reals^2$ that 
fixes $\infty$.
Assume also that  $f(0)=g(0)$ and $f'(0) = g'(0) =1$.
Then given the first $n$ terms of the power series for 
$f$ and $g$ we can compute in time $O(n \log n)$, an $O(n)$ term power series 
$h$  so that $|h-g \circ f^{-1}| \leq  O(\epsilon^{2(1-\beta)})$ on 
$ D(0, 1)$.
\end{lemma}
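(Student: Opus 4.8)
The plan is to exploit that, under the hypotheses, $f$ is $C^{1}$-close to the identity, so $g\circ f^{-1}$ is a small perturbation of $g$ captured to second order by a single \emph{linearization}; a linearization needs only multiplication and differentiation of length-$n$ series, done in $O(n\log n)$ by Appendix \ref{fast-power-series}, rather than the $O(n^{3/2}\log^{3/2}n\log\log n)$ of a genuine composition. We may take $f(0)=g(0)=0$: $f-f(0)$ and $g-g(0)$ are normalized, and a nonzero common value $a=f(0)=g(0)$ (which in our applications is $O(\epsilon)$, the only regime we need) is absorbed by a $O(n\log n)$ translation of coefficients, the Pascal-matrix step of Appendix \ref{fast-power-series}. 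Then the displacement estimate behind Lemma \ref{f near 1} (applied to the normalized $f$, conformal on $D(0,R)$ with $R\ge2$ and $(1+\epsilon)$-quasiconformal fixing $0,\infty$) gives $\sup_{|z|\le2}|f(z)-z|=O(\epsilon)$ and $\sup_{|z|\le2}|f'(z)-1|=O(\epsilon)$; since $f$ moves the circle $|z|=2$ by $O(\epsilon)$, $f^{-1}$ is conformal on $D(0,1)$ with image in $D(0,2)\subset D(0,R)$ and $\sup_{|z|\le1}|f^{-1}(z)-z|=O(\epsilon)$; and since $g$ is conformal on $D(0,R)$ with $g'(0)=1$, Koebe bounds $|g'|,|g''|$ by absolute constants on $D(0,3/2)$.

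The analytic heart of the matter is the following. With $\phi=f-\mathrm{id}$ and $\psi=f^{-1}-\mathrm{id}$, the relation $f\circ f^{-1}=\mathrm{id}$ gives $\psi=-\phi\circ f^{-1}$, and $\|\phi'\|=O(\epsilon)$ on $D(0,2)$ yields $\psi=-\phi+O(\epsilon^{2})$ uniformly on $D(0,1)$. Taylor expanding $g$ about $z$ then gives $g(f^{-1}(z))=g(z)+g'(z)\psi(z)+O(\|g''\|_{D(0,3/2)}\|\psi\|_{\infty}^{2})=g(z)-g'(z)\phi(z)+O(\epsilon^{2})$ for $z\in D(0,1)$. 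Thus the ideal answer is the honest analytic function $g-g'\cdot(f-\mathrm{id})$, agreeing with $g\circ f^{-1}$ to within $O(\epsilon^{2})\le O(\epsilon^{2(1-\beta)})$ on $D(0,1)$ — stronger than claimed, leaving room for the input and truncation errors.

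For the algorithm: from the given length-$n$ series of $f$ and $g$, form $f-z$ and $g'$ in $O(n)$ steps, multiply $g'\cdot(f-z)$ in $O(n\log n)$ by FFT, subtract from $g$, and truncate to $q=O(\log\frac{1}{\epsilon})=O(n)$ terms; $h$ is this truncation. On $D(0,1)$ there are three errors: replacing the true series of $f,g$ by length-$n$ approximations costs $O(R^{-n})$ (Cauchy estimates, $f,g$ conformal on $D(0,R)$); truncating to $q$ terms costs $O(\lambda^{q})$ for some fixed $\lambda<1$ via Lemma \ref{expand-est}, the tails of $g-g'\phi$ decaying geometrically since all factors extend conformally to $D(0,R)$; and the linearization costs $O(\epsilon^{2})$. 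With $n$ a suitable multiple of $\log\frac{1}{\epsilon}$ the first two are $\le\epsilon^{2}$, so $|h-g\circ f^{-1}|=O(\epsilon^{2(1-\beta)})$ on $D(0,1)$ in total time $O(n\log n)$.

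The main obstacle I anticipate is not the algebra but the uniformity of the quadratic remainders on the full disk $D(0,1)$: one needs $f^{-1}$ genuinely defined and $C^{1}$-$O(\epsilon)$-close to the identity on a neighbourhood of $\overline{D(0,1)}$, which uses the quasiconformal displacement estimate rather than merely Koebe (the latter only guarantees an image disk of radius $\frac{1}{4}$), together with uniform Koebe bounds on $g''$ from $g$ being conformal on the strictly larger disk $D(0,R)$. Bookkeeping exactly which disk each estimate lives on, and routing the $\epsilon^{2}\to\epsilon^{2(1-\beta)}$ loss through Lemma \ref{expand-est} as in the lemma preceding Lemma \ref{small-norm}, is where the care is required.
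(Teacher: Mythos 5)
Your proof is correct, and the central idea coincides with the paper's: linearize the composition, $g\circ f^{-1}(z)=g(z)+g'(z)\bigl(z-f(z)\bigr)+O(\epsilon^2)$, justified by the quasiconformal displacement estimate $|f'-1|=O(\epsilon)$ (Lemma \ref{f near 1}), and implement the linearization with one FFT. The implementations diverge in a way worth noting. The paper evaluates $g(\omega_j)+g'(\omega_j)(\omega_j-f(\omega_j))$ at the $n$-th roots of unity and defines $h$ as the interpolating polynomial recovered by an inverse FFT; it then must convert smallness at the roots of unity into a sup-norm bound, via the Lagrange/$\sigma(z)=z^n-1$ argument giving $|P|\le 4M$ on $D(0,1/2)$, and finally invoke Lemma \ref{expand-est} to reach a larger disk -- which is exactly where the exponent degrades to $2(1-\beta)$. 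You instead form the coefficient sequence of $g-g'\cdot(f-\mathrm{id})$ by a single FFT product and analyze the honest analytic function $g-g'\phi$ directly; this bypasses both the interpolation bound and Lemma \ref{expand-est}, and yields the stronger estimate $O(\epsilon^2)$ uniformly on $D(0,1)$ (the two algorithms are in fact nearly identical, since FFT multiplication \emph{is} evaluation at roots of unity followed by interpolation, differing only by aliasing mod $z^n-1$). Two caveats you should keep explicit, both of which the paper shares: the input-truncation error forces $n\gtrsim\log\frac1\epsilon$ with a constant depending on $R$ (for $R$ close to $2$ the coefficient decay rate $\lambda$ is close to $1$, so be careful claiming estimates on $D(0,2)$ rather than on $D(0,r)$ for $r<R$); and the linearization genuinely requires the common value $f(0)=g(0)$ to be $0$ (or $O(\epsilon)$ after your translation step), since otherwise $f$ is not close to the identity -- you flag this honestly, while the paper assumes it silently.
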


\begin{proof}
Let $\{ \omega_j\}_0^{n-1}$ denote the $n$th roots of unity.
Using the FFT we can compute the images of all these points under 
either $f$ or $g$ in time $O(n \log n)$. 
By Lemma \ref{f near 1}, 
$|f'-1|=O(\epsilon)$ on $D(0,2)$ and hence if $|z-\omega_j|=O(\epsilon)$, 
$$ f(z ) = f(\omega_j) + (1+O(\epsilon))(z-\omega_j) + O(|z-\omega_j|^2).$$
 In particular, if $w = f(z)$, 
\begin{eqnarray*}
 f^{-1}( w) &=& w +(\omega_j -f(\omega_j))
              + O(\epsilon |z-\omega_j|) +O(|z-\omega_j|^2)\\
              &=&w +(\omega_j -f(\omega_j))+O(\epsilon^2).
\end{eqnarray*}
Thus taking $w = \omega_j$, 
$$ f^{-1}( \omega_j)  =\omega_j +(\omega_j -f(\omega_j))+O(\epsilon^2).$$
Therefore, we  get
\begin{eqnarray*}
 g \circ f^{-1}(\omega_j) &=& g(\omega_j) +g'(\omega_j)( \omega_j - f(\omega_j)) +
                    O(|\omega_j - f(\omega_j)|^2)+
                    O(\epsilon^2) \\
&=& g(\omega_j) + g'(\omega_j)(\omega_j - f(\omega_j)) +
                                       O(\epsilon^2) .
\end{eqnarray*}
So define  a degree $n$ polynomial by setting its values
on the roots of unity using 
$$h(\omega_j) = g(\omega_j) + g'(\omega_j)(\omega_j - f(\omega_j)).$$
Since we can compute and evaluate $g$ and  $g'$ at the roots of unity in 
$O(n \log n)$, we can find the series expansion for $h$ 
around $0$ in time $O(n \log n)$ using an FFT.

Clearly $h$ is a good approximation to $g \circ f^{-1}$ on the 
$n$th roots of unity (they differ by $O(\epsilon^2)$ on these 
points). What about the rest of $\disk$? We shall show that the 
functions differ by at most $O(\epsilon^2)$ on $\disk$ as well.
Let 
$$ \sigma(z) = \prod_{j=0}^{n-1} (z- \omega_j) = z^n -1.$$
Let 
$$ \psi(z) = \sum_{k=0}^{n-1} \frac {h(\omega_j)/(n\omega_j^{n-1})}
{z - \omega_j}.$$
The zeros of $\sigma$ cancel the simple poles of $\psi$, 
so $P=\psi \cdot \sigma$ is entire and since $|\sigma| \sim |z|^n$
and $|\psi| \sim |z|^{-1}$ near $\infty$, $P$ must 
be a polynomial of degree $n-1$. Moreover, by l'Hopital's rule
\begin{eqnarray*}
\frac {h(\omega_j)}{n\omega_j^{n-1}}
         &=& \lim_{z \to \omega_j} (z-\omega_j)\psi(z)   \\
         &=&  \lim_{z \to \omega_j} (z-\omega_j)\frac{P(z)}{\sigma(z)} \\
         &=&  \frac {1\cdot P(\omega_j) + 
                              (\omega_j-\omega_j)P'(\omega_j)}
                                  {\sigma'(\omega_j)}
         =  \frac {P(\omega_j)}{\sigma'(\omega_j)},
\end{eqnarray*}
which implies $P(\omega_j) = h(\omega_j)$ for all $j=0, \dots , n-1$, 
and this means $P=h$ as polynomials.
Thus 
\begin{eqnarray*}
 |h(z)|=|P(z)| &\leq& |\psi(z)| \cdot |\sigma(z)|\\
      &\leq & (\max_j |h(\omega_j)| )\sum_{j=0}^{n-1} \frac{1/n}{|z-\omega_j|}
                      \cdot (|z^n|+1) \\
      &\leq&  4 M,
\end{eqnarray*}
if $|z| \leq 1/2 $ and $M = \max_j |h(\omega_j)|$.

Suppose $ 1 < r < R/2$.
The  map $g \circ f^{-1}$ is conformal on $D(0, R/2)$ (if 
$\epsilon$ is small enough), so its power series at $0$ 
has terms that  decay like $(2/R)^k$. If $H$ is the $m$-term
truncation of this power series and we choose 
$m \sim n$ large enough, then $|H - g\circ f^{-1}| 
\leq O((2/R)^m)= O(\epsilon^2)$ on $\disk$. Since $H$ is a degree $n-1$
polynomial that interpolates its own values on the roots of 
unity,  $H-h$ is a degree $n-1$ polynomial with 
 values $=O((2/R)^n)$ on the $n$th roots of unity and 
hence, by our calculation above applied to $H-h$, is bounded 
by $O(\epsilon^2)$
% =O(\exp( (\log 2-\log R)  n)$
on $\disk$.
By Lemma \ref{expand-est}, this means that a truncation $h_t$ of $h$ satisfies 
$$ |h_t(z) - g\circ f^{-1}(z) | =O(\epsilon^{2(1-\beta)}),$$
on $D(0,r)$ where $r= R^\beta$.
\end{proof}

Exact evaluation of a degree $n$ polynomial at $m $ points 
takes $O((n+m) \log^2(n+m))$  \cite{ASU75} and recovering a degree
$n-1$ polynomial from its values at $n$ points takes
$O(n \log^2 n)$, so our approximate method is faster
for the cases we consider.   Multipole methods can 
also be used to give faster approximate evaluation 
and interpolation algorithms. See \cite{DGR96}, 
\cite{Reif}.  These are $O(n \log \frac 1 \epsilon)$
where $n$ is the degree and $\epsilon$ is the desired 
accuracy. In our case, however, $n \sim \log  \frac 1 \epsilon$, 
so this is not faster than exact calculation.

%\bibliography{ref,ref2,ref3,ref-hist,ref-MA,ref-Bishop}
\bibliography{time}

\bibliographystyle{plain}
\end{document}